\documentclass[11pt]{amsart}
\usepackage[margin=0.9in]{geometry}
\usepackage{amsmath}
\usepackage{amsfonts}
\usepackage{amssymb}
\usepackage{graphicx}
\usepackage{mathrsfs}
\usepackage{graphicx,color}
\usepackage[mathscr]{eucal}
\usepackage{caption}
\usepackage{subcaption}
\usepackage{color}
\usepackage{comment}
\usepackage{tikz-cd}
\usepackage{graphicx,color}
\usepackage{mathrsfs}
\usepackage{marvosym}
\usepackage{mathtools,tikz-cd}
\usepackage{enumitem}
\usepackage{graphicx}

\usepackage{tikz}
\usetikzlibrary{shapes.geometric, arrows}
\tikzstyle{startstop} = [rectangle, rounded corners, minimum width=2.5cm, minimum height=0.5cm,text centered, text width=2cm, draw=black, fill=white!30]
\tikzstyle{startstop2} = [rectangle, rounded corners, minimum width=2.5cm, minimum height=0.5cm,text centered, text width=2cm, draw=black, fill=white!30]
\tikzstyle{startstop3} = [rectangle, rounded corners, minimum width=2.5cm, minimum height=0.5cm,text centered, text width=2.5cm, draw=black, fill=white!30]
\tikzstyle{arrow} = [thick,->,>=stealth]

\newtheorem{theorem}{Theorem}[section]
\newtheorem{lemma}[theorem]{Lemma}

\newtheorem{corollary}[theorem]{Corollary}
\newtheorem{corollary*}[theorem]{Corollary*}
\newtheorem{proposition}[theorem]{Proposition}
\newtheorem{proposition*}[theorem]{Proposition*}
\newtheorem{definition}[theorem]{Definition}
\newtheorem{example}[theorem]{Example}
\newtheorem{remark}[theorem]{Remark}

\newtheorem{claim}[theorem]{Claim}

\numberwithin{equation}{section}
\numberwithin{figure}{section}

\newcommand{\ol}[1]{\overline{#1}}
\newcommand{\ul}[1]{\underline{#1}}

\newcommand{\NN}{\mathbb{N}}
\newcommand{\ZZ}{\mathbb{Z}}

\newcommand{\RR}{\mathbb{R}}

\newcommand{\KK}{\mathbb{K}}

\newcommand{\bH}{\mathbb{H}}

\newcommand{\bc}{\boldsymbol{c}}

\newcommand{\be}{\boldsymbol{e}}

\newcommand{\kg}{\mathfrak{g}}

\newcommand{\CC}{\mathbb{C}}
\newcommand{\CP}{\mathbb{CP}}
\newcommand{\cS}{\mathcal{S}}

\newcommand{\cO}{\mathcal{O}}
\newcommand{\cI}{\mathcal{I}}

\newcommand{\cY}{\mathcal{Y}}

\newcommand{\cJ}{\mathcal{J}}
\newcommand{\cP}{\mathcal{P}}
\newcommand{\cG}{\mathcal{G}}
\newcommand{\cR}{\mathcal{R}}

\newcommand{\cD}{\mathcal{D}}
\newcommand{\cC}{\mathcal{C}}
\newcommand{\cB}{\mathcal{B}}
\newcommand{\cX}{\mathcal{X}}
\newcommand{\cK}{\mathcal{K}}
\newcommand{\cH}{\mathcal{H}}
\newcommand{\cFS}{\mathcal{FS}}

\newcommand{\Cone}{\mathrm{Cone}}
\newcommand{\Ext}{\operatorname{Ext}}

\newcommand{\cA}{\mathcal{A}}

\newcommand{\cL}{\mathcal{L}}

\newcommand{\End}{\mathrm{End}}

\newcommand{\Hom}{\mathrm{Hom}}
\newcommand{\rank}{\mathrm{rank}}

\newcommand{\liesl}{\mathfrak{sl}}

\newcommand{\lieg}{\mathfrak{g}}
\newcommand{\liep}{\mathfrak{p}}

\newcommand{\aff}{\mathrm{aff}}

\newcommand{\im}{\mathrm{im}}
\newcommand{\re}{\mathrm{re}}

\newcommand{\cl}{\operatorname{cl}}
\newcommand{\Conf}{\operatorname{Conf}}
\newcommand{\Hilb}{\operatorname{Hilb}}
\newcommand{\Sym}{\operatorname{Sym}}
\newcommand{\symp}{\operatorname{symp}}
\newcommand{\alg}{\operatorname{alg}}
\newcommand{\Map}{\operatorname{Map}}

\newcommand{\mk}{\operatorname{mk}}
\newcommand{\mult}{\operatorname{mult}}
\newcommand{\virdim}{\operatorname{virdim}}
\newcommand{\length}{\operatorname{length}}
\newcommand{\Spec}{\operatorname{Spec}}

\newcommand{\hs}{\operatorname{hs}}
\newcommand{\coker}{\operatorname{coker}}
\newcommand{\fiber}{\operatorname{fiber}}

\newcommand{\lperf}{\text{-perf}}
\newcommand{\rperf}{\text{perf-}}

\def\eA{\EuScript{A}}

\def\eM{\EuScript{M}}

\begin{document}

\title
[Fukaya-Seidel categories of Hilbert schemes and parabolic category $\cO$ ]
{Fukaya-Seidel categories of Hilbert schemes \\ and parabolic category $\cO$ }
\author{Cheuk Yu Mak and Ivan Smith}

\date{\today}

\address{\tiny Centre for Mathematical Sciences, University of Cambridge, CB3 0WB, UK}
\email{cym22@dpmms.cam.ac.uk}

\address{\tiny Centre for Mathematical Sciences, University of Cambridge, CB3 0WB, UK}
\email{is200@dpmms.cam.ac.uk }
\thanks{Both authors are supported by EPSRC Fellowship EP/N01815X/1.}

\begin{abstract}
We realise Stroppel's extended arc algebra \cite{Stroppel-parabolic, BS11} in the Fukaya-Seidel category of a natural Lefschetz fibration on the 
generic fiber of the adjoint quotient map on a
type $A$ nilpotent slice with two Jordan blocks, and hence obtain a symplectic interpretation of certain parabolic two-block versions of Bernstein-Gelfan'd-Gelfan'd category $\cO$.  
As an application, we give a new geometric construction of the spectral sequence from annular to ordinary Khovanov homology.  The heart of the paper is the development of a cylindrical model to compute Fukaya categories of (affine open subsets of) Hilbert schemes of quasi-projective surfaces, which may be of independent interest.
\end{abstract}

\maketitle

\setcounter{tocdepth}{1}
\tableofcontents

\section{Introduction}\label{s:intro}

\subsection{Summary}
This paper has three parts:
\begin{itemize}
\item We give a `cylindrical' formulation of the Fukaya category of (an affine open subset of) the Hilbert scheme of an affine algebraic surface; (Sections \ref{s:cylindricalFS}-\ref{s:FScategories})
\item We compute the Fukaya-Seidel category of a natural Lefschetz fibration on the generic fiber of the adjoint quotient map on the
nilpotent / Slodowy slice of Jordan type $(n,m-n)$ arising from its inclusion in the $n$-th Hilbert scheme of the Milnor fibre of the $A_{m-1}$-surface singularity,  establishing a Morita equivalence to the principal block $\cO^{n,m}$ of BGG parabolic category $\cO$ associated to the partition $m=n+(m-n)$; (Sections \ref{s:TypeA}-\ref{s:formality})
\item We exploit this equivalence to describe a new semiorthogonal decomposition of $\rperf \cO^{n,2n}$, from which we derive a geometric construction of the spectral sequence from annular to ordinary Khovanov homology; (Sections \ref{s:Dictionary}-\ref{s:Ann2Kh}).
\end{itemize}

\subsection{Context}
Fix a semisimple complex Lie algebra $\lieg$.  The Bernstein-Gelfan'd-Gelfan'd category $\cO$ is an abelian (Noetherian and Artinian) category of finitely generated $\lieg$-modules, 
which plays a central role in many parts of representation theory.  It contains all the highest-weight modules, is closed under the operations of taking submodules, 
quotient modules and under tensoring with finite-dimensional modules, and is ``minimal" with respect to those properties.  
Concretely for $\liesl_m$, fixing the Cartan subalgebra $\mathfrak{h} \subset \liesl_m$ of diagonal matrices with respect to a choice of basis,
it comprises the finitely generated $U(\liesl_m)$-modules which are $\mathfrak{h}$-semisimple and which are locally finite for 
the nilpotent subalgebra $\mathfrak{n}\subset\liesl_m$ given by the upper triangular matrices.  Given a parabolic subalgebra $\mathfrak{p}\subset \mathfrak{g}$ containing the Borel $\mathfrak{n} \oplus \mathfrak{h}$, 
there is a parabolic subcategory $\cO^\liep$ of $\cO$ consisting of those modules on which $\liep$ acts locally finitely.  Again for $\liesl_m$, a partition 
$m=m_1+\cdots+m_k$ determines a parabolic $\liep$; the extreme cases $m=1+\cdots+1$ and $m=m$ respectively define $\cO^\liep=\cO$ and $\cO^{\liep} = \cO^{fd}$, the semisimple category of finite-dimensional $\liesl_m$-modules, so the parabolic subcategories can be viewed as intepolating between these.   
The central characters decompose $\cO^\liep$ into blocks and the block containing the trivial modules is called the principal block $\cO^\liep_0$.

It is known that for any $\liep\subset\lieg$, the category $\cO^\liep_0$ is equivalent to the category of modules over a finite-dimensional associative algebra; there are 
algorithmic descriptions of these algebras via quivers with relations \cite{Vybornov}, but nonetheless working with them concretely is often rather non-trivial.  
For parabolics $\liep  \subset \liesl_m$ associated to two-block partitions $m=n+(m-n)$, the corresponding category $\cO^\liep_0 = \cO^{n,m}$ 
has several other descriptions: it is Morita equivalent to the category of perverse sheaves on the Grassmannian $Gr(n,m)$ constructible with respect to the Schubert 
stratification \cite{Braden}, and the underlying associative algebra\footnote{
Our $K^{\alg}_{n,m}$ is $K_{n}^{m-n}$ in \cite{BS11}.  The superscript $alg$ (for algebraic) distinguishes it from a symplectic sibling which is introduced later in the paper.} 
$K_{n,m}^{\alg}$ has a diagrammatic description due to Stroppel \cite{Stroppel-parabolic} and Brundan-Stroppel \cite{BS11,BS2}, in which context it is known as the `extended arc algebra'. This paper gives a new interpretation of these algebras in terms of Fukaya-Seidel categories associated to natural Lefschetz fibrations on the
generic fiber of the adjoint quotient map of the
nilpotent (Jacobson-Morozov type) slices associated to nilpotent matrices with two Jordan blocks, and hence a symplectic-geometric construction of these particular principal blocks of parabolic category $\cO$.  This fits into the general dictionary between symplectic geometry and aspects of representation theory related to categorification, cf. \cite{Khovanov-Seidel, BLPW}, and simultaneously extends the symplectic viewpoint on Khovanov homology \cite{SeidelSmith, AbouzaidSmith16, AbouzaidSmith19} to a corresponding viewpoint on annular Khovanov homology, complementing recent work of \cite{GLW18, BPW19}. 

\subsection{Main result}

Let $\pi: A_{m-1} \to \CC$ be the standard Lefschetz fibration on the Milnor fibre of the $A_{m-1}$-surface singularity, so $\pi$ 
is an affine conic fibration with $m$ Lefschetz critical points; the symplectic topology of this fibration has been extensively studied, see for 
instance \cite{Khovanov-Seidel, Maydanskiy-Seidel}. 
The Hilbert scheme $\Hilb^n(A_{m-1})$ of zero dimensional length $n$ subschemes has a distinguished divisor $D_r$ of subschemes whose 
projection under $\pi$ has length $< n$, the complement of which is an affine variety $\cY_{n,m}$. 
(When $2n \le m$, this space is isomorphic to the generic fibre of the restriction of the adjoint quotient map $\liesl_{m} \to \mathfrak{h}/W$ to a transverse slice meeting 
the nilpotent cone at a matrix with Jordan blocks of size $\{n,m-n\}$, as studied in classical Springer theory \cite{Slodowy}.) 
The map $\pi$ induces a map $\pi_{n,m}:\cY_{n,m} \to \CC$ which is a Lefschetz fibration in the weak sense that its only critical points are of Lefschetz type 
(it may, however, have critical points at infinity when $n>1$). Nonetheless, there is a well-defined Fukaya-Seidel $A_{\infty}$-category $\cFS(\pi_{n,m})$, governing 
the Floer theory of the Lefschetz thimbles associated to any collection of vanishing paths.  
We write $D^{\pi}(\cC)$ for the derived category of $\cC$ (split-closure of twisted complexes), working over a field $\mathbb{K}$.

\begin{theorem}\label{t:main} 
If $n \le m$ and  $\mathbb{K}$ has characteristic zero, 
 $D^{\pi}\cFS(\pi_{n,m})$ is quasi-equivalent to the dg-category of perfect modules $\rperf K_{n,m}^{alg}$ over $K^{\alg}_{n,m}$, hence Morita equivalent to 
 parabolic category $\cO^{n,m}$.
\end{theorem}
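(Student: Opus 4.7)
The plan is to produce the quasi-equivalence by identifying a set of generating Lefschetz thimbles for $\cFS(\pi_{n,m})$ whose total endomorphism algebra matches $K_{n,m}^{\alg}$ on cohomology, and then passing to the derived level via a formality statement. The starting point is to fix a distinguished configuration of vanishing paths for $\pi_{n,m}: \cY_{n,m} \to \CC$. The Hilbert scheme perspective (used to define $\cY_{n,m}$ and $\pi_{n,m}$ in the first place) suggests the right thimbles: they should be the lifts to $\Hilb^n$ of $n$-tuples of matching spheres in the base $A_{m-1}$ that intersect each fiber of $\pi$ in prescribed ways. Combinatorially, unordered $n$-subsets of the $m$ critical values of $\pi$ match up with the idempotents of $K_{n,m}^{\alg}$ (equivalently, with Schubert cells of $Gr(n,m)$), and this is the indexing I will use. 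Generation of $\cFS(\pi_{n,m})$ by such thimbles follows from Seidel's general framework, once one checks that the vanishing cycles of $\pi_{n,m}$ arise (up to Hamiltonian isotopy) from the selected configuration.

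The second step is to compute $\bigoplus_{I,J} HF^*(L_I,L_J)$ inside $\cY_{n,m}$ and match it with $K_{n,m}^{\alg}$. Here the cylindrical model for Fukaya categories of Hilbert schemes of quasi-projective surfaces (developed in Sections \ref{s:cylindricalFS}--\ref{s:FScategories}) is doing the heavy lifting: it reduces the computation from $\Hilb^n(A_{m-1})$ to a computation on the surface $A_{m-1}$ itself, where the Lagrangians become (products or symmetric products of) matching cycles. The counts of pseudoholomorphic discs on the surface are extremely constrained by dimension, so at the cohomological level the answer should be read off directly from intersection combinatorics. I expect intersection points of $L_I \cap L_J$ to be naturally labeled by the `circle diagrams' $\ol{I}J$ of Brundan--Stroppel, with gradings matching the homological degree on $K_{n,m}^{\alg}$; the multiplication by a holomorphic triangle should correspond to the Khovanov TQFT-style surgery rule defining the extended arc algebra product. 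Stroppel's and Brundan--Stroppel's diagrammatic description then gives an isomorphism of graded algebras $H^*(\End_{\cFS}(\bigoplus_I L_I)) \cong K_{n,m}^{\alg}$.

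The third, and decisive, step is formality: upgrading the cohomological isomorphism to a quasi-isomorphism of $A_\infty$-algebras. This is the step I expect to be the main obstacle, and is presumably what Section \ref{s:formality} is devoted to. The cleanest route would be intrinsic formality: since $K_{n,m}^{\alg}$ is the Koszul dual (via Braden's theorem) of the endomorphism algebra of projectives in $\cO^{n,m}$, it carries a Koszul/internal grading in addition to the cohomological one, and a standard purity argument shows that bigraded algebras with the internal grading strictly positive in positive cohomological degree admit no non-trivial Massey products compatible with both gradings. The task is thus to realise this internal grading symplectically --- most naturally via the weight filtration coming from the (partial) compactification provided by the Hilbert scheme of a compactification of $A_{m-1}$, or via a Hamiltonian circle action --- and to verify that the $A_\infty$-structure on $\End_{\cFS}(\bigoplus_I L_I)$ respects it. Because the characteristic is zero, one then obtains formality over $\mathbb{K}$ directly.

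Once formality is in place, the standard derived Morita theory of $A_\infty$-categories yields a quasi-equivalence
\[
D^{\pi}\cFS(\pi_{n,m}) \;\simeq\; \rperf\!\End_{\cFS}\!\Big(\bigoplus_I L_I\Big) \;\simeq\; \rperf K_{n,m}^{\alg},
\]
and the Morita equivalence with parabolic category $\cO^{n,m}$ is then exactly the classical theorem of Stroppel and Brundan--Stroppel cited in the introduction. The remaining check (which is genuinely cosmetic) is that the chosen set of thimbles does indeed split-generate $\cFS(\pi_{n,m})$, rather than only generating a subcategory; this follows because the vanishing cycles of $\pi_{n,m}$ are already in the list (or else are obtained by iterated mapping cones among the $L_I$), combined with Seidel's generation theorem for Fukaya--Seidel categories of Lefschetz fibrations.
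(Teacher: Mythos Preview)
Your proposal has a genuine and fatal gap in the choice of Lagrangians. You propose to take the Lefschetz thimbles $L_I$ (indexed by cardinality-$n$ subsets of the critical values), compute their total Floer endomorphism algebra, identify it with $K_{n,m}^{\alg}$, and then prove formality. But the cohomological endomorphism algebra of the thimbles is \emph{not} $K_{n,m}^{\alg}$: under the dictionary of Section~\ref{s:Dictionary} the thimbles correspond to the \emph{standard} (Verma-type) modules $V(\lambda)$, and their Ext-algebra is a different object entirely. Worse, the $A_\infty$-endomorphism algebra of the thimbles is in general \emph{not formal}; the Appendix computes the case $(n,m)=(2,4)$ explicitly (using Klamt's minimal model) and exhibits a non-trivial $\mu^3$ that is not a Hochschild coboundary. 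So your third step would simply fail. The paper flags this in the Introduction: ``the endomorphism algebra of the Lefschetz thimbles is in general not formal \ldots\ it seems hard to prove Theorem~\ref{t:main} by directly comparing the $A_\infty$-algebras associated to distinguished bases of exceptional objects.''

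What the paper actually does is work with a different collection of generators, the ``projective'' Lagrangians $\ul{L}_\lambda$ for $\lambda\in\Lambda_{n,m}$ introduced in Section~\ref{s:ArcAlg}: these are tuples of matching spheres \emph{and} thimbles prescribed by the weight combinatorics (the cup/cap diagrams), and they correspond to the indecomposable projectives $P(\lambda)$ of $K_{n,m}^{\alg}$. The cohomology-level identification $K_{n,m}^{\symp}\cong K_{n,m}^{\alg}$ (Sections~\ref{s:ArcAlg}--\ref{s:AlgIsom}) is obtained by realising both as explicit quotients of the compact arc algebras $H_{m,2m}$, reducing to the Abouzaid--Smith isomorphism. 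Formality of $K_{n,m}^{\symp}$ is then proved not by an intrinsic/bigrading argument (the paper notes explicitly that it does \emph{not} hold for degree reasons) but by constructing an nc-vector field $b\in HH^1$ from counts of discs with prescribed incidence to divisors at infinity (Section~\ref{s:ncField}) and showing purity (Section~\ref{s:formality}). Finally one passes from the $\ul{L}_\lambda$ back to the thimbles by exhibiting each $\ul{T}^I$ as an iterated cone on the $\ul{L}_\lambda$ (and vice versa) via the exact triangles of Corollary~\ref{c:Triangle}; this is the content of the proof at the end of Section~\ref{s:formality}. Your outline, by contrast, never introduces the projective Lagrangians at all, and the formality mechanism you sketch (a Koszul/weight grading forcing intrinsic formality) is not the one that works here.
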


\begin{remark}\label{r:nm}
 For the extreme case $n=m$, we have $D^{\pi}\cFS(\pi_{n,m}) = D^b(\mathbb{K})$ (see Remark \ref{r:nm2}) and $K_{n,m}^{alg}:=\mathbb{K}$.
 When $n>m$, the map $\pi_{n,m}$ has no critical point and $\cFS(\pi_{n,m})$ is an empty category; while $K_{n,m}^{alg}$ is not well-defined.
 When $n=0$, our convention is $\cFS(\pi_{n,m}):=\mathbb{K}$ and $K_{n,m}^{alg}:=\mathbb{K}$.
 Under this convention, Theorem \ref{t:main} is true for all non-negative integers $n,m$.
\end{remark}

\begin{remark}
The hypothesis on the ground field arises in 
two places.
The first one is
a formality theorem for the $A_{\infty}$-structure on the symplectic side, which we prove, following \cite{AbouzaidSmith16}, via methods of non-commutative 
algebra which rely on inverting all primes.   
The second one comes from the construction of an auxiliary $A_{\infty}$-category $\cFS^{cyl}$, which is a cylindrical model of the Fukaya-Seidel category, for which the definition of the $A_{\infty}$-operations involves dividing out by symmetry groups arising from re-ordering marked points on domains.
\emph{We work over a characteristic zero field unless stated otherwise.} All Lagrangians in the paper are assumed to be orientable and to admit (and be equipped with) spin structures. \end{remark}


\begin{remark} 
It is reasonable to expect that symplectic models for the categories $\cO^{\frak{p}}$, for parabolics $\frak{p}$ corresponding to partitions with more parts (or indeed in other simple $\frak{g}$), can also be obtained from the symplectic geometry of nilpotent slices. In general, however, such slices are not known to be re-interpretable in terms of Hilbert schemes of surfaces, so different techniques would be needed to analyse them.  Note that the map $\pi_{n,m}: \cY_{n,m} \to \CC$  is itself obtained from the embedding of $\cY_{n,m}$ into a Hilbert scheme; it would be interesting to characterise $\pi_{n,m}$ intrinsically Lie-theoretically.
\end{remark}

There is a well-known `tautological correspondence' between a holomorphic disc in the symmetric product of a complex manifold $X$ and a map of a branched cover of the disc to $X$ itself, cf. \cite{DonaldsonSmith, Costello, Lipshitz-cylindrical, Auroux-bordered} amongst others. Following \cite{Lipshitz-cylindrical} such correspondences are referred to as `cylindrical models' for computing Floer cohomology. Theorem \ref{t:main} uses the embedding $\cY_{n,m} \hookrightarrow \Hilb^n(A_{m-1})$  \cite{Manolescu06}, and the development of a cylindrical model for computing Fukaya-Seidel categories of Hilbert schemes.  Whilst the tautological correspondence has been widely exploited before, the proof  nonetheless requires numerous technical innovations: we require a model for the whole Fukaya category,  our Lagrangian submanifolds are not compact and not cylindrical at infinity, and the Hilbert scheme is related to the symmetric product by a non-trivial crepant resolution. A brief summary of the relevant issues is given in Section \ref{Sec:FScyl-overview} (our treatment requires only classical transversality theory, but an appeal to virtual perturbation theory would not bypass many of the difficulties).

\subsection{Consequences}

As commented previously, $K^{alg}_{n,m}$ is Morita equivalent to a category of perverse sheaves on the Grassmannian $Gr(m,n)$.  From the (Schubert-compatible) isomorphism $Gr(n,m) = Gr(m-n,m)$, one obtains a geometrically non-trivial:

\begin{corollary}\label{c:projDual}
$D^{\pi}\cFS(\pi_{n,m})$ is quasi-equivalent to $D^{\pi}\cFS(\pi_{m-n,m})$.
\end{corollary}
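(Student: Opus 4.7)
The plan is to derive this purely formally from Theorem \ref{t:main} by exhibiting the required symmetry on the algebraic side of the dictionary, where it is manifest. Applying Theorem \ref{t:main} to both $(n,m)$ and $(m-n,m)$ yields quasi-equivalences
\[
D^{\pi}\cFS(\pi_{n,m}) \simeq \rperf K^{\alg}_{n,m}, \qquad D^{\pi}\cFS(\pi_{m-n,m}) \simeq \rperf K^{\alg}_{m-n,m},
\]
so it suffices to produce a quasi-equivalence $\rperf K^{\alg}_{n,m} \simeq \rperf K^{\alg}_{m-n,m}$, which in turn follows from any Morita equivalence between these finite-dimensional algebras.

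There are two complementary routes for the Morita equivalence, either of which I would use. The first is geometric: as recalled in the introduction, $K^{\alg}_{n,m}$ is Morita equivalent to the category of perverse sheaves on $Gr(n,m)$ constructible with respect to the Schubert stratification \cite{Braden}, and the classical duality $V \mapsto V^{\perp}$ (with respect to a fixed non-degenerate pairing on $\mathbb{C}^m$) gives an isomorphism of stratified varieties $Gr(n,m) \cong Gr(m-n,m)$ that identifies Schubert cells with Schubert cells. This induces an equivalence of the two Schubert-perverse sheaf categories, hence of $\cO^{n,m}$ with $\cO^{m-n,m}$, and hence a Morita equivalence $K^{\alg}_{n,m} \sim K^{\alg}_{m-n,m}$. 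The second route is diagrammatic and shortcuts the Braden equivalence: the presentation of the extended arc algebra in \cite{Stroppel-parabolic, BS11} via crossingless matchings involving $n$ symbols $\vee$ and $m-n$ symbols $\wedge$ is manifestly invariant under swapping $\vee \leftrightarrow \wedge$, which realises an explicit isomorphism of algebras $K^{\alg}_{n,m} \cong K^{\alg}_{m-n,m}$. Either way, passing to perfect dg-modules gives the desired quasi-equivalence.

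I do not anticipate a genuine obstacle; the only bookkeeping point is to check that the duality of Grassmannians, or equivalently the swap of the two blocks in the diagrammatic model, carries the Schubert stratification (and the corresponding idempotent decomposition of $K^{\alg}_{n,m}$) to the stratification used on the other side rather than to some opposite version. This is a convention-matching exercise, not a mathematical obstacle. The substantive content of the corollary lies entirely in Theorem \ref{t:main}: it is the presence of the algebraic bridge that makes the symmetry visible, since at the symplectic end there is no evident relation between the Lefschetz fibrations $\pi_{n,m}$ on an open subset of $\Hilb^n(A_{m-1})$ and $\pi_{m-n,m}$ on an open subset of $\Hilb^{m-n}(A_{m-1})$ — indeed, their total spaces $\cY_{n,m}$ and $\cY_{m-n,m}$ have different dimensions — which is exactly what is meant by calling the corollary ``geometrically non-trivial''.
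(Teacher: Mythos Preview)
Your approach is the same as the paper's: both apply Theorem \ref{t:main} on each side and then invoke the $n \leftrightarrow m-n$ symmetry on the algebraic side, either via the Schubert-compatible isomorphism $Gr(n,m)\cong Gr(m-n,m)$ or via the diagrammatics of \cite{BS11}. Your Grassmannian route is fine as stated.

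One correction to your diagrammatic route: the bare swap $\vee\leftrightarrow\wedge$ does \emph{not} give an algebra isomorphism, because the cup/cap construction is not symmetric under this swap. For instance $\lambda=\vee\wedge\wedge\in\Lambda_{1,3}$ has cup diagram with one arc and one ray, while the swapped weight $\wedge\vee\vee\in\Lambda_{2,3}$ has cup diagram with two rays and no arc, so the swap does not carry the diagram basis to the diagram basis. The operation the paper uses (following \cite[(3.10)]{BS11}) is the $\pi$-rotation $PD$, i.e.\ swap $\vee\leftrightarrow\wedge$ \emph{together with} the reversal $a\mapsto m+1-a$; this does send oriented circle diagrams to oriented circle diagrams, but because it exchanges cups and caps it yields an isomorphism $K^{\alg}_{n,m}\cong (K^{\alg}_{m-n,m})^{op}$ rather than onto $K^{\alg}_{m-n,m}$ itself. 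That is of course enough, since $\rperf A\simeq\rperf A^{op}$ via $M\mapsto \Hom_A(M,A)$ for finite-dimensional $A$, or alternatively one composes with the reflection isomorphism $K^{\alg}_{m-n,m}\cong(K^{\alg}_{m-n,m})^{op}$ already noted in the paper.
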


Note that the categories appearing in Corollary \ref{c:projDual} are associated to Hilbert schemes of $n$ respectively $m-n$ points, so take place in 
symplectic manifolds of different dimension.  For another non-trivial symmetry, the identity $K_{n,m}^{alg} \cong  (K_{n,m}^{alg})^{op}$ of the arc algebra with its 
opposite -- which has an easy diagrammatic proof -- is non-trivial on the geometric side, because of the non-trivial boundary conditions and `wrapping' involved in 
setting up a Fukaya-Seidel category (cf. Section \ref{Sec:consistent}).  

The Fukaya-Seidel category of any Lefschetz fibration has a full exceptional collection.  Additivity of Hochschild homology under semi-orthogonal decomposition \cite{Kuznetsov} immediately yields:

\begin{corollary} \label{c:HHarc}
The Hochschild homology of the extended arc algebra $HH_*(K_{n,m}^{alg})$ vanishes in degree $*\neq 0$, and in degree zero is free of rank $m \choose n$.
\end{corollary}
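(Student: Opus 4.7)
The plan is to transport the computation of $HH_*(K_{n,m}^{\alg})$ to the symplectic side via Theorem~\ref{t:main}, and then exploit the exceptional collection of Lefschetz thimbles together with Kuznetsov's additivity. The key input that needs verification is a count of Lefschetz critical points of $\pi_{n,m}$, which should equal $\binom{m}{n}$.

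First, because Hochschild homology is Morita invariant and is unchanged by passage to twisted complexes and their split-closure, Theorem~\ref{t:main} identifies
$$HH_*(K_{n,m}^{\alg}) \;\cong\; HH_*(\rperf K_{n,m}^{\alg}) \;\cong\; HH_*\bigl(D^{\pi}\cFS(\pi_{n,m})\bigr).$$
Next I would invoke Seidel's construction: choosing a system of vanishing paths from a regular base point presents $D^{\pi}\cFS(\pi_{n,m})$ with a full exceptional collection whose objects are the Lefschetz thimbles associated to the $N$ Lefschetz critical points of $\pi_{n,m}$ at finite distance. This is equivalent to a semi-orthogonal decomposition of $D^{\pi}\cFS(\pi_{n,m})$ into $N$ pieces, each quasi-equivalent to $\rperf\KK$. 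Applying Kuznetsov's additivity of Hochschild homology under semi-orthogonal decomposition then yields
$$HH_*\bigl(D^{\pi}\cFS(\pi_{n,m})\bigr) \;\cong\; \bigoplus_{i=1}^{N} HH_*(\KK),$$
which is $\KK^{N}$ concentrated in degree zero.

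It remains to verify $N = \binom{m}{n}$. Via the tautological correspondence relating $\cY_{n,m}\subset\Hilb^n(A_{m-1})$ to unordered configurations of $n$ points in $A_{m-1}$, and because the divisor $D_r$ of length-$n$ subschemes whose projection under $\pi$ has length strictly less than $n$ has been excised from $\cY_{n,m}$, the critical points of $\pi_{n,m}$ at finite distance are in bijection with unordered $n$-element subsets of the $m$ Lefschetz critical points of $\pi: A_{m-1}\to\CC$. This gives the desired count $N=\binom{m}{n}$, consistent with the degenerate cases in Remark~\ref{r:nm} ($N=1$ when $n=m$ or $n=0$, and $N=0$ when $n>m$).

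The one subtle point — and the place where an independent argument is actually needed — is ensuring that the critical points of $\pi_{n,m}$ \emph{at infinity}, which may occur when $n>1$, do not contribute additional thimbles to the exceptional collection; but this is already encoded in the definition of $\cFS(\pi_{n,m})$ adopted throughout the paper, which counts only Lefschetz critical points at finite distance among the thimble-generators.
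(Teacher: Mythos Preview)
Your proposal is correct and follows essentially the same approach as the paper: invoke Theorem~\ref{t:main}, use the full exceptional collection of Lefschetz thimbles, and apply Kuznetsov's additivity of Hochschild homology. The count $N=\binom{m}{n}$ you verify by hand is established in the paper as Corollary~\ref{c:AllCrit} (the critical points of $\pi_{\cY_E}$ are indexed by the set $\cI$ of cardinality-$n$ subsets of $\{1,\dots,m\}$), and the definition of $\cFS(\pi_{n,m})$ in Section~\ref{ss:FSEmbed} takes only these finite-distance thimbles as objects, exactly as you note.
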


The Corollary was previously established by Beliakova \emph{et al} \cite{BPW19} by an ingenious and involved  algebraic argument involving a `quantum deformation' of the Hochschild complex. 

Underlying and extending Theorem \ref{t:main}, and the previous Corollaries, there is a rich dictionary between objects in the representation theory and in the symplectic geometry, 
which is useful in both directions.  The blocks of parabolic category $\cO$ are renowned instances of highest-weight categories, i.e. ones with a full exceptional collection  
where the exceptional objects generate rather than just split-generate.
A key point in the proof of Theorem \ref{t:main} 
is that the endomorphism algebra of the collection of Lagrangian submanifolds associated to projective modules is formal. 
Crucially, the `projective' Lagrangians are predicted by the diagrammatic combinatorics on the algebra side; away from Milnor fibres of surface singularities, 
they have no obvious counterpart for Hilbert schemes of general Lefschetz fibrations on quasi-projective surfaces.  

\begin{remark} In contrast to a number of recent formality results which hold (for instance) for degree reasons,  the formality of the symplectic extended arc algebra  holds for non-trivial geometric reasons, and is established, following \cite{AbouzaidSmith16}, by building a non-commutative vector field $b \in HH^1(\cFS(\pi_{n,m}))$  counting certain holomorphic curves with prescribed behaviour at infinity. 
The endomorphism algebra of the Lefschetz thimbles is in general \emph{not} formal (cf. Appendix \ref{Appendix}), and it seems hard to prove Theorem \ref{t:main} by directly comparing the $A_{\infty}$-algebras associated to distinguished bases of exceptional objects.  \end{remark}

In the other direction, the geometry gives rise rather directly to the following:

\begin{theorem}[see Theorem \ref{t:semiOrtho}]\label{t:semiorthogonal}
There is a semi-orthogonal decomposition $\rperf K_{n,m}^{alg} = \langle A_n,\ldots, A_0\rangle$ where $A_j$ is quasi-equivalent to 
$\rperf (K_{j,n}^{\alg} \otimes K_{n-j,m-n}^{\alg})$ for all $j=0,\dots,n$.
\end{theorem}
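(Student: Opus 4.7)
The plan is to realize the decomposition symplectically via Theorem \ref{t:main}, by partitioning a distinguished basis of Lefschetz thimbles for $\pi_{n,m}$ according to a cluster decomposition of the critical values of $\pi$.

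I would first fix a configuration of vanishing paths for the base Lefschetz fibration $\pi: A_{m-1} \to \CC$ in which the first $n$ critical values form a cluster on the right and the remaining $m-n$ form a disjoint cluster on the left, separated by a distance $R\gg 0$. Let $T_1,\ldots,T_m$ be the associated thimbles in $A_{m-1}$. The Lefschetz thimbles of $\pi_{n,m}$ are then indexed by $n$-element subsets $I\subset\{1,\ldots,m\}$, with $T_I$ realized via the symmetric product of $\{T_i : i\in I\}$ lifted through the embedding $\cY_{n,m}\hookrightarrow \Hilb^n(A_{m-1})$. Setting $j(I) := |I \cap \{1,\ldots,n\}|$, I define $A_j\subset D^{\pi}\cFS(\pi_{n,m})$ to be the full subcategory split-generated by $\{T_I : j(I)=j\}$.

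For the Künneth-type identification $A_j \simeq \rperf(K_{j,n}^{\alg}\otimes K_{n-j,m-n}^{\alg})$, I would work in the cylindrical model developed in Sections \ref{s:cylindricalFS}--\ref{s:FScategories}. In the limit $R\to\infty$, the open locus in $\cY_{n,m}$ where $j$ points lie near the right cluster and $n-j$ near the left becomes biholomorphic (asymptotically) to an open subset of $\cY_{j,n}\times \cY_{n-j,m-n}$, with $\pi_{n,m}$ restricting to the sum of the two $\CC$-valued projections. An integrated maximum principle applied to the essentially cylindrical base region between the clusters forces any holomorphic curve contributing to a Floer operation among thimbles of fixed type $j$ to have its $\pi$-projection confined to a neighborhood of the two clusters; its $n$-fold branched cover then splits as $j$ sheets near the right plus $n-j$ near the left, and the $A_\infty$-operations factor as a tensor product of those on $\cFS(\pi_{j,n})$ and $\cFS(\pi_{n-j,m-n})$. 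Applying Theorem \ref{t:main} to each tensor factor yields the claimed quasi-equivalence.

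For the semi-orthogonality, I would order the blocks so that $A_j$ precedes $A_{j'}$ when $j>j'$, and verify that $\Hom$-groups between $T_I$ and $T_{I'}$ vanish in the appropriate direction when $j(I)\ne j(I')$. Any such morphism would be computed by Floer strips transporting at least $|j(I)-j(I')|$ strands across the cluster gap; in the $R\to\infty$ limit these become incompatible with the boundary conditions at the two ends, by the same base maximum-principle argument together with an index count that rules out the remaining possibilities in one of the two directions. Combined with the fact that $\{T_I\}_I$ is a full exceptional collection split-generating $\cFS(\pi_{n,m})$, this assembles the required semi-orthogonal decomposition.

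The main obstacle will be making the Künneth step rigorous: one must control all holomorphic curves in the $R\to\infty$ degeneration and ensure they decompose cleanly into product configurations, in particular ruling out bubbling at the infinity of $\cY_{n,m}$ and stretched curves that migrate between the two clusters. The cylindrical model is essential here, since it reduces the analysis from $\Hilb^n(A_{m-1})$ to that of branched covers of discs mapping to $A_{m-1}$, where the cluster separation becomes a quantitative energy estimate controlled by the standard Lefschetz structure on $A_{m-1}$.
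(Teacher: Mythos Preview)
Your approach is viable but differs from the paper's in two substantive ways.

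First, the paper does \emph{not} build $A_j$ from thimbles. It uses the ``projective'' Lagrangian tuples $\ul{L}_\lambda \sqcup \ul{K}_\mu$ with $\lambda\in\Lambda_{j,n}$ and $\mu\in\Lambda_{n-j,m-n}$ (the weight Lagrangians from Section~\ref{s:ArcAlg}, placed in the two halves $W_1=\{\mathrm{re}(z)<n+\tfrac12\}$ and $W_2$). The K\"unneth identification is then proved not by a degeneration argument but by \emph{formality}: the cohomological product splits tautologically, and the nc-vector field machinery of Sections~\ref{s:ncField}--\ref{s:formality} is rerun on this collection to show the $A_\infty$-structure is formal (Lemma~\ref{l:productDG}). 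This is where the paper cashes in its main technical investment; your direct curve-splitting argument would recover the same conclusion but requires establishing an $A_\infty$-level K\"unneth theorem from scratch, which is exactly the obstacle you flag. Note also that the thimble endomorphism algebra is \emph{not} formal (Appendix~\ref{Appendix}), so if you go via thimbles you genuinely must control the full $A_\infty$-structure, not just cohomology, before invoking Theorem~\ref{t:main} on each factor and tensoring Morita equivalences.

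Second, two of your steps are harder than necessary. The $R\to\infty$ limit is not needed: the open mapping theorem applied to $\pi_E\circ v$ in the cylindrical model already forces each component of the branched cover $\Sigma$ to lie over one half-plane at finite separation, once the vanishing paths from the two clusters are kept disjoint (this is how the paper's $\sqcup\ul{K}$ functor in Section~\ref{sss:CanEmbed} works). And semi-orthogonality is immediate: for thimbles, $HF(\ul{T}^{I_0},\ul{T}^{I_1})\neq 0$ forces $I_0\le I_1$ in Bruhat order (Lemma~\ref{l:CohVanishing}), which in turn forces $j(I_0)\ge j(I_1)$ since any order-preserving bijection $I_0\to I_1$ sends $I_0\cap\{n+1,\dots,m\}$ into $I_1\cap\{n+1,\dots,m\}$. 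No index count or stretching is required; in the paper's version (Lemma~\ref{l:semiOrtho}) it is a one-line observation about the wrapping direction.
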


The equivalence of Theorem \ref{t:semiorthogonal} does not send tensor products of projectives to projectives, which may make it less transparent from the viewpoint of the extended arc algebras themselves.

\subsection{Invariants of braids}

The braid group $Br_m$ acts by symplectomorphisms on $\cY_{n,m}$; a braid $\beta$ defines a symplectomorphism $\phi_{\beta}^{(n)} $ and a corresponding bimodule $P_{\beta}^{(n)}$ over the Fukaya-Seidel category $D^{\pi}\cFS(\pi_{n,m})$. The Hochschild homology $HH_*(P_{\beta}^{(n)})$ is an algebraic analogue of the natural symplectic invariant $HF^*(\phi_{\beta}^{(n)})$ given by taking fixed-point Floer cohomology\footnote{There are numerous results relating fixed-point Floer cohomology and Hochshild homology, see e.g. \cite{Seidel2.5}, which for instance show that the two invariants co-incide when $n=1$. When $n>1$, since $\pi_{n,m}$ has critical points at infinity, the established results do not apply in our case; for simplicity we take our basic invariant of a braid to be the bimodule.}. We define the \emph{symplectic annular Khovanov homology} of $\beta \in Br_m$ by
\[
AKh^{\symp}(\beta) := \oplus_{j=0}^m \, HH_*(P_{\beta}^{(j)}).
\]
This is by definition an invariant of the braid. Recall from \cite{SeidelSmith} that there is a distinguished closed exact Lagrangian submanifold $L_{\wp} \subset \cY_{n,2n}$ with the property that
\[
Kh^{\symp}(\kappa(\beta)) := HF^*(L_{\wp}, \phi_{\beta}^{(n)}(L_{\wp}))
\]
is an invariant of the link closure $\kappa(\beta)$ of $\beta \in Br_n$, known as `symplectic Khovanov cohomology'.  This was introduced in \cite{SeidelSmith} as a singly graded sibling to combinatorial Khovanov homology; working over a characteristic zero field $\mathbb{K}$, the isomorphism $Kh(\kappa(\beta)) \cong Kh^{\symp}(\kappa(\beta))$ of $\ZZ$-graded\footnote{In this paper we will not discuss the grading;  the methods of \cite{AbouzaidSmith16} give rise to a second grading by elements of $\overline{\mathbb{K}}$, which is conjecturally integral, Markov-invariant and lifts the $\ZZ$-graded equivalence to a bigraded equivalence.} 
 vector spaces was established in \cite{AbouzaidSmith16, AbouzaidSmith19}.  

\begin{theorem}\label{t:sseq}
 There is a spectral sequence $AKh^{\symp}(\beta) \Rightarrow Kh^{\symp}(\kappa(\beta))$ from the symplectic annular Khovanov homology of $\beta$ to the symplectic Khovanov cohomology of the link closure $\kappa(\beta)$ of $\beta$.
\end{theorem}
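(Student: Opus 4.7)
The plan is to produce the spectral sequence as the one associated to a filtration of the Floer complex $CF^*(L_{\wp}, \phi_{\beta}^{(n)}(L_{\wp}))$ induced by the semi-orthogonal decomposition of Theorem \ref{t:semiorthogonal}. Specialising to $m=2n$ and translating via Theorem \ref{t:main}, one obtains
\[
D^{\pi}\cFS(\pi_{n,2n}) \;\simeq\; \langle A_n, \ldots, A_0 \rangle, \qquad A_j \;\simeq\; \rperf\bigl(K_{j,n}^{\alg} \otimes K_{n-j,n}^{\alg}\bigr).
\]
Geometrically I would expect this decomposition to originate from splitting the $2n$ critical values of $\pi_{n,2n}$ into a ``top'' and a ``bottom'' half of size $n$, with the piece $A_j$ parametrising configurations whose $n$ Hilbert-scheme points project as $j$ to the top and $n-j$ to the bottom.

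The first step is to verify that, after embedding $Br_n \hookrightarrow Br_{2n}$ so that $\beta \in Br_n$ braids only the top half of the critical values, the symplectomorphism $\phi_{\beta}^{(n)}$ preserves this SOD. With respect to the product presentation of $A_j$, $\phi_{\beta}^{(n)}$ should then restrict to $\phi_{\beta}^{(j)} \boxtimes \mathrm{id}$, and the induced bimodule over $A_j$ should be $P_{\beta}^{(j)} \boxtimes \Delta_{K_{n-j,n}^{\alg}}$, where $\Delta$ denotes the diagonal bimodule.

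The second, and more delicate, step is to read off the SOD-projections $\pi_j(L_{\wp}) \in A_j$. The Lagrangian $L_{\wp}$ is assembled from $n$ Lagrangian matchings each joining a top critical value to a bottom one; I would use the Lagrangian-to-module dictionary of Section \ref{s:Dictionary} to argue that $\pi_j(L_{\wp})$ represents (up to shifts and signs) the diagonal $K_{j,n}^{\alg}$-bimodule paired trivially with $K_{n-j,n}^{\alg}$, so that
\[
\Hom_{A_j}^{*}\bigl(\pi_j L_{\wp},\, \phi_{\beta}^{(j)}(\pi_j L_{\wp})\bigr) \;\cong\; HH_*\bigl(P_{\beta}^{(j)}\bigr).
\]

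Granting this identification, the SOD induces a filtration on $L_{\wp}$ and hence on $CF^*(L_{\wp}, \phi_{\beta}^{(n)}(L_{\wp}))$, giving a spectral sequence whose $E_1$-page is $\bigoplus_j HH_*(P_{\beta}^{(j)})$ and which abuts to $HF^*(L_{\wp}, \phi_{\beta}^{(n)}(L_{\wp})) = Kh^{\symp}(\kappa(\beta))$. Matching this $E_1$-page with $AKh^{\symp}(\beta) = \bigoplus_{j=0}^{2n} HH_*(P_{\beta}^{(j)})$ should follow from the duality of Corollary \ref{c:projDual} pairing the summand with index $j$ with that of index $2n-j$ (the projections $\pi_j(L_{\wp})$ naturally exhaust only $0 \le j \le n$). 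The main obstacle, by a comfortable margin, is the diagonal-bimodule identification in the third step: the other steps are essentially formal consequences of the SOD and the triangulated structure, but pinning down $\pi_j(L_{\wp})$ requires the full dictionary between Lagrangian matchings, vanishing cycles, and bimodules developed in the paper, together with a careful analysis of how matchings interact with the mutation functors realising the SOD.
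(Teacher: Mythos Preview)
Your overall architecture is right and matches the paper: use the semi-orthogonal decomposition of Theorem~\ref{t:semiorthogonal} to filter, and identify the horseshoe with a diagonal bimodule so that the graded pieces become Hochschild homologies. But there is a genuine gap in the middle, and a confusion at the end.

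\textbf{The gap.} Filtering $L_{\wp}$ by the single SOD $\langle A_n,\dots,A_0\rangle$ gives $E_1$-terms $\Hom_{\cC}(\pi_j L_{\wp},\,\phi_\beta L_{\wp})$, \emph{not} $\Hom_{A_j}(\pi_j L_{\wp},\,\pi_j\phi_\beta L_{\wp})$. The SOD orthogonality only says $\Hom(A_{j_0},A_{j_1})=0$ for $j_0<j_1$; the pieces $\pi_i(\phi_\beta L_{\wp})$ with $i<j$ still contribute. The paper resolves this by constructing a second, explicitly geometric, Koszul-dual SOD $\langle A_0^{!},\dots,A_n^{!}\rangle$ (Theorem~\ref{t:semiOrtho2}), generated by ``upper'' Lagrangian tuples $\ul{L}^{!}_{\ol\lambda}\sqcup\ul{K}^{!}_{\ol\mu}$, and proves the two-sided vanishing $\Hom(A_i^{!},A_j)=0$ for $i\neq j$ (Lemma~\ref{l:Koszul dual}). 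The Beilinson-type spectral sequence (Proposition~\ref{p:spectral}) then uses \emph{both} decompositions: one filters $L_{\wp}$ via $A^{!}$, projects $\phi_\beta L_{\wp}$ via $A$, and a geometrically constructed quasi-inverse $\psi_j$ matches them. Correspondingly the paper never computes the projection $\pi_j(L_{\wp})$; it computes the restricted Yoneda modules $(\iota_j)^*(\ul{L}_{\hs}^l)$ and $(\iota_j^{!})^*(\phi_\beta(\ul{L}_{\hs})^r)$ separately (Proposition~\ref{p:diagonalbimod}, Lemma~\ref{l:diagonalbimodBETA2}) and shows these are the diagonal and $P_\beta^{(j)}$ respectively. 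Your appeal to the dictionary of Section~\ref{s:Dictionary} does not substitute for this: that section is illustrative, relies on the unproven Claim~\ref{p:Serre}, and does not address the horseshoe.

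\textbf{The confusion at the end.} By Definition~\ref{defn:AnnSympKh}, $AKh^{\symp}(\beta)=\bigoplus_{j=0}^{n}HH_*(P_\beta^{(j)})$, not a sum to $2n$; the $P_\beta^{(j)}$ are bimodules over $\cFS^{cyl,j}(\pi_{E_{1/2}})$ with $E_{1/2}=A_{n-1}$, and vanish for $j>n$. The $n{+}1$ pieces of the SOD match this on the nose, so no appeal to Corollary~\ref{c:projDual} is needed.
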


Annular Khovanov homology was introduced by Asaeda, Przytycki and Sikora in \cite{APS04}, via a diagrammatic calculus for links in a solid torus.  Roberts \cite{Roberts13} showed there is a spectral sequence $AKh(\beta) \Rightarrow Kh(\kappa(\beta))$, but the fact that $AKh(\beta)$ splits into summands which can be identified with Hochschild homology groups is non-trivial; this was conjectured by Auroux, Grigsby and Wehrli \cite{AGW} and proven very recently by Beliakova, Putyra and Wehrli \cite{BPW19}.  By contrast, if one had \emph{defined} the annular Khovanov invariant as such a direct sum of Hochschild homologies, the existence of the spectral sequence would seem rather mysterious: the bimodules $P_{\beta}^{(j)}$ over the categories $\cFS(\pi_{j,m})$ do not in themselves have enough information to determine the differentials in the spectral sequence (which do not preserve the decomposition by $j$).   Theorem \ref{t:sseq} gives a geometric explanation for the existence of a spectral sequence from a direct sum of Hochschild homologies to (symplectic) Khovanov cohomology; the crucial input is the semi-orthogonal decomposition from Theorem \ref{t:semiorthogonal}.  We remark that there is an analogous spectral sequence from knot Floer homology, viewed as Hochschild homology of a suitable bimodule via \cite{LOT-bimodules}, to Heegaard Floer homology; it would be interesting to see if that can be derived following the methods of this paper. 

\subsection{Outline of the paper} 
Let $\pi_E: E \to \CC$ be a Lefschetz fibration on a quasi-projective surface. The map $\pi_E$ defines a map $\pi_E^{[n]}: \Hilb^n(E) \to \CC$,  by taking the map induced from the sum of 
copies of $\pi_E$ on the product $E^n$.  The map $\pi_E^{[n]}$ is a Lefschetz fibration when restricted to the affine open subvariety of the Hilbert scheme 
$\Hilb^n(E)$ comprising subschemes whose projection to $\CC$ has length $n$. 
Section \ref{s:cylindricalFS} develops a cylindrical model $\cFS^{cyl}$ for the Fukaya(-Seidel) category of this associated Lefschetz fibration; 
section \ref{s:properties} establishes some basic properties of $\cFS^{cyl}$ analogous to the usual Fukaya(-Seidel) category; 
section \ref{s:FScategories} relates 
the cylindrical model to the usual model $\cFS$, yielding in general an embedding $\cFS \hookrightarrow \cFS^{cyl}$;
and section \ref{s:TypeA} applies this framework to type $A$ Milnor fibres.
Sections \ref{s:ArcAlg}-\ref{s:AlgIsom} construct a symplectic version of the extended arc algebra, which is a priori an $A_{\infty}$-algebra, 
and prove that its cohomology agrees with its combinatorial sibling; sections \ref{s:ncField}-\ref{s:formality} then prove the $A_{\infty}$-structure is actually formal 
in characteristic zero. Section \ref{s:Dictionary} illustrates the dictionary between natural objects in the representation theory of the extended arc algebras and their 
geometric counterparts. Sections \ref{s:Annular} and \ref{s:Ann2Kh} introduce symplectic annular Khovanov homology, and relate this to ordinary Khovanov homology via a 
particular semiorthogonal decomposition of the Fukaya-Seidel category associated to the $(n,2n)$-nilpotent slice (associated to a nilpotent with Jordan blocks of size $(n,n)$). 
The Appendix shows that the $A_{\infty}$-endomorphism algebra of the thimbles in the Fukaya-Seidel category is in general not formal; this underscores the important contribution of the relationship to representation theory, which picks out a different set of generators for the category (as studied in sections \ref{s:TypeA}-\ref{s:formality}) which do have formal endomorphism algebra.


\vspace{0.2cm}
\noindent \textbf{Acknowledgements.} 
The authors are grateful to Mohammed Abouzaid, Robert Lipshitz, Jacob Rasmussen, Catharina Stroppel and Filip Zivanovic for many helpful conversations.
We thank the anonymous referee for their numerous helpful comments and corrections.

\section{The cylindrical Fukaya-Seidel category}\label{s:cylindricalFS}


\subsection{Overview\label{Sec:FScyl-overview}}

The aim of this section is to define, given a Lefschetz fibration $\pi_E$ on a complex surface $E$ equipped with an exact symplectic structure (that satisfies some mild additional hypothesis, see Section \ref{sss:targetspace}) and a positive integer $n$, 
an $A_{\infty}$ category $\cFS^{cyl,n}(\pi_E)$, which we call the {\it n-fold cylindrical Fukaya-Seidel category}. 
Objects in this category are, roughly speaking, unordered $n$-tuples of pairwise disjoint exact Lagrangians $\ul{L}=\{L_1,\dots,L_n\}$, with $L_i \subset E$.
The product of the $L_i$ defines a Lagrangian (which descends to one) in $\Sym^n(E)$,  which can be lifted to a Lagrangian $\Sym(\ul{L})$ in $\Hilb^n(E)$. 

Let $\pi_E^{[n]}:\Hilb^n(E) \to \CC$ be the map induced from the sum of $n$ copies of $\pi_E$.
There is a divisor $D_r$ of $\Hilb^n(E)$ such that $\cY_E:=\Hilb^n(E) \setminus D_r$ admits an exact symplectic structure, and such that 
\begin{align}
\pi_{\cY_E}:=\pi_E^{[n]}|_{\cY_E}:\cY_E \to \CC
\end{align}
is a Lefschetz fibration in the weak sense that all the critical points of $\pi_{\cY_E}$ are of Lefschetz type\footnote{There may be critical points at infinity, so this is not a symplectic Lefschetz fibration in the usual sense.}.  Every Lefschetz thimble of $\pi_{\cY_E}$ is given by $\Sym(\ul{L})$ for some $\ul{L}$.
If $E$ is the $A_{m-1}$-Milnor fiber and $\pi_E$ is the conic fibration with $m$ critical points, then $\cY_E$ and $\pi_{\cY_E}$ coincide with $\cY_{n,m}$ and $\pi_{n,m}$, respectively.

The $A_{\infty}$ structure of $\cFS^{cyl,n}(\pi_E)$ is carefully defined so that it has the property that the category of Lefschetz thimbles of $\pi_{\cY_E}$
embeds into $\cFS^{cyl,n}(\pi_E)$ as a full subcategory, where on the object level, it is given by $\Sym(\ul{L}) \mapsto \ul{L}$.
This reduces calculations in the Fukaya-Seidel category of $\pi_{\cY_E}$ to 
more accessible calculations in $\cFS^{cyl,n}(\pi_E)$, which involve holomorphic curve counts in $E$ itself.

Ideally, we would like to have a choice of perturbation scheme such that there is a bijective correspondence between (perturbed)-holomorphic polygons $u:S \to \cY_E$ contributing to the $A_{\infty}$ structure
of $\cFS(\pi_{\cY_E})$
and pairs $(\pi_\Sigma,v)$ such that $\pi_\Sigma: \Sigma \to S$ is an 
$n$-fold branched covering and $v:\Sigma \to E$ is a solution to a (perturbed)-holomorphic equation.
In this case, if we define the $A_{\infty}$ structure of $\cFS^{cyl,n}(\pi_E)$ as a signed rigid count of the pairs $(\pi_\Sigma,v)$, then it would be tautological that the
$A_{\infty}$ categories $\cFS(\pi_{\cY_E})$ and $\cFS^{cyl,n}(\pi_E)$ would be equivalent. Problems arise when one implements this idea in practice:

\begin{itemize}
 \item The domain-dependent almost complex structure on $E$ has to be complex (and hence induce an almost complex structure on $\Hilb^n(E)$) to have any hope of a bijective correspondence between $u$ and $(\pi_\Sigma,v)$, which puts some restrictions on the perturbation scheme;
 \item Hamiltonian perturbations in $\Hilb^n(E)$ that are simultaneously induced from Hamiltonian perturbations in $E$ and which preserve $D_r$ are not general enough to achieve transversality; 
 strictly speaking, there is no perturbation scheme that can allow us to define the $A_{\infty}$ structure of $\cFS^{cyl,n}(\pi_E)$ by merely counting $(\pi_\Sigma,v)$;
 \item We need to include some $\ul{L}$ for which $\Sym(\ul{L})$ is not cylindrical in  $\cY_E$ (e.g. to obtain formal $A_{\infty}$ Floer cochain algebras in the setting of Theorem \ref{t:main}), so 
 compactness of the moduli involved has to be carefully addressed.
\end{itemize}

Our main contribution to the construction of $\cFS^{cyl,n}(\pi_E)$ is to overcome the above difficulties, roughly as follows:

\begin{itemize}
 \item Instead of using moduli of polygons $\cR^{d+1}$, we use moduli of polygons with ordered interior marked points $\cR^{d+1,h}$ to define the $A_{\infty}$ structure. 
 The interior marked points keep track of the branch points of $\pi_\Sigma:\Sigma \to S$, and the domain-dependent almost complex structure is only required to be  integrable
 near the interior marked points. This gives us more flexibility for the perturbation scheme and at the same time partially recovers  the bijective correspondence between $u$ and $(\pi_\Sigma,v)$.
 \item Whilst the $A_{\infty}$ structure of $\cFS^{cyl,n}(\pi_E)$ is defined by counting certain solutions $u$ mapping to $\Hilb^n(E)\setminus D_r$, rather than pairs  $(\pi_\Sigma,v)$, 
 our set up is sufficiently flexible that the Floer differential and product  can be computed by  counting solutions $(\pi_\Sigma,v)$.
 \item To achieve compactness, we need to avoid that solutions in $\Hilb^n(E)\setminus D_r$ escape to the vertical boundary, horizontal boundary or $D_r$.
 No escape along the vertical boundary is achieved by modifying Seidel's ingenious set-up in \cite{Seidel4.5}; one takes the base of the Lefschetz fibration to be the upper half-plane, and uses hyperbolic isometries to gauge back perturbed holomorphic curves to actual holomorphic curves.
 No escape along the horizontal boundary and into $D_r$ are each achieved by positivity of intersection, which relies on a delicate choice of Hamiltonian perturbation scheme
 and some particular geometric features of $D_r$.  Familiarity with \cite{Seidel4.5} may be helpful.

\end{itemize}

\subsection{Definitions and the setup}\label{ss:DefnsSetup}

\subsubsection{Domain moduli}

Let $\cR^{d+1,h}$ be the moduli space of discs with $d+1$ punctures on the boundary and $h$ ordered and pairwise distinct 
interior marked points. 
Let $\cS^{d+1,h}$ be the universal family of $\cR^{d+1,h}$.
We fix a distinguished puncture $\xi^0$ for the elements in $\cR^{d+1,h}$ consistently, and order the remaining punctures $\xi^1,\dots,\xi^d$ counter-clockwise along the boundary.
For $S \in \cR^{d+1,h}$, 
we denote the ordered interior marked points by $\xi^1_+, \dots \xi^h_+$ and
we use $\partial_j S$ to denote the boundary component of $S$ between $\xi^j$ and $\xi^{j+1}$ for $j=0,\dots,d$ ($\xi^{d+1}$ is understood as $\xi^0$).
We use $\mk(S)$ to denote the set of interior marked points of $S$.

The moduli space $\cR^{d+1,h}$ can be compactified to the moduli of stable discs $\ol{\cR}^{d+1,h}$.
The latter moduli is used to define bulk deformation in \cite{FOOOBook2}, \cite{Siegel}, to which  readers are referred for the details of its construction.
We denote the universal family over $\ol{\cR}^{d+1,h}$ by $\ol{\cS}^{d+1,h}$.

\subsubsection{Strip-like ends and marked-points neighborhoods}\label{sss:EndsAndNeighborhoods}

For each $\cR^{d+1,h}$, we make a choice of strip-like ends $\epsilon=\{\epsilon^0, \dots, \epsilon^d\}$
for elements in $\cR^{d+1,h}$ such that $\xi^0$ is an output and $\xi^j$ is an input for $j=1,\dots,d$.
Thus, for each $S \in \cR^{d+1,h}$, we have holomorphic embeddings varying smoothly respect to $S$
\[   \left\{
\begin{array}{ll}
      \epsilon^0:\RR^{\le 0} \times [0,1] \to S \\
       \epsilon^1, \dots \epsilon^d :\RR^{\ge 0}\times [0,1] \to S \\
      \lim_{s \to  \pm \infty}\epsilon^j(s,\cdot)=\xi^j \\
      (\epsilon^j)^{-1}(\partial S)=\{(s,t):t=0,1\}.
\end{array}
      \right. \]
We denote $\cup_{S \in \cR^{d+1,h}} Im(\epsilon_j)$ in $\cS^{d+1,h}$ by $N_{\epsilon_j}^{d+1,h}$.
      
Furthermore, we choose a `marked-points neighborhood' $\nu(\mk(S))$ for each $S \in \cR^{d+1,h}$, which is a  (possibly disconnected) open subset of $S$ containing $\mk(S)$,
such that 
\begin{align}
 N_{\mk}^{d+1,h}:=\cup_{S \in \cR^{d+1,h}} \ol{\nu(\mk(S))} \label{eq:Nmk}
\end{align} 
is a smooth submanifold (with boundary) in $\cS^{d+1,h}$.
We require that $N_{\mk}^{d+1,h} \cap N_{\epsilon_j}^{d+1,h} =\emptyset$ for all $j$.

We fix a choice of cylindrical ends for the interior marked points of $S \in \cR^{d+1,h}$, and for the interior marked points for the elements in the moduli of spheres with ordered marked points. Given that choice, one obtains a smooth structure on 
$\ol{\cR}^{d+1,h}$ (see \cite[Section 4]{Siegel}, \cite[Section (9g)]{SeidelBook}).

A choice of strip-like ends for all elements of $\cR^{d+1,h}$, for all $d$ and $h$, is called {\it consistent} if it extends to a 
choice of strip-like ends (smooth up to the boundary) for all elements in $\ol{\cR}^{d+1,h}$.
A choice of marked-points neighborhoods for all elements of $\cR^{d+1,h}$, for all $d$ and $h$, is called {\it consistent} if
the closure of their union, denoted by $\ol{N}_{\mk}^{d+1,h}$, is a smooth submanifold with boundary and corners in  $\ol{\cS}^{d+1,h}$.
We fix such a consistent choice of strip-like ends and marked-points neighborhoods.

\begin{remark}\label{r:sphereCom}
Note that $\mk(S) \subset \nu(\mk(S))$ for all $S \in \cR^{d+1,h}$ implies that
if $S \in  \ol{\cR}^{d+1,h}$ has a stable sphere component $Q$, then $Q \subset \nu(\mk(S))$.

\end{remark}

\subsubsection{An isometry group}

Let $G_{\aff}$ be the group of orientation preserving affine transformations of the real line and $\kg_{\aff}$ be its Lie algebra.
Let $\bH$ be the closed upper half-plane, whose interior $\bH^{\circ}$ is equipped with the hyperbolic area form $\omega_{\bH^\circ}=\frac{d \re (w) \wedge d \im(w)}{\im(w)^2}$
and primitive $\theta_{\bH^\circ}=-d^c(\log(\im(w)))$.
We write $\partial_{\infty} \bH:=\bH \setminus \bH^\circ$.

The $G_{\aff}$-action on the real line extends to $\bH$ and we have Lie algebra homomorphism 
\begin{align}
\kg_{\aff} \to C^{\infty}(\bH^\circ, T\bH^\circ) 
\end{align}
which sends $\gamma \in \kg_{\aff}$ to a Hamiltonian vector field $X_\gamma$. We define 
\begin{equation}
 H_\gamma:=\theta_{\bH^\circ}(X_\gamma)
\end{equation}
which is a Hamiltonian on $\bH^\circ$ generating $X_\gamma$.

\subsubsection{Target space}\label{sss:targetspace}

Our setup is modified from \cite{Seidel4.5}.
Let $(E^\rceil,J_{E^\rceil})$ be a complex surface with boundary and let 
\begin{align}
 \pi_{E^\rceil}:E^\rceil: \to \bH
\end{align}
be a proper Lefschetz fibration such that $\partial E^\rceil= \pi_{E^\rceil}^{-1}(\partial \bH)$.
Let $\omega_{\overline{E}}$ be a symplectic form on  $\overline{E}:= E^\rceil \setminus \partial E^\rceil$
which tames $J_{\overline{E}}:=J_{E^\rceil}|_{\overline{E}}$
and makes
\begin{align}
 \pi_{\overline{E}}:=\pi_{E^\rceil}|_{\overline{E}} \to \bH^\circ
\end{align}
into a symplectic Lefschetz fibration.

Let $D_E$ be a smooth and reduced (but possibly disconnected) divisor of $E^\rceil$.
Let $E:=\overline{E} \setminus D_E$, $J_E:=J_{E^\rceil}|_E$, $\omega_E:=\omega_{\overline{E}}|_E$ and $\pi_E:=\pi_{\overline{E}}|_E$.
We assume that there is a primitive $\theta_E$ for $\omega_E$ on $E$ (so, in particular, it implies that $\pi_{E^\rceil}|_{D_E}$ is surjective).

We assume that there is a contractible compact subset $C_{\bH} \subset \bH^\circ $ such that $\pi_{\overline{E}}|_{\pi_{\overline{E}}^{-1}(\bH^\circ \setminus C_{\bH})}$ is symplectically locally trivial.
It means that for all $x \in \pi_{\overline{E}}^{-1}(\bH^\circ \setminus C_{\bH})$
\begin{enumerate}
 \item $x$ is a regular  point of $\pi_{\overline{E}}$, and
 \item the horizontal distribution $T_x^h\overline{E}:=(T^v_x\overline{E})^{\perp_{\omega_{\overline{E}}}}$ is integrable in a neighborhood of $x$, and 
 \item $\omega_{\overline{E}}|_{T_x^h\overline{E}}=(\pi_{\overline{E}})^*\omega_{\bH^{\circ}}|_{T_x^h\overline{E}}$
\end{enumerate}
We also require that for $x \in D_E \cap \pi_{\overline{E}}^{-1}(\bH^\circ \setminus C_{\bH})$, we have $T_x^h\overline{E} = T_xD_E$.


If we pick a point $* \in \bH^\circ \setminus C_{\bH}$ and define $(F,\omega_F, \theta_F):=(\pi_E^{-1}(*), \omega_E|_{\pi_E^{-1}(*)},\theta_E|_{\pi_E^{-1}(*)})$, then
the conditions above imply that for every $z \in \bH^\circ \setminus C_{\bH}$, there is an open neighborhood $U$ of $z$, and a 
symplectomorphism $(\pi_E^{-1}(U),\omega_E) \to (U \times F, \omega_{\bH^\circ}|_U + \omega_F)$ compatible with the projection to $U$.
Therefore, there is a natural way to extend the symplectic form $\omega_{\overline{E}}$ to $\omega_{E^\rceil}$ on $E^\rceil$.

Let $\cJ(E)$ be the space of $\omega_E$-tamed almost complex structures $J$ on $E$ such that 
\begin{align}
 & J=J_E \text{ outside a compact subset in $E$}  \label{eq:pseudoconvex}\\
 &\pi_E \text{ is $(J,j_{\bH^\circ})$-holomorphic}. \label{eq:fiberJ}
\end{align}
Condition \eqref{eq:pseudoconvex} implies that every $J \in \cJ(E)$ can be smoothly extended to an $\omega_{E^\rceil}$-tamed
almost complex structure $J^\rceil$ on $E^\rceil$, and we assume that
\begin{align}
 \text{ every $J^\rceil$-holomorphic map $\CP^1 \to E^\rceil$ has positive algebraic intersection number with $D_E$}. \label{eq:StrictPositiveInt}
\end{align}

We also assume that $c_1(E)=0$ and a trivialization of the canonical bundle is chosen.

For $\gamma \in \kg_\aff$, let 
\begin{align}
 \cH_\gamma(\bH^\circ):=&\{H \in C^{\infty}(\bH^\circ,\RR)| H \text{ is a constant in a neighborhood of }C_{\bH}, \label{eq:HH}\\ 
 &\text{and } H=H_\gamma \text{ outside a compact subset of $\bH^\circ$}\} \nonumber \\
 \cH_\gamma(E):=&\{H \in C^{\infty}(E,\RR)| H=\pi_E^*H' \text{ for some }H' \in \cH_\gamma(\bH^\circ)\} \label{eq:HE}
\end{align}
Then for $H \in \cH_\gamma(\bH^\circ)$, we have
\begin{align}
 X_H|_{C_\bH}=0 \quad \text{ and } \quad X_H=X_{H_\gamma} \text{ outside a compact set}.
\end{align}
Since $\pi_E$ is symplectically locally trivial outside $\pi_E^{-1}(C_\bH)$, for $H \in \cH_\gamma(E)$, $X_H$ is uniquely determined by the property that
\begin{align}
 (\pi_E)_* (X_H|_x)= X_{H'}|_{\pi_E(x)}
\end{align}
for all $x \in E$.
If we extend $X_H$ to a smooth vector field on $\overline{E}$, we also have
\begin{align}
 X_H|_{x} \in T_xD_E \text{ for all } x \in D_E \label{eq:tangentD0}
\end{align}

Next, we consider the class $\cL$ of properly embedded, oriented and spin Lagrangian submanifolds $L \subset E$ such that
\begin{align}
 & \pi_E|_L \text{ is proper and }\partial L=\emptyset \\
 &\text{there is a compact subset $C_L \subset \bH^\circ$ such that $\pi_E(L)\setminus C_L$ is either empty or is
a properly}\\
 &\text{embedded arc $\gamma_L$ in $\bH^\circ$, and if $\pi_E(L)\setminus C_L \neq \emptyset$ 
then we denote the point $\overline{\gamma_L} \setminus \gamma_L$ by $\lambda_L \in \RR$, }\nonumber \\
&\text{where $\overline{\gamma_L}$ is the closure of $\gamma_L$ in $\bH$} \nonumber \\ 
 &L \text{ is exact with respect to } \theta_E.
\end{align}
Note that, if
\begin{align}
 \left\{ 
 \begin{array}{ll}
  A=a_tdt \in \Omega^1([0,1], \kg_{\aff} ) \\
 H=(H_t)_{t \in [0,1]} \text{ such that } H_t \in \cH_{a_t}(E)
 \end{array}
 \right. 
\end{align}
and $\phi_H$ is the associated Hamiltonian diffeomorphism (which is well-defined everywhere because $\phi_H|_{\pi^{-1}_E(C_{\bH})}$ is the identity),
then $\phi_H^{-1}(L) \in \cL$ and $\lambda_{\phi_H^{-1}(L)}=g_A^{-1}\lambda_{L}$,
where $g_A \in G_{\aff}$ is the parallel transport from $0$ to $1$ with respect to $A$ (see Figure \ref{fig:parallelTrans}).

\begin{figure}[ht]
 \includegraphics{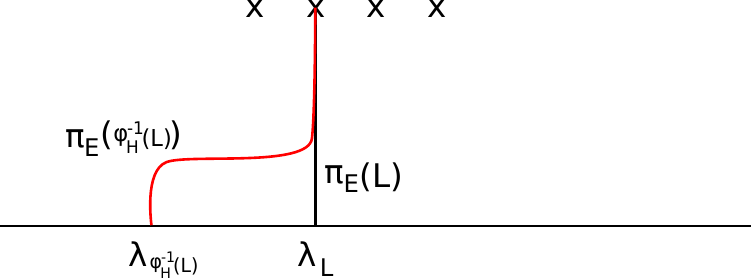}
 \caption{An example of $\pi_E(L)$ (black) and $\pi_E(\phi_H^{-1}(L))$ (red); crosses are critical values.}\label{fig:parallelTrans}
\end{figure}

Finally, we consider the set $\cL^{cyl,n}$ of (unordered) $n$-tuples of Lagrangians
$ \ul{L}=\{L_1,\dots,L_n\}$ such that
\begin{align}
 & L_k \in \cL \text{ for } k=1,\dots,n \\
 &\text{there are pairwise disjoint contractible open sets $U_{L_k}$ in $\bH$ such that} \label{eq:DisjointOpen} \\
 &\text{$\pi_E(L_k) \subset U_{L_k}$ and $U_{L_k} \cap \partial \bH$ is either empty or contractible} \nonumber 
\end{align}
In particular, \eqref{eq:DisjointOpen} implies that the $\{L_k\}$ are pairwise disjoint.
We define
\begin{align}
 \Sym(\ul{L})&:=q_{S_n}(L_1 \times \cdots \times L_n) \subset \Conf^n(E) \subset \Sym^n(E) \label{eq:SymL}\\ 
 \Sym(U_{\ul{L}}) &:=q_{S_n,\bH^\circ}(U_{L_1} \times \dots U_{L_n}) \subset \Sym^n(\bH^\circ) \label{eq:SymU}\\
 \lambda_{\ul{L}}&:=[\min_k \{\lambda_{L_k}\}, \max_k \{\lambda_{L_k}\}] \subset \RR \label{eq:Symlambda}
\end{align}
where $q_{S_n}: E^n \to \Sym^n(E)$ and $q_{S_n, \bH^\circ}: (\bH^\circ)^n \to \Sym^n(\bH^\circ)$ are  the quotient maps by the symmetric group.
When $\lambda_{\ul{L}}=\emptyset$ (i.e. when $\pi_E(L_k)$ is compact for all $k=1,\dots,n$), we define $\lambda_{\ul{L}}:=\{0\}$ so $\lambda_{\ul{L}}$ is a non-empty closed interval for all $\ul{L} \in \cL^{cyl,n}$.


\subsubsection{Hilbert scheme of points}

Let $\Hilb^n(E)$ be the Hilbert scheme of zero dimensional length $n$ subschemes on $E$, defined with respect to the complex structure $J_E$ on $E$.
We denote the Hilbert-Chow divisor by $D_{HC}$ and the relative Hilbert scheme with respect to $\pi_E$ by $D_r$, i.e. this is the divisor of subschemes whose projection under $\pi_E$ has length $<n$. (We will sometimes write $\cY_E$ for the complement $\Hilb^n(E) \backslash D_r$ when the particular value of $n$ is implicit or plays no role.) 
Let $\pi_{HC}:\Hilb^n(E) \to \Sym^n(E)$ be the contraction of $D_{HC}$ and $\Sym^n(\pi_E): \Sym^n(E) \to \Sym^n(\bH^\circ)$ be the natural map induced by $\pi_E$.
Let $\Delta_{\bH^\circ} \subset \Sym^n(\bH^\circ)$ be the big diagonal (i.e. all unordered tuples $\{q_1,\dots,q_n\}$ of points in $\bH^\circ$ such that $q_i = q_j$ for some $i \neq j$).

\begin{lemma}\label{l:DhcDr}
 $(\Sym^n(\pi_E) \circ \pi_{HC})^{-1}(\Delta_{\bH^\circ})=D_{HC} \cup D_r$.
\end{lemma}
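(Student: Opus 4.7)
The plan is to unpack the definitions of the four ingredients on each side and to show both inclusions by a direct case analysis on the cycle $\pi_{HC}(Z)$.

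Recall that $\pi_{HC}$ sends a length-$n$ subscheme $Z$ to its support cycle; if the local lengths of $\cO_Z$ at its support points are $n_1,\dots,n_k$ with $\sum n_i = n$, then $\pi_{HC}(Z) = \{p_1^{(n_1)},\dots,p_k^{(n_k)}\}$ as an unordered $n$-tuple (with multiplicities). Applying $\Sym^n(\pi_E)$ just replaces each $p_i$ by $\pi_E(p_i)$, giving the cycle-theoretic pushforward of $Z$ to $\bH^\circ$. Membership in $\Delta_{\bH^\circ}$ then means that this multiset of $n$ points in $\bH^\circ$ has a coincidence.

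For the inclusion $D_{HC}\cup D_r \subseteq (\Sym^n(\pi_E)\circ \pi_{HC})^{-1}(\Delta_{\bH^\circ})$, I would split into two cases. If $Z\in D_{HC}$, then some $n_i\geq 2$, so $\pi_{HC}(Z)$ itself contains the same point $p_i$ at least twice, and the image multiset contains $\pi_E(p_i)$ at least twice, landing in $\Delta_{\bH^\circ}$. If instead $Z\in D_r$, then by definition the scheme-theoretic image $\pi_E(Z)\subset \bH^\circ$ has length strictly less than $n$; so the support points $p_1,\dots,p_k$ of $Z$ cannot map injectively to $\bH^\circ$ with the multiplicities $n_1,\dots,n_k$ preserved on the image. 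In particular, either some $n_i\geq 2$ (and we are in the previous case) or there exist $i\neq j$ with $\pi_E(p_i)=\pi_E(p_j)$; in both situations the image cycle lies in $\Delta_{\bH^\circ}$.

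For the reverse inclusion, I would argue contrapositively: suppose $Z\notin D_{HC}\cup D_r$. Then $Z$ is reduced, so $\pi_{HC}(Z) = \{p_1,\dots,p_n\}$ with all $p_i$ distinct. Since $Z\notin D_r$, the scheme-theoretic image $\pi_E(Z)$ is reduced of length $n$, which forces the points $\pi_E(p_1),\dots,\pi_E(p_n)$ to be pairwise distinct. Therefore $\Sym^n(\pi_E)(\pi_{HC}(Z))\notin \Delta_{\bH^\circ}$, as required.

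There is no real obstacle here: the statement is essentially a book-keeping identity, and the only point deserving a sentence of care is the standard fact that for a reduced length-$n$ subscheme $Z$ with distinct support, the scheme-theoretic image has length equal to the number of distinct values of $\pi_E$ on the support. Once this is spelled out, both inclusions follow immediately.
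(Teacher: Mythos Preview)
Your proof is correct and follows essentially the same approach as the paper: both argue by unpacking the definitions and doing a case split on whether the support of $Z$ consists of $n$ distinct points or fewer. The paper's proof is simply a terser version of yours (two sentences for each inclusion), but the logic is identical.
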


\begin{proof}
 It is clear that $\Sym^n(\pi_E) \circ \pi_{HC}(D_{HC} \cup D_r) \subset \Delta_{\bH^\circ}$.
 For the converse, if the support of $z \in \Hilb^n(E) \cap (\Sym^n(\pi_E) \circ \pi_{HC})^{-1}(\Delta_{\bH^\circ})$ is a union of distinct $n$ points, then $z \in D_r$.
 If the support consists of $k < n$ points instead, then $z \in D_{HC}$. 
\end{proof}




Note that $\Hilb^n(E) \setminus D_{HC}=\Conf^n(E)$, and we have a trivialization of the canonical bundle of $\Conf^n(E)$
induced by that of $E$.

\begin{lemma}\label{l:canonicalBundle}
 The trivialization of the canonical bundle of $\Conf^n(E)$ extends smoothly to a trivialization of the canonical bundle of $\Hilb^n(E)$.
\end{lemma}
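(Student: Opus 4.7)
The plan is to exhibit the trivialization of $K_{\Conf^n(E)}$ explicitly, then show that it extends nowhere-vanishingly across $D_{HC}$. The extension is checked in a local model around the generic stratum of $D_{HC}$ and then propagated across higher-codimension strata by Hartogs' theorem.

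First, let $\Omega_E \in H^0(K_E)$ denote the chosen nowhere-vanishing section, a holomorphic $(2,0)$-form. The exterior product
\[
\Omega_{E^n} \;:=\; \mathrm{pr}_1^*\Omega_E \wedge \cdots \wedge \mathrm{pr}_n^*\Omega_E
\]
trivializes $K_{E^n}$, and is $S_n$-invariant because each factor has even degree. On the preimage in $E^n$ of $\Conf^n(E)$ the quotient map to $\Conf^n(E)$ is \'etale, so $\Omega_{E^n}$ descends to a nowhere-vanishing section $\widetilde{\Omega}$ of $K_{\Conf^n(E)} = K_{\Hilb^n(E) \setminus D_{HC}}$. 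This is the trivialization appearing in the statement.

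Now $\Hilb^n(E)$ is smooth (Fogarty); let $D_{HC}^\circ \subset D_{HC}$ be the open stratum corresponding to configurations of type $(2,1,\ldots,1)$ in $\Sym^n(E)$ (exactly one pair of coincident points). The complement $Z := D_{HC} \setminus D_{HC}^\circ$ parametrises configurations with a triple (or higher) coincidence or two disjoint coincident pairs, and so has complex codimension $\ge 2$ in $\Hilb^n(E)$. Near a point of $D_{HC}^\circ$ the Hilbert scheme is locally biholomorphic to $\Hilb^2(V) \times V^{n-2}$ for a small polydisc $V \subset E$ around the colliding point, compatibly with $\pi_{HC}$ and with $\widetilde{\Omega}$; it therefore suffices to verify the extension in the model case $n=2$, $V = \CC^2$. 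On $V \times V$, pass to sum--difference coordinates $u_i = \zeta_i + \eta_i$, $v_i = \zeta_i - \eta_i$. The $S_2$-action becomes $(u,v)\mapsto (u,-v)$, so $\Sym^2(V)$ is identified with $V \times (V/\{\pm 1\})$ and $\pi_{HC}$ with the identity on the first factor times the minimal (crepant) resolution of the $A_1$-singularity on the second. In the standard affine chart on this resolution with local coordinates $(x,Y)$ determined by $x = v_1^2$, $v_2 = v_1 Y$, a direct computation using $v_1 \, dv_1 = \tfrac{1}{2} dx$ gives
\[
dv_1 \wedge dv_2 \;=\; dv_1 \wedge (Y\, dv_1 + v_1\, dY) \;=\; v_1\, dv_1 \wedge dY \;=\; \tfrac{1}{2}\, dx \wedge dY,
\]
so the pullback of $\widetilde{\Omega}$ to this chart is a nonzero constant multiple of $du_1 \wedge du_2 \wedge dx \wedge dY$, nowhere-vanishing including along the exceptional divisor $\{x=0\}$. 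The other affine chart of the resolution is handled by the symmetric computation with $v_1 \leftrightarrow v_2$.

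This produces a nowhere-vanishing holomorphic extension of $\pi_{HC}^* \widetilde{\Omega}$ to $\Hilb^n(E) \setminus Z$. Hartogs' extension theorem then gives a unique holomorphic extension to all of $\Hilb^n(E)$, and this extension is nowhere-vanishing since any zero locus would have to be a divisor entirely contained in $Z$, contradicting $\mathrm{codim}\,Z \ge 2$. The main obstacle is the local crepant calculation: it encodes precisely that the Hilbert--Chow morphism is a crepant resolution for smooth surfaces, and hence that a volume form on $\Sym^n(E)$ pulls back to one that is non-singular and nonzero along the exceptional divisor $D_{HC}$.
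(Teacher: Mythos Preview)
Your proof is correct. The paper's own proof is the single line ``It follows from the fact that $\pi_{HC}$ is a crepant resolution of $\Sym^n(E)$,'' whereas you have unpacked this statement explicitly: you write down the $S_n$-invariant volume form on $E^n$, reduce the extension question to the generic stratum of $D_{HC}$ via Hartogs, and then do the local $A_1$ computation that verifies crepancy of $\Hilb^2(\CC^2)\to\Sym^2(\CC^2)$ by hand. As you yourself note in your final sentence, your calculation is exactly the content of the crepancy assertion the paper invokes. The trade-off is clear: the paper's version is terse and relies on the reader knowing (or looking up) that the Hilbert--Chow morphism on a smooth surface is crepant, while your version is self-contained and makes the mechanism visible, at the cost of a page of standard computation.
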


\begin{proof}
 It follows from the fact that $\pi_{HC}$ is a crepant resolution of $\Sym^n(E)$.
\end{proof}

We equip $\Conf^n(E)$ with the product symplectic form $\omega_{\Conf^n(E)}$ from $(E,\omega_E)$. This is smooth,
but cannot be smoothly extended to a $2$-form on $\Hilb^n(E)$.

\begin{lemma}\label{l:tameSymp}
 For every open neighborhood $U \subset \Hilb^n(E)$ of $D_{HC}$,
 there is a symplectic form on $\Hilb^n(E)$ which tames the complex structure, and coincides with $\omega_{\Conf^n(E)}$ outside $U$.
\end{lemma}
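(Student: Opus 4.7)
The form $\omega_{\Conf^n(E)}$ is descended from the product symplectic form on $E^n$ via the quotient by $S_n$ and pulled back across the crepant resolution $\pi_{HC}$. Because $\pi_{HC}$ is a resolution of a variety whose symplectic form is bounded near its singular stratum, I expect that $\omega_{\Conf^n(E)}$ extends to a smooth closed $(1,1)$-form $\hat{\omega}$ on all of $\Hilb^n(E)$ (this can be checked in the local model $\Hilb^k(\CC^2) \times \CC^{n-k}$ near a point of $D_{HC}$). This extension is non-negative on complex lines and strictly positive away from $D_{HC}$; its degeneracy locus is exactly $D_{HC}$, where its kernel is the tangent space to the fibers of $\pi_{HC}|_{D_{HC}}$.

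The idea is then to add a closed $(1,1)$-correction term, supported in $U$, which restores positivity along the fiber directions. Pick open neighborhoods $V' \Subset V \Subset U$ of $D_{HC}$. Cover $V$ by holomorphic coordinate charts, and on each chart take a strictly plurisubharmonic function (for instance $\|z\|^2$ in local coordinates, or the restriction of a Nakajima-type Kähler potential pulled back from the local model $\Hilb^k(\CC^2) \times \CC^{n-k}$). Combining these via a partition of unity produces a smooth $\rho\colon V \to \RR$ with $i\partial\bar{\partial}\rho$ strictly positive on every complex line. Let $\chi\colon \Hilb^n(E) \to [0,1]$ be a cutoff supported in $U$ with $\chi \equiv 1$ on $V'$, and define
\[ \omega := \hat{\omega} + \epsilon \cdot i\,\partial\bar{\partial}(\chi\rho) \]
for $\epsilon > 0$. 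This is closed, of type $(1,1)$, and agrees with $\omega_{\Conf^n(E)}$ outside $U$. On $V'$ it equals $\hat{\omega} + \epsilon \cdot i\,\partial\bar{\partial}\rho$, which tames $J$ for any $\epsilon>0$ since $i\,\partial\bar{\partial}\rho$ is strictly positive on complex lines and $\hat{\omega}$ is non-negative on them. Outside $V'$, we already have $\hat{\omega} = \omega_{\Conf^n(E)}$ strictly positive on complex lines.

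The main obstacle is the transition region $\supp(\chi)\setminus V'$, where $i\,\partial\bar{\partial}(\chi\rho)$ involves derivatives of $\chi$ and is a priori indefinite, so taming of $\omega$ relies on $\omega_{\Conf^n(E)}$ dominating the perturbation pointwise. This is a local open condition, but since $\Hilb^n(E)$ is not compact a single constant $\epsilon$ need not suffice globally. I would resolve this either by choosing $\rho$ with controlled growth at infinity (using a global Kähler potential on $\Hilb^n(E)$, which exists since the Hilbert scheme of a quasi-projective surface is quasi-projective), or by replacing $\epsilon$ with a positive smooth function supported in $U$ that decays rapidly enough on the transition region; in both cases, the corrected form remains closed because the perturbation is globally of the form $i\,\partial\bar{\partial}(\,\cdot\,)$, and the resulting $\omega$ then tames $J$ everywhere and equals $\omega_{\Conf^n(E)}$ outside $U$, as required.
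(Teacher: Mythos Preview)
Your first step has a genuine gap: the product form $\omega_{\Conf^n(E)}$ does \emph{not} extend smoothly across $D_{HC}$. The paper states this explicitly just before the lemma, and it is visible already in the local model for $n=2$. Splitting off the center of mass, one is reduced to asking whether the standard flat K\"ahler form on $(\CC^2\setminus\{0\})/\{\pm 1\}$ pulls back to a smooth form on the $A_1$ resolution. In the chart $(p,q)\mapsto (b,d)=(\sqrt{p},\,q\sqrt{p})$ one computes
\[
db\wedge d\bar b + dd\wedge d\bar d \;=\; \frac{1+|q|^2}{4|p|}\,dp\wedge d\bar p \;+\; (\text{mixed terms}) \;+\; |p|\,dq\wedge d\bar q,
\]
and the $1/|p|$ coefficient blows up along the exceptional $\PP^1=\{p=0\}$. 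So there is no smooth $\hat\omega$ as you describe; the whole scheme of ``non-negative extension plus small psh correction'' never gets off the ground.

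The paper's proof, following Voisin, handles both this and your non-compactness worry simultaneously. Voisin constructs a pair of smooth closed $2$-forms $\chi$ and $\Psi$ on the \emph{compact} $\Hilb^n(E^\rceil)$ such that $\chi$ agrees with $\omega_{\Conf^n(E^\rceil)}$ outside a neighborhood $V$ of $D_{HC}$ (so $\chi$ is a genuine modification near $D_{HC}$, not an extension of the singular form), and $\chi+\lambda\Psi$ tames $J$ for all small $\lambda>0$. Moreover $\Psi$ is exact off $D_{HC}$, say $\Psi=d\Theta$ there. One then cuts off the primitive: with $\rho$ supported in $U$ and equal to $1$ on $V$, the form $\chi+\lambda\,d(\rho\Theta)$ equals $\omega_{\Conf^n(E)}$ outside $U$, equals $\chi+\lambda\Psi$ on $V$, and on the transition region differs from the taming form $\chi$ by an $O(\lambda)$ term. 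Because $\Hilb^n(E^\rceil)$ is compact, a single small $\lambda$ makes this tame everywhere; restricting to $\Hilb^n(E)$ gives the result. Note also that your proposed fix of ``replacing $\epsilon$ by a decaying positive function'' does not buy anything new: writing the correction as $i\partial\bar\partial(f)$ just absorbs that function into the cutoff, and the indefinite cross-terms from $\partial f$ reappear in exactly the same transition region.
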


\begin{proof}
  This follows essentially from \cite{Voisin}, see also \cite{Perutz1, Perutz2} in the K\"ahler case.  Let $D_{HC}^\rceil$ be the Hilbert-Chow divisor of $\Hilb^n(E^\rceil)$.
 Let $U,V \subset \Hilb^n(E^\rceil)$ be an open neighborhoods of $D_{HC}^\rceil$
such that $U$ contains the closure of $V$.
 In \cite{Voisin}, Voisin constructed two smooth closed two-forms $\chi$ and $\Psi$ on $\Hilb^n(E^\rceil)$ 
 such that for some $\lambda_0 >0$ and for all $\lambda \in (0,\lambda_0)$, $\chi+ \lambda \Psi$ is a symplectic form on $\Hilb^n(E^\rceil)$
 that tames the complex structure.
 Moreover, $\chi|_{\Hilb^n(E^\rceil) \setminus V}=\omega_{\Conf^n(E^\rceil)}|_{\Conf^n(E^\rceil) \setminus V}$ and $\Psi|_{\Hilb^n(E^\rceil) \setminus D_{HC}^\rceil}=d\Theta$ for some $\Theta \in \Omega^1(\Hilb^n(E^\rceil) \setminus D_{HC}^\rceil)$.
 
 Let $\rho:\Hilb^n(E^\rceil) \to [0,1]$ be a cut off function such that $\rho|_V=1$ and $\rho=0$ outside $U$.
 Then $(\chi+\lambda d(\rho \Theta))|_{\Hilb^n(E^\rceil) \setminus U}= \omega_{\Conf^n(E^\rceil)}|_{\Conf^n(E^\rceil) \setminus U}$
 and $(\chi+\lambda d(\rho \Theta))|_V=(\chi+ \lambda \Psi)|_V$.
 
 Since $\chi$ is non-degenerate and tames the complex structure outside $V$, and being non-degnerate and taming are both open conditions, for sufficiently small 
 $\lambda >0$, we know that $\chi+\lambda d(\rho \Theta)$ is a symplectic form which tames the complex structure outside $V$.
 Moreover, this is also true inside $V$ because $\chi+\lambda d(\rho \Theta)=\chi+ \lambda \Psi$ inside $V$.
 Therefore, we can restrict this symplectic form to $\Hilb^n(E)$ to get the result.
\end{proof}

\begin{lemma}
\label{l:rationalCurveinHilb} If $C$ is the image of a non-constant rational curve in $\Hilb^n(E)$,
then $[C] \cdot [D_r] >0$.
\end{lemma}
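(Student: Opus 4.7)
My plan is to reduce the claim to the standard negativity of the exceptional divisor of the Hilbert--Chow morphism on contracted curves, via a cohomological identity coming from the triviality of the line bundle associated to the discriminant on $\Sym^n(\bH^\circ)$.

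First I will show $C\subset D_{HC}$. Since $E$ is Stein (being exact symplectic with $\omega_E$ taming $J_E$), both $E^n$ and its finite-group quotient $\Sym^n(E)$ are Stein, hence admit no non-constant holomorphic images of $\CP^1$. Therefore $\pi_{HC}|_C$ is constant, so $C$ lies in a single fiber of $\pi_{HC}$; since $C$ is non-constant the fiber is positive-dimensional, of the form $\prod_i\Hilb^{m_i}_{x_i}(E)$ with some $m_i\ge 2$, and any such fiber is contained in the exceptional divisor $D_{HC}$.

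Next, since $\bH^\circ$ is biholomorphic to the disc, $\Sym^n(\bH^\circ)$ is a contractible open subset of $\CC^n$ (via elementary symmetric functions); it is Stein with trivial topology, so $\mathrm{Pic}(\Sym^n(\bH^\circ))=0$ and in particular $\mathcal{O}(\Delta_{\bH^\circ})$ is trivial. Setting $f:=\Sym^n(\pi_E)\circ\pi_{HC}$ and using Lemma \ref{l:DhcDr} to identify $f^{-1}(\Delta_{\bH^\circ})=D_{HC}\cup D_r$ set-theoretically, I may write the pullback divisor as $f^*\Delta_{\bH^\circ}=aD_{HC}+bD_r$ with integers $a,b\ge 1$ (both positive, since each irreducible divisor appears in the support). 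Triviality of the pulled-back line bundle gives $a[D_{HC}]+b[D_r]=0$ in $\mathrm{Pic}(\Hilb^n(E))$, and pairing with $[C]$ yields
\[
a\bigl([C]\cdot[D_{HC}]\bigr) + b\bigl([C]\cdot[D_r]\bigr) = 0.
\]

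It therefore suffices to show $[C]\cdot[D_{HC}]<0$, which delivers $[C]\cdot[D_r]=-(a/b)[C]\cdot[D_{HC}]>0$. This is the classical negativity of the Hilbert--Chow exceptional divisor: after Fogarty (see also Nakajima), $\pi_{HC}$ is a projective birational morphism admitting a relatively ample $\Q$-line bundle $\mathcal{O}(-\delta)$ with $2\delta=[D_{HC}]$, so every curve contracted by $\pi_{HC}$ pairs strictly negatively with $\delta$ and hence with $[D_{HC}]$. The main point that needs care is transferring this classical relative ampleness from projective surfaces to the quasi-projective Stein $E$; but the statement is local in a Zariski neighbourhood of $D_{HC}$ and is inherited from any local projective model, so no new difficulty arises.
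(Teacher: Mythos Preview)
Your argument is essentially correct, with one imprecision: the claim that $E$ is Stein ``being exact symplectic with $\omega_E$ taming $J_E$'' is not a valid implication in general. What you actually need is that $\Sym^n(E)$ contains no non-constant holomorphic $\CP^1$, and this follows directly from exactness of $\omega_E$: any $J_E$-holomorphic sphere in $E$ would have positive area by taming but zero area by Stokes, hence is constant, and then the tautological correspondence rules out spheres in $\Sym^n(E)$. With that fix, your conclusion $C\subset D_{HC}$ stands.

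Your route differs from the paper's. The paper observes (from Lemma~\ref{l:tameSymp} and exactness of $\omega_E$) that the taming symplectic form on $\Hilb^n(E)$ has cohomology class $-\epsilon[D_{HC}]$, then cites \cite[Lemma~5.4]{AbouzaidSmith16} for the fact that $[D_r]$ is a positive multiple of $-[D_{HC}]$, and concludes directly from positivity of symplectic area of $C$. You instead rederive the linear relation $a[D_{HC}]+b[D_r]=0$ by pulling back the discriminant divisor (this is essentially how \cite{AbouzaidSmith16} proves its Lemma~5.4), and then replace the symplectic-area step by relative ampleness of $-\delta$ for the Hilbert--Chow morphism, which requires first locating $C$ inside $D_{HC}$. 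The paper's approach is shorter because the symplectic-area argument applies to any non-constant holomorphic curve in $\Hilb^n(E)$, bypassing the need to show $C$ is contracted; your approach is more self-contained (no external citation for the divisor relation) and stays within complex geometry.
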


\begin{proof}
From Lemma \ref{l:tameSymp}, one sees that the symplectic form on 
$\Hilb^n(E)$ is Poincare dual to the relative cycle $-\epsilon [D_{HC}]$  for some $\epsilon>0$ (recall that $\omega_E$ is exact).
It is proved in \cite[Lemma 5.4]{AbouzaidSmith16} that $[D_r]$ is a positive multiple of $-[D_{HC}]$ so $[C] \cdot [D_{r}] >0$ follows from positivity of the symplectic area of $C$.
\end{proof}

Let $D_r^{\circ}=D_r \cap \Conf^n(E)$.
For $J\in \cJ(E)$, we define $J^{[n]}$ to be the almost complex structure on $\Conf^n(E)$ decended from 
the product almost complex structure $J^n$ on $E^n$.
Note that $J^{[n]}$ is $\omega_{\Conf^n(E)}$-tamed and when $J=J_E$, $J^{[n]}$ extends smoothly to $\Hilb^n(E)$.
We define the following space of  almost complex structures on $\Conf^n(E)$:
\begin{align}
 \cJ^{n}(E):=&\{J | J=(J')^{[n]} \text{ for some } J' \in \cJ(E)\} \label{eq:JnE}
\end{align}

For $H \in C^{\infty}(E,\RR)$, we define $H^{[n]} \in C^{\infty}(\Conf^n(E),\RR)$ to be 
\begin{align}
 H^{[n]}(\ul{z}):=\sum_{i=1}^n H(z_i) \label{eq:sumH}
\end{align}
for $\ul{z}=\{z_1,\dots,z_n\} \in \Conf^n(E)$.
For $\gamma \in \kg_\aff$, let 
\begin{align}
 \cH_\gamma^{n,pre}(E):=&\{H \in C^{\infty}(\Conf^n(E),\RR)|H=(H')^{[n]} \text{ for some }H' \in \cH_\gamma(E)\} \\
 \cH_\gamma^n(E):=&\{H \in C^{\infty}(\Conf^n(E),\RR)| H=H' \text{ outside a compact subset of } \label{eq:HnE}\\
                  & \Conf^n(E) \setminus D_r^{\circ}, \text{ for some $H' \in \cH_\gamma^{n,pre}(E)$}\} \nonumber
\end{align}
Note that, if
\begin{align}
 \left\{ 
 \begin{array}{ll}
  A=a_tdt \in \Omega^1([0,1], \kg_{\aff} ) \\
 H=(H_t)_{t \in [0,1]} \text{ such that } H_t=(H'_t)^{[n]} \in \cH_{a_t}^{n,pre}(E) \text{ for } H'_t \in \cH_{a_t}(E)
 \end{array}
 \right. 
\end{align}
then $\phi_H$ is a well-defined Hamiltonian diffeomorphism of $\Conf^n(E)$ and the Hamiltonian vector field $X_{H_t}$ satisfies
\begin{align}
X_{H_t}|_p \in T_pD_r^{\circ}  \label{eq:preserveDr}
\end{align}
for all $p \in D_r^{\circ}$ and for all $t \in [0,1]$.
It implies that $\phi_H(D_r^{\circ})=D_r^{\circ}$.
Moreover, we have $\phi_H^{-1}(\Sym(\ul{L}))=\Sym(\phi_{H'}^{-1}(\ul{L})) \in \Sym^n(\cL)$ and 
\begin{align}
\lambda_{\phi_{H'}^{-1}(\ul{L})}=g_A^{-1}\lambda_{\ul{L}}. \label{eq:gaugelambda}
\end{align}
As a result, if $H=(H_t)_{t \in [0,1]}$ and $H_t \in \cH_{a_t}^n(E)$ for all $t$, the associated Hamiltonian vector field 
satisfies \eqref{eq:preserveDr} and $\phi_H$ is also a well-defined Hamiltonian diffeomorphism of $\Conf^n(E)$.
Moreover, $\phi_H^{-1}(\Sym(\ul{L}))=\Sym(\ul{L'})$ outside a compact set of $\Conf^n(E)\setminus D_r^{\circ}$ for some $\ul{L'} \in \cL^{cyl,n}$.
Therefore, we can define $\lambda_{\phi_H^{-1}(\Sym(\ul{L}))}=\lambda_{\ul{L'}}$.

\begin{remark}\label{r:FlexH}
 We introduce both $\cH_\gamma^{n,pre}(E)$ and $\cH_\gamma^n(E)$ because, on the one hand, $\cH_\gamma^n(E)$ gives us more freedom to achieve transversality, but on the other, working with 
 $\cH_\gamma^{n,pre}(E)$ simplifies explicit computations for cases that transversality can be achieved within that more restricted class.
\end{remark}

\subsubsection{Floer cochains}\label{sss:FloerCochain}

Let $\cG_{\aff}([0,1]):=C^{\infty}([0,1], G_\aff)$ which is weakly contractible.
Let $\cI_\aff$ be the set of non-empty closed intervals in $\RR$. 
Let 
\begin{align}
\cC_\aff([0,1]):=\{(\lambda_0,\lambda_1) \in \cI_\aff^2|  \lambda_0 > \lambda_1\} \label{eq:ConfInterval} 
\end{align}
where the strict inequality means $\min \lambda_0 > \max \lambda_1$.
For each $\Phi \in \cG_{\aff}([0,1])$ and $(\lambda_0,\lambda_1) \in \cI_\aff^2$, we define 
\begin{align}
 \Phi_*(\lambda_0,\lambda_1):=(\Phi_*(0_\aff), \Phi_0(\lambda_0), \Phi_1(\lambda_1)) \in \Omega^1([0,1],\kg_\aff) \times \cI_\aff^2 \label{eq:gaugeInterval}
\end{align}
where $0_\aff$ is the trivial connection, $\Phi_*(0_\aff)$ is the gauge transformation by $\Phi$ and $\Phi_i:=\Phi(i)$ for $i=0,1$.
We define
\begin{align}
 \cP_\aff([0,1]):=\{\Phi_*(\lambda_0,\lambda_1)| (\lambda_0,\lambda_1) \in \cC_\aff([0,1])\}.
\end{align}

Note that, if $(A,\lambda_0,\lambda_1) \in \cP_\aff([0,1])$, then by \eqref{eq:ConfInterval} and \eqref{eq:gaugeInterval}, we have
\begin{align}
 g_A^{-1} \lambda_{1} < \lambda_{0}. \label{eq:orderinglambda}
\end{align}

\begin{lemma}[cf. Section $(2a)$ of \cite{Seidel4.5}]
 $\cP_\aff([0,1])$ is non-empty and weakly contractible.
 The projection
\begin{align}
 \cP_\aff([0,1]) \to \cI_\aff^2 \label{eq:weakfibration1}
\end{align}
is a surjective weak fibration.
Therefore, the fibers of \eqref{eq:weakfibration1} are also weakly contractible.
\end{lemma}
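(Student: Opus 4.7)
The plan is to reidentify $\cP_\aff([0,1])$ with an explicit open subset of a product space, and then to analyze the projection to $\cI_\aff^2$ via the holonomy map of a connection. Normalizing $\Phi(0)=e$ removes the residual ambiguity in the choice of $\Phi$ representing a given $A=\Phi_*(0_\aff)$, so that $\mu_0=\Phi_0(\lambda_0)=\lambda_0$ and $\mu_1=\Phi_1(\lambda_1)=g_A(\lambda_1)$; the condition \eqref{eq:orderinglambda} then reads $g_A^{-1}\mu_1<\mu_0$. This gives an identification
\begin{equation*}
  \cP_\aff([0,1]) \;\cong\; \{(A,\mu_0,\mu_1)\in\Omega^1([0,1],\kg_\aff)\times\cI_\aff^2 : g_A^{-1}\mu_1 < \mu_0\},
\end{equation*}
under which the projection \eqref{eq:weakfibration1} corresponds to $(A,\mu_0,\mu_1)\mapsto(\mu_0,\mu_1)$. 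Non-emptiness is then immediate, for example $(0_\aff,[1,2],[-1,0])$.

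For the projection $p:\cP_\aff([0,1])\to\cI_\aff^2$, surjectivity follows by choosing, given any $(\mu_0,\mu_1)$, a constant connection $A=a\,dt$ with $a\in\kg_\aff$ inducing a translation large enough that $g_A^{-1}\mu_1$ lies strictly to the left of $\mu_0$. To compute the fibers I would factor through the holonomy map $\mathrm{hol}:\Omega^1([0,1],\kg_\aff)\to G_\aff$, $A\mapsto g_A$, which is a standard weak fibration with affine, hence contractible, fibers (it is the time-$1$ evaluation of a left-invariant ODE on a Lie group). The fiber $p^{-1}(\mu_0,\mu_1)$ is $\mathrm{hol}^{-1}(U_{\mu_0,\mu_1})$, where $U_{\mu_0,\mu_1}:=\{g\in G_\aff : g^{-1}\mu_1<\mu_0\}$; writing $g(x)=ax+b$ with $a>0$ identifies $U_{\mu_0,\mu_1}$ with the convex open region $\{a>0,\ b>\max\mu_1-a\min\mu_0\}\subset\RR_{>0}\times\RR$, so it is contractible, and pulling back along $\mathrm{hol}$ gives weak contractibility of the fibers of $p$.

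Combining these, $p$ is a surjection with weakly contractible fibers onto the contractible base $\cI_\aff^2$ (each factor $\cI_\aff$ being convex in $\RR^2$). To upgrade this to a weak fibration, and thereby derive weak contractibility of $\cP_\aff([0,1])$ from the long exact sequence of homotopy groups, I would verify homotopy lifting directly: given a homotopy $(\mu_0^s,\mu_1^s)$ in the base and a lift $A^0$, perturb $A^0$ through a family $A^s$ by adjusting its constant translation component so that $g_{A^s}^{-1}\mu_1^s<\mu_0^s$ remains satisfied, which is possible because this is an open condition and the translation subgroup of $G_\aff$ can absorb any required shift. The main technical obstacle is producing these lifts coherently; an alternative, arguably cleaner, route is to bypass the fibration statement and directly construct a deformation retraction of $\cP_\aff([0,1])$ onto a chosen basepoint by simultaneously scaling $A$ toward $0_\aff$ and contracting each $\mu_i$ to a fixed point, maintaining the open constraint throughout.
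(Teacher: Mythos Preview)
Your proposal is correct, but the route you take differs from the paper's in one instructive way. You and the paper start from the same normalization $\Phi(0)=e$, and your identification
\[
\cP_\aff([0,1]) \;\cong\; \{(A,\mu_0,\mu_1)\in\Omega^1([0,1],\kg_\aff)\times\cI_\aff^2 : g_A^{-1}\mu_1 < \mu_0\}
\]
is exactly the paper's description $\cP_\aff([0,1])\cong \cG_\aff([0,1])\times_{G_\aff}\cC_\aff([0,1])$ written in coordinates. The difference is that the paper makes the further change of variables $\lambda_1:=g_A^{-1}\mu_1$ (and $\lambda_0:=\mu_0$), which turns the constraint into the linear condition $\lambda_0>\lambda_1$ and exhibits $\cP_\aff([0,1])$ as the \emph{product} $\Omega^1([0,1],\kg_\aff)\times\cC_\aff([0,1])$; weak contractibility is then immediate, without passing through the fibration. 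The paper then deduces weak contractibility of the fibers from that of the total space via the long exact sequence. Your order is reversed: you compute the fibers directly (nicely, via the convexity of $U_{\mu_0,\mu_1}\subset G_\aff$ and the holonomy fibration), and only then assemble weak contractibility of the total space from the fibration structure.

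Both routes are valid. The paper's is shorter for the total space, since the product decomposition makes contractibility obvious and avoids the homotopy-lifting verification you sketch. Your route has the advantage of giving an explicit description of the fibers and of making the weak-fibration property concrete: your factorization $p = q\circ r$ with $r=(\mathrm{hol}\times\mathrm{id})|_{\cP_\aff}$ and $q:(g,\mu_0,\mu_1)\mapsto(\mu_0,\mu_1)$ is sound, and $q$ is in fact a trivial fibration via the shearing homeomorphism $(a,b,\mu_0,\mu_1)\mapsto(a,\,b-\max\mu_1+a\min\mu_0,\,\mu_0,\mu_1)$. If you had noted the change of variables $\lambda_1=g_A^{-1}\mu_1$, you would have recovered the paper's one-line argument for the total space while keeping your direct fiber computation as a bonus.
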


\begin{proof}
 Since $\Phi_*(\lambda_0,\lambda_1)=\Phi'_*(\lambda'_0,\lambda'_1)$ if and only if $\Phi \circ (\Phi')^{-1}$ is a constant 
and $\lambda_j= \Phi \circ (\Phi')^{-1}(\lambda_j)$ for $j=0,1$,
$\cP_\aff([0,1])$ can be identified with $\cG_{\aff}([0,1]) \times_{G_\aff} \cC_\aff([0,1])$.
That implies that $\cP_\aff([0,1])$ is non-empty and weakly contractible.

On the other hand, given $\lambda_0, \lambda_1 \in \cI_\aff$, there exists $g \in G_\aff$ such that $g^{-1} \lambda_1 < \lambda_0$.
Moreover, there exists $\Phi \in \cG_{\aff}([0,1])$ such that for $A:=\Phi_*(0_\aff)$, we have $g_A=g$.
It follows that \eqref{eq:weakfibration1} is surjective.
We leave to readers to check that \eqref{eq:weakfibration1} is a weak fibration (cf. \cite[Section $(2a)$]{Seidel4.5}).
\end{proof}

We denote the fiber of \eqref{eq:weakfibration1} at $(\lambda_0, \lambda_1)$ by $\cA_\aff([0,1],\lambda_0, \lambda_1)$.

\begin{definition}
For $\ul{L}_0=\{L_{0,k}\}_{k=1}^n$ and $\ul{L}_1=\{L_{1,k}\}_{k=1}^n$ in $\cL^{cyl,n}$, a perturbation pair is a pair $(A_{\ul{L}_0,\ul{L}_1},H_{\ul{L}_0,\ul{L}_1})$
such that
\begin{align}\label{eq:HamData}
\left\{ 
\begin{array}{lll}
 A_{\ul{L}_0,\ul{L}_1}=a_{\ul{L}_0,\ul{L}_1,t}dt \in \cA_\aff([0,1],\lambda_{\ul{L}_0},\lambda_{\ul{L}_1}) \\
 H_{\ul{L}_0,\ul{L}_1}=(H_{\ul{L}_0,\ul{L}_1,t})_{t \in [0,1]}, \text{ } H_{\ul{L}_0,\ul{L}_1,t} \in \cH_{a_{\ul{L}_0,\ul{L}_1,t}}^n(E) \\
\end{array}
\right. 
\end{align}
and 
\begin{align}
 \phi_{H_{\ul{L}_0,\ul{L}_1}}^{-1}(\Sym(\ul{L}_{1})) \pitchfork \Sym(\ul{L}_{0}). \label{eq:transversalIntersect}
\end{align}
\end{definition}

\begin{lemma}
For any $\ul{L}_0, \ul{L}_1 \in \cL^{cyl,n}$, the set of  perturbation pairs is non-empty.
\end{lemma}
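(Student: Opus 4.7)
The plan is to produce $(A_{\ul{L}_0,\ul{L}_1}, H_{\ul{L}_0,\ul{L}_1})$ in three steps: pick the connection, build an initial Hamiltonian of product type, then perturb within the flexible class $\cH^n$ to achieve the transversality condition \eqref{eq:transversalIntersect}. Step one is immediate: by the preceding lemma, \eqref{eq:weakfibration1} is a surjective weak fibration, so its fibre $\cA_\aff([0,1],\lambda_{\ul{L}_0},\lambda_{\ul{L}_1})$ is non-empty; any $A=a_tdt$ in it works, and unwinding the definition of $\cP_\aff([0,1])$ via \eqref{eq:orderinglambda} gives the strict separation $g_A^{-1}\lambda_{\ul{L}_1} < \lambda_{\ul{L}_0}$, which will be the crucial input for the compactness estimate below. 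For step two, for each $t$ I would modify $H_{a_t}$ by a bump function supported in a compact neighbourhood of $C_\bH$ to obtain $H''_t \in \cH_{a_t}(\bH^\circ)$, smoothly in $t$, and set $H_{0,t}:=(\pi_E^*H''_t)^{[n]}$; this lies in $\cH_{a_t}^{n,pre}(E)\subset\cH_{a_t}^n(E)$.

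Step three is where the real work lies. The critical geometric observation is that the intersection
\begin{equation*}
I_0 := \phi_{H_0}^{-1}(\Sym(\ul{L}_1)) \cap \Sym(\ul{L}_0)
\end{equation*}
is compact and disjoint from $D_r^\circ$. Disjointness follows from \eqref{eq:DisjointOpen} together with the fact that $\phi_{H_0}$ descends from a fibre-wise Hamiltonian on $E$, hence preserves the partitioning of $\bH^\circ$ by the open sets $U_{L_{j,k}}$ and thereby the property $\Sym(\ul{L}_j) \cap D_r^\circ = \emptyset$. Compactness follows by projecting to $\bH^\circ$: using \eqref{eq:gaugelambda}, the projection of $\phi_{H_0}^{-1}(\Sym(\ul{L}_1))$ has $\lambda$-interval $g_A^{-1}\lambda_{\ul{L}_1}$, which is strictly disjoint from $\lambda_{\ul{L}_0}$ by the inequality from step one; combined with properness of each $\pi_E|_{L_k}$, this confines $I_0$ to a compact subset of $\Conf^n(E)\setminus D_r^\circ$.

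I would now choose a precompact open neighbourhood $U$ of $I_0$ in $\Conf^n(E)\setminus D_r^\circ$ and let $\mathcal{B}$ be a Banach space of time-dependent smooth perturbations $\tilde{h}=(\tilde{h}_t)$ with $\supp(\tilde{h}_t)\subset U$ for all $t$. A standard parametric-transversality argument then shows that the universal moduli space
\begin{equation*}
\widetilde{\cM} := \{(p,\tilde{h}) \in \Sym(\ul{L}_0) \times \mathcal{B} : \phi_{H_0+\tilde{h}}(p) \in \Sym(\ul{L}_1)\}
\end{equation*}
is a smooth Banach manifold -- the linearisation in the $\tilde{h}$-direction is surjective because compactly supported bumps on $U$ generate arbitrary infinitesimal Hamiltonian vector fields at points of $I_0$ -- and Sard--Smale applied to the forgetful projection $\widetilde{\cM}\to \mathcal{B}$ yields an arbitrarily small $\tilde{h}\in \mathcal{B}$ for which \eqref{eq:transversalIntersect} holds with $H:=H_0+\tilde{h}$. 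By construction $H_t\in \cH_{a_t}^n(E)$, as required.

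The only non-routine aspect is the compactness of $I_0$, since $\Sym(\ul{L}_j)$ is neither compact nor cylindrical at infinity in $\Conf^n(E)$; this is precisely what the strict inequality in \eqref{eq:ConfInterval} built into $\cP_\aff([0,1])$ is engineered to deliver. Once compactness is secured, the Sard--Smale step is essentially routine, making essential use of the additional flexibility in $\cH_\gamma^n(E)$ over $\cH_\gamma^{n,pre}(E)$ highlighted in Remark \ref{r:FlexH}.
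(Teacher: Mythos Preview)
Your proposal is correct and follows essentially the same route as the paper: use the strict separation $g_A^{-1}\lambda_{\ul{L}_1} < \lambda_{\ul{L}_0}$ from \eqref{eq:orderinglambda} to confine the intersection to a compact subset of $\Conf^n(E)\setminus D_r^\circ$, then exploit the flexibility of $\cH_\gamma^n(E)$ (Remark~\ref{r:FlexH}) to perturb to transversality. The paper's proof is simply a terser version of yours, omitting the explicit Sard--Smale machinery; one minor simplification is that disjointness of $I_0$ from $D_r^\circ$ follows immediately from $I_0 \subset \Sym(\ul{L}_0)$ without needing to track how $\phi_{H_0}$ interacts with the sets $U_{L_{j,k}}$.
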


\begin{proof}
For any choice of $(A_{\ul{L}_0,\ul{L}_1},H_{\ul{L}_0,\ul{L}_1})$ satisfying \eqref{eq:HamData}, by \eqref{eq:gaugelambda} and \eqref{eq:orderinglambda}, we have
\begin{align}
 \lambda_{\phi_{H_{\ul{L}_0,\ul{L}_1}}^{-1}(\Sym(\ul{L}_{1}))} < \lambda_{\ul{L}_0}
\end{align}
so the transversality condition \eqref{eq:transversalIntersect} is satisfied outside a compact subset.
By definition \eqref{eq:HnE}, we are free to perturb $H_{\ul{L}_0,\ul{L}_1}$ inside a compact subset, so the result follows (cf. Remark \ref{r:FlexH}).
\end{proof}


Let $\cX(H_{\ul{L}_0,\ul{L}_1}, \ul{L}_0,\ul{L}_1)=\phi_{H_{\ul{L}_0,\ul{L}_1}}^{-1}(\Sym(\ul{L}_{1})) \pitchfork \Sym(\ul{L}_{0})$
 which is identified with the set of $X_{H_{\ul{L}_0,\ul{L}_1}}$-chords from $\Sym(\ul{L}_{0})$ to $\Sym(\ul{L}_{1})$.
By \eqref{eq:preserveDr} and the fact that $\Sym(\ul{L}) \subset \Conf^n(E) \setminus D^\circ_r$ for every $\ul{L} \in \cL^{cyl,n}$, we know that
\begin{align}
\text{the $X_{H_{\ul{L}_0,\ul{L}_1}}$-chords from $\Sym(\ul{L}_{0})$ to $\Sym(\ul{L}_{1})$ are disjoint from $D^\circ_r$.} \label{eq:disjointFromDr}
\end{align}
\begin{example}\label{ex:noHperb}
 If \eqref{eq:transversalIntersect} can be achieved by $H_{\ul{L}_0,\ul{L}_1,t}=(H'_{\ul{L}_0,\ul{L}_1,t})^{[n]} \in \cH_{a_{\ul{L}_0,\ul{L}_1,t}}^{n,pre}(E)$, then the set 
 $\cX(H_{\ul{L}_0,\ul{L}_1}, \ul{L}_0,\ul{L}_1)$ can be identified with 
 the set of {\it unordered} $n$-tuples $\ul{x}=(x_{1}, \dots, x_{n})$
such that $x_{k} \in \phi_{H'_{\ul{L}_0,\ul{L}_1}}^{-1}(L_{1,b_k}) \pitchfork L_{0,a_k}$, where $\{a_k:k=1,\dots,n\} =\{b_k:k=1,\dots,n\}=\{1,\dots,n\}$ (see Figure \ref{fig:noH}).
\end{example}

\begin{figure}[ht]
 \includegraphics{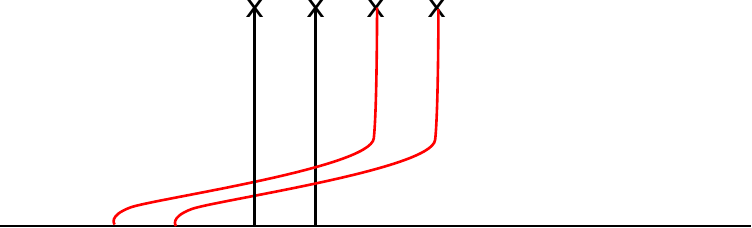}
 \caption{Lagrangian tuples $\pi_E(\ul{L}_0)$ (black) and $\pi_E(\phi_{H'_{\ul{L}_0,\ul{L}_1}}^{-1}(\ul{L}_1))$ (red) for $H'_{\ul{L}_0,\ul{L}_1} \in
 H_{\gamma}(E) $}\label{fig:noH}
\end{figure}





Given $\ul{L}_0,\ul{L}_1 \in \cL^{cyl,n}$, a perturbation pair $(A_{\ul{L}_0,\ul{L}_1}, H_{\ul{L}_0,\ul{L}_1})$ and a smooth family $J=(J_t)_{t \in [0,1]}, J_t \in \cJ^n(E)$,
we define the Floer cochains (as a vector space over a chosen coefficient field of characteristic zero\footnote{Recall that to work in characteristic zero requires that we fix spin structures on the Lagrangians; in the main examples studied later in the paper, the Lagrangians are products $(S^2)^j \times (\mathbb{R}^2)^k$ for $j,k \geq 0$ and admit unique spin structures.}) by 
\begin{align}
CF(\ul{L}_0,\ul{L}_1) :=\oplus_{\ul{x} \in \cX(H_{\ul{L}_0,\ul{L}_1}, \ul{L}_0,\ul{L}_1)}  o_{\ul{x}} 
\end{align}
where $o_{\ul{x}}$ is the orientation line at $\ul{x}$.
In this case, we call $(A_{\ul{L}_0,\ul{L}_1}, H_{\ul{L}_0,\ul{L}_1},J)$ a Floer cochain datum for $\ul{L}_0,\ul{L}_1 \in \cL^{cyl,n}$.

For a fixed choice of grading functions $\eta_{0,k}$ on $L_{0,k}$ and $\eta_{1,k}$ on $L_{1,k}$, for $k=1,\dots,n$,
Lemma \ref{l:canonicalBundle}  implies that the product $\prod_k \eta_{i,k}$
descends to a grading function on $\Sym(\ul{L}_i)$, and
we use that to grade $\cX(H_{\ul{L}_0,\ul{L}_1}, \ul{L}_0,\ul{L}_1)$.

\begin{example}
 In the situation of Example \ref{ex:noHperb},  the grading of $\ul{x}=(x_{1}, \dots, x_{n})$ is 
 \begin{align}
  |\ul{x}|=\sum_{j=1}^n |x_j|
 \end{align}
where $|x_j|$ is the Floer grading of $x_j$ in $E$.
 \end{example}

\subsubsection{Floer data}\label{sss:FloerData}

We define the following group of gauge transformations:
\begin{align}
 \cG_\aff(\ol{\cS}^{d+1,h}):=\{\Phi \in C^{\infty}(\ol{\cS}^{d+1,h},G_\aff) \ :&
 \Phi|_{\ol{N}_{\mk}^{d+1,h}}=id_{G_\aff} \text{ and }  \Phi(\epsilon_j(s,t)) \text{ is independent of }       \label{eq:GaugeS}\\
 &\text{both } s \text{ and }S \in \ol{\cR}^{d+1,h}\}.  \nonumber
\end{align}
where $\ol{N}_{\mk}^{d+1,h}$ is defined in Section \ref{sss:EndsAndNeighborhoods}.

\begin{lemma}
 $\cG_\aff(\ol{\cS}^{d+1,h})$ is weakly contractible.
\end{lemma}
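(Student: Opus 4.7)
The plan is to reduce the statement to an elementary convexity argument. The key observation is that the affine group $G_\aff$ is diffeomorphic to the Euclidean space $\kg_\aff \cong \RR^2$: identifying $G_\aff$ with the upper half-plane $\{(a,b): a > 0\}$ of affine transformations $x \mapsto ax + b$, the exponential map $\exp\colon \kg_\aff \to G_\aff$ is an analytic diffeomorphism sending $0 \in \kg_\aff$ to $id_{G_\aff}$. First I would use this to identify $\cG_\aff(\ol{\cS}^{d+1,h})$ with the space
\[
\widetilde{\cG}_\aff(\ol{\cS}^{d+1,h}) := \{ \psi \in C^{\infty}(\ol{\cS}^{d+1,h}, \kg_\aff) \ :\ \psi|_{\ol{N}_{\mk}^{d+1,h}} = 0 \text{ and } \psi(\epsilon_j(s,t)) \text{ is independent of } s \text{ and } S \}
\]
via the post-composition $\psi \mapsto \exp \circ \psi$; this is a homeomorphism (in fact a diffeomorphism of Fr\'echet manifolds).

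Next I would observe that $\widetilde{\cG}_\aff(\ol{\cS}^{d+1,h})$ is a \emph{linear} subspace of the Fr\'echet space $C^{\infty}(\ol{\cS}^{d+1,h}, \kg_\aff)$, since both defining conditions (vanishing on a closed subset, and $(s,S)$-independence on the strip-like ends) are preserved under taking $\RR$-linear combinations. In particular $\widetilde{\cG}_\aff(\ol{\cS}^{d+1,h})$ is non-empty (it contains the zero map) and convex, so the straight-line homotopy $\psi_r := (1-r)\psi$ realises a strong deformation retraction onto $\{0\}$. Transporting back through $\exp$, the corresponding contraction of $\cG_\aff(\ol{\cS}^{d+1,h})$ is given pointwise by applying a fixed contraction of $G_\aff$ onto $id_{G_\aff}$; both defining conditions are manifestly preserved along the homotopy, because the constant value $id_{G_\aff}$ is fixed and the construction is purely pointwise in the domain. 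This yields strong contractibility, which in particular implies weak contractibility of $\cG_\aff(\ol{\cS}^{d+1,h})$.

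The only substantive point to verify, and the one I would treat with a little care, is that the requisite smoothness and topological structure are compatible with the constraints, i.e. that evaluation on $\ol{N}_{\mk}^{d+1,h}$ and restriction to the strip-like ends are continuous maps of Fr\'echet spaces so that $\widetilde{\cG}_\aff(\ol{\cS}^{d+1,h})$ is a genuine closed linear subspace; this is standard given the smoothness up to corners of $\ol{\cS}^{d+1,h}$ and the consistency of the chosen strip-like ends from Section \ref{sss:EndsAndNeighborhoods}. No further input is needed.
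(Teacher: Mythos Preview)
Your argument is correct and in fact gives strong (not merely weak) contractibility. The paper's proof takes a different route: it exhibits $\cG_\aff(\ol{\cS}^{d+1,h})$ as the fiber of a restriction map
\[
C^{\infty}(\ol{\cS}^{d+1,h},G_\aff) \ \longrightarrow \ C^{\infty}(\ol{N}_{\mk}^{d+1,h},G_\aff) \times \prod_{j=0}^d C^{\infty}(\epsilon_j(0,[0,1]),G_\aff),
\]
argues this is a weak fibration, and concludes from contractibility of total space and base (both being mapping spaces into the contractible group $G_\aff$). Your approach bypasses the fibration machinery entirely by using that $G_\aff$ is not just contractible but diffeomorphic to its Lie algebra, turning the problem into the contractibility of a linear subspace of a Fr\'echet space. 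This is more elementary and yields a sharper conclusion; the paper's approach, on the other hand, is the one that generalises verbatim to target groups that are contractible but not Euclidean, and it fits the pattern used elsewhere in the section for the spaces $\cP_\aff$.
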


\begin{proof}
Recalling that $G_{\aff}$ is contractible and
$\ol{N}_{\mk}^{d+1,h}$ is a codimension $0$ smooth submanifold of $\ol{\cS}^{d+1,h}$ with boundary and corner.
The result follows from the weak fibration
 \begin{align}
  \cG_\aff(\ol{\cS}^{d+1,h}) \to C^{\infty}(\ol{\cS}^{d+1,h},G_\aff) \to C^{\infty}(\ol{N}_{\mk}^{d+1,h},G_\aff) \times \prod_{j=0}^d C^{\infty}(\epsilon_j(0,[0,1]),G_\aff)
 \end{align}
\end{proof}


Similarly to \eqref{eq:ConfInterval}, we define 
\begin{align}
\cC_\aff(\ol{\cS}^{d+1,h}):=\{(\lambda_0,\dots,\lambda_d) \in \cI_\aff^{d+1}| \lambda_0 > \lambda_1 > \dots > \lambda_d\}.
\end{align}

Consider a trivial $G_\aff$-bundle over $\ol{\cS}^{d+1,h}$ with fiber $\RR$ and equip it with the trivial connection $0_\aff$.
For $\Phi \in \cG_\aff(\ol{\cS}^{d+1,h})$ and $\lambda \in C^{\infty}(\partial \ol{\cS}^{d+1,h}, \cI_\aff)$, we define
\begin{align}
 \Phi_*\lambda:=&(\Phi_*0_\aff, \Phi \circ \lambda) \in \Omega^1(\ol{\cS}^{d+1,h},\kg_\aff) \times C^{\infty}(\partial \ol{\cS}^{d+1,h}, \cI_\aff) \label{eq:PhiLambda} \\
 \cP_\aff(\ol{\cS}^{d+1,h}):=&\{\Phi_*\lambda | \lambda \text{ is locally constant and } (\lambda|_{\partial_0 \ol{\cS}^{d+1,h}}, \dots, \lambda|_{\partial_d \ol{\cS}^{d+1,h}}) \in \cC_\aff(\ol{\cS}^{d+1,h})\}
\end{align}
where $\partial_j \ol{\cS}^{d+1,h}:=\cup_{S \in \ol{\cR}^{d+1,h}} \partial_j S$ for all $j$.
We have the identification
\begin{align}
 \cP_\aff(\ol{\cS}^{d+1,h}) \simeq \cG_\aff(\ol{\cS}^{d+1,h}) \times_{G_\aff} \cC_\aff(\ol{\cS}^{d+1,h})
\end{align}
so $\cP_\aff(\ol{\cS}^{d+1,h})$ is weakly contractible. 
By \eqref{eq:GaugeS}, if $(A,\lambda) \in \cP_\aff(\ol{\cS}^{d+1,h})$, then $A|_{\epsilon_j(s,t)}$ and $\lambda|_{\epsilon_j(s,k)}$ (for $k=0,1$) are independent of $s$
and $S \in \ol{\cR}^{d+1,h}$.
Therefore, over strip-like ends, we have a well-defined projection 
\begin{align}
 \cP_\aff(\ol{\cS}^{d+1,h}) \to \prod_{j=0}^d \cP_\aff([0,1])
\end{align}
which is surjective and is a weak fibration (because $G_\aff$ is contractible so one can extend $\Phi_j \in \cG_\aff([0,1])$ over strip-like ends smoothly to a $\Phi \in \cG_\aff(\ol{\cS}^{d+1,h})$ consistently).
For a choice of $(A_j,\lambda_{j,0}, \lambda_{j,1}) \in \cP_\aff([0,1])$ for each $j=0,\dots,d$, we denote the fiber by
\begin{align}
 \cP_\aff(\ol{\cS}^{d+1,h}, \{(A_j,\lambda_{j,0}, \lambda_{j,1})\}_j)
\end{align}
which is also weakly contractible.
Note also that, if $(A,\lambda) \in \cP_\aff(\ol{\cS}^{d+1,h})$, then by \eqref{eq:GaugeS},
\begin{align}\label{eq:SurfaceConnection}
\text {$A$ is flat everywhere and vanishes in $\ol{N}^{d+1,h}_{\mk}$.} 
\end{align}

Now, for $S \in \cR^{d+1,h}$, we define $\cP_\aff(S, \{(A_j,\lambda_{j,0}, \lambda_{j,1})\}_j)$ by
\begin{align}
 \{(A_S,\lambda):(A_S,\lambda)=(A,\lambda')|_{S} \text{ for some }(A,\lambda') \in  \cP_\aff(\ol{\cS}^{d+1,h}, \{(A_j,\lambda_{j,0}, \lambda_{j,1})\}_j)\}
\end{align}

A {\it cylindrical Lagrangian label} is a choice of an element $\ul{L}_j \in \cL^{cyl,n}$ associated to $\partial_j S$ for all $j$.
We choose a cylindrical Lagrangian label and Floer cochain data $(A_0,H_0,J_0)$ and $(A_j,H_j,J_j)$ for $(\ul{L}_{0}, \ul{L}_{d})$ and $(\ul{L}_{j-1}, \ul{L}_{j})$
for $j=1,\dots, d$, respectively.







Fix $A_S \in \cP_\aff(S, \{(A_j, \lambda_{\ul{L}_{j-1}}, \lambda_{\ul{L}_{j}})\}_{j=0}^d)$, where for $j=0$, 
it should be understood as $(A_j, \lambda_{\ul{L}_{j-1}}, \lambda_{\ul{L}_{j}}):=(A_0,\lambda_{\ul{L}_{0}},\lambda_{\ul{L}_{d}})$.
Recall that we have chosen $J_j=(J_{j,t})_{t \in [0,1]}, J_{j,t} \in \cJ(E)$ for $j=0,\dots,d$.
We have also chosen strip-like ends $\epsilon_0(s,t):(-\infty,0] \times [0,1] \to S$ and $\epsilon_j(s,t):[0,\infty)  \times [0,1] \to S$ for $j=1,\dots,d$.
We equip $S$ with the following additional data
\begin{align}\label{eq:FloerDataJ}
 &\text{a smooth family $J=(J_z)_{z \in S}$, $J_z \in \cJ^n(E)$ such that $J_z=J_E^{[n]}$ in $\nu(\mk(S))$}\\
 &\text{and $J_{\epsilon_j(s,t)}=J_{j,t}$ for all $j$} \nonumber \\
 &\text{$K \in \Omega^1(S,C^{\infty}(\Conf^n(E),\RR))$ such that for each $w \in TS$, $K(w) \in \cH_{A_S(w)}^n(E)$, and}\label{eq:FloerDataK}\\
 &\text{for each $j$ and $r>0$, there is some $c_j>0$ for which $\|(\epsilon_j^*K-H_jdt)e^{c_j|s|}\|_{C^r}$ } \nonumber\\
 &\text{converges to $0$ as $s$ goes to $\pm \infty$. Moreover, we require }K \text{ to vanish in $\nu(\mk(S))$.}  \nonumber 
\end{align}
Let $X_K \in \Omega^1(S,C^{\infty}(\Conf^n(E),TE))$ be the associated one-form with values in Hamiltonian vector fields.

\begin{remark}\label{rmk:exp_convergence}
 We require $\epsilon_j^*K$ to converge to $H_jdt$ exponentially fast, instead of co-inciding with it, because it makes it easier to achieve regularity of the moduli whilst maintaining 
 compatibility with gluing (see \cite[Remark 4.7]{Seidel4.5}).
\end{remark}

\begin{remark}\label{r:JKextension}
 Note that $J$ can be extended smoothly to a family of tamed almost complex structures in $\Conf^n(E^{\rceil})$ and $K$ 
can be extended smoothly to an element in $\Omega^1(S,C^{\infty}(\Conf^n(E^{\rceil}),\RR))$.
\end{remark}


We choose a smooth family of $(A_S,J,K)$ for $S$ varying in $\cR^{d+1,h}$. 
Given $\ul{x}_0 \in  \cX(H_0, \ul{L}_0,\ul{L}_d)$ and $\ul{x}_j \in \cX(H_j, \ul{L}_{j-1},\ul{L}_j)$ for $j=1,\dots,d$,
we define $\cR^{d+1,h}(\ul{x}_0;\ul{x}_d, \dots, \ul{x}_1)_{pre}$ to be the moduli of all maps $u :S \to \Hilb^n(E)$ such that
\begin{align}\label{eq:FloerEquation0}
    \left\{
\begin{array}{ll}
      u^{-1}(D_{HC}) \subset \nu(\mk(S)) \\
      (Du|_z-X_{K}|_{u(z)})^{0,1}=0 \text{ with respect to } (J_z)_{u(z)} \text{ for } z \in S\\
      u(z) \in \Sym(\ul{L}_j) \text{ for } z \in \partial_j S \\
      \lim_{s \to \pm \infty}u( \epsilon^j(s,\cdot))=\ul{x}_j(\cdot) \text{ uniformly}
\end{array}
      \right.
\end{align}

Note that the conditions: $X_{K}$ vanishes in $\nu(\mk(S))$, $J=J_E^{[n]}$ in $\nu(\mk(S))$, 
and $u^{-1}(D_{HC})\subset \nu(\mk(S))$ guarantee that $(Du|_z-X_{K}|_{u(z)})^{0,1}=0$ is a well-defined equation for all $z \in S$, by identifying $\Conf^n(E)$ as a subset of $\Hilb^n(E)$.

Subsequently, we define
$\cR^{d+1,h}(\ul{x}_0;\ul{x}_d, \dots, \ul{x}_1)$ to be the subset of $\cR^{d+1,h}(\ul{x}_0;\ul{x}_d, \dots, \ul{x}_1)_{pre}$ 
consisting of all $u$ such that
\begin{align}\label{eq:FloerEquation1}
u(\xi_+^i) \in D_{HC} \text{ for all }i=1,\dots,h.
\end{align}

\begin{lemma}\label{l:regularity}
For generic $(J,K)$ such that \eqref{eq:FloerDataJ} and \eqref{eq:FloerDataK} are satisfied, every  solution $u\in \cR^{d+1,h}(\ul{x}_0;\ul{x}_d, \dots, \ul{x}_1)_{pre}$ is regular.
\end{lemma}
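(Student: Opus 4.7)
My plan is to prove the lemma by the standard universal moduli space / Sard--Smale argument, adapted to the restricted classes of perturbations $\cJ^n(E)$ and $\cH_\gamma^n(E)$.

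\textbf{Step 1 (Universal moduli).} Form a Banach-manifold completion (e.g.\ a $C^\epsilon$-space) of the admissible data $(J,K)$ satisfying \eqref{eq:FloerDataJ} and \eqref{eq:FloerDataK}, and consider the universal moduli $\cR^{d+1,h}(\ul{x}_0;\ul{x}_d,\dots,\ul{x}_1)^{univ}_{pre}$ of triples $(u,J,K)$ solving \eqref{eq:FloerEquation0}. At each solution the linearization splits as
\[
\mathcal{D} \;=\; D_u \,\oplus\, \partial_J \,\oplus\, \partial_K,
\]
where $D_u$ is the Fredholm linearization in $u$. It suffices to show $\mathcal{D}$ is surjective at every point of the universal moduli; the lemma then follows by applying Sard--Smale to the projection onto the space of $(J,K)$.

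\textbf{Step 2 (Cokernel vanishing by unique continuation).} Suppose $\eta \in L^q(S,\,\Omega^{0,1}\otimes u^*T\Hilb^n(E))$ is $L^2$-orthogonal to the image of $\mathcal D$. Elliptic regularity and the formal adjoint equation for $D_u^*$ show that $\eta$ is smooth, and the standard unique continuation result for solutions of $D_u^*\eta=0$ (see, e.g., \cite[Section 9k]{SeidelBook}) reduces the task to showing $\eta$ vanishes on some nonempty open subset of $S$.

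\textbf{Step 3 (Good locus for perturbations).} On $\nu(\mk(S))$ both $J=J_E^{[n]}$ and $K\equiv 0$ are fixed, so perturbations have no effect there. Set $S^\circ := S\setminus\overline{\nu(\mk(S))}$ and
\[
G \;:=\; \bigl\{z\in S^\circ : u(z)\in\Conf^n(E)\setminus D_r^\circ\bigr\}.
\]
At any $z\in G$, the defining property \eqref{eq:HnE} of $\cH_\gamma^n(E)$ permits $K$ to be perturbed by any Hamiltonian compactly supported in a small neighborhood of $u(z)$ inside $\Conf^n(E)\setminus D_r^\circ$. Together with the freedom in $J$, this yields the usual surjectivity of $\partial_K$ onto the $(0,1)$-part of $T\Hilb^n(E)$ at $u(z)$ (as in \cite[Proposition 9.10]{SeidelBook}), forcing $\eta(z)=0$ on $G$.

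\textbf{Step 4 (Nonemptyness of $G$).} By \eqref{eq:disjointFromDr} each chord $\ul{x}_j$ lies in $\Conf^n(E)\setminus D_r^\circ$, so near every strip-like end $u$ stays in $\Conf^n(E)\setminus D_r^\circ$; hence $G$ contains a half-strip near each puncture and is nonempty and open. Combined with Step 2, unique continuation then gives $\eta\equiv 0$ on $S$.

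\textbf{Main obstacle.} The delicate point is verifying, in Step 3, that the restricted classes $\cJ^n(E)$ (almost complex structures of symmetric product type) and $\cH_\gamma^n(E)$ (Hamiltonians forced to be of product type outside a compact subset of $\Conf^n(E)\setminus D_r^\circ$) are still rich enough for surjectivity. The key observation is that, even though $\cJ^n(E)$ is constrained to be of the form $(J')^{[n]}$, the class $\cH_\gamma^n(E)$ is \emph{unconstrained} inside compact subsets of $\Conf^n(E)\setminus D_r^\circ$, and this pointwise freedom in $K$ alone is enough to run the standard Floer-theoretic transversality argument on the good locus $G$, irrespective of the restricted form of $J$. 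The restriction on $K$ near $D_r^\circ$ is harmless because Step 4 shows $G$ is always nonempty.
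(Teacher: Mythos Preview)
Your argument is correct and shares the paper's core idea: achieve transversality by varying $K$ alone, using that every solution passes through a region where $K$ is unconstrained (your Step~4 is exactly the observation the paper records as ``the boundary conditions ensure that every solution \dots\ has non-empty intersection with a compact subset of $\Conf^n(E)\setminus D_r^\circ$ outside $\mk(S)$'').

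The packaging differs in one respect worth noting. The paper invokes Gromov's graph trick rather than arguing directly via $\partial_K$; for that it needs a global taming symplectic form on $\Hilb^n(E)$, and the product form $\omega_{\Conf^n(E)}$ does \emph{not} extend smoothly across $D_{HC}$. The paper therefore stratifies the mapping space into subsets $\cB^k=\{u:\ u(S\setminus\nu(\mk(S)))\cap V^k=\emptyset\}$ for a shrinking system of neighborhoods $V^k\supset D_{HC}$, uses the Voisin-type forms $\omega_k$ of Lemma~\ref{l:tameSymp} to run the graph trick on each $\cB^k$, and then intersects the resulting residual sets over $k$. Your direct $\partial_K$ argument sidesteps this: you only need some smooth Riemannian metric on $\Hilb^n(E)$ to set up the Sobolev completion, and the surjectivity of $\partial_K$ uses $\omega_{\Conf^n(E)}$ only on the good locus $G$, where it is perfectly well defined. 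You should, however, say explicitly which metric you use in Step~1 (any smooth one, e.g.\ one induced by an $\omega_k$), since the naive product metric is singular along $D_{HC}$ and cannot be used directly.
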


\begin{proof}
For the ease of exposition, we discuss the case when $S \in \cR^{d+1,h}$ is fixed.
We refer readers to \cite[Section (9k)]{SeidelBook} for the discussion when $S$ is allowed to vary in $\cR^{d+1,h}$, and to \cite[Section 9]{Seidel4} for the role of Remark \ref{rmk:exp_convergence} in achieving regularity compatibly with gluing.

Let $\cB$ be the space of smooth maps $u:S \to \Hilb^n(E)$ such that 
$u^{-1}(D_{HC})\subset \nu(\mk(S))$.
Note that $\cB$ is an open subset of $C^{\infty}(S,\Hilb^n(E))$.
Let $V^1 \supset V^2 \supset \dots$ be a sequence of neighborhoods of $D_{HC}$ such that $\cap_k V^k =D_{HC}$.
Let $\cB^k=\{u \in \cB | Im(u|_{S \setminus \nu(\mk(S))}) \cap V^k =\emptyset\}$.
Note that $\cB= \cup_k \cB^k$.
We want to run the Fredholm theory for appropriate Sobolev completions of $\cB^k$ for each $k$.

For each $k$, using Lemma \ref{l:tameSymp}, we pick a symplectic form $\omega_{k}$ on $\Hilb^n(E)$ which agrees with $\omega_{\Conf^n(E)}$ outside $V^k$
and tames the complex structure on $\Hilb^n(E)$.
This induces a family of Riemannian metrics $g_k=(g_{k,z})_{z \in S}$ on $\Hilb^n(E)$ which agree  with the metric induced  from $\omega_{\Conf^n(E)}$ and $J_z$ outside $V^k$. 
We use $g_k$ to form an appropriate Sobolev completion of $\cB^k$ (or the corresponding function space of the graphs of the maps).

The boundary conditions ensure that every solution $u$ of \eqref{eq:FloerEquation0} has non-empty 
 intersection with a compact subset of $\Conf^n(E) \setminus D^\circ_r$ outside $\mk(S)$. 
 For $u \in \cB^k$, Gromov's graph trick applies, because on the one hand, $X_K|_{S \setminus \nu(\mk(S))}$ is the Hamiltonian field with respect to $\omega_{k}$, and 
 on the other, $X_K|_{\nu(\mk(S))}=0$, so it is tautologically the Hamiltonian field with respect to $\omega_{k}$. Hence regularity of $u$ can be achieved
 by choosing $K$ generically amongst functions satisfying \eqref{eq:FloerDataK}, i.e. although we require that $K|_{\nu(\mk(S))}=0$, the freedom of $K$ outside $\nu(\mk(S))$ is sufficient to achieve regularity.
 
 The outcome is a sequence of residual sets $\cS^k$ in the space of all $K$ satisfying \eqref{eq:FloerDataK}
 such that for every $K \in \cS^k$,
 every $u \in \cB^k$ satisfying \eqref{eq:FloerEquation0} is regular.
 Therefore, we can take $\cS:=\cap_{k} \cS^k$, which is still dense
 and every $u\in \cB$ satisfying \eqref{eq:FloerEquation0} is regular for every $K \in \cS$.
 
 More precisely, it means that for every $K \in \cS$, and for every $u\in \cB$ satisfying \eqref{eq:FloerEquation0},
 there exists $N>0$ such that for all $k>N$, the Fredholm operator $D_u$ at $u$, with domain 
 a Sobolev completion of $\cB^k$ and codomain a Sobolev completion of $\Omega^{0,1}(S,u^*T\Hilb^n(E))$ with respect to the metric $g_k$, is surjective.
 This gives a manifold structure on $\cR^{d+1,h}(\ul{x}_0;\ul{x}_d, \dots, \ul{x}_1)_{pre}$.
 \end{proof}


\begin{lemma}\label{l:regularity2}
For generic $(J,K)$ such that \eqref{eq:FloerDataJ} and \eqref{eq:FloerDataK} are satisfied, every  solution $u\in \cR^{d+1,h}(\ul{x}_0;\ul{x}_d, \dots, \ul{x}_1)$ is regular.
\end{lemma}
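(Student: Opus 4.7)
The plan is to deduce the regularity of $\cR^{d+1,h}(\ul{x}_0;\ul{x}_d,\dots,\ul{x}_1)$ from the regularity of the ambient moduli $\cR^{d+1,h}(\ul{x}_0;\ul{x}_d,\dots,\ul{x}_1)_{pre}$ established in Lemma \ref{l:regularity}. Concretely, I would write the former as the preimage $ev^{-1}(D_{HC}^h)$ of the evaluation map
\[
ev \colon \cR^{d+1,h}(\ul{x}_0;\ul{x}_d,\dots,\ul{x}_1)_{pre} \longrightarrow \prod_{i=1}^{h} \Hilb^n(E), \qquad u \longmapsto (u(\xi_+^1),\dots,u(\xi_+^h)),
\]
and show that $ev$ is transverse to $D_{HC}^h$ for generic $(J,K)$. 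The manifold structure on $\cR^{d+1,h}(\ul{x}_0;\ul{x}_d,\dots,\ul{x}_1)$ then follows by the implicit function theorem; its tangent space at $u$ is cut out of $T_u\cR^{d+1,h}(\ul{x}_0;\ul{x}_d,\dots,\ul{x}_1)_{pre}$ by the $h$ codimension-two incidence conditions $d\,ev_i(\cdot)\in T_{u(\xi_+^i)}D_{HC}$.

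The first step is to verify that for generic $(J,K)$ every $u\in\cR^{d+1,h}(\ul{x}_0;\ul{x}_d,\dots,\ul{x}_1)_{pre}$ meets $D_{HC}$ transversally at each of the marked points $\xi_+^i$. Recall from \eqref{eq:FloerDataJ} and \eqref{eq:FloerDataK} that $J_z=J_E^{[n]}$ and $X_K=0$ on $\nu(\mk(S))$, so $u$ is genuinely $J_E^{[n]}$-holomorphic near $\xi_+^i$; since $D_{HC}$ is a complex hypersurface and the Lagrangian boundary conditions keep $u$ from being identically contained in $D_{HC}$, the set $u^{-1}(D_{HC})\cap\nu(\mk(S))$ is a discrete complex-analytic subset. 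Failure of transversality at $\xi_+^i$ is exactly the statement that the holomorphic intersection multiplicity there is $\geq 2$. To rule this out generically, I would run the parametric Sard argument already used in Lemma \ref{l:regularity}, exploiting that $K$ remains freely perturbable outside $\ol{N}^{d+1,h}_{\mk}$: the universal locus of $(u,K)$ with a non-transverse contact at some $\xi_+^i$ is cut out by an additional complex codimension-one condition (vanishing of a $1$-jet coefficient), and surjectivity of the universal linearisation there follows from unique continuation for $J$-holomorphic curves applied to variations supported away from the marked points.

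The second step exploits that the positions of $\xi_+^1,\dots,\xi_+^h$ vary as independent parameters in $\cR^{d+1,h}$. The differential of $ev_i$ in the direction of moving $\xi_+^i$ by $\delta\in T_{\xi_+^i}S$ is $du(\xi_+^i)\!\cdot\!\delta$. Genericity as in the first step also gives $du(\xi_+^i)\neq 0$ (the critical locus of $u$ in $\nu(\mk(S))$ is a proper complex subvariety, whose presence imposes complex codimension-one conditions on the universal moduli). Combined with Step one, $du(\xi_+^i)\cdot T_{\xi_+^i}S$ is a complex line in $T_{u(\xi_+^i)}\Hilb^n(E)$ transverse to $T_{u(\xi_+^i)}D_{HC}$. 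Since the $h$ marked points move independently, the image of the full differential $d\,ev$ already spans the normal bundle of $D_{HC}^h$, which is the desired transversality.

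I expect the main obstacle to be the first step: proving generic transversality of $u$ to $D_{HC}$ despite $(J,K)$ being pinned down on $\nu(\mk(S))$. The remedy, as indicated above, is that $K$ and $J$ remain free on the complement of $\nu(\mk(S))$, and perturbing them there influences $u$ globally by unique continuation; combined with positivity of intersection of pseudoholomorphic maps with the complex hypersurface $D_{HC}$ (which prevents tangential intersections from being stable under generic perturbation), one obtains that the multiplicity-$\geq 2$ stratum of the universal moduli lies in a countable union of submanifolds of codimension at least two and is therefore avoided by a residual set of $K$ compatible with \eqref{eq:FloerDataK}. Intersecting this residual set with the one provided by Lemma \ref{l:regularity} yields the statement.
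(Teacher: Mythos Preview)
Your reduction to transversality of the evaluation map $ev\colon \cR^{d+1,h}(\ul{x}_0;\ul{x}_d,\dots,\ul{x}_1)_{pre}\to (\Hilb^n(E))^h$ against $D_{HC}^h$ is exactly the paper's approach; the paper simply records this reduction and appeals to the standard transversality theory for evaluation maps in \cite[Sections~6\&7]{McDuffSalamonJ}, whereas you spell out a concrete mechanism (moving the interior marked points, combined with unique continuation to rule out higher-multiplicity contacts).

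One point you have not addressed is that $D_{HC}$ is \emph{not} a smooth hypersurface: it is singular along the locus of subschemes whose support has a point of multiplicity $\geq 3$ (or several multiple points). Your argument speaks of $T_{u(\xi_+^i)}D_{HC}$ and ``codimension-two incidence conditions'' without justifying that $u(\xi_+^i)$ lies in the smooth locus. The paper handles this by observing that $D_{HC}$ is the image of a smooth pseudocycle, and by noting that algebraic intersection number $1$ at $\xi_+^i$ forces $u(\xi_+^i)$ to lie in the top stratum of $D_{HC}$. An equivalent fix within your framework is to argue, via the same Sard--Smale mechanism you invoke in Step~1, that for generic $(J,K)$ the point $u(\xi_+^i)$ avoids the singular locus of $D_{HC}$ (a complex codimension~$\geq 2$ condition on the universal moduli). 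Once that is in place, your Steps~1 and~2 go through; indeed, Step~1 alone (multiplicity $1$ at a smooth point of $D_{HC}$) already implies that $du(\xi_+^i)(T_{\xi_+^i}S)$ surjects onto the normal line, so Step~2 is essentially a restatement rather than an additional ingredient.
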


\begin{proof}
It suffices to show that for generic $(J,K)$,
the evaluation map
\begin{align}
\cR^{d+1,h}(\ul{x}_0;\ul{x}_d, \dots, \ul{x}_1)_{pre} \ni u \mapsto (u(\xi_+^1),\dots,u(\xi_+^h)) \in (\Hilb^n(E))^h 
\end{align} 
is transversal to $(D_{HC})^h$ (i.e. the algebraic intersection, which is well-defined as $u$ is holomorphic near $\mk(S)$, is $1$ at all the intersection points; note this implies that each such intersection point belongs to the smooth locus of $D_{HC}$).
This can be achieved by combining the argument in Lemma \ref{l:regularity} 
with standard transversality results for evaluation maps (see \cite[Sections 6 \& 7]{McDuffSalamonJ} and note that $D_{HC}$ is the image of a smooth pseudocycle).
\end{proof}

There is a correspondence of maps as follows (see \cite[Lemma 3.6]{OS04} or \cite[Section 13]{Lipshitz-cylindrical}).
An $n$-fold branched covering $\pi_\Sigma:\Sigma \to S$ and a continuous map $v: \Sigma \to E$ together 
 uniquely determine a continuous map $u:S \to \Sym^n(E)$, given by $u(z)=v(\pi_\Sigma^{-1}(z))$, counted with multiplicity.
Conversely, if $u:S \to \Sym^n(E)$ is complex analytic near the big diagonal $\Delta_E$ and $Im(u)$ is not contained in $\Delta_E$, then 
we can form the fiber product
\[
\begin{tikzcd}
\widetilde{\Sigma} \ar[rr,"\tilde{v}"] \ar[d,"\pi_{\widetilde{\Sigma}}"] & & E^n  \ar[d,"q_{S_n}"]\\
S \ar[rr,"u"] & &  \Sym^n(E).
\end{tikzcd}
\]
and the map $\tilde{v}$ is $S_n$-equivariant. 
Let $\pi_1:E^n \to E$ be the projection to the first factor.
Consider the subgroup $S_{n-1}$ of $S_n$ which fixes the first element.
The map $\pi_1 \circ \tilde{v}:\widetilde{\Sigma} \to E$
factors through $\Sigma:= \widetilde{\Sigma}/S_{n-1} \to E$
and we denote the latter map by $v$.
The map $\pi_{\widetilde{\Sigma}}$ also induces a map $\pi_{\Sigma}:\Sigma \to S$.
One can check that $\pi_{\Sigma}$ is an $n$-fold branched covering
such that $u(z)=v(\pi_\Sigma^{-1}(z))$.
Moreover, 
\begin{align}\label{eq:BranchedInDiag}
\text{$z \in S$ is a critical value of $\pi_{\Sigma}$ only if $u(z) \in \Delta_E$.} 
\end{align}
We call this the  `tautological correspondence'.

\begin{remark}\label{r:BranchedInDiag}
 If the algebraic intersection number between $u$ and $\Delta_E$ at $z$ is $1$, then $u(z)$ lies in
the top stratum of $\Delta_E$ and there is exactly $1$ critical point $p$ of $\pi_E$ such that $z$
is its critical value. Moreover, $p$ is Morse.
\end{remark}


\begin{lemma}[Tautological correspondence]\label{l:tautologicalCorr}
Every solution $u$ of \eqref{eq:FloerEquation0}
determines uniquely an $n$-fold branched covering $(\Sigma, \pi_{\Sigma})$ of $S$
and a map $v:\Sigma \to E$ such that $\pi_{HC} \circ u(z)=v(\pi_{\Sigma}^{-1}(z)) \in \Sym^n(E)$ for all $z \in S$. 
\end{lemma}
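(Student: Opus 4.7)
The plan is to apply the tautological correspondence, already set up in the paragraph immediately preceding the lemma, to the composite
\[
\tilde{u} \;:=\; \pi_{HC} \circ u \;:\; S \longrightarrow \Sym^n(E).
\]
To invoke the correspondence, I need to verify its two analytic hypotheses: that $\tilde u$ is complex analytic near $\Delta_E$, and that $\mathrm{Im}(\tilde u) \not\subset \Delta_E$. The first follows from the three properties in \eqref{eq:FloerEquation0}--\eqref{eq:FloerDataK}: $u^{-1}(D_{HC}) \subset \nu(\mk(S))$, $J_z = J_E^{[n]}$ on $\nu(\mk(S))$, and $X_K$ vanishes on $\nu(\mk(S))$. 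Together these imply that $u$ is genuinely $J_E^{[n]}$-holomorphic on a neighborhood of $u^{-1}(D_{HC})$. Since $\pi_{HC}$ is holomorphic and, by Lemma \ref{l:DhcDr}, $\tilde u^{-1}(\Delta_E) \cap S = u^{-1}(D_{HC} \cup D_r) \cap S$ while $u$ avoids $D_r$ on $S$ (note $u(\partial S) \subset \Sym(\ul L_j) \subset \Hilb^n(E)\setminus D_r$ and the interior meets $D_{HC}$ only inside $\nu(\mk(S))$ by \eqref{eq:FloerEquation0}), one concludes that $\tilde u$ is holomorphic on a neighborhood of $\tilde u^{-1}(\Delta_E)$. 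The second hypothesis is immediate: $\nu(\mk(S))$ is a proper open subset of $S$, and on its complement $\tilde u$ lands in $\Conf^n(E) \subset \Sym^n(E)\setminus \Delta_E$.

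Given these hypotheses, I construct $(\Sigma,\pi_\Sigma,v)$ exactly as in the correspondence: form the holomorphic fiber product $\widetilde\Sigma := S \times_{\Sym^n(E)} E^n$, which is a (possibly singular) analytic space carrying a natural $S_n$-action on the $E^n$ factor; set $\Sigma := \widetilde\Sigma/S_{n-1}$, where $S_{n-1} \leq S_n$ fixes the first coordinate; define $\pi_\Sigma : \Sigma \to S$ as the induced projection and $v : \Sigma \to E$ via $\pi_1 \circ \tilde v$. Over the open set $S \setminus \tilde u^{-1}(\Delta_E)$, the quotient map $q_{S_n}:E^n \to \Sym^n(E)$ is étale of degree $n!$, so $\pi_{\widetilde\Sigma}$ is étale of degree $n!$ and $\pi_\Sigma$ is étale of degree $n$. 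Near each point where $\tilde u$ meets $\Delta_E$, holomorphicity of $\tilde u$ lets me take the fiber product in the complex-analytic category; the result is a holomorphic branched cover extending the étale cover on the complement, whose branch locus lies inside $\tilde u^{-1}(\Delta_E) \subset \nu(\mk(S))$. The defining property $\pi_{HC}\circ u(z) = v(\pi_\Sigma^{-1}(z))$ then holds tautologically off the branch locus and extends by continuity.

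Uniqueness is formal: any other pair $(\Sigma',\pi_{\Sigma'},v')$ with $\pi_{HC}\circ u(z) = v'(\pi_{\Sigma'}^{-1}(z))$ yields an $S_n$-equivariant holomorphic map from the corresponding $S_n$-cover of $\Sigma'$ to $E^n$ compatible with $u$, hence a canonical map to the universal fiber product $\widetilde\Sigma$, and passing to the $S_{n-1}$-quotient produces a biholomorphism $\Sigma' \cong \Sigma$ intertwining all structures.

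The main technical point I expect to be delicate is normalizing $\widetilde\Sigma$ at the points lying over $\tilde u^{-1}(\Delta_E)$: a priori the fiber product may fail to be smooth there, and one must justify that the resulting $\Sigma$ is a genuine Riemann surface and $\pi_\Sigma$ a genuine branched cover. This is where the holomorphicity of $u$ near $u^{-1}(D_{HC})$ is essential; combined with the fact that $\pi_{HC}$ is a crepant resolution with $D_{HC}$ mapping birationally to the codimension-$2$ stratum of $\Delta_E$, a local coordinate computation shows that the pulled-back branched cover $E^n \to \Sym^n(E)$ gives a normal analytic cover of $S$, whose normalization is the required smooth $\Sigma$.
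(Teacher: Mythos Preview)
Your approach is essentially the paper's: verify the two hypotheses of the tautological correspondence set up immediately before the lemma, then invoke it. The paper's own proof is two sentences doing precisely this, and your elaboration on the fiber-product construction, uniqueness, and smoothness of $\Sigma$ simply spells out what the paper leaves implicit.

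There is, however, a small confusion in your verification of the first hypothesis. You invoke Lemma~\ref{l:DhcDr} to assert $\tilde u^{-1}(\Delta_E) = u^{-1}(D_{HC}\cup D_r)$, and then argue that $u$ avoids $D_r$. Both steps are problematic: Lemma~\ref{l:DhcDr} concerns the diagonal $\Delta_{\bH^\circ}$ in $\Sym^n(\bH^\circ)$, not $\Delta_E$ in $\Sym^n(E)$; and nothing in \eqref{eq:FloerEquation0} prevents $u$ from meeting $D_r$ (that disjointness is only established later, in Corollary~\ref{c:missingDr}, under the additional assumption $h = I_{\ul{x}_0;\ul{x}_d,\dots,\ul{x}_1}$). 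The correct and simpler observation is that $\pi_{HC}$ restricts to an isomorphism $\Hilb^n(E)\setminus D_{HC} \to \Conf^n(E) = \Sym^n(E)\setminus\Delta_E$, so directly $\tilde u^{-1}(\Delta_E) = u^{-1}(D_{HC}) \subset \nu(\mk(S))$, on which $u$ is $J_E^{[n]}$-holomorphic. This is what the paper means when it writes ``$u$ is complex analytic near $D_{HC}$''. With that correction, your argument goes through.
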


\begin{proof}
Every solution $u$ of \eqref{eq:FloerEquation0} 
is complex analytic near $D_{HC}$ and $Im(u)$ is not contained in $D_{HC}$.
Therefore, by the tautological correspondence, we get an $n$-fold branched covering $\pi_\Sigma:\Sigma \to S$ and a continuous map $v: \Sigma \to E$
such that $\pi_{HC} \circ  u(z)=v(\pi_\Sigma^{-1}(z))$.
\end{proof}

\begin{remark}\label{r:SplitData}
 Suppose that we are in the situation of Example \ref{ex:noHperb} for all the pairs $(\ul{L}_0,\ul{L}_d)$ and $(\ul{L}_{j-1},\ul{L}_j)$.
 Suppose also that $J_z=(J_z')^{[n]} \in \cJ^{n}(E)$ and $K(w)=(K'(w))^{[n]} \in \cH_{A_S(v)}^{n,pre}(E)$ in \eqref{eq:FloerDataJ} and \eqref{eq:FloerDataK}, respectively.
 Then \eqref{eq:FloerEquation0} and \eqref{eq:FloerEquation1} 
 imply that the maps $v:\Sigma \to E$ and $\pi_{\Sigma}:\Sigma \to S$ satisfy
 \begin{align}
  (Dv|_z- X_{\pi_{\Sigma}^*K'}|_{v(z)})^{0,1}=0 \text{ with respect to } (J'_{\pi_\Sigma(z)})_{v(z)} \text{ for all } z \in \Sigma
 \end{align}
 and the critical values of $\pi_{\Sigma}$ are contained in $\mk(S)$.
 \end{remark}

\subsubsection{Homotopy classes of maps}


Let $B(\ol{S})$ be the real blow-up of $\ol{S}$  at the boundary punctures.
In other words, we replace the punctures of $S$ by closed intervals, which can be identified with $\{\epsilon^j(\pm \infty,t)|t \in [0,1]\}$.
For $u \in \cR^{d+1,h}(\ul{x}_0;\ul{x}_d, \dots, \ul{x}_1)$, 
we define $G(u):=(\Sym^n(\pi_E) \circ \pi_{HC} \circ u,id_S):S \to \Sym^n(\bH^\circ) \times S$, which 
 can be extended continuously to
\begin{align}
 \overline{G}(u):B(\ol{S}) \to \Sym^n(\bH^\circ) \times B(\ol{S})
\end{align}
by sending $\epsilon^j(\pm \infty,t)$ to $(\Sym^n(\pi_E) \circ \pi_{HC} \circ \ul{x}_j(t),\epsilon^j(\pm \infty,t))$ for all $j$.
Note that, $\overline{G}(u)(\partial B(\ol{S}))$ lies inside
\begin{align}
 \partial_{\ul{x}_0;\ul{x}_d,\dots,\ul{x}_1} &:= (\cup_j \Sym(U_{\ul{L}_j}) \times \partial_j S) \cup (\cup_j \{(\Sym^n(\pi_E) \circ \pi_{HC} \circ \ul{x}_j(t),\epsilon^j(\pm \infty,t))| t \in [0,1]\}) \\
 &\subset \Sym^n(\bH^\circ) \times  B(\ol{S})
\end{align}
In particular, $\overline{G}(u)$ descends to a class in the space $\Map(\ul{x}_0;\ul{x}_d,\dots,\ul{x}_1)$ of homotopy class of continuous maps 
from $(B(\ol{S}),\partial B(\ol{S}))$ to $(\Sym^n(\bH^\circ) \times  B(\ol{S}),  \partial_{\ul{x}_0;\ul{x}_d,\dots,\ul{x}_1})$.
In other words,
\begin{align}
 \overline{G}(u) \in \Map(\ul{x}_0;\ul{x}_d,\dots,\ul{x}_1):=[(B(\ol{S}),\partial B(\ol{S})), (\Sym^n(\bH^\circ) \times  B(\ol{S}), \partial_{\ul{x}_0;\ul{x}_d,\dots,\ul{x}_1} ))]
\end{align}

By \eqref{eq:DisjointOpen}, \eqref{eq:disjointFromDr} and Lemma \ref{l:DhcDr}, $ (\Delta_{\bH^{\circ}} \times  B(\ol{S})) \cap \partial_{\ul{x}_0;\ul{x}_d,\dots,\ul{x}_1} =\emptyset$ 
so we have an intersection pairing (with respect to the obvious orientations)
\begin{align}
 \cdot  [(\Delta_{\bH^{\circ}} \times  B(\ol{S}))]: \Map(\ul{x}_0;\ul{x}_d,\dots,\ul{x}_1) \to \ZZ.
\end{align}

\begin{lemma}\label{l:IPairing}
 Given $\ul{x}_0,\dots,\ul{x}_d$, there is $I_{\ul{x}_0;\ul{x}_d,\dots,\ul{x}_1} \in \ZZ$ such that for every $u \in \cR^{d+1,h}(\ul{x}_0;\ul{x}_d, \dots, \ul{x}_1)$, we have
 \begin{equation}
[\overline{G}(u)] \cdot  [(\Delta_{\bH^{\circ}} \times  B(\ol{S}))]=I_{\ul{x}_0;\ul{x}_d,\dots,\ul{x}_1}  
 \end{equation}
 Moreover, $I_{\ul{x}_0;\ul{x}_d,\dots,\ul{x}_1}$ is independent of $h$.
\end{lemma}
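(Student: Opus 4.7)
The plan is to observe that the intersection pairing with $\Delta_{\bH^\circ}\times B(\ol{S})$ only depends on the relative homotopy class $[\overline{G}(u)]\in\Map(\ul{x}_0;\ul{x}_d,\dots,\ul{x}_1)$, and then to show that this relative homotopy class is uniquely determined by the asymptotic and Lagrangian boundary data $(\ul{x}_0,\dots,\ul{x}_d,\ul{L}_0,\dots,\ul{L}_d)$ alone. This forces independence from the choice of $S\in\cR^{d+1,h}$, from the specific solution $u$, and from $h$, because $h$ enters only through the interior marked points and not through the target pair or the topology of $B(\ol{S})$.

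The two key inputs are both contractibility statements. First, since $\bH^\circ$ is biholomorphic to $\CC$, the elementary symmetric polynomials give a homeomorphism $\Sym^n(\bH^\circ)\cong\CC^n$, so the target factor is contractible and all its homotopy groups vanish. Second, by the disjointness hypothesis \eqref{eq:DisjointOpen} the quotient map $q_{S_n,\bH^\circ}$ embeds $U_{L_{j,1}}\times\cdots\times U_{L_{j,n}}$ into $\Sym^n(\bH^\circ)$, so $\Sym(U_{\ul{L}_j})$ is homeomorphic to a product of contractible sets, hence itself contractible.

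With these two inputs in hand, I would analyze the boundary restriction of $\overline{G}(u)$. On each asymptotic interval $\epsilon^j(\pm\infty,[0,1])$, the map is the prescribed arc $t\mapsto(\Sym^n(\pi_E)\circ\pi_{HC}\circ\ul{x}_j(t),\epsilon^j(\pm\infty,t))$, which is independent of $u$. On each $\partial_j S$ the map lands in the contractible $\Sym(U_{\ul{L}_j})$, so by Step 2 any two such arcs with the same endpoints are homotopic within this subset. Consequently the boundary restriction of $\overline{G}(u)$ is unique up to homotopy within $\partial_{\ul{x}_0;\ul{x}_d,\dots,\ul{x}_1}$, with the homotopy class depending only on the fixed data $(\ul{x}_j,\ul{L}_j)$.

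Given two solutions $u,u'$ (possibly for different $S,S'$ and different $h,h'$), I would first transport the domains along a canonical homeomorphism $B(\ol{S})\cong B(\ol{S'})$ preserving the boundary cell structure. After homotoping $\overline{G}(u')|_{\partial B(\ol{S})}$ to coincide with $\overline{G}(u)|_{\partial B(\ol{S})}$ using the boundary homotopy above, the difference $\overline{G}(u)\cup_\partial\overline{G}(u')^{\mathrm{rev}}$ defines a map $S^2\to\Sym^n(\bH^\circ)$, which is null-homotopic since $\pi_2(\Sym^n(\bH^\circ))=0$. This yields $[\overline{G}(u)]=[\overline{G}(u')]$ in the relative homotopy set, so their intersection numbers with $\Delta_{\bH^\circ}\times B(\ol{S})$ coincide, and setting $I_{\ul{x}_0;\ul{x}_d,\dots,\ul{x}_1}$ to this common value proves the lemma. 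The main technical care lies in checking that all boundary homotopies can be arranged to be constant on the prescribed asymptotic arcs, but this is standard since those arcs glue continuously into the contractible pieces $\Sym(U_{\ul{L}_j})$ at their endpoints.
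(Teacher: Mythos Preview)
Your proposal is correct and uses the same two contractibility inputs as the paper (that $\Sym^n(\bH^\circ)\cong\CC^n$ and that each $\Sym(U_{\ul{L}_j})$ is contractible). The paper packages the argument slightly differently: rather than building an explicit relative homotopy between $\overline{G}(u)$ and $\overline{G}(u')$, it computes the entire relative homotopy set $\Map(\ul{x}_0;\ul{x}_d,\dots,\ul{x}_1)\cong\pi_1(S^1)=\ZZ$ (using exactly your contractibility facts to retract the boundary subspace onto $\partial B(\ol{S})\simeq S^1$), and then observes that the class of $\overline{G}(u)$ is always the generator because the second component of $G(u)$ is $\mathrm{id}_S$, forcing winding number $1$. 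Your direct-homotopy argument and the paper's winding-number argument are two sides of the same coin; the paper's version is terser and yields the extra datum that $\Map\cong\ZZ$, while yours makes the independence from $h$ more visibly a consequence of the interior marked points playing no role in the topology of $B(\ol{S})$ or the boundary data.
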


\begin{proof}
Since $\Sym(U_{\ul{L}_j})$ is a contractible open set, we have $\Map(\ul{x}_0;\ul{x}_d,\dots,\ul{x}_1)=\pi_2(\CC^{n+1}, S^1)=\pi_1( S^1)= \ZZ$.
In this case, the intersection pairing is a multiple of the winding number along the boundary and the winding number of
$[\overline{G}(u)]$ is $1$, by definition.
\end{proof}


\begin{lemma}[Positivity of intersection]\label{l:PositivityIntersection}
 If $u \in \cR^{d+1,h}(\ul{x}_0;\ul{x}_d, \dots, \ul{x}_1)$, then
  \begin{equation}
[\ol{G}(u)]\cdot  [(\Delta_{\bH^{\circ}} \times  B(\ol{S}))] \ge h 
 \end{equation}
and equality holds if and only if the image of $u$ is disjoint from $D_r$, $u^{-1}(D_{HC})=\mk(S)$
and the multiplicity of intersection between $u$ and $D_{HC}$ is $1$ for all $z \in \mk(S)$. 
\end{lemma}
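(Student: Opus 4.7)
The plan is to reinterpret the intersection pairing as an algebraic intersection number of a (mostly) holomorphic map with a holomorphic divisor, and then apply positivity of intersection. Set $\psi := \Sym^n(\pi_E)\circ \pi_{HC} : \Hilb^n(E) \to \Sym^n(\bH^\circ)$, which is holomorphic. From the construction of $\ol G(u)$ together with Lemma \ref{l:DhcDr}, the pairing $[\ol G(u)]\cdot [\Delta_{\bH^\circ}\times B(\ol S)]$ equals the algebraic intersection number of $\psi\circ u : S \to \Sym^n(\bH^\circ)$ with the holomorphic divisor $\Delta_{\bH^\circ}$, and this is supported on $u^{-1}(D_{HC}\cup D_r)$.

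The key step is to show that $\psi\circ u$ is honestly (unperturbed) holomorphic in a neighbourhood of each $z_0 \in u^{-1}(D_{HC}\cup D_r)$. If $u(z_0)\in D_{HC}$, then by \eqref{eq:FloerEquation0} we have $z_0 \in \nu(\mk(S))$, where $K\equiv 0$ and $J = J_E^{[n]}$ by \eqref{eq:FloerDataJ}--\eqref{eq:FloerDataK}; hence $u$ itself is honestly $J_E^{[n]}$-holomorphic there, and so is $\psi\circ u$. If instead $u(z_0)\in D_r^\circ$, then by \eqref{eq:HnE} the perturbation $K$ agrees near $u(z_0)$ with some $(H')^{[n]}\in \cH^{n,pre}_{A_S(\cdot)}(E)$, and every such function factors through $\psi$; consequently $X_K \in \ker D\psi$ near $u(z_0)$, and pushing the perturbed Cauchy--Riemann equation $(Du-X_K)^{0,1}=0$ forward through $\psi$ gives $(D(\psi\circ u))^{0,1}=0$.

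Granted local holomorphicity of $\psi\circ u$ near every preimage of $\Delta_{\bH^\circ}$, positivity of intersection (\`a la Micallef--White) assigns a strictly positive integer to each point of $u^{-1}(D_{HC}\cup D_r)$, equal to the local intersection multiplicity of $\psi\circ u$ with $\Delta_{\bH^\circ}$. Since $u(\xi^i_+)\in D_{HC}$ for $i=1,\dots,h$ by \eqref{eq:FloerEquation1}, summing over these $h$ contributions yields the bound $\geq h$. Equality then forces (a) no further intersections, i.e.\ $u\cap D_r=\emptyset$ and $u^{-1}(D_{HC})=\mk(S)$; and (b) each $\xi^i_+$ contributes exactly $1$. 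For (b), a local model computation (writing the tautological branched cover $\pi_\Sigma$ as $w\mapsto w^2$ near a Morse branch point, cf.\ Remark \ref{r:BranchedInDiag}) shows that $u$ meets $D_{HC}$ with multiplicity $1$ at $\xi^i_+$ iff $\psi\circ u$ meets $\Delta_{\bH^\circ}$ with local multiplicity $1$ at $\xi^i_+$, giving the equivalence with the multiplicity-one condition in the lemma statement.

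The main technical obstacle is the second half of the holomorphicity step: $\psi\circ u$ is honestly holomorphic near $D_r^\circ$ \emph{despite} $u$ itself satisfying a genuinely perturbed equation there. This is precisely why the class $\cH^n_\gamma(E)$ in \eqref{eq:HnE} is designed so that its elements are $\psi$-basic near $D_r^\circ$ and at infinity; away from a neighbourhood of $D_r^\circ$ the perturbations may be more general (as needed for transversality in Lemma \ref{l:regularity2}), but that region contributes nothing to the intersection pairing being analysed, since $u$ does not meet $D_{HC}\cup D_r$ there.
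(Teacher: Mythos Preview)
Your overall strategy coincides with the paper's: reinterpret the pairing as a local intersection count of $\psi\circ u$ with $\Delta_{\bH^\circ}$ and then invoke positivity.  However, the key technical step for the case $u(z_0)\in D_r^\circ$ contains a genuine error.  The implication ``$K$ factors through $\psi$, hence $X_K\in\ker D\psi$'' is false.  The fibres of $\psi|_{\Conf^n(E)}:\Conf^n(E)\to\Conf^n(\bH^\circ)$ are products of fibres of $\pi_E$, hence symplectic; for a Hamiltonian $K=\psi^*K''$ the vector field $X_K$ is the \emph{horizontal lift} of $X_{K''}$, so $D\psi(X_K)=X_{K''}$ is typically nonzero (cf.\ the discussion after \eqref{eq:HE}, where $(\pi_E)_*X_H=X_{H'}$).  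Consequently $\psi\circ u$ is not honestly holomorphic near $z_0$: it only satisfies a perturbed Cauchy--Riemann equation whose inhomogeneous term is $X_{K''}$, tangent to every stratum of $\Delta_{\bH^\circ}$.

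The paper closes this gap by Gromov's graph trick rather than by pushing down through $\psi$.  One forms $F=(u,\mathrm{id}):B(z_0)\to\Conf^n(E)\times B(z_0)$ and equips the target with the almost complex structure $J_F$ of \eqref{eq:GraphJF}; then $F$ is genuinely $J_F$-holomorphic, and the tangency \eqref{eq:tangentialPerturbation} of $X_K$ to $D_r^\circ$ makes every stratum of $D_r^\circ\times B(z_0)$ a $J_F$-holomorphic subvariety.  Positivity of intersection for $F$ with $D_r^\circ\times B(z_0)$ then gives the required positive local contribution.  Your argument can be repaired along the same lines (or equivalently by running the graph trick downstairs in $\Sym^n(\bH^\circ)$ with the perturbation $X_{K''}$), but as written the claim that $\psi\circ u$ is unperturbed-holomorphic near $D_r^\circ$ does not hold.
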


\begin{proof}
 Let $\ol{u}:=\Sym^n(\pi_E) \circ \pi_{HC} \circ u:S \to \Sym^n(\bH^\circ)$.
 There are two kinds of intersections between $\ol{u}$ and $\Delta_{\bH^{\circ}}$, namely, at or away from $\mk(S)$.
 If $z \in \mk(S)$, then \eqref{eq:FloerDataJ}, \eqref{eq:FloerDataK}, \eqref{eq:FloerEquation0} and \eqref{eq:FloerEquation1} imply that
 $\ol{u}$ is $(j_{\Sym^n(\bH^\circ)},j_S)$-holomorphic near $z$.
 It implies that the contribution of the algebraic intersection at $z$ is at least $1$ and $z$ is the only intersection with $\Delta_{\bH^{\circ}}$ in a small neighborhood of $z$.
 Summing over all $z \in \mk(S)$, the contribution to the algebraic intersection is at least $h$.
 
 We need to show that the contribution from other intersections with $\Delta_{\bH^{\circ}}$ is positive.
 Let $z_0 \in S \setminus \mk(S)$ be such that $\ol{u}(z_0) \in \Delta_{\bH^{\circ}}$.
 Let $B(z_0) \subset S$ be a small disc centred at $z_0$.
 By Lemma \ref{l:DhcDr}, we must have $u(z_0) \in D_{HC}\cup D_r$.
To show that the contribution at $\ol{u}(z_0) $ is positive, it suffices to show that
the algebraic intersection number between $u(B(z_0))$ and $D_{HC}\cup D_r$ at $u(z_0)$ is positive.
If $u(z_0) \in D_{HC}$, this follows from $u^{-1}(D_{HC}) \subset \nu(\mk(S))$ and $u|_{\nu(\mk(S))}$ is complex analytic. 
If $u(z_0) \in D_{r}$, then it can be achieved by Gromov's graph trick, and the fact that our choice of $X_K$ is tangential to $D_r$ at 
$u(z_0)$.

We give a detailed explanation of the last sentence. 
 By the definitions \eqref{eq:JnE} and \eqref{eq:HnE}, near $D_r$, we have $J_{z_0}=(J')^{[n]}$ and $K(w)=(\pi^*_{E}H^w)^{[n]}$ for some $H^w \in \cH_{A_S(w)}(\bH^{\circ})$
 for $w \in TB(z_0)$.
 It means that, by \eqref{eq:fiberJ} and by shrinking $B(z_0)$ if necessary, we have 
 \begin{equation}\label{eq:tangentialPerturbation}
  (X_K(\eta))_\tau \in T_{\tau}D_r
 \end{equation}
 for all $\eta \in TB(z_0)$ and $\tau \in D_r$, where 
 $T_{\tau}D_r$ is understood to be the tangent space of the smallest strata of $D_r$ that contains $\tau$.

 We consider the graph trick. Let $F=(u,id):B(z_0) \to \Conf^n(E) \times B(z_0)$.
 Let $J_{F}$ be the almost complex structure of $\Conf^n(E)  \times B(z_0)$ characterised by
 \begin{align}\label{eq:GraphJF}
 \left\{
 \begin{array}{ll}
  J_{F}|_{\Conf^n(E) \times \{z\}}=J_z\\
  d\pi_{B(z_0)} \circ J_F=j_S \circ d\pi_{B(z_0)}  \\
  J_F(\partial_s+X_K(\partial_s))=\partial_t+X_K(\partial_t)  
 \end{array}
\right.
 \end{align}
 where $\pi_{B(z_0)} : \Conf^n(E) \times B(z_0) \to B(z_0)$ is the projection.
 In holomorphic coordinates $(s,t)$, we have
 \begin{align}
  &(DF+J_F \circ DF \circ j_S)(\partial_s) \\
  =&Du(\partial_s)+\partial_s+J_F(Du(\partial_t)-X_K(\partial_t))+J_F(\partial_t+X_K(\partial_t)) \\
  =&Du(\partial_s)+\partial_s-(Du(\partial_s)-X_K(\partial_s))-(\partial_s+X_K(\partial_s))=0
 \end{align}
where the last line uses $(Du-X_K)^{0,1}=0$.
Similarly, we also have $(DF+J_F \circ DF \circ j_S)(\partial_t)=0$ and hence $(DF)^{0,1}=0$ and $F$ is $(J_F,j_S)$-holomorphic.

Notice that all strata of $D_r^{\circ} \times B(z_0)$ are $J_F$-holomorphic due to \eqref{eq:tangentialPerturbation}, \eqref{eq:GraphJF} and
the fact that $\Delta_{\bH^{\circ}}$ is a $J_z$-holomorphic subvariety.
As a result, the intersection $F(z_0)$ between $F(B(z_0))$ and $D_r^\circ \times B(z_0)$ is positive.
This completes the proof.
\end{proof}

\begin{corollary}\label{c:missingDr}
 If $u \in \cR^{d+1,h}(\ul{x}_0;\ul{x}_d, \dots, \ul{x}_1)$ and $I_{\ul{x}_0;\ul{x}_d,\dots,\ul{x}_1}=h$, then $Im(u) \cap D_r= \emptyset$, $u^{-1}(D_{HC})=\mk(S)$ and the  multiplicity of intersection between $u$ and $D_{HC}$ is $1$ for all $z \in \mk(S)$.
\end{corollary}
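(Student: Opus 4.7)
The statement is essentially an immediate consequence of combining the two preceding lemmas, so my plan is to simply invoke them and trace through the equality case.

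First, I would apply Lemma \ref{l:IPairing} to the given solution $u \in \cR^{d+1,h}(\ul{x}_0;\ul{x}_d, \dots, \ul{x}_1)$ to conclude that
\[
[\overline{G}(u)] \cdot  [(\Delta_{\bH^{\circ}} \times  B(\ol{S}))] = I_{\ul{x}_0;\ul{x}_d,\dots,\ul{x}_1}.
\]
Under the hypothesis $I_{\ul{x}_0;\ul{x}_d,\dots,\ul{x}_1}=h$, the left-hand side equals $h$.

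Next, I would compare this to the inequality in Lemma \ref{l:PositivityIntersection}, which states that the intersection pairing is always at least $h$, with equality precisely when the three desired conclusions hold: $\mathrm{Im}(u) \cap D_r = \emptyset$, $u^{-1}(D_{HC}) = \mk(S)$, and the intersection multiplicity of $u$ with $D_{HC}$ equals $1$ at each $z \in \mk(S)$. Since we are exactly in the equality case, all three conclusions follow simultaneously.

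There is no real obstacle here, as all the substantive work — in particular the use of Gromov's graph trick, the tangency \eqref{eq:tangentialPerturbation} of $X_K$ to $D_r$, and the holomorphicity of $u$ on $\nu(\mk(S))$ — is already absorbed into Lemma \ref{l:PositivityIntersection}. The only thing to verify, implicitly, is that the topological invariant $I_{\ul{x}_0;\ul{x}_d,\dots,\ul{x}_1}$ really does compute the geometric intersection number for each $u$, which is exactly the content of Lemma \ref{l:IPairing} (and follows from the contractibility of the boundary neighborhoods $\Sym(U_{\ul{L}_j})$). Thus the corollary reduces to a one-line deduction.
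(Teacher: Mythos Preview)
Your proposal is correct and matches the paper's approach exactly: the corollary is stated without proof in the paper precisely because it follows immediately from combining Lemma~\ref{l:IPairing} (which identifies the intersection number with $I_{\ul{x}_0;\ul{x}_d,\dots,\ul{x}_1}$) with the equality characterization in Lemma~\ref{l:PositivityIntersection}. Your one-line deduction is the intended argument.
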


\begin{corollary}\label{c:SideBubbling}
 If $I_{\ul{x}}=0$, then $\cR^{0+1,h}(\ul{x})=\emptyset$ for all $h \ge 1$.
\end{corollary}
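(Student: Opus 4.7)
The plan is to deduce this immediately from Lemma~\ref{l:IPairing} and the positivity of intersection in Lemma~\ref{l:PositivityIntersection}. Suppose for contradiction that there exists $u \in \cR^{0+1,h}(\ul{x})$ for some $h \geq 1$. By Lemma~\ref{l:IPairing}, the intersection number $[\overline{G}(u)] \cdot [(\Delta_{\bH^{\circ}} \times B(\ol{S}))]$ equals $I_{\ul{x}}$, and in particular is independent of $h$. On the other hand, Lemma~\ref{l:PositivityIntersection} gives
\[
[\overline{G}(u)] \cdot [(\Delta_{\bH^{\circ}} \times B(\ol{S}))] \geq h.
\]
Combining these yields $I_{\ul{x}} \geq h \geq 1$, contradicting the hypothesis $I_{\ul{x}} = 0$. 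Therefore $\cR^{0+1,h}(\ul{x}) = \emptyset$ for every $h \geq 1$.

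There is no real obstacle here: the content has been absorbed into the two preceding lemmas. The only thing to check is that Lemma~\ref{l:PositivityIntersection} is genuinely applicable in the $d=0$ case, i.e.\ that nothing about its proof used $d \geq 1$. Inspecting that proof, the positivity argument is local near each point of $\mk(S)$ and each point of $S \setminus \mk(S)$ mapping into $D_{HC} \cup D_r$, and relies only on: (i) holomorphicity of $u$ on $\nu(\mk(S))$, (ii) Gromov's graph trick applied with the tangency $(X_K(\eta))_\tau \in T_\tau D_r$ from \eqref{eq:tangentialPerturbation}, and (iii) the fact that $\Delta_{\bH^\circ}$ is a holomorphic subvariety of $\Sym^n(\bH^\circ)$. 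None of these requires the presence of input punctures, so the estimate $\geq h$ holds verbatim for $d=0$, which is what the argument needs.
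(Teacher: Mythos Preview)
Your proof is correct and is exactly the intended argument: the paper states this as an immediate corollary of Lemmas~\ref{l:IPairing} and~\ref{l:PositivityIntersection} without further comment, and your contradiction via $I_{\ul{x}} \geq h \geq 1$ is the natural way to unpack it. The extra paragraph verifying that the positivity lemma applies when $d=0$ is a fair remark but not strictly needed, since nothing in that lemma's statement or proof constrains $d$.
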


\begin{lemma}\label{l:NoSideBubbling}
 For every $\ul{L} \in \cL^{cyl,n}$, there is a perturbation pair $(A_{\ul{L}}, H_{\ul{L}})$ for $(\ul{L},\ul{L})$ such that 
 $I_{\ul{x}}=0$ for all $\ul{x} \in \cX(H_{\ul{L}},\ul{L},\ul{L})$.
\end{lemma}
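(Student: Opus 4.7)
The plan is to construct an explicit perturbation pair whose self-chords are $C^0$-close to points of $\Sym(\ul{L})$, so that their base projections lie in the contractible open set $\Sym(U_{\ul{L}}) \subset \Sym^n(\bH^\circ) \setminus \Delta_{\bH^\circ}$, forcing $I_{\ul{x}}=0$ by the winding-number interpretation of Lemma \ref{l:IPairing}.

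First, I would pick $\Phi \in \cG_\aff([0,1])$ with $\Phi_0 = \mathrm{id}$ and $\Phi_1$ a translation of $\RR$ by a generic constant $c$ whose magnitude strictly exceeds $\max\lambda_{\ul{L}} - \min\lambda_{\ul{L}}$ and such that $\Phi_1^{-1}(\lambda_{L_k}) \notin \{\lambda_{L_j}\}_{j=1}^n$ for every $k$. This ensures $\Phi_0(\lambda_{\ul{L}}) > \Phi_1(\lambda_{\ul{L}})$ in $\cI_\aff$; set $A_{\ul{L}} = \Phi_*(\lambda_{\ul{L}},\lambda_{\ul{L}})$. Next, choose $H''_t \in \cH_{a_t}(\bH^\circ)$ generating $\Phi_t$ asymptotically, with $H''_t$ constant on $C_\bH$, lift to $H'_t = \pi_E^*H''_t \in \cH_{a_t}(E)$, and form the preliminary Hamiltonian $H^{pre}_{\ul{L},t} = (H'_t)^{[n]} \in \cH_{a_t}^{n,pre}(E)$.

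By construction, $\phi^t_{H^{pre}_{\ul{L}}}$ acts as the identity on $(\pi_E^{-1}(C_\bH))^n \cap \Conf^n(E)$, while outside a sufficiently large compact subset of $E$ the shifted projection $\pi_E(\phi^{-1}_{H'}(L_k))$ is asymptotic to $\Phi_1^{-1}(\lambda_{L_k})$, which by genericity differs from every $\lambda_{L_j}$. Consequently the intersection $\phi^{-1}_{H^{pre}_{\ul{L}}}(\Sym(\ul{L})) \cap \Sym(\ul{L})$ is empty outside a compact subset of $\Conf^n(E)$ and inside equals the clean (non-transverse) intersection $\Sym(\ul{L}) \cap (\pi_E^{-1}(C_\bH))^n$. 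Adding a $C^\infty$-small compactly supported perturbation in a tubular neighborhood of this clean locus (within $\Conf^n(E) \setminus D_r^\circ$, using the flexibility of $\cH_{a_t}^n(E)$) yields $H_{\ul{L}}$ satisfying \eqref{eq:transversalIntersect}; by continuity of intersections in the $C^1$-size of the perturbation, all resulting self-chords $\ul{x}(\cdot)$ are $C^0$-close to points of $\Sym(\ul{L}) \cap (\pi_E^{-1}(C_\bH))^n$.

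Now fix $\ul{x} \in \cX(H_{\ul{L}}, \ul{L}, \ul{L})$ and any $u \in \cR^{0+1,h}(\ul{x})$. The Lagrangian label condition forces $\overline{G}(u)(\partial_0 S) \subset \Sym(U_{\ul{L}}) \times \partial_0 S$, and the preceding paragraph shows the chord's base projection $t \mapsto \Sym^n(\pi_E) \circ \pi_{HC} \circ \ul{x}(t)$ also stays in $\Sym(U_{\ul{L}})$. Projecting to $\Sym^n(\bH^\circ)$, the boundary loop of $\overline{G}(u)$ therefore lies inside the contractible open set $\Sym(U_{\ul{L}})$, which by \eqref{eq:DisjointOpen} is disjoint from $\Delta_{\bH^\circ}$. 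Hence the loop has trivial winding number, and Lemma \ref{l:IPairing} gives $I_{\ul{x}} = 0$. The main technical subtlety is calibrating the compactly supported transversality perturbation: it must be small enough to confine surviving self-chords to a neighborhood of $\Sym(\ul{L})$ (and hence base projections to $\Sym(U_{\ul{L}})$), yet generic enough to cut out a discrete transverse zero-set. This is handled by a standard Sard--Smale argument applied to perturbations supported in a pre-specified tubular neighborhood of the clean intersection locus $\Sym(\ul{L}) \cap (\pi_E^{-1}(C_\bH))^n$, combined with $C^1$-continuous dependence of the perturbed intersection on the perturbation.
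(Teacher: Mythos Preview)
Your overall plan and concluding step are correct and match the paper: once every chord $\ul{x}(t)$ has base projection in $\Sym(U_{\ul{L}})$, the boundary loop of $\overline{G}(u)$ lies in a contractible set disjoint from $\Delta_{\bH^\circ}$, and $I_{\ul{x}}=0$ follows. The problem is the sentence ``Consequently the intersection $\phi^{-1}_{H^{pre}_{\ul{L}}}(\Sym(\ul{L})) \cap \Sym(\ul{L}) \ldots$ inside equals the clean intersection $\Sym(\ul{L}) \cap (\pi_E^{-1}(C_\bH))^n$.'' This does not follow from what you have established. You control the flow in two regimes (identity over $C_\bH$, asymptotic translation at infinity), but the interpolation region in between---where $H''_t$ is neither constant nor equal to $H_{a_t}$---is completely uncontrolled. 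In that region the curve $\pi_E(\phi^{-1}_{H'}(L_j))$ can cross $U_{L_i}$ for $i\neq j$, producing intersection points $\phi^{-1}_{H'}(L_j)\cap L_i$ that lie outside $(\pi_E^{-1}(C_\bH))^n$. Such off-diagonal intersections give rise to additional $X_{(H')^{[n]}}$-chords whose base projections leave $\Sym(U_{\ul{L}})$, and your small perturbation near the clean locus neither removes them nor makes them transverse. So both the transversality claim and the ``chords stay in $\Sym(U_{\ul{L}})$'' claim can fail.

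The paper closes this gap not by avoiding off-diagonal intersections (which is generally impossible) but by arranging them to be \emph{triangular}: one chooses $H'$ and an ordering of $L_1,\dots,L_n$ so that $L_i\cap\phi_{H'}(L_j)\neq\emptyset$ only if $i\geq j$. Since a chord of $\Sym(\ul{L})$ is an unordered tuple indexed by a bijection $\{1,\dots,n\}\to\{1,\dots,n\}$, the triangularity forces every component to be a self-chord from $L_i$ to $L_i$; one then further adjusts $H'$ so that each such self-chord projects into its own $U_{L_i}$. This combinatorial step---using the bijection constraint to kill the off-diagonal contributions rather than the off-diagonal intersections themselves---is the missing idea in your argument.
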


\begin{proof}

Let $A=a_tdt \in \cA_{\aff}([0,1], \lambda_{\ul{L}},\lambda_{\ul{L}} )$.
Since $g_A^{-1}\lambda_{\ul{L}}< \lambda_{\ul{L}}$, we can find
 $H'=(H'_t)_{t \in [0,1]}$, $H'_t \in \cH_{a_t}(E)$ and an ordering of the Lagrangians in $\ul{L}$ such that
$L_i \cap \phi_{H'}(L_j) \neq \emptyset$ only if $i\geq j$. 
In this case, the (possibly non-transversal) $X_{(H')^{[n]}}$-chords $\ul{x}(t)$ from $\Sym(\ul{L})$ to $\Sym(\ul{L})$ 
are given by unordered tuples of $X_{H'}$-chords from $L_i$ to $L_i$ for $i=1,\dots,n$ (see Example \ref{ex:noHperb}).
Moreover, by choosing $H'$ appropriately, we can assume that $\Sym^n(\pi_E)(\pi_{HC}(\ul{x}(t))) \in \Sym(U_{\ul{L}})$ for all $X_{(H')^{[n]}}$-chords $\ul{x}(t)$.

We can pick $H$ to be a compactly supported perturbation of $(H')^{[n]}$ such that every $X_{H}$-chord  $\ul{x}(t)$ from $\Sym(\ul{L})$ to $\Sym(\ul{L})$ 
is transversal and $\Sym^n(\pi_E)(\pi_{HC}(\ul{x}(t))) \in \Sym(U_{\ul{L}})$ for all $t$.
By taking $(A_{\ul{L}}, H_{\ul{L}})$ to be $(A,H)$, the result follows, because the boundary conditions also project to $\Sym(U_{\ul{L}})$, which is contractible.


\end{proof}

\subsubsection{Energy}\label{sss:Energy}

The product symplectic form on $\Conf^n(E)$ cannot be smoothly extended to $\Hilb^n(E)$, and nor can the induced metric.
Therefore, we will define energy and discuss compactness with the help of the corresponding maps $v: \Sigma \to E$ obtained from Lemma \ref{l:tautologicalCorr}.

By \eqref{eq:FloerDataJ} and \eqref{eq:FloerDataK},
there exists a compact subset $C$ of $\Conf^n(E)\setminus D_r^\circ$ and
\begin{align}
 &\text{a smooth family } J'=(J'_z)_{z \in S}, J'_z \in \cJ(E) \text{, and } \\
 &K' \in \Omega^1(S,C^{\infty}(E)), K'(w) \in \cH_{A_S(w)}(E) \text{ for all $w \in TS$}
\end{align}
such that outside $C$, we have (cf. Remark \ref{r:SplitData})
\begin{align}
 &\text{$(J_z')^{[n]}=J_z$ for all $z \in S$, and }  \label{eq:productJ}\\
 &\text{$K'(w)^{[n]}=K(w)$ for all $w \in TS$}  \label{eq:productK}
\end{align}

Let $u$ be a solution of \eqref{eq:FloerEquation0} and \eqref{eq:FloerEquation1} and $v$ be the map obtained by Lemma \ref{l:tautologicalCorr}.
Let $U \subset \Conf^n(E)$ be a relatively compact open neighborhood of $C$, and define
\begin{align}
S^{in}:=u^{-1}(\overline{U}) \qquad S^{out}:= u^{-1}(\Hilb^n(E) \setminus U)
\end{align}
where $\overline{U}$ is the closure of $U$.
By adjusting $U$, we assume that $u(S)$ is transversal to $\partial \overline{U}$ so that $u^{-1}(\partial \overline{U}) \subset S$ is a
smooth manifold with boundary.
Note that, by definition, $\mk(S) \subset S^{out}$ and $S^{in}$ contains small neighborhoods of the punctures.
We also define $\Sigma^{in}=\pi_{\Sigma}^{-1}(S^{in})$ and $\Sigma^{out}=\pi_{\Sigma}^{-1}(S^{out})$. 

\begin{definition}[Energy]\label{d:Energy}
 The energy of $u$ is defined to be
 \begin{align}
  E(u):=\frac{1}{2}\int_{S^{in}} \|Du-X_K\|_g^2 dvol +\frac{1}{2} \int_{\Sigma^{out}} \|Dv- X_{\pi_{\Sigma}^*K'}\|_{g'}^2dvol \label{eq:DefnEnergy}
 \end{align}
where $g$ is the metric induced by $J$ and $\omega_{\Conf^n(E)}$ and 
$g'$ is the metric induced by $J'$ and $\omega_E$. 
\end{definition}

The energy is defined this way, instead of as $\frac{1}{2}\int_{S} \|Du-X_K\|_g^2 dvol$, because $u^*X_K$ is not defined at $u^{-1}(D_{HC}) \subset \mk(S)$, so it is not \emph{a priori} clear that the latter expression is related to the action of the asymptotes of $u$. However, we show the following (see also \eqref{eq:L2_u}).

\begin{lemma}\label{l:EnergyIndep}
 The energy $E(u)$ is independent of the choice of $U$.
\end{lemma}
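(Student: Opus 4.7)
The plan is to reduce to the case of two nested choices $U_1 \subset U_2$ and then show their energies agree, by interpreting the difference as two integrals over the collar region $A := \overline{U_2} \setminus U_1$ and exploiting the product structure of the Floer data there. For general admissible $U, U'$, first choose a relatively compact open neighborhood $U''$ of $C$ containing $\overline{U} \cup \overline{U'}$ (with $u$ transverse to $\partial \overline{U''}$), reducing everything to the nested case. In the nested case, the difference of energies works out to
\begin{align*}
E(u; U_2) - E(u; U_1) = \tfrac{1}{2}\int_{u^{-1}(A)} \|Du - X_K\|_g^2\, dvol_S - \tfrac{1}{2}\int_{\pi_\Sigma^{-1}(u^{-1}(A))} \|Dv - X_{\pi_\Sigma^* K'}\|_{g'}^2\, dvol_\Sigma,
\end{align*}
which I want to show is zero. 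Note $A$ is disjoint from $C$ (because $C \subset U_1$), so on $A$ we have $J = (J')^{[n]}$ and $K = (K')^{[n]}$ by \eqref{eq:productJ}--\eqref{eq:productK}; moreover $A \subset \Conf^n(E)$ forces $u^{-1}(A)$ to be disjoint from $\mk(S)$ and from the branch locus of $\pi_\Sigma$, since by \eqref{eq:BranchedInDiag} branching occurs only over $u^{-1}(\Delta_E) \subset u^{-1}(D_{HC})$.

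The crux will be a pointwise identity on $u^{-1}(A)$: for $z \in u^{-1}(A)$ with $n$ distinct preimages $w_1, \ldots, w_n \in \pi_\Sigma^{-1}(z)$,
\begin{align*}
\|Du|_z - X_K|_{u(z)}\|_g^2 = \sum_{i=1}^n \|Dv|_{w_i} - X_{\pi_\Sigma^* K'}|_{w_i}\|_{g'}^2.
\end{align*}
To establish this, I would locally lift $u$ to $\tilde u = (v \circ \varphi_1, \ldots, v \circ \varphi_n) : V_z \to E^n$, where $\varphi_i$ are the local inverse branches of $\pi_\Sigma$ at $w_i$ (well-defined because $\pi_\Sigma$ is unbranched over $u^{-1}(A)$). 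On $A$, the quotient map $q_{S_n}: E^n \to \Sym^n(E)$ is a local diffeomorphism, and the product metric and product Hamiltonian on $E^n$ descend to $g$ and $X_K$ on $\Conf^n(E) \setminus D^\circ_{HC}$. Under this identification, $(Du - X_K)|_z$ corresponds componentwise to $\bigl((Dv - X_{K'})|_{w_i} \circ (D\pi_\Sigma|_{w_i})^{-1}\bigr)_{i=1}^n$; evaluating squared norms on an orthonormal frame of $T_z S$ and passing through the holomorphic isometries $D\pi_\Sigma|_{w_i}$ to orthonormal frames on each $T_{w_i}\Sigma$ yields the identity.

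With the pointwise identity in hand, I would finish using that $\pi_\Sigma$ is locally a trivial $n$-to-$1$ cover over $u^{-1}(A)$, so Fubini gives
\begin{align*}
\int_{\pi_\Sigma^{-1}(u^{-1}(A))} f\, dvol_\Sigma = \int_{u^{-1}(A)} \Bigl(\sum_{w \in \pi_\Sigma^{-1}(z)} f(w)\Bigr) dvol_S
\end{align*}
for $f = \|Dv - X_{\pi_\Sigma^* K'}\|^2$. Combined with the pointwise identity, the two integrals in the energy difference coincide, so $E(u; U_2) = E(u; U_1)$ and $E(u)$ is independent of $U$. The main substantive step is the pointwise identity, which hinges on checking how the product Riemannian structure on $E^n$ descends under $q_{S_n}$ and interacts with the branched cover $\pi_\Sigma$; once the region $A$ is known to lie in $\Conf^n(E) \setminus C$, the remainder is bookkeeping.
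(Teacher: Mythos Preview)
Your proposal is correct and follows essentially the same approach as the paper's proof. The paper reduces to showing that for any open $G \subset S \setminus \mk(S)$ with $u(G) \cap C = \emptyset$ one has $\int_G \|Du - X_K\|_g^2\,dvol = \int_{\pi_\Sigma^{-1}(G)} \|Dv - X_{\pi_\Sigma^* K'}\|_{g'}^2\,dvol$, and establishes this via the same product decomposition $T_{u(z)}\Conf^n(E) \simeq \bigoplus_j T_{v(z_j)}E$ on which $J$, $X_K$, and $\omega_{\Conf^n(E)}$ (hence $g$) split; your nested-$U$ reduction and pointwise identity are just a slightly more explicit packaging of the same computation.
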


\begin{proof}
 It suffices to show that for every open subset $G \subset S \setminus \mk(S)$ such that $u(G) \cap C =\emptyset $, we have
 \begin{align}
  \int_{G} \|Du-X_K\|_g^2 dvol=\int_{\pi_\Sigma^{-1}(G)} \|Dv- X_{\pi_{\Sigma}^*K'}\|_{g'}^2dvol \label{eq:u=v}
 \end{align}
 This is in turn clear because by \eqref{eq:productJ} and \eqref{eq:productK}, both $J$ and $X_K$ split as products.
 Since $\omega_{\Conf^n(E)}$ is also a product, the metric $g$ is the product metric.
 
 More precisely, let $G \subset S \setminus \mk(S)$ be a small open set such that $u(G) \cap C =\emptyset $ 
 and $\pi_{\Sigma}^{-1}(G)$ is a disjoint union of open sets $G_1,\dots,G_n \subset \Sigma$.
 For $z \in G$ and $z_j:= G_j \cap \pi_{\Sigma}^{-1}(z)$ for $j=1,\dots,n$, we have canonical identifications
\begin{align}
 T_zG \simeq T_{z_j}G_j \qquad T_{u(z)}\Conf^n(E) \simeq \oplus_{j=1}^n T_{v(z_j)} E
\end{align}
By \eqref{eq:productJ} and \eqref{eq:productK}, both $J_z$ and $X_K$ (and $\omega_{\Conf^n(E)}$) respect this product decomposition 
and every summand is given by $J'_z$ and $X_{{\pi_{\Sigma}}^*K'}$, respectively.
Therefore, \eqref{eq:u=v} is true for $G$.
Now, the result follows by summing over these small open subsets
$G \subset S \setminus \mk(S)$. 
 \end{proof}
By taking a sequence of larger and larger $U$ and applying Lemma \ref{l:EnergyIndep}, we have 
\begin{align}
 E(u)=\frac{1}{2}\int_{S\setminus \mk(S)} \|Du-X_K\|_g^2 dvol. \label{eq:L2_u}
\end{align}

Our next task is to derive a uniform upper bound for $E(u)$ that depends only on $(A_S,J,K)$ and the Lagrangian boundary condition.

Consider again the graph construction.
Let $\hat{v}:=(v, id): \Sigma \to E \times \Sigma$ and define on $E \times \Sigma$ the following $2$-forms: 
\begin{align}
 \omega_{\pi_{\Sigma}^*K'}^{geom}:=&\omega_{E}+\omega_{E}(X_{\pi_{\Sigma}^*K'}(\partial_s),\cdot)\wedge ds+\omega_{E}(X_{\pi_{\Sigma}^*K'}(\partial_t),\cdot)\wedge dt \\
 &-\omega_{E}(X_{\pi_{\Sigma}^*K'}(\partial_s),X_{\pi_{\Sigma}^*K'}(\partial_t)) ds\wedge dt \nonumber \\
 \omega_{\pi_{\Sigma}^*K'}^{top}:=&\omega_{E}-d(\pi_{\Sigma}^*K'(\partial_s) ds)-d(\pi_{\Sigma}^*K'(\partial_t) dt)=\omega_{\pi_{\Sigma}^*K'}^{geom}+R_{\pi_{\Sigma}^*K'} \label{eq:omegaR}
\end{align}
where $R_{\pi_{\Sigma}^*K'}$ is the curvature defined by
\begin{align}
 R_{\pi_{\Sigma}^*K'}:=(\partial_t\pi_{\Sigma}^*K'(\partial_s)-\partial_s\pi_{\Sigma}^*K'(\partial_t)+\{\pi_{\Sigma}^*K'(\partial_s),\pi_{\Sigma}^*K'(\partial_t)\})ds \wedge dt 
\in \Omega^2(\Sigma, C^{\infty}(E)) \label{eq:curvature}
\end{align}
It is clear that (cf. Remark \ref{r:SplitData})
\begin{align}
 (Dv|_{\Sigma^{out}}- X_{\pi_{\Sigma}^*K'}|_{\Sigma^{out}})^{0,1}=0 \label{eq:FloerEquationV}
\end{align}
so, by tameness of $J'$, we have
\begin{align}
 \frac{1}{2} \int_{\Sigma^{out}} \|Dv- X_{\pi_{\Sigma}^*K'}\|_{g'}^2dvol=\int_{\Sigma^{out}} \hat{v}^*\omega_{\pi_{\Sigma}^*K'}^{geom} \label{eq:geomE1}
\end{align}

\begin{lemma}\label{l:boundGeoEnergy1}
 There is a constant $T>0$ such that for any choice of solution $u$ of \eqref{eq:FloerEquation0} and \eqref{eq:FloerEquation1} (and hence the corresponding $v$) and 
 $U$, we have $|\int_{\Sigma^{out}} \hat{v}^*R_{\pi_{\Sigma}^*K'}| <T$.
\end{lemma}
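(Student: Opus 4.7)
The plan is to reduce the integral on $\Sigma$ to one on $S$ via the branched cover $\pi_\Sigma$, then use flatness of $A_S$ to localise the integrand to the compact region $C_\bH$ of the fibre direction of $\pi_E$, and finally exploit the exponential decay on strip-like ends to make the resulting integral uniformly bounded.

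First I would observe that away from the branch set $R_{\pi_\Sigma^* K'}=\pi_\Sigma^* R_{K'}$, where $R_{K'}\in\Omega^2(S,C^\infty(E))$ is computed by the analogue of \eqref{eq:curvature} with $K'$ in place of $\pi_\Sigma^*K'$. Since $K'(w)\in\cH_{A_S(w)}(E)=\pi_E^*\cH_{A_S(w)}(\bH^\circ)$ by \eqref{eq:HE}, $K'$ factors as $K'=\pi_E^*\tilde K'$ for some $\tilde K'\in\Omega^1(S,C^\infty(\bH^\circ))$, and therefore $R_{K'}=\pi_E^* R_{\tilde K'}$. Because $\pi_\Sigma$ has degree $n$, unpacking $\hat v^*$ and estimating the pointwise value of the integrand by its sup over $E$ yields
\[
\bigg|\int_{\Sigma^{out}}\hat v^* R_{\pi_\Sigma^* K'}\bigg|\;\le\;n\int_{S^{out}}\|R_{\tilde K'}\|_{C^0(\bH^\circ)}\,dvol_S\;\le\;n\int_S \|R_{\tilde K'}\|_{C^0(\bH^\circ)}\,dvol_S.
\]
It suffices to bound the right-hand side by a constant depending only on the Floer data.

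Second, I would show that $\|R_{\tilde K'}\|_{C^0(\bH^\circ)}$ at every $y\in S$ only registers values on the compact set $C_\bH\subset\bH^\circ$. Outside $C_\bH$ one has $\tilde K'(w)=H_{A_S(w)}$ by the definition of $\cH_\gamma(\bH^\circ)$ in \eqref{eq:HH}. Using the Lie algebra homomorphism $\gamma\mapsto H_\gamma$ and the resulting identity $\{H_\alpha,H_\beta\}=H_{[\alpha,\beta]}$ on $\bH^\circ$, a direct computation from the formula \eqref{eq:curvature} applied to $\tilde K'$ gives
\[
R_{\tilde K'}\big|_{S\times(\bH^\circ\setminus C_\bH)}\;=\;H_{F_{A_S}(\partial_s,\partial_t)}\,ds\wedge dt,
\]
which vanishes since $A_S$ is flat by \eqref{eq:SurfaceConnection}. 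Hence the integrand is supported in $C_\bH$ in the $\bH^\circ$-direction.

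Third I would establish integrability on $S$. The integrand vanishes on $\nu(\mk(S))$, since $K$, and hence $K'$, vanishes there by \eqref{eq:FloerDataK}. On each strip-like end $\epsilon_j$, the exponential convergence $\|\epsilon_j^* K-H_j dt\|_{C^r}=O(e^{-c_j|s|})$ passes to $\tilde K'$ (the descent being smooth where defined), and together with the $A_S$-flatness on ends this yields $\|\epsilon_j^* R_{\tilde K'}\|_{C^0(\bH^\circ)}=O(e^{-c_j|s|})$; the integral over each half-strip is then finite. On the remaining portion of $S$, relatively compact in $\overline{S}$, the $C^0$-norm is uniformly bounded by smoothness of the Floer data together with its consistent extension to the compact parameter space $\overline{\cR}^{d+1,h}$. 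Setting $T$ to be $n$ times the resulting finite total, one obtains a bound independent of $u$, of the corresponding $v$, and of $U$ (the latter because we have only enlarged the domain of integration from $\Sigma^{out}$ to the whole base integral).

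The main obstacle is the fibrewise cancellation in the second step. Since Lagrangians $L_k\in\cL$ may project to non-compact arcs in $\bH^\circ$ terminating at boundary points $\lambda_{L_k}\in\partial\bH$, the image of $\pi_E\circ v$ can be unbounded; without the flatness cancellation, $R_{\tilde K'}$ would generally be supported on an unbounded region of $\bH^\circ$ and the integral over $\Sigma^{out}$ would not converge. It is precisely the interplay between the flat base connection $A$ and the asymptotic identity $\tilde K'(w)=H_{A_S(w)}$ outside $C_\bH$ that localises the curvature integrand to a compact region of $\bH^\circ$ and makes the estimate close.
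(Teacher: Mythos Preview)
Your proof is correct and follows essentially the same approach as the paper's: both factor $K'=\pi_E^*\tilde K'$ (the paper writes $K''$ for your $\tilde K'$), use flatness of $A_S$ to show the curvature $R_{\tilde K'}$ takes values in compactly supported functions on $\bH^\circ$, and then combine exponential decay over strip-like ends with boundedness on the compact core to get a uniform integral bound. One minor imprecision: condition \eqref{eq:HH} only says $H=H_\gamma$ outside \emph{some} compact subset of $\bH^\circ$, not necessarily outside $C_\bH$ itself, so your localisation in the second step should be to this (possibly larger, but still compact and $u$-independent) set rather than to $C_\bH$; the rest of the argument is unaffected.
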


\begin{proof}
 By \eqref{eq:HE}, there exists $K'' \in \Omega^1(S, C^{\infty}(\bH^\circ))$ such that $K'=\pi_E^*K''$ and $K''(w) \in \cH_{A_S(w)}(\bH^\circ)$ for all $w \in TS$.
 Recall also that $K''=0$ near $\mk(S)$.
 It implies that 
 \begin{align}
  R_{\pi_{\Sigma}^*K''}= \pi_\Sigma^*R_{K''} \quad \text{and } \quad R_{\pi_{\Sigma}^*K'}= \pi_E^*R_{\pi_{\Sigma}^*K''} \label{eq:compareCurvatures}
 \end{align}
 where $R_{-}$ is the corresponding curvature defined by $-$ using \eqref{eq:curvature}.
 
 Note that by \eqref{eq:HamData} and \eqref{eq:FloerDataK}, $K''$ converges exponentially fast in any $C^r$ topology with respect to $s$ over strip-like ends of $S$,
 so there is a constant $T'>0$ such that for every $j$ and any section $f:S \to \bH^\circ \times S$, we have $|\int_{Im(\epsilon_j)} f^*R_{K''}| <T'$.
 Moreover, \eqref{eq:HH}, \eqref{eq:HamData} and the flatness of $A_S$ implies that 
 $R_{K''}$ takes values in functions of $\bH^\circ$ that are supported in a compact subset of $\bH^\circ$.
 Therefore, by \eqref{eq:compareCurvatures}, $R_{\pi_{\Sigma}^*K'}$ 
 takes values in functions of $E$ that are uniformly bounded (the bound only depends on $K''$ but not $u$),
 and there is a constant $T''>0$ such that for every $j$ and any section $f:\Sigma \to E \times \Sigma$, we have $|\int_{\pi_{\Sigma^{-1}}(Im(\epsilon_j))} f^*R_{\pi_{\Sigma}^*K'}| <T''$.
 
 Let $\Sigma^{e} \subset \Sigma$ be the closure of the complement of the strip-like ends.  
 The discussion in the previous paragraph implies that $|\int_{\Sigma^{out}} \hat{v}^*R_{\pi_{\Sigma}^*K'}|$ is bounded above by $(d+1)T''$ plus 
 the integral of a bounded function over $\Sigma^{e}$,
 where the bound on that function depends only on $K''$. The result follows.
\end{proof}

Similarly, 
Let $\hat{u}:=(u, id): S \to \Conf^n(E) \times S$ and define on $\Conf^n(E) \times S$ the following $2$-forms
\begin{align}
 \omega_K^{geom}:=&\omega_{\Conf^n(E)}+\omega_{\Conf^n(E)}(X_K(\partial_s),\cdot)\wedge ds+\omega_{\Conf^n(E)}(X_K(\partial_t),\cdot)\wedge dt\\
 &-\omega_{\Conf^n(E)}(X_K(\partial_s),X_K(\partial_t)) ds\wedge dt ^{in}\nonumber \\
 \omega_K^{top}:=&\omega_{\Conf^n(E)}-d(K(\partial_s) ds)-d(K(\partial_t) dt)=\omega_K^{geom}+R_K \label{eq:omegaR2}
\end{align}
where $R_K \in \Omega^2(S, C^{\infty}(\Conf^n(E)))$ is the curvature of $K$.
We have
\begin{align}
 \frac{1}{2}\int_{S^{in}} \|Du-X_K\|_g^2 dvol= \int_{S^{in}} \hat{u}^*\omega_K^{geom} \label{eq:geomE2}
\end{align}

We have the parallel lemma.

\begin{lemma}\label{l:boundGeoEnergy2}
 There is a constant $T>0$ such that for any choice of solution $u$ of \eqref{eq:FloerEquation0} and \eqref{eq:FloerEquation1}, and any choice of 
 $U$, we have $|\int_{S^{in}} \hat{u}^*R_K| <T$.
\end{lemma}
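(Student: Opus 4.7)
The plan is to mirror the proof of Lemma \ref{l:boundGeoEnergy1}, decomposing the domain as
\[
S^{in} = (S^{in} \cap S^e) \cup \bigcup_{j=0}^{d} \bigl(S^{in} \cap \mathrm{Im}(\epsilon_j)\bigr),
\]
where $S^e \subset S$ is the (compact) closure of the complement of the strip-like ends, and bounding each contribution by a different mechanism. The key intermediate step is a uniform $C^0$-bound on $R_K$ over $S^e \times \Conf^n(E)$, after which the rest of the estimate is routine and, crucially, manifestly independent of $U$.

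To get that uniform bound, I would use the definition \eqref{eq:HnE} of $\cH_\gamma^n(E)$ together with the smoothness of the family $K$ and compactness of $S^e$ to produce a single compact subset $C_0 \subset \Conf^n(E) \setminus D_r^\circ$ such that, for every $z \in S^e$ and every $w \in T_zS$, $K(w)$ agrees outside $C_0$ with a function of the form $(\pi_E^*K''(w))^{[n]}$, with $K''(w) \in \cH_{A_S(w)}(\bH^\circ)$. Since the Hamiltonian vector field of a product function $(H)^{[n]}$ is the direct sum of the Hamiltonian vector fields on the factors, the Poisson bracket appearing in the curvature formula underlying \eqref{eq:omegaR2} splits accordingly, and one verifies that on $\Conf^n(E) \setminus C_0$,
\[
R_K\bigl|_{(z,\{z_1,\ldots,z_n\})} \;=\; \sum_{i=1}^{n} R_{K''}\bigl|_{(z,\pi_E(z_i))}.
\]
The analysis of $R_{K''}$ in Lemma \ref{l:boundGeoEnergy1} shows that $R_{K''}$ takes values in functions of $\bH^\circ$ that are uniformly bounded in the $C^0$-norm, so $R_K$ is uniformly bounded on $\Conf^n(E) \setminus C_0$; on the compact set $C_0$, smoothness of $K$ gives an automatic bound. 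Thus $\sup_{S^e \times \Conf^n(E)} |R_K| < \infty$, and since $S^e$ has finite area, the integral over $S^{in} \cap S^e$ is bounded uniformly in $u$ and $U$.

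For the strip-like-end contributions, the asymptotic profile $H_j\,dt$ has vanishing curvature since $H_{j,t}$ does not depend on $s$; combined with the exponential $C^r$-convergence of $\epsilon_j^*K$ to $H_j\,dt$ required in \eqref{eq:FloerDataK}, this forces $R_K$ to decay exponentially in $|s|$ along each end. Because $u \circ \epsilon_j$ is asymptotic to the chord $\ul{x}_j$, which lies in a compact subset of $\Conf^n(E)$, the pullback $\hat{u}^*R_K$ inherits the same exponential decay with a uniform constant, and its integral over any portion of a semi-infinite strip is therefore bounded independently of $u$ and $U$. Summing this with the bound on $S^e$ produces the required constant $T$. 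The main obstacle I foresee is the uniform choice of $C_0$, which was unnecessary in Lemma \ref{l:boundGeoEnergy1} since $R_{K''}$ lives on $\bH^\circ$ with no analogue of $D_r^\circ$; here one must genuinely exploit smoothness of the family $K$ and compactness of $S^e$ to upgrade the pointwise-in-$z$ definition of $\cH_\gamma^n(E)$ to a uniform one.
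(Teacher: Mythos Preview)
Your approach is correct and essentially matches the paper's proof: both decompose $S^{in}$ into the compact core $S^e$ and the strip-like ends, bound $R_K$ uniformly on $\Conf^n(E)$ by using the product structure outside a compact set together with smoothness inside, and handle the ends via the exponential convergence of $\epsilon_j^*K$ to $H_j\,dt$. The only minor difference is that your appeal to the asymptotic behaviour of $u$ on the strip-like ends is unnecessary---the paper simply notes (as in Lemma~\ref{l:boundGeoEnergy1}) that $|\int_{\mathrm{Im}(\epsilon_j)} f^* R_K| < T'$ for \emph{any} section $f$, since the curvature itself decays exponentially in $|s|$.
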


\begin{proof}
 As before, there exists $T'>0$ such that for every section $S \to \Conf^n(E) \times S$ and every $j$, we have $|\int_{Im(\epsilon_j)} f^* R_K|<T'$.
 Ouside $U$, $R_K$ takes values in bounded functions on $\Hilb^n(E) \setminus U$ with bound determined by $K'$.
 Since $U$ is relatively compact, there is also a bound for the function-values of $R_K$ inside $U$ that is independent of $u$.
 Over all, if we let $S^e$ be the closure of the complement of strip-like ends in $S$, then
 $|\int_{S^{in}} \hat{u}^*R_K|$ is bounded above by $(d+1)T'$ plus an integration of a bounded function over $S^e$, so the result follows.
\end{proof}

The primitive one form $\theta_E$ for $\omega_E$ induces a primitive one form $\theta_{\Conf^n(E)}$
for $\omega_{\Conf^n(E)}$.
It is clear that 
\begin{align}
 \theta_{\pi_{\Sigma}^*K'}^{top}:=\theta_E- \pi_{\Sigma}^*K' \quad \text{ and }\quad \theta_{K}^{top}:=\theta_{\Conf^n(E)}- K
\end{align}
are primitives of $\omega_{\pi_{\Sigma}^*K'}^{top}$ and $\omega_{K}^{top}$, respectively.

\begin{lemma}\label{l:EnergyBound}
 We have
 \begin{align}
  \int_{\Sigma^{out}} \hat{v}^*\omega_{\pi_{\Sigma}^*K'}^{top}+\int_{S^{in}} \hat{u}^*\omega_K^{top}= \int_{\partial S} \hat{u}^*\theta_{K}^{top} \label{eq:TopEnergy}
 \end{align}
so there is a constant $T>0$ such that for all $u$ satisfying \eqref{eq:FloerEquation0} and \eqref{eq:FloerEquation1},
we have $E(u)<T$.
\end{lemma}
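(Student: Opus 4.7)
The identity \eqref{eq:TopEnergy} follows from Stokes' theorem applied separately on $S^{in}$ and $\Sigma^{out}$, together with a cancellation along their shared interior boundary. First I apply Stokes to obtain
\begin{align*}
\int_{S^{in}}\hat{u}^*\omega_K^{top} = \int_{\partial S^{in}}\hat{u}^*\theta_K^{top}, \qquad \int_{\Sigma^{out}}\hat{v}^*\omega_{\pi_\Sigma^*K'}^{top} = \int_{\partial \Sigma^{out}}\hat{v}^*\theta_{\pi_\Sigma^*K'}^{top},
\end{align*}
where the strip-like end contributions converge absolutely by the exponential decay of $Du - X_K$ (and hence of $Dv - X_{\pi_\Sigma^*K'}$) imposed via \eqref{eq:FloerDataK}. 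The boundary $\partial S^{in}$ decomposes as $(\partial S \cap S^{in}) \sqcup u^{-1}(\partial \overline{U})$, while $\partial \Sigma^{out}$ decomposes as $\pi_\Sigma^{-1}(\partial S \cap S^{out}) \sqcup \pi_\Sigma^{-1}(u^{-1}(\partial \overline{U}))$, with orientations induced from the interiors.

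The crux is that on $\Conf^n(E) \setminus C$, by \eqref{eq:productJ} and \eqref{eq:productK} together with the fact that $\theta_{\Conf^n(E)}$ is the product primitive coming from $\theta_E$, all the relevant data split as symmetric products; since $u(S^{out}) \cap C = \emptyset$ (as $C \subset U$), the tautological correspondence identifies $\hat{u}^*\theta_K^{top}$ over $S^{out} \setminus \mk(S)$ with the $\pi_\Sigma$-pushforward of $\hat{v}^*\theta_{\pi_\Sigma^*K'}^{top}$. Thus for any measurable $A \subset S^{out} \setminus \mk(S)$,
\begin{align*}
\int_{\pi_\Sigma^{-1}(A)} \hat{v}^*\theta_{\pi_\Sigma^*K'}^{top} \;=\; \int_A \hat{u}^*\theta_K^{top}.
\end{align*}
Taking $A = u^{-1}(\partial \overline{U})$, the boundary orientations inherited from $S^{in}$ and from $\Sigma^{out}$ are opposite, so these contributions cancel in the sum of the two Stokes identities. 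Taking $A = \partial S \cap S^{out}$, the contribution from $\partial \Sigma^{out}$ becomes $\int_{\partial S \cap S^{out}} \hat{u}^*\theta_K^{top}$, which combines with $\int_{\partial S \cap S^{in}} \hat{u}^*\theta_K^{top}$ to give $\int_{\partial S}\hat{u}^*\theta_K^{top}$. This proves \eqref{eq:TopEnergy}.

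For the uniform bound on $E(u)$, combine \eqref{eq:DefnEnergy} with \eqref{eq:geomE1}, \eqref{eq:geomE2}, \eqref{eq:omegaR}, \eqref{eq:omegaR2} and the identity just established to obtain
\begin{align*}
E(u) \;=\; \int_{\partial S}\hat{u}^*\theta_K^{top} \;-\; \int_{\Sigma^{out}}\hat{v}^*R_{\pi_\Sigma^*K'} \;-\; \int_{S^{in}}\hat{u}^*R_K.
\end{align*}
The last two terms are bounded by constants independent of $u$ by Lemmas \ref{l:boundGeoEnergy1} and \ref{l:boundGeoEnergy2}. For the first term, on each $\partial_j S$ the image of $\hat{u}$ lies in $\Sym(\ul{L}_j)$; since each $L_k$ is exact with respect to $\theta_E$, the restriction $\theta_{\Conf^n(E)}|_{\Sym(\ul{L}_j)}$ is exact, so $\int_{\partial_j S}\hat{u}^*\theta_{\Conf^n(E)}$ reduces to a difference of primitives determined by the limiting chords $\ul{x}_j, \ul{x}_{j+1}$. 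The complementary piece $-\int_{\partial_j S}\hat{u}^*K$ is controlled (up to an exponentially small remainder at infinity coming from \eqref{eq:FloerDataK}) by the action of the Hamiltonian chord $\ul{x}_j$. Summing over $j$ yields a constant $T$, depending only on $(A_S,J,K)$ and the asymptotes, with $E(u) < T$.

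The main obstacle is the careful bookkeeping of orientations, both of $S^{in}$ versus $\Sigma^{out}$ along the interior boundary $u^{-1}(\partial \overline{U})$ and of the $n$ preimage sheets of $\pi_\Sigma$ over the Lagrangian boundary, so that the tautological correspondence delivers the correct signs for the cancellation. Once this is in place, and given that branch points of $\pi_\Sigma$ sit in the interior of $\Sigma^{out}$ (so do not contribute to $\partial \Sigma^{out}$) by \eqref{eq:BranchedInDiag}, the rest of the argument is formal.
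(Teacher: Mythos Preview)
Your proof is correct and follows essentially the same approach as the paper: Stokes' theorem on $S^{in}$ and $\Sigma^{out}$, the product splitting outside $U$ to cancel the common interior boundary and convert the $\partial\Sigma^{out}$ Lagrangian piece into a $\partial S^{out}$ integral, and then Lemmas~\ref{l:boundGeoEnergy1} and~\ref{l:boundGeoEnergy2} for the curvature terms. The only difference is that the paper delegates the bound on $\int_{\partial S}\hat{u}^*\theta_K^{top}$ to \cite[Lemma~4.8]{Seidel4.5}, whereas you sketch the standard exactness-plus-action argument directly; both are fine.
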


\begin{proof}
 Apply Stokes theorem to the terms on the left hand side of \eqref{eq:TopEnergy}.
 Note that,
 \begin{align}
  \int_{\pi_{\Sigma}^{-1}(u^{-1}(\partial \overline{U}))} \hat{v}^*\theta_{\pi_{\Sigma}^*K'}^{top} + \int_{u^{-1}(\partial \overline{U})} \hat{u}^*\theta_K^{top}=0
 \end{align}
because the Floer data splits into a product outside $U$ and the orientation of the curves in the two summands are opposite to one another.
Similarly,
 \begin{align}
  \int_{\pi_{\Sigma}^{-1}(\partial S^{out} \setminus u^{-1}(\partial \overline{U}))} \hat{v}^*\theta_{\pi_{\Sigma}^*K'}^{top} =\int_{\partial S^{out} \setminus u^{-1}(\partial \overline{U})} \hat{u}^*\theta_K^{top}.
 \end{align}
Therefore, we get \eqref{eq:TopEnergy}.

Moreover, there is a constant $T'>0$ (independent of $u$) such that
$\int_{\partial S} \hat{u}^*\theta_K^{top} <T'$ (see \cite[Lemma $4.8$]{Seidel4.5}) so the result follows
from Lemma \ref{l:boundGeoEnergy1} and \ref{l:boundGeoEnergy2}, and the equalities \eqref{eq:omegaR}, \eqref{eq:geomE1}, \eqref{eq:omegaR2}, \eqref{eq:geomE2} and \eqref{eq:TopEnergy}.
\end{proof}

Finally, we address the $L^2$-norm of $v$ when the domain is not restricted to $\Sigma^{out}$.

\begin{lemma}\label{l:comparableMetric}
 For every relatively compact open subset $C_{\Sigma} \subset \Sigma$,
 there is a constant $T_{C_{\Sigma}}>0$ such that for all $v$ arising from applying Lemma \ref{l:tautologicalCorr} to $u$ satisfying \eqref{eq:FloerEquation0} and \eqref{eq:FloerEquation1}, we have
 \begin{align}
  \frac{1}{2} \int_{C_{\Sigma}} \|Dv- X_{\pi_{\Sigma}^*K'}\|_{g'}^2dvol <T_{C_{\Sigma}}.
 \end{align}

\end{lemma}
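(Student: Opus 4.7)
My plan is to decompose the domain as $C_\Sigma = (C_\Sigma \cap \Sigma^{out}) \cup (C_\Sigma \cap \Sigma^{in})$ and control each piece using the uniform energy bound $E(u) < T$ from Lemma~\ref{l:EnergyBound}. The key geometric input is that, while the $K$-part of the Floer data need not split as a product on $\Conf^n(E)$, the almost complex structure $J_z = (J'_z)^{[n]}$ always splits and $\omega_{\Conf^n(E)}$ is a product; hence the induced Riemannian metric $g$ splits as a direct sum along $T_{u(z)}\Conf^n(E) = \bigoplus_j T_{v(z_j)}E$ at every non-branch point.

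On $C_\Sigma \cap \Sigma^{out}$ the bound is immediate: by the definition~\eqref{eq:DefnEnergy} of energy, $\tfrac{1}{2}\int_{\Sigma^{out}} \|Dv - X_{\pi_\Sigma^* K'}\|_{g'}^2\, dvol$ is already a summand of $E(u)$ and is therefore bounded by $T$.

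On $C_\Sigma \cap \Sigma^{in}$, I would first derive the pointwise identity
\[
\|Du - X_K\|_g^2 = \sum_{j=1}^n \|Dv|_{z_j} - X_K^j\|_{g'}^2
\]
at non-branch points $z \in S^{in}$, where $X_K^j \in T_{v(z_j)}E$ is the $j$-th component of $X_K|_{u(z)}$ in the product decomposition. Pushing this up by $\pi_\Sigma$ and integrating (the branch locus has measure zero) converts the $S$-side bound from~\eqref{eq:DefnEnergy} into
\[
\int_{\Sigma^{in}} \|Dv - X^\circ\|_{g'}^2\, dvol = \int_{S^{in}} \|Du - X_K\|_g^2\, dvol \leq 2T,
\]
where $X^\circ$ is the $1$-form on $\Sigma^{in}$ with values in $v^*TE$ whose fiber component at $z_j$ is $X_K^j$. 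I would then compare $X^\circ$ with $X_{\pi_\Sigma^* K'}$: their difference corresponds to the product-decomposition components of $X_{K^{diff}}$, where $K^{diff} := K - (K')^{[n]}$ is compactly supported in $\Conf^n(E) \setminus D_r^\circ$ by~\eqref{eq:HnE}, so $\|X^\circ - X_{\pi_\Sigma^* K'}\|_{g'} \leq M$ uniformly for some constant $M$ depending only on the Floer data. Applying $\|a-b\|^2 \leq 2\|a-c\|^2 + 2\|c-b\|^2$ together with the relative compactness of $C_\Sigma$ (hence the finiteness of its area in the pullback metric) then yields
\[
\int_{C_\Sigma \cap \Sigma^{in}} \|Dv - X_{\pi_\Sigma^* K'}\|_{g'}^2\, dvol \leq 4T + 2M^2 \cdot \mathrm{area}(C_\Sigma),
\]
so adding the two pieces gives $T_{C_\Sigma} := 3T + M^2\,\mathrm{area}(C_\Sigma)$.

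The main obstacle I anticipate is the careful bookkeeping of the product-decomposition identity: branch points of $\pi_\Sigma$ form a discrete (measure-zero) set and so pose no integration issue, but the strip-like ends of $S$ are more delicate because $K^{diff}$ can persist there in the $t$-direction. The exponential convergence $\|(\epsilon_j^*K - H_j\,dt)e^{c_j|s|}\|_{C^r} \to 0$ built into~\eqref{eq:FloerDataK}, together with the compact support of each $H_j$ modulo its split part in $\Conf^n(E) \setminus D_r^\circ$, guarantees that $\|X_{K^{diff}}\|_{L^\infty(S \times \Conf^n(E))}$ is finite, which is all that is needed for the constant $M$ to exist independent of $u$.
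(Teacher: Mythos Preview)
Your argument is correct and rests on the same two ingredients the paper uses: the metric $g$ on $\Conf^n(E)$ is always a product (since $J_z=(J'_z)^{[n]}$ and $\omega_{\Conf^n(E)}$ is a product), and the difference $X_K - X_{(K')^{[n]}}$ is uniformly bounded. Your triangle-inequality step with the additive $M^2\,\mathrm{area}(C_\Sigma)$ term is exactly what is needed, and in fact makes explicit a point the paper's displayed inequality glosses over.

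The only organizational difference is that the paper avoids your $u$-dependent split $C_\Sigma = (C_\Sigma\cap\Sigma^{in})\cup(C_\Sigma\cap\Sigma^{out})$: it instead fixes a relatively compact $G\subset S$ with $C_\Sigma\subset\pi_\Sigma^{-1}(G)$ and bounds $\int_{\pi_\Sigma^{-1}(G)}\|Dv-X_{\pi_\Sigma^*K'}\|_{g'}^2\,dvol$ directly in terms of $\int_G\|Du-X_K\|_g^2\,dvol$ plus the additive constant, then invokes Lemma~\ref{l:EnergyBound} via~\eqref{eq:L2_u}. This is slightly cleaner because $G$ is fixed once and for all, whereas your $\Sigma^{in},\Sigma^{out}$ move with $u$; but since both pieces of your split are controlled by the single uniform constant $T$ from Lemma~\ref{l:EnergyBound}, your version works just as well. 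Note also that on $\Sigma^{in}$ there are in fact no branch points of $\pi_\Sigma$ (since $\overline{U}\subset\Conf^n(E)$ avoids $D_{HC}$), so your measure-zero caveat is unnecessary there.
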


\begin{proof}
Let $G \subset S$ be a relatively  compact open subset such that $C_{\Sigma} \subset \pi_{\Sigma}^{-1}(G)$.
Since $J_z=(J_z')^{[n]}$ and $X_K=X_{(K')^{[n]}}$ outside a compact subset of $\Conf^n(E) \setminus D^\circ_r$, 
there is a constant $T_G>0$ (independent of $u$) such that
\begin{align}
 \int_{\pi_{\Sigma}^{-1}(G)} \|Dv- X_{\pi_{\Sigma}^*K'}\|_{g'}^2dvol < T_G \int_G  \|Du-X_K\|_g dvol.
\end{align}
The right hand sided is in turn bounded above by a constant independent of $u$, by Lemma \ref{l:EnergyBound}.
\end{proof}

\begin{remark}
 The term $\frac{1}{2} \int_{\Sigma} \|Dv- X_{\pi_{\Sigma}^*K'}\|_{g'}^2dvol$ may be infinite, because 
  $X_K \neq X_{(K')^{[n]}}$ everywhere, so the symmetric product of the asymptotes of $v$
corresponding to a fixed puncture of $S$ is not necessarily a $X_K$-Hamiltonian chord.
This implies that the integral over the strip-like ends might diverge.
\end{remark}

\subsubsection{Compactness}\label{sss:Compactness}

Let $\ul{x}_0,\dots,\ul{x}_d$ be as before.
We next discuss  compactness of the solution spaces $\cR^{d+1,h}(\ul{x}_0;\ul{x}_d, \dots, \ul{x}_1)$.

\begin{lemma}\label{l:nobubble}
 Fix $S \in \cR^{d+1,h}$.
 Let $u_k:S_k \to \Hilb^n(E)$ be a sequence in $\cR^{d+1,h}(\ul{x}_0;\ul{x}_d, \dots, \ul{x}_1)$ such that $S_k$ converges to $S \in \cR^{d+1,h}$.
 If there exists $z_k \in S_k \setminus \nu(\mk(S_k))$ such that $\|Du_k(z_k)-X_K(z_k)\|_g$ diverges to infinity, then for any 
 compact subset $C_S$ in the universal family over $\cR^{d+1,h}$ and $N>0$, there exists $k>N$ such that $z_k \notin C_S$. 
\end{lemma}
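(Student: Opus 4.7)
The plan is to rule out interior bubbling by a standard rescaling argument combined with the tautological correspondence and exactness of the Lagrangians, followed by positivity of intersection with $D_r$ to handle the sphere case.

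First I would argue by contradiction. Suppose that, after passing to a subsequence, $z_k \to z_\infty$ for some $z_\infty$ in a compact $C_S \subset \cS^{d+1,h}$. The hypothesis $z_k \notin \nu(\mk(S_k))$, together with the fact that the family $\ol{N}^{d+1,h}_{\mk}$ is a closed smooth submanifold of $\ol{\cS}^{d+1,h}$, forces $z_\infty \in S \setminus \nu(\mk(S))$. Choose a small coordinate (half-)disk $V \subset S$ around $z_\infty$ disjoint from $\overline{\nu(\mk(S))}$; for $k$ large, a corresponding $V_k \subset S_k$ is disjoint from $\overline{\nu(\mk(S_k))}$ and contains $z_k$. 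On $V_k$, the condition $u_k^{-1}(D_{HC}) \subset \nu(\mk(S_k))$ forces $u_k(V_k) \subset \Conf^n(E) \subset \Hilb^n(E) \setminus D_{HC}$.

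Next I would run a Hofer-style rescaling. Using the tame symplectic form on $\Hilb^n(E)$ from Lemma \ref{l:tameSymp}, the uniform energy bound of Lemma \ref{l:EnergyBound}, and the exponential decay built into \eqref{eq:FloerDataK}, rescaling at $z_k$ by the blow-up factor $R_k = \|Du_k(z_k) - X_K\|_g \to \infty$ extracts, in the Gromov sense, a non-constant $J_{z_\infty}$-holomorphic bubble $\tilde u$. The perturbation $X_K/R_k$ vanishes in the limit, so $\tilde u$ satisfies the unperturbed equation; it is a sphere $\tilde u : \CP^1 \to \Hilb^n(E)$ if $z_\infty$ is interior to $S$, or a disk $\tilde u : D \to \Hilb^n(E)$ with $\tilde u(\partial D) \subset \Sym(\ul{L}_j)$ if $z_\infty$ lies on $\partial_j S$.

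Now I would combine the tautological correspondence with exactness. Whenever $\pi_{HC}\circ \tilde u$ is not entirely contained in the big diagonal of $\Sym^n(E)$, the correspondence of Lemma \ref{l:tautologicalCorr} produces $\tilde v : \widetilde{\Sigma} \to E$ satisfying the unperturbed $J'_{z_\infty}$-holomorphic equation, with either empty boundary (sphere case) or boundary on $L_1 \cup \cdots \cup L_n$ (disk case). Since $\omega_E = d\theta_E$ is exact and each $L_i \in \cL$ is exact with respect to $\theta_E$, Stokes' theorem forces the energy of $\tilde v$ to vanish, so $\tilde v$ is constant. Hence $\pi_{HC}\circ \tilde u$ is constant and $\tilde u$ lies in a fiber of $\pi_{HC}$. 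In the disk case, the condition $\tilde u(\partial D) \subset \Sym(\ul L_j) \subset \Conf^n(E)$ excludes positive-dimensional fibers (which are contained in $D_{HC}$), so $\tilde u$ is constant, contradicting the blow-up of $\|Du_k(z_k)\|_g$.

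Finally, in the sphere case one of the following holds: either $\tilde u(\CP^1) \subset D_{HC}$, or the argument above applies and $\tilde u$ lies in a fiber of $\pi_{HC}$. In both situations $\tilde u$ is a non-constant rational curve in $\Hilb^n(E)$, so Lemma \ref{l:rationalCurveinHilb} gives $[\tilde u] \cdot [D_r] > 0$, and $\tilde u$ meets $D_r$ at some point with positive local multiplicity. Since the rescaled maps $\tilde u_k = u_k \circ \phi_k$ converge to $\tilde u$ in $C^{\infty}_{\mathrm{loc}}$ near this intersection, positivity of intersection (as exploited in Lemma \ref{l:PositivityIntersection}) implies that $\tilde u_k$, and hence $u_k$, also meets $D_r$ for $k$ large, contradicting $u_k(S_k) \subset \Hilb^n(E) \setminus D_r$. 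The main technical obstacle is the Gromov compactness step in $\Hilb^n(E)$: the tame symplectic form from Lemma \ref{l:tameSymp} is only implicit near $D_{HC}$, and $J_{z_\infty}$ extends to $\Hilb^n(E)$ only away from a compact subset of $\Conf^n(E)$, so local uniform bounds for the rescaled sequence must be obtained by combining the energy bound with the tautological correspondence on the part of the domain mapping close to $D_{HC}$.
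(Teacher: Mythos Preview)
Your approach reverses the order of the two main operations relative to the paper, and this is precisely what creates the technical obstacle you flag at the end. The paper applies the tautological correspondence \emph{before} rescaling: since $z_\infty \notin \nu(\mk(S))$ and $u_k^{-1}(D_{HC}) \subset \nu(\mk(S_k))$, on a small ball $B$ around $z_\infty$ the map $u_k$ lands in $\Conf^n(E)$ and $\pi_\Sigma^{-1}(B)$ is a disjoint union of $n$ disks, producing $n$ maps $V_{k,1},\ldots,V_{k,n}\colon B \to E$. These are then rescaled in the compact surface $E^\rceil$, where both the metric and $J'_{z_\infty}$ extend; at least one limit is a non-constant $J'_{z_\infty}$-holomorphic sphere (or disk) in $E^\rceil$. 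The contradiction is with the divisor $D_E$: by assumption \eqref{eq:StrictPositiveInt} any such sphere meets $D_E$, and in the disk case exactness of the Lagrangian forces the same. This contradicts $\mathrm{Im}(v_k) \subset E = E^\rceil \setminus D_E$.

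Your route of rescaling directly in $\Hilb^n(E)$ does not go through as written, for two reasons. First, $J_{z_\infty} = (J'_{z_\infty})^{[n]}$ is only defined on $\Conf^n(E)$ and does not extend across $D_{HC}$ unless $J'_{z_\infty}$ is integrable, and the product metric $g$ likewise fails to extend smoothly; so there is no Gromov compactness available in $\Hilb^n(E)$ for the rescaled sequence, and your proposed fix (``combine the energy bound with the tautological correspondence near $D_{HC}$'') amounts to redoing the paper's argument anyway. Second, your sphere-case contradiction asserts $u_k(S_k) \subset \Hilb^n(E) \setminus D_r$, but this is not a hypothesis of Lemma~\ref{l:nobubble}: membership in $\cR^{d+1,h}(\ul{x}_0;\ul{x}_d,\ldots,\ul{x}_1)$ imposes no constraint on $u_k^{-1}(D_r)$, and Corollary~\ref{c:missingDr} only gives disjointness from $D_r$ under the additional assumption $h = I_{\ul{x}_0;\ul{x}_d,\ldots,\ul{x}_1}$. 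The paper's contradiction via $D_E$ requires no such hypothesis.
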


\begin{proof}
 For simplicity, we assume $S_k=S$ for all $k$; the reasoning below adapts directly to the general case.
 Suppose the lemma were false, then there is a subsequence of $z_k$ which converges to $z_\infty \in S$.
 
 First assume that $z_\infty$ is an interior point of $S$.
 Let $B \subset S$ be a small ball centered at $z_\infty$ and 
 conformally identify $B$ with a $3\epsilon$-ball in $\CC$ centered at the origin.
 Under this identification, we can assume that for all $k$, $z_k$ lies in the $\epsilon$-ball centered at $z_\infty$.
 By assumption and the uniform bound of $X_K$ near $z_{\infty}$, $\|Du_k(z_k)\|_g$ diverges to infinity.
 
 We apply a rescaling trick to the $\epsilon$-ball $B_k$ centered at $z_k$ in the following sense (see \cite[Section 4.2]{McDuffSalamonJ}).
 Let $r_k=\sup_{z \in B_k} \|Du_k(z)\|_g$ and $r_kB_k$ be the $\epsilon r_k$-ball centered at the origin.
 Let $\phi_k:r_kB_k \to B_k$ be $\phi_k(z)=z_k+\frac{z}{r_k}$.
 The sequence of maps $u_k \circ \phi_k: r_kB_k \to \Conf^n(E)$ satisfies $\sup_{r_kB_k} \|D(u_k \circ \phi_k)\|<2$ for all $k$.
Since $\pi_{\Sigma}^{-1}(B)$ is a disjoint union of $n$ discs in $\Sigma$,
 the $u_k \circ \phi_k$ induce corresponding maps $V_{k,j}:  r_kB_k \to E$ for $j=1,\dots,n$ as in Lemma \ref{l:tautologicalCorr}.
 By the same reasoning in Lemma \ref{l:comparableMetric}, we know that there exists $T>0$ such that $\sup_{r_kB_k}\|DV_{k,j}\|_{g'} <T $ for all $k$.
 
 Since the metric on $E$ extends to a metric on $E^{\rceil}$ and $V_{k,j} \in W^{1,p}$ (for $p>2$) with uniformly bounded $W^{1,p}$-norm,
 by applying compactness of $W^{1,p}$ in $C^0$ to $V_{k,j}$ for each $j$ in turn, 
 we get a subsequence which converges uniformly on compact subsets to continuous functions $V_{\infty,j}: \CC \to E^{\rceil}$ for all $j=1,\dots,n$.
 Moreover, $V_{\infty,j}$ satisfies the $(j_\CC,J'_{z_\infty})$-holomorphic equation for all $j$, because of the stipulation that  $J_z=(J'_z)^{[n]}$ for all $z$.
 Elliptic bootstrapping shows that $V_{\infty,j}$ is smooth.
 Moreover, since $\|Du_k(z_k)-X_K(z_k)\|_g$ diverges to infinity, at least one of $V_{\infty,j}$ is not a constant map.
 By Lemma \ref{l:comparableMetric}, $DV_{\infty,j}$ has finite $L^2$-norm so we can apply removal of singularities to conclude that every $V_{\infty,j}$ extends to a  $J'_{z_\infty}$-holomorphic
 map $\CP^1 \to E^{\rceil}$ (and at least one of them is not a constant).
 
 By assumption \eqref{eq:StrictPositiveInt}, every non-constant  $J'_{z_\infty}$-holomorphic map $\CP^1 \to E^{\rceil}$
 has strictly positive algebraic intersection with $D_E$, which in turn implies that this is true for $v_k$ for large $k$ (because all other sphere bubbles, if any, 
 also contribute positively to the algebraic intersection).
 However, $Im(v_k)$ is contained in $E$, giving a contradiction. 
 
 Now, if instead  $z_\infty \in \partial S$, then we can apply the same rescaling trick and the outcome is a 
 $J'_{z_\infty}$-holomorphic disc with appropriate Lagrangian boundary for each $j$, and at least one of them is non-constant.
 By exactness of the Lagrangian boundary, the  $J'_{z_\infty}$-holomorphic disc
 has strictly positive algebraic intersection with $D_E$, which in turn implies that that is true for $v_k$ for large $k$, yielding the same contradiction.
 \end{proof}

Next, we consider the case that we have a uniform bound on $\sup_{S_k \setminus \nu(\mk(S_k))} \|Du_k(z_k)-X_K(z_k)\|_g$.

\begin{proposition}\label{p:Compactness}
 Fix $S \in \cR^{d+1,h}$.
 Let $u_k:S_k \to \Hilb^n(E)$ be a sequence in $\cR^{d+1,h}(\ul{x}_0;\ul{x}_d, \dots, \ul{x}_1)$ such that $S_k$ converges to $S \in \cR^{d+1,h}$.
 We assume that $h=I_{\ul{x}_0;\ul{x}_d, \dots, \ul{x}_1}$.
 If there exists $T>0$ such that 
 \begin{align}
\sup_{S_k \setminus \nu(\mk(S_k))} \|Du_k(z_k)-X_K(z_k)\|_g <T  \label{eq:EnergyNoConcentration}
 \end{align}
 for all $k$,
 then there exists $u_\infty \in \cR^{d+1,h}(\ul{x}_0;\ul{x}_d, \dots, \ul{x}_1)$ such that a subsequence of $u_k$ converges (uniformly on compact subsets) to $u_\infty$.

\end{proposition}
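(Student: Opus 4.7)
The plan is to pass to the tautological correspondence $(\pi_{\Sigma_k}, v_k)$ of Lemma \ref{l:tautologicalCorr} and run a Gromov compactness argument on the $v_k$, exploiting the identity $h = I$ in an essential way to rule out escape to $D_r$, and a Seidel-type hyperbolic gauge argument to rule out escape to the ends of $E^{\rceil}$. Because $h = I_{\ul{x}_0;\ul{x}_d,\dots,\ul{x}_1}$, Corollary \ref{c:missingDr} already guarantees that $\im(u_k) \cap D_r = \emptyset$ and that $u_k^{-1}(D_{HC}) = \mk(S_k)$ consists of simple transverse intersections, so each branched cover $\pi_{\Sigma_k}$ has branch locus exactly $\mk(S_k)$ with simple branching. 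The monodromy data then take values in a finite set, and after passing to a subsequence all $\pi_{\Sigma_k}$ may be assumed isomorphic to a fixed $\pi_\Sigma: \Sigma \to S$. The hypothesis \eqref{eq:EnergyNoConcentration}, together with the product structure of $(J, K)$ outside a compact subset of $\Conf^n(E) \setminus D_r^{\circ}$ (see \eqref{eq:productJ}, \eqref{eq:productK}) and the fact that the metrics on $E$ and $\Conf^n(E)$ are quasi-isometric on compact subsets of $\Conf^n(E) \setminus D_r^{\circ}$, translates (as in the proof of Lemma \ref{l:comparableMetric}) into a uniform $C^0$-bound on $\|Dv_k - X_{\pi_\Sigma^*K'}\|_{g'}$ over compact subsets of $\Sigma \setminus \crit(\pi_\Sigma)$.

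The next step is to confine $\im(v_k)$ to a compact subset of $E^{\rceil}$. To rule out escape to the ends, I would apply Seidel's hyperbolic gauge argument from Section $4$ of \cite{Seidel4.5}: using the flatness \eqref{eq:SurfaceConnection} of $A_S$, one gauges away $X_K$ outside a compact subset of $\bH^\circ$, converting $\pi_E \circ v_k$ (outside the compact region) into honest $j_{\bH^\circ}$-holomorphic maps with boundary on the arcs $\gamma_{L_{j,k}}$ that lie in disjoint contractible open sets $U_{L_{j,k}} \subset \bH$ (see \eqref{eq:DisjointOpen}). The symplectic local triviality of $\pi_{\overline E}$ outside $C_\bH$ and a maximum principle then confine the projected images to a compact region of $\bH$, while exactness of $\theta_E$ and of the Lagrangian boundary conditions controls the fiberwise direction via the action estimate. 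To rule out escape to $D_r$ in the limit: any hypothetical intersection of $u_\infty$ with $D_r$ would, by positivity of intersection (Lemma \ref{l:PositivityIntersection}), contribute strictly positively to $[\overline{G}(u_\infty)] \cdot [\Delta_{\bH^{\circ}} \times B(\overline{S})]$ and hence force the topological invariant of Lemma \ref{l:IPairing} to exceed $h$, contradicting the hypothesis.

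With these a priori $C^0$- and image-bounds in place, elliptic bootstrapping combined with Arzelà--Ascoli extracts a subsequence $v_k \to v_\infty$ in $C^\infty_{\mathrm{loc}}$ on $\Sigma \setminus \crit(\pi_\Sigma)$; the uniform $L^2$-bound of Lemma \ref{l:comparableMetric} together with removal of singularities extends $v_\infty$ smoothly across $\crit(\pi_\Sigma)$. Reassembling via the tautological correspondence produces $u_\infty: S \to \Hilb^n(E) \setminus D_r$, and convergence $u_k \to u_\infty$ on compact subsets of $S \setminus \mk(S)$ is inherited from the convergence of $v_k$; near $\mk(S)$, where $u_k$ is $J_E^{[n]}$-holomorphic (since $X_K = 0$ and $J = J_E^{[n]}$ there), positivity of intersection ensures that the simple transverse intersections with $D_{HC}$ persist in the limit, so $u_\infty$ satisfies \eqref{eq:FloerEquation1}. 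The Lagrangian boundary conditions, the asymptotic Hamiltonian chords $\ul{x}_j$ at the punctures, and the Floer equation \eqref{eq:FloerEquation0} all pass to the limit in the standard way, giving $u_\infty \in \cR^{d+1,h}(\ul{x}_0;\ul{x}_d,\dots,\ul{x}_1)$. The main obstacle is the coordinated control of non-escape in two directions: the $D_r$-direction, which depends essentially on the topological identity $h = I$; and the ends of $E^{\rceil}$, where careful exploitation of the hyperbolic Seidel-type setup introduced in Section \ref{ss:DefnsSetup} is required.
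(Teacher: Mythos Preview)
Your overall strategy matches the paper's: pass to the tautological correspondence, fix the branched cover using $h=I$, derive gradient bounds on $v_k$, extract a limit, and use positivity of intersection together with Seidel's hyperbolic argument to control escape. However, two of the mechanisms you invoke are not the ones that actually work, and the paper's proof differs at precisely those points.

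First, no-escape in the fibre direction (toward $D_E$) is not controlled by ``exactness of $\theta_E$ and \dots the action estimate''. Exactness yields the energy bound of Lemma~\ref{l:EnergyBound}, but an $L^2$ bound does not prevent $v_\infty$ from touching $D_E$ pointwise. The paper's Lemma~\ref{l:1true} instead uses positivity of intersection with the divisor $D_E$: because the Hamiltonian perturbation is tangent to $D_E$ (see \eqref{eq:tangentD0}), the graph trick makes any intersection of $v_\infty$ with $D_E$ contribute positively, contradicting the fact that the $v_k$ have zero intersection with $D_E$. This is exactly parallel to your (correct) argument for $D_r$, and you should invoke the same mechanism here rather than an action estimate.

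Second, your treatment near $\mk(S)$ is incomplete. Removal of singularities will extend $v_\infty$ across $\crit(\pi_\Sigma)$, but it does not establish uniform convergence $v_k\to v_\infty$ there, nor does it establish uniform convergence of the lifts $u_k\to u_\infty$ in $\Hilb^n(E)$ over $\nu(\mk(S))$. The paper addresses both separately: energy concentration for $v_k$ inside $\pi_\Sigma^{-1}(\nu(\mk(S)))$ would produce a non-constant sphere in $E^\rceil$, which by \eqref{eq:StrictPositiveInt} meets $D_E$, forcing $\im(v_k)\cap D_E\neq\emptyset$ for large $k$ --- a contradiction. Then, even after $v_\infty$ is obtained, convergence of $u_k$ over $\nu(\mk(S))$ is argued separately: failure of uniform convergence there produces a $J_E^{[n]}$-holomorphic sphere in $\Hilb^n(E)$, which by Lemma~\ref{l:rationalCurveinHilb} meets $D_r$, contradicting $h=I$ via Corollary~\ref{c:missingDr}. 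Your sentence about ``simple transverse intersections with $D_{HC}$ persisting in the limit'' addresses the structure of $u_\infty$, not the convergence of $u_k$. Finally, you should also record the separate argument (the paper's \eqref{eq:NoEscapeEnd}) that images over the strip-like ends remain in a fixed compact set, which is needed before standard Floer-strip compactness can be invoked there.
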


\begin{proof}
 Without loss of generality, assume $S_k=S$ for all $k$.
 Let $v_k:\Sigma_k \to E$ be the corresponding maps.
Br Corollary \ref{c:missingDr},  \eqref{eq:BranchedInDiag} and Remark \ref{r:BranchedInDiag}, $\pi_{\Sigma_k}:\Sigma_k \to S$
is an $n$-fold branched covering such that its critical values are precisely $\mk(S)$ and all of its critical points are Morse.
 Since there are only finitely many such $n$-fold branched coverings of $S$, by passing to a subsequence, we can assume $\Sigma_k=\Sigma$ for all $k$.
 
 First, Lemma \ref{l:comparableMetric} and \eqref{eq:EnergyNoConcentration} imply that there is no energy concentration outside $\pi_{\Sigma}^{-1}(\nu(\mk(S)))$.
 Since $v_k|_{\pi_{\Sigma}^{-1}(\nu(\mk(S)))}$ is $(J'_{\pi_{\Sigma}(z)})_{z \in \pi_{\Sigma}^{-1}(\nu(\mk(S)))}$-holomorphic, energy concentration inside 
 $\pi_{\Sigma}^{-1}(\nu(\mk(S)))$ would result in a non-constant map $\CP^1 \to E^{\rceil}$, which is impossible.
 Therefore, there exists $T>0$ such that 
 \begin{align}
\sup_{\Sigma} \|Dv_k(z_k)-X_{\pi_{\Sigma}^*K}(z_k)\|_{g'} <T  \label{eq:EnergyNoConcentration2}
 \end{align}
 for all $k$.
 
 In particular, for every relatively compact open subset $G$ of $S$, we have $v_k|_{\pi_\Sigma^{-1}(G)} \in W^{1,p}$ for $p>2$.
 Therefore, $v_k|_{\pi_\Sigma^{-1}(G)}$ converges uniformly to a continuous function $v_\infty: \pi_\Sigma^{-1}(G) \to E^{\rceil}$.
 
 There exists a compact set $C_E$ of $E$ such that outside $C_E$, $v_k|_{\pi_\Sigma^{-1}(G)}$ satisfies \eqref{eq:FloerEquationV} (cf. Remark \ref{r:SplitData}).
 Therefore, so does $v_\infty$, and elliptic bootstrapping implies that $v_\infty$ is smooth outside $C_E$ and satisfies \eqref{eq:FloerEquationV}.
 Let $u'_\infty:G \to \Sym^n(E^\rceil)$ be the corresponding continuous map induced from $v_\infty$.
 We can apply elliptic bootstrapping away from $\Delta_{E^\rceil}$ so $u'_\infty$ is actually smooth away from $\Delta_{E^\rceil}$ , so $v_\infty$ is smooth everywhere.
 
 We will prove that
 \begin{align}
  &Im(v_\infty) \cap D_E= \emptyset  \label{eq:NoEscapeHor}\\
  &Im(v_\infty) \cap \pi^{-1}_E(\partial \bH)= \emptyset  \label{eq:NoEscapeVer}\\
  &(u'_\infty)^{-1}(\Delta_{E^\rceil}) = G \cap \nu(\mk(S)). \label{eq:NoEscapeDiag}
 \end{align}
 Given these, $u'_\infty$ uniquely lifts to a map $u_\infty:G \to \Hilb^n(E)$ such that $u_\infty^{-1}(D_{HC}) \subset G \cap \nu(\mk(S))$.
 Note that this will be true for for every relatively compact open subset $G$ of $S$.
 
 On the other hand, we will also prove that, by possibly passing to a subsequence, 
 \begin{align}
\text{there is a compact subset $C$ of $\Conf^n(E)$ such that $u_k(Im(\epsilon_j)) \subset C$ for all $k$ and $j$} \label{eq:NoEscapeEnd}
 \end{align}
 so we can apply compactness of $u_k$ over strip-like ends inside $C$.
 Combining all this information, and by a diagonal subsequence argument, we obtain $u_\infty \in \cR^{d+1,h}(\ul{x}_0;\ul{x}_d, \dots, \ul{x}_1)$ such that $u_k$ has a subsequence 
 that is uniformly converging to $u_\infty$ over $S \setminus \nu(\mk(S))$.
 Note that we cannot guarantee the convergence is also uniform over $\nu(\mk(S))$, because $u_\infty$ is lifted from $u'_\infty$.
 
 However, we can argue as follows.
 If there is a subsequence of $u_k$ which converges uniformly over $\nu(\mk(S))$, then it converges pointwise to $u_\infty$ over $\nu(\mk(S)) \setminus \mk(S)$.
 By continuity, the limit must be $u_\infty$ and we are done.
 If there is no such subsequence, then for all $T>0$ and $N>0$, there exists $k>N$ such that $\sup_{\nu(\mk(S))} \| Du_k\|_{g''} >T$, where $g''$ is the induced metric 
 from a choice of a symplectic
 form on $\Hilb^n(E)$ that tames $J_E^{[n]}$.
 In this case, bubbling occurs and results in a $J_E^{[n]}$-holomorphic sphere mapping to $\Hilb^n(E)$.
 However, every $J_E^{[n]}$-holomorphic sphere intersects $D_r$ strictly positively (see Lemma \ref{l:rationalCurveinHilb}) so $Im(u_k) \cap D_r  \neq \emptyset$ for large $k$. This contradicts our assumption that
 $h=I_{\ul{x}_0;\ul{x}_d, \dots, \ul{x}_1}$, by Corollary \ref{c:missingDr}.
\end{proof}

Now, we need to justify the claims in the previous proof.

\begin{lemma}\label{l:1true}
 \eqref{eq:NoEscapeHor} is true.
\end{lemma}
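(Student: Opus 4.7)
The plan is to apply positivity of intersection via a graph construction. First I would carry out the graph trick, analogous to \eqref{eq:GraphJF} but applied to $v$ rather than $u$: set $\hat{v}_\infty := (v_\infty, \operatorname{id})\colon \pi_{\Sigma}^{-1}(G) \to E^\rceil \times \pi_{\Sigma}^{-1}(G)$ and define an almost complex structure $J_F^{(v)}$ analogously to \eqref{eq:GraphJF}, using $(J'_z)^\rceil$ in place of $J_z$. Then $\hat{v}_\infty$ and each $\hat{v}_k := (v_k, \operatorname{id})$ are $J_F^{(v)}$-holomorphic, and $\hat{v}_k \to \hat{v}_\infty$ in $C^0$ on compact subsets. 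The key point is that $D_E \times \pi_{\Sigma}^{-1}(G)$ is a $J_F^{(v)}$-complex submanifold of complex codimension one. Indeed, near $D_E$ the condition \eqref{eq:pseudoconvex} forces $(J'_z)^\rceil = J_{E^\rceil}$, and $D_E$ is $J_{E^\rceil}$-complex by construction; moreover \eqref{eq:tangentD0} says $X_{K'}$ is tangent to $D_E$ along $D_E$, which ensures that the Hamiltonian correction in the definition of $J_F^{(v)}$ preserves $T(D_E \times \pi_{\Sigma}^{-1}(G))$ along this submanifold.

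Next I would invoke the standard dichotomy from positivity of intersection for a pseudo-holomorphic curve against a codimension-one pseudo-holomorphic submanifold: on each connected component $\Sigma'$ of $\pi_{\Sigma}^{-1}(G)$, either (a) $v_\infty^{-1}(D_E) \cap \Sigma'$ is discrete and carries a strictly positive local intersection multiplicity at each point, or (b) $v_\infty(\Sigma') \subset D_E$. Case (a) is easy to rule out: by stability of algebraic intersection numbers under $C^0$-small perturbations of $J_F^{(v)}$-holomorphic curves, a small disc around such an intersection point would force $\hat{v}_k$ to meet $D_E \times \pi_{\Sigma}^{-1}(G)$ for all sufficiently large $k$, contradicting $v_k(\Sigma) \subset E = \overline{E} \setminus D_E$.

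The main obstacle is case (b). Here I would upgrade the local limit $v_\infty$ on $\pi_{\Sigma}^{-1}(G)$ to a global limit $v_\infty^{\mathrm{glob}} \colon \Sigma \to E^\rceil$ by a diagonal subsequence over an exhaustion $G = G_0 \Subset G_1 \Subset \cdots$ of $S$; the compactness and energy estimates underlying the proof of Proposition \ref{p:Compactness} apply verbatim on every $G_m$. Unique continuation for $J_F^{(v)}$-holomorphic maps (applied, say, to $f \circ \mathrm{pr}_{E^\rceil} \circ \hat{v}_\infty^{\mathrm{glob}}$ for a local defining function $f$ of $D_E$) would then propagate the containment $v_\infty^{\mathrm{glob}}(\Sigma') \subset D_E$ to the entire connected component $\tilde{\Sigma}'$ of $\Sigma$ containing $\Sigma'$. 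Finally, since $\Sigma$ is a branched cover of the disc-with-punctures $S$, every connected component of $\Sigma$ has nonempty boundary mapping onto some component of $\partial S$, on which $v_\infty^{\mathrm{glob}}$ lies in some $L \in \cL$. As $L \subset E$ is disjoint from $D_E$, this contradicts $v_\infty^{\mathrm{glob}}(\tilde{\Sigma}') \subset D_E$, ruling out case (b) and completing the argument.
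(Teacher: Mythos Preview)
Your approach matches the paper's: both establish positivity of intersection between $v_\infty$ and $D_E$ (via the graph trick, using that $X_{K'}$ is tangent to $D_E$) and derive a contradiction from the fact that the approximating $v_k$ miss $D_E$. The paper's version is shorter---it splits into the cases $\pi_{E^\rceil}(v_\infty(z_0)) \in C_\bH$ (where $X_{K'}$ vanishes, so $v_\infty$ is honestly $J'$-holomorphic near $z_0$) and $\pi_{E^\rceil}(v_\infty(z_0)) \notin C_\bH$ (where one invokes \eqref{eq:tangentD0} and the graph trick as in Lemma~\ref{l:PositivityIntersection}), and in each case simply asserts a strictly positive local intersection number. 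You instead treat both cases uniformly and, more carefully, handle the dichotomy explicitly: your case (b), where $v_\infty$ is locally contained in $D_E$, is a possibility the paper glosses over, and your argument for it (global extension by a diagonal subsequence, unique continuation, then the Lagrangian boundary condition) is sound. One minor imprecision: the graphs $\hat v_k$ are $J_F^{(v)}$-holomorphic only where $v_k$ lands outside the compact set $C_E\subset E$ from the proof of Proposition~\ref{p:Compactness} (since $K$ need not split as $(K')^{[n]}$ over $C$), but this is harmless because you only use holomorphicity near $D_E$, which is disjoint from $C_E$.
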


\begin{proof}
 Suppose not, then there exists $z_0 \in \pi_{\Sigma}^{-1}(G)$ such that $v_\infty(z_0) \in D_E$.
 
 First, if $\pi_{E^\rceil}\circ v_\infty(z_0) \in C_{\bH}$, then $v_\infty$ is $J'$-holomorphic near $z_0$ so it has strictly positive algebraic intersection with $D_E$.
 Contradiction.
 
 If $\pi_{E^\rceil}\circ v_\infty(z_0) \notin C_{\bH}$, then \eqref{eq:FloerEquationV} implies that 
 $v_\infty$ satisfies a perturbed pseudo-holomorphic equation near $z_0$ with perturbation term having values in Hamiltonian vector fields tangent to $D_E$ (see 
 \eqref{eq:tangentD0}).
 It means that the intersection also contributes strictly positively to the algebraic intersection by the graph trick (cf. the proof of Lemma \ref{l:PositivityIntersection}).
 Contradiction.
\end{proof}

The following is a modification of the corresponding result of Seidel in \cite[Lemma 4.9]{Seidel4.5}.

\begin{proposition}\label{p:containment}
 \eqref{eq:NoEscapeVer} is true.
\end{proposition}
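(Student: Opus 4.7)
The strategy follows \cite[Lemma 4.9]{Seidel4.5}: we use the hyperbolic gauge-transformation trick to reduce the problem to a genuinely $j_{\bH^\circ}$-holomorphic equation on the base, and then apply a maximum-principle argument. Suppose for contradiction there is some $z_0 \in \pi_\Sigma^{-1}(G)$ with $v_\infty(z_0) \in \pi_{E^\rceil}^{-1}(\partial \bH)$. Let $s_0 := \pi_\Sigma(z_0) \in S$ and $q_0 := \pi_{E^\rceil}(v_\infty(z_0)) \in \partial \bH$. Since $C_\bH$ is a compact subset of $\bH^\circ$, it is disjoint from $\partial \bH$, so there is a neighborhood $U_0$ of $z_0$ in $\pi_\Sigma^{-1}(G)$ on which $\pi_{E^\rceil} \circ v_\infty$ stays outside $C_\bH$. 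On $U_0$, the perturbation term $\pi_\Sigma^* K'$ equals $\pi_\Sigma^* \pi_E^*(H_{A_S(\cdot)})$ for some $A_S$-valued Hamiltonian (using \eqref{eq:HE} and \eqref{eq:HH}), and \eqref{eq:FloerEquationV} becomes a perturbed holomorphic equation whose perturbation vector field, pushed down to $\bH$, generates orientation-preserving affine transformations and in particular is tangent to $\partial \bH$.

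The next step is to gauge away this perturbation. The connection $A_S$ is flat by \eqref{eq:SurfaceConnection}, and $G_\aff$ is contractible, so we may choose a smooth $\Phi : U_0' \to G_\aff$ on a neighborhood $U_0' \subset S$ of $s_0$ satisfying $\Phi_* 0_\aff = A_S|_{U_0'}$. The $G_\aff$-action on $\bH^\circ$ extends to an action on $\bH$ preserving $\partial \bH$, and lifts (via symplectic parallel transport along $\pi_{E^\rceil}$, valid outside $\pi_{E^\rceil}^{-1}(C_\bH)$) to a symplectic action on $E^\rceil$ away from $\pi_{E^\rceil}^{-1}(C_\bH)$ preserving $\pi_{E^\rceil}^{-1}(\partial \bH)$. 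Setting $\tilde v(z) := \Phi(\pi_\Sigma(z))^{-1} \cdot v_\infty(z)$ on $U_0 \cap \pi_\Sigma^{-1}(U_0')$, the map $\tilde v$ satisfies the honest pseudo-holomorphic equation with respect to the pulled-back almost complex structure, and $\tilde w := \pi_{E^\rceil} \circ \tilde v$ is $j_\bH$-holomorphic with $\tilde w(z_0) \in \partial \bH$.

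Now the imaginary part $\im \tilde w$ is a non-negative harmonic function on a neighborhood of $z_0$ (possibly with boundary) that vanishes at $z_0$. Depending on whether $z_0$ is an interior point or a boundary point of $\pi_\Sigma^{-1}(G)$, we apply either the strong maximum principle or Hopf's boundary-point lemma, combined with Schwarz reflection across $\partial \bH$ which is legitimate because the Lagrangian arcs $\pi_E(L_j)$ meet $\partial \bH$ only at isolated points $\lambda_{L_j}$ where they are controlled. Either way, $\tilde w$ is forced to be constant on a neighborhood of $z_0$, and unique continuation spreads this to the connected component $Z \subset \pi_\Sigma^{-1}(G)$ of $z_0$. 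This contradicts the asymptotic behavior: the strip-like ends of $S$ force $\pi_{E^\rceil} \circ v_\infty$ to converge to the chords $\Sym^n(\pi_E) \circ \pi_{HC} \circ \ul{x}_j \subset \bH^\circ$ (using \eqref{eq:DisjointOpen} and \eqref{eq:disjointFromDr}), so the gauged projection $\tilde w$ cannot be constant equal to a point of $\partial \bH$ on any component of $\pi_\Sigma^{-1}(G)$ that meets the strip-like-end region.

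The main obstacle is handling the boundary case: one must verify that the $G_\aff$-gauge can be extended two-sidedly across $\partial \pi_\Sigma^{-1}(G)$ at $z_0$ (using that $\Phi$ only depends on $\pi_\Sigma(z)$ and is defined for all $s$ in $U_0'$), and that the boundary maximum principle applies despite the Lagrangian label. Here one uses the precise structure of $\pi_E(L_j)$ outside a compact set, namely that it is a properly embedded arc whose closure adds exactly one point on $\partial \bH$; transversality of this arc to $\partial \bH$ at $\lambda_{L_j}$ allows the reflection argument to go through. A secondary subtlety is that $\tilde w$ may not extend smoothly across interior marked points of $S$ where $K$ vanishes, but at such points $v_\infty$ is already $J_E$-holomorphic, so the same maximum-principle argument (without any gauge change) rules out touching $\partial \bH$.
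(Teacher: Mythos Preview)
Your gauge-transformation and maximum-principle steps are the right mechanism, and the paper's proof indeed invokes \cite[Lemma 4.9]{Seidel4.5} for exactly this: once the (suitably gauged) projection touches $\partial\bH$ it must lie there entirely. The gap is in your final contradiction. You assert that $\pi_{E^\rceil}\circ v_\infty$ converges along the strip-like ends to the projections of the chords $\ul{x}_j\subset\bH^\circ$. But at this point in the argument $v_\infty$ is only defined on $\pi_\Sigma^{-1}(G)$ for a fixed \emph{relatively compact} open $G\subset S$; it has no ends and no asymptotics, and control over the ends is precisely the content of the later, separate claim \eqref{eq:NoEscapeEnd}, whose proof in turn uses \eqref{eq:NoEscapeVer}. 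So invoking convergence to the chords here is circular.

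The paper obtains the contradiction differently, and this is where the interval-ordering built into $\cP_\aff([0,1])$ enters. If $v_\infty$ lies in $\pi_E^{-1}(\partial\bH)$, the Lagrangian boundary condition pins $\pi_E\circ v_\infty$ on each boundary arc of $\Sigma$ to a single point, since $\overline{\gamma_L}\cap\partial\bH=\{\lambda_L\}$. Taking $G$ large enough to contain a slice of the output end $\epsilon_0$, the equation there reduces to $\partial_s(\pi_E\circ v_\infty\circ\epsilon_0)=0$, $\partial_t(\pi_E\circ v_\infty\circ\epsilon_0)=X_{a_{\ul{L}_0,\ul{L}_d,t}}$, with boundary values in $\lambda_{\ul{L}_0}$ and $\lambda_{\ul{L}_d}$; integrating in $t$ gives $g_{A_0}^{-1}(\lambda_{\ul{L}_d})\cap\lambda_{\ul{L}_0}\neq\emptyset$, contradicting the strict ordering \eqref{eq:orderinglambda}. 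Your argument never invokes this ordering condition, and without it the proof does not close.
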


\begin{proof}
 We can make \eqref{eq:FloerEquationV} more akin to the situation in \cite{Seidel4.5}, namely, if we define 
$A_{\Sigma}=\pi_{\Sigma}^*A_S$,  $K_{\Sigma}=\pi_{\Sigma}^*K'$ and $(J_{\Sigma})_z=J_{\pi_\Sigma(z)}'$ for $z \in \Sigma$, then 
outside a compact set of $E$, \eqref{eq:FloerEquationV} becomes
 \begin{equation}
  (Dv_k-X_{K_\Sigma})^{0,1}=0 \text{ with respect to } ((J_\Sigma)_z)_{u(z)}.
 \end{equation}
 Note also that our choice of Floer data $(A_S,K,J)$ (see \eqref{eq:Symlambda} and \eqref{eq:orderinglambda}) 
 and the induced Floer data $(A_{\Sigma},K_{\Sigma},J_{\Sigma})$ guarantee that over each strip-like end, we have
 \begin{align}
  g_{A_{\Sigma}}^{-1}(\lambda_{L_{j,b_i}}) =g_{A_{j}}^{-1}(\lambda_{L_{j,b_i}}) < \lambda_{L_{j-1,a_i}} \label{eq:ordering1}
 \end{align}
 for all $a_i$ and $b_i$ and all $j=1,\dots,d$. For $j=0$, we have
 \begin{align}
g_{A_{\Sigma}}^{-1}(\lambda_{L_{d,b_i}})=g_{A_{0}}^{-1}(\lambda_{L_{d,b_i}}) < \lambda_{L_{0,a_i}}  \label{eq:ordering2}
 \end{align}
for all $a_i$ and $b_i$,
 
 In \cite[Lemma $4.9$]{Seidel4.5}, Seidel shows that if $Im(v_\infty) \cap \pi^{-1}_E(\partial \bH) \neq \emptyset$, then $v_\infty$
 lies entirely in $\pi^{-1}_E(\partial \bH)$ . In that case it will satisfy
 \begin{align}
 \left\{
 \begin{array}{ll}
  \partial_s (\pi_E \circ v_{\infty} \circ \epsilon_0)=0 \\
  \partial_t (\pi_E \circ v_{\infty} \circ \epsilon_0)=X_{a_{\ul{L}_{0},\ul{L}_{d},t}} \\
  \pi_E \circ v_{\infty} \circ \epsilon_0(s,0) \in \lambda_{\ul{L}_{0}} \\
  \pi_E \circ v_{\infty} \circ \epsilon_0(s,1)\in \lambda_{\ul{L}_{d}}.
 \end{array}
 \right.
 \end{align}
 These conditions imply $g_{A_{0}}^{-1}(\lambda_{\ul{L}_{d}}) \cap \lambda_{\ul{L}_{0}} \neq \emptyset$, violating \eqref{eq:ordering2}. Therefore $Im(v_\infty) \cap \pi^{-1}_E(\partial \bH) = \emptyset$.
\end{proof}

\begin{lemma}\label{l:3true}
 \eqref{eq:NoEscapeDiag} is true.
\end{lemma}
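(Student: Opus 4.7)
The plan is to establish the inclusion $(u'_\infty)^{-1}(\Delta_{E^\rceil}) \subseteq G \cap \nu(\mk(S))$ (the content needed for the lifting step that follows in the proof of Proposition \ref{p:Compactness}); the reverse containment $G \cap \mk(S) \subseteq (u'_\infty)^{-1}(\Delta_{E^\rceil})$ is immediate from \eqref{eq:FloerEquation1} and continuity of $u'_\infty$. The strategy is to combine the tautological correspondence with local positivity of intersection for perturbed pseudo-holomorphic curves, using the hypothesis $h = I_{\ul{x}_0;\ul{x}_d,\dots,\ul{x}_1}$ to transfer information from the pre-limit maps.

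Suppose for contradiction that $u'_\infty(z_0) \in \Delta_{E^\rceil}$ for some $z_0 \in G \setminus \nu(\mk(S))$. Since $\pi_\Sigma$ is unbranched over a neighborhood of $z_0$ (its critical values lie in $\mk(S)$), there exist distinct points $z_0^1, z_0^2 \in \pi_\Sigma^{-1}(z_0)$ with $v_\infty(z_0^1) = v_\infty(z_0^2) = p$; by Lemma \ref{l:1true}, $p \in E$. Choose small disjoint discs $B_1, B_2 \subset \Sigma$ centred at $z_0^i$ so that each $\pi_\Sigma|_{B_i} : B_i \to B$ is biholomorphic onto a common disc $B \subset S$ about $z_0$, and set $v_\infty^{(i)} := v_\infty|_{B_i} \circ (\pi_\Sigma|_{B_i})^{-1} : B \to E$.

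The compact set $C \subset \Conf^n(E) \setminus D_r^\circ$ outside of which the Floer data splits as a product stays away from the Hilbert--Chow divisor $D_{HC}$. Since $u'_\infty(z_0) \in \Delta_{E^\rceil}$ corresponds to a point of $D_{HC}$ in any continuous lift, by shrinking $B$ we may assume the Floer data splits in the region through which $v_\infty^{(i)}$ pass. Thus both $v_\infty^{(i)}$ satisfy the perturbed pseudo-holomorphic equation \eqref{eq:FloerEquationV} on $B$, and their graphs $\hat{v}_\infty^{(i)} := (v_\infty^{(i)}, \mathrm{id}) : B \to E \times B$ become genuine pseudo-holomorphic curves with respect to the graph almost complex structure (Gromov's trick, exactly as in the proof of Lemma \ref{l:PositivityIntersection}), passing through the common point $(p, z_0)$. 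By local positivity of intersection for $J$-holomorphic curves in a four-manifold (see \cite[Appendix E]{McDuffSalamonJ}), the local algebraic intersection $\hat{v}_\infty^{(1)} \cdot \hat{v}_\infty^{(2)}$ at $(p, z_0)$ is strictly positive.

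Local positive intersection numbers of pseudo-holomorphic curves are stable under $C^1$-small perturbations; since $v_k^{(i)} \to v_\infty^{(i)}$ smoothly on compact subsets (this is part of the uniform convergence already extracted in the proof), for all sufficiently large $k$ the graphs $\hat{v}_k^{(i)}$ must also intersect in $E \times B$ near $(p, z_0)$. An intersection of the two graphs over the common base coordinate means $v_k^{(1)}(z) = v_k^{(2)}(z)$ for some $z \in B$, which is equivalent to $u_k(z) \in D_{HC}$. But by Corollary \ref{c:missingDr} (applicable since $h = I_{\ul{x}_0;\ul{x}_d,\dots,\ul{x}_1}$) we have $u_k^{-1}(D_{HC}) = \mk(S_k)$, and $\mk(S_k) \to \mk(S)$ together with $B \subset G \setminus \nu(\mk(S))$ forces $B \cap \mk(S_k) = \emptyset$ for large $k$, a contradiction. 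The main technical hurdle is verifying that the branches $v_\infty^{(i)}$ individually satisfy a clean pseudo-holomorphic equation at $p$ — this is where it is crucial that $C$ was taken in $\Conf^n(E)$, keeping it uniformly away from the locus $D_{HC}$ where $u'_\infty$ can touch the diagonal.
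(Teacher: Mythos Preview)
Your argument contains a dimension error at the crucial step. The graphs $\hat{v}_\infty^{(i)}$ lie in $E \times B$, which has real dimension $6$ (recall $E$ is a complex \emph{surface}), not $4$. Two $2$-dimensional $J$-holomorphic curves in a $6$-manifold are not of complementary dimension: the expected intersection dimension is $2+2-6=-2$, so there is no well-defined local algebraic intersection number $\hat{v}_\infty^{(1)}\cdot\hat{v}_\infty^{(2)}$, and the positivity result you cite from \cite[Appendix~E]{McDuffSalamonJ} does not apply. For the same reason the persistence step fails: a generic $C^1$-small perturbation of two such curves makes them disjoint, so there is no reason why the graphs of $v_k^{(1)}$ and $v_k^{(2)}$ should continue to meet for large $k$. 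Equivalently, the condition $v^{(1)}(z)=v^{(2)}(z)$ cuts out a complex codimension-$2$ locus in the disc $B$, which is generically empty.

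The paper circumvents this by first projecting via $\pi_E$. The composition $\Sym^n(\pi_E)\circ u'_\infty:G\to\Sym^n(\bH^\circ)$ meets the big diagonal $\Delta_{\bH^\circ}$, which is a complex \emph{codimension-one} subvariety, so positivity of intersection (via the graph trick exactly as in Lemma~\ref{l:PositivityIntersection}) now applies and yields a strictly positive contribution at $z_0$; together with the $h$ contributions at $\mk(S)$ this contradicts $h=I_{\ul{x}_0;\ul{x}_d,\dots,\ul{x}_1}$. Your argument can be repaired along the same lines by replacing $v^{(i)}$ with $\pi_E\circ v^{(i)}$ and working in the genuine $4$-manifold $\bH^\circ\times B$ --- but once you do that, the detour back through the $u_k$ is no longer needed.
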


\begin{proof}
 Suppose not. Then there is a point $z_0 \in G \setminus \mk(S)$ such that $u'_\infty(z_0) \in \Delta_E$ 
 (here, we have applied Lemma \ref{l:1true} and Proposition \ref{p:containment} to replace $\Delta_{E^\rceil}$ by $\Delta_E$).
 
 Then, by the same reasoning as in Lemma \ref{l:PositivityIntersection}, $\Sym^n(\pi_E) \circ u'_\infty(G)$ and $\Delta_\bH$ intersect positively at $z_0$.
 That would violate the assumption that $h=I_{\ul{x}_0;\ul{x}_d, \dots, \ul{x}_1}$.
\end{proof}

\begin{lemma}
 \eqref{eq:NoEscapeEnd} is true.
\end{lemma}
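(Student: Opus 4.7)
The plan is to argue by contradiction, exploiting the tautological correspondence and an $s$-translation trick. Suppose $\eqref{eq:NoEscapeEnd}$ fails; then after passing to a subsequence, there exist an index $j \in \{0,1,\dots,d\}$ and points $z_k = \epsilon_j(s_k, t_k)$ such that $u_k(z_k)$ leaves every compact subset of $\Conf^n(E)$. A further subsequence satisfies either that $s_k$ remains bounded or that $|s_k| \to \infty$.

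The bounded case follows from what has already been proved in this section. By $\eqref{eq:EnergyNoConcentration}$ together with the fact that $K$ and $J$ are uniformly bounded on any compact portion of the strip-like end $\epsilon_j([-M,M] \times [0,1])$, the maps $u_k$ have uniformly bounded $C^1$ norm there, and a diagonal Arzel\`a--Ascoli argument combined with the convergence statements already established---namely Lemma \ref{l:1true} and Proposition \ref{p:containment}, which contain the corresponding $v_k$ in a compact subset of $E$ away from $D_E$ and $\pi_E^{-1}(\partial \bH)$---forces, via the tautological correspondence of Lemma \ref{l:tautologicalCorr}, the values $u_k(z_k)$ into a compact subset of $\Conf^n(E)$, contradicting the escape hypothesis.

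For the unbounded case I will use an $s$-translation argument. Consider the shifted maps $\tilde u_k(s,t) := u_k(\epsilon_j(s+s_k, t))$, which are defined on growing subdomains of $\RR \times [0,1]$. By Remark \ref{rmk:exp_convergence}, the pullback Floer data $(\epsilon_j^* K, J \circ \epsilon_j)$ converge exponentially fast to the $s$-translation-invariant asymptotic data $(H_j dt, J_j)$, and this convergence survives translation in $C^\infty_{loc}$. Since the uniform derivative bound $\eqref{eq:EnergyNoConcentration}$ and the uniform energy bound of Lemma \ref{l:EnergyBound} are both translation-invariant, the tautological-correspondence / elliptic-bootstrapping / removal-of-singularities argument of Proposition \ref{p:Compactness} applies: a subsequence of $\tilde u_k$ converges on compact subsets to a finite-energy solution $\tilde u_\infty : \RR \times [0,1] \to \Hilb^n(E)$ of the autonomous Floer equation for $(H_j, J_j)$, with boundary on $\Sym(\ul L_{j-1})$ and $\Sym(\ul L_j)$ (with the cyclic convention when $j=0$). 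The verbatim analogues of Lemmas \ref{l:1true}, \ref{l:3true} and Proposition \ref{p:containment} apply to $\tilde u_\infty$, showing that its image is contained in a compact subset of $\Conf^n(E)$. Since $\tilde u_k(0, t_k) = u_k(z_k)$ and $t_k$ has a convergent subsequence to some $t_\infty$, we get $u_k(z_k) \to \tilde u_\infty(0, t_\infty) \in \Conf^n(E)$, contradicting escape.

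The main obstacle is ensuring that the translated limit $\tilde u_\infty$ is honestly a finite-energy Floer strip, free of unaccounted energy loss. This requires, on the one hand, the exponential convergence of Remark \ref{rmk:exp_convergence} so that the translated equations converge in $C^\infty_{loc}$ to the $s$-invariant equation, and on the other, the re-running of the bubbling exclusion in Lemma \ref{l:nobubble}, which forbids sphere bubbles in $E^\rceil$ (they would hit $D_E$ positively by $\eqref{eq:StrictPositiveInt}$) and disc bubbles (forbidden by exactness of the Lagrangian boundary). Once these are in place the compactness machinery of Section \ref{sss:Compactness} transplants to the translated setting without change.
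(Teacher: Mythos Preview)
Your proof is correct and follows essentially the same contradiction-by-$s$-translation approach as the paper: translate along the strip-like end, extract a limit via the tautological correspondence, and invoke the analogues of Lemma~\ref{l:1true}, Lemma~\ref{l:3true}, and Proposition~\ref{p:containment} to rule out escape. The paper is slightly more economical in that it works only on a small neighbourhood $D$ of $it_\infty$ rather than on the full translated strip (so no ``finite-energy'' or bubble-exclusion discussion is needed---the derivative bound \eqref{eq:EnergyNoConcentration} is already a hypothesis), and it folds your bounded-$s_k$ case into the assertion that $s_k \to \infty$, but these are cosmetic differences.
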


\begin{proof}
 Suppose not, then there exists $j \in \{0,\dots,d\}$ and a sequence $z_k=\epsilon_j(s_k,t_k)$ such that $u_k(z_k)$ goes to the infinite end of $\Conf^n(E)$.
 Moreover, we know that $t_k$ has a subsequence converging to some $t_\infty \in [0,1]$ and $s_k$ goes to infinity. 
 Pick a small neighborhood $D \subset \CC$ of $it_\infty$ and consider the maps $\hat{u}_k:D \to \Conf^n(E)$ given by $\hat{u}_k(s,t)=u_k(\epsilon_j(s+s_k,t))$.
 Let $\hat{v}_{k,a}: D \to E$ be the corresponding maps for $a=1,\dots,n$.
 
 Since there is no energy concentration, there is a subsequence of $k$ such that for each $a=1,\dots,n$, $\hat{v}_{k,a}$ converges uniformly
 to a continuous map $\hat{v}_{\infty,a}:D \to E^{\rceil}$;  these in turn induce a continuous map $\hat{u}_{\infty}':D \to \Sym^n(E^\rceil)$.
 By applying Lemma \ref{l:1true}, \ref{l:3true} and Proposition \ref{p:containment}, we get the corresponding results \eqref{eq:NoEscapeHor}, \eqref{eq:NoEscapeVer}
 and \eqref{eq:NoEscapeDiag} for $\hat{v}_{\infty,a}$ and $\hat{u}_{\infty}'$.
 This contradicts  the assumption that $u_k(z_k)$ goes to infinity in $\Conf^n(E)$.
 
\end{proof}

We now assume that $(A_S,J,K)$ is chosen to vary smoothly in $\ol{\cR}^{d+1,h}$.
That implies that $J=J^{[n]}_E$ and $K=0$ on the sphere components of $S \in \ol{\cR}^{d+1,h}$ (see Remark \ref{r:sphereCom}).
We call the (interior/boundary) special points of $S$ which connect different irreducible components of $S$  (interior/boundary) nodes.
For an interior node $z$ on a disc component, we define the multiplicity $\mult(z)$ of $z$ to be the total number of marked points (not including nodes) on the tree of sphere 
components to which $z$ is connected.

Lemma \ref{l:nobubble} can be directly generalized by replacing $\cR^{d+1,h}$ with $\ol{\cR}^{d+1,h}$.
The generalization of Proposition \ref{p:Compactness} is as follows.

\begin{proposition}\label{p:Compactness2}
 Fix $S \in \ol{\cR}^{d+1,h}$.
 Let $u_k:S_k \to \Hilb^n(E)$ be a sequence in $\cR^{d+1,h}(\ul{x}_0;\ul{x}_d, \dots, \ul{x}_1)$ such that $S_k$ converges to $S \in \ol{\cR}^{d+1,h}$.
 We assume that $h=I_{\ul{x}_0;\ul{x}_d, \dots, \ul{x}_1}$.
 If there exists $T>0$ such that 
 \begin{align}
\sup_{S_k \setminus \nu(\mk(S_k))} \|Du_k(z_k)-X_K(z_k)\|_g <T  \label{eq:EnergyNoConcentration}
 \end{align}
 for all $k$,
 then there exists a subsequence of $u_k$ which converges  (uniformly on compact subsets) to 
a stable map $u_\infty:S \to  \Hilb^n(E)$ such that $u|_Q$ is a constant map for every sphere component $Q$ of $S$.
Moreover, interior nodes on disc components of $S$ are mapped to $D_{HC}$ under $u_{\infty}$ and the algebraic intersection number between $u_{\infty}$ and $D_{HC}$ at an interior
node $z$ is given by $\mult(z)$.

\end{proposition}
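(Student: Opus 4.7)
The plan is to combine the compactness analysis from Proposition \ref{p:Compactness} on the smooth strata of $S$ with a standard Gromov-type analysis at the newly-formed nodes, using the tautological correspondence $u_k \leftrightarrow (\pi_{\Sigma_k}, v_k)$ as before. To begin, I would pass to a subsequence so that the $n$-fold branched covering $\pi_{\Sigma_k}: \Sigma_k \to S_k$ stabilizes. By Corollary \ref{c:missingDr} and Remark \ref{r:BranchedInDiag}, the critical values of $\pi_{\Sigma_k}$ are precisely $\mk(S_k)$ with all critical points Morse, and since $\chi(S_k)$ is fixed there are only finitely many topological types of such covers; the subsequence thus has $\Sigma_k$ converging to a nodal surface $\Sigma$ fibering over the nodal $S$.

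Next, on each disc component $S_v$ of $S$ (away from nodes), together with its preimage $\Sigma_v \subset \Sigma$, I would apply Proposition \ref{p:Compactness} essentially verbatim to extract a limit $u_\infty^v: S_v \to \Hilb^n(E)$ of the required form. Boundary nodes are glued by the standard strip-breaking mechanism, which is justified by the exponential decay of $\epsilon_j^*K - H_j dt$ along strip-like ends (Remark \ref{rmk:exp_convergence}) and the fact that the incoming/outgoing Floer data on the two sides of the node agree.

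The essential new point is the analysis of sphere components. By the consistency of the Floer data together with Remark \ref{r:sphereCom}, any sphere component $Q \subset S$ carries $J=J_E^{[n]}$ and $K=0$; hence $u_\infty|_Q$ is a genuine $J_E^{[n]}$-holomorphic sphere in $\Hilb^n(E)$. Here I would invoke Lemma \ref{l:rationalCurveinHilb}: any non-constant such sphere has strictly positive algebraic intersection with $D_r$. But $h = I_{\ul{x}_0;\ul{x}_d,\dots,\ul{x}_1}$ together with Corollary \ref{c:missingDr} forces $\mathrm{Im}(u_k) \cap D_r = \emptyset$ for every $k$, and since $D_r$ is closed and Lemma \ref{l:tameSymp} provides a tamed symplectic form near $D_r$, this property persists under Gromov convergence. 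Hence $u_\infty|_Q$ must be a constant for every sphere component $Q$. The stability of $(S, u_\infty)$ then follows automatically from the presence of sufficiently many marked points on any such ghost sphere (this is precisely the reason the marked-point moduli was used to compactify).

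The step I expect to be the main obstacle is verifying the multiplicity claim at an interior node $z$. Concretely, when a cluster of $\mult(z)$ interior marked points of $S_k$ collides, they disappear into a tree of ghost sphere bubbles rooted at $z$ on some disc component $S_v$. Each marked point $\xi_+^i$ contributes local algebraic intersection number exactly $1$ with $D_{HC}$ by Corollary \ref{c:missingDr}. Because $u_\infty$ is constant on every ghost sphere, all these $\mult(z)$ contributions must be realized as the algebraic intersection multiplicity of $u_\infty|_{S_v}$ with $D_{HC}$ at the single point $z$. This requires two things: (i) conservation of the total algebraic intersection number with the pseudocycle $D_{HC}$ under Gromov convergence, which I would verify using Lemma \ref{l:tameSymp} to place everything in an honest tamed almost-Kähler setting near $D_{HC}$; and (ii) ruling out any stray intersection points of $u_\infty$ with $D_{HC} \cup D_r$ away from the prescribed nodes. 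For (ii), the positivity-of-intersection argument of Lemma \ref{l:PositivityIntersection}, together with the equality $h = I_{\ul{x}_0;\ul{x}_d,\dots,\ul{x}_1}$, forces every intersection with $D_{HC} \cup D_r$ on $u_\infty$ to be accounted for by one of the nodes, pinning the multiplicity at $z$ to be exactly $\mult(z)$.
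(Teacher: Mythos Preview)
Your proposal is correct and takes essentially the same approach as the paper: apply Proposition~\ref{p:Compactness} on disc components, then invoke Lemma~\ref{l:rationalCurveinHilb} together with the hypothesis $h=I_{\ul{x}_0;\ul{x}_d,\dots,\ul{x}_1}$ and Corollary~\ref{c:missingDr} to force sphere components to be constant, with the multiplicity claim following from conservation of intersection number with $D_{HC}$. The paper only sketches the illustrative case $S=S_\alpha \cup S_\beta$ (one disc, one sphere) and is terser on the multiplicity point, whereas you supply additional detail on the branched-cover stabilization and boundary-node breaking; but the mechanism is identical.
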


\begin{proof}
For ease of exposition, we only consider the case
$S=S_{\alpha} \cup S_{\beta}$, where $S_\alpha$ is a disc component and $S_{\beta}$ is a sphere component.
Let $z_{\alpha \beta} \in S_{\alpha}$ 
and $z_{\beta \alpha} \in S_{\beta}$  be the nodes.
There exists an open subset $S_{\alpha,k}$ of $S_k$
such that $S_{\alpha,k}$ converges to $S_{\alpha} \setminus z_{\alpha \beta}$.
The proof of Proposition \ref{p:Compactness} can be applied 
to $u_k|_{S_{\alpha,k}}$ to conclude that there is a subsequence which converges
(uniformly on compact subsets) to a map 
$u_{\infty}^{\alpha}:S_{\alpha} \setminus z_{\alpha \beta} \to \Hilb^n(E)$ which satisfies \eqref{eq:FloerEquation0}. By 
removal of singularities, we can extend this to a map  
$u_{\infty}^{\alpha}:S_{\alpha}  \to \Hilb^n(E)$.

On the other hand, there exists an open subset $S_{\beta,k}$ of $S_k$
such that $S_{\beta,k}$ converges to $S_{\beta} \setminus z_{\beta \alpha}$.
Gromov compactness  can be applied to $u_k|_{S_{\beta,k}}$ and after
removal of singularities, we obtain a stable $J_E^{[n]}$-holomorphic map $u_{\infty}^{\beta}$ from a tree of spheres
to $\Hilb^n(E)$.
 
We know that every non-constant $J_E^{[n]}$-holomorphic sphere intersects $D_r$ strictly positively, by Lemma \ref{l:rationalCurveinHilb}, so 
if $u_{\infty}^{\beta}$ is not a constant map, then we have
 $Im(u_k) \cap D_r  \neq \emptyset$ for large $k$ (cf. the last paragraph in the proof of Proposition \ref{p:Compactness}).
This is a contradiction, so $u_{\infty}^{\beta}$ is a constant map to a point in $D_{HC}$.
That implies that 
$u_{\infty}^{\alpha}(z_{\alpha \beta}) \in D_{HC}$, and furthermore that 
the algebraic intersection number with
$D_{HC}$ at this point is precisely $\mult(z_{\alpha \beta})$.
 \end{proof}

\begin{corollary}\label{c:v1}
Suppose $\cR^{d+1,h}(\ul{x}_0;\ul{x}_d, \dots, \ul{x}_1)$  has virtual dimension $1$ and 
$h=I_{\ul{x}_0;\ul{x}_d, \dots, \ul{x}_1}$. For generic $(J,K)$ satisfying \eqref{eq:FloerDataJ}
and \eqref{eq:FloerDataK}, the moduli
 $\cR^{d+1,h}(\ul{x}_0;\ul{x}_d, \dots, \ul{x}_1)$
can be compactified by adding stable maps $u:S \to  \Hilb^n(E)$ for $S\in \ol{\cR}^{d+1,h}$
for which $S$ has no sphere components.
\end{corollary}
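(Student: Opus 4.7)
The plan is to combine the Gromov-type compactness result of Proposition \ref{p:Compactness2} with the generalization of Lemma \ref{l:nobubble} to $\ol{\cR}^{d+1,h}$, and then to eliminate sphere-bubble strata by a dimension count valid for generic Floer data.

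First I would take any sequence $u_k \in \cR^{d+1,h}(\ul{x}_0;\ul{x}_d,\dots,\ul{x}_1)$ with underlying domains $S_k$; after passing to a subsequence, $S_k$ converges to some $S_\infty \in \ol{\cR}^{d+1,h}$ by compactness of the Deligne–Mumford-type compactification. The generalization of Lemma \ref{l:nobubble} shows that any point $z_k \in S_k\setminus\nu(\mk(S_k))$ at which $\|Du_k(z_k)-X_K(z_k)\|_g$ blows up must escape every compact subset of the universal family $\ol{\cS}^{d+1,h}$, i.e.\ is pushed to a node of $S_\infty$; the obstruction comes from exactness of the Lagrangian boundary and hypothesis \eqref{eq:StrictPositiveInt}, which rule out disc and sphere bubbles in $E$ itself. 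Consequently, restricting to a small neighbourhood of each irreducible disc component of $S_\infty$ (shrunk away from the nodes), the uniform bound \eqref{eq:EnergyNoConcentration} holds, so Proposition \ref{p:Compactness2} applies componentwise to produce a stable limit map $u_\infty:S_\infty \to \Hilb^n(E)$, with $u_\infty|_Q$ constant on every sphere component $Q \subset S_\infty$ and with multiplicity of intersection with $D_{HC}$ at each interior node $z$ equal to $\mult(z)$.

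Second I would rule out the appearance of sphere components in $S_\infty$ by a dimension count. Strata of $\ol{\cR}^{d+1,h}$ in which interior marked points collide to form a sphere bubble carrying $k\geq 2$ of them have real codimension at least $2$ (while boundary disc-breakings have real codimension $1$). Since $u_\infty$ is constant on every such bubble with value in $D_{HC}$, the total data of $u_\infty$ is equivalent to a Floer-equation solution on the disc tree $S_\infty^{disc}$ whose interior marked points are replaced by constrained incidences with $D_{HC}$ (of multiplicity $\mult(z)$). For generic $(J,K)$ extended smoothly over the closure $\ol{\cS}^{d+1,h}$, the arguments of Lemma \ref{l:regularity} and Lemma \ref{l:regularity2} (combined with the observation that $D_{HC}$ is a smooth pseudocycle) yield regularity of these boundary strata too; the resulting moduli then has virtual dimension bounded above by
\[
\virdim\bigl(\cR^{d+1,h}(\ul{x}_0;\ul{x}_d,\dots,\ul{x}_1)\bigr) - 2 \;=\; -1,
\]
so it is empty. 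Hence sphere-component strata simply do not arise in the compactification.

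Combining the two steps, every limit $u_\infty$ is a stable map whose domain $S_\infty \in \ol{\cR}^{d+1,h}$ consists purely of disc components glued along boundary nodes, giving the desired compactification. The main obstacle is the dimension count of the third paragraph: although Lemma \ref{l:regularity2} takes care of the open stratum, one must extend the transversality argument to guarantee that, for generic $(J,K)$ chosen compatibly on the boundary strata of $\ol{\cS}^{d+1,h}$, the constrained moduli parametrizing sphere-bubble configurations is cut out transversely (taking into account the prescribed tangency order $\mult(z)$ to $D_{HC}$). This is a standard, if technically delicate, boundary transversality argument in the spirit of \cite[Section 9]{Seidel4} and \cite[Section (9k)]{SeidelBook}, adapted to our cylindrical setup using the same graph-trick and Gromov-taming arguments already deployed in Lemma \ref{l:regularity} and in the positivity of intersection result Lemma \ref{l:PositivityIntersection}.
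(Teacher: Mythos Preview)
Your proposal is correct and follows essentially the same approach as the paper: apply Lemma~\ref{l:nobubble} and Proposition~\ref{p:Compactness2} to obtain a stable limit with constant sphere components and prescribed intersection multiplicity $\mult(z)$ at the interior nodes, then rule out sphere strata by a codimension-$2$ dimension count together with a transversality argument for the higher-order tangency condition with $D_{HC}$. The paper makes the last step slightly more concrete by phrasing the constrained moduli as a fiber product with the appropriate jet bundle of $\Hilb^n(E)$ (citing Cieliebak--Mohnke \cite{CieMoh}), which is exactly the standard way to set up the transversality problem you flag as the main obstacle.
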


\begin{proof}
By Lemma \ref{l:nobubble} and 
Proposition \ref{p:Compactness2}, $\cR^{d+1,h}(\ul{x}_0;\ul{x}_d, \dots, \ul{x}_1)$ can be compactified by stable maps 
 $u:S \to  \Hilb^n(E)$
such that every sphere component of $u$ is mapped to a constant.
If $S$ has a sphere component, then some disc component would have algebraic intersection number $>1$ with $D_{HC}$ at an interior node.
However, this is a phenomenon of codimension at least $2$, so it 
has virtual dimension at most $-1$ and hence
can be avoided for generic $(J,K)$.

More precisely, just as  $\cR^{d+1,h}(\ul{x}_0;\ul{x}_d, \dots, \ul{x}_1)$ is the fiber product between 
(a smooth pseudocycle replacement for) the inclusion $D_{HC}^h \to (\Hilb^n(E))^h$ and the evaluation map $\cR^{d+1,h}(\ul{x}_0;\ul{x}_d, \dots, \ul{x}_1)_{pre} \to (\Hilb^n(E))^h$, we can define 
a subset of $\cR^{d+1,h}(\ul{x}_0;\ul{x}_d, \dots, \ul{x}_1)_{pre}$ consisting of those $u$
for which the interior marked points have prescribed algebraic intersection numbers with $D_{HC}$.
In this case, some factors of the target of the evaluation map should be taken to be the appropriate jet bundles of $\Hilb^n(E)$ (cf. \cite[Section 6]{CieMoh}).
The regularity argument in Lemma \ref{l:regularity}, \ref{l:regularity2} applies to this case
to conclude that, whenever there is an interior node on a disc component that is required to be mapped to $D_{HC}$ with algebraic intersection number $>1$, the restriction of $u$ to the disc component 
does not exist for generic $(J,K)$.
Therefore, 
 $\cR^{d+1,h}(\ul{x}_0;\ul{x}_d, \dots, \ul{x}_1)$ can be compactified by stable maps without sphere components.
\end{proof}

\subsubsection{The definition}

An object of $\cFS^{cyl,n}(\pi_{E})$ is an element $\ul{L} \in \cL^{cyl,n}$.
Given two objects $\ul{L}_0=\{L_{0,k}\}_{k=1}^n$ and $\ul{L}_1=\{L_{1,k}\}_{k=1}^n$, we choose a Floer chain datum 
$(A_{\ul{L}_0,\ul{L}_1},H_{\ul{L}_0,\ul{L}_1}, J_{\ul{L}_0,\ul{L}_1})$ under the additional assumption that
if $\ul{L}_0=\ul{L}_1$, the perturbation pair chosen is given by Lemma \ref{l:NoSideBubbling}.
The corresponding morphism space is
\begin{align}
hom(\ul{L}_0,\ul{L}_1) :=CF(\ul{L}_0,\ul{L}_1)=\oplus_{\ul{x} \in \cX(H_{\ul{L}_0,\ul{L}_1}, \ul{L}_0,\ul{L}_1)} o_{\ul{x}} 
\end{align}


Recall that we have chosen strip-like ends, cylindrical ends and marked-points neighborhoods in $\ol{\cS}^{d+1,h}$ that are smooth up 
to the boundary and corners.
 Following \cite[Section (9i)]{SeidelBook}, see also \cite[Section 9]{Seidel4}, we call a choice of Floer data smooth and consistent if for any  Lagrangian labels 
$\ul{L}_0,\dots,\ul{L}_d$, the Floer data depends smoothly on $\ol{\cR}^{d+1,h}$ up to the boundary and corner.
Note that smoothness near the boundary and corner strata is defined with respect to a collar neighborhood obtained from gluing the lower dimensional strata.
Therefore, a smooth and consistent  choice of Floer data is obtained from an inductive procedure from the lower dimensional strata. In our case, we proceed as follows:
\begin{enumerate}
\item We equip the unique element $S \in \cR^{1+1,0}$ with the ($s$-invariant) data $(A_{\ul{L}_0,\ul{L}_1},J_{\ul{L}_0,\ul{L}_1},H_{\ul{L}_0,\ul{L}_1})$.
 \item For every element of $\cR^{d+1,0}$ with Lagrangian labels being $\ul{L}_0,\dots,\ul{L}_d$, we equip with it $(A_S,J,K)$ satisfying 
 \eqref{eq:SurfaceConnection}, \eqref{eq:FloerDataJ} and \eqref{eq:FloerDataK} such that $(A_S,J,K)$ varies smoothly and consistently with respect to gluing 
 (this is done inductively in $d$);
 \item Equip any sphere with any number of interior marked points with $(A_S,J,K)=(0,J_E^{[n]},0)$.
 \item Equip the element of $\cR^{0+1,1}$ with Lagrangian label being $\ul{L}_0$ or $\ul{L}_1$ with $(A_S,J,K)$ satisfying 
 \eqref{eq:SurfaceConnection}, \eqref{eq:FloerDataJ} and \eqref{eq:FloerDataK}; 
 \item For every element of $\cR^{d+1,1}$ with Lagrangian labels being $\ul{L}_0,\dots,\ul{L}_d$, we equip  it with $(A_S,J,K)$ satisfying 
 \eqref{eq:SurfaceConnection}, \eqref{eq:FloerDataJ} and \eqref{eq:FloerDataK} such that $(A_S,J,K)$ varies smoothly and consistently with respect to gluing; 
 \item repeat the procedure with increasing $h$.
\end{enumerate}
For  a generic consistent choice of Floer data, all elements in $\cR^{d+1,h}(\ul{x}_0;\ul{x}_d, \dots, \ul{x}_1)$ are regular (here, the genericity is with respect to the Floer's $C^{\infty}_{\epsilon}$-topology, see \cite[Remark 9.9]{SeidelBook}).



The $A_{\infty}$ structure is now defined by
\begin{align}
 \mu^d(\ul{x}_d,\dots,\ul{x}_1)=\sum_{\ul{x}_0 \in \cX(H_{\ul{L}_0,\ul{L}_d}, \ul{L}_0,\ul{L}_d)} \frac{1}{I_{\ul{x}_0;\ul{x}_d,\dots,\ul{x}_1} !}
 (\# \cR^{d+1,I_{\ul{x}_0;\ul{x}_d,\dots,\ul{x}_1}}(\ul{x}_0;\ul{x}_d,\dots,\ul{x}_1)) \ul{x}_0
\end{align}
where, when $h=0$ and $d=1$, $\# \cR^{d+1,h}(\ul{x}_0;\ul{x}_d,\dots,\ul{x}_1)$ is understood as the signed count after dividing out by the $\RR$-symmetry (and where we suppress the discussion of signs).

\begin{lemma}
 The collection of maps $\{\mu^d\}$ satisfies the $A_{\infty}$ relation.
\end{lemma}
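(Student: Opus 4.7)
The plan is to prove the $A_\infty$ relation by the standard strategy of analyzing the codimension-one boundary of the one-dimensional components of the moduli spaces $\cR^{d+1,h}(\ul{x}_0;\ul{x}_d,\dots,\ul{x}_1)$ with $h=I_{\ul{x}_0;\ul{x}_d,\dots,\ul{x}_1}$. By Lemma \ref{l:regularity2} the interior of this moduli is a smooth manifold, and Corollary \ref{c:v1} compactifies it to a $1$-manifold with boundary, whose boundary is a signed count of broken configurations. Identifying this count with $\sum (\pm)\,\mu^{d_1}\!\circ\!\mu^{d_2}$ and verifying that all combinatorial factors $1/h!$ combine correctly will give the $A_\infty$ equation.

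First I would enumerate the boundary strata of $\ol{\cR}^{d+1,h}$. There are two kinds of codimension-one boundaries of the domain moduli: (a) the breaking of the polygon $S$ into two polygons $S_1 \in \ol{\cR}^{d_1+1,h_1}$ and $S_2 \in \ol{\cR}^{d_2+1,h_2}$ along a boundary node, where $d_1+d_2 = d+1$ and $h_1+h_2=h$, with the $h$ ordered interior marked points distributed in $\binom{h}{h_1}$ ways between the two components; and (b) collisions of interior marked points or of interior marked points with the boundary, which however are codimension one in $\ol{\cR}^{d+1,h}$ only when they produce sphere bubbles. By the set-up of marked-point neighborhoods (Remark \ref{r:sphereCom}) together with Proposition \ref{p:Compactness2} and Corollary \ref{c:v1}, the generic choice of Floer data rules out limits with sphere components, so case (b) contributes nothing; this is the first key point to check.

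Next I must show that disc bubbles off the boundary are also excluded. On any piece $\partial_j S$ the Lagrangian label is fixed; for $\ul{L}_j \ne \ul{L}_{j'}$ one uses exactness of $\Sym(\ul{L})$ and the transversality of the Hamiltonian chord conditions, while for $\ul{L}_j = \ul{L}_{j'}$ (the boundary of a single component) Lemma \ref{l:NoSideBubbling} guarantees that the chosen perturbation pair has $I_{\ul{x}}=0$, so Corollary \ref{c:SideBubbling} forces $\cR^{0+1,h}(\ul{x})=\emptyset$ for all $h\ge 1$ and hence no disc bubble with positive intersection number can appear. It remains to check that the intersection multiplicity is additive under breaking, i.e.\ $I_{\ul{x}_0;\ul{x}_d,\dots,\ul{x}_1} = I_1 + I_2$ where $I_i$ refers to the two halves; this follows because $[\overline G(u)]\cdot[\Delta_{\bH^\circ}\times B(\ol S)]$ is computed from the boundary winding number (Lemma \ref{l:IPairing}) and glues additively.

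Finally I would assemble the counts. The boundary of the one-dimensional component of $\cR^{d+1,h}(\ul{x}_0;\ul{x}_d,\dots,\ul{x}_1)$ is, up to sign, the disjoint union over all splittings $(d_1,d_2,h_1,h_2)$ and all intermediate asymptotes $\ul{y}$ of the fiber products
\[
\binom{h}{h_1} \cdot \cR^{d_1+1,h_1}(\ul{x}_0;\dots,\ul{y},\dots)\times \cR^{d_2+1,h_2}(\ul{y};\dots)
\]
with $h_1=I_{1}$ and $h_2=I_2$. Dividing by $h!$ on the left and by $h_1!\,h_2!$ on the right, the binomial $\binom{h}{h_1}$ cancels the factorial discrepancy, so the boundary count equals $\sum (\pm)\mu^{d_1}(\dots,\mu^{d_2}(\dots),\dots)$. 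The sign verification is the classical one from \cite{SeidelBook}, adapted in the presence of interior constraints; the main obstacle in the whole argument is precisely this combinatorial/sign bookkeeping for the interior marked points together with ruling out any `virtual' contribution from unparametrized sphere bubbles attaching to them, which is why the arguments of Proposition \ref{p:Compactness2} and Corollary \ref{c:v1} are needed in an essential way.
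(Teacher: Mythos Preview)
Your proposal is correct and follows essentially the same approach as the paper: the paper's proof invokes Corollary~\ref{c:v1} to exclude sphere components, uses Lemma~\ref{l:NoSideBubbling} together with exactness to ensure every disc component has at least two boundary punctures, notes the additivity of $I_{\ul{x}_0;\ul{x}_d,\dots,\ul{x}_1}$ under gluing, and then defers the remaining combinatorics (including the $\binom{h}{h_1}$ versus $h!/(h_1!\,h_2!)$ bookkeeping) to \cite[Section 4.1.8]{Siegel}. You have spelled out precisely these steps, including the factorial cancellation that the paper leaves implicit; your slight detour through the case distinction ``$\ul{L}_j\neq\ul{L}_{j'}$'' when discussing disc bubbles is unnecessary (a one-punctured bubble always carries a single Lagrangian label), but this does not affect the validity of the argument.
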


\begin{proof}
 By Corollary \ref{c:v1}, the relevant $1$-dimensional moduli spaces can be compactified by stable broken maps without sphere components.
 By Lemma \ref{l:NoSideBubbling} and exactness of the individual Lagrangians, every component of such a stable broken map has at least two boundary punctures.
 The rest follows from a well-established argument (see e.g. \cite[Section 4.1.8]{Siegel}) together with the additivity property of $I_{\ul{x}_0;\ul{x}_d,\dots,\ul{x}_1}$ under gluing of maps.
\end{proof}


\section{Properties of the cylindrical Fukaya-Seidel category}\label{s:properties}

At this point, given  a four-dimensional exact Lefschetz fibration $\pi_E: E \to \bH$, we have constructed for each $n\geq 1$  an $A_{\infty}$ category $\cFS^{cyl,n}(\pi_E)$ which captures certain Floer-theoretic computations in the Hilbert scheme of $E$, or rather its affine subset $\cY_E$. Unsurprisingly, the resulting $A_{\infty}$ category satisfies typical properties of Fukaya-type categories, with proofs which are minor modifications of those which pertain in more familiar settings. This short section briefly elaborates upon some of these; to keep the exposition of manageable length, the proofs are only sketched.


\subsection{Unitality and Hamiltonian invariance}

\begin{lemma}[Cohomological unit]
 $\cFS^{cyl,n}(\pi_{E})$ is cohomologically unital.
\end{lemma}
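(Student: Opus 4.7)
The plan is to construct, for each $\ul{L}\in\cL^{cyl,n}$, a cocycle $e_{\ul{L}}\in CF(\ul{L},\ul{L})$ whose class is a two-sided cohomological unit, following the standard scheme for Fukaya-type categories (cf.\ \cite[Section (8d)]{SeidelBook} and \cite[Section (2h)]{Seidel4.5}), adapted to our cylindrical setup.

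First, for each $\ul{L}$ I would fix the perturbation pair $(A_{\ul{L}},H_{\ul{L}})$ furnished by Lemma \ref{l:NoSideBubbling}, so that $I_{\ul{x}}=0$ for every $\ul{x}\in\cX(H_{\ul{L}},\ul{L},\ul{L})$. Corollary \ref{c:SideBubbling} then kills all contributions from $\cR^{0+1,h}$ with $h\ge 1$, so only the moduli $\cR^{0+1,0}(\ul{x})$ of $s$-dependent half-planes with one output puncture asymptotic to $\ul{x}$ is relevant. Equip the (unique-up-to-reparametrisation) element of $\cR^{0+1,0}$ with Floer data $(A_S,J,K)$ satisfying \eqref{eq:SurfaceConnection}, \eqref{eq:FloerDataJ}, \eqref{eq:FloerDataK} that agree with $(A_{\ul{L}},H_{\ul{L}},J_{\ul{L}})$ over the strip-like end, and set
\begin{align}
e_{\ul{L}}\;:=\;\sum_{\ul{x}}\;\#\,\cR^{0+1,0}(\ul{x})\cdot\ul{x}.
\end{align}
Regularity for generic such data is Lemma \ref{l:regularity}, and compactness is simpler than in the general setup of Section \ref{sss:Compactness} as there are no interior marked points to track. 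Analysing the boundary of the $1$-dimensional components of $\cR^{0+1,0}$, whose only ends are a single Floer strip breaking at the output, shows $\mu^1 e_{\ul{L}}=0$.

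Next, to show that $[\mu^2(-,e_{\ul{L}})]$ is the identity on $HF(\ul{L}',\ul{L})$ (and symmetrically on the other side), I would use the usual continuation trick: interpolate in a $1$-parameter family of Floer data on the strip $\mathbb{R}\times[0,1]$ with boundary conditions $\ul{L}'$ and $\ul{L}$, between (a) the glued configuration obtained by inserting the $\cR^{0+1,0}$-disc defining $e_{\ul{L}}$ into one input of a $\cR^{2+1,0}$-triangle (which computes $\mu^2(-,e_{\ul{L}})$) and (b) an $s$-invariant continuation from $(A_{\ul{L}',\ul{L}},H_{\ul{L}',\ul{L}})$ to itself (which is chain-homotopic to the identity via acyclicity of the space of admissible Floer data). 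The parametric $1$-dimensional moduli supplies the required chain homotopy; an entirely analogous argument handles $\mu^2(e_{\ul{L}'},-)$.

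The main obstacle is extending the compactness theorems of Section \ref{sss:Compactness} to these one-parameter moduli, verifying that solutions do not escape along the horizontal boundary $D_E$, along the vertical boundary $\pi_{E}^{-1}(\partial\bH)$, or into $D_r$ during the homotopy. Because the perturbation pair from Lemma \ref{l:NoSideBubbling} is a compactly supported perturbation of a product Floer datum localised in the contractible region $\Sym(U_{\ul{L}})$, throughout the interpolation one may arrange that $I_{\ul{x}}=0$ for all generators entering the $\mu^2$-count, so that Lemma \ref{l:PositivityIntersection} and Proposition \ref{p:containment} apply verbatim to rule out the unwanted escape. Once this confinement is in place, the argument reduces to the formal structure already established in Section \ref{sss:FloerData}; in cases where the Floer data can be chosen to split as in Remark \ref{r:SplitData}, the tautological correspondence (Lemma \ref{l:tautologicalCorr}) further identifies $[e_{\ul{L}}]$ with the symmetrised product of the classical Floer-theoretic units $[e_{L_i}]\in HF^*(L_i,L_i)$, providing a useful sanity check.
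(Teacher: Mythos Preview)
Your proposal is correct and follows essentially the same approach as the paper's sketch: construct the unit by counting rigid maps from the once-punctured disc $S\in\cR^{0+1,0}$ with the perturbation pair of Lemma~\ref{l:NoSideBubbling}, and use Corollary~\ref{c:SideBubbling} to eliminate $h\ge 1$ contributions. The paper phrases the boundary condition as a moving Lagrangian from $\phi_{H_{\ul{L},\ul{L}}}(\Sym(\ul{L}))$ to $\Sym(\ul{L})$ rather than keeping the Hamiltonian in the equation, and simply asserts (without the continuation-homotopy argument you spell out) that the resulting count is a cohomological unit; but these are standard equivalent formulations, and your version is in fact more detailed than the paper's sketch.

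One small imprecision: in your compactness paragraph you write ``$I_{\ul{x}}=0$ for all generators entering the $\mu^2$-count'', but the generators in $CF(\ul{L}',\ul{L})$ need not individually satisfy this. What you actually need (and what holds) is that the intersection number $I_{\ul{x}_0;\ul{x}_2,\ul{x}_1}$ for the glued triangle matches the sum of the $I$'s for the pieces, by the additivity noted after Lemma~\ref{l:IPairing}; since the half-plane contributes $0$, the triangle count and the continuation strip count live at the same value of $h$, and the parametric moduli interpolating between them inherits the compactness of Section~\ref{sss:Compactness}. This is a phrasing issue rather than a gap.
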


\begin{proof}[Sketch of proof]
To give a cochain representative of the cohomological unit of $HF(\ul{L},\ul{L})$, we need to consider $S \in \cR^{0+1,0}$.
We put
the Floer cochain datum
$(A_{\ul{L},\ul{L}},H_{\ul{L},\ul{L}}, J_{\ul{L},\ul{L}})$
on the outgoing strip-like end and stabilize the disc $S$ by picking $(A_S,K,J)$ satisfying  \eqref{eq:SurfaceConnection}, \eqref{eq:FloerDataJ} and \eqref{eq:FloerDataK}.
The Lagrangian boundary condition is a moving one from $\phi_{H_{\ul{L},\ul{L}}}(\Sym(\ul{L}))$ to $\Sym(\ul{L})$.

We then form the moduli space of maps $u: S \to \Hilb^n(E) \setminus (D_r \cup D_{HC})$ such that $u$ is asymptotic to $\ul{x}_0 \in \cX(H_{\ul{L},\ul{L}},\ul{L},\ul{L})$
near the strip-like end.
The signed rigid count of this moduli gives a cochain level representative of the cohomological unit.

Note that we only need to consider $\cR^{0+1,h}$ for $h=0$ because, by Lemma \ref{l:NoSideBubbling}, the corresponding moduli space for $h>0$ is necessarily empty.
\end{proof}

\begin{lemma}\label{l:chainRep}
For any $\ul{L} \in \cFS^{cyl,n}(\pi_{E})$, there is a choice of Floer cochain datum such that 
$CF^*(\ul{L},\ul{L})$ is non-negatively graded and $CF^0(\ul{L},\ul{L})$ is rank one.
As a result, the Hamiltonian chord generating  $CF^0(\ul{L},\ul{L})$ gives a chain level representative of the cohomological unit.
\end{lemma}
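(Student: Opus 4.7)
The plan is to start from the perturbation pair of Lemma \ref{l:NoSideBubbling} and refine its compactly supported piece so as to realize a Floer--Morse correspondence on each component $L_k$. Recall that perturbation has the split form $H_{\ul{L}, \ul{L}} = (H')^{[n]}$, with $H'_t \in \cH_{a_t}(E)$ chosen so that a fixed ordering of $\ul{L} = \{L_1, \ldots, L_n\}$ yields $L_i \cap \phi_{H'}^{-1}(L_j) = \emptyset$ unless $i \ge j$. Combined with Example \ref{ex:noHperb}, this forces generators $\ul{x} \in \cX(H_{\ul{L}, \ul{L}}, \ul{L}, \ul{L})$ to take the form $(x_1, \ldots, x_n)$ with $x_k$ a chord from $L_k$ to itself: both index sequences $\{a_k\}, \{b_k\}$ must be permutations of $\{1, \ldots, n\}$ satisfying $a_k \ge b_k$, so necessarily $a_k = b_k$.

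Next, I would refine $H'$ on the relevant compact region so that, on each connected $L_k$, the set $L_k \cap \phi_{H'}^{-1}(L_k)$ bijects, via the standard Floer--Morse correspondence for a $C^2$-small perturbation of the identity, with the critical points of a chosen Morse function $f_k : L_k \to \RR$ possessing a unique global minimum. Two observations ensure this is well-posed: chord candidates project into the contractible set $\Sym(U_{\ul{L}})$ by the proof of Lemma \ref{l:NoSideBubbling}, and at infinity $\phi_{H'}^{-1}(L_k)$ and $L_k$ are disjoint because the corresponding points $g_A^{-1}\lambda_{L_k}$ and $\lambda_{L_k}$ in $\RR$ differ. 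Connectedness of $L_k$ allows one to choose $f_k$ with a unique degree-$0$ minimum (for instance a height function on $S^2$ or a proper convex function on $\RR^2$, these being the cases relevant to the main theorem); with the grading function on $L_k$ normalized so this minimum sits in degree $0$, the complex $CF^*(L_k, L_k)$ becomes non-negatively graded with $CF^0(L_k, L_k) = \KK \langle e_{L_k} \rangle$.

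Since the grading on $CF^*(\ul{L}, \ul{L})$ is additive in tuples, the chord counting gives $CF^*(\ul{L}, \ul{L}) \cong \bigotimes_k CF^*(L_k, L_k)$ as graded $\KK$-modules; in particular it is non-negatively graded and $CF^0(\ul{L}, \ul{L})$ is rank one, generated by $\ul{e}_{\ul{L}} := (e_{L_1}, \ldots, e_{L_n})$. For the unit statement, the preceding lemma furnishes a nonzero class in $HF^0(\ul{L}, \ul{L})$; non-negativity of the grading gives $CF^{-1}(\ul{L}, \ul{L}) = 0$, so there are no $\mu^1$-coboundaries landing in $CF^0$, and any nonzero class in $HF^0$ is represented by a cocycle in $CF^0$. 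Since $CF^0$ is rank one, $\ul{e}_{\ul{L}}$ itself is therefore a $\mu^1$-cocycle and represents the unit up to a nonzero scalar.

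The main technical point will be the compatibility of the $C^2$-small perturbation on the compact region with the prescribed asymptotic class $\cH_{a_t}(E)$: the latter constrains $H'_t$ only outside a compact subset of $\bH^\circ$, leaving exactly the freedom required on the compact region containing all chord candidates, and combined with the disjointness of the cylindrical ends noted above this makes the construction of the desired perturbation routine.
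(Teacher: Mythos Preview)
Your strategy matches the paper's: start from the split perturbation of Lemma \ref{l:NoSideBubbling} so that chords are unordered tuples of self-chords on each $L_k$, arrange each factor to be Morse-like with a single degree-zero generator, and use additivity of the grading. The unit argument at the end is fine.

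There is, however, a technical slip in how you assemble the perturbation. You propose to keep the split form $H=(H')^{[n]}$ and refine $H'$, asserting that ``$\cH_{a_t}(E)$ constrains $H'_t$ only outside a compact subset of $\bH^\circ$''. That is not so: by \eqref{eq:HH}--\eqref{eq:HE} every element of $\cH_\gamma(E)$ is a pullback $\pi_E^*H''$ from the base, hence is constant along fibres \emph{everywhere}. Base-only freedom is too rigid to realise an arbitrary Morse function $f_k$ on a general $L_k\in\cL$ via a Weinstein-neighbourhood argument. And if you instead let $H'$ be an arbitrary compactly supported perturbation of something in $\cH_{a_t}(E)$, then $(H')^{[n]}$ need not lie in $\cH^n_{a_t}(E)$: the symmetrised support $\{\ul{z}:\text{some }z_i\in\supp(H'-H'_{\text{old}})\}$ is not compact in $\Conf^n(E)\setminus D_r^\circ$.

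The paper's fix is to drop the global split form and work directly in $\cH^n_{a_t}(E)$. One chooses a \emph{separate} compactly supported perturbation $H'_i$ of $H'$ on $\pi_E^{-1}(U_{L_i})$ for each $i$, then uses that a neighbourhood of the chord set in $\Conf^n(E)$ is genuinely a product $\prod_i \pi_E^{-1}(U_{L_i})$ (since the $U_{L_i}$ are pairwise disjoint) to set $H(\ul{z})=\sum_i H'_i(z_i)$ only on that compact neighbourhood, while keeping $H=(H')^{[n]}$ elsewhere. This $H$ lies in $\cH^n_{a_t}(E)$ by \eqref{eq:HnE}, its chords are exactly the tuples of $X_{H'_i}$-chords, and your tensor-product and grading arguments then go through unchanged.
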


\begin{proof}
As in the proof of Lemma \ref{l:NoSideBubbling}, we can choose $A$ and $H'$
such that 
the (possibly non-transversal) $X_{(H')^{[n]}}$-chord from $\Sym(\ul{L})$ to itself is given by the unordered tuple of $X_{H'}$-chords $x(t)$ from $L_i$ to itself for $i=1,\dots,n$.
Recall that $H'$ is chosen such that $\pi_E(x(t)) \in U_{L_i}$ for all $x(t)$.
For each $i$, we pick $H_i'$ to be a perturbation of $H'$ that is supported in a compact subset of $\pi_E^{-1}(U_{L_i})$ and 
such that the $X_{H'_i}$-chords from $L_i$ to itself satisfy: (i) they are transversal, (ii) they are non-negatively graded,  and (iii) there exists exactly one such chord of grading $0$.


Recall the definition of $q_{S_n}$ in \eqref{eq:SymL}.
Note that there is a compact subset $K$ of $q_{S_n}(\pi_E^{-1}(U_{L_1}) \times \dots \times \pi_E^{-1}(U_{L_n}))$  which contains all
the $X_{(H')^{[n]}}$-chords from $\Sym(\ul{L})$ to itself.
Since $q_{S_n}(\pi_E^{-1}(U_{L_1}) \times \dots \times \pi_E^{-1}(U_{L_n}))$ is
symplectomorphic to the product symplectic manifold $\pi_E^{-1}(U_{L_1}) \times \dots \times \pi_E^{-1}(U_{L_n})$,
we can find a  perturbation $H=(H_t)_{t \in [0,1]}$ of $(H')^{[n]}$ such that 
$H_t \in \cH^n_{a_t}(E)$ for all $t$ and $H$ is the product type Hamiltonian $H(\ul{z})=H'_1(z_1)+\dots +H_n'(z_n)$
locally near $K$,
where $\ul{z} \in q_{S_n}(\pi_E^{-1}(U_{L_1}) \times \dots \times \pi_E^{-1}(U_{L_n}))$
is identified with $(z_1,\dots,z_n) \in \pi_E^{-1}(U_{L_1}) \times \dots \times \pi_E^{-1}(U_{L_n})$.

In particular, it means that the $X_H$ Hamiltonian chords from $\Sym(\ul{L})$ to itself are precisely 
given by unordered tuples of $X_{H'_i}$-chord from $L_i$ to itself for $i=1,\dots,n$.
Therefore, the $X_H$-chords are
non-negatively graded 
and precisely one of them is in grading zero.
\end{proof}

\begin{lemma}[Hamiltonian invariance]\label{l:HamInvariance}
 If $\ul{L}_0, \ul{L}_1 \in \cL^{cyl,n}$ can be connected by a smooth family of elements $(\ul{L}_t)_{t \in [0,1]}$ in $\cL^{cyl,n}$,
 then $\ul{L}_0$ is quasi-isomorphic to $\ul{L}_1$ in $\cFS^{cyl,n}(\pi_{E})$.
\end{lemma}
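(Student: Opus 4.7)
The plan is to adapt the standard continuation-map argument for Fukaya categories to the cylindrical model developed in Section \ref{s:cylindricalFS}. First, I would upgrade the smooth family $(\ul{L}_t)_{t \in [0,1]}$ into a Hamiltonian isotopy: using exactness of each $L_{t,k}$ and the fact that the open neighborhoods $U_{L_{t,k}}$ in \eqref{eq:DisjointOpen} can be chosen to vary smoothly in $t$ and remain pairwise disjoint, a partition-of-unity construction produces a time-dependent $H'_t \in \cH_{a_t}(E)$ whose flow sends $\ul{L}_0$ to $\ul{L}_t$ componentwise. This induces a corresponding family $H_t \in \cH^n_{a_t}(E)$ whose flow on $\Conf^n(E)$ preserves $D_r^{\circ}$ (by \eqref{eq:preserveDr}) and sends $\Sym(\ul{L}_0)$ to $\Sym(\ul{L}_t)$.

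Second, I would define continuation cochains $c_{01} \in CF^0(\ul{L}_0, \ul{L}_1)$ and $c_{10} \in CF^0(\ul{L}_1, \ul{L}_0)$ via the moduli of strips $S \in \cR^{1+1,0}$ equipped with $s$-dependent data $(A_S, J, K)$ satisfying \eqref{eq:SurfaceConnection}--\eqref{eq:FloerDataK}, interpolating between the Floer chain data of the respective endpoint pairs, and carrying a moving Lagrangian boundary condition built from the Hamiltonian isotopy of step one. The count of rigid such maps defines $c_{01}$ (and symmetrically $c_{10}$ from the reverse isotopy). The full battery of compactness, energy, and positivity-of-intersection estimates from Sections \ref{sss:Compactness} and \ref{sss:Energy} extends essentially verbatim: no escape to $D_r$ follows from \eqref{eq:tangentialPerturbation} together with Lemma \ref{l:PositivityIntersection}; no horizontal escape follows from Lemma \ref{l:1true}; no vertical escape follows from the analogue of Proposition \ref{p:containment}, provided the interpolating connection $A_S$ is chosen so that the ordering of the parallel-transported intervals $\lambda_{\ul{L}_t}$ remains strict along the moving boundary.

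Third, I would check that $[c_{10}] \cdot [c_{01}] = [\one_{\ul{L}_0}]$ and $[c_{01}] \cdot [c_{10}] = [\one_{\ul{L}_1}]$ via a standard one-parameter homotopy: one interpolates the two-strip gluing $c_{10} \# c_{01}$ to the trivial (constant) isotopy, so that the boundary of the resulting one-dimensional moduli space contributes exactly the difference of $c_{10}\cdot c_{01}$ and the cochain-level unit of $CF^*(\ul{L}_0,\ul{L}_0)$ produced as in Lemma \ref{l:chainRep}. By Lemma \ref{l:NoSideBubbling}, no interior-marked-point configurations contribute to either the domain or the boundary of this moduli. Together these imply that $[c_{01}]$ is invertible in cohomology, hence $\ul{L}_0 \simeq \ul{L}_1$ in $\cFS^{cyl,n}(\pi_E)$.

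The hard part will be ensuring that all three ``no escape'' phenomena from Section \ref{sss:Compactness} persist uniformly along the moving boundary data, because the intersection pairing $I_{\ul{x}_0;\ul{x}_1}$ governing positivity of intersection with $D_{HC}$ (Lemma \ref{l:IPairing}) is a priori defined only for fixed endpoint data; one must verify that it varies locally constantly along the continuation, and that the ordering condition \eqref{eq:orderinglambda}, which underlies the Seidel-type confinement in Proposition \ref{p:containment}, can be enforced throughout the $s$-dependent interpolation. This is a matter of choosing the interpolating $(A_S, H, J)$ carefully, which is possible because the fiber $\cA_\aff([0,1], \lambda_{\ul{L}_0}, \lambda_{\ul{L}_1})$ is weakly contractible and the condition on $\lambda_{\ul{L}_t}$ is open.
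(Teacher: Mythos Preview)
Your overall strategy---construct continuation cochains via moving-boundary moduli and show they compose to cohomological units---is the same as the paper's. Two points of divergence are worth noting.

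First, the paper uses domains $S \in \cR^{0+1,h}$ (a disc with \emph{one} output puncture and $h$ interior marked points, for \emph{all} $h \geq 0$), not strips in $\cR^{1+1,0}$. Your description conflates the continuation-\emph{map} picture (strip with $s$-dependent data, producing a chain map) with the continuation-\emph{element} picture (disc with moving boundary, producing a cochain directly); the paper uses the latter. More importantly, restricting to $h=0$ from the outset is premature: since the ambient $\mu^1$ on $CF(\ul{L}_0,\ul{L}_1)$ involves interior marked points via $I_{\ul{x}_0;\ul{x}_1}$, the cocycle property of $\alpha$ only follows once the moving-boundary moduli are defined for all $h$ compatibly with gluing against $\cR^{1+1,k}$. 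The paper then shows \emph{a posteriori} that the $h>0$ rigid counts vanish (by the mechanism of Lemma~\ref{l:NoSideBubbling}), whence $\alpha$ decomposes as a sum of ordinary continuation elements $\alpha_i \in CF^0(L_{0,i}, L_{1,i})$ in the surface $E$ itself.

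Second, the paper immediately reduces to the case where $\ul{L}_1$ is $C^1$-close to $\ul{L}_0$, by breaking the isotopy into small steps and invoking transitivity of quasi-isomorphism. This neatly sidesteps the difficulties you flag at the end: once $\pi_E(L_{0,i}) \cap \pi_E(L_{1,j}) = \emptyset$ for $i \neq j$, the ``no escape'' and $\lambda$-ordering concerns all reduce to the corresponding statements for a single Lagrangian in $E$ undergoing a small isotopy, where they are standard. Your direct approach should also go through, but the bookkeeping you worry about is genuinely messier without this reduction; the paper buys simplicity by not attempting the global argument.
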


\begin{proof}[Sketch of proof]
Without loss of generality, we can assume that $\ul{L}_1 $ is $C^1$-close  to $\ul{L}_0$.
In particular, we can assume that $\pi_E(L_{0,i}) \cap \pi_E(L_{1,j}) =\emptyset$ if $j \neq i$.

Pick a Floer cochain datum $(A_{\ul{L}_0,\ul{L}_1},H_{\ul{L}_0,\ul{L}_1}, J_{\ul{L}_0,\ul{L}_1})$  for $(\ul{L}_0, \ul{L}_1)$.
 For each integer $h \geq 0$ and $S \in \cR^{0+1,h}$, we equip $S$  with the moving Lagrangian boundary label $\ul{L}_t$ on $\partial S$.
 Then, we pick $(A_{S},K,J)$ such that \eqref{eq:SurfaceConnection}, \eqref{eq:FloerDataJ} and \eqref{eq:FloerDataK}  are satisfied
 and compatible with gluing with elements in $\cR^{1+1,k}$ for all $k$.
 The rigid count of the corresponding solutions of \eqref{eq:FloerEquation0} and \eqref{eq:FloerEquation1} give us an element $\alpha$ in $CF^0(\ul{L}_0, \ul{L}_1)$.
 
  We claim that $\alpha$ is a cocycle. Indeed, by the same argument as in Lemma \ref{l:NoSideBubbling}, when the Floer cochain datum is carefully chosen, the rigid count of solutions for $h>0$ vanishes.
In this case, $\alpha$  is a sum of continuation elements $\alpha_i \in CF^0(L_{0,i},L_{1,i})$.
 
 Similarly, we can construct a cocycle $\beta$ in $CF^0(\ul{L}_1, \ul{L}_0)$.
 By gluing, one checks that $\mu^2(\alpha,\beta)$ and $\mu^2(\beta,\alpha)$ are cohomological units (for suitable choices of Floer cochain data as above, these cohomological units agree with the classical units). \end{proof}

\subsection{Canonical embeddings}\label{sss:CanEmbed}

Let $W$ be a contractible (hence connected) open subset of $\bH$ such that
\begin{align}
 \text{$W \cap \partial \bH$ equals $(R,\infty)$ or $(-\infty,R)$ for some $R \in \mathbb{R}$} \label{eq:Wcondition}
\end{align}

Let $W^{\circ}:=W \cap \bH^\circ$.
We can define an $A_{\infty}$ full subcategory $\cFS^{cyl,n}_W(\pi_E)$ of $\cFS^{cyl,n}(\pi_E)$ with objects given by 
$\ul{L}=\{L_1,\dots,L_n\}$ such that $\pi_E(L_j) \subset W^{\circ}$ for all $j$.

The conditions that $W$ is contractible and satisfies \eqref{eq:Wcondition} imply that $W^c:=Int(\bH \setminus W)$ is also contractible and satifies \eqref{eq:Wcondition}, where $Int(-)$ stands for the interior.

Let $\ul{K}$ be an object of $\cFS^{cyl,k}_{W^c}(\pi_E)$. 
Each object $\ul{L}$ of $\cFS^{cyl,n}_W(\pi_E)$ determine a object $\ul{L}\sqcup\ul{K}$ in $\cFS^{cyl,n+k}(\pi_E)$ given by adding to 
$\ul{L}$ the Lagrangians in $\ul{K}$.

\begin{lemma}
 There is a cohomologically faithful $A_{\infty}$ functor $\sqcup \ul{K}: \cFS^{cyl,n}_W(\pi_E) \to \cFS^{cyl,n+k}(\pi_E)$,
  which on objects is given by sending $\ul{L}$ to $\ul{L}\sqcup\ul{K}$.
\end{lemma}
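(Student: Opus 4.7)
The plan is to construct $\sqcup\ul{K}$ by exploiting the disjointness of $W$ and $W^c$, which forces a clean splitting of Floer data, Hamiltonian chords, and pseudoholomorphic polygons in $\cFS^{cyl,n+k}(\pi_E)$ with boundary on Lagrangians of the form $\Sym(\ul{L} \sqcup \ul{K})$.

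First, I would choose Floer cochain data for each pair $(\ul{L}_0 \sqcup \ul{K}, \ul{L}_1 \sqcup \ul{K})$ in product form: the Hamiltonian perturbation splits as a sum of one supported in $\pi_E^{-1}(W)$ (matching the perturbation for $(\ul{L}_0, \ul{L}_1)$ in $\cFS^{cyl,n}_W(\pi_E)$) and one supported in $\pi_E^{-1}(W^c)$ (chosen via Lemma \ref{l:chainRep} so that $CF(\ul{K},\ul{K})$ has a distinguished degree-zero generator $e_{\ul{K}}$ representing the cohomological unit), and similarly for $J$ near the Lagrangian boundaries. Extending this consistently to families over $\ol{\cR}^{d+1,h}$ gives that Hamiltonian chords split as unordered pairs $(\ul{x}, \ul{y})$, inducing a vector-space identification $hom(\ul{L}_0 \sqcup \ul{K}, \ul{L}_1 \sqcup \ul{K}) \cong CF(\ul{L}_0, \ul{L}_1) \otimes CF(\ul{K}, \ul{K})$.

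The key geometric step is the splitting of moduli. Given a polygon $u: S \to \Hilb^{n+k}(E)$ with boundary on the $\Sym(\ul{L}_j \sqcup \ul{K})$ and asymptotes $\ul{x}_j \sqcup \ul{y}_j$, Lemma \ref{l:tautologicalCorr} produces $\pi_\Sigma: \Sigma \to S$ and $v: \Sigma \to E$. Boundary components of $\Sigma$ are colored by the single Lagrangian into which they map, and at each boundary puncture the $n$ sheets asymptotic to points of $\ul{x}_j$ (living in $\pi_E^{-1}(W)$) are disjoint from the $k$ sheets asymptotic to points of $\ul{y}_j$ (living in $\pi_E^{-1}(W^c)$); continuity then forces a topological decomposition $\Sigma = \Sigma_L \sqcup \Sigma_K$ in which $\Sigma_L$ gathers the $L$-colored components (of total degree $n$ over $S$) and $\Sigma_K$ the $K$-colored ones (of degree $k$). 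Running the tautological correspondence backwards on each piece produces polygons $u_L: S \to \Hilb^n(E)$ and $u_K: S \to \Hilb^k(E)$ contributing to $\cFS^{cyl,n}_W(\pi_E)$ and $\cFS^{cyl,k}_{W^c}(\pi_E)$ respectively, whose intersection numbers with the respective Hilbert--Chow divisors sum to that of $u$. Hence the moduli $\cR^{d+1,h}(\ul{x}_0 \sqcup \ul{y}_0; \ldots)$ decomposes, over partitions $h = h_L + h_K$ of the labeled interior marked points, into fiber products of $\cR^{d+1,h_L}(\ul{x}_0; \ldots)$ and $\cR^{d+1,h_K}(\ul{y}_0; \ldots)$ over the common disc moduli $\cR^{d+1}$.

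With this splitting in hand, define the $A_\infty$ functor by $F^1(\ul{x}) := \ul{x} \otimes e_{\ul{K}}$ and, for $d \geq 2$, let $F^d(\ul{x}_d, \ldots, \ul{x}_1)$ count polygons in $\cFS^{cyl,n+k}(\pi_E)$ with inputs $\ul{x}_j \otimes e_{\ul{K}}$ and output lying in the subspace $CF(\ul{L}_0, \ul{L}_d) \otimes e_{\ul{K}}$; by the moduli splitting this count decouples into an $\ul{L}$-polygon count (which is $\mu^d$ in $\cFS^{cyl,n}_W(\pi_E)$) paired with a $\ul{K}$-polygon count all of whose asymptotes are $e_{\ul{K}}$. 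The $A_\infty$ functor equations follow from the $A_\infty$ relations in $\cFS^{cyl,n+k}(\pi_E)$ applied to this decomposition, because boundary degenerations degenerate separately in the $L$- and $K$-components. Cohomological faithfulness is then immediate: the induced map on cohomology sends $[\ul{x}] \mapsto [\ul{x}] \otimes [e_{\ul{K}}]$, which is injective since $[e_{\ul{K}}]$ is the nonzero unit of $HF^*(\ul{K}, \ul{K})$. The chief technical obstacle will be verifying the splitting in the presence of compactifications: one must rule out boundary strata in which an $L$-component of $u$ meets a $K$-component at a node --- such a node would demand a single point of $D_{HC}$ project simultaneously to $W$ and $W^c$, contradicting their disjointness --- and ensure that split Floer data can be chosen generic enough to preserve transversality within the moduli considered.
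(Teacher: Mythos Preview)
Your geometric splitting argument is essentially the paper's: the disjointness of $W$ and $W^c$ forces the tautological-correspondence curve $\Sigma$ to decompose into $L$-colored and $K$-colored components, and the paper makes this precise by projecting via $\pi_E$ and invoking the open mapping theorem. The paper goes one step further than you do on the $K$-side: it shows that when all asymptotics are the degree-zero generator $e$ (arranged via Lemma~\ref{l:chainRep} to be a \emph{constant} chord), the $K$-colored components are the constant maps $v_i\colon S\to e_i$, checks these are regular of index zero, and concludes that they contribute exactly a factor of $1$ to any rigid count. It also separates the stable case $(d,h)\neq(1,0)$ from the semi-stable case $(d,h)=(1,0)$, where one must account for the $\RR$-translation quotient.

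The genuine gap is in your definition of the functor. What you call $F^d$ for $d\ge 2$---the count of polygons in $\cFS^{cyl,n+k}(\pi_E)$ with inputs $\ul{x}_j\otimes e_{\ul{K}}$ and output in the $e_{\ul{K}}$-summand---is not the $d$-th component of an $A_\infty$ functor; it is the restriction of $\mu^d_{\cFS^{cyl,n+k}}$ to the image of $F^1$, and by your own moduli splitting it equals $\mu^d_{\cFS^{cyl,n}_W}(\ul{x}_d,\ldots,\ul{x}_1)\otimes e_{\ul{K}}$. This has degree $2-d$, not the required $1-d$, so it cannot play the role of $F^d$. The paper instead takes the functor to be \emph{strict}: $F^1(\ul{x})=\ul{x}\otimes e_{\ul{K}}$ and $F^d=0$ for all $d\ge 2$. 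The moduli bijection (your splitting, refined by the constant-$K$-map observation) is then precisely the verification of the strict-functor relation
\[
\mu^d_{\cFS^{cyl,n+k}}\bigl(F^1\ul{x}_d,\ldots,F^1\ul{x}_1\bigr)=F^1\bigl(\mu^d_{\cFS^{cyl,n}_W}(\ul{x}_d,\ldots,\ul{x}_1)\bigr),
\]
which is what you need. Once this is corrected, your faithfulness argument ($[\ul{x}]\mapsto[\ul{x}]\otimes[e_{\ul{K}}]$ is injective because $[e_{\ul{K}}]$ is the unit) is fine.
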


\begin{proof}
 By \eqref{eq:Wcondition}, $W$ contains a neighborhood of $[R,\infty)$ or $(-\infty,R]$.
 Therefore, we can choose the Hamiltonian term in the Floer data such that
 each element in $\cX(\ul{L_0}\sqcup \ul{K},\ul{L_1}\sqcup \ul{K})$ is a tuple  given by adjoining an element in 
 $\cX(\ul{L_0},\ul{L_1})$ with another element in $\cX(\ul{K},\ul{K})$.
 It establishes a bijective correspondence between the generators in $CF(\ul{L_0}\sqcup \ul{K},\ul{L_1}\sqcup \ul{K})$
 and $CF(\ul{L_0},\ul{L_1}) \otimes CF(\ul{K},\ul{K})$.
By Lemma \ref{l:chainRep}, we can arrange the Floer data such that  $CF^0(\ul{K},\ul{K})$ is rank one and 
generated by a chain level representative $e$ of the cohomological unit.

The $A_{\infty}$ functor  $\sqcup \ul{K}$ has first order term sending $x \in CF(\ul{L_0},\ul{L_1})$
to the generator in $CF(\ul{L_0}\sqcup \ul{K},\ul{L_1}\sqcup \ul{K})$ corresponding to $x\otimes e$, and has no higher order terms an an $A_{\infty}$ functor. 
To check that $\sqcup \ul{K}$ is indeed an  $A_{\infty}$ functor, it suffices to show that there is a bijective correspondence between the moduli governing the $A_{\infty}$ structures in $\cFS^{cyl,n}_W(\pi_E)$ and the moduli 
 governing the $A_{\infty}$ structures in $ \cFS^{cyl,n+k}(\pi_E)$ with all inputs and output being of the form $x\otimes e$.
 
 Let $u:S \to \Hilb^n(E)$ be a solution contributing to the $A_{\infty}$ structure of  $\cFS^{cyl,n}_W(\pi_E) $.
 Let $v:\Sigma \to E$ be the map which tautologically corresponds to $u$.
 By projecting to $\bH^\circ $ via $\pi_E$ and appealing to the open mapping theorem, we know that the image of $v$ lies inside $W$.

The proof of Lemma \ref{l:chainRep} shows
 that $e$ is given by the unordered tuple of grading zero $X_{H_i'}$ Hamiltonian chords $e_i$ from $K_i$ to itself, for $i=1,\dots,k$.
 Moreover, we can assume that $e_i$ is a constant chord (i.e. $X_{H_i'}(e_i(t))=0$ for all $t$).
 
 We now separate the discussion into two cases, namely, the stable case 
 $(d,h) \neq (1,0)$ and the semi-stable case $(d,h) = (1,0)$.
 We start with $(d,h) \neq (1,0)$.
 
 In this case, for each $S \in \cR^{d+1,h}$, the constant map $v_i$ from $S$ to $e_i$ satisfies
 \begin{align}\label{eq:constantSol}
 \left\{
 \begin{array}{ll}
  (Dv_i-X_{H_j'})^{0,1}=0 \\
  v_i(\partial S) \subset L_i' \\
  \lim_{s \to \pm \infty} v_i(\epsilon_j(s,\cdot))=e_i \text{ uniformly for all }j=0,\dots,d.
 \end{array}
 \right.
 \end{align}
 The moduli of solutions to \eqref{eq:constantSol} has virtual dimension $0$ (because the conformal structure of $S$ is fixed).
 Moreover, $v_i$ is a regular solution to \eqref{eq:constantSol}.
 Note also that $\pi_E \circ v_i \notin W$ for all $i$.
 
 Now we define $\widetilde{\Sigma}=\Sigma \sqcup  (\sqcup_{i=1}^k S)$, and we define $\pi_{\widetilde{\Sigma}}:\widetilde{\Sigma} \to S$
 to be $\pi_{\widetilde{\Sigma}}=\pi_\Sigma \sqcup (\sqcup_{i=1}^k Id_S)$, where $Id_S:S \to S$ is the identity map.
 Let $\widetilde{v}=v \sqcup (\sqcup_{i=1}^k v_i)$ and $\widetilde{u}:S \to \Hilb^{n+k}(E)$ be the map 
 tautologically corresponding to $\widetilde{v}$.
 Notice that $Im(v)$ and $Im(v_i)$ are pairwise disjoint.
This means that the Fredholm operator associated to $\widetilde{u}$
splits into the direct sum of those associated to $u$ and $v_i$.
Therefore, $\widetilde{u}$ is a regular solution contributing to the $A_{\infty}$ structure of  $\cFS^{cyl,n+k}(\pi_E) $
 with all inputs and output being of the form $x\otimes e$.
 Conversely, by the Lagrangian boundary conditions, every such solution is of the form $\widetilde{u}$ for some $u$ contributing to the $A_{\infty}$ structure of  $\cFS^{cyl,n}_W(\pi_E) $.
 
 Next, we consider the case $(d,h) = (1,0)$.
 In this case, $\Sigma=\sqcup_{i=1}^n S$ and $\pi_{\Sigma}=\sqcup_{i=1}^n Id_S$.
 Since $u$ is rigid, it means that the moduli containing $u$ is $1$ dimensional before dividing out by the $\RR$-translation-symmetry.
 We define $\widetilde{\Sigma}$, $\pi_{\widetilde{\Sigma}}$, $\widetilde{v}$ and $\widetilde{u}$ as above.
 The moduli containing $\widetilde{u}$ still has virtual dimension $1$ before dividing by the $\RR$-symmetry, because the constant maps have virtual dimension $0$.
 Moreover, $\widetilde{u}$ is regular because one can split the Fredholm operator 
 of $\widetilde{u}$ to those of $u$ and the constant maps, which are all regular.
 Conversely, rigid solutions contributing to the $A_{\infty}$ structure of  $\cFS^{cyl,n+k}(\pi_E) $ with $(d,h)=(1,0)$ and 
 both input and output being of the form $x\otimes e$ are necessarily of the form $\widetilde{u}$ for some $u$.
 
  This finishes the proof.
 \end{proof}

\begin{corollary}\label{c:Triangle}
 If $\ul{L}_0,\ul{L}_1,\ul{L}_2$ form an exact triangle in $\cFS^{cyl,n}_W(\pi_E)$, then  $\ul{L}_0 \sqcup \ul{K},\ul{L}_1 \sqcup \ul{K},\ul{L}_2 \sqcup \ul{K}$ form one in $\cFS^{cyl,n+k}(\pi_E)$.
\end{corollary}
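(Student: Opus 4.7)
The plan is to deduce this immediately from the preceding lemma, which constructs the cohomologically faithful $A_{\infty}$-functor $\sqcup \ul{K}: \cFS^{cyl,n}_W(\pi_E) \to \cFS^{cyl,n+k}(\pi_E)$. Since every $A_{\infty}$-functor descends to a triangulated functor between the corresponding derived (twisted-complex) categories, and triangulated functors by definition send exact triangles to exact triangles, the corollary will follow essentially formally.

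More concretely, I would argue as follows. By hypothesis, there exist cocycles $\alpha \in CF^0(\ul{L}_0, \ul{L}_1)$, $\beta \in CF^0(\ul{L}_1, \ul{L}_2)$, $\gamma \in CF^0(\ul{L}_2, \ul{L}_0[1])$ (or an analogous cyclic arrangement) such that the underlying twisted complex $\bigl(\ul{L}_0 \oplus \ul{L}_1 \oplus \ul{L}_2, \delta\bigr)$ with differential built from $\alpha, \beta, \gamma$ is null-homotopic in $\Tw \cFS^{cyl,n}_W(\pi_E)$, equivalently $\ul{L}_2$ is quasi-isomorphic to the cone on $\alpha$. Applying the first-order part of the functor $\sqcup \ul{K}$ produces corresponding morphisms between $\ul{L}_i \sqcup \ul{K}$; because $\sqcup \ul{K}$ has vanishing higher-order terms as an $A_{\infty}$-functor (as seen in the proof of the preceding lemma, where the morphism spaces and the counts of perturbed holomorphic curves factor through the constant contribution of $\ul{K}$), the image of the twisted complex above is precisely the analogous twisted complex built on $\ul{L}_0 \sqcup \ul{K} \oplus \ul{L}_1 \sqcup \ul{K} \oplus \ul{L}_2 \sqcup \ul{K}$, and the image of a null-homotopy is a null-homotopy. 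Hence the target triangle is exact.

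The only substantive step that warrants comment is the invocation that $\sqcup \ul{K}$ is an honest $A_{\infty}$-functor (not merely a cohomology-level map), which is exactly what the preceding lemma provides; granted that, the preservation of exact triangles is standard and requires no further holomorphic-curve input. I do not anticipate any serious obstacle beyond unpacking the definition of exactness in terms of twisted complexes.
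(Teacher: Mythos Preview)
Your proposal is correct and matches the paper's approach: the paper gives no explicit proof of this corollary, treating it as an immediate consequence of the preceding lemma that $\sqcup \ul{K}$ is an $A_\infty$-functor (with vanishing higher-order terms), from which preservation of exact triangles follows formally.
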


Using Corollary \ref{c:Triangle}, we can inductively construct a plethora of exact triangles.
For example, we will prove in Section \ref{ss:AmMilnorFiber} that our setup applies to the standard Lefschetz fibration $\pi_E$ on the $A_{m-1}$-Milnor fiber $E$. In that case, 
there are well-known exact triangles \cite[Lemma 18.20]{SeidelBook} relating matching spheres and thimbles in $\cFS^{cyl,1}(\pi_E)=\cFS(\pi_E)$.
We can obtain exact triangles in $\cFS^{cyl,n}(\pi_E)$ by adjoining matching spheres/thimbles to the exact triangles in $\cFS^{cyl,1}(\pi_E)$ (see Figure \ref{fig:pushTriangle}).

\begin{figure}[ht]
 \includegraphics{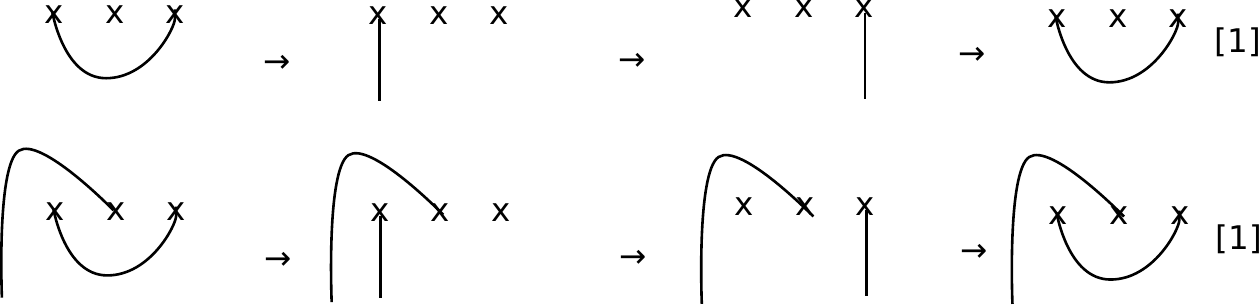}
 \caption{The top row represents an exact triangle in $\cFS(\pi_E)$ when $E$ is the $A_2$-Milnor fiber. The bottom row is exact in $\cFS^{cyl,2}(\pi_E)$ by Corollary \ref{c:Triangle}.}\label{fig:pushTriangle}
\end{figure}



\subsection{Serre functor}\label{ss:SerreFunctor}

An important observation due to Kontsevich and Seidel is that the global monodromy $\tau$ should induce the Serre functor on the Fukaya-Seidel cayegory up to degree shift 
\cite{Seidel1, SeidelSHHH, Seidel2, Hanlon}.
In our context, the global monodromy of $\pi_{E}$ induces an auto-equivalence of $\cFS^{cyl,n}(E)$ for each $n$.
By formally the same argument, we:

\begin{claim}\label{p:Serre}
 The global monodromy of $\pi_{E}$ induces the Serre functor on $\cFS^{cyl,n}(E)$ up to a degree shift by $-2n$.
\end{claim}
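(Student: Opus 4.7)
The plan is to follow the Kontsevich--Seidel--Hanlon blueprint, adapted to our cylindrical setup. First, I would observe that the global monodromy $\tau$ of $\pi_E$ is a compactly supported symplectomorphism of $E$ which preserves both the fibration structure and the class $\cL$; acting diagonally on unordered tuples, it extends to a symplectomorphism $\tau^{[n]}$ of $\Hilb^n(E)$ preserving $\cY_E = \Hilb^n(E) \setminus D_r$ and sending $\Sym(\ul{L}) \mapsto \Sym(\tau(\ul{L}))$. Naturality of the Floer data under $\tau^{[n]}$ upgrades this to an $A_\infty$-auto-equivalence $F_\tau\colon \cFS^{cyl,n}(\pi_E) \to \cFS^{cyl,n}(\pi_E)$, which on objects is $\ul{L} \mapsto \tau(\ul{L})$.

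Next, to identify $F_\tau[-2n]$ with the Serre functor, I would construct for each pair $(\ul{L}_0,\ul{L}_1)$ a chain-level pairing
\[
\langle -,- \rangle \colon CF^*(\ul{L}_0,\ul{L}_1) \otimes CF^{2n-*}(\ul{L}_1, F_\tau(\ul{L}_0)) \longrightarrow \KK
\]
by counting rigid solutions of the cylindrical Floer equation on a disc with two boundary punctures whose Lagrangian label interpolates via the wrapping by $\tau$ on one boundary component. The shift by $2n$ is dictated by $\dim_\CC \cY_E = 2n$, via the induced trivialization of the canonical bundle of $\Hilb^n(E)$ (Lemma~\ref{l:canonicalBundle}). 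Extending this to higher arity by decorating the universal family $\cS^{d+1,h}$ with a wrap on the outgoing end yields a morphism of $\cFS^{cyl,n}$-bimodules from the diagonal to $F_\tau[-2n]$, and the standard TQFT-style gluing argument (as in \cite{Seidel2, Hanlon}) shows it is a cocycle whose underlying pairing is graded-symmetric up to signs.

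The main step is proving non-degeneracy of $\langle-,-\rangle$. As in Hanlon's proof, this reduces to a local model near each chord $\ul{x} \in \cX(H_{\ul{L}_0,\ul{L}_1},\ul{L}_0,\ul{L}_1)$, where the monodromy wrap produces a unique rigid triangle contributing $\pm 1$ and exhibits the dual of $\ul{x}$ as a generator of $CF^{2n-*}(\ul{L}_1, F_\tau(\ul{L}_0))$. The main obstacle -- and the only place where the cylindrical formalism matters, rather than the result being purely formal -- is ensuring that the compactness and positivity-of-intersection results of Sections~\ref{sss:Energy}--\ref{sss:Compactness} survive the wrapping: one must verify that curves do not escape into $D_r$ (Lemma~\ref{l:PositivityIntersection}, Corollary~\ref{c:missingDr}), along the vertical boundary (Proposition~\ref{p:containment}), or out along the strip-like ends, with the moving Lagrangian labels produced by the $\tau$-wrap. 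Since $\tau$ is the identity outside a compact subset of $\bH^\circ$ and preserves $D_r$, the hyperbolic-isometry gauge trick of Seidel \cite{Seidel4.5} continues to apply and all required estimates carry over verbatim. Granted these, non-degeneracy and hence the identification of $F_\tau[-2n]$ with the Serre functor follow as in the compact case.
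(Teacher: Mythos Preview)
Your outline is a reasonable plan, and in broad strokes it is correct; however, you should note that the paper itself does not claim to give a full proof here either. The statement is deliberately called a \emph{Claim} rather than a Proposition, the argument is labelled ``Sketch of proof'', and the paper explicitly remarks that a complete proof that monodromy equals Serre functor does not appear in the literature (and that the Claim is not needed for any of the results in the Introduction; see Remark~\ref{r:claim}). So the appropriate comparison is between two sketches.

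The paper's sketch takes a more direct route than yours. Rather than constructing a pairing by counting curves and then proving non-degeneracy, the paper observes that there is a \emph{canonical chain-level bijection of generators}: an intersection point of $L_0^+ \cap L_1$ is literally the same geometric point as an intersection of $\tau(L_0) \cap L_1^+$, viewed dually (Figure~\ref{fig:GlobalMonodromy}, left). This gives
\[
CF^i(\ul{L}_0,\ul{L}_1) \;\simeq\; \bigl(CF^{-i}(\ul{L}_1,\tau(\ul{L}_0)[-2n])\bigr)^{\vee}
\]
directly on generators, with the shift $-2n$ coming from the $n$ factors. The paper then asserts that, with suitable Floer data, the same holomorphic polygons compute the $A_\infty$ operations on both sides (after dualising one input and the output), so the Yoneda module $CF(-,\ul{L})$ is identified with $CF(\tau^{-1}(\ul{L})[2n],-)^\vee$ on the nose. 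Your approach via a bimodule morphism and non-degeneracy is the more standard homological-algebra packaging (closer to the forthcoming Abouzaid--Ganatra framework the paper alludes to), and your ``unique rigid triangle'' in the local model is essentially the same geometric content as the paper's bijection of intersection points. Either route would work in principle; the paper's is more elementary but leaves the higher-$\mu^d$ compatibility as an unjustified claim, while yours is more systematic but requires setting up the moving-boundary moduli spaces carefully. Neither sketch is a complete proof, and the paper is honest about this.
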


Even for Lefschetz fibrations, a complete proof that 
the auto-equivalence induced by global monodromy agrees with the
Serre functor (up to shift) does not seem to appear in the literature\footnote{Forthcoming work of Abouzaid and Ganatra lays the general foundations for the treatment of Serre functors in the context of Fukaya categories for Landau-Ginzburg models, which generalise Fukaya-Seidel categories.}.  
Any argument is likely to apply to our case.
For the reader's convenience, we outline the essential geometric input underlying the Claim.

\begin{proof}[Sketch of proof]

\begin{figure}[ht]
 \includegraphics{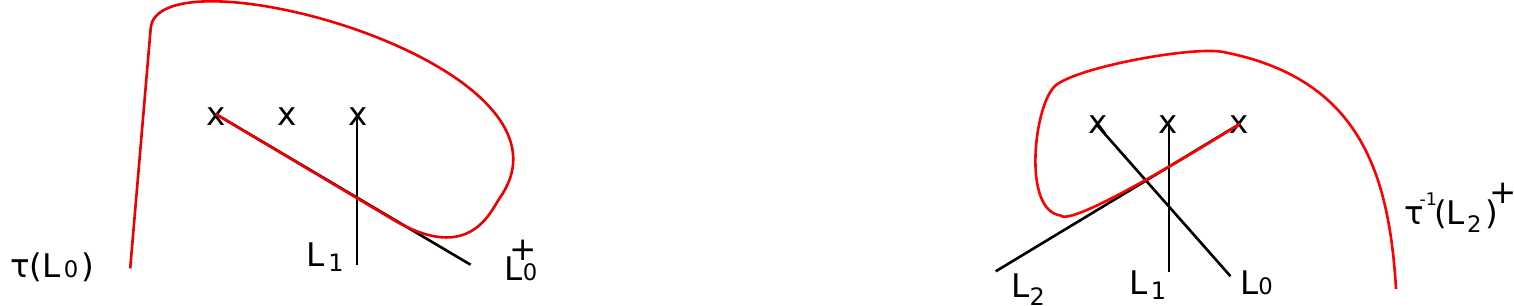}
 \caption{}\label{fig:GlobalMonodromy}
\end{figure}

As a graded symplectomorphism, we require that $\tau$ acts as the identity on the trivialization of the bicanonical bundle 
in a compact region containing the critical points.
It means that for each compact exact graded Lagrangian $L$ in $E$, $\tau(L)=L$ as graded objects.
 On the chain level, there is a canonical isomorphism (see the left of Figure \ref{fig:GlobalMonodromy})
\begin{align}
CF^i(L_0,L_1) \simeq (CF^{-i}(L_1, \tau(L_0)[-2]))^{\vee}=(CF^{-i}(\tau^{-1}(L_1)[2], L_0))^{\vee} \label{eq:SerreCan}
\end{align}
given by sending an intersection of $L_0^+ \cap L_1$ to the corresponding intersection point of $\tau(L_0) \cap L_1^+$
but viewing the latter as a generator in the dual group $CF(L_1, \tau(L_0))$.
Here, for two Lagrangians $L,K$, we use $L^+ \cap K$ (or $K \cap L^+$)
to mean 
that we pick a Hamiltonian diffeomorphism $\phi$ such that $\lambda_K <\lambda_{\phi(L)}$, $\phi(L) \pitchfork K$
and we use $L^+ \cap K$ to denote $\phi(L) \pitchfork K$.

In particular, when $L_0$ is compact, a generator $x$ of $CF^i(L_0,L_1)$ is mapped to the linear dual of 
the corresponding generator
$x^\vee \in CF^{2-i}(L_1, L_0)=CF^{-i}(L_1,L_0[-2])=CF^{-i}(L_1,\tau(L_0)[-2])$.
The canonical isomorphism \eqref{eq:SerreCan} lifts to the canonical isomorphism for Lagrangian tuples 
\begin{align}
 CF^i(\ul{L}_0,\ul{L}_1) \simeq (CF^{-i}(\ul{L}_1, \tau(\ul{L}_0)[-2n]))^{\vee}=(CF^{-i}(\tau^{-1}(\ul{L}_1)[2n], \ul{L}_0))^{\vee} \label{eq:SerreCan2}
\end{align}

A key claim is that, under the canonical isomorphism \eqref{eq:SerreCan2}, one can arrange to have a bijective correspondence between the moduli computing higher $A_{\infty}$ operations:
\begin{align}
CF(\ul{L}_{d-1},\ul{L}_d) \times CF(\ul{L}_{d-2},\ul{L}_{d-1}) \times \dots \times CF(\ul{L}_0,\ul{L}_1) \to CF(\ul{L}_0,\ul{L}_d), \text{ and }\\
CF(\tau^{-1}(\ul{L}_d)[2n], \ul{L}_{d-1})^{\vee} \times CF(\ul{L}_{d-2},\ul{L}_{d-1}) \times \dots \times CF(\ul{L}_0,\ul{L}_1) \to CF(\tau^{-1}(\ul{L}_d)[2n], \ul{L}_0)^{\vee} \label{eq:dualize}
\end{align}
where \eqref{eq:dualize} is obtained by dualizing $CF(\tau^{-1}(\ul{L}_d)[2n], \ul{L}_{d-1})$ and $CF(\tau^{-1}(\ul{L}_d)[2n], \ul{L}_0)$ in the structural map
\begin{align}
CF(\ul{L}_{d-2},\ul{L}_{d-1}) \times \dots \times CF(\ul{L}_0,\ul{L}_1) \times CF(\tau^{-1}(\ul{L}_d)[2n], \ul{L}_0) \to CF(\tau^{-1}(\ul{L}_d)[2n], \ul{L}_{d-1})
\end{align}
The right  side of Figure \ref{fig:GlobalMonodromy} gives a schematic indication of why such a bijection exists, in a simple case in which the Lagrangians are pairwise distinct (and the $A_{\infty}$-products are governed by the same set of holomorphic curves projecting to the unique triangle in the base). 
Together, these claims imply that $CF(\tau^{-1}(\ul{L})[2n],-)^\vee$ is isomorphic as an $A_{\infty}$ right module to $CF(-,\ul{L})$.
In other words, on the object level, the global monodromy $\tau[-2n]$ sends 
$\ul{L}$ to $\tau(\ul{L})[-2n]$, whose Yoneda image is in turn isomorphic to $CF(\ul{L},-)^\vee$. This is the first piece of geometric information that enters into Seidel's argument.
\end{proof}

\begin{remark}\label{r:claim}
There is an embedding from an appropriate Fukaya-Seidel category to the cylindrical version $\cFS^{cyl,n}$ (see Proposition \ref{p:embedding}).
 Claim \ref{p:Serre} is only used to prove that this embedding is essentially surjective 
 for type $A$-Milnor fiber (see Proposition \ref{p:generation}), and to compare certain Lagrangian tuples with certain modules over the extended arc algebra in Section \ref{s:Dictionary}.
 It is not needed to derive any of the results mentioned in Section \ref{s:intro}.
\end{remark}

\section{Comparing Fukaya-Seidel categories}\label{s:FScategories}

In this section, we discuss the relation between $\cFS^{cyl,n}(\pi_E)$ and the Fukaya-Seidel category of the Lefschetz fibration on $\Hilb^n(E)\setminus D_r$ induced by $\pi_E$ (we recall that this is a `weak' Lefschetz fibration, in the sense that it may have critical points at infinity, but it still has a Fukaya-Seidel category of Lagrangians proper over the base, as constructed in \cite{SeidelBook}).
This allows us to translate our subsequent study of $\cFS^{cyl,n}(\pi_E)$ back to the usual Fukaya-Seidel category.

\subsection{A directed subcategory}\label{ss:directedsub}

Let $\bc_1,\dots, \bc_m$ be the set of critical values of $\pi_E$.
By applying a diffeomorphism of $E$ covering a compactly supported diffeomorphism of $\bH^\circ$
and using push forward $J_{E^\rceil}$, $\omega_{E^\rceil}$, etc, we assume that $\bc_k:=k+\sqrt{-1} \in \bH^\circ$ for $k=1,\dots,m$.
For simplicity, we assume that there is exactly one critical point lying above a critical value.
We also assume that the symplectic parallel transport is well-defined everywhere.
This holds for type $A$-Milnor fiber (see \cite{Khovanov-Seidel} or the subsequent discussion in Section \ref{s:TypeA}).
In fact we only apply parallel transport to Lagrangians that are proper over $\bH^\circ$
and one can avoid this hypothesis at the cost of having more notations.

A matching path $\gamma:[0,1] \to \bH^{\circ}$ of $\pi_E: E \to \bH^{\circ}$ is a smooth path from $\bc_a$ to $\bc_b$, for some $a, b \in \{1,\dots,m\} $ with $a \neq b$,
such that $\gamma(t)$ is not a critical value of $\pi_E$ for all $t \neq 0,1$, 
and such that the vanishing cycles from the critical points lying above $\bc_a$ to $\bc_b$ match up under symplectic parallel transport along $\gamma$ to give a Lagrangian matching sphere $L_\gamma$ in $E$.
A thimble path $\gamma:[0,1] \to \bH$ of $\pi_E: E \to \bH^{\circ}$ is a smooth path from $\bc_a$, 
for some $a \in \{1,\dots,m\} $, to a point on the real line such that $\gamma(t) \in \bH^{\circ}\setminus \{\bc_b|b=1,\dots,m\}$ for $t \neq 0,1$, and the symplectic parallel transport of the vanishing cycle from $\bc_a$
gives a Lagrangian disc (thimble) $L_{\gamma}$ in $E$.

\begin{definition}\label{d:admissibletuple}
For an $n$-tuple $\Gamma=\{\gamma_1,\dots,\gamma_n\}$ of pairwise disjoint embedded curves in $\bH^{\circ}$ such that
each curve is either a matching path or a thimble path of $\pi_E: E \to \bH^{\circ}$, we can define the corresponding $n$-tuple of Lagrangians
\begin{align}
 \ul{L}_{\Gamma}:=\{L_{\gamma_1}, \dots, L_{\gamma_n}\} \in \cL^{cyl,n}
\end{align}
In this case, we call $\Gamma$ an {\it admissible tuple}. 
\end{definition}

For $r \in \RR$ and $k \in \{1,\dots, m\}$, let $l_{r,k}$ be the straight line joining $r$ and $\bc_k$. 
Every $l_{r,k}$ is a thimble path.

\begin{figure}[h]
 \includegraphics{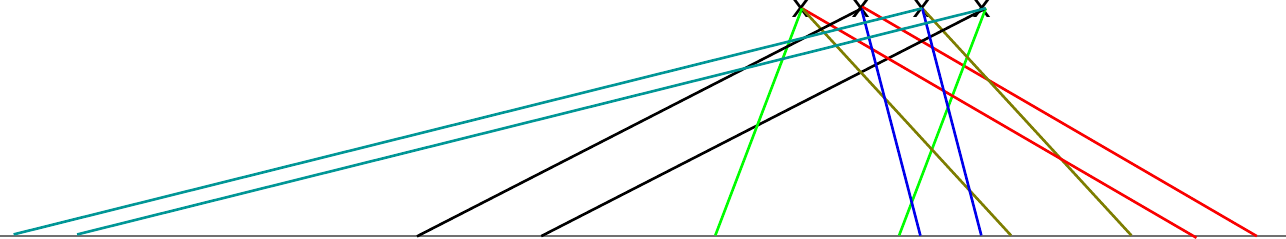}
 \caption{The case when $s=4$ and $n=2$}\label{fig:Thimbles}
\end{figure}

Let $\cI$ be the set of cardinality $n$ subsets of $\{1,\dots,m\}$.
We define a partial ordering on $\cI$, called the Bruhat order, as follows:
For $I_0,I_1 \in \cI$, $I_0 \le I_1$ if and only if there is a bijection $f: I_0 \to I_1$ such that $x\le f(x)$ for all $x \in I_0$.
Strict inequality $I_0<I_1$ is defined by $I_0 \le I_1$ and $I_0 \neq I_1$.

Let $f:\cI \to \cI_{\aff}$ (see Section \ref{sss:FloerCochain}) be a function such that
\begin{align}
 &f(I_0) \cap f(I_1) = \emptyset \text{ for all }I_0 \neq I_1 \in \cI \\
 &f(I_0) > f(I_1) \text{ if }I_0 < I_1  \label{eq:lambdaThimbleOrdering}\\
 &\length(f(I))=m
\end{align}
where $\length([a,b]):=b-a$.

Given $f$ and $I=\{i_1< \dots < i_n\} \in \cI$, we define
\begin{align}
 \gamma^{I,k}:=l_{\min(f(I))+i_k, i_k}
\end{align}
so $\Gamma^I:=\{\gamma^{I,k}|k=1,\dots,n\}$ is a collection of parallel lines in $\bH^\circ$.
For generic $f$, no three pairwise distinct lines in $\cup_{I \in \cI} \Gamma^I$ intersect at the same point in $\{z \in \bH^\circ |im(z)<1\}$ (see Figure \ref{fig:Thimbles}).

Let $\ul{T}^I:=\ul{L}_{\Gamma^I} \in \cL^{cyl,n}$.
For a generic perturbation of $\omega_E$ inside a compact subset, which changes the symplectic connection but keeps the symplectic 
Lefschetz fibration structure, we have
\begin{align}
 L_{\gamma^{I,k}} \pitchfork  L_{\gamma^{I',k'}} \text{ for all }(I,k) \neq (I',k').
\end{align}
Moreover, by applying a diffeomorphism of $E$ covering a compactly supported diffeomorphism of $\bH^\circ$ again (and using the push-forward $J_{E^\rceil}$, $\omega_{E^\rceil}$, etc), we can assume that there exists $0< \eta <1$ such that
$\pi_E$ is symplectically locally trivial in $\pi_E^{-1}(\{z \in \bH^\circ |im(z)<1-\eta\})$ and for any $\gamma^{I,k} \neq \gamma^{I',k'} \in \Gamma$,
we have $\gamma^{I,k} \cap \gamma^{I',k'} \subset \{z \in \bH^\circ |im(z)<1-\eta \text{ or }im(z)=1\}$.
We are interested in the subcategory of $\cFS^{cyl,n}(\pi_E)$ that is (split-)generated by $\{\ul{T}^I|I \in \cI\}$.

\begin{lemma}\label{l:CohVanishing}
 Let $I_0, I_1 \in \cI$. Then $HF(\ul{T}^{I_0},\ul{T}^{I_1}) \neq 0$ only if $I_0 < I_1$.
 Moreover, if $I_0=I_1$, then $HF(\ul{T}^{I_0},\ul{T}^{I_1})$ is generated by the identity element.
\end{lemma}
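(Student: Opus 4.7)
The plan is to compute $CF(\ul{T}^{I_0},\ul{T}^{I_1})$ combinatorially by reducing to intersections of thimbles in $E$, and then to observe that the existence of even a single generator forces a bijection witnessing $I_0 \le I_1$ in the Bruhat order.

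First I would choose Floer cochain data of the product form described in Example \ref{ex:noHperb}: a small compactly supported perturbation $H' \in \cH_{a_t}(E)$ taking $H = (H')^{[n]} \in \cH^{n,pre}_{a_t}(E)$, together with an almost complex structure induced from one on $E$. Since $\{\ul{T}^{I}\}_{I \in \cI}$ are tuples of thimbles whose projections in $\bH^\circ$ are generic families of straight lines, and since $f(I_0) \cap f(I_1) = \emptyset$ forces $\lambda_{\ul{T}^{I_0}}$ and $\lambda_{\ul{T}^{I_1}}$ to be disjoint (so \eqref{eq:transversalIntersect} is satisfied outside a compact set for arbitrarily small $H'$), one obtains transverse intersections in $E$ everywhere by a small perturbation. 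Write $I_0=\{i_1<\cdots <i_n\}$ and $I_1=\{j_1<\cdots <j_n\}$. By Example \ref{ex:noHperb}, the generators of $CF(\ul{T}^{I_0},\ul{T}^{I_1})$ are then in bijection with unordered tuples $\ul{x} = (x_1,\ldots,x_n)$ where $x_k \in \phi_{H'}^{-1}(L_{\gamma^{I_1,b_k}}) \pitchfork L_{\gamma^{I_0,a_k}}$ for some bijections $a,b$ of $\{1,\ldots,n\}$.

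Second, I would analyse when an intersection $\phi_{H'}^{-1}(L_{\gamma^{I_1,b_k}})\pitchfork L_{\gamma^{I_0,a_k}}$ can exist. Projecting to $\bH^\circ$, this requires $\gamma^{I_0,a_k}$ to meet a small perturbation of $\gamma^{I_1,b_k}$. Using the explicit formula $\gamma^{I,k} = l_{\min(f(I))+i_k,\, i_k}$ for the thimble paths, an elementary computation (after applying the base hyperbolic isometry $g_A$ if needed, permissible because the orderings between $f(I_0)$ and $f(I_1)$ can be arranged via the choice of $A$) shows that two such lines cross inside $\bH^\circ$ precisely when $i_{a_k} < j_{b_k}$. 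The boundary case $i_{a_k} = j_{b_k}$ corresponds to two thimbles sharing a critical point, and a transverse intersection point near that critical value is created by the Hamiltonian perturbation. Conversely, if $i_{a_k} > j_{b_k}$ then the paths are pushed apart by the perturbation and do not meet, so the thimbles $\phi_{H'}^{-1}(L_{\gamma^{I_1,b_k}})$ and $L_{\gamma^{I_0,a_k}}$ are disjoint.

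Third, I would conclude: if $CF(\ul{T}^{I_0},\ul{T}^{I_1})$ is non-zero, then at least one bijection pair $(a,b)$ yields a simultaneous intersection for every $k$, which by the previous step forces $i_{a_k} \le j_{b_k}$ for all $k$. Defining $g\colon I_0 \to I_1$ by $g(i_{a_k}) := j_{b_k}$ gives a well-defined bijection (since $a,b$ are permutations of $\{1,\ldots,n\}$ and $i_\bullet$, $j_\bullet$ are the sorted enumerations of $I_0,I_1$) with $g(x) \ge x$ for all $x \in I_0$, i.e.\ $I_0 \le I_1$ in the Bruhat order. This establishes the first assertion.

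Finally, for the equality case $I_0=I_1=I$, I would instead adopt the cochain datum of Lemmas \ref{l:NoSideBubbling} and \ref{l:chainRep}. The argument of the previous paragraph applied in both directions shows that any contributing bijection $g\colon I \to I$ satisfies $g(x) \ge x$ and $g^{-1}(x) \ge x$, forcing $g=\mathrm{id}$. Therefore the only surviving pairing is the diagonal one, giving a unique tuple $(e_1,\ldots,e_n)$ where each $e_k$ is the degree-zero generator of $CF^0(L_{\gamma^{I,k}},L_{\gamma^{I,k}})$; this is the cochain-level cohomological unit produced by Lemma \ref{l:chainRep}, as required. The main obstacle in this program is the base-geometric bookkeeping in the second step, specifically the edge case $i_{a_k}=j_{b_k}$ where the two thimbles share a critical endpoint and an intersection is created by the perturbation rather than already present; once this case is handled, the rest of the argument is essentially combinatorial.
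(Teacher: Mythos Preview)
Your approach --- analysing $CF$ directly via product-type Floer data as in Example~\ref{ex:noHperb} --- is different from the paper's and has a gap in the second step.

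The problem is the claim that $\gamma^{I_0,a_k}$ meets $\phi_{H'}^{-1}(\gamma^{I_1,b_k})$ precisely when $i_{a_k}\le j_{b_k}$. The Hamiltonian $H'\in\cH_{a_t}(E)$ is constant on a neighbourhood of $C_{\bH}$ (which contains all the critical values) and equals the infinitesimal isometry only outside a compact set; in between it interpolates. Consequently the perturbed path is not a straight line, and your parenthetical ``after applying the base hyperbolic isometry $g_A$'' does not describe what $\phi_{H'}^{-1}$ actually does: applying $g_A^{-1}$ globally would move the critical endpoints $\bc_{j_{b_k}}$ as well, whereas $\phi_{H'}^{-1}$ fixes them. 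When $f(I_0)<f(I_1)$ --- which does occur, e.g.\ for Bruhat-incomparable $I_0,I_1$ --- the translation $g_A$ must be large, and the perturbed path can swing rightwards in the interpolation region before returning left, producing crossings with $\gamma^{I_0,a_k}$ even when $i_{a_k}>j_{b_k}$. To salvage the argument you would need to specify $H'$ carefully enough to control the perturbed paths globally, which you have not done. There is also a smaller imprecision in the $I_0=I_1$ case: even with the diagonal pairing forced, each $CF(L_{\gamma^{I,k}},L_{\gamma^{I,k}})$ may carry higher-degree generators, so ``a unique tuple'' holds only after passing to $HF$; and the justification ``applied in both directions'' is unnecessary, since a permutation $g$ of a finite set with $g(x)\ge x$ for all $x$ is automatically the identity.

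The paper sidesteps all of this. Rather than computing $CF$ for the given straight-line thimbles, it uses Hamiltonian invariance (Lemma~\ref{l:HamInvariance}): when $I_0\nless I_1$ one isotopes $\ul{T}^{I_0}$ through $\cL^{cyl,n}$ to a tuple $\ul{T}^{I_0}_1$ with $\lambda_{\ul{T}^{I_0}_1}>\lambda_{\ul{T}^{I_1}}$ and $\Sym(\ul{T}^{I_0}_1)\cap\Sym(\ul{T}^{I_1})=\emptyset$, so $CF=0$ for that representative and hence $HF=0$. For $I_0=I_1$ one isotopes to a position with a single intersection point. The freedom to choose the isotoped paths arbitrarily, rather than as Hamiltonian images of the originals constrained by \eqref{eq:HH}--\eqref{eq:HE}, is exactly what circumvents the interpolation-region difficulty in your argument.
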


\begin{proof}
 For the first statement, it suffices to note that if $I_0 \nless I_1$ then we can find an isotopy of thimbles $\ul{T}^{I_0}_t \in \cL^{cyl,n}$
 such that $\ul{T}^{I_0}_0=\ul{T}^{I_0}$, $\lambda_{\ul{T}^{I_0}_1} > \lambda_{\ul{T}^{I_1}}$
 and $\Sym(\ul{T}^{I_0}_1) \cap \Sym(\ul{T}^{I_1})=\emptyset$ (see Figure \ref{fig:NoIntersections}).
 It implies that $CF(\ul{T}_1^{I_0},\ul{T}^{I_1})=HF(\ul{T}_1^{I_0},\ul{T}^{I_1})=0$, but by Lemma \ref{l:HamInvariance}, $\ul{T}_1^{I_0}$ is quasi-isomorphic to $\ul{T}^{I_0}$ as objects in $\cFS^{cyl,n}(\pi_E)$, 
 so the first statement follows.
 
 For the second one, when  $I_0=I_1=I$, we can find an isotopy of thimbles $\ul{T}^{I}_t \in \cL^{cyl,n}$
 such that $\ul{T}^{I}_0=\ul{T}^{I}$, $\lambda_{\ul{T}^{I}_1} > \lambda_{\ul{T}^{I}}$
 and $\Sym(\ul{T}^{I}_1) \cap \Sym(\ul{T}^{I})$ is a singleton.
 By Lemma \ref{l:HamInvariance} again, it implies that $HF(\ul{T}^{I},\ul{T}^{I})$ has rank one, so is generated by the cohomological unit.
\end{proof}

\begin{figure}[ht]
 \includegraphics{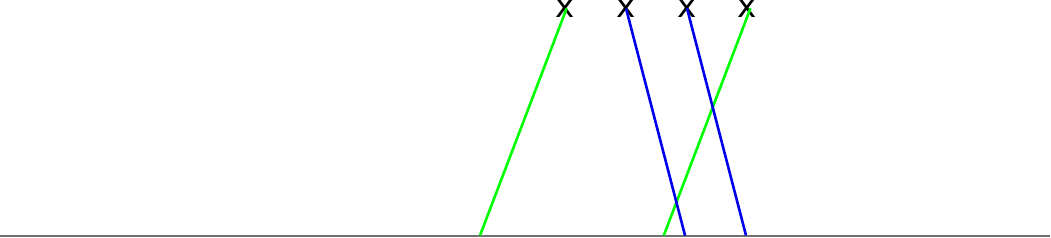}
 \caption{$\Sym(\ul{T}^{I_0}_1) \cap \Sym(\ul{T}^{I_1})=\emptyset$}\label{fig:NoIntersections}
\end{figure}


\begin{lemma}\label{l:directedSubCat}
 Let $I_0 < I_1 < \dots < I_d$. 
 For the pairs $(\ul{T}^{I_0},\ul{T}^{I_d})$, and $(\ul{T}^{I_{j-1}},\ul{T}^{I_j})$ for $j=1,\dots,d$, we can choose  Floer cochain data 
 $(A,H,J)$ such that $A=0$ and $H \equiv 0$.
 Moreover, for $\ul{x}_0 \in \cX(\ul{T}^{I_0},\ul{T}^{I_d})$ and $\ul{x}_j \in \cX(\ul{T}^{I_{j-1}},\ul{T}^{I_j})$ for $j=1,\dots,d$,
 we can also choose the Floer data $(A_S,K,J)$ such that $A_S=0$ and $K\equiv 0$.
 In consequence, when $ \cR^{d+1,h}(\ul{x}_0;\ul{x}_d, \dots, \ul{x}_1)$ has virtual dimension zero,
 its regularity can be achieved by generic $J$.
\end{lemma}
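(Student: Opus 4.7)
\emph{Strategy.} The plan is first to check that the trivial choices $A=0$, $H\equiv 0$, $A_S=0$, $K\equiv 0$ lie in the admissible classes of Floer (cochain and surface) data laid out in Sections \ref{sss:FloerCochain}--\ref{sss:FloerData}, and then to invoke the tautological correspondence of Lemma \ref{l:tautologicalCorr} to reduce the regularity of $\cR^{d+1,h}(\ul{x}_0;\ul{x}_d,\dots,\ul{x}_1)$ to the standard transversality theory for genuine pseudo-holomorphic curves in $E$ itself.

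\emph{Admissibility.} The hypothesis $I_0<I_1<\cdots<I_d$, together with the strict monotonicity \eqref{eq:lambdaThimbleOrdering} of the function $f$, yields the chain $\lambda_{\ul{T}^{I_0}}>\lambda_{\ul{T}^{I_1}}>\cdots>\lambda_{\ul{T}^{I_d}}$ in $\cI_\aff$. Since $g_0=\mathrm{id}_{G_\aff}$, the trivial connection lies in each $\cA_\aff([0,1],\lambda_{\ul{T}^{I_{j-1}}},\lambda_{\ul{T}^{I_j}})$ and in $\cA_\aff([0,1],\lambda_{\ul{T}^{I_0}},\lambda_{\ul{T}^{I_d}})$, while $H\equiv 0$ belongs to $\cH^n_0(E)$. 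The required transversality $\Sym(\ul{T}^{I_{j-1}})\pitchfork\Sym(\ul{T}^{I_j})$ unfolds (Example \ref{ex:noHperb}) into pairwise transversality of the individual thimbles $L_{\gamma^{I,k}}$, which is exactly what the generic perturbation of $\omega_E$ in Section \ref{ss:directedsub} ensures; the generators $\ul{x}_j$ are unordered $n$-tuples of such intersection points and lie in $\Conf^n(E)\setminus D_r^\circ$ because no three of the lines $\gamma^{I,k}$ meet at a common base point, by genericity of $f$. For the surface data, setting $\lambda|_{\partial_j S}:=\lambda_{\ul{T}^{I_j}}$ is locally constant and strictly ordered, so $A_S=0$ lies in the prescribed fibre of $\cP_\aff$; and $K\equiv 0$ tautologically satisfies \eqref{eq:FloerDataK}, including vanishing on $\nu(\mk(S))$ and the exponential decay $\epsilon_j^*K\to H_j\,dt=0$ over strip-like ends.

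\emph{Regularity.} With these trivial choices, \eqref{eq:FloerEquation0} is the unperturbed $J$-holomorphic equation; writing $J=(J')^{[n]}$ for $J'=(J'_z)_{z\in S}$, Remark \ref{r:SplitData} supplies, for every $u\in\cR^{d+1,h}(\ul{x}_0;\ul{x}_d,\dots,\ul{x}_1)$, an $n$-fold branched cover $\pi_\Sigma:\Sigma\to S$ with branch locus contained in $\mk(S)$ and a $J'$-holomorphic map $v:\Sigma\to E$ whose boundary decomposes over the individual thimbles $L_{\gamma^{I_j,k}}$. At virtual dimension zero, Corollary \ref{c:v1} together with $h=I_{\ul{x}_0;\ul{x}_d,\dots,\ul{x}_1}$ forces the branching of $\pi_\Sigma$ to be simple and confined to $\mk(S)$ and excludes sphere bubbles, so only finitely many topological types of $(\Sigma,v)$ arise. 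Off $\nu(\mk(S))$ the linearized operator $D_u$ splits as a direct sum $\bigoplus_i D_{v_i}$ over the connected components of $\Sigma$, with matching splittings of the Lagrangian boundary conditions, and classical transversality for $J'$-holomorphic maps into $E$ with Lagrangian boundary makes each $D_{v_i}$ surjective for generic $J'$; a single residual set of $J'$ handles all the finitely many topological types at once.

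\emph{Main obstacle.} The delicate point is to propagate this direct-sum decomposition of $D_u$ across the branch points of $\pi_\Sigma$, where $T\Hilb^n(E)$ does not split globally. I would handle this locally: near a simple branch point $J_z$ is integrable and equals $J_E^{[n]}$, so a holomorphic chart identifying a neighbourhood in $\Hilb^2$ with $\CC^2$ via elementary symmetric functions is available, and the splitting of $D_u$ matches the eigenspace decomposition of the local $S_2$-action on $E\times E$; this is precisely the local analysis performed in the cylindrical Heegaard--Floer setting of \cite{Lipshitz-cylindrical}. Once this is in place, the rest is standard component-wise pseudo-holomorphic Fredholm theory in $E$.
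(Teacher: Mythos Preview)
Your admissibility discussion is fine, and the overall strategy of passing to the tautological correspondence $v:\Sigma\to E$ is correct. However, the regularity step contains a genuine gap, and the ``main obstacle'' you identify is not in fact the crucial one.

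You write that ``classical transversality for $J'$-holomorphic maps into $E$ with Lagrangian boundary makes each $D_{v_i}$ surjective for generic $J'$.'' This fails precisely for the components $\Sigma_i$ on which $\pi_E\circ v_i$ is a constant map. Such components necessarily arise: for instance, whenever several of the lines $\gamma^{I_j,k}$ share a common endpoint at a critical value $\bc_k$, the corresponding piece of $v$ is the constant map to the critical point $p$ above $\bc_k$; there can also be components where $\pi_E\circ v_i$ is constant to a point in $\{im(z)<1-\eta\}$. For a constant map $v_i$ the usual transversality mechanism---pairing $\eta$ against $Y\circ dv_i\circ j_S$ for an infinitesimal deformation $Y$ of $J'$---vanishes identically since $dv_i=0$, so perturbing $J'$ gives you nothing. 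The paper addresses this by a separate case analysis: for constants to a critical point it computes the relevant Maslov indices (using the ordering $\lambda_{L_0}>\cdots>\lambda_{L_d}$) to show the constant polygon is automatically regular of index zero; for $\pi_E\circ v_i$ constant to a non-critical base point it forces $d=1$ and uses the short exact sequence $\coker(D_{\mathrm{fiber}})\to\coker(D_{v_1})\to\coker(D_{\pi_E\circ v_1})\to 0$, with the base direction regular for free and the fiber direction handled by varying $J'$ in the fiber.

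Relatedly, the paper does \emph{not} attempt to globally split $D_u$ as $\bigoplus_i D_{v_i}$, and in particular does not need to ``propagate the decomposition across branch points.'' Instead it argues by unique continuation: one takes a putative cokernel element $\eta$, restricts to a small open set $G\subset S\setminus\nu(\mk(S))$ away from branch points where the product decomposition is available, and shows $\eta|_G=0$ by the component-wise argument above (distinguishing constant and non-constant components). Unique continuation then forces $\eta\equiv 0$. So the branch-point issue you flag is a red herring; the real work is the regularity of the constant pieces.
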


\begin{proof}
 First note that, if $i<j<k$ and $\ul{x}=\{x_1,\dots,x_n\} \in \Sym(\ul{T}^{I_i}) \cap \Sym(\ul{T}^{I_j}) \cap \Sym(\ul{T}^{I_k})$, then since there is no triple intersection in 
 $\{z \in \bH^\circ |im(z)<1\}$,
 it is necessary that for all $t=1,\dots, n$, $\pi_E(x_t)=\bc_{l_t}$ for some $l_t \in \{1,\dots,m\}$.
 This in turn implies that $i=j=k$, which is a contradiction.
 Therefore, if $i<j<k$ then $\Sym(\ul{T}^{I_i}) \cap \Sym(\ul{T}^{I_j}) \cap \Sym(\ul{T}^{I_k}) = \emptyset$.

 
 We first discuss how to achieve regularity of elements in $\cR^{d+1,h}(\ul{x}_0;\ul{x}_d, \dots, \ul{x}_1)_{pre}$.
 Let $u:S \to \Hilb^n(E)$ be an element in this space.
 Let $v:\Sigma \to E$ be the map associated to $u$ by the tautological correspondence.
 Let $\Sigma_1,\Sigma_2,\dots,\Sigma_m$ be the connected components of $\Sigma$ and $v_j=v|_{\Sigma_j}$.
By reordering $\Sigma_1, \dots, \Sigma_m$ if necessary, we can assume that 
there is $0 \le a \le m$ such that
$\pi_E \circ v_i$ is a constant for $i \le a$, and is a non-constant map otherwise.

By the boundary conditions, 
 $\pi_\Sigma|_{\Sigma_i}$ must have degree $1$ for $i \le a$, so $\Sigma_i=S$ is a disc with $d+1$ boundary punctures.
We want to discuss the Fredholm operator associated to $v_i:S \to E$ for $i \le a$.
Let assume $a \neq 0$ and consider $v_1$.
 We denote the Lagrangian boundary label on $\partial \Sigma_1$ by $L_0,\dots,L_d$ so that $L_j \in \ul{T}^{I_j}$ for $j=0,\dots,d$.
The map $\pi_E \circ v_1$ is either a constant map to a point in $\{z \in \bH^\circ |im(z)<1-\eta\}$
 or to a point $\bc_k$ for some $k$.

We first suppose that $\pi_E \circ v_1$ is a constant map to a point in $\{z \in \bH^\circ |im(z)<1-\eta\}$.
 Then we must have $d=1$ because no three pairwise distinct lines in $\Gamma$ intersect at the same point in $\{z \in \bH^\circ |im(z)<1\}$.
 If we view $\pi_E \circ v_1$ as a holomorphic map with boundary on $\pi_E(L_j)$ and both asymptotic conditions are given by uniform convergence to the intersection point, then
 $\pi_E \circ v_1$ is a regular rigid solution because the input and output are the same (so have the same gradings).
The cokernel of the Fredholm operator $D_{v_1}$ of $v_1$ sits inside a short exact sequence
\begin{align}
\coker(D_{\fiber}) \to \coker(D_{v_1}) \to \coker(D_{\pi_E \circ v_1}) \to 0
\end{align}
where $D_{\pi_E \circ v_1}$ is the Fredholm operator for $\pi_E \circ v_1$ and $D_{\fiber}$ is the  Fredholm operator of $v_1$ viewed as a map to the fiber.
By regularity of $\pi_E \circ v_1$, we have $\coker(D_{\pi_E \circ v_1})=0$.
On the other hand, depending on the virtual dimension of $v_1$,
either $\coker(D_{\fiber})$ can also be made $0$ by a generic choice of $J$, or $v_1$ does not exist for generic $J$.
In the former case, $D_{v_1}$ is surjective.
 
 Next, we consider the case that $\pi_E \circ v_1$ is a constant map to a point $\bc_k$ for some $k$.
 In this case, $d$ is not necessarily $1$ but, by \eqref{eq:lambdaThimbleOrdering}, we have
  \begin{align}
 \lambda_{L_0}>\dots>\lambda_{L_d}. \label{eq:lorder}
 \end{align}
 We can assume that $L_i \cap L_j=p$ for all $i,j$, where $p$ is the critical point lying above $\bc_k$.
 Let $B_p$ be a Darboux ball centered at $p$.
 We can assume that $J$ is integrable near $p$ so that $B_p$ can be identified with a ball in $\CC^2$.
 Moreover, by \eqref{eq:lorder}, we can assume that $T_pL_i \cap T_pB_p$ are pairwise transversally intersecting Lagrangian planes in $T_pB_p$
 with strictly decreasing K\"ahler angles.
 More explicitly, a local model is given by
 \begin{align}
  &\pi_E|_{B_p}(z_1,z_2)=z_1z_2, \quad z_1,z_2 \in \CC \\
  &L_i \cap B_p=\{(z_1,z_2)=(re^{i\theta_i+t},re^{i\theta_i-t}) \in B_p|r \ge 0, t \in [0,2\pi]\} \text{ for some }\theta_i \in [0,2\pi)
 \end{align}
 and \eqref{eq:lorder} translates to $\theta_0 >\dots>\theta_d$.
 As a result, there are choices of grading functions on $\{L_i\}_{i=0}^d$ such that 
 for all $i$, the point $p$ as a generator of $CF(L_{i-1},L_i)$ has grading $0$ (when $i=0$, $CF(L_{i-1},L_i)$ should be understood as $CF(L_{0},L_d)$).
 Therefore, for a fixed $S\in \cR^{d+1}$, the moduli of solutions to the equation
 \begin{align}
 \left\{
 \begin{array}{ll}
  w:S \to E \\
  (Dw)^{0,1}=0\\
  w(\partial_j S) \subset L_j \text{ for all }j\\
  \lim_{s \to \pm \infty}w(\epsilon_j(s,\cdot))=p \text{ uniformly for all } j
 \end{array}
 \right.
 \end{align}
 has virtual dimension $0$.
 Moreover, the constant map from $S$ to $p$ is regular and rigid.
 On the other hand, $v_1$ must be the constant map from $S$ to $p$ so $v_1$ is regular.

We can now address the regularity of $u$. Recall that the key point is to show that, 
if $\eta$ is an element in an appropriate Sobolev completion of
 $\Omega^{0,1}(S,u^*T\Hilb^n(E))$ which annihilates the image of the Fredholm operator associated to $u$,
then $\eta$ vanishes identically. Since $\eta$ lies inside the kernel of the adjoint operator, $\eta$ satisfies the 
unique continuation property, so it suffices to show that $\eta$ vanishes on an open subset $G$ of $S$.

 Let $z \in S \setminus \nu(\mk(S))$ such that $u(z) \in \Conf^n(E)$.
 Let $G$ be an open neighborhood of $z$ such that $\pi_{\Sigma}^{-1}(G)$ consists of $n$ disjoint open sets $G_1,\dots,G_n$.
 There is a neighborhood $U$ of $u(z)$ that is symplectomorphic to a product $U_1 \times \dots \times U_n$, for open sets $U_i \subset E$
 satisfying $U_i \cap U_j =\emptyset$ if $i \neq j$.
Moreover, we can assume the image of $v|_{G_i}$ lies inside $U_i$, so we have
$u|_G(z)=(v|_{G_1}(z),\dots,v|_{G_n}(z))$ under the identifications between $U$ and $U_1 \times \dots \times U_n$, and between $G$ and $G_i$.
Having this product type local model for $u|_G$, we can write $\eta|_G$
as $(\eta_1,\dots,\eta_n)$, where $\eta_i \in \Omega^{0,1}(G_i,v|_{G_i}^*TU_i)$.
If $\eta \neq 0$, then at least one $\eta_i \neq 0$; relabel so $\eta_1 \neq 0$.

If $\pi_E \circ v|_{G_1}$ is not a constant map, then there is an infinitesimal deformation $Y$ of $J$
in the space of almost complex structures  satisfying \eqref{eq:FloerDataJ}
 (which only deforms in the first factor
of the product $U_1 \times \dots \times U_n$) such that 
\begin{align}
\int_G \langle \eta, Y \circ (du- X_K) \circ j_S \rangle =\int_{G_1} \langle \eta_1, Y \circ (dv_1- X_{\pi_{\Sigma}^*K})  \circ j_S \rangle \neq 0
\end{align}
where $j_S$ is the complex structure on $S$.
That contradicts the assumption that $\eta$  annihilates the image of the Fredholm operator associated to $u$.

If $\pi_E \circ v|_{G_1} $ is a constant, then we must have $G_1 \subset \Sigma_i$ for some $i \le a$.
Without loss of generality, we assume $G_1 \subset \Sigma_1$ and 
$v|_{G_1}=v_1|_{G_1}$.
From the discussion of the surjectivity of the Fredholm operator associated to $v_1$ above, 
we know that there exists an infinitesimal deformation $Y$ of $J$ supported in $G_1$, and an element $\xi$
in an appropriate Sobolev completion of $C^{\infty}(S,v_1^*TE)$ supported in $G_1$ such that
\begin{align}
\int_{G_1} \langle \eta_1, D_{v_1} \xi +\frac{1}{2} Y \circ (dv_1- X_{\pi_{\Sigma}^*K})  \circ j_S \rangle \neq 0.
\end{align}
Therefore, if we
identify $G_1$ with $G$ and 
 think of $v_1|_{G_1}^*TE=v_1|_{G_1}^*TU_1$ 
as a component of $u|_G^*TU$, then we have 
\begin{align}
\int_G \langle \eta, D_{u} \xi +\frac{1}{2}Y \circ (du- X_K) \circ j_S \rangle  \neq 0.
\end{align}
Therefore, we have $\eta=0$ and hence the regularity of $u$ can be achieved by generic $J$.

This proves the regularity of elements in $\cR^{d+1,h}(\ul{x}_0;\ul{x}_d, \dots, \ul{x}_1)_{pre}$.
The transversality  between the evaluation map $\cR^{d+1,h}(\ul{x}_0;\ul{x}_d, \dots, \ul{x}_1)_{pre} \to (\Hilb^n(E))^h$
and a pseudocycle representing the inclusion $(D_{HC})^h \to (\Hilb^n(E))^h$ can be addressed by the same reasoning.
This finishes the proof.
\end{proof}

 

By Lemma \ref{l:directedSubCat}, we can define the $A_{\infty}$ operations
\begin{align}
 CF(\ul{T}^{I_{d-1}},\ul{T}^{I_d}) \times \dots \times CF(\ul{T}^{I_0},\ul{T}^{I_1}) \to CF(\ul{T}^{I_0},\ul{T}^{I_d})
\end{align}
without introducing a Hamiltonian term $K$, whenever $I_0 < I_1 < \dots < I_d$.
In a standard way, we can extend the $A_{\infty}$ operations by adding idempotents.

\begin{corollary}\label{c:directedSubCat}
 Let $R=\oplus_{I \in \cI} \KK e_I$ and $e_I^2=e_I$.
 We have a strictly unital $A_\infty$ algebra
 \begin{align}
R \oplus (\oplus_{I_0<I_1}CF(\ul{T}^{I_0},\ul{T}^{I_1})) \label{eq:Dir1}
 \end{align}
 with unit $\sum_{I \in \cI} e_I$. 
\end{corollary}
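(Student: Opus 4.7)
The plan is to assemble the stated $A_\infty$-algebra by taking the Floer complexes between strictly ordered thimble tuples as the non-trivial morphism spaces, using the counts from Lemma \ref{l:directedSubCat} as the higher products, and then formally adjoining an idempotent $e_I$ for each $I \in \cI$ to supply strict units. The hypothesis $I_0<I_1<\dots<I_d$ in Lemma \ref{l:directedSubCat} supplies a perturbation scheme with $A=0$, $H\equiv 0$, $A_S=0$, $K\equiv 0$, and generic $J$ making the zero-dimensional spaces $\cR^{d+1,h}(\ul{x}_0;\ul{x}_d,\dots,\ul{x}_1)$ regular and, by signed rigid count, defining a multilinear map
\[
\mu^d : CF(\ul{T}^{I_{d-1}},\ul{T}^{I_d}) \otimes \cdots \otimes CF(\ul{T}^{I_0},\ul{T}^{I_1}) \longrightarrow CF(\ul{T}^{I_0},\ul{T}^{I_d}).
\]
Making a smooth, consistent choice of such data over each $\ol{\cR}^{d+1,h}$ compatibly with gluing, as in the construction of $\cFS^{cyl,n}(\pi_E)$ in Section \ref{ss:DefnsSetup}, we obtain a family of structure maps on $\oplus_{I_0<I_1}CF(\ul{T}^{I_0},\ul{T}^{I_1})$.

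Next I would verify the $A_\infty$-relations. For any ascending chain $I_0<I_1<\cdots<I_d$ and asymptotes realising total index one, Corollary \ref{c:v1} compactifies the relevant $\cR^{d+1,h}$ by stable maps without sphere components, and exactness rules out disc bubbles; since no Hamiltonian perturbation is present and the Lagrangian tuples are pairwise transverse, no disc/sphere bubbling at interior points can occur. The remaining boundary contribution is a two-level broken configuration with an intermediate generator in some $CF(\ul{T}^{I_i},\ul{T}^{I_j})$. By Lemma \ref{l:CohVanishing} such a generator can only exist when $I_i<I_j$, so every possible splitting respects the order and the sum over broken configurations is exactly the quadratic expression in the $A_\infty$-relation for the chain $I_0<\dots<I_d$. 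Hence the $\mu^d$ satisfy the $A_\infty$-axioms on the subspace $\oplus_{I_0<I_1}CF(\ul{T}^{I_0},\ul{T}^{I_1})$.

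It then remains to add strict units. Set $R=\oplus_I \KK e_I$ with $e_I^2=e_I$, extend the $A_\infty$-operations on $R\oplus(\oplus_{I_0<I_1}CF(\ul{T}^{I_0},\ul{T}^{I_1}))$ by declaring
\[
\mu^2(e_{I_1},a)=a,\qquad \mu^2(a,e_{I_0})=a \qquad \text{for } a\in CF(\ul{T}^{I_0},\ul{T}^{I_1}),
\]
$\mu^2(e_I,e_I)=e_I$, and $\mu^d(\dots,e_I,\dots)=0$ whenever $d\neq 2$ or the insertion is of a form not listed above. This is the standard procedure of formally adjoining strict units to a non-unital $A_\infty$-algebra (cf.\ \cite[Section (2a)]{SeidelBook}); the additional $A_\infty$-relations to check are all quadratic identities involving at least one $e_I$, which reduce either to the strict unit axiom or to the identity $\mu^d=0$ whenever two consecutive inputs fail to align via the composability condition $I_0<I_1<\dots<I_d$, and are therefore tautologically satisfied. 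The element $\sum_{I\in\cI}e_I$ is then a strict unit, yielding the asserted $A_\infty$-algebra.

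The only place where genuine care is needed is in the second paragraph: we must know that the breaking generators in the compactification sit inside the truncated morphism spaces, so that the $A_\infty$-relation closes on the directed subspace. This is exactly the content of the directedness Lemma \ref{l:CohVanishing}, so the proof goes through without any further geometric input.
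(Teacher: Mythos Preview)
Your proposal is correct and follows essentially the same approach as the paper, which simply cites \cite[Section 7]{Seidel1} for the standard construction of adjoining strict idempotent units to the directed $A_\infty$-structure built from Lemma \ref{l:directedSubCat}. Your write-up spells out the details of that construction; the one minor point is that your appeal to Lemma \ref{l:CohVanishing} in the closure argument is not really needed, since any strip-breaking in a polygon with Lagrangian labels $\ul{T}^{I_0},\dots,\ul{T}^{I_d}$ automatically produces a generator in some $CF(\ul{T}^{I_a},\ul{T}^{I_b})$ with $a<b$, hence $I_a<I_b$, by the chain structure alone.
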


\begin{proof}
See \cite[Section 7]{Seidel1} for how the $A_{\infty}$ structure is defined after adjoining the strict units $e_I$ of the idempotents.
\end{proof}

\begin{corollary}\label{c:directedSubCat2}
 The $A_{\infty}$ algebra \eqref{eq:Dir1} is quasi-isomorphic to $\End_{\cFS^{cyl,n}}(\oplus_{I \in \cI} \ul{T}^{I})$.
\end{corollary}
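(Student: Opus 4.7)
The plan is to embed \eqref{eq:Dir1} into $\End_{\cFS^{cyl,n}}(\oplus_{I\in \cI} \ul{T}^I)$ by a careful choice of Floer data that makes the embedding a quasi-isomorphism up to a standard obstruction-theoretic correction. First, I would fix Floer cochain data for every pair $(\ul{T}^{I_0},\ul{T}^{I_1})$ as follows. If $I_0 < I_1$, use the trivial data $(A,H,J)=(0,0,J)$ licensed by Lemma \ref{l:directedSubCat}, so that $CF(\ul{T}^{I_0},\ul{T}^{I_1})$ coincides verbatim with the corresponding summand in \eqref{eq:Dir1}. If $I_0=I_1=I$, use the Floer data of Lemma \ref{l:chainRep}, placing $CF^*(\ul{T}^I,\ul{T}^I)$ in non-negative degrees with $CF^0$ of rank one, spanned by a cochain representative $e_I$ of the cohomological unit. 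If $I_0 \not\leq I_1$, apply the isotopy argument in the proof of Lemma \ref{l:CohVanishing} to select $(A,H)$ such that $\phi_H^{-1}(\Sym(\ul{T}^{I_1}))$ is disjoint from $\Sym(\ul{T}^{I_0})$, yielding $CF(\ul{T}^{I_0},\ul{T}^{I_1})=0$ at the chain level. These boundary choices I would then extend to a smooth, consistent family of Floer data $(A_S,J,K)$ over each $\ol{\cS}^{d+1,h}$, arranged so that $(A_S,K)=(0,0)$ identically whenever every boundary label of $S$ forms a strictly increasing Bruhat chain (this is compatible with gluing because the closure of such a `strict' stratum meets only strata labelled by sub-chains that are again strictly increasing).

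With these choices in place, I would define a map
\[
F\colon R \oplus \bigoplus_{I_0<I_1}CF(\ul{T}^{I_0},\ul{T}^{I_1}) \longrightarrow \End_{\cFS^{cyl,n}}(\oplus_I \ul{T}^I)
\]
whose first-order term $F^1$ sends the formal idempotent $e_I$ to its cochain representative and acts as the identity on each off-diagonal summand. The map $F^1$ is a quasi-isomorphism of cochain complexes: the off-diagonal summands match tautologically, the image of $e_I$ generates the one-dimensional $HF^0(\ul{T}^I,\ul{T}^I)$, and the remaining summands of the target are acyclic (either vanishing outright by construction, or non-negatively graded with rank-one $H^0$). Moreover, whenever the inputs come from off-diagonal summands labelled by a strict chain $I_0<\cdots<I_d$, the operation $\mu^d$ on both sides is computed by the very same (unperturbed) polygon count, so $F^1$ commutes with these $\mu^d$ on the nose.

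The remaining task is to interpolate between the strict unit of $R$ and the merely cohomological unit $e_I$ on the target by constructing higher terms $F^d$ of an $A_\infty$ morphism. This is a standard obstruction-theoretic induction: the obstructions at each stage represent Hochschild-type cocycles that are exact because $F^1$ is already a quasi-isomorphism and $e_I$ is a homological unit in the target; equivalently, one invokes the homological perturbation lemma to transfer both $A_\infty$ structures to minimal models on the common graded vector space $R\oplus \bigoplus_{I_0<I_1}HF(\ul{T}^{I_0},\ul{T}^{I_1})$, which must then coincide, since the non-unit operations match by the curve-count argument above and the unit operations are pinned down uniquely by the strict/homological unit axiom. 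The main technical obstacle I anticipate is precisely the consistent extension of Floer data: one must ensure that a smooth, gluing-compatible extension over $\ol{\cS}^{d+1,h}$ exists which simultaneously respects the three boundary regimes, achieves transversality on all moduli appearing in the $A_\infty$ relations, and preserves the trivialisation $(A_S,K)=(0,0)$ along strict chains. Once this compatibility is established, the construction of $F$ is essentially formal.
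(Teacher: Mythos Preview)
Your proposal is correct and follows essentially the same route as the paper. The paper's own proof is the one-line ``This follows from Lemmas \ref{l:CohVanishing} and \ref{l:directedSubCat}, Corollary \ref{c:directedSubCat} and homological perturbation,'' and your argument is a careful unpacking of precisely that: arrange Floer data so that the off-diagonal (strict-chain) summands are computed with $(A_S,K)=(0,0)$ and hence agree literally with \eqref{eq:Dir1}, force the incomparable summands to vanish at the chain level, and then invoke homological perturbation (equivalently, transfer to minimal models) to reconcile the formal idempotents $e_I$ with the cohomological units in the target. Your identification of the one genuine technical point --- that the trivialisation $(A_S,K)=(0,0)$ along strictly increasing Bruhat-labelled strata is preserved under gluing and can therefore be extended consistently --- is exactly what makes Lemma \ref{l:directedSubCat} applicable inside the consistent universal family, and is implicit in the paper's citation of that lemma.
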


\begin{proof}
 This follows from Lemmas \ref{l:CohVanishing} and \ref{l:directedSubCat}, Corollary \ref{c:directedSubCat}
 and homological perturbation.
\end{proof}

\subsection{Fukaya-Seidel embeds into cylindrical Fukaya-Seidel}\label{ss:FSEmbed}


In this section, we show that 
\begin{align}
 \pi_{\cY_E}:=\pi_E^{[n]}|_{\cY_E}:\cY_E:=\Hilb^n(E) \setminus D_r \to \CC \label{eq:LF}
\end{align}
is a Lefschetz fibration,
and identify the  subcategory of the associated Fukaya-Seidel category $D^\pi\cFS(\pi_{\cY_E})$ generated by thimbles with the category of perfect modules over the $A_{\infty}$-algebra \eqref{eq:Dir1}. 

\begin{lemma}\label{l:ConcentratedAtCritical}
 If $f: \CC^2 \to \CC$ is a holomorphic map without critical points, then so is the induced map $f^{[n]}:\Hilb^n(\CC^2) \to \CC$.
\end{lemma}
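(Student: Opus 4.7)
The plan is to exhibit a global holomorphic vector field on $\Hilb^n(\CC^2)$ along which $f^{[n]}$ has a constant nonzero derivative. The first step is to lift the constant function $1$ to a holomorphic vector field $V$ on $\CC^2$ satisfying $df(V) \equiv 1$. Since $f$ is a holomorphic submersion, $df \colon T\CC^2 \to f^*T\CC$ is a surjection of holomorphic vector bundles whose kernel $K$ is a holomorphic line bundle, giving a short exact sequence
\begin{align*}
0 \to K \to T\CC^2 \xrightarrow{df} f^*T\CC \to 0.
\end{align*}
As $\CC^2$ is Stein and $K$ is coherent, Cartan's Theorem B gives $H^1(\CC^2, K) = 0$, so the induced map $\Gamma(T\CC^2) \to \Gamma(f^*T\CC) = \cO(\CC^2)$ is surjective. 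The desired $V$ is any lift of the constant section $1$. (Equivalently, $\partial f/\partial x$ and $\partial f/\partial y$ have no common zero on $\CC^2$, so by the holomorphic B\'ezout theorem there exist global $a, b \in \cO(\CC^2)$ with $a\,\partial f/\partial x + b\,\partial f/\partial y \equiv 1$; take $V = a\partial_x + b\partial_y$.)

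Next, I would promote $V$ to a holomorphic vector field $V^{[n]}$ on $\Hilb^n(\CC^2)$ by functoriality of the Hilbert scheme applied to the (possibly only locally defined) flow of $V$; concretely, at $Z \in \Hilb^n(\CC^2)$ the vector $V^{[n]}|_Z \in \Hom_{\cO_Z}(I_Z/I_Z^2, \cO_Z) = T_Z\Hilb^n(\CC^2)$ is the derivation $g \mapsto V(g)|_Z$. On the dense open locus $\Conf^n(\CC^2) = \Hilb^n(\CC^2) \setminus D_{HC}$, for $Z = \{z_1, \dots, z_n\}$ with distinct support, $V^{[n]}|_Z$ corresponds to $(V|_{z_1}, \dots, V|_{z_n})$ under $T_Z\Conf^n(\CC^2) \cong \bigoplus_i T_{z_i}\CC^2$, and since $f^{[n]}$ restricts there to $\sum_i f \circ \pi_i$,
\begin{align*}
df^{[n]}(V^{[n]})|_Z = \sum_{i=1}^n df(V)|_{z_i} = n.
\end{align*}

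To conclude, $df^{[n]}(V^{[n]})$ is a holomorphic function on the smooth, connected complex manifold $\Hilb^n(\CC^2)$ which equals the constant $n$ on the dense open $\Conf^n(\CC^2)$; by analytic continuation it equals $n$ identically, so $df^{[n]}$ is nowhere vanishing and $f^{[n]}$ has no critical points. The only real obstacle is the production of the lift $V$: once it is in hand, everything else is a direct computation on the configuration locus together with analytic continuation across the Hilbert--Chow divisor, and no explicit understanding of $\Hilb^n(\CC^2)$ near the punctual locus is required.
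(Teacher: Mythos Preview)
Your argument is correct, but the paper takes a more elementary and local route. The paper first reduces to the case where the subscheme $z$ is supported at a single point (using the local product structure of $\Hilb^n$ near a subscheme with disconnected support), then uses the implicit function theorem to choose coordinates near that point in which $f(x,y)=x$. In those coordinates the translation $z \mapsto z_t$ by $(t,0)$ is an explicit holomorphic path in $\Hilb^n(\CC^2)$ through $z$, and since $z_t$ is supported at $(t,0)$ with multiplicity $n$ one has $f^{[n]}(z_t)=nt$, so $df^{[n]}$ is nonzero at $z$.

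Both proofs ultimately lift a vector field $V$ on $\CC^2$ with $df(V)\equiv 1$ to $\Hilb^n$. The paper produces $V=\partial_x$ \emph{locally} after straightening $f$, and evaluates the derivative of $f^{[n]}$ along the resulting translation flow directly at the punctual subscheme; this avoids any cohomological input. Your approach produces $V$ \emph{globally} via Cartan's Theorem~B (or the holomorphic Nullstellensatz), computes $df^{[n]}(V^{[n]})=n$ on the configuration locus, and extends across $D_{HC}$ by analytic continuation. The paper's version is shorter and uses only the implicit function theorem, while yours yields a global holomorphic vector field on $\Hilb^n(\CC^2)$ along which $f^{[n]}$ has constant derivative $n$, a slightly stronger conclusion that could be reused elsewhere.
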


\begin{proof}
 Without loss of generality, it suffices to show that $f^{[n]}$ is regular at a $0$-dimensional length $n$
 subscheme $z$ supported at $0$.
 We can also assume that $f(x,y)=x$.
 
 Let $z_t$ be a family of $0$-dimensional length $n$ subschemes such that $z_0=z$ and $z_t$ is a length $n$
 subscheme supported at $(t,0)$.
 Then $f^{[n]}(z_t)=nt$ so $f^{[n]}$ is regular at $0$.
\end{proof}

\begin{corollary}\label{c:CritAtCrit}
 Every critical point of $\pi_E^{[n]}: \Hilb^n(E) \to \bH^\circ$ has support lying inside the union of critical points of $\pi_E$.
\end{corollary}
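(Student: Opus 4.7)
The plan is to reduce the claim to Lemma \ref{l:ConcentratedAtCritical} by a standard local product decomposition of the Hilbert scheme near a given subscheme. Let $z \in \Hilb^n(E)$ be a critical point of $\pi_E^{[n]}$, and write its support as $\{p_1,\dots,p_k\}$ with corresponding multiplicities $n_1,\dots,n_k$ so that $\sum_i n_i = n$. Suppose, for a contradiction, that some $p_i$ is not a critical point of $\pi_E$; say this holds for $i=1$.

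First I would set up the local model. Choose pairwise disjoint open neighborhoods $U_i \ni p_i$ in $E$. Because the $U_i$ are disjoint, we have a biholomorphism from an open neighborhood of $z$ in $\Hilb^n(E)$ onto an open set in the product $\prod_{i=1}^k \Hilb^{n_i}(U_i)$, sending $z$ to the tuple $(z_1,\dots,z_k)$ where $z_i$ is the length-$n_i$ subscheme of $U_i$ obtained by localising $z$ at $p_i$. Under this identification the map $\pi_E^{[n]}$ becomes the sum
\begin{equation}
(w_1,\dots,w_k) \longmapsto \sum_{i=1}^k \pi_E^{[n_i]}(w_i),
\end{equation}
where $\pi_E^{[n_i]} : \Hilb^{n_i}(U_i) \to \bH^\circ$ is the map induced from $\pi_E|_{U_i}$.

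Next, since $p_1$ is a regular point of $\pi_E$, the holomorphic implicit function theorem lets us shrink $U_1$ and choose holomorphic coordinates identifying $(U_1, \pi_E|_{U_1})$ with an open subset of $(\CC^2, \mathrm{pr}_1)$ for the first coordinate projection $\mathrm{pr}_1$. The projection $\mathrm{pr}_1$ extends to the map $\mathrm{pr}_1 : \CC^2 \to \CC$ of Lemma \ref{l:ConcentratedAtCritical}, and that lemma guarantees that $\mathrm{pr}_1^{[n_1]}$ has no critical points on $\Hilb^{n_1}(\CC^2)$. Restricting to the open subset $\Hilb^{n_1}(U_1) \subset \Hilb^{n_1}(\CC^2)$, we conclude that $\pi_E^{[n_1]} : \Hilb^{n_1}(U_1) \to \bH^\circ$ is a submersion at $z_1$.

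Finally, since $\pi_E^{[n]}$ is a sum of maps, one of which (the $i=1$ summand) is already submersive in its own variable at $z_1$, the total differential at $z$ is surjective. This contradicts the assumption that $z$ is a critical point of $\pi_E^{[n]}$, so every support point of $z$ must be a critical point of $\pi_E$. No step is expected to be a real obstacle; the only subtlety is identifying the local product structure of $\Hilb^n(E)$ near a subscheme with multi-point support, which is classical once the neighborhoods $U_i$ are taken to be disjoint.
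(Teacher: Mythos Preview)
Your proof is correct and follows exactly the approach the paper takes: localise near a support point $p$ that is not a critical point of $\pi_E$, and invoke Lemma~\ref{l:ConcentratedAtCritical} to see $\pi_E^{[n]}$ is submersive there. The paper compresses this into a single sentence, while you have spelled out the local product decomposition $\Hilb^n(E) \cong \prod_i \Hilb^{n_i}(U_i)$ and the additivity of $\pi_E^{[n]}$ under it, but the content is the same.
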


\begin{proof}
 If the support of $z \in \Hilb^n(E)$ contains a point $p$ that is not a critical point of $\pi_E$, then we can apply Lemma \ref{l:ConcentratedAtCritical} locally near $p$
 to show that $z$ is not a critical point of $\pi_E^{[n]}$.
\end{proof}

If $z$ is a critical point of $\pi_E^{[n]}$, consider whether the support is a disjoint union of $n$ points or not.
We consider the former case first.
For $I \in \cI$, we use $z_I \in \Conf^n(E) \subset \Hilb^n(E)$ to denote the subscheme with support being the union of the critical points lying above $\{\bc_i|i \in I\}$.

\begin{lemma}[Proposition $2.1$ and Lemma $2.2$ of \cite{Auroux-bordered}]\label{l:HilbThimble}
For each $I \in \cI$, the point $z_I$ is a Lefschetz critical point of $\pi_E^{[n]}$ and $\Sym(\ul{T}^I)$ is a Lefschetz thimble.
\end{lemma}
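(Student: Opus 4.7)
The plan is to reduce both claims to a local product model at $z_I$, and then to use that $\pi_E^{[n]}$ locally becomes a sum of Lefschetz (holomorphic Morse) singularities in disjoint variables, so that standard facts about Lefschetz thimbles of sums in a product apply.

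First I would observe that $z_I$ lies in $\Conf^n(E) \subset \Hilb^n(E)$ with support $\{p_{i_1},\dots,p_{i_n}\}$ consisting of $n$ distinct critical points of $\pi_E$. Since the Hilbert--Chow morphism is an isomorphism away from the diagonal, a neighborhood of $z_I$ in $\Hilb^n(E)$ is biholomorphic to a product $\prod_{k=1}^n U_k$ where $U_k$ is a small neighborhood of $p_{i_k}$ in $E$, and in these coordinates $\pi_E^{[n]}$ is (up to an additive constant) $\sum_{k=1}^n \pi_E|_{U_k}$. Each $\pi_E|_{U_k}$ has a non-degenerate holomorphic quadratic singularity at $p_{i_k}$ by the Lefschetz hypothesis on $\pi_E$, so their sum in disjoint variables is again a non-degenerate holomorphic quadratic singularity; hence $z_I$ is a Lefschetz critical point of $\pi_E^{[n]}$ with critical value $\sum_k \bc_{i_k}$.

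For the thimble claim I would exploit the specific shape of the paths: $\gamma^{I,k} = l_{\min f(I)+i_k,\,i_k}$ is the segment from $\min f(I)+i_k \in \RR$ to $\bc_{i_k}$, with tangent vector $\bc_{i_k} - (\min f(I)+i_k) = -\min f(I) + \sqrt{-1}$, independent of $k$. Hence all $n$ segments $\gamma^{I,k}$ are parallel, and the Minkowski sum $\sum_k \gamma^{I,k}$ is a single straight segment with the same direction, joining $n\min f(I) + \sum_k i_k$ on the real line to $\pi_E^{[n]}(z_I) = \sum_k \bc_{i_k}$; this is the candidate thimble ray. In the local product model near $z_I$, the set $\Sym(\ul{T}^I) \cap \prod_k U_k$ corresponds to $\prod_k (L_{\gamma^{I,k}} \cap U_k)$, which is the product of the local thimbles of $\pi_E|_{U_k}$; this is exactly the local Lefschetz thimble of the sum $\sum_k \pi_E|_{U_k}$. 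One then extends along the ray by symplectic parallel transport in $\Hilb^n(E) \setminus D_r$, using that the $\gamma^{I,k}$ are pairwise disjoint in $\bH^\circ$ so that $\prod_k L_{\gamma^{I,k}}$ stays in $\Conf^n(E)$ throughout, to identify $\Sym(\ul{T}^I)$ with the full Lefschetz thimble.

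The main technical point is the last step: the symplectic form on $\Hilb^n(E)$ from Lemma \ref{l:tameSymp} is not literally the product form $\omega_{\Conf^n(E)}$ near $D_{HC}$, so a priori the two symplectic connections need not match. However, since the $\gamma^{I,k}$ are pairwise disjoint and $\pi_{HC}(\Sym(\ul{T}^I))$ is bounded away from the big diagonal along the entire ray (the thimbles $L_{\gamma^{I,k}}$ live over disjoint neighborhoods in $\bH^\circ$), the parallel transport computation takes place where $\omega_{\Hilb^n(E)}$ may be arranged to agree with the product form. Therefore the two parallel transports coincide on the relevant set, and the product of thimbles for $\pi_E$ integrates to the thimble for $\pi_E^{[n]}$, giving $\Sym(\ul{T}^I)$ as claimed.
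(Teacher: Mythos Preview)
Your proposal is correct and follows essentially the same approach as the paper: both use the local product model near $z_I$ to identify $\pi_E^{[n]}$ with a sum of quadratics (giving the Lefschetz critical point), and both use that the thimble paths $\gamma^{I,k}$ are parallel lines so that $\pi_E^{[n]}(\Sym(\ul{T}^I))$ is a single straight segment. The paper's proof is considerably terser, simply asserting that the image being a straight line makes $\Sym(\ul{T}^I)$ a Lefschetz thimble; your elaboration via the local product-of-thimbles identification and the parallel transport extension (together with the observation that everything stays away from $D_{HC}$, so the product symplectic form governs the connection) fills in exactly the details the paper leaves implicit.
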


\begin{proof}
 Since $z_I$ consists of pairwise distinct points, near $z_I$, $\pi_E^{[n]}$ is locally given by 
 $(u_1,v_1,\dots,u_n,v_n) \mapsto u_1^2+v_1^2+\dots u_n^2+v_n^2$.
 Therefore, it admits a Lefschetz critical point at $z_I$.
 
 Since $\{\pi_E(T^{I,k})\}_{k=1}^n$ are parallel lines, $\pi_E^{[n]}(\Sym(\ul{T}^I))$ is also a straight line so it is a Lefschetz thimble.
\end{proof}

For the other case, we have a local lemma.

\begin{lemma}\label{l:thrownAway}
 Let $\pi: \CC^2 \to \CC$ be $\pi(u,v)=u^2+v^2$. 
 For $n \ge 2$ and $z \in \Hilb^n(\CC^2)$ supported at the origin of $\CC^2$, the length of the projection of $z$ by $\pi$ is strictly less than $n$.
\end{lemma}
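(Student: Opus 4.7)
The plan is to reduce the claim to a short commutative-algebra computation by making explicit the meaning of ``length of the projection.''

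First I would unwind definitions. The subscheme $z$ is defined by an ideal $I \subset \CC[u,v]$ with $\cO_z := \CC[u,v]/I$ of $\CC$-dimension $n$; since $z$ is supported at the origin, $\cO_z$ is a local Artinian ring with maximal ideal $\mathfrak{m} = (u,v)/I$. The projection of $z$ under $\pi$ is the scheme-theoretic image in $\CC$, determined by the kernel of the ring map $\pi^{*}\colon \CC[t] \to \cO_z$, $t \mapsto u^2+v^2$. Because $u^{2}+v^{2} \in \mathfrak m$ and $\cO_z$ is local Artinian, $t$ is nilpotent in $\cO_z$, so $\ker(\pi^{*}) = (t^{k})$ for $k$ the nilpotency degree of $u^{2}+v^{2}$ in $\cO_z$. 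The length of the projection is then
\[
\dim_{\CC} \CC[t]/(t^{k}) \;=\; k,
\]
so it suffices to prove $k < n$.

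Next I would bound $k$ by combining two simple observations. Let $N$ be the Loewy length of $\cO_z$, i.e.\ the smallest integer with $\mathfrak m^{N} = 0$. The strictly decreasing filtration $\cO_z \supsetneq \mathfrak m \supsetneq \cdots \supsetneq \mathfrak m^{N-1} \supsetneq 0$ has $N$ nonzero consecutive quotients, each of $\CC$-dimension at least $1$, so
\[
n \;=\; \sum_{i=0}^{N-1} \dim_{\CC}\bigl(\mathfrak m^{i}/\mathfrak m^{i+1}\bigr) \;\ge\; N.
\]
On the other hand $u^{2}+v^{2} \in \mathfrak m^{2}$, hence $(u^{2}+v^{2})^{j} \in \mathfrak m^{2j}$. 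Therefore $(u^{2}+v^{2})^{j}=0$ in $\cO_z$ whenever $2j \ge N$, giving $k \le \lceil N/2 \rceil \le \lceil n/2 \rceil$.

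Finally I would conclude: for $n \ge 2$ one has $\lceil n/2 \rceil < n$, so $k < n$, as required. The argument is essentially a one-page calculation, and the only ``obstacle'' is the bookkeeping at the start: pinning down that ``length of the projection'' refers to the length of the scheme-theoretic image (equivalently, the length of $\CC[t]/\ker(\pi^{*})$), after which the bound on the nilpotency degree of an element of $\mathfrak m^{2}$ in terms of the Loewy length is immediate.
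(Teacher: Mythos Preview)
Your proof is correct and follows essentially the same approach as the paper: both arguments show $(u^{2}+v^{2})^{\lceil n/2\rceil}=0$ in $\cO_z$ by observing that $\mathfrak m^{n}=0$ (the paper phrases this as ``$u^{a}v^{b}\in I$ whenever $a+b\ge n$'') and that $u^{2}+v^{2}\in\mathfrak m^{2}$, then conclude that the projection has length at most $\lceil n/2\rceil<n$. Your version is slightly more explicit in introducing the Loewy length and justifying the bound $N\le n$ via the filtration, but the content is the same.
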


\begin{proof}
Let $\CC=\Spec(\CC[w])$ and $f:\CC[w] \to \CC[u,v]$ be the algebra homomorphism sending $w$ to $u^2+v^2$.
Let $I$ be a length $n$ ideal of $\CC[u,v]$ supported at the origin, which corresponds to the point $z$ in the statement.
We want to show that the ideal $f^{-1}(I)$ in $\CC[w]$ has length strictly less than $n$.

Since $\dim_\CC(\CC[u,v]/I)=n$, we know that $u^av^b \in I$ if $a,b, \ge 0$ and $a+b \ge n$.
It means that when $n$ is even (resp. odd), we have $(u^2+v^2)^{\frac{n}{2}} \in I$ (resp.  $(u^2+v^2)^{\frac{n+1}{2}} \in I$).
Therefore, $z^{\frac{n}{2}} \in f^{-1}(I)$ when $n$ is even (resp. $z^{\frac{n+1}{2}} \in f^{-1}(I)$ when $n$ is odd)
which in turn implies that $f^{-1}(I)$ has length no greater than $\frac{n+1}{2}$.

\end{proof}

\begin{corollary}\label{c:AllCrit}
 The critical points of \eqref{eq:LF} are precisely $\{z_I\}_{I \in \cI}$ and they are Lefschetz. 
\end{corollary}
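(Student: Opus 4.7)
The plan is to combine Corollary \ref{c:CritAtCrit} (critical points of $\pi_E^{[n]}$ have support in the critical locus of $\pi_E$) with Lemma \ref{l:thrownAway} (non-reduced subschemes supported at a Lefschetz critical point have projection of length strictly less than their own length) to force the support of any critical point in $\cY_E$ to be exactly $n$ distinct critical points of $\pi_E$; the Lefschetz property then follows from Lemma \ref{l:HilbThimble}.

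In detail, let $z \in \cY_E$ be a critical point of $\pi_{\cY_E} = \pi_E^{[n]}|_{\cY_E}$. By Corollary \ref{c:CritAtCrit}, the support of $z$ lies in the critical locus of $\pi_E$, so write $\mathrm{supp}(z) = \{p_1,\dots,p_k\}$ with each $p_j$ a Lefschetz critical point of $\pi_E$, and let $m_1,\dots,m_k \geq 1$ be the multiplicities of the local components of $z$, so that $\sum_j m_j = n$. The length of $\pi_E$-pushforward of $z$ is the sum over $j$ of the length of the pushforward of the local component at $p_j$. The hypothesis that each critical value of $\pi_E$ has exactly one preimage ensures the points $\pi_E(p_j)$ are pairwise distinct, so these summands are unentangled. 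For any $j$ with $m_j \geq 2$, Lemma \ref{l:thrownAway} (applied in a standard Lefschetz Darboux chart $\pi_E = u^2+v^2$ near $p_j$) gives that the length of the projection of that local component is strictly less than $m_j$. Summing gives a pushforward of total length $< n$, meaning $z \in D_r$, contradicting $z \in \cY_E$.

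Hence every $m_j = 1$ and $k = n$, so $z$ is reduced and $\mathrm{supp}(z)$ is an $n$-element subset of critical points of $\pi_E$, uniquely determined by a set $I \in \cI$ of critical values; that is, $z = z_I$. Conversely, each $z_I$ lies in $\cY_E$ (its pushforward is supported on the $n$ distinct critical values $\{\bc_i\}_{i \in I}$, hence has length exactly $n$), and by Lemma \ref{l:HilbThimble} it is a Lefschetz critical point of $\pi_E^{[n]}$; since $\cY_E$ is open in $\Hilb^n(E)$, it is also a Lefschetz critical point of $\pi_{\cY_E}$. This gives the stated bijection $\{z_I\}_{I \in \cI} \leftrightarrow \mathrm{Crit}(\pi_{\cY_E})$ with all critical points Lefschetz.

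The only conceptual content beyond bookkeeping is the second paragraph: ruling out non-reduced critical points. This is where the specific form of $D_r$ (the relative Hilbert divisor of subschemes projecting to length $<n$) meshes with the local model at a Lefschetz critical point provided by Lemma \ref{l:thrownAway}. I expect no technical obstacle; the argument is essentially a local-to-global length count.
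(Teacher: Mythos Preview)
Your proof is correct and follows essentially the same approach as the paper: use Corollary~\ref{c:CritAtCrit} to reduce to subschemes supported on critical points of $\pi_E$, then invoke Lemma~\ref{l:thrownAway} to show that any non-reduced such subscheme lies in $D_r$, leaving only the $z_I$, which are Lefschetz by Lemma~\ref{l:HilbThimble}. The paper's proof is a terse three-line version of exactly this argument; your expanded length-count and the explicit converse direction are helpful but not substantively different.
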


\begin{proof}
By Lemma \ref{l:thrownAway}, if the support of $z$ has multiplicity $2$ at some critical point of $\pi_E$, then $z \in D_r$ which is not in $\cY_E = \Hilb^n(E)\setminus D_r$.
The result then follows from Corollary \ref{c:CritAtCrit} and Lemma \ref{l:HilbThimble}.
\end{proof}

\begin{remark}\label{r:horBoundaryNonTrivial}
 Even though \eqref{eq:LF} has only Lefschetz critical points, it is in general not symplectically locally trivial
 near the horizontal boundary, because there are critical points of $ \pi_E^{[n]}:\Hilb^n(E) \to \CC$ lying in $D_r$ (when $n>1$ so $D_r \neq \emptyset$).
\end{remark}

In light of Remark \ref{r:horBoundaryNonTrivial}, we should clarify what we mean by $\cFS(\pi_\cY)$.
Objects of the Fukaya-Seidel category $\cFS(\pi_\cY)$ 
are restricted to be Lefschetz thimbles $\Sym(\ul{T}^{I})$ for $I \in \cI$.
As in Corollary \eqref{c:directedSubCat2} (see \cite[Section 7]{Seidel1}), the endomorphism $A_{\infty}$ algebra of the direct sum of the objects is defined to be
\begin{align}
R \oplus (\oplus_{I_0<I_1}CF(\Sym(\ul{T}^{I_0}),\Sym(\ul{T}^{I_1}))) \label{eq:Dir2} 
\end{align}
where the Floer cochains are taken in $\Hilb^n(E) \setminus D_r$ and no Hamiltonian perturbation is put on the Floer equations defining the  $A_{\infty}$ structure.
As in \cite{Seidel1}, this definition is quasi-isomorphic to the directed subcategory of the vanishing cycles in the distinguished fiber.

\begin{proposition}\label{p:embedding}
There is a quasi-isomorphism between \eqref{eq:Dir1} and \eqref{eq:Dir2}.
As a result, there is a cohomologically full and faithful embedding $D^{\pi}\cFS(\pi_\cY) \to D^{\pi}\cFS^{cyl,n}(\pi_E)$.
\end{proposition}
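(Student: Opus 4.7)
The plan is to establish the quasi-isomorphism of $A_{\infty}$-algebras directly, after which the embedding follows formally. By Corollary \ref{c:AllCrit} together with Lemma \ref{l:HilbThimble}, $\{\Sym(\ul{T}^I)\}_{I \in \cI}$ is a full set of Lefschetz thimbles for $\pi_{\cY_E}$, so (split-)generates $D^\pi\cFS(\pi_\cY)$; once the quasi-isomorphism of endomorphism algebras is in hand, the assignment $\Sym(\ul{T}^I) \mapsto \ul{T}^I$ extends via Yoneda to a cohomologically full and faithful embedding $D^\pi\cFS(\pi_\cY) \to D^\pi\cFS^{cyl,n}(\pi_E)$.

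First I would compare the chain complexes. By Lemma \ref{l:directedSubCat}, for pairs $\ul{T}^{I_0}, \ul{T}^{I_1}$ with $I_0 < I_1$ the cylindrical Floer data may be chosen with $A = 0$ and $H \equiv 0$. The generators of $CF(\ul{T}^{I_0}, \ul{T}^{I_1})$ on the cylindrical side are then unordered tuples in $\Sym(\ul{T}^{I_0}) \pitchfork \Sym(\ul{T}^{I_1}) \subset \Conf^n(E)$, which by the tautological correspondence are identified with the transverse intersections $\Sym(\ul{T}^{I_0}) \pitchfork \Sym(\ul{T}^{I_1})$ inside $\Hilb^n(E) \setminus D_r$ that generate $CF(\Sym(\ul{T}^{I_0}), \Sym(\ul{T}^{I_1}))$ on the Fukaya-Seidel side. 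Gradings coincide via the induced grading functions on the symmetric products.

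Next I compare the $A_{\infty}$-structures. On the Fukaya-Seidel side, $\mu^d_{\cFS}$ counts rigid holomorphic maps $u: S \to \Hilb^n(E) \setminus D_r$ with $S \in \cR^{d+1}$, boundary on $\{\Sym(\ul{T}^{I_j})\}$, and prescribed asymptotics. On the cylindrical side, by Lemma \ref{l:directedSubCat} the defining equation for $\cR^{d+1,h}(\ul{x}_0;\ldots)$ also reduces to $(Du)^{0,1} = 0$; only $h = I_{\ul{x}_0;\ul{x}_d,\ldots,\ul{x}_1}$ contributes to $\mu^d_{cyl}$, and by Corollary \ref{c:missingDr} any such solution has image disjoint from $D_r$ with $u^{-1}(D_{HC}) = \mk(S)$ of multiplicity one. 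Conversely, an arbitrary Fukaya-Seidel solution has topological intersection number $h = I_{\ul{x}_0;\ul{x}_d,\ldots,\ul{x}_1}$ with $D_{HC}$ by Lemma \ref{l:IPairing}, realized generically as $h$ transverse points; labeling these by $S_h$ produces $h!$ lifts to the cylindrical moduli. Hence the forgetful map from the cylindrical to the Fukaya-Seidel moduli is a signed $h!$-to-one covering on the rigid locus, and the factor $\tfrac{1}{h!}$ in the definition of $\mu^d_{cyl}$ exactly identifies the two counts. A common generic choice of $J$ achieves simultaneous regularity on both sides: the evaluation constraints $u(\xi^i_+) \in D_{HC}$ are codimension-two and preserve the virtual dimension, and regularity of the constrained moduli is obtained as in Lemma \ref{l:regularity2}.

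The last step is compatibility on one-dimensional moduli: by Corollary \ref{c:v1} and Proposition \ref{p:Compactness2}, the cylindrical moduli are compactified by stable broken maps without sphere components, matching the Gromov compactification on the Fukaya-Seidel side (bubbles into $D_r$ are precluded by the positivity of intersection from Lemma \ref{l:rationalCurveinHilb}). The main obstacle is the careful bookkeeping of signs under the $S_h$-action on the ordered marked points, together with verifying that a single almost complex structure regularizes both sides simultaneously; once these are pinned down, the $A_{\infty}$-operations of \eqref{eq:Dir1} and \eqref{eq:Dir2} tautologically coincide, yielding the desired quasi-isomorphism and hence the embedding of derived categories.
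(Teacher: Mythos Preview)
Your overall strategy matches the paper's: identify generators bijectively, then identify the $A_\infty$-operations via the forgetful map $\cR^{d+1,h} \to \cR^{d+1}$ together with the factor $\tfrac{1}{h!}$. However, there is a genuine gap in the assertion that ``a common generic choice of $J$ achieves simultaneous regularity on both sides,'' and you yourself flag this as an obstacle without resolving it.

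The difficulty is that the Floer data on the cylindrical side depends on the full domain moduli $\cR^{d+1,h}$, not merely on $\cR^{d+1}$: condition \eqref{eq:FloerDataJ} requires $J_z = J_E^{[n]}$ on the marked-points neighbourhood $\nu(\mk(S))$, which moves with the marked points. A family of $J$'s pulled back along the forgetful map (i.e.\ constant on fibres) would then have to be integrable near every possible marked-point configuration, forcing $J \equiv J_E^{[n]}$, which is too rigid for regularity. Conversely, if $J$ genuinely varies with the marked points, reordering them changes the equation and there is no well-defined forgetful map of \emph{solution} moduli. The paper states this explicitly: ``Given the difference of domain moduli \ldots we cannot directly compare the $\mu^d$ operations.''

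The paper's resolution inserts an intermediate step. Via the ``total Fukaya category'' trick of \cite[Section~(10a)]{SeidelBook}, one shows that the cylindrical algebra \eqref{eq:Dir1} is quasi-isomorphic to a variant in which $J$ is chosen \emph{independent} of the interior marked points, required only to equal $J_E^{[n]}$ outside a compact subset of $\Hilb^n(E) \setminus (D_{HC} \cup D_r)$ and generic inside, with the evaluation map at the marked points transverse to $D_{HC}^h$. With this modified perturbation scheme the marked points become purely decorative, and your $h!$-to-one identification goes through tautologically. So your argument is exactly the final step of the paper's proof; what is missing is the invariance argument allowing you to change the cylindrical perturbation scheme to one compatible with the Fukaya--Seidel side.
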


\begin{proof}
There is an obvious bijective correspondence of objects and generators between \eqref{eq:Dir1} and \eqref{eq:Dir2}.
Note that all pseudo-holomorphic maps involved in defining the $\mu^d$ operations in \eqref{eq:Dir1} are contained in $\Hilb^n(E) \setminus D_r$.

For the definition of the $\mu^d$ operations in \eqref{eq:Dir2}, we can use moduli of pseudo-holomorphic maps with no 
Hamiltonian perturbation term as in \eqref{eq:Dir1} (see Lemma \ref{l:directedSubCat}) but we need to use the domain moduli $\cR^{d+1}$ instead of $\cR^{d+1,h}$.
It means that we equip $\cup_d \overline{\cR}^{d+1}$ with a consistent choice of domain-dependent almost complex structures that are 
equal to $J_E^{[n]}$ outside a compact subset of $\Hilb^n(E) \setminus D_r$ 
and generic inside the compact subset, and we count the corresponding moduli of maps to define the $\mu^d$ operations in \eqref{eq:Dir2}.

Given the difference of domain moduli for \eqref{eq:Dir2} and in \eqref{eq:Dir1}, we cannot directly compare the $\mu^d$ operations.
The standard method to circumvent this is to use the `total Fukaya category' trick from \cite[Section 10a]{SeidelBook}.

Briefly, one can show that  
$\mu^d$ operations in \eqref{eq:Dir1} yield a category quasi-isomorphic to another one in which the operations $\tilde{\mu}^d$ are defined by the domain moduli $\cup_{d,h} \overline{\cR}^{d+1,h}$
but with the domain dependent $J$ chosen \emph{independent} of the interior marked points, equal to $J_E^{[n]}$ outside a compact subset of $\Hilb^n(E) \setminus (D_{HC} \cup D_r)$ 
and generic inside a compact subset.
Moreover, for generic $J$, we can assume the the universal evaluation map to $(\Hilb^n(E))^h$ is transversal to $D_{HC}^h$.
With these data, 
$\tilde{\mu}^d$ is defined by counting the corresponding moduli of maps $u$ such that $u(z) \in D_{HC}$ for  $z \in \mk(S)$.
In this modification, the interior marked points are merely decorative, and one can canonically identify the  moduli spaces with the ones 
defining the $\mu^d$ operations in \eqref{eq:Dir2}.
\end{proof}

 It is natural to ask when the thimbles $\{\ul{T}^{I}\}_{I \in \cI}$ split-generate $D^{\pi}\cFS^{cyl,n}(\pi_E)$.
We next show that this holds when $\pi_E$ is the standard Lefschetz fibration on the type-$A$ Milnor fibre.

\section{Type $A$ geometry}\label{s:TypeA}

In this section, we  apply the results in Sections \ref{s:cylindricalFS}--\ref{s:FScategories} to the case in which $E=A_{m-1}$ is a  type A Milnor fiber. 
When $2n \le m$, the corresponding $\cY_E = \Hilb^n(E) \setminus D_r$ is the generic fiber of the adjoint quotient map restricted to
a nilpotent (or Slodowy) slice associated to a nilpotent with two Jordan blocks, as studied 
in \cite{SeidelSmith, Manolescu-link} in the context of symplectic Khovanov cohomology. 
At the end of this section, we show that if Claim \ref{p:Serre} holds, then the embedding in 
Proposition \ref{p:embedding} is essentially surjective in this case.

\subsection{$A_{m-1}$ Milnor fibers}\label{ss:AmMilnorFiber}

We recall the symplectic geometry of type $A$ Milnor fibers, with an emphasis on verifying the assumptions made in the setup in Section \ref{s:cylindricalFS}.
The following model is a slight modification of the one in \cite[Section 7]{EvansThesis}.
Let $m>1$ be an integer.
Let
\begin{align}
 X:=\CC \times \CP^1 \times \prod_{i=1}^{m} \CP^1_i
\end{align}
with coordinates $(x, [Y_0:Y_1], [a_1,b_1], \dots, [a_{m},b_{m}])$.
Let $M$ be the subvariety given by the equations
\begin{align}
 a_iY_0=b_iY_1(x-i) \text{ for }i=1,\dots,m
\end{align}
Let $\pi_M: M \to \CC$ be the projection to the $x$ coordinate.
For $x \neq 1,\dots,m$, the fiber at $x$ is given by
\begin{align}
 P_x:=\pi_E^{-1}(x)=\{(x,[Y_0:Y_1], [Y_1(x-1):Y_0], \dots, [Y_1(x-m):Y_0])\}
\end{align}
which is a smooth rational curve.
For $x=i \in \{1,\dots,m\}$, 
\begin{align}
 P_i:=\pi_E^{-1}(i)=&\{(i,[Y_0:Y_1], [Y_1(i-1):Y_0], \dots, [Y_1(i-m):Y_0]) | Y_0 \neq 0\} \\
     &\cup \{(i,[0:1], [1:0], \dots,[a_i:b_i], \dots, [1:0])|[a_i:b_i] \in \CP^1_i\}
\end{align}
is a union of two irreducible smooth rational curves.
One can check that $\pi_M$ is a Lefschetz fibration. 
Consider the following sections
\begin{align}
D_{\infty}:=&\{(x,[1:0],[0:1],\dots,[0:1]) |x \in \CC\} \\
D_{0}:=&\{(x,[0:1],[1:0],\dots,[1:0]) |x \in \CC\}
\end{align}
and define $D:=D_{0} \cup D_{\infty}$.

\begin{lemma}[cf. Lemma 7.1 of \cite{EvansThesis}]\label{l:typeA}
 $M \setminus D$ is biholomorphic to $\{a^2+b^2+(c-1)\dots(c-m)=0\}$.
\end{lemma}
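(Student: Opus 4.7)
The plan is to construct an explicit biholomorphism $\phi : M \setminus D \to V$, where $V := \{a^2 + b^2 + (c-1)\cdots(c-m) = 0\} \subset \CC^3$. Introduce the meromorphic functions $A := Y_0/Y_1$ and $B := -\prod_{i=1}^m(x-i) \cdot Y_1/Y_0$ on $M$. Using the defining equations of $M$, one checks that $\{Y_1 = 0\} \cap M = D_\infty$, while $\{Y_0 = 0\} \cap M$ consists of $D_0$ together with the exceptional $\CP^1_i$ in each reducible fibre $P_i$. Thus $A$ is regular on $M \setminus D_\infty$, with zeros along $D_0$ and along each $\CP^1_i$. The function $B$ a priori has a simple pole along each $\CP^1_i$ (from $Y_1/Y_0$), but these are cancelled by the simple zero of $\prod(x-j)$ along the whole fibre $P_i$; in a blow-up chart near $(i,[0{:}1])$ with coordinates $(t,s)$ satisfying $t = Y_0/Y_1$ and $x - i = st$, a direct calculation gives $B = -s\prod_{j \ne i}\bigl((i-j) + st\bigr)$, which is regular. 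Hence $A$ and $B$ are both regular on $M \setminus D$. By construction $AB = -\prod(x-i)$, so the substitution $A = a + \sqrt{-1}\,b$, $B = a - \sqrt{-1}\,b$ yields a holomorphic map $\phi := (a,b,x) : M \setminus D \to V$.

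Next I would check fibre-by-fibre over the $x$-axis that $\phi$ is bijective. For $x \notin \{1,\dots,m\}$ the fibre $\pi_M^{-1}(x) \setminus D$ is a $\CP^1$ minus two points, and in the chart $Y_0 = 1$, $u := Y_1$, the map is $u \mapsto (1/u,\,-u\prod(x-i),\,x)$, visibly a bijection onto $V \cap \{c = x\}$. For $x = i$, the complement $P_i \setminus D$ is a nodal curve: the proper transform of the old fibre (parametrised in the chart $Y_0 = 1$ by $u \in \CC$, with $u = 0$ removed since it is $D_\infty \cap P_i$) meets the exceptional $\CP^1_i$ (parametrised in the blow-up chart by $s = a_i/b_i \in \CC$, with $s = \infty$ removed since it is $D_0 \cap P_i$) at one node. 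The formulas above give $\phi = (1/u,\,0,\,i)$ on the proper transform and $\phi = (0,\,-s\prod_{j \ne i}(i-j),\,i)$ on the exceptional component; these are bijective parametrisations of the two branches of $V \cap \{c = i\} = \{AB = 0, c = i\}$, and the node maps to the unique common point $(0,0,i)$.

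The only delicate check is that $\phi$ is a local biholomorphism at the node of each $P_i$. In the local coordinates $(t,s)$ on $M$ around the node, $\phi(t,s) = \bigl(t,\, -s\prod_{j \ne i}(i-j) + O(ts),\, i + st\bigr)$, whose Jacobian with respect to $(t,s)$ is invertible at the origin since $\prod_{j \ne i}(i-j) \neq 0$. On the target, the partial derivative of the defining equation of $V$ with respect to $c$ at $(0,0,i)$ equals $\prod_{j \ne i}(i-j) \neq 0$, so $V$ is smooth there, and $\phi$ is a local biholomorphism of smooth surfaces. Combined with the bijectivity above, this yields the desired biholomorphism. The main obstacle is the explicit computation at the nodes; away from them, the statement reduces to a routine identification using the two standard affine charts $\{Y_0 = 1\}$ and $\{Y_1 = 1\}$ of $M$.
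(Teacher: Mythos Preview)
Your proof is correct and takes essentially the same approach as the paper: both exhibit the explicit biholomorphism, with the paper writing down the inverse map $(a,b,c) \mapsto (c,[a+\sqrt{-1}b:1],[c-1:a+\sqrt{-1}b],\ldots,[c-m:a+\sqrt{-1}b])$ on a dense subset and leaving the remaining verification to the reader. Your version supplies precisely the details the paper omits, in particular the regularity of $B$ along the exceptional components and the local-isomorphism check at the nodes of the reducible fibres.
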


\begin{proof}
For a dense subset, the identification is given by
\begin{align}
 (a,b,c) \mapsto (c,[a+\sqrt{-1}b:1], [c-1:a+\sqrt{-1}b], \dots, [c-m:a+\sqrt{-1}b])
\end{align}
We leave the rest to the reader.
\end{proof}

We call a smooth affine variety of the form $\{a^2+b^2+(c-c_1)\dots(c-c_m)=0\}$ with pairwise distinct 
$c_1,\dots,c_m \in \CC$ an $A_{m-1}$ Milnor fiber.
In particular, $M \setminus D$ is an $A_{m-1}$ Milnor fiber by Lemma \ref{l:typeA}.

\begin{remark}
By projecting to the $c$ co-ordinate, $\{a^2+b^2+(c-c_1)\dots(c-c_m)=0\}\subset \CC^3$ is the total space of a Lefschetz fibration with general fibre $\CC^*$ and with $m$ nodal fibers (over the $c_i$). Since we are primarily interested in the symplectic geometry and not complex geometry of $M\backslash D$, we will also refer to any such Lefschetz fibration as an $A_{m-1}$-Milnor fiber; in particular, the restriction of this Lefschetz fibration of $M\backslash D$ to a disc in the $c$-plane containing all the critical values is a Milnor fiber. 
\end{remark}

\begin{lemma}[cf. Lemma 7.2 of \cite{EvansThesis}]\label{l:omegaOrthogonal}
 Let $\omega_X$ be a product symplectic form on $X$ that tames the complex structure.
 Then $\omega_M:=\omega_X|_M$ is a symplectic form and $P_x$ is $\omega_M$-orthogonal to $D$ for all $x$.
\end{lemma}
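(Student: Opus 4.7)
The plan is to treat the two assertions separately. For the first assertion, that $\omega_M := \omega_X|_M$ is a symplectic form, I would use that $M$ is a complex submanifold of $(X, J_X)$ and that $\omega_X$ tames $J_X$. Closedness of $\omega_M$ is inherited immediately from that of $\omega_X$. For nondegeneracy, the taming condition $\omega_X(v, J_X v) > 0$ for every nonzero $v$ restricts to $T_pM$, which is $J_X$-invariant because $M$ is complex; this implies that $\omega_M|_{T_pM}$ is a nondegenerate real $2$-form, so $\omega_M$ is a symplectic form on the smooth locus of $M$ (and $M$ is smooth by the Lefschetz property of $\pi_M$).

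For the orthogonality, the argument will be essentially tautological once we observe two features visible in the product coordinates $(x, [Y_0:Y_1], [a_1:b_1], \dots, [a_m:b_m])$ on $X$. First, $D_0$ and $D_\infty$ are sections of $\pi_M$, so at any point $p \in D$ the tangent space $T_pD$ is contained in the $T\CC$ factor of the product splitting $TX = T\CC \oplus T\CP^1 \oplus \bigoplus_i T\CP^1_i$; equivalently, it projects to zero in every factor other than $T\CC$. Second, any tangent vector $v \in T_pP_x$ projects to zero in the $T\CC$ factor because $P_x = \pi_M^{-1}(x)$; this holds for regular $x$ and also for each of the two components of the singular fiber $P_i$, since $x = i$ is constant on both components. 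Writing $\omega_X = p_\CC^*\omega_\CC + p_{\CP^1}^*\omega_{\CP^1} + \sum_i p_{\CP^1_i}^*\omega_{\CP^1_i}$, each summand in the expansion of $\omega_X(u,v)$ kills one of the two arguments, so $\omega_M(u,v) = 0$ for $u \in T_pD$, $v \in T_pP_x$ at any $p \in P_x \cap D$.

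The only verification demanding a moment's care is the singular fiber $P_i$, where the two irreducible components intersect $D_0$ and $D_\infty$ at distinct points; but inspecting the defining equations $a_j Y_0 = b_j Y_1(i-j)$ at $x = i$ shows each component is parametrised with $x$ held fixed, so the vertical-projection observation still applies. I expect no serious obstacle: once the product splitting of $\omega_X$ and the ``horizontal vs.\ vertical'' dichotomy for $D$ and $P_x$ are noted, the orthogonality is immediate from linearity of the taming/closed form under the factorwise decomposition.
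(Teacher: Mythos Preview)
Your proposal is correct and follows essentially the same approach as the paper: the paper's proof simply notes that both assertions are clear, and for orthogonality observes that $TD$ lies in the $T\CC$ factor while $TP_x$ lies in the $T(\CP^1)^{m+1}$ factor of $TX = T\CC \oplus T(\CP^1)^{m+1}$. You have supplied more detail (including the taming argument for nondegeneracy and the check at the singular fibers), but the underlying idea is identical.
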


\begin{proof}
 Both assertions are clear. For the second one, observe that $TD$ and $TP_x$ lie in the first and second factor of $T\CC \oplus (T(\CP^1)^{m+1}) =TX$, respectively.
\end{proof}

Let $\omega_X$ be the standard symplectic form on $X$ and define $\omega_M:=\omega_X|_M$.
For $R>0$, let $B_R \subset \CC$ be the open disc of radius $R$ and $M_R=\pi_M^{-1}(B_R)$.
Let $\pi_{M_R}: \frac{1}{R} \pi_M|_{M_R}:M_R \to B_1$ which is still a holomorphic Lefschetz fibration, and a symplectic Lefschetz fibration
with respect to $\omega_{M_R}:=\omega_M|_{M_R}$.
We equip the base unit disc $B_1$ with the hyperbolic area form $\omega_{hyp}$.

\begin{lemma}\label{l:omegaR}
 Let $R>m$.
 There exists a symplectic form $\omega_R$ on $M_{R}$ and a compact subset $C_B \subset B_1$ such that
 \begin{align}
  &\omega_R \text{ tames the complex structure}  \label{eq:adjOmegaTame}\\
  &\omega_R \text{ is symplectically locally trivial in }\pi_{M_R}^{-1}(B_1 \setminus C_B) \label{eq:adjOmegaTrivial} \\
  &D \cap M_R \text{ is $\omega_R$-symplectic-orthogonal to } \pi_{M_R}^{-1}(x) \text{ for all }x \in B_1 \setminus C_B \label{eq:adjOmegaOrtho}
 \end{align}
\end{lemma}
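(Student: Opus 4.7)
The plan is to keep $\omega_R$ equal to the ambient K\"ahler form $\omega_M$ near the critical fibres of $\pi_{M_R}$ and to interpolate it, via a primitive and a cutoff, to a product K\"ahler form on the complement of a slightly larger compact region in the base; the construction hinges on the natural holomorphic trivialisation of the fibration over the complement of its critical values.

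The fibre formulas of Section \ref{ss:AmMilnorFiber} yield a biholomorphism
\begin{align*}
\Phi : (\CC \setminus \{1,\ldots,m\}) \times \CP^1 \;\longrightarrow\; M \setminus \pi_M^{-1}(\{1,\ldots,m\}),
\end{align*}
sending $(x,[Y_0\!:\!Y_1])$ to $(x,[Y_0\!:\!Y_1],[Y_1(x-1)\!:\!Y_0],\ldots,[Y_1(x-m)\!:\!Y_0])$, under which $D_0$ and $D_\infty$ become the constant sections $\{[0\!:\!1]\}$ and $\{[1\!:\!0]\}$. I pick nested compact sets $C_B'\subset \mathrm{int}(C_B)\subset B_1$ each containing $\{i/R:1\le i\le m\}$ in its interior (possible since $R>m$), an area form $\alpha$ on $B_1$, and a rotation-invariant K\"ahler form $\omega_{FS}$ on $\CP^1$ rescaled so that $\int_{\CP^1}\omega_{FS}=\int_{P_x}\omega_M$ on any smooth fibre. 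Then
\begin{align*}
\widetilde{\omega} \;:=\; (\Phi^{-1})^*\bigl(\pi_{\CC}^*\alpha + \pi_{\CP^1}^*\omega_{FS}\bigr)
\end{align*}
is a K\"ahler product form on $\pi_{M_R}^{-1}(B_1\setminus C_B')$ which tames $J$, is tautologically symplectically locally trivial over its base (for a suitable $\alpha$ matching the local model of Section \ref{sss:targetspace}), and makes $D_0,D_\infty$ symplectic-orthogonal to every fibre.

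I then interpolate using a primitive. The trivialised region $\pi_{M_R}^{-1}(B_1\setminus C_B')$ deformation-retracts onto its fibre $\CP^1$, so its $H^2$ is generated by the fibre class; the rescaling of $\omega_{FS}$ forces $\widetilde{\omega}$ and $\omega_M$ to agree on this class, hence $\widetilde{\omega}-\omega_M=d\beta$ for some $1$-form $\beta$. With a smooth cutoff $\rho:B_1\to[0,1]$ vanishing on a neighbourhood of $C_B'$ and equal to $1$ outside $C_B$, I set
\begin{align*}
\omega_R \;:=\; \omega_M \,+\, d\bigl((\pi_{M_R}^*\rho)\,\beta\bigr).
\end{align*}
This form is closed, equals $\omega_M$ on $\pi_{M_R}^{-1}(C_B')$, and equals $\widetilde{\omega}$ on $\pi_{M_R}^{-1}(B_1\setminus C_B)$, so properties \eqref{eq:adjOmegaTrivial} and \eqref{eq:adjOmegaOrtho} are immediate.

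The only remaining point is taming \eqref{eq:adjOmegaTame}, which holds trivially where $\omega_R$ equals $\omega_M$ or $\widetilde{\omega}$; on the transition annulus $\pi_{M_R}^{-1}(C_B\setminus C_B')$ it expands as
\begin{align*}
\omega_R(v,Jv) \;=\; (1-\rho)\,\omega_M(v,Jv) + \rho\,\widetilde{\omega}(v,Jv) + (d\rho\wedge\beta)(v,Jv),
\end{align*}
a convex combination bounded below by $c|v|^2$ uniformly on the precompact annulus, plus an error of size at most $\|d\rho\|_\infty\|\beta\|_\infty|v|^2$. I finish by the standard Voisin-type interpolation argument recalled in the proof of Lemma \ref{l:tameSymp}: taking $\rho$ with $\|d\rho\|_\infty$ as small as required (by stretching the annulus between $C_B'$ and $C_B$, or inserting a chain of nested cutoffs) and choosing $\beta$ to minimise $\|\beta\|_\infty$ within its cohomology class reduces the error strictly below $c$. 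The main obstacle of the whole proof is precisely this last quantitative bookkeeping in the transition region; modulo it, the construction is formal once the trivialisation $\Phi$ is in hand.
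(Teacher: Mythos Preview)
Your construction is sound up to the point you yourself flag, but the taming step is a genuine gap rather than routine bookkeeping, and your two suggested fixes do not close it. The paper takes a different route that sidesteps the issue entirely.

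The paper restricts the trivialisation $\Phi$ to a thin collar $B_1\setminus B_{1-\epsilon}$ and defines a smooth self-map $\phi_g$ of $\overline{M_R}$ which radially collapses the collar onto $\partial\overline{M_R}$ (via a function $g$ with $g(r)=1$ for $r\ge 1-\delta$). It then sets
\[
\omega_R \;=\; \tfrac{1}{a}\,\phi_g^*\omega_{M_R} \;+\; \pi_{M_R}^*\omega_{hyp}.
\]
The pullback $\phi_g^*\omega_{M_R}$ is closed, equals $\omega_{M_R}$ over $B_{1-\epsilon}$, is fibrewise symplectic everywhere, and for $r\ge 1-\delta$ has no $dr$-component; this forces the $\omega_R$-horizontal distribution there to be integrable with $\omega_R|_{T^h}=\pi^*\omega_{hyp}$, while Lemma~\ref{l:omegaOrthogonal} transfers to $\phi_g^*\omega_{M_R}$ since $\phi_g$ preserves both $D$ and the fibration. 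Crucially, the parameter $a$ is free: the cross-terms of $\tfrac{1}{a}\phi_g^*\omega_{M_R}$ are $O(1/a)$, while $\pi^*\omega_{hyp}$ supplies $a$-independent base positivity, so taming holds once $a\gg 0$.

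Your form $\omega_R=\omega_M+d((\pi^*\rho)\beta)$ has no such free parameter. The appeal to Lemma~\ref{l:tameSymp} is misplaced: there the correction is $\lambda\,d(\rho\Theta)$ with $\lambda\to 0$, so smallness is built in; here you need the full $\beta$ to reach $\widetilde\omega$ and cannot scale it away. Stretching the annulus does not make $\|d\rho\|_\infty$ arbitrarily small, since $B_1$ has finite Euclidean diameter (so $\|d\rho\|_\infty\gtrsim 1/(1-r')$ for any annulus with inner radius $r'$), while local triviality forces $\alpha$ to agree with $\omega_{hyp}$, so $\|\beta\|_\infty$ has no reason to stay bounded as $C_B\to\partial B_1$. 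Nested cutoffs do not help either: splitting into $N$ sub-annuli rescales $\|d\rho_k\|\sim N$ and $\|\beta_k\|\sim 1/N$, with product independent of $N$. A repair along your lines would be to first interpolate to a product with a \emph{bounded} base area form (where a genuine small-parameter argument applies) and only then add $\pi^*\omega_{hyp}$; but that is in effect the paper's construction.
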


\begin{proof}

For $\epsilon>0$ small, we have a holomorphic embedding
\begin{align}
 \Phi: (B_{1} \setminus B_{1-\epsilon}) \times \CP^1 &\to M_R \\
 (x,[Y_0:Y_1]) &\mapsto (Rx,[Y_0:Y_1], [Y_1(x-1):Y_0], \dots, [Y_1(x-m):Y_0]) \nonumber
\end{align}
which is compatible with the projection to $B_{1} \setminus B_{1-\epsilon}$.

Let $g:(1-\epsilon,1] \to (1-\epsilon,1]$ be an increasing function such that there exists $\delta>0$ so that 
$g(r)=r$ for $r$ near $1-\epsilon$, $g$ is strictly increasing when $r \in (1-\epsilon,1-\delta)$, and $g(r)=1$ for $r \ge 1-\delta$.
It induces a smooth map $\phi_g: M_R \to M_R$ such that 
\begin{align}
\phi_g(z)&:=z \text{ for } z \notin Im(\Phi) \\
\phi_g(\Phi(re^{\sqrt{-1}\theta},y))&:=\Phi(g(r)e^{\sqrt{-1}\theta},y) \text{, where }(x,[Y_0:Y_1])=(re^{\sqrt{-1}\theta},y)
\end{align}
Then $\phi_g^*\omega_{M_R}$ is a closed $2$-form, non-degenerate in $\pi_{M_R}^{-1}(B_{1-\epsilon})$ and fiberwise symplectic.

A direct calculation shows that there is $A>0$ such that for all $a>A$,
$\frac{1}{a}\phi_g^*\omega_{M_R} + \pi_{M_R}^* \omega_{hyp}$ is a symplectic form that satisfies all the conditions \eqref{eq:adjOmegaTame}, \eqref{eq:adjOmegaTrivial}
and \eqref{eq:adjOmegaOrtho} for $C_B:=\overline{B_{1-\delta}}$.
\end{proof}

Now, we give a dictionary to the setup in Section \ref{sss:targetspace}.
Let $f:\ol{B}_1 \to \bH$ be a hyperbolic isometry.
Let 
\begin{align}
 E^{\rceil}&:=\ol{M}_R 
  \qquad \pi_{E^{\rceil}}:= f \circ \frac{1}{R}\pi_M|_{\ol{M}_R} \qquad D_E:=D \cap \ol{M}_R  \label{eq:dictionary}\\
 \overline{E}&:=M_R  \qquad \pi_{\overline{E}}:=f \circ \pi_{M_R} \qquad \omega_{\ol{E}}:=\omega_R \qquad C_{\bH}:=f(C_B)  \nonumber
\end{align}
It is straight forward to check that the assumptions made in Section \ref{sss:targetspace} are satisfied.
Most notably, $\omega_{\ol{E}}$ tames the complex structure, $\pi_{\ol{E}}|_{\pi_{\ol{E}}^{-1}(\bH^\circ \setminus C_\bH)}$ is symplectically locally trivial
and every holomorphic map $\mathbb{CP}^1 \to E^{\rceil}$ has positive algebraic intersection number with $D_E$.

\begin{remark}
 There is an additional feature in this setting that is not assumed in Section \ref{sss:targetspace}, namely, for all $x \in \pi_{\ol{E}}|_{\pi_{\ol{E}}^{-1}(\bH^\circ \setminus C_\bH)}$, the
 complex structure at $x$ respects the symplectic decomposition $T_x\ol{E}=T^v_x\ol{E} \oplus T^h_x\ol{E}$.
\end{remark}

\subsection{Nilpotent slices}\label{ss:nilSlice}

Let $m,n \in \NN$ such that $2n \le m$. Let $G=GL_{m}(\CC)$ and $ \mathfrak{g}$ be its Lie algebra.
The adjoint quotient map $\chi:\kg \to \mathfrak{h}/W=\Sym^{m}(\CC)$ take an element $A \in \mathfrak{g}$ to the coefficients of its characteristic polynomial, i.e. the symmetric functions of the  
eigenvalues (with multiplicities) of $A$.
The set of regular values is given by $\Conf^{m}(\CC) \subset \Sym^{m}(\CC)=\CC^{m}$.
Let $S_{n,m} \subset \kg$ be the affine subspace consisting of matrices
\[ A= \left( \begin{array}{cccccccccccccccccccccccc}
a_1   & 1 &         &  & b_1  &    &  & \\
a_2   & 0 & 1       &  & b_2  &    &  & \\
\dots &   & \dots   &   & \dots &  &    \\
a_{n-1}& &         & 1 & b_{n-1}&  &  & \\
a_n    & &         & 0 & b_n    &  &  & \\
0      &   &      &    & d_1     & 1  &   & \\
\dots &     &     &    & d_2    &  0 &1    & \\
0     &  &       &    & \dots & & &\dots & \\
c_1 &   &       &     & \dots & & &\dots & \\
\dots & &      &      & d_{m-n-1} & &     & & 1\\
c_n   & &       &     & d_{m-n}& &  & &0
\end{array} 
\right)\]
such that $a_i,b_i,c_i,d_i \in \CC$ for all $i$.
Note that, for the first column of $A$ to be well-defined, we used $2n \le m$.
The affine subspace $S_{n,m}$ is a nilpotent (Slodowy) slice of the nilpotent element of Jordan type $(n,m-n)$.
The restriction $\chi|_{S_{n,m}}: S_{n,m} \to \Sym^{m}(\CC)$ admits a simultaneous resolution by Grothendieck (see \cite{Slodowy}, \cite{CG97}), and $\chi|_{S_{n,m}}^{-1}(\Conf^{m}(\CC)) \to \Conf^{m}(\CC)$ is a differentiable fiber bundle.
For $ \tau \in \Conf^{m}(\CC)$, we define
\begin{align}
\cY_{n,\tau}:= \chi|_{S_{n,m}}^{-1}(\tau)
\end{align}

\begin{example}
 When $n=1$,
 \begin{align}
  \cY_{1,\tau}=\{(b,c,z) \in \CC^3|bc+P_{\tau}(z)=0\}
 \end{align}
where $P_{\tau}(t)$ is the degree $m$ monic polynomial with roots given by elements in $\tau$.
Note that, if $\tau=\{1,\dots,m\}$, then $\cY_{1,\tau}$ is biholomorphic to $\{a^2+b^2+(c-1)\dots(c-m)=0\}$ (cf. Lemma \ref{l:typeA}).

\end{example}

We denote the projection to the $z$ co-ordinate by $\pi_{1,\tau}: \cY_{1,\tau} \to \CC$.

\begin{lemma}[\cite{Manolescu06}]\label{l:Manolescu}
 When $2n \le m$, there is a holomorphic open embedding $j:\cY_{n,\tau} \to \Hilb^{n}(\cY_{1,\tau})$.
 The complement of the image is the relative Hilbert scheme of the projection $\pi_{1,\tau}$, i.e. the subschemes whose projection to $\CC_z$ does not have length $n$.
\end{lemma}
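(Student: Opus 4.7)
The plan is to give the embedding $j$ via a spectral (companion-matrix) construction, and then invert it on the open locus of length-$n$ fibres. Given $A \in \cY_{n,\tau}$, the north-west data $\{a_i\}$ defines a monic polynomial $p_A(z) := z^n - a_1 z^{n-1} - \cdots - a_n$; the transverse column and bottom-left column data $\{b_i\}$ and $\{c_i\}$ define polynomials $B_A(z), C_A(z) \in \CC[z]$ of degree $<n$ (say $B_A = \sum b_i z^{n-i}$, $C_A = \sum c_i z^{n-i}$, with the exact conventions dictated by the indexing of $S_{n,m}$). We propose to set
\[
j(A) := \mathrm{Spec}\bigl(\CC[b,c,z]/\bigl(p_A(z),\, b - B_A(z),\, c - C_A(z)\bigr)\bigr) \subset \cY_{1,\tau}.
\]
The first step is to check that this is in fact a subscheme of $\cY_{1,\tau} = \{bc + P_\tau(z) = 0\}$, which amounts to showing the congruence $B_A(z) C_A(z) + P_\tau(z) \equiv 0 \pmod{p_A(z)}$, with quotient polynomial $q_A(z)$ of degree $m-n$ whose coefficients are precisely the $\{d_i\}$ entries of $A$.

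The identification of the congruence with the characteristic-polynomial constraint $\chi(A) = \tau$ is the computational heart of the lemma. The plan is to expand $\det(zI - A)$ in block form using the two-Jordan-block structure of $S_{n,m}$: the north-west $n\times n$ block contributes $p_A(z)$, the south-east $(m-n)\times (m-n)$ block contributes a degree-$(m-n)$ companion-type polynomial $q_A(z)$ built from the $d_i$, and the only nontrivial cross-block contribution comes from the interaction of the $b_i$-column with the $c_i$-column, which — after Laplace expansion along these columns — assembles into the product $B_A(z) C_A(z)$. The upshot is the identity $P_\tau(z) = p_A(z) q_A(z) - B_A(z) C_A(z)$, which both exhibits $j(A) \subset \cY_{1,\tau}$ and encodes $\chi(A) = \tau$. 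The hypothesis $2n \le m$ enters here to guarantee that the $b$-column and $c$-column do not collide inside the matrix, so the block expansion has this clean form.

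Once $j$ is known to be a morphism, I will construct its inverse on the open set $\Hilb^n(\cY_{1,\tau}) \setminus D_r$. A subscheme $Z$ in this open set has ideal of the form $(p(z),\, b - B(z),\, c - C(z))$ for a unique monic $p \in \CC[z]$ of degree $n$ and unique $B, C \in \CC[z]$ of degree $<n$, because $\pi_{1,\tau}|_Z \colon Z \to \CC_z$ has length $n$ and so $\CC[z]/(p) \twoheadrightarrow \mathcal{O}_Z$ is an isomorphism, under which $b, c$ become uniquely determined polynomial functions. The membership $Z \subset \cY_{1,\tau}$ forces $BC + P_\tau \equiv 0 \pmod p$, so performing polynomial division yields a unique quotient $q$ of degree $m-n$; reading off the coefficients of $p, q, B, C$ recovers $a_i, d_i, b_i, c_i$ and hence an element $A \in \cY_{n,\tau}$ with $j(A) = Z$. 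Both $j$ and its inverse are polynomial in the coordinates, hence holomorphic, giving the open embedding with image exactly $\Hilb^n(\cY_{1,\tau}) \setminus D_r$.

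The main obstacle, as noted above, is the block-determinant computation establishing $P_\tau = p_A q_A - B_A C_A$; the rest of the argument is formal scheme-theoretic bookkeeping. A secondary subtlety is the scheme-theoretic well-definedness of $j(A)$ when $p_A$ has multiple roots — but because we have defined $j(A)$ as $\mathrm{Spec}$ of an explicit quotient ring (rather than as a configuration of points), this is automatic, and the openness of the length-$n$ projection condition ensures that $j$ lands in, and surjects onto, the correct open subset of $\Hilb^n(\cY_{1,\tau})$.
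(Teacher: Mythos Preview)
Your proposal is correct and follows essentially the same approach as the paper. The paper, following Manolescu, identifies a point of $\cY_{n,\tau}$ with a 4-tuple of polynomials $(A(t),B(t),C(t),D(t))$ satisfying $A(t)D(t)-B(t)C(t)=P_\tau(t)$ via the block-determinant computation you describe, and defines $j$ by the ideal $\{Q(b,c,z)\mid A(t)\text{ divides }Q(B(t),C(t),t)\}$; this membership-test description is equivalent to your generator description $(p_A(z),\,b-B_A(z),\,c-C_A(z))$, and your explicit inverse on the complement of $D_r$ makes precise what the paper leaves implicit.
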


\begin{proof}
 Although Manolescu \cite{Manolescu06} only considers the case $m$ even, the same proof goes through in general.
 More precisely, for $A \in S_{n,m}$, it is easy to check that
 \begin{align}
  \det(tI-A)=A(t)D(t)-B(t)C(t)
 \end{align}
where 
\begin{align*}
 &A(t)=t^n-a_1t^{n-1}+\dots+(-1)^na_n, \\
 &B(t)=b_1t^{n-1}-b_2t^{n-2}+ \dots+(-1)^{n-1}b_n, \\
 &C(t)=c_1t^{n-1}-c_2t^{n-2}+ \dots+(-1)^{n-1}c_n, \\
 &D(t)=t^{m-n}-d_1t^{m-n-1}+\dots+(-1)^{m-n}d_{m-n}
\end{align*}
Points in $\cY_{n,\tau}$ are identified with 4-tuples of polynomials $(A(t),B(t),C(t),D(t))$ in $\CC[t]$
such that $A(t)D(t)-B(t)C(t)=P_{\tau}(t)$.
Points in $\Hilb^{n}(\cY_{1,\tau})$ are identified with ideals $\cI$ in $\cO:=\CC[b,c,z]/(bc+P_{\tau}(z))$ such that $\dim \cO/\cI=n$. 
The holomorphic embedding $j:\cY_{n,\tau} \to \Hilb^{n}(\cY_{1,\tau})$ is given by 
\begin{align}
 j(A(t),B(t),C(t),D(t))=\{Q(b,c,z) \ | \ A(t) \text{ divides } Q(B(t),C(t),t)\}
\end{align}

\end{proof}


Let $\tau=\{1,\dots,m\}$ and identify $\cY_{1,\tau}$ and $\pi_{1,\tau}$ with $M$ and $\pi_M$ from Section \ref{ss:AmMilnorFiber}, respectively.
For $E^\rceil=\ol{M}_R$ as in the dictionary \eqref{eq:dictionary}, we know that, by Lemma \ref{l:Manolescu}, 
$\cY_{n,m}:=\Hilb^n(E)\setminus D_r$ is an open subset of $\cY_{n,\tau}$ when $2n \le m$.
Moreover, $\cY_{n,m}$ exhausts $\cY_{n,\tau}$ as $R$ goes to infinity.

\begin{lemma}
When $2n \le m$, the map $\pi_{1,\tau}^{[n]}|_{j(\cY_{n,\tau})}$ is given by sending a matrix $A \in \cY_{n,\tau}$ to the top left entry $a_1$.
\end{lemma}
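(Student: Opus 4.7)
The plan is to compute $\pi_{1,\tau}^{[n]}\circ j$ by exploiting Manolescu's explicit description of the ideal $\cI_A\subset \cO:=\CC[b,c,z]/(bc+P_\tau(z))$ associated to a matrix $A\in \cY_{n,\tau}$. The map $\pi_{1,\tau}^{[n]}$ sends a length-$n$ subscheme of $\cY_{1,\tau}$ to the sum (with multiplicities) of the $z$-coordinates of its support, i.e.\ to $\tr(m_z : \cO/\cI \to \cO/\cI)$ where $m_z$ denotes multiplication by $z$. So the task reduces to identifying the quotient ring $\cO/\cI_A$ together with this trace.

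The key step is to write down a concrete ring isomorphism $\cO/\cI_A \cong \CC[t]/(A(t))$. Using Manolescu's description $\cI_A = \{Q(b,c,z) : A(t)\mid Q(B(t),C(t),t)\}$, the natural candidate is the algebra homomorphism
\begin{equation*}
\varphi_A : \cO \longrightarrow \CC[t]/(A(t)), \qquad Q(b,c,z) \longmapsto Q(B(t),C(t),t) \bmod A(t).
\end{equation*}
To check this descends from $\CC[b,c,z]$ to $\cO$, one substitutes into the defining relation: $bc+P_\tau(z) \mapsto B(t)C(t)+P_\tau(t)$, and the identity $A(t)D(t)-B(t)C(t)=P_\tau(t)$ gives $B(t)C(t)+P_\tau(t)=A(t)D(t)\equiv 0 \pmod{A(t)}$. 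Thus $\varphi_A$ is well-defined, surjective (since $\varphi_A(z)=t$), and by construction $\ker \varphi_A = \cI_A$; a dimension count ($\dim \cO/\cI_A = n = \deg A$) then forces $\varphi_A$ to be an isomorphism.

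Under $\varphi_A$ multiplication by $z$ is conjugate to multiplication by $t$, so the required trace is $\tr(m_t : \CC[t]/(A(t)) \to \CC[t]/(A(t)))$, i.e.\ the sum of the roots of $A(t)$ counted with multiplicity. Since $A(t)=t^n - a_1 t^{n-1}+\dots+(-1)^n a_n$, this sum equals $a_1$ by Vieta's formula, yielding $\pi_{1,\tau}^{[n]}(j(A))=a_1$ as claimed.

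The only step that needs any care is the well-definedness of $\varphi_A$; once the identity $A(t)D(t)-B(t)C(t)=P_\tau(t)$ (already recorded in the proof of Lemma~\ref{l:Manolescu}) is in hand this is essentially automatic, so I do not anticipate a substantive obstacle.
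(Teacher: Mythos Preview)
Your proof is correct and follows essentially the same route as the paper's: both identify the image of $j(A)$ in $\Sym^n(\CC)$ with the multiset of roots of $A(t)$ and then invoke Vieta. The only difference is that the paper outsources this identification to \cite[Remark 2.8]{Manolescu06}, whereas you supply the explicit ring isomorphism $\cO/\cI_A \cong \CC[t]/(A(t))$ yourself; your version is thus more self-contained (the dimension count you mention is in fact redundant, since you already have both surjectivity and the kernel).
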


\begin{proof}
By \cite[Remark 2.8]{Manolescu06}, the composition
\begin{align}
 \cY_{n,\tau} \hookrightarrow \Hilb^{n}(\cY_{1,\tau}) \xrightarrow{\text{$\pi_{HC}$}} \Sym^n(\cY_{1,\tau}) 
\xrightarrow{\Sym^n(\pi_E)} \Sym^n(\CC)
\end{align}
is given by the roots of $A(t)$ (in the notation of the proof of Lemma \ref{l:Manolescu}).
Therefore the map $\pi_{1,\tau}^{[n]} \circ j: \cY_{n,\tau} \to \CC$ is the sum of the roots of $A(t)$, which is $a_1$.

\end{proof}

\subsection{Sliding invariance}

If $\ul{L}_{0}$ and $\ul{L}_1$ are not connected by a path $\ul{L}_t \in \cL^{cyl,n}$, then 
the quasi-isomorphism types of $\ul{L}_0$ and $\ul{L}_1$ in $\cFS^{cyl,n}(\pi_E)$ are in general different.
However, we present three special cases where the quasi-isomorphism type is unchanged by a move not arising from an isotopy through admissible tuples (recall Definition \ref{d:admissibletuple}).

The first case is the analogue of \cite[Lemma 49]{SeidelSmith}.

\begin{lemma}\label{l:sliding1}
 Let $\Gamma:=\{\gamma_{1},\dots,\gamma_{n}\}$ be an admissible tuple.
 Suppose $\gamma_{i}$ and $\gamma_{j}$ are matching paths and $i \neq j$.
 Let $\gamma'$ be the matching path obtained by sliding $\gamma_{i}$ across $\gamma_{j}$.
 If $\Gamma'$ is the admissible tuple obtained by replacing $\gamma_{i}$ by $\gamma'$, then $\ul{L}_{\Gamma}$
 is quasi-isomorphic to  $\ul{L}_{\Gamma'}$.
\end{lemma}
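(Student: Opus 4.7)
The plan is to realize the sliding move as a Hamiltonian isotopy in $\Hilb^n(E) \setminus D_r$ taking $\Sym(\ul{L}_{\Gamma})$ to $\Sym(\ul{L}_{\Gamma'})$, and then invoke a mild extension of Lemma \ref{l:HamInvariance}. The starting point is the observation that since $\gamma_i$ and $\gamma'$ are isotopic in $\bH$ rel endpoints, the matching spheres $L_{\gamma_i}$ and $L_{\gamma'}$ are Hamiltonian isotopic in $E$ through a one-parameter family $L_{\gamma_i^s}$, $s \in [0,1]$, in which the interpolating matching path $\gamma_i^s$ crosses $\gamma_j$ transversally at exactly one moment $s_0 \in (0,1)$.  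For $s \neq s_0$ the tuple $\ul{L}_{\Gamma^s} := \{L_{\gamma_i^s}\} \cup \{L_{\gamma_k}\}_{k \neq i}$ is admissible, and $\Sym(\ul{L}_{\Gamma^s})$ lies in $\Hilb^n(E) \setminus D_r$; at $s=s_0$, however, the base projections of $L_{\gamma_i^{s_0}}$ and $L_{\gamma_j}$ share one point, and the corresponding product Lagrangian meets $D_r$ transversally along the fibre over that point.

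Since $D_r$ has complex codimension $1$ in $\Hilb^n(E)$, a generic perturbation of the one-parameter family in a small neighborhood of $s_0$ by a Hamiltonian in $\Hilb^n(E)$ (not necessarily of product type) routes the isotopy around $D_r$: the endpoints $\Sym(\ul{L}_{\Gamma})$ and $\Sym(\ul{L}_{\Gamma'})$ are preserved, while the perturbed intermediate Lagrangians remain compact, exact, and disjoint from $D_r$.  The main obstacle is that these intermediate Lagrangians are no longer of the form $\Sym(\ul{L})$ for $\ul{L} \in \cL^{cyl,n}$, so Lemma \ref{l:HamInvariance} does not apply verbatim.  To close the gap, I would enlarge the class of objects of $\cFS^{cyl,n}(\pi_E)$ to include compact exact Lagrangians in $\Hilb^n(E) \setminus D_r$ arising from admissible tuples by local Hamiltonian perturbation near $D_r$; the cylindrical construction of Section \ref{s:cylindricalFS} extends without essential change to such objects, since its delicate compactness, energy, and tautological-correspondence arguments are essentially local near infinity and insensitive to localised perturbations near the relative diagonal.

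An alternative approach, which I would pursue if the direct perturbation argument runs into unforeseen difficulties, reinterprets the slide via a Dehn twist: one shows $L_{\gamma'} \simeq \tau_{L_{\gamma_j}}^{\pm 1}(L_{\gamma_i})$ in $\cFS(\pi_E)$, and then invokes Seidel's long exact sequence for Dehn twists combined with Corollary \ref{c:Triangle}.  In this variant, $L_{\gamma_j}$ would first be perturbed to a Hamiltonian-isotopic matching sphere whose base projection is disjoint from the other paths in $\Gamma$, and the disjointness $L_{\gamma_i} \cap L_{\gamma_j} = \emptyset$ in $E$ makes the exact triangle collapse to the identity on $L_{\gamma_i}$.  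Either approach relies on the same essential geometric input: $D_r$ has complex codimension $1$ in $\Hilb^n(E)$, giving just enough transversality to smoothly route the sliding isotopy around the relative diagonal in the crepant resolution.
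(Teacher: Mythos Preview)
Your geometric intuition is correct: the key input is indeed that $\Sym(\ul{L}_{\Gamma})$ and $\Sym(\ul{L}_{\Gamma'})$ are Hamiltonian isotopic in $\cY_{n,m}$ for the basic $n=2$ case (this is \cite[Lemma 49]{SeidelSmith}, essentially your codimension-one routing argument). The gap is in how you propose to transport this into $\cFS^{cyl,n}$. Enlarging the category to admit non-product Lagrangians is not a minor bookkeeping step: the entire apparatus of Section~\ref{s:cylindricalFS}, in particular the tautological correspondence (Lemma~\ref{l:tautologicalCorr}), the energy estimates of Section~\ref{sss:Energy}, and the compactness argument, depend on the Lagrangians being of the form $\Sym(\ul{L})$ so that maps to $\Hilb^n(E)$ correspond to maps to $E$ with boundary on the individual components $L_k$. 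A perturbed Lagrangian near $D_r$ has no such description, so the claim that the construction ``extends without essential change'' is unjustified and would in effect require setting up the Fukaya category of $\cY_{n,m}$ directly.

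The paper avoids this entirely. From the Hamiltonian isotopy in $\cY_{n,m}$ one extracts mutually inverse continuation elements $a,b$ in \emph{ordinary} Floer cohomology $HF(\Sym(\ul{L}_{\Gamma}),\Sym(\ul{L}_{\Gamma'}))$, etc.; since these Lagrangians are compact one can compute the relevant $\mu^2$'s with no Hamiltonian perturbation, and the curve bijection underlying Proposition~\ref{p:embedding} carries $a,b$ and the relations $\mu^2(b,a)=1$, $\mu^2(a,b)=1$ verbatim into $\cFS^{cyl,2}$. The open mapping theorem then pins down the contributing curves to lie over the disc bounded by $\gamma_i\cup\gamma'$, so for general $n$ one simply adjoins constant triangles over the remaining components (with some further care when thimble paths or the regime $2n>m$ are present). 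No intermediate non-product Lagrangians ever need to be treated as objects. As for your Dehn twist alternative: since $L_{\gamma_i}\cap L_{\gamma_j}=\emptyset$ one has $\tau_{L_{\gamma_j}}(L_{\gamma_i})=L_{\gamma_i}$ literally, so the Seidel triangle collapses to a tautology and produces no relation between $\ul{L}_{\Gamma}$ and $\ul{L}_{\Gamma'}$; moreover any contractible $W$ containing both $\gamma_i$ and $\gamma'$ must also contain $\gamma_j$, so Corollary~\ref{c:Triangle} does not reduce the problem as you suggest.
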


\begin{proof}
 First consider the case that $n=2$, $i=1$, $j=2$ and $m \ge 2n$.
 In this case, it is proved in \cite[Lemma 49]{SeidelSmith} that 
 $\Sym(\ul{L}_{\Gamma})$ and $\Sym(\ul{L}_{\Gamma'})$ are Hamiltonian isotopic in $\cY_{n,m}$ (they treat the case $m=2n$ but the proof works for all $m \ge 2n$).
 In particular, we can find
 $a \in HF(\Sym(\ul{L}_{\Gamma}),\Sym(\ul{L}_{\Gamma'}))$ and $b \in HF(\Sym(\ul{L}_{\Gamma'}),\Sym(\ul{L}_{\Gamma}))$
 such that 
 \begin{align}
  \mu^2(b,a)=1_{HF(\Sym(\ul{L}_{\Gamma}),\Sym(\ul{L}_{\Gamma}))} \text{ and } \mu^2(a,b)=1_{HF(\Sym(\ul{L}_{\Gamma'}),\Sym(\ul{L}_{\Gamma'}))}
 \end{align}
 
 As in the proof of Proposition \ref{p:embedding}, when we turn off the Hamiltonian perturbation,
 there is a bijective correspondence between the $J$-holomorphic curves in $\cY_{n,m}$
 and the corresponding curves in $E$.
 In particular, we can find
 $a' \in HF(\ul{L}_{\Gamma},\ul{L}_{\Gamma'})$ and $b' \in HF(\ul{L}_{\Gamma'},\ul{L}_{\Gamma})$
 such that 
 \begin{align}
  \mu^2(b',a')=1_{HF(\ul{L}_{\Gamma},\ul{L}_{\Gamma})} \text{ and } \mu^2(a',b')=1_{HF(\ul{L}_{\Gamma'},\ul{L}_{\Gamma'})} \label{eq:equivalentmu2}
 \end{align}
 which implies that $\ul{L}_{\Gamma}$ and $\ul{L}_{\Gamma'}$ are quasi-isomorphic objects in $\cFS^{cyl,n}(E)$.
 Moreover, by the open mapping theorem, the projection to $\bH^{\circ}$ of the curves contributing to $\mu^2$ in \eqref{eq:equivalentmu2}
 is contained in the disc bounded by $\gamma_1$ and $\gamma'$ (see Figure \ref{fig:equivalentmu2}).
 
Now, we consider the general $n$ with $2n \le m$.
Without loss of generality, we continue to assume $i=1$ and $j=2$.
If $\Gamma$ has no thimble path, then we do not need to impose Hamiltonian perturbations in the Floer equation when we compute the differential and product.
In this case, the Floer solutions contributing to $\mu^2$ in \eqref{eq:equivalentmu2} persist because they lie above the disc bounded by $\gamma_1$ and $\gamma'$, which is not altered when adding the other Lagrangian components. Together with the constant triangles on the other Lagrangian components representing $\mu^2(e,e)=e$, we conclude that there exists
 $a' \in HF(\ul{L}_{\Gamma},\ul{L}_{\Gamma'})$ and $b' \in HF(\ul{L}_{\Gamma'},\ul{L}_{\Gamma})$
 such that \eqref{eq:equivalentmu2} holds, and hence $\ul{L}_{\Gamma}$ is quasi-isomorphic to $\ul{L}_{\Gamma'}$ (see Figure \ref{fig:equivalentmu2}).

If $\Gamma$ contains some thimble paths, then Hamiltonian perturbation terms in the Floer equation are necessary. 
However, the Hamiltonian terms can be taken to be zero over the disc bound by  $\gamma_1$ and $\gamma'$ and hence the Floer solutions contributing to $\mu^2$ in \eqref{eq:equivalentmu2} persist. 
Again, together with the Floer triangles on the other Lagrangian components representing $\mu^2(e,e)=e$ (all of which are constant for an appropriate choice of Hamiltonian perturbation, cf. the proof of Lemma \ref{l:chainRep}), we conclude that
$\ul{L}_{\Gamma}$ is quasi-isomorphic to $\ul{L}_{\Gamma'}$.

Finally, we explain the remaining case where $2n > m$.
To deal with this case, we want to compare  $\cY_{n,m}$ and $\cY_{n,2n}$.
Let $E$ be as above so  that  $\cY_{n,m}=\Hilb^n(E)\setminus D_r$.
We denote the corresponding $E$ for $\cY_{n,2n}$ by $E_+$ (i.e. $\cY_{n,2n}=\Hilb^n(E_+)\setminus D_r$).
Similarly, we denote the Lefschetz fibration $E_+ \to \bH^\circ$ by $\pi_{E_+}$.
Let $W \subset \bH$ be $\{ Re(z)>2n-m+\frac{1}{2}\}$ so $W$ contains exactly $m$ of the $2n$ critical values of $\pi_{E_+}$.
Pick a diffeomorphism from $W$ to $\bH$ which sends $\{2n-m+1,\dots,2n\}$ to $\{1,\dots,m\}$.
The pre-images of $\Gamma$ and $\Gamma'$ under this diffeomorphism define two  
admissible tuples, denoted by $\Gamma_+$
and $\Gamma'_+$, respectively.
From the discussion above, we know that 
 $\ul{L}_{\Gamma_+}$ is quasi-isomorphic to $\ul{L}_{\Gamma'_+}$ in $\cFS_W^{cyl,n}(\pi_{E_+})$
because we can find $a',b'$ such that the corresponding \eqref{eq:equivalentmu2} holds for $\ul{L}_{\Gamma_+}$ and $\ul{L}_{\Gamma'_+}$. The pairs $\ul{L}_{\Gamma}$, $\ul{L}_{\Gamma'}$ in $E$ and
 $\ul{L}_{\Gamma_+}$, $\ul{L}_{\Gamma'_+}$ in $\pi_{E_+}^{-1}(W)$ are isotopic through a family of Lagrangians associated to a family of admissible tuples in Lefschetz fibrations over the disc with varying symplectic form and almost complex structure, and where the isotopy does not create or cancel intersection points. There is no bifurcation in the moduli spaces of constant holomorphic triangles in such a deformation, so 
 one can find the corresponding $a',b'$ 
for  $\ul{L}_{\Gamma}$ and $\ul{L}_{\Gamma'}$ such that \eqref{eq:equivalentmu2} holds.
Therefore, the result follows.
\end{proof}

\begin{figure}[h]
 \includegraphics{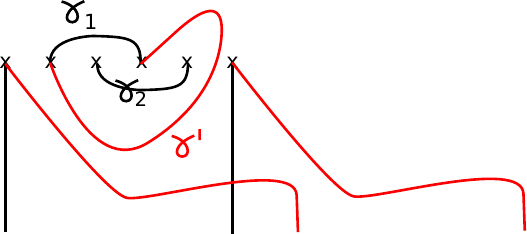}
 \caption{Floer cochain $CF(\ul{L}_{\Gamma'},\ul{L}_{\Gamma})$, where $\ul{L}_{\Gamma'}$ is in red and $\ul{L}_{\Gamma}$ is in black}\label{fig:equivalentmu2}
\end{figure}

\begin{lemma}\label{l:sliding2}
 Let $\Gamma:=\{\gamma_{1},\dots,\gamma_{n}\}$ be an admissible tuple.
 Suppose $\gamma_{i}$ is a thimble path and $\gamma_{j}$ is a matching path.
 Let $\gamma'$ be the thimble path obtained by sliding $\gamma_{i}$ across $\gamma_{j}$.
 If $\Gamma'$ is the admissible tuple obtained by replacing $\gamma_{i}$ with $\gamma'$, then $\ul{L}_{\Gamma}$
 is quasi-isomorphic to  $\ul{L}_{\Gamma'}$.
\end{lemma}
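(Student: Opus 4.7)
The proof follows the same three-step strategy as Lemma \ref{l:sliding1}, with minor modifications to accommodate the non-compactness of the thimble involved. We first treat the base case $n=2$, $i=1$, $j=2$, and $2n \le m$. The key geometric input is the thimble analog of \cite[Lemma 49]{SeidelSmith}: one constructs an explicit Hamiltonian isotopy in $\cY_{n,m}$ from $\Sym(\ul{L}_{\Gamma})$ to $\Sym(\ul{L}_{\Gamma'})$, supported in a neighborhood of the region in $\bH^{\circ}$ swept out by the slide. The construction is essentially the same as in the all-matching-sphere case; the only new point is that the Hamiltonian must be arranged to vanish outside a compact set in order to preserve the asymptotic data $\lambda_{L_{\gamma_i}}$ of the thimble. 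This is possible because $\gamma_i$ and $\gamma'$ end at the same point on $\RR$, so $\lambda_{L_{\gamma_i}} = \lambda_{L_{\gamma'}}$ already, and the slide happens entirely within the bounded region cobounded by $\gamma_i, \gamma'$, and a sub-arc of $\gamma_j$.

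From this Hamiltonian isotopy, we extract morphisms $a \in HF(\Sym(\ul{L}_{\Gamma}), \Sym(\ul{L}_{\Gamma'}))$ and $b \in HF(\Sym(\ul{L}_{\Gamma'}), \Sym(\ul{L}_{\Gamma}))$ with $\mu^{2}(b,a) = 1$ and $\mu^{2}(a,b) = 1$. Using the cylindrical interpretation (as in the proof of Proposition \ref{p:embedding}), where $J$-holomorphic curves in $\cY_{n,m}$ correspond bijectively to families of curves in $E$ once the Hamiltonian perturbation is turned off in the relevant region, we lift these to $a', b'$ in $\cFS^{cyl,n}(\pi_{E})$ satisfying the analog of \eqref{eq:equivalentmu2}. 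This establishes the claim for $n=2$.

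For general $n$ with $2n \le m$, the persistence argument from Lemma \ref{l:sliding1} carries over verbatim: the Floer solutions contributing to $\mu^{2}(b',a')$ and $\mu^{2}(a',b')$ project under $\pi_{E}$ into the bounded slide region, which is unaffected by adjoining further Lagrangian components of $\Gamma$ whose base projections lie outside this region. Adjoining to each such solution the constant Floer triangle on the extra components (representing $\mu^{2}(e,e) = e$, achievable via the choice of Hamiltonian perturbation from Lemma \ref{l:chainRep}) yields morphisms $a', b'$ in $\cFS^{cyl,n}(\pi_{E})$ with the required unit property. The case $2n > m$ is handled by the same deformation trick: we embed the picture into $\cY_{n,2n}$ via a diffeomorphism of bases and transport the quasi-isomorphism back through an isotopy of Lagrangian tuples that creates no bifurcations in the relevant moduli of constant triangles.

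The main obstacle is the base case, and specifically the construction of the Hamiltonian isotopy in $\cY_{n,m}$. In the matching-path case of Lemma \ref{l:sliding1} this was simply quoted from \cite[Lemma 49]{SeidelSmith}, but the thimble version requires a minor variant. The essential geometric point is that the slide is topologically a local move, realizable by a compactly supported isotopy of the unordered pair in $\Hilb^{n}(E)$ — even though the underlying Dehn twist $\tau_{L_{\gamma_j}}$ on $E$ is not Hamiltonian isotopic to the identity, its effect on the \emph{pair} $\{L_{\gamma_i}, L_{\gamma_j}\}$, which leaves $L_{\gamma_j}$ fixed setwise, can be absorbed by a Hamiltonian in the Hilbert scheme. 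Verifying this carefully, with due attention to the non-compactness of $L_{\gamma_i}$ and the behavior of the isotopy near the critical values, is the sole technical input beyond Lemma \ref{l:sliding1}.
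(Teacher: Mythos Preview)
Your approach is genuinely different from the paper's, and the difference matters because your version leaves the hardest step unproved.

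You reduce everything to a claimed thimble analogue of \cite[Lemma 49]{SeidelSmith}: a compactly supported Hamiltonian isotopy of $\cY_{n,m}$ carrying $\Sym(\ul{L}_{\Gamma})$ to $\Sym(\ul{L}_{\Gamma'})$ when one of the factors is a thimble. You acknowledge this is ``the sole technical input beyond Lemma~\ref{l:sliding1}'' but do not actually construct it; the final paragraph only gestures at why it should exist. The result in \cite{SeidelSmith} is stated and proved for compact matching spheres, and its proof uses the global structure of the simultaneous resolution rather than a purely local argument. Extending it to a non-compact thimble is plausible but not automatic, and you have not supplied the verification. So as written there is a real gap at exactly the point you identify as the crux.

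The paper avoids this problem entirely by a restriction argument. For $n=2$, it introduces auxiliary wrapped thimble paths $\gamma_{1,w}$, $\gamma'_w$, then \emph{adds an extra critical value} $\hat c$ and extends all four thimble paths to matching paths running into $\hat c$. Now all Lagrangians are compact matching spheres, and Lemma~\ref{l:sliding1} applies directly to give elements $\hat a,\hat b,\hat a'$ with $\mu^2(\hat a,\hat b)=1$ and $\mu^2(\hat b,\hat a')=1$. The key observation is that the degree-zero generators involved lie over the original critical values (not over $\hat c$), and the holomorphic triangles contributing to these products project, by the open mapping theorem, into a region disjoint from the fibre over $\hat c$. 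Hence the same curves and identities persist after one removes $\hat c$ and reverts to the original thimbles, yielding the desired $a,b,a'$. The case of general $n$ is then handled as you describe.

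In short: the paper trades your unproved Hamiltonian-isotopy claim for a reduction to Lemma~\ref{l:sliding1} via compactification of the thimbles, followed by a localisation argument to remove the added critical point. This is cleaner and self-contained. If you want to salvage your route, you would need to actually carry out the thimble version of \cite[Lemma 49]{SeidelSmith}; otherwise, the paper's restriction argument is the missing idea.
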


\begin{proof}

We first consider the case that $n=2$. 
Let $\gamma_1$ be the thimble path and $\gamma_2$ be the matching path.
We can assume that the thimble path $\gamma'$ only intersects $\gamma_1$
at the starting point, which we denote by $c$.
Along with $\gamma'$, we consider two more auxiliary thimble paths $\gamma_{1,w}$
and $\gamma_w'$, which are obtained by positively wrapping $\gamma_1$
and $\gamma'$ along the real line, respectively.
Without loss of generality (by possibly switching the role of $\gamma_1$ and $\gamma'$), we can assume that $\gamma_{1,w}$ intersects $\gamma'$ at two points $c$ and $c'$, and any other pair of thimble paths amongst $\gamma_1,\gamma',\gamma_{1,w},\gamma'_w$ intersect only at $c$ (see Figure \ref{fig:equivalentmu2_2}).

\begin{figure}[h]
 \includegraphics{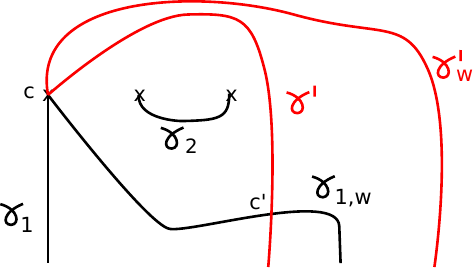}
 \caption{}\label{fig:equivalentmu2_2}
\end{figure}

Let $\Gamma_w:=\{\gamma_{1,w},\gamma_2\}$ and $\Gamma'_w:=\{\gamma'_w,\gamma_2\}$. It is clear that $\ul{L}_{\Gamma}$ is quasi-isomorphic to $\ul{L}_{\Gamma_w}$
and $\ul{L}_{\Gamma'}$ is quasi-isomorphic to $\ul{L}_{\Gamma'_w}$.
To show that $\ul{L}_{\Gamma}$ is quasi-isomorphic to $\ul{L}_{\Gamma'}$, it suffices to find
$a \in HF(\ul{L}_{\Gamma'},\ul{L}_{\Gamma})$,
$b \in HF(\ul{L}_{\Gamma_w},\ul{L}_{\Gamma'})$, and
$a' \in HF(\ul{L}_{\Gamma'_w},\ul{L}_{\Gamma_w})$ such that
\begin{align}
\mu^2(a,b)=1_{HF(\ul{L}_{\Gamma_w},\ul{L}_{\Gamma})} \text{ and } 
\mu^2(b,a')=1_{HF(\ul{L}_{\Gamma'_w},\ul{L}_{\Gamma'})} \label{eq:restrictionmu2}
\end{align}
and such that $a$ is identified with $a'$ under the continuation map (with respect to positive wrapping along the real line).
In the given positions of the Lagrangians, we do not need to use Hamiltonian terms on the Floer multiplication maps
\begin{align}
&HF(\ul{L}_{\Gamma'},\ul{L}_{\Gamma}) \times HF(\ul{L}_{\Gamma_w},\ul{L}_{\Gamma'})  \to HF(\ul{L}_{\Gamma_w},\ul{L}_{\Gamma})\\
& HF(\ul{L}_{\Gamma_w},\ul{L}_{\Gamma'}) \times HF(\ul{L}_{\Gamma'_w},\ul{L}_{\Gamma_w}) \to
 HF(\ul{L}_{\Gamma'_w},\ul{L}_{\Gamma'})
\end{align}
To compute these maps and hence verify \eqref{eq:restrictionmu2}, we rely on \eqref{eq:equivalentmu2} and a restriction argument as follows.

We add a critial value $\hat{c}$ and extend the thimble paths $\gamma_1,\gamma',\gamma_{1,w},\gamma'_w$ to run into $\hat{c}$.
The thimble paths become matching paths $\hat{\gamma}_1,\hat{\gamma}', \hat{\gamma}_{1,w},\hat{\gamma}'_w$ and we define $\hat{\Gamma}:=\{\hat{\gamma}_1,\gamma_2\}$, $\hat{\Gamma}':=\{\hat{\gamma}',\gamma_2\}$, $\hat{\Gamma}_{1,w}=\{\hat{\gamma}_{1,w},\gamma_2\}$ and $\hat{\Gamma}'_w:=\{\hat{\gamma}'_w,\gamma_2\}$ (see Figure \ref{fig:equivalentmu2_3}). 
By Lemma \ref{l:sliding1}, $\ul{L}_{\hat{\Gamma}}$ , $\ul{L}_{\hat{\Gamma}'}$, $\ul{L}_{\hat{\Gamma}_{1,w}}$ and $\ul{L}_{\hat{\Gamma}'_w}$ are all quasi-isomorphic objects. 
Therefore, there are
$\hat{a} \in HF(\ul{L}_{\hat{\Gamma}'},\ul{L}_{\hat{\Gamma}})$,
$\hat{b} \in HF(\ul{L}_{\hat{\Gamma}_w},\ul{L}_{\hat{\Gamma}'})$, and
$\hat{a}' \in HF(\ul{L}_{\hat{\Gamma}'_w},\ul{L}_{\hat{\Gamma}_w})$ such that
\begin{align}
\mu^2(\hat{a},\hat{b})=1_{HF(\ul{L}_{\hat{\Gamma}_w},\ul{L}_{\hat{\Gamma}})} \text{ and } 
\mu^2(\hat{b},\hat{a}')=1_{HF(\ul{L}_{\hat{\Gamma}'_w},\ul{L}_{\hat{\Gamma}'}).} \label{eq:restrictingmu2}
\end{align}
The degree $0$ generator for each of the Floer cochain groups above is given by an intersection point lying above either $c$ or $c'$, tensored with the degree $0$ generator of $CF(L_{\gamma_2},L_{\gamma_2})$.
For example, the cochain complex $CF(L_{\hat{\gamma}_{1,w}},L_{\hat{\gamma}'})$ is generated by one degree $0$ and one degree $1$ generator lying over $c'$ (arising from Morsifying the clean $S^1$-intersection locus \cite{Pozniak}),  and  transverse degree $2$ intersection points lying over each of $c$ and $\hat{c}$. 
\begin{figure}[h]
 \includegraphics{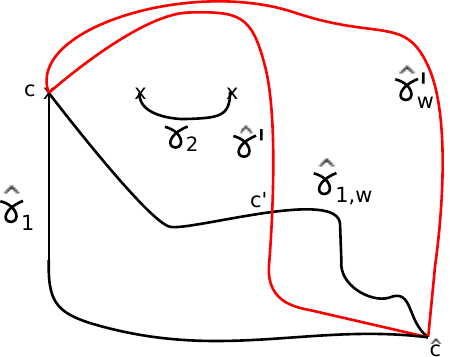}
 \caption{}\label{fig:equivalentmu2_3}
\end{figure}
Since each of the $CF^0$ groups involved has rank one, $\hat{a}$, $\hat{b}$ and $\hat{a}'$ are the unique degree $0$ generators (up to sign).  The curves contributing to \eqref{eq:restrictingmu2} cannot hit the generator lying above $\hat{c}$, so all the curves lie away from the fiber above $\hat{c}$.
By the open mapping theorem, the projection of these curves are contained in the disc bounded by $\gamma'$ and $\gamma_{1,w}$, which shows that \eqref{eq:restrictionmu2} holds before adding the critical value $\hat{c}$.
This verifies the case in which $n=2$.

Since the previous computation is local, the general case where $n>2$ can be treated as in the proof of Lemma \ref{l:sliding1}.
\end{proof}

The last case involves sliding a thimble path across another thimble path.
The proof is again an adaption of Lemma \ref{l:sliding1}, extending the thimble paths in order to apply the same kind of restriction argument as in the proof of Lemma \ref{l:sliding2},  so we omit it.

\begin{lemma}\label{l:sliding3}
 Let $\Gamma:=\{\gamma_{1},\dots,\gamma_{n}\}$ be an admissible tuple.
Suppose $\gamma_{i}$ and $\gamma_{j}$ are thimble paths and $i \neq j$.
 Let $\gamma'$ be the thimble path obtained by sliding $\gamma_{i}$ across $\gamma_{j}$.
 If $\Gamma'$ is the admissible tuple obtained by replacing $\gamma_{i}$ with $\gamma'$, then $\ul{L}_{\Gamma}$
 is quasi-isomorphic to  $\ul{L}_{\Gamma'}$.
\end{lemma}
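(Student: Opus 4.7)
The plan is to adapt the restriction argument of Lemma \ref{l:sliding2}, extending \emph{both} thimble paths to matching paths so that Lemma \ref{l:sliding1} can be invoked in an enlarged Milnor fibre. As in Lemmas \ref{l:sliding1} and \ref{l:sliding2}, the case of general $n$ reduces to $n = 2$, $i = 1$, $j = 2$ by adjoining constant Floer triangles on the remaining components; so I assume $\Gamma = \{\gamma_1, \gamma_2\}$ and $\Gamma' = \{\gamma', \gamma_2\}$ with $\gamma_1, \gamma_2, \gamma'$ all thimble paths and $\gamma'$ obtained from $\gamma_1$ by sliding across $\gamma_2$.

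I would then positively wrap $\gamma_1$ and $\gamma'$ along the real line to thimble paths $\gamma_{1,w}$ and $\gamma'_w$, chosen so that the pairs among $\gamma_1, \gamma', \gamma_{1,w}, \gamma'_w$ intersect transversally as in the setup of Lemma \ref{l:sliding2}; write $\Gamma_w := \{\gamma_{1,w}, \gamma_2\}$ and $\Gamma'_w := \{\gamma'_w, \gamma_2\}$. Wrapping invariance gives $\ul{L}_{\Gamma} \simeq \ul{L}_{\Gamma_w}$ and $\ul{L}_{\Gamma'} \simeq \ul{L}_{\Gamma'_w}$, so it suffices to produce $a \in HF(\ul{L}_{\Gamma'}, \ul{L}_{\Gamma})$, $b \in HF(\ul{L}_{\Gamma_w}, \ul{L}_{\Gamma'})$ and $a' \in HF(\ul{L}_{\Gamma'_w}, \ul{L}_{\Gamma_w})$ satisfying the analog of \eqref{eq:restrictionmu2}, with $a$ and $a'$ identified under the positive-wrapping continuation map. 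Next I introduce two auxiliary critical values $\hat{c}_1, \hat{c}_2$ placed far from the original configuration, extend $\gamma_1, \gamma', \gamma_{1,w}, \gamma'_w$ to matching paths $\hat\gamma_1, \hat\gamma', \hat\gamma_{1,w}, \hat\gamma'_w$ running into $\hat{c}_1$, and extend $\gamma_2$ to a matching path $\hat\gamma_2$ running into $\hat{c}_2$. In the enlarged Milnor fibre, the transition from $\hat\gamma_1$ to $\hat\gamma'$ is a slide of one matching path across another, so Lemma \ref{l:sliding1} yields cocycles $\hat a, \hat b, \hat a'$ satisfying the analog of \eqref{eq:restrictingmu2}.

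The restriction step then proceeds as in Lemma \ref{l:sliding2}. Each relevant $CF^0$ group has a unique generator up to sign, coming from an intersection over a point in the original region between $\gamma_1$ and $\gamma'$; the extra transverse intersection points produced over $\hat{c}_1$ and $\hat{c}_2$ by the extensions all have strictly positive degree, by the local Lefschetz model with suitably chosen grading functions. Turning off Hamiltonian perturbations as in Lemma \ref{l:directedSubCat} and invoking the open mapping theorem, the pseudo-holomorphic curves $u : S \to \Hilb^n(E)$ contributing to $\mu^2(\hat a, \hat b)$ and $\mu^2(\hat b, \hat a')$ project under $\Sym^n(\pi_E) \circ \pi_{HC}$ into discs in $\bH^\circ$ bounded by $\pi_E$-images of the original Lagrangians, none of which meets the fibres over $\hat{c}_1$ or $\hat{c}_2$. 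Consequently these curves coincide with the ones computing the corresponding products in the original (un-extended) configuration, \eqref{eq:restrictionmu2} holds for the unwrapped data, and $\ul{L}_\Gamma \simeq \ul{L}_{\Gamma'}$ follows. The passage to general $n$ and general $i \neq j$ is then the same adjoining-constant-triangles argument used at the end of Lemma \ref{l:sliding1}.

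The hard part will be the joint placement of $\hat{c}_1$ and $\hat{c}_2$: they must be located so that the five extensions $\hat\gamma_1, \hat\gamma', \hat\gamma_{1,w}, \hat\gamma'_w, \hat\gamma_2$ genuinely are matching paths of the enlarged Lefschetz fibration, bound the expected embedded discs containing no auxiliary critical values, and so that positivity of intersection with the fibres over $\hat{c}_1, \hat{c}_2$ rigorously excludes curves passing through those fibres in the relevant moduli. This is analogous to, but slightly more delicate than, the use of the single fibre over $\hat{c}$ in Lemma \ref{l:sliding2}, since now both endpoints of the slide must be ``closed off'' simultaneously.
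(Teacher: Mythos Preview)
Your proposal is correct and follows exactly the route the paper indicates: the paper omits the proof, saying only that it is ``again an adaption of Lemma~\ref{l:sliding1}, extending the thimble paths in order to apply the same kind of restriction argument as in the proof of Lemma~\ref{l:sliding2}.'' Your plan of adding two auxiliary critical values $\hat c_1,\hat c_2$ (one to close off each family of thimble paths), reducing to Lemma~\ref{l:sliding1}, and then restricting away from the fibres over $\hat c_1,\hat c_2$ is precisely the intended argument; the only cosmetic point you might add is that, since $\gamma_2$ appears in all four tuples, one should use slightly wrapped copies of it (as with $\gamma_1$) so that the Hamiltonian term can be turned off in the relevant multiplication maps.
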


\subsection{Generation}

We now show that when $E$ is the $A_{m-1}$-Milnor fiber,  the embedding in 
Proposition \ref{p:embedding} is essentially surjective.
In other words, we want to show that the split-closure $\cA$ of the thimbles $\ul{T}^I$ is the entire $D^{\pi}\cFS^{cyl,n}(\pi_E)$. 
We first recall some general facts for $A_{\infty}$/triangulated categories.

\begin{lemma}
 The full subcategory generated by an exceptional collection is admissible (i.e. admits right and left adjoints).
 In particular, $\cA$ is an admissible subcategory of $D^{\pi}\cFS^{cyl,n}(\pi_E)$.
\end{lemma}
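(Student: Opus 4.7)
The plan is to prove the general statement (which then immediately gives the corollary for $\cA$) in two steps.

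First, I would verify that $\{\ul{T}^I\}_{I \in \cI}$, ordered by any linear refinement of the Bruhat order on $\cI$, is an exceptional collection in $D^{\pi}\cFS^{cyl,n}(\pi_E)$. This is essentially a restatement of Lemma \ref{l:CohVanishing}: the Floer cohomology $HF(\ul{T}^{I_0}, \ul{T}^{I_1})$ vanishes whenever $I_0 \not\le I_1$, and for $I_0 = I_1$ the cohomology is one-dimensional, generated by the identity. Hence $\cA$ is the split-closure of the subcategory generated by this exceptional collection, and the specific claim for $\cA$ reduces to the general one.

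For the general statement, I would invoke the standard fact that the subcategory generated by an exceptional collection in a pre-triangulated $A_\infty$-category is admissible. The right adjoint to the inclusion is built by iterated mutations: for a single exceptional object $E$ and any ambient object $X$, one uses the canonical evaluation triangle
\begin{equation*}
\Hom^*(E, X) \otimes E \to X \to X',
\end{equation*}
with $X' \in E^{\perp}$, exhibiting the semi-orthogonal decomposition $\langle E^{\perp}, E\rangle$. Iterating this construction along the ordered exceptional collection $\{\ul{T}^I\}$ (in the appropriate direction) produces a right adjoint to $\cA \hookrightarrow D^{\pi}\cFS^{cyl,n}(\pi_E)$; the left adjoint is constructed dually. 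Admissibility then passes to the split-closure, because adjoint functors commute with idempotent completion.

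The main obstacle is not really substantive: once the exceptional structure is established in Lemma \ref{l:CohVanishing}, the admissibility is a formal consequence. The one item worth tracking carefully is that the mutation construction has to be implemented at the $A_\infty$-chain level (via iterated mapping cones inside twisted complexes), but this is standard in the treatment of semi-orthogonal decompositions for Fukaya-theoretic categories and needs no new input here.
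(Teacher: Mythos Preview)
Your proposal is correct and follows essentially the same route as the paper: both reduce the claim to the standard fact that the subcategory generated by an exceptional collection is admissible, which the paper dispatches by citing \cite[Lemma 1.58]{FMBook} and you spell out via the iterated evaluation/mutation triangles. The only minor difference is in handling the split-closure: you argue that admissibility passes to the idempotent completion, whereas the paper observes (via \cite[Remark 5.14]{SeidelBook}) that a category generated by a full exceptional collection is already split-closed, so no passage is needed.
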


\begin{proof}
Recall that a category admitting a full exceptional collection is already split-closed \cite[Remark 5.14]{SeidelBook}. The result then follows from e.g. \cite[Lemma 1.58]{FMBook}.
\end{proof}

For a subcategory $\cA$ of a triangulated category $\cC$, the right orthogonal $\cA^{\perp}$ of $\cA$ 
is the full subcategory of objects $\{X \in \cC \, | \, \mathrm{Hom}_{\cC}(A,X) = 0 \ \forall  A\in \cA\}$. The left orthogonal is defined similarly.

\begin{lemma}\label{l:directSum}
 Let $\cS$ be the Serre functor of $D^{\pi}\cFS^{cyl,n}(\pi_E)$.
 If $\cS(X), \cS^{-1}(X) \in \cA$ for all $X \in \cA$, then $D^{\pi}\cFS^{cyl,n}(\pi_E)=\cA \oplus \cA^{\perp}$.
\end{lemma}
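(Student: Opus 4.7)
The plan is to leverage Serre duality together with the hypothesis that $\cA$ is preserved by $\cS^{\pm 1}$ to promote the semi-orthogonal decomposition associated to the admissible subcategory $\cA$ into an orthogonal one.

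First, since $\cA$ is admissible, standard homological algebra gives a semi-orthogonal decomposition
\[
D^{\pi}\cFS^{cyl,n}(\pi_E) = \langle \cA^{\perp}, \cA \rangle,
\]
i.e.\ every object $X$ fits into a distinguished triangle $X_1 \to X \to X_0 \to X_1[1]$ with $X_1 \in \cA$ and $X_0 \in \cA^\perp$, and moreover $\Hom(A,B)=0$ for all $A \in \cA$, $B \in \cA^\perp$. To deduce $D^{\pi}\cFS^{cyl,n}(\pi_E) = \cA \oplus \cA^\perp$, it suffices to show this triangle splits canonically; equivalently, that $\Hom(X_0, X_1[k]) = 0$ for all $k$, i.e.\ that $\cA^{\perp} \subseteq {}^{\perp}\cA$.

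The key step is to identify $\cA^{\perp}$ with ${}^{\perp}\cA$ using the Serre functor. Given $X \in \cA^\perp$ and $A \in \cA$, Serre duality provides a natural isomorphism
\[
\Hom(A, X) \cong \Hom(X, \cS A)^{\vee}.
\]
The left-hand side vanishes by definition of $\cA^{\perp}$. The hypothesis $\cS(\cA) \subseteq \cA$ implies $\cS A \in \cA$, so $\Hom(X, \cS A) = 0$ for all $A \in \cA$; using $\cS^{-1}(\cA) \subseteq \cA$, which forces $\cS(\cA) = \cA$, we conclude $\Hom(X, A') = 0$ for every $A' \in \cA$, i.e.\ $X \in {}^{\perp}\cA$. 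The reverse inclusion follows symmetrically by applying Serre duality in the form $\Hom(X, A) \cong \Hom(\cS^{-1}A, X)^{\vee}$ and invoking $\cS^{-1}(\cA) \subseteq \cA$.

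With $\cA^{\perp} = {}^{\perp}\cA$ in hand, the canonical triangle $X_1 \to X \to X_0 \to X_1[1]$ satisfies $\Hom(X_0, X_1[1]) = 0$ (since $X_0 \in \cA^\perp = {}^{\perp}\cA$ and $X_1 \in \cA$), so the connecting morphism vanishes and the triangle splits, giving $X \cong X_0 \oplus X_1$. Hence every object of $D^{\pi}\cFS^{cyl,n}(\pi_E)$ decomposes canonically as a direct sum of an object in $\cA$ and one in $\cA^{\perp}$, yielding the desired orthogonal decomposition. There is no serious obstacle here beyond having the Serre functor at one's disposal, which is supplied by Claim \ref{p:Serre}.
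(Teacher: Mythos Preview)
Your proof is correct and follows essentially the same approach as the paper: both arguments use Serre duality together with the hypothesis $\cS^{\pm 1}(\cA)\subseteq\cA$ to show that the left and right orthogonals of $\cA$ coincide, from which the orthogonal splitting follows. You supply a bit more detail (explicitly splitting the canonical triangle), but the core idea is identical.
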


\begin{proof}
 If $Y \in D^{\pi}\cFS^{cyl,n}(\pi_E)$ satisfies $\Hom(Y,X)=0$ for all $X \in \cA$, then
 $\Hom(X,\cS(Y))^\vee=0$ for all $X \in \cA$.
 It means that $\Hom(\cS^{-1}(X),Y)=0$ for all $X \in \cA$.
 By assumption, this is equivalent to $\Hom(X,Y)=0$ for all $X \in \cA$.
 Similarly, $\Hom(X,Y)=0$ for all $X \in \cA$ implies that $\Hom(Y,X)=0$ for all $X \in \cA$.
 As a result, the left-orthogonal of $\cA$ coincides with its right-orthogonal, so $D^{\pi}\cFS^{cyl,n}(\pi_E)$ splits as a direct sum $\cA \oplus \cA^{\perp}$.
\end{proof}

\begin{lemma}\label{l:allThimbles}
 Suppose $\ul{T}=\{T_1,\dots,T_n\} \in \cL^{cyl,n}$ is a Lagrangian tuple such that each $T_i$ is a thimble of $\pi_E$.
 Then $\ul{T}$ is generated by $\{\ul{T}^I\}_{I \in \cI}$.
\end{lemma}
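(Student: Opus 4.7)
The plan is to reduce $\ul{T}$ to a split-generating combination of standard tuples $\ul{T}^{I'}$ by combining a topological normalization of the configuration of thimble paths with an inductive exact-triangle argument. Since $\ul{T} \in \cL^{cyl,n}$ requires the $\pi_E(T_i)$ to be contained in pairwise disjoint open sets $U_{T_i} \subset \bH$, and each $T_i$ is a thimble ending at some critical value $\bc_{k_i} \in U_{T_i}$, the values $k_1,\dots,k_n$ are pairwise distinct, so the targets of the thimbles assemble into an element $I:=\{k_1,\dots,k_n\} \in \cI$. I will show that $\ul{T}$ is split-generated by $\{\ul{T}^{I'}\}_{I' \in \cI}$; the index set $I'$ will not always equal $I$ because intermediate constructions bring in new critical values.

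The first step is to put the tuple $\Gamma=\{\gamma_1,\dots,\gamma_n\}$ of thimble paths into a normal form. Using Hamiltonian isotopies through admissible tuples (Lemma \ref{l:HamInvariance}) and the sliding move of Lemma \ref{l:sliding3}, I will reduce to the case in which each $\gamma_i$ is a straight vertical ray from $\bc_{k_i}$ down to the real axis, modulo some finite amount of winding around critical values $\bc_j$ with $j \notin I$. The sliding moves provide precisely the flexibility needed to disentangle the braiding of the thimble paths around each other, while isotopy takes care of the ambient Hamiltonian position (in particular adjusting the $\lambda_{T_i}$ so as to match a choice of function $f$ as in Section \ref{ss:directedsub}). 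If no wrapping survives this process, the tuple is literally $\Gamma^I$ (up to admissible isotopy), and $\ul{T} \simeq \ul{T}^I$ via Lemma \ref{l:HamInvariance}.

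To remove any surviving wrappings, I would invoke the standard Dehn-twist exact triangle in $\cFS(\pi_E)=\cFS^{cyl,1}(\pi_E)$: the wrapped thimble is the cone on the canonical evaluation map $HF(V_j,T)\otimes V_j \to T$, where $V_j$ is a matching sphere adjacent to $\bc_j$ (see \cite[Lemma 18.20]{SeidelBook}). Using Corollary \ref{c:Triangle}, with the remaining $n-1$ thimbles of $\ul{T}$ taken as the filler $\ul{K}$ (this is possible because the single wrapping thimble is separated from the others by a vertical line, so a half-plane $W$ as in Section \ref{sss:CanEmbed} exists), this triangle lifts to an exact triangle in $\cFS^{cyl,n}(\pi_E)$ relating $\ul{T}$ to an $n$-tuple with one fewer winding and to an $n$-tuple in which one thimble has been replaced by the matching sphere $V_j$. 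Iterating reduces the total wrapping to zero, at which stage the two other terms in each triangle have the form of the previous paragraph, i.e. tuples built from straight-line thimbles, and (in the case where $V_j$ has appeared) we further decompose $V_j$ via the analogous exact triangle that writes a matching sphere as the cone on a morphism between two adjacent thimbles, lifting again via Corollary \ref{c:Triangle}.

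The hardest step is the normalization: verifying combinatorially that any admissible configuration of $n$ pairwise disjoint thimble paths with the prescribed endpoints can be brought to the standard straight-line-plus-wrappings form by a finite sequence of sliding moves and admissible isotopies. I would proceed by induction on $n$, sliding the leftmost thimble off to the boundary and applying the inductive hypothesis to the remainder; the induction step requires a careful check that the sliding moves available in Lemma \ref{l:sliding3} generate the full mapping class group of such configurations, which is a planar-topology statement analogous to the generation of the braid group by elementary transpositions. Bookkeeping the index sets $I'$ that appear at each stage of the exact-triangle unwinding (since the matching sphere $V_j$ uses a critical value $\bc_j \notin I$) is straightforward but must be tracked explicitly to confirm that all intermediate objects are finite combinations of the tuples $\ul{T}^{I'}$.
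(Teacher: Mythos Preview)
Your approach is genuinely different from the paper's, and while the underlying idea is plausible, there are real gaps.

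The paper's proof is short: the braid group $Br_m$ acts transitively (up to isotopy) on the set of all thimble tuples $\{T_1,\dots,T_n\}$, so it suffices to show that the subcategory $\cA$ generated by $\{\ul{T}^I\}_{I\in\cI}$ is preserved by the braid group action. Since the half-twists $\sigma_j$ generate $Br_m$, this reduces to checking that $\phi_{\sigma_j}^{\pm 1}(\ul{T}^I) \in \cA$ for each $j$ and each $I$; there are four cases, depending on which of $j,j+1$ lie in $I$, and each is dispatched by a single exact triangle (Corollary~\ref{c:Triangle}) or sliding move (Lemma~\ref{l:sliding3}).

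Your direct-normalization route has two concrete gaps. First, the normalization (your step~3) is essentially a reproof of Hurwitz-move transitivity for thimble configurations; you acknowledge this is the hard part but do not carry it out, and it is exactly what the paper gets for free by invoking the braid group action. Second, your claim that after normalization ``the single wrapping thimble is separated from the others by a vertical line'' is unjustified and generally false: if the $k_1$-thimble wraps around $\bc_j$ with $j\notin I$ but $k_1<k_i<j$ for some other $k_i\in I$, the wrapped path must pass to the right of $\bc_{k_i}$, so no vertical line separates it from the $k_i$-thimble and Corollary~\ref{c:Triangle} does not apply as you state it. Relatedly, the matching sphere $V_j$ must have its second endpoint at some $\bc_{j'}$ with $j'\notin\{k_2,\dots,k_n\}$ for the tuple $\{V_j,T_2,\dots,T_n\}$ even to be admissible, and after decomposing $V_j$ into two thimbles you need $j'\notin I$ as well; you have not arranged this. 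These issues might be repairable with a much more careful inductive scheme, but the paper's braid-group argument sidesteps them entirely.
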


\begin{proof}
 First note that the braid group acts transitively (up to isotopy) on all the $\ul{T}=\{T_1,\dots,T_n\}$ such that $T_i$ is a thimble for each $i$.
 Therefore, it suffices to show that for each simple braid $\sigma$ 
 and the associated symplectomorphism $\phi_{\sigma}$, the images  $\phi_{\sigma}(\ul{T}^I)$ and $\phi_{\sigma}^{-1}(\ul{T}^I)$
 are generated by $\{\ul{T}^I\}_{I \in \cI}$.
 
 Let $\sigma$ be the positive half twist swapping $\bc_j$ and $\bc_{j+1}$.
 There are 4 cases of $\ul{T}^I$ to consider, namely, whether $j$ and/or $j+1$ is contained in $I$ or not (recall that $I$ is a cardinality $n$ subset of $\{1,\dots,m\}$ and $\bc_i=i+\sqrt{-1}$ for all $i=1,\dots,m$).
 For each of these 4 cases, one can apply the exact triangles from Corollary \ref{c:Triangle} and the sliding invariance property in Lemma \ref{l:sliding3} to show that for all $I \in \cI$,
 $\phi_{\sigma}(\ul{T}^I)$ and $\phi_{\sigma}^{-1}(\ul{T}^I)$
 are generated by $\{\ul{T}^I\}_{I \in \cI}$.

More precisely, if $j,j+1 \notin I$, then $\phi_{\sigma}(\ul{T}^I)=\phi_{\sigma}^{-1}(\ul{T}^I)=\ul{T}^I$.
If $j+1 \in I$ and $j \notin I$, then $\phi_{\sigma}^{-1}(\ul{T}^I)=\ul{T}^{I'}$, where $I'=(I \setminus \{j+1\}) \cup \{j\}$.
On the other hand,  $\phi_{\sigma}(\ul{T}^I)$ can be obtained from applying iterated exact triangles to $\ul{T}^I$
and $\ul{T}^{I'}$ (see the first row of Figure \ref{fig:braidimage} where $n=4$, $j=2$
and $\phi_{\sigma}(\ul{T}^I)$ corresponds to the third term in the exact triangle).
If $j \in I$ and $j+1 \notin I$, it is similar to the previous case.
The last case is $j,j+1 \in I$. In this cae, both $\phi_{\sigma}(\ul{T}^I)$ and $\phi_{\sigma}^{-1}(\ul{T}^I)$
can be obtained from applying sliding invariance property to $\ul{T}^I$ (see the second row of Fgure \ref{fig:braidimage}).
 \begin{figure}[h]
 \includegraphics{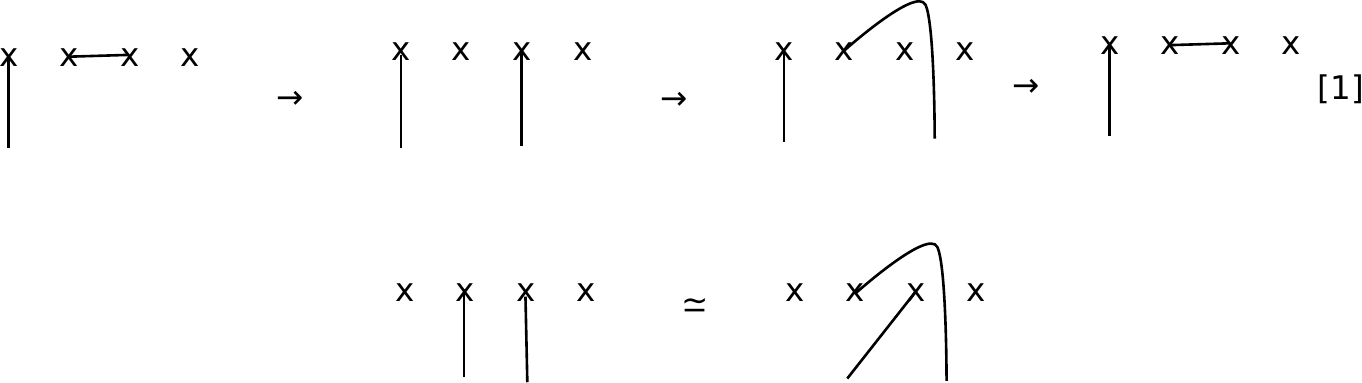}
 \caption{The top row is an exact triangle so the third term is generated by the first and second terms, which are in turn generated by $\{\ul{T}^I\}_{I \in \cI}$.
 The second row represents two quasi-isomorphic objects.}\label{fig:braidimage}
\end{figure}
\end{proof}


\begin{corollary}
Assume that Claim \ref{p:Serre} holds. Then the assumption in Lemma \ref{l:directSum} holds, i.e. $\cS(X), \cS^{-1}(X) \in \cA$ for all $X \in \cA$.
\end{corollary}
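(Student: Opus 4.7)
My plan is to reduce the claim to checking that $\cS(\ul{T}^I) \in \cA$ and $\cS^{-1}(\ul{T}^I) \in \cA$ for each generator $\ul{T}^I$, $I \in \cI$. This reduction is automatic since $\cA$ is triangulated and split-closed and is generated by $\{\ul{T}^I\}_{I \in \cI}$, while $\cS$ and $\cS^{-1}$ are exact auto-equivalences of $D^\pi\cFS^{cyl,n}(\pi_E)$; any additive auto-equivalence which sends each of a set of split-generators into $\cA$ must send all of $\cA$ into $\cA$.

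The key input is Claim \ref{p:Serre}, which identifies $\cS$ with $\tau[-2n]$, where $\tau$ denotes the auto-equivalence induced by the global monodromy symplectomorphism of $\pi_E$. Geometrically, this monodromy is a compactly supported exact symplectomorphism of $E$ whose effect on the base $\bH^\circ$ is a full positive twist around the critical values $\bc_1, \dots, \bc_m$; it preserves the class $\cL$ and sends each thimble $L_{\gamma^{I,k}}$ to the thimble Lagrangian associated to the wrapped thimble path $\tau(\gamma^{I,k})$. Consequently $\tau(\ul{T}^I)$ is a Lagrangian tuple all of whose components are again thimbles of $\pi_E$.

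Before invoking Lemma \ref{l:allThimbles}, the tuple $\tau(\ul{T}^I)$ must actually be an element of $\cL^{cyl,n}$, meaning its components must admit pairwise disjoint contractible neighborhoods in $\bH$ as in \eqref{eq:DisjointOpen}. This may fail naively because the wrapped thimble paths can intersect, but can always be arranged by an ambient Hamiltonian isotopy of $E$; the resulting quasi-isomorphism class in $\cFS^{cyl,n}(\pi_E)$ is independent of this choice by Lemma \ref{l:HamInvariance}. With this arrangement, Lemma \ref{l:allThimbles} yields $\tau(\ul{T}^I) \in \cA$, and since $\cA$ is stable under degree shifts we conclude $\cS(\ul{T}^I) = \tau(\ul{T}^I)[-2n] \in \cA$. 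The identical argument with $\tau^{-1}$ in place of $\tau$ gives $\cS^{-1}(\ul{T}^I) \in \cA$. The hard work therefore lies entirely in Claim \ref{p:Serre} and Lemma \ref{l:allThimbles}; the corollary itself is a formal consequence of these two statements together with Hamiltonian invariance.
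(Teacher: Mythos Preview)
Your proof is correct and follows essentially the same approach as the paper: reduce to the generating thimbles $\ul{T}^I$, identify the Serre functor with the global monodromy $\tau$ via Claim \ref{p:Serre}, and appeal to Lemma \ref{l:allThimbles}. Your extra paragraph about membership in $\cL^{cyl,n}$ is not strictly needed, since the \emph{proof} of Lemma \ref{l:allThimbles} already shows that each simple braid generator preserves $\cA$, hence so does any braid word including the full twist $\tau^{\pm 1}$.
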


\begin{proof}
 By Claim \ref{p:Serre}, the Serre functor of $D^{\pi}\cFS^{cyl,n}(\pi_E)$ is given by the global monodromy $\tau$ up to grading shift.
 Therefore, it suffices to prove that for each thimble $\ul{T}^I$, the images $\tau(\ul{T}^I)$ and $\tau^{-1}(\ul{T}^I)$ are split-generated by 
 the collection of thimbles $\{\ul{T}^I\}_{I \in \cI}$. This is the content of Lemma \ref{l:allThimbles}.
 \end{proof}

\begin{proposition}\label{p:generation}
If Claim \ref{p:Serre} holds, then $\cA^\perp=0$, so $D^{\pi}\cFS^{cyl,n}(\pi_E)=\cA$.
\end{proposition}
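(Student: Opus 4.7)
The plan is to exploit the semi-orthogonal decomposition $D^{\pi}\cFS^{cyl,n}(\pi_E)=\cA\oplus\cA^{\perp}$ obtained just above the statement (combining Lemma \ref{l:directSum} with Claim \ref{p:Serre}) and show that every object $\ul{L}\in\cL^{cyl,n}$ already lies in $\cA$; this forces $\cA^{\perp}=0$.

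For $\ul{L}=\{L_1,\ldots,L_n\}$, I would replace the Lagrangian components by thimbles one at a time, using the disjoint contractible open sets $U_{L_i}\subset\bH$ from \eqref{eq:DisjointOpen}. For each $i$, first enlarge $U_{L_i}$ to a contractible open set $W_i\subset\bH$ satisfying the condition \eqref{eq:Wcondition} of Section \ref{sss:CanEmbed}, arranged so that $W_i^c$ still contains all the remaining $U_{L_j}$ with $j\neq i$; this is a straightforward topological construction, obtained by connecting $U_{L_i}$ to an unbounded interval of $\partial\bH$ via a thin strip avoiding the other $U_{L_j}$. Viewing $L_i$ inside the subcategory $\cFS^{cyl,1}_{W_i}(\pi_E)$, I would invoke a split-generation statement that expresses $L_i$ as an iterated mapping cone of thimble paths supported in $W_i$. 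Once this is done, Corollary \ref{c:Triangle} applied with $\ul{K}=\{L_j:j\neq i\}$ lifts the decomposition to $\cFS^{cyl,n}(\pi_E)$, showing that $\ul{L}$ is an iterated cone of $n$-tuples in which $L_i$ has been replaced by a thimble. Iterating over $i=1,\ldots,n$ expresses $\ul{L}$ as an iterated cone of tuples of pairwise disjoint thimble paths, and by Lemma \ref{l:allThimbles} every such tuple lies in the split-closure $\cA$ of $\{\ul{T}^I\}_{I\in\cI}$. Hence $\ul{L}\in\cA$.

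The main obstacle is the $n=1$ split-generation statement: that every object of $\cFS_{W_i}^{cyl,1}(\pi_E)$ is an iterated cone of thimble paths supported in $W_i$. This is the analog of Seidel's classical split-generation theorem for Fukaya-Seidel categories of Lefschetz fibrations, which must be verified inside the cylindrical model rather than simply imported via Proposition \ref{p:embedding} (which provides only the embedding in one direction). Because the Lagrangians in $\cL$ are properly embedded with cylindrical ends compatible with the Lefschetz structure, the standard handle-by-handle argument---pushing $L_i$ across successive critical values of $\pi_E|_{W_i}$ to express it as an iterated cone of thimbles through its vanishing cycles---should go through. An alternative route is first to reduce to the case that $L_i$ is a matching path or thimble path by a Hamiltonian isotopy through admissible tuples (which preserves quasi-isomorphism type by Lemma \ref{l:HamInvariance}), and then to use the sliding invariance Lemmas \ref{l:sliding1}--\ref{l:sliding3} combined with Corollary \ref{c:Triangle} to break each matching path into a cone of two thimbles as in Figure \ref{fig:pushTriangle}. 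Either approach avoids a circular appeal to the very proposition being proved.
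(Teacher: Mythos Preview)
The paper's route is different and simpler. Rather than decomposing $\ul{L}$ into thimble tuples, it shows only that $\ul{L}\notin\cA^\perp$: after enlarging each $U_{L_i}$ so that $\overline{U_{L_i}}\cap\partial\bH$ is a nonempty interval (keeping the $U_{L_i}$ pairwise disjoint), one chooses for each $i$ a single thimble $T_i$ with $\pi_E(T_i)\subset U_{L_i}$ and $HF(T_i,L_i)\neq 0$---this exists because thimbles generate the ordinary $n=1$ Fukaya--Seidel category over $U_{L_i}$. Disjointness of the $U_{L_i}$ gives $CF(\ul{T},\ul{L})\simeq\bigotimes_i CF(T_i,L_i)$, hence $HF(\ul{T},\ul{L})\neq 0$, and Lemma~\ref{l:allThimbles} places $\ul{T}\in\cA$. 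The direct sum $\cA\oplus\cA^\perp$ (which is precisely where Claim~\ref{p:Serre} enters) then forces $\cA^\perp=0$.

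Your iteration has a genuine gap. After step~1 the thimbles replacing $L_1$ have vanishing paths in $W_1$, not in $U_{L_1}$; for step~2 to invoke Corollary~\ref{c:Triangle} with these thimbles inside $\ul{K}$, you need them to project into $W_2^c$, and iterating you need $W_1,\ldots,W_n$ pairwise disjoint. But condition~\eqref{eq:Wcondition} forces each $W_i\cap\partial\bH$ to be a half-line of $\RR$, and three half-lines cannot be pairwise disjoint, so the scheme fails as written for $n\geq 3$. This is repairable---after each step one can isotope the new thimbles back into a thin neighbourhood of the original $U_{L_i}$, provided the strip used to form $W_i$ avoids the other critical values---but that work is missing. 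Your alternative route is also incorrect: a general $L_i\in\cL$ is an arbitrary proper exact Lagrangian with the stated asymptotics and need not be Hamiltonian isotopic through $\cL$ to a matching sphere or a thimble. Finally, note that if your argument were completed it would establish $D^{\pi}\cFS^{cyl,n}(\pi_E)=\cA$ without invoking Claim~\ref{p:Serre} at all, which is stronger than what the paper proves; the paper's reliance on that claim is genuine.
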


\begin{proof}
 It suffices to show that for each $\ul{L} \in \cL^{cyl,n}$, there is an object $\ul{T}$ in $\cA$ such that $\Hom(\ul{T},\ul{L}) \neq 0$.
 By definition, we have $\ul{L}=\{L_1,\dots,L_n\}$ and $\pi_E(L_i) \subset U_i$ for some contractible $U_i$.
 We can assume that $\ol{U_i} \cap \partial \bH$ is connected and non-empty so that $\bH^{\circ} \setminus U_i$ is also contractible.
 
 For each $i$, there is a thimble $T_i$ of $\pi_E$ such that $\pi_E(T_i) \subset U_i$ and $HF(T_i,L_i) \neq 0$.
 This follows from the fact that thimbles generate $\cFS_{U_i}^{cyl,1}(\pi_E)=\cFS(\pi_E|_{\pi_E^{-1}(U_i)})$, the usual Fukaya-Seidel category of the Milnor fibre, see \cite{SeidelBook} (note that when $n=1$ there are no critical points at infinity).
 Let $\ul{T}=\{T_1,\dots,T_n\}$.
 It is clear that there is a cochain isomorphism
 \begin{align}
  CF(\ul{T},\ul{L}) \simeq \otimes_{i=1}^n CF(T_i,L_i)
 \end{align}
 which implies that $HF(\ul{T},\ul{L}) \neq 0$. By Lemma \ref{l:allThimbles}, we have $\ul{T} \in \cA$, concluding the proof.
 \end{proof}

\begin{corollary}\label{c:nm}
 Given Claim \ref{p:Serre}, when $n=m$ we have $\cA=\mathbb{K}$ so $D^{\pi}\cFS^{cyl,n}(\pi_E)=D^b(\mathbb{K})$.
\end{corollary}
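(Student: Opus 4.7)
The proof is essentially immediate given the machinery developed above. My plan is as follows.

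When $n = m$, the indexing set $\cI$ of cardinality-$n$ subsets of $\{1,\dots,m\}$ is the singleton $\{I_0\}$ with $I_0 = \{1,2,\dots,m\}$. Hence there is a unique thimble tuple $\ul{T}^{I_0} \in \cL^{cyl,n}$, consisting of the collection of all $m$ straight-line thimbles to the $m$ critical values. By Lemma \ref{l:CohVanishing}, the self-Floer cohomology $HF(\ul{T}^{I_0},\ul{T}^{I_0})$ has rank one and is generated by the cohomological unit. So the $A_\infty$-algebra \eqref{eq:Dir1} describing the endomorphism algebra of $\ul{T}^{I_0}$ is quasi-isomorphic to $\KK$ concentrated in degree zero.

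The split-closed derived category generated by a single exceptional object with endomorphism algebra $\KK$ is $D^b(\KK)$, so the full subcategory $\cA \subset D^{\pi}\cFS^{cyl,n}(\pi_E)$ generated (and split-generated) by $\ul{T}^{I_0}$ is quasi-equivalent to $D^b(\KK)$. Granting Claim \ref{p:Serre}, Proposition \ref{p:generation} asserts that $\cA = D^{\pi}\cFS^{cyl,n}(\pi_E)$, concluding the proof.

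The only subtlety worth noting is the invocation of Claim \ref{p:Serre} via Proposition \ref{p:generation}; however, in this special case $n=m$ one could also argue more directly, since the `horizontal room' needed in Proposition \ref{p:generation} to find a thimble $T_i$ inside $U_i$ with nonzero pairing with $L_i$ remains available, and Lemma \ref{l:allThimbles} is vacuous when $|\cI|=1$. Thus the argument is clean and essentially a bookkeeping consequence of the generation and orthogonality results already in place.
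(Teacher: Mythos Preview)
Your proof is correct and follows essentially the same approach as the paper: observe that $\cI$ is a singleton when $n=m$, so $\cA$ is generated by a single exceptional object and hence equivalent to $D^b(\KK)$, and then invoke Proposition \ref{p:generation} (which depends on Claim \ref{p:Serre}) to conclude $\cA = D^{\pi}\cFS^{cyl,n}(\pi_E)$. Your added remark that Lemma \ref{l:allThimbles} is ``vacuous'' when $|\cI|=1$ is slightly imprecise---the lemma still has content, asserting that any thimble tuple is generated by the single $\ul{T}^{I_0}$---but this does not affect the main argument.
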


\begin{proof}
 When $n=m$, there is only one cardinality $n$ subset in $\{1,\dots,m\}$ so
there is only one object in $\cA$ up to quasi-isomorphism (recall that $\cA$ is defined to be the full subcategory of $\cFS^{cyl,n}(\pi_E)$ with objects $\ul{T}^I$).
Moreover, this object is an exceptional object.
 Therefore, the result follows from Proposition \ref{p:generation}.
\end{proof}

This special case is not very important at this point but we will come back to it in Section \ref{s:Annular} when we define the symplectic annular Khovanov homology
and compare it to the algebraically defined annular Khovanov homology.

\begin{remark}\label{r:nm2}
 Even though Corollary \ref{c:nm} depends on  Claim \ref{p:Serre}, the fact that $D^{\pi}\cFS(\pi_\cY)=D^b(\mathbb{K})$ when $n=m$ follows from the definitions.
\end{remark}

\section{The extended symplectic arc algebra}\label{s:ArcAlg}

In this section, we will introduce a particular collection of admissible tuples, and hence
the corresponding collection of objects in $\cL^{cyl,n}$; these objects are motivated by the diagrammatics in \cite{Stroppel-parabolic, BS11}. We will prove that the cohomological Floer endomorphism algebra of these objects recovers the algebraic extended arc algebra as a graded vector space, cf. Lemma \ref{l:gvspIsom}.  
The $A_{\infty}$ endomorphism algebra of this collection of objects will be the `extended symplectic arc algebra', and will contain the symplectic arc algebra from \cite{AbouzaidSmith16} as a subalgebra. The corresponding Lagrangian products in $\Hilb^n(A_{m-1})\backslash D_r$ would not be cones over Legendrian submanifolds at infinity, which is why it is important to be able to study these Lagrangians in the cylindrical model.  The later parts of the section begin the study of the algebra structure on, and formality of, the extended symplectic arc algebra; these studies continue in Sections \ref{s:AlgIsom} and \ref{s:ncField} respectively.

\subsection{Weights and projective Lagrangians}\label{ss:weights}
To introduce the collection of admissible tuples, we start with some terminology.
Without loss of generality, we assume that the critical values are $\bc_k:=k+\sqrt{-1}$ for $k=1,\dots,m$.
A weight of type $(n,m)$ is a function $\lambda: \{1,\dots, m\} \to \{\wedge,\vee\}$ such that $|\lambda^{-1}(\vee)|=n$.
Let $\Lambda_{n,m}$ be the set of all weights of type $(n,m)$.
For $\lambda \in \Lambda_{n,m}$, let $c_{\lambda,1}< \dots < c_{\lambda,n}$ be the integers such that $\lambda(c_{\lambda,j})=\vee$.
For each $c_{\lambda,j}$, if there exists $c' \in \lambda^{-1}(\wedge)$ with $c'>c_{\lambda,j}$ and such that 
\begin{align}
 |\{c \in \{1,\dots,m\}| \lambda(c)=\vee, c_{\lambda,j}< c < c'\}| = |\{c \in \{1,\dots,m\}| \lambda(c)=\wedge, c_{\lambda,j}< c < c'\}| \label{eq:balancing}
\end{align}
then we call $c_{\lambda,j}$ a good point of $\lambda$; 
the minimum of all $c'$ satisfying \eqref{eq:balancing} is denoted by $c_{\lambda,j}^{\wedge}$.
If $c_{\lambda,j}$ is not a good point of $\lambda$, then we call it a bad point (see Figure \ref{fig:weight} for an example).

For $\lambda \in \Lambda_{n,m}$, we choose $n$ pairwise disjoint embedded curves
$\ul{\gamma}_{\lambda,1}, \dots, \ul{\gamma}_{\lambda,n}$ in $\{z \in \bH^\circ| im(z) \le 1, re(z)<2m\}$ such that 
\begin{align}
 &\text{if $c_{\lambda,j}$ is a good point, then $\ul{\gamma}_{\lambda,j}$ is a matching path joining $c_{\lambda,j}+\sqrt{-1}$ and $c_{\lambda,j}^{\wedge}+\sqrt{-1}$} \label{eq:lower1}\\
 &\text{if $c_{\lambda,j}$ is a bad point, then $\ul{\gamma}_{\lambda,j}$ is a thimble path from $c_{\lambda,j}+\sqrt{-1}$ to $c_{\lambda,j}$} \label{eq:lower2}
\end{align}
We define $\ul{\lambda}:=\{\ul{\gamma}_{\lambda,1}, \dots, \ul{\gamma}_{\lambda,n}\}$ and
\begin{align}
 \ul{L}_{\ul{\lambda}}:=\{L_{\ul{\gamma}_{\lambda,1}}, \dots, L_{\ul{\gamma}_{\lambda,n}}\} \in \cL^{cyl,n}
\end{align}
The quasi-isomorphism type of $\ul{L}_{\ul{\lambda}}$ is independent of the choice of $\ul{\lambda}$ (i.e. of the particular choice of paths) by Lemma \ref{l:HamInvariance}.

\begin{figure}[h]
 \includegraphics{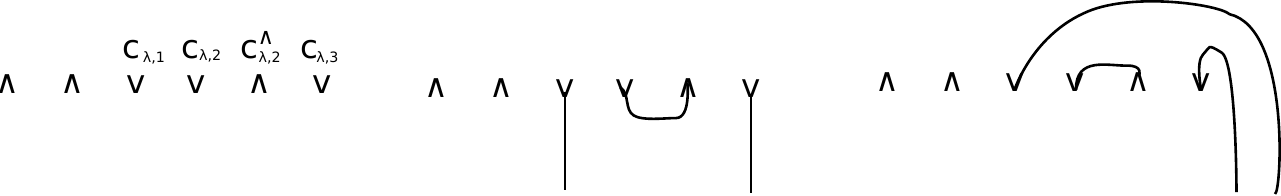}
 \caption{The left figure represents a weight $\lambda \in \Lambda_{3,6}$ with a good point $c_{\lambda,2}$ and two bad points $c_{\lambda,1}, c_{\lambda,3}$.
 The middle figure is $\ul{\lambda}$ and the right figure is $\ol{\lambda}$.}\label{fig:weight}
\end{figure}

%

Similarly, for $\lambda \in \Lambda_{n,m}$, we choose $n$ pairwise disjoint embedded curves
$\ol{\gamma}_{\lambda,1}, \dots, \ol{\gamma}_{\lambda,n}$ in $\{z \in \bH^\circ| im(z) \ge 1 \text{ or } re(z)>2m\}$ such that 
\begin{align}
 &\text{if $c_{\lambda,j}$ is a good point, then $\ol{\gamma}_{\lambda,j}$ is a matching path joining $c_{\lambda,j}+\sqrt{-1}$ and $c_{\lambda,j}^{\wedge}+\sqrt{-1}$} \label{eq:upper1}\\
 &\text{if $c_{\lambda,j}$ is a bad point, then $\ol{\gamma}_{\lambda,j}$ is a thimble path from $c_{\lambda,j}+\sqrt{-1}$ to $6m-c_{\lambda,j}$} \label{eq:upper2}
\end{align}
We define $\ol{\lambda}:=\{\ol{\gamma}_{\lambda,1}, \dots, \ol{\gamma}_{\lambda,n}\}$ and
\begin{align}
 \ul{L}_{\ol{\lambda}}:=\{L_{\ol{\gamma}_{\lambda,1}}, \dots, L_{\ol{\gamma}_{\lambda,n}}\} \in \cL^{cyl,n}
\end{align}
The quasi-isomorphism type of $\ul{L}_{\ol{\lambda}}$ is again independent of the choices of paths made in defining $\ol{\lambda}$,  by Lemma \ref{l:HamInvariance}.

\begin{figure}[h]
 \includegraphics{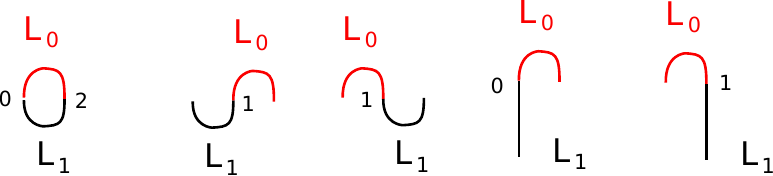}
 \caption{The integer near an intersection point labels its degree as an element of $CF(L_0,L_1)$.} \label{fig:grading}
\end{figure}

We want to choose a grading function on each $L_{\ul{\gamma},j}$ and $L_{\ol{\gamma},k}$ to induce a grading on 
$\ul{L}_{\ul{\lambda}}$ and $\ul{L}_{\ol{\lambda}}$.
These grading functions are chosen so that (see Figure \ref{fig:grading})
\begin{align}
 \text{$x  \in  CF(L_{\ol{\gamma},k},L_{\ul{\gamma},j})$ has degree $a$ if $x$ is the right end point of $a \in \NN$ matching spheres;} \label{eq:FloerGrading}
\end{align}
in particular, $\deg(x) \in \{0,1,2\}$ for all $x \in CF(L_{\ol{\gamma},k},L_{\ul{\gamma},j})$.

By iteratively applying Lemmas \ref{l:sliding1}, \ref{l:sliding2} and \ref{l:sliding3}, we obtain the following (see Figures  \ref{fig:sliding1}, \ref{fig:sliding2}, \ref{fig:sliding3}):

\begin{proposition}\label{p:slidinginvariance}
 $\ul{L}_{\ul{\lambda}}$ is quasi-isomorphic to $\ul{L}_{\overline{\lambda}}$  in $\cFS^{cyl,n}$. 
\end{proposition}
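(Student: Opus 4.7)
The plan is to construct an explicit chain of elementary moves transforming $\ul{\lambda}$ into $\ol{\lambda}$, each of which preserves the quasi-isomorphism class of the associated object in $\cFS^{cyl,n}$. Two kinds of moves are available: ambient isotopies through admissible tuples (invariance by Lemma \ref{l:HamInvariance}), and slide moves of one path across another (invariance by Lemmas \ref{l:sliding1}, \ref{l:sliding2}, or \ref{l:sliding3} depending on whether the paths involved are matching or thimble). A finite composition of such moves taking $\ul{\lambda}$ to $\ol{\lambda}$ will give the desired quasi-isomorphism.

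I would argue by induction on $m$, peeling off the rightmost critical value $\bc_m$ at each step. The rightmost position is convenient because no other critical value lies to the right of $\bc_m$, so paths in that region can be manipulated freely. If $\lambda(m) = \wedge$, then $\bc_m$ is the right endpoint of a matching path shared by $\ul{\lambda}$ and $\ol{\lambda}$; after using the inductive hypothesis to flip the nested sub-weight first, an admissible isotopy then takes the lower outer matching path to its upper counterpart through the now-empty enclosed region. If $\lambda(m) = \vee$, then $\bc_m$ is a bad point whose lower thimble runs straight down to $m \in \mathbb{R}$ and whose upper thimble arcs up, over, and down to a point on the far right of $\mathbb{R}$ as prescribed by \eqref{eq:upper2}. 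I would sweep the real-line endpoint counter-clockwise along $\mathbb{R}$ to the right: because $\bc_m$ is rightmost, the rotating thimble encounters no other critical value, and each time it crosses another path of the tuple, the crossing registers as a slide covered by Lemma \ref{l:sliding2} (if a matching path is crossed) or Lemma \ref{l:sliding3} (if a thimble is crossed). After the sweep is complete, the inductive hypothesis applied to $\lambda|_{\{1,\dots,m-1\}}$ finishes the argument. (In the induction one should freely allow the on-real-line endpoints of thimbles to be translated along $\mathbb{R}$ by admissible isotopy, so that the target endpoint $6m - c_{\lambda,j}$ may be replaced by any convenient point far to the right of the other Lagrangians.)

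The main obstacle is the combinatorial bookkeeping required to identify each elementary rotation or isotopy as either an admissible isotopy or a single, clean transverse slide governed by one of Lemmas \ref{l:sliding1}--\ref{l:sliding3}. In particular, in the bad-point case one must ensure that the rotating thimble crosses each other path of the tuple transversely and one at a time, which may require preliminary admissible isotopies to spread the paths apart, and one must verify that every intermediate tuple is admissible in the sense of Definition \ref{d:admissibletuple}. The referenced Figures \ref{fig:sliding1}, \ref{fig:sliding2}, \ref{fig:sliding3} presumably depict such elementary slides in representative cases, and the general argument is the obvious inductive extension.
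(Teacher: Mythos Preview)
Your overall strategy --- iterating Lemmas \ref{l:sliding1}--\ref{l:sliding3} together with admissible isotopies via Lemma \ref{l:HamInvariance} --- is exactly the paper's approach; the paper's own proof is no more detailed, simply invoking those lemmas and pointing to Figures \ref{fig:sliding1}--\ref{fig:sliding3}.

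That said, your inductive recipe contains a genuine error in the $\lambda(m)=\wedge$ case. Suppose $\bc_m$ is the right endpoint of the matching path from $\bc_a$ to $\bc_m$. You claim that once the nested sub-weight has been flipped to upper position, an \emph{admissible isotopy} carries the lower outer arc to its upper counterpart ``through the now-empty enclosed region.'' This is false: the lower and upper outer arcs together form a closed curve enclosing the critical values $\bc_{a+1},\dots,\bc_{m-1}$, so they are not isotopic rel endpoints in $\bH^\circ\setminus\{\bc_1,\dots,\bc_m\}$, and no isotopy through admissible tuples can take one to the other. What is actually required is a sequence of \emph{slides} of the outer arc across each inner path (Lemma \ref{l:sliding1} or \ref{l:sliding2}); these slides are the entire content of the proposition in this configuration, and your argument has excised precisely the step that does the work. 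There are also smaller gaps: $\lambda(m)=\wedge$ does not force $\bc_m$ onto any path (e.g.\ $\lambda=\vee\wedge\wedge$); your induction on $m$ leaves the portion of $\lambda$ to the left of $\bc_a$ unaddressed; and in the $\lambda(m)=\vee$ case the inductively-produced upper thimbles for the remaining bad points must still be slid across the $\bc_m$-thimble (Lemma \ref{l:sliding3}) to reach their prescribed endpoints in $\ol\lambda$. A cleaner scheme is to induct on nesting depth of the cup diagram rather than on $m$, flipping each outermost arc by sliding it across everything it encloses and then recursing.
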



When only the quasi-isomorphism type is important, we denote either of $\ul{L}_{\ul{\lambda}}$ or $\ul{L}_{\overline{\lambda}}$ by $\ul{L}_{\lambda}$.

\begin{figure}[h]
\includegraphics{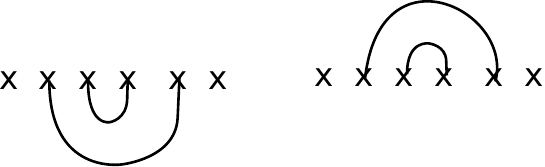}
\caption{Quasi-isomorphic compact objects}\label{fig:sliding1}
\end{figure}

\begin{figure}[h]
\includegraphics{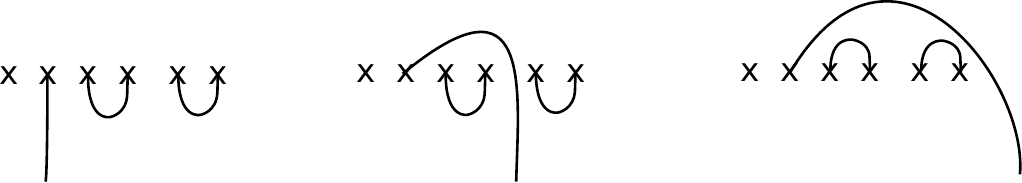}
\caption{Quasi-isomorphic `mixed' objects}\label{fig:sliding2}
\end{figure}

\begin{figure}[h]
\includegraphics{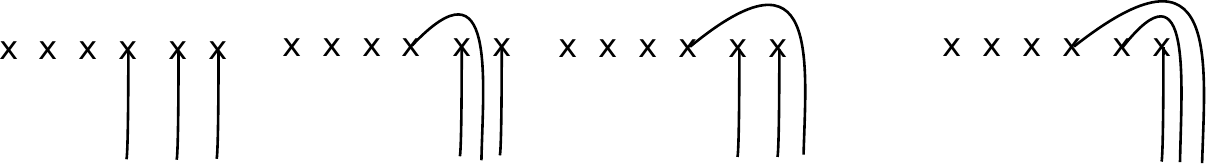}
\caption{Quasi-isomorphic thimble objects}\label{fig:sliding3}
\end{figure}

\begin{definition}
 The \emph{extended symplectic arc algebra} is the $A_{\infty}$-algebra
 \begin{align}
  \cK_{n,m}^{\symp}:=\oplus_{\lambda, \lambda' \in \Lambda_{n,m}} CF(\ul{L}_{\lambda},\ul{L}_{\lambda'}),
 \end{align}
 which is well-defined up to quasi-isomorphism.
\end{definition}

We want to choose a basis for the cohomology of $\cK_{n,m}^{\symp}$ as follows.
Let $\ol{\lambda} \cup \ul{\lambda}'$ be the union of all the paths in $\ol{\lambda}$ and $\ul{\lambda}'$.
By definition, $\ol{\lambda} \cup \ul{\lambda}'$ is a union of embedded circles and arcs; 
 some circles might be nested inside one another. It will be helpful to 
consider alternative admissible tuples which avoid such nesting (but for which the quasi-isomorphism type of the associated Lagrangian tuple is unchanged).

\begin{lemma}[cf. Lemma 5.15 of \cite{AbouzaidSmith16}]
There is an admissible tuple $\tilde{\lambda}'$ 
such that 
\begin{itemize}
 \item if $\gamma \in \ul{\lambda}'$ is not contained in a circle of  $\ol{\lambda} \cup \ul{\lambda}'$ (for example, when $\gamma$ is a  thimble path), then $\gamma \in \tilde{\lambda}'$;
 \item if $\gamma \in \ul{\lambda}'$ is contained in a circle $C$ of  $\ol{\lambda} \cup \ul{\lambda}'$, then there is a matching path $\tilde{\gamma} \in \tilde{\lambda}'$ with the same end points as $\gamma$
such that $\tilde{\gamma}$ is enclosed in $C$;
 \item $\ol{\lambda} \cup \tilde{\lambda}'$ is a union of embedded circles and arcs such that none are nested.
\end{itemize}
\end{lemma}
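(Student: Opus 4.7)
The plan is to proceed by induction on the combinatorial complexity of the nesting pattern of $\overline{\lambda} \cup \underline{\lambda}'$. First, I would make the combinatorics precise. Since the paths within $\overline{\lambda}$ (and within $\underline{\lambda}'$) are pairwise disjoint, and paths from the two tuples meet only at shared critical values on the line $\mathrm{Im}(z)=1$, the union $\overline{\lambda} \cup \underline{\lambda}'$ decomposes as a disjoint collection of embedded simple closed curves (the \emph{circles}) together with arcs having at least one endpoint on $\partial \bH$ (arising from thimble paths). A circle $C$ arises exactly when a matching path $\gamma \in \underline{\lambda}'$ shares its two endpoints $\bc_a + \sqrt{-1}$, $\bc_b + \sqrt{-1}$ with a matching path $\bar\gamma \in \overline{\lambda}$. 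I would then define a partial order on the set of circles by declaring $C_1 < C_2$ if $C_1$ lies in the open disc bounded by $C_2$ in $\bH^{\circ}$.

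For the inductive step, I would let $C$ be a circle with lower arc $\gamma \in \underline{\lambda}'$ and upper arc $\bar\gamma \in \overline{\lambda}$ which is maximal (with respect to $<$) among those circles that properly contain at least one other circle. I then replace $\gamma$ by a new matching path $\tilde\gamma$ with the same endpoints, chosen to be drawn just below $\bar\gamma$ inside the disc bounded by $C$. By construction every other component of $\overline{\lambda} \cup \underline{\lambda}'$ that previously sat in the disc bounded by $C$ is now outside the new circle $C_{\tilde\gamma} := \tilde\gamma \cup \bar\gamma$, so the total number of nested pairs strictly decreases. The path $\tilde\gamma$ still lies in the disc bounded by $C$, which takes care of the second bullet of the statement.

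Crucially, the isotopy from $\gamma$ to $\tilde\gamma$ can be realized as a finite sequence of elementary slides of $\gamma$ across the adjacent matching and thimble paths of $\overline{\lambda} \cup \underline{\lambda}'$ that are encountered as $\gamma$ is pushed upward. Each such slide is one of the three moves covered by Lemmas \ref{l:sliding1}, \ref{l:sliding2} and \ref{l:sliding3}, so the Lagrangian tuple changes only by quasi-isomorphism at each step; in particular $\ul{L}_{\tilde\lambda'}$ remains quasi-isomorphic to $\ul{L}_{\underline{\lambda}'}$. Thimble paths of $\underline{\lambda}'$ are never part of any circle (having an endpoint on the real axis), and the slides required to move $\gamma$ do not touch them, so the first bullet is preserved. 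Iterating the procedure until no nested pair remains produces the required $\tilde{\lambda}'$.

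The main technical obstacle would be verifying that the required sequence of elementary slides can always be carried out: one needs to confirm that the local models of Lemmas \ref{l:sliding1}--\ref{l:sliding3} apply without introducing unwanted intersections, and that each intermediate admissible tuple continues to satisfy the pairwise disjointness condition \eqref{eq:DisjointOpen}. This amounts to a careful pictorial argument, directly parallel to that of Lemma 5.15 of \cite{AbouzaidSmith16}; the only novelty here is the presence of thimble paths in $\underline{\lambda}'$, which sit in the non-circle components of $\overline{\lambda} \cup \underline{\lambda}'$ and are handled by the thimble versions of the sliding moves (Lemmas \ref{l:sliding2} and \ref{l:sliding3}).
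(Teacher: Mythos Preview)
Your approach is essentially the same as the paper's --- an inductive un-nesting via handle slides --- but there is a small gap in how you invoke the sliding lemmas, and you miss the key combinatorial observation that makes the argument work cleanly.

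First, a clarification: the sliding Lemmas \ref{l:sliding1}--\ref{l:sliding3} concern sliding one path of an admissible tuple across \emph{another path of the same tuple}. You never slide $\gamma$ over anything in $\overline{\lambda}$; those arcs belong to a different Lagrangian tuple and are irrelevant to the quasi-isomorphism type of $\ul{L}_{\underline{\lambda}'}$. So your phrase ``paths of $\overline{\lambda}\cup\underline{\lambda}'$'' should read ``paths of $\underline{\lambda}'$''.

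Second, and more substantively: you cite all three sliding lemmas, but note that Lemmas \ref{l:sliding2} and \ref{l:sliding3} slide a \emph{thimble path} across something, whereas here $\gamma$ is a matching path. There is no lemma in the paper for sliding a matching path across a thimble path, so if such a slide were ever required your argument would stall. The point you are missing is that it is never required: the balancing condition \eqref{eq:balancing} forces every $\vee$ strictly between the endpoints of $\gamma$ to be a good point of $\lambda'$, so every critical value enclosed by $C$ is an endpoint of a \emph{matching} path of $\underline{\lambda}'$ (and likewise of $\overline{\lambda}$). Hence only Lemma \ref{l:sliding1} is needed, exactly as the paper states. Your observation that thimble paths are ``never part of any circle'' is correct but not sufficient --- you also need that no thimble path is \emph{enclosed by} a circle, which is this combinatorial fact.

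Once you supply this observation, your inductive framework is fine and matches the paper's.
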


\begin{proof}

The proof is analogous to the proof of Lemma 5.15 of \cite{AbouzaidSmith16}.
The only difference for our case is that we could have some thimble paths in admissible tuples.

More precisely, if $\gamma$ is contained in a circle $C$, then each critical value enclosed in $C$ is an end point of a matching path of $\ul{\lambda}'$ (and also a matching path of $\ol{\lambda}$), directly from the definitions \eqref{eq:lower1}, \eqref{eq:lower2} (resp. \eqref{eq:upper1}, \eqref{eq:upper2}).
Thus, a suitable $\tilde{\lambda}'$  can be obtained by iteratively applying Lemma \ref{l:sliding1} to $ \ul{\lambda}'$, ensuring that $\ul{L}_{\ul{\lambda'}}$ is quasi-isomorphic to $\ul{L}_{\tilde{\lambda}'}$ (see Figure \ref{fig:tildeGeom}).  Note that thimble paths are never contained in a circle, so don't need to be changed.
\end{proof}

Note that the definition of $\tilde{\lambda}'$ depends on the pair $(\lambda, \lambda')$, and not just on $\lambda'$.

\begin{figure}[h]
 \includegraphics{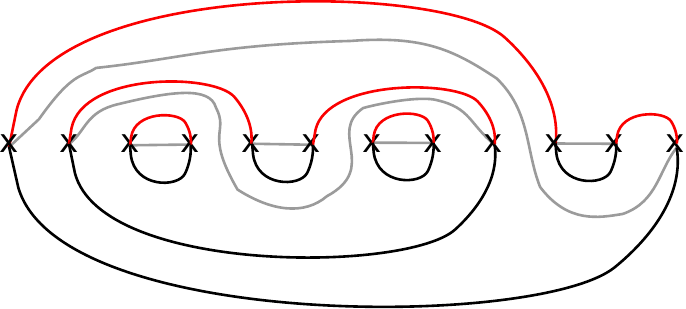}
 \caption{Here $\ol{\lambda}$ is red, $\tilde{\lambda}'$ is grey, and $\ul{\lambda'}$ is black.}\label{fig:tildeGeom}
\end{figure}

\begin{lemma}
The cohomology of $\cK_{n,m}^{\symp}$, denoted by $K_{n,m}^{\symp}$,  is given by
\begin{align}
 K_{n,m}^{\symp}=\oplus_{\lambda, \lambda' \in \Lambda_{n,m}} HF(\ul{L}_{\ol{\lambda}},\ul{L}_{\tilde{\lambda'}})
 = \oplus_{\lambda, \lambda' \in \Lambda_{n,m}} CF(\ul{L}_{\ol{\lambda}},\ul{L}_{\tilde{\lambda'}}) \label{eq:CohBasis}
\end{align}
as a graded vector space.
\end{lemma}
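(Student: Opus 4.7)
The first equality reduces to sliding invariance. By Proposition \ref{p:slidinginvariance} we have $\ul{L}_{\ul{\lambda}}\simeq \ul{L}_{\ol{\lambda}}$, and by construction $\tilde{\lambda}'$ is obtained from $\ul{\lambda}'$ by iteratively applying Lemma \ref{l:sliding1}, so $\ul{L}_{\ul{\lambda}'}\simeq \ul{L}_{\tilde{\lambda}'}$. Since $K_{n,m}^{\symp}=H^*(\cK_{n,m}^{\symp})$ and $\cK_{n,m}^{\symp}$ is well-defined up to quasi-isomorphism, the claim follows from invariance of Floer cohomology under quasi-isomorphism of either argument.

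For the second equality, I will show that the Floer differential on $CF(\ul{L}_{\ol{\lambda}},\ul{L}_{\tilde{\lambda}'})$ vanishes. Following Example \ref{ex:noHperb}, I plan to choose a product-type perturbation pair so that each generator is an unordered tuple $\ul{x}=\{x_1,\dots,x_n\}$ with $x_i\in L_{\ol{\gamma}_{\lambda,a_i}}\pitchfork L_{\tilde{\gamma}_{\lambda',b_i}}$ for some pairing $(a_i,b_i)$. By Lemma \ref{l:tautologicalCorr}, any holomorphic strip contributing to $\mu^1(\ul{x})$ lifts to an $n$-fold branched cover $\pi_\Sigma\colon \Sigma\to S$ together with a map $v\colon \Sigma\to E$. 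Under the product-type Floer datum and the integrability assumption near $\mk(S)$, the projection $\pi_E\circ v$ on each connected component of $\Sigma$ is a holomorphic map into $\bH^\circ$ with boundary on a single pair $\ol{\gamma}_{\lambda,a_i}\cup\tilde{\gamma}_{\lambda',b_i}$.

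The main obstacle is the exclusion of such holomorphic contributions; this is where the non-nested structure of $\ol{\lambda}\cup\tilde{\lambda}'$ enters decisively. The plan is to argue component by component. The open mapping theorem confines the image of any non-constant component of $\pi_E\circ v$ to a single region of $\bH^\circ\setminus(\ol{\lambda}\cup\tilde{\lambda}')$; since $\ol{\lambda}$ lies in $\{\mathrm{im}(z)\ge 1\}\cup\{\mathrm{re}(z)>2m\}$ while $\tilde{\lambda}'$ lies in $\{\mathrm{im}(z)\le 1,\,\mathrm{re}(z)<2m\}$, and the union is non-nested, each such region is a topological disc whose boundary is either a single matching circle $\ol{\gamma}_{\lambda,a}\cup\tilde{\gamma}_{\lambda',b}$ or an arc ending on $\partial\bH$. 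For a pair $(a,b)$ whose paths form a matching circle, the corresponding Floer complex of matching spheres in $E$ has rank two generated by transverse intersections with gradings $0$ and $2$, and its differential vanishes by the classical calculation of \cite{Khovanov-Seidel}. For pairs involving thimbles, I would reduce to the compact case by temporarily adjoining an auxiliary critical value and extending the thimble paths into matching paths (as in the restriction argument in the proof of Lemma \ref{l:sliding2}), carrying out the argument there and restricting back using the open mapping theorem to exclude any curves that would pass through the auxiliary fiber.

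Combining the component-by-component vanishings (and using that the product-type perturbation makes the total differential a sum of contributions from single components, the other factors being constant strips contributing $\mathrm{id}$), I conclude that $\mu^1\equiv 0$ on $CF(\ul{L}_{\ol{\lambda}},\ul{L}_{\tilde{\lambda}'})$, which is the second equality. The hardest part will be carefully keeping track of the branched cover $\pi_\Sigma$ so that the product-type splitting of $\mu^1$ is valid in the presence of potential branch points at the interior marked points; these are controlled by requiring $h=I_{\ul{x};\ul{x}}$ and invoking the positivity-of-intersection results of Section \ref{sss:Compactness} to ensure that the branched cover structure is consistent with the non-nested decomposition.
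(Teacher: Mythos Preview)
Your treatment of the first equality is correct. For the second, however, you have missed the paper's one-line argument: with the grading convention \eqref{eq:FloerGrading}, all generators of $CF(\ul{L}_{\ol{\lambda}},\ul{L}_{\tilde{\lambda}'})$ have the same degree parity, so the degree-one map $\mu^1$ vanishes. (Via the bijection of Lemma~\ref{l:gvspIsom}, two generators differ by reversing the orientation on some circle components of the diagram; reversing a circle with $2k$ arcs changes the number of clockwise arcs, hence the degree, by an even integer. Line components have a unique orientation and contribute a fixed amount.)

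Your direct geometric approach has genuine gaps. First, the assertion that $\tilde{\lambda}'\subset\{\mathrm{im}(z)\le 1,\ \mathrm{re}(z)<2m\}$ is not part of the construction: the replacement paths $\tilde{\gamma}$ are only required to lie inside their enclosing circle $C$, and will typically pass into $\{\mathrm{im}(z)>1\}$ (see Figure~\ref{fig:tildeGeom}). Second, and more seriously, a non-nested circle in $\ol{\lambda}\cup\tilde{\lambda}'$ need not consist of a single pair of arcs sharing both endpoints: it can contain $m_C>1$ upper paths (the paper exhibits $m_C=2$ in Figure~\ref{fig:local_model}). For such a circle the two possible generator choices involve different upper-to-lower pairings, so any strip between them has $I_{\ul{x}_0;\ul{x}_1}>0$; the cover $\Sigma$ then carries genuine branch points and does not decompose into bigons, and your ``product-type splitting of $\mu^1$'' breaks down exactly where it is needed. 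Your appeal to ``$h=I_{\ul{x};\ul{x}}$'' conflates the trivially-zero self term with the relevant quantity $I_{\ul{x}_0;\ul{x}_1}$, and the positivity-of-intersection results you cite do not supply the missing analysis. The parity argument bypasses all of this: the virtual dimension $|\ul{x}_0|-|\ul{x}_1|-1$ is independent of $h$, and it can never be zero.
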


\begin{proof}
With the grading conventions of Figure \ref{fig:grading}, 
the pure degree elements in $CF(\ul{L}_{\ol{\lambda}},\ul{L}_{\tilde{\lambda'}})$ are concentrated either in odd or even degree,  so the Floer differential vanishes (compare to \cite[Proposition 5.12]{AbouzaidSmith16}).
\end{proof}

We call a basis $\cB^{\symp}$ for $K_{n,m}^{\symp}$ `geometric' 
if each basis element in $\cB^{\symp}$ is concentrated at a single intersection point in $\ul{L}_{\ol{\lambda}} \cap \ul{L}_{\tilde{\lambda'}}$ under the isomorphism \eqref{eq:CohBasis}.
Once a $\tilde{\lambda'}$ has been chosen for each pair $(\lambda, \lambda')$, two different geometric bases can differ only by signs.

\subsection{Extended arc algebra}

We briefly recall the diagrammatic extended arc algebra $K_{n,m}^{\alg}$. Details can be found in \cite{BS11, Stroppel-parabolic}, to which we defer for many details.
For each weight $\lambda \in \Lambda_{n,m}$, there is an associated cup diagram
$\ul{\lambda}^{\alg}$ and a cap diagram $\ol{\lambda}^{\alg}$ as follows.
The cup diagram $\ul{\lambda}^{\alg}$ can be obtained by adding 
a thimble path to $\ul{\lambda}$ from $a+\sqrt{-1}$ to $a$, for each 
$a \in \{1,\dots,m\}$, such that $a+\sqrt{-1}$ is not contained in any of the paths in
$\ul{\lambda}$ (see Figure \ref{fig:alg_weight}).
For the cap diagram  $\ol{\lambda}^{\alg}$, we need to replace the thimble paths from $c+\sqrt{-1}$ to $6m-c$ in $\ol{\lambda}$ by vertical rays from $c+\sqrt{-1}$ to 
$c+\sqrt{-1}\infty$, and in addition, for each 
$a \in \{1,\dots,m\}$ such that $a+\sqrt{-1}$ is not contained in any of the paths in
$\ol{\lambda}$, add a thimble path from  $a+\sqrt{-1}$ to $a+\sqrt{-1}\infty$ (see Figure \ref{fig:alg_weight}).
In this paper, the only cup and cap diagrams we will encounter are given by $\ul{\lambda}^{\alg}$ or 
$\ol{\lambda}^{\alg}$ for some $\lambda \in \Lambda_{n,m}$.

\begin{figure}[h]
 \includegraphics{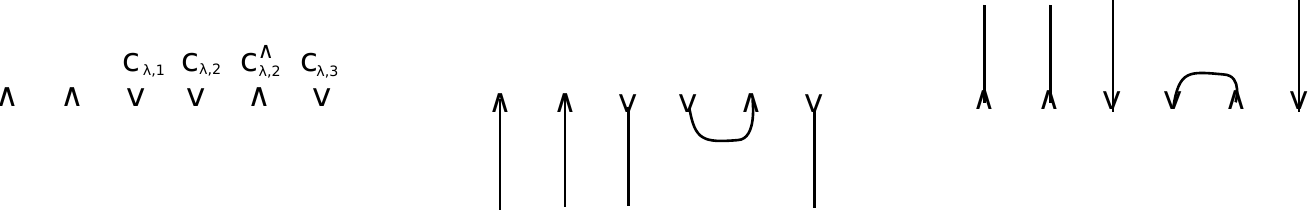}
 \caption{From left to right:  a weight $\lambda$, 
 the cup diagram  $\ul{\lambda}^{\alg}$ and the cap diagram  $\ol{\lambda}^{\alg}$.}\label{fig:alg_weight}
\end{figure}

If $\beta$ is a cup diagram and $\lambda$ is a weight, then we say that $\beta \lambda$
is an {\it oriented cup diagram} if 
\begin{align}
&\text{the $\lambda$-values of the two ends of every matching path in $\beta$ are different} \label{eq:orientations1} \\
&\text{if $\gamma_a$ and $\gamma_b$ are thimble paths in $\beta$ containing $a+\sqrt{-1}$ and $b+\sqrt{-1}$, respectively, } \label{eq:orientations2}\\
&\text{such that $a<b$ and $\lambda(a)=\vee$, then $\lambda(b)=\vee$.} \nonumber
\end{align}
A cap diagram is defined to be 
the reflection $\beta^{r}$ of a cup diagram $\beta$ along the line $\{im(z)=1\}$.
If $\alpha$ is a cap diagram and $\lambda$ is a weight, then we say that $ \lambda \alpha$ is
an oriented cap diagram if $\alpha^r \lambda $ is an oriented cup diagram.

The union of a cup diagram $\beta$ and a cap diagram $\alpha$ is denoted by $\beta \cup \alpha$  and is
called a circle diagram. This is a union of  embedded circles and arcs in the upper half-plane.
An orientation of a circle diagram $\beta\cup\alpha  $ is a weight $\lambda$
such that $\lambda \alpha $ and $\beta \lambda $ is an oriented cap diagram and oriented cup diagram, respectively.
Given such a $\lambda$, we denote the resulting oriented circle diagram by $\beta\lambda\alpha  $.

A clockwise cap (resp. cup) of an oriented cap (resp. cup) diagram $ \lambda \alpha$ (resp. $\alpha \lambda $) 
is a matching path $\gamma \in \alpha$ such that the 
$\lambda$-value of the left end point is $\wedge$, and hence the $\lambda$-value of the right end point is $\vee$.
The degree (or grading) of an oriented cap/cup/circle diagram is defined to be the number of clockwise cups in it.
As a result, we have
\begin{align}
 \deg(\beta\lambda\alpha  )=\deg(\lambda \alpha) + \deg (\beta \lambda ). \label{eq:AlgGrading}
\end{align}

As a graded vector space, $K_{n,m}^{\alg}$ is generated by oriented circle diagrams of the form $\ul{\lambda}^{\alg}_b \lambda \ol{\lambda}^{\alg}_a$, for  $\lambda, \lambda_a,\lambda_b \in \Lambda_{n,m}$,
and the grading of an oriented circle diagram is given by its degree.

\begin{lemma}\label{l:gvspIsom}
 There is a graded vector space isomorphism $\Phi:K_{n,m}^{\symp} \to K_{n,m}^{\alg}$ of the cohomological symplectic extended arc algebra with its algebraic counterpart.
\end{lemma}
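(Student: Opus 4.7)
The plan is to set up a bijection between two geometric bases: one for $K_{n,m}^{\symp}$, parametrized component-by-component in the geometric circle-diagram $\ol{\lambda_a}\cup \tilde{\lambda_b}$, and one for $K_{n,m}^{\alg}$, parametrized component-by-component in the algebraic circle-diagram $\ul{\lambda_b}^{\alg}\cup\ol{\lambda_a}^{\alg}$. First I would decompose the graded vector space on each side as a sum indexed by pairs $(\lambda_a,\lambda_b)\in \Lambda_{n,m}^2$, and then argue pair-by-pair. On the symplectic side, by \eqref{eq:CohBasis} and by picking $\tilde{\lambda}_b$ as above, each summand $CF(\ul{L}_{\ol{\lambda_a}},\ul{L}_{\tilde{\lambda_b}})$ has a geometric basis indexed by unordered $n$-tuples of transverse intersection points of the individual Lagrangians (after Morsifying clean $S^1$-intersections along matching spheres, as in \cite{Pozniak}); on the algebraic side, generators are oriented circle diagrams $\ul{\lambda_b}^{\alg}\lambda\ol{\lambda_a}^{\alg}$, parametrized by the weights $\lambda$ compatible with $(\lambda_a,\lambda_b)$.

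Next I would observe that there is a canonical bijection between the connected components (circles and arcs) of $\ol{\lambda_a}\cup\tilde{\lambda_b}$ and those of $\ul{\lambda_b}^{\alg}\cup\ol{\lambda_a}^{\alg}$: each matching path of $\ol{\lambda_a}$ (resp.\ $\tilde{\lambda_b}$) corresponds to a cap (resp.\ cup) in the algebraic diagram, and each thimble path of $\ol{\lambda_a}$ (resp.\ $\tilde{\lambda_b}$) corresponds to a vertical ray at a bad $\vee$-point. The only combinatorial difference is that the algebraic diagram is further decorated with extra vertical rays at every critical value not touched by any matching path; these rays do not alter the connected-component structure, since by construction they convert points of $\{1,\dots,m\}$ not otherwise appearing into tiny \emph{singleton arcs} on the algebraic side, and these singleton arcs admit a unique orientation (forced by the rule \eqref{eq:orientations2} together with the fact that untouched points are always $\wedge$-points). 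This normalization ensures that the orientation count agrees with the symplectic count on such singleton components, and reduces the task to matching non-trivial components.

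With the component decomposition in place, the key step is to count generators/orientations on each component and compare gradings. For a circular component, both sides have exactly two choices: symplectically these are the two Morsified intersection points on each matching sphere propagated consistently around the circle, and algebraically these are the two alternating $\vee/\wedge$ orientations. The local grading computation of Figure \ref{fig:grading} shows that one choice contributes $2\cdot(\text{\# clockwise caps})$ and the other $2\cdot(\text{\# clockwise cups})$, matching BS's grading convention \eqref{eq:AlgGrading} after accounting for the cap/cup symmetry within a circle. For an arc component (containing at least one thimble path on each side), the boundary conditions imposed by the thimbles pin down the orientation at both ends, so both sides have a unique (or at most two, compatible) choice; the grading is again the number of clockwise cups by inspection of the local model near each critical value, which matches \eqref{eq:FloerGrading}. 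Assembling these component-level bijections into $n$-tuples defines the desired map $\Phi$, and summing over all $(\lambda_a,\lambda_b)$ yields a graded vector space isomorphism.

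The main obstacle will be the combinatorial bookkeeping for arc components, especially verifying that the orientation rules \eqref{eq:orientations1}--\eqref{eq:orientations2} on thimble-terminated arcs exactly match the choices of intersection representatives available on the symplectic side; this requires a careful local analysis at each bad $\vee$-point and at each critical value receiving an auxiliary vertical ray in the algebraic picture. The shape of the argument is closely modeled on \cite[Proposition 5.12 and Lemma 5.17]{AbouzaidSmith16}, with the novel content lying in the treatment of thimble paths absent from the compact setting considered there.
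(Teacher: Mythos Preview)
Your approach is broadly correct but more laborious than necessary, and contains one inaccuracy. The reference to ``Morsifying clean $S^1$-intersections along matching spheres'' is misplaced: in the non-nested model $\ol{\lambda_a}\cup\tilde{\lambda_b}$, the Lagrangian components already intersect transversely in single points lying over the critical values $k+\sqrt{-1}$, so no Morsification is needed. This matters because your subsequent component-by-component count relies on knowing exactly what the local intersection data are.

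The paper's proof bypasses the component decomposition entirely with a single global map. A generator $\ul{x}=\{x_1,\dots,x_n\}\in CF(\ul{L}_{\ol{\lambda_0}},\ul{L}_{\tilde{\lambda_1}})$ projects under $\pi_E$ to an $n$-tuple of distinct points in $\{1,\dots,m\}+\sqrt{-1}$, and one simply defines $\Phi(\ul{x})$ to be the weight $\lambda_{\ul{x}}$ with $\lambda_{\ul{x}}(a)=\vee$ if and only if $a+\sqrt{-1}\in\pi_E(\ul{x})$. The conditions \eqref{eq:orientations1}--\eqref{eq:orientations2} are then checked directly: \eqref{eq:orientations1} holds because each matching sphere contains exactly one $x_i$, and \eqref{eq:orientations2} holds because the extra rays in $\ul{\lambda_1}^{\alg}$ and $\ol{\lambda_0}^{\alg}$ (the ones at $\wedge$-points not appearing in the geometric tuples) all sit to the left of the thimble paths that do appear. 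Bijectivity and the grading match with \eqref{eq:FloerGrading}, \eqref{eq:AlgGrading} are then routine. Your component analysis ultimately encodes the same information --- the two alternating vertex-selections on a circle are precisely the two weights orienting that circle --- but packaging it as ``tuple $\mapsto$ weight'' makes well-definedness, bijectivity, and the grading check each a one-line observation rather than a case analysis over circles and arcs.
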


\begin{proof}
 Let $\lambda_0,\lambda_1 \in \Lambda_{n,m}$.
 On the symplectic side, we consider the Floer cochains $CF(\ul{L}_{\ol{\lambda}_0},\ul{L}_{\tilde{\lambda}_1})$.
 On the diagrammatic side, we consider the graded vector space $S(\lambda_0,\lambda_1)$ generated by the orientations of the circle diagram $\ul{\lambda}^{\alg}_1 \cup \ol{\lambda}^{\alg}_0  $.
 By \eqref{eq:CohBasis}, $K_{n,m}^{\symp}=\oplus_{\lambda_0, \lambda_1 \in \Lambda_{n,m}} CF(\ul{L}_{\ol{\lambda}_0},\ul{L}_{\tilde{\lambda}_1})$
 so it suffices to find a graded vector space isomorphism between 
 $CF(\ul{L}_{\ol{\lambda}_0},\ul{L}_{\tilde{\lambda}_1})$ and $S(\lambda_0,\lambda_1)$ for all $\lambda_0, \lambda_1$.
 
 Each generator $\ul{x}=\{x_1,\dots,x_n\}$ of $CF(\ul{L}_{\ol{\lambda}_0},\ul{L}_{\tilde{\lambda}_1})$ projects to
 an $n$-tuple of pairwise distinct points $\pi_E(\ul{x})$ in $\{1,\dots, m\}+\sqrt{-1}$, 
 such that each Lagrangian component in $\ol{\lambda}_0$ and $\tilde{\lambda}_1$
 contains exactly one $\pi_E(x_i)$.
 Conversely, every $n$-tuple of pairwise distinct points in $\{1,\dots, m\}+\sqrt{-1}$ satisfying this property uniquely determines a  generator of $CF(\ul{L}_{\ol{\lambda}_0},\ul{L}_{\tilde{\lambda}_1})$.
 Let $\lambda_{\ul{x}}$ be the weight given by $\lambda_{\ul{x}}(a)= \vee$ if and only if $a+\sqrt{-1} \in \pi_E(\ul{x})$.
 We claim that the linear map $\Phi_{\lambda_0,\lambda_1}:CF(\ul{L}_{\ol{\lambda}_0},\ul{L}_{\tilde{\lambda}_1}) \to S(\lambda_0,\lambda_1)$
 given by
 \begin{align}
  \ul{x}=\{x_1,\dots,x_n\} \mapsto \lambda_{\ul{x}} \label{eq:identifyGen}
 \end{align}
 is a graded vector space isomorphism.
 
 To see that $\Phi_{\lambda_0,\lambda_1}$ is well-defined, we observe that all the thimble paths contained in $\ul{\lambda}^{\alg}$ but not in $\ul{\lambda}$ are on the left of 
 the thimble paths (if any) in $\ul{\lambda}$, and the same is true for $\ol{\lambda}^{\alg}$ and $\ol{\lambda}$. 
 Therefore, $\lambda_{\ul{x}}$ satisfies \eqref{eq:orientations2}.
 On the other hand, since each Lagrangian component contains exactly one of the $x_i$, it means that $\lambda_{\ul{x}}$ also satisfies \eqref{eq:orientations1}.
 
 It is routine to check that $\Phi_{\lambda_0,\lambda_1}$ is bijective and preserves the grading, using \eqref{eq:FloerGrading} and \eqref{eq:AlgGrading}.
\end{proof}

The algebra structure on $K_{n,m}^{\alg}$ is defined by applying an appropriate diagrammatic TQFT.  We briefly recall one possible definition of this algebra, and some crucial properties that we will use later, in Section \ref{Sec:tqft_multiplication} below, and refer the readers to \cite[Sections 3 \& 4]{BS11} for a detailed exposition. 

\subsection{Compact subalgebra}

We call a weight $\lambda \in \Lambda_{n,m}$ a \emph{compact weight} if $\ol{\lambda}$ (and hence $\ul{\lambda}$) consists only of matching paths.
Let $\Lambda_{n,m}^c \subset \Lambda_{n,m}$ be the subset of compact weights.
We define 
\begin{align}
 \cH_{n,m}^{\symp}:=\oplus_{\lambda,\lambda' \in \Lambda_{n,m}^c} CF(\ul{L}_{\lambda}, \ul{L}_{\lambda'})
\end{align}
which is well-defined up to quasi-isomorphism.
As in \eqref{eq:CohBasis}, its cohomology is given by
\begin{align}
 H_{n,m}^{\symp}=\oplus_{\lambda,\lambda' \in \Lambda_{n,m}^c} HF(\ul{L}_{\lambda}, \ul{L}_{\lambda'})=\oplus_{\lambda,\lambda' \in \Lambda_{n,m}^c} CF(\ul{L}_{\ol{\lambda}}, \ul{L}_{\tilde{\lambda}'}). \label{eq:CohBasisCpt}
\end{align}
A basis of $H_{n,m}^{\symp}$ is called geometric if it is given by the geometric intersection points in $CF(\ul{L}_{\ol{\lambda}}, \ul{L}_{\tilde{\lambda}'})$ under the isomorphism
\eqref{eq:CohBasisCpt} (again such bases are well-defined up to sign). 

On the diagrammatic side, we can define the corresponding subalgebra $H_{n,m}^{\alg}$ of $K_{n,m}^{\alg}$,
which is generated by oriented circle diagrams such that the underlying cap and cup diagrams are  given by $\ol{\lambda}^{\alg}$
and $\ul{\lambda'}^{\alg}$ for some $\lambda, \lambda' \in \Lambda_{n,m}^c$.
It is clear that the graded vector space isomorphism in Lemma \ref{l:gvspIsom} induces a graded vector space isomorphism between $H_{n,m}^{\symp}$
and $H_{n,m}^{\alg}$.

Previous study has focussed on the case $m=2n$.
In this case,  $H_{n,2n}^{\alg}$ is generated by oriented circle diagrams whose underlying diagram only contains circles (and the corresponding Lagrangian submanifolds of $\Hilb^n(A_{2n-1})$ are compact, being products of spheres rather than products of spheres and thimbles). 
The algebra $H_{n,2n}^{\alg}$ is also known as Khovanov's \emph{arc algebra}.

\begin{theorem}[\!\cite{AbouzaidSmith16, AbouzaidSmith19}]\label{t:AbouzaidSmith}
 The $A_{\infty}$ algebra $H_{n,2n}^{\symp}$ is formal.
 Moreover, there is an isomorphism between $H_{n,2n}^{\alg}$ and $H_{n,2n}^{\symp}$, sending the oriented circle diagram basis 
 of $H_{n,2n}^{\alg}$ to a geometric basis of $H_{n,2n}^{\symp}$.
\end{theorem}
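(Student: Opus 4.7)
The plan is to follow the strategy of Abouzaid--Smith as indicated, observing first that in the case $m = 2n$ all the admissible tuples defining $H_{n,2n}^{\symp}$ consist purely of matching paths, so $\ul{L}_\lambda$ is a product of Lagrangian matching spheres and $\Sym(\ul{L}_\lambda)$ is a \emph{compact} Lagrangian submanifold of $\Hilb^n(A_{2n-1})$ contained in $\cY_{n,2n} = \Hilb^n(A_{2n-1}) \setminus D_r$. Consequently $H_{n,2n}^{\symp}$ coincides with the ordinary Fukaya-category endomorphism algebra of the objects $\Sym(\ul{L}_\lambda)$ in $\cY_{n,2n}$, and the results of \cite{AbouzaidSmith16, AbouzaidSmith19} apply directly; my task is to sketch what those results do in a form adapted to the present setting.

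First I would establish the vector-space identification and compute the product on a geometric basis. Using the sliding-invariance in Proposition \ref{p:slidinginvariance} to replace $\ul{L}_{\ul{\lambda}_1}$ by the non-nested representative $\ul{L}_{\tilde{\lambda}_1}$, the graded vector-space isomorphism of Lemma \ref{l:gvspIsom} restricts to $H_{n,2n}^{\symp} \cong H_{n,2n}^{\alg}$. For the product $\mu^2$, I would exploit the cylindrical model of Section \ref{s:cylindricalFS}: holomorphic triangles $u : S \to \Hilb^n(A_{2n-1})$ contributing to $\mu^2$ lift under the tautological correspondence of Lemma \ref{l:tautologicalCorr} to branched covers $v : \Sigma \to A_{2n-1}$, whose projections to $\bH^\circ$ are dictated by the open-mapping theorem. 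For Lagrangian tuples consisting of matching spheres lying over disjoint circles and arcs in the base, these projected triangles can be enumerated combinatorially via standard disc-counting in the fibre direction, and one recognises the resulting count as the TQFT surgery rule defining multiplication in $K_{n,2n}^{\alg}$.

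The formality assertion is the deeper point, and here I would construct a non-commutative vector field $b \in HH^1(\cH_{n,2n}^{\symp})$ by counting rigid holomorphic curves with one additional interior constraint: a simple intersection with an auxiliary divisor obtained from the preimage of $\infty$ in $\bH$ (equivalently, a distinguished component of the boundary/horizontal divisor of the Lefschetz fibration). Positivity of intersection, as in Lemma \ref{l:PositivityIntersection}, ensures that this count is well-defined, and the resulting Hochschild cocycle carries a strictly positive weight with respect to the symplectic area / intersection-number filtration. Working over a characteristic-zero field, the Kaledin-type argument from \cite{AbouzaidSmith16} then promotes this weighted cocycle to a gauge equivalence killing all higher $A_\infty$-operations, yielding formality.

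The main obstacle, as in the cited papers, is verifying that the non-commutative vector field is well-defined and has the required weight-one behaviour: one must prove a compactness/transversality statement for the once-pointed moduli spaces with constrained incidence to the divisor, ruling out degenerations in which the marked point escapes to $D_r \cup D_{HC}$ or in which bubbles absorb the incidence. This reduces to the careful positivity-of-intersection and no-escape analysis already set up in Section \ref{sss:Compactness}, but the accounting of signs and the verification that $b$ represents a non-trivial class (so that its differential relation forces vanishing of $\mu^d$ for $d \geq 3$ modulo gauge) is where the bulk of the work lies. I would expect this step to occupy the majority of a full proof, while the algebra identification, once the basis correspondence is in place, follows from a direct comparison of combinatorial counts.
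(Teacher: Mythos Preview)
Your outline broadly tracks what \cite{AbouzaidSmith16, AbouzaidSmith19} do, but it is not what the paper's own sketch does, and it contains a couple of inaccuracies worth flagging.

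The paper does not attempt to re-prove either formality or the algebra isomorphism; those are simply cited. The content of the paper's sketch is a compatibility check that you have not addressed: the paper recalls the specific basis of $H_{n,2n}^{\symp}$ constructed in \cite{AbouzaidSmith19} --- the ``convenient basis'', obtained by restricting distinguished classes $v_j \in H^2(A_{2n-1}) \cong H^2(\cY_{n,2n})$ to each $H^2(\Sym(\ul{L}_\lambda)) = HF^2(\ul{L}_\lambda,\ul{L}_\lambda)$, and then propagating to $HF(\ul{L}_\lambda,\ul{L}_{\lambda'})$ by multiplying a minimal-degree generator $a_{\min}$ by these --- and then verifies that this convenient basis is a \emph{geometric basis} in the sense of \eqref{eq:CohBasis}, i.e.\ that each convenient basis element is concentrated at a single intersection point of $\ul{L}_{\ol\lambda}\cap\ul{L}_{\tilde\lambda'}$. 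The argument computes $v_j(S_{\lambda,k})\in\{0,\pm1\}$ to show the restricted basis is the product basis on $(S^2)^n$, and then decomposes the product map \eqref{eq:piece} circle-by-circle into rank-one pieces \eqref{eq:piece2} to see that multiplication by $a_{\min}$ carries geometric generators to geometric generators. None of this is in your proposal.

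On the side of inaccuracies: your plan to compute $\mu^2$ by direct enumeration of projected triangles is more optimistic than what \cite{AbouzaidSmith19} actually achieves; as the paper itself remarks at the start of Section~\ref{s:AlgIsom}, that computation proceeds indirectly via plumbing models for clean intersections and the cyclic module structure over $HF(\ul{L}_\lambda,\ul{L}_\lambda)$, not by a naked disc count. And the formality mechanism is not a generic ``weighted cocycle'' or Kaledin-style argument: one must show the nc-vector field $b$ is \emph{pure}, i.e.\ acts as the Euler field on each $HF(\ul{L}_\lambda,\ul{L}_{\lambda'})$, and then invoke Seidel's criterion (Theorem~\ref{t:seidel-formality}). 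Establishing purity is itself a nontrivial computation (carried out in \cite{AbouzaidSmith16} and reprised in Section~\ref{s:formality} here), not a formal consequence of $b$ having degree one.
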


\begin{proof}[Sketch of proof]
 Formality of $H_{n,2n}^{\symp}$ is the main result of \cite{AbouzaidSmith16}, whilst a basis-preserving algebra  isomorphism between $H_{n,2n}^{\alg}$ and $H_{n,2n}^{\symp}$
 is one of the main results of \cite{AbouzaidSmith19}.
 We now recall the basis for $H_{n,2n}^{\symp}$ chosen in \cite{AbouzaidSmith19}, and explain why it is a geometric basis in our sense. 
 The essential point \cite[Corollary 5.5]{AbouzaidSmith19} is the compatibility of the basis with various K\"unneth-type functors and decompositions (of the cohomology of products of spheres and thimbles with the cohomologies of the constituent factors).

 The natural action of the braid group $Br_{2n}$ on $A_{2n-1}$ and on $\cY_{n,2n}$ factors through an action of the symmetric group $Sym_{2n}$ on cohomology.  Furthermore, there are inclusions
 \[
 A_{2n-1} = \cY_{1,2n} \subset \cY_{n,2n} \subset (\mathbb{P}^1)^{2n}
 \]
 which induce $Sym_{2n}$-equivariant cohomology isomorphisms 
 \begin{align}
  H^2(A_{2n-1})=H^2(\cY_{1,2n}) \simeq H^2(\cY_{n,2n}) \simeq \mathbb{Z}\langle e_1,\ldots, e_{2n}\rangle / \langle \sum e_i \rangle \leftarrow \mathbb{Z} \langle e_1,\ldots, 
  e_{2n}\rangle = H^2((\mathbb{P}^1)^{2n}) \label{eq:geombasis}
 \end{align}
 In particular, the image of $\{e_i|1 \le i \le 2n-1\}$ in $H^2(\cY_{n,2n})$ forms a basis.
 The restriction map $H^2(\cY_{n,2n}) \to H^2(\Sym(\ul{L}_{\lambda}))$ is surjective for each $\lambda \in \Lambda^c_{n,2n}$, and by identifying the range with the cohomology of suitable multi-diagonals in $(\mathbb{P}^1)^{2n}$, 
 it is proved in \cite{AbouzaidSmith19} that 
 the chosen basis has the property that 
 it induces a well-defined basis for $H^2(\Sym(\ul{L}_{\lambda}))$, i.e. there is a basis of the latter such that each $e_i$ maps to a basis element or to zero.
 
 Now, we translate the basis of $H^2(\cY_{n,2n})$ to a basis of $H^2(A_{2n-1})$. 
 Denote by $s_1,\ldots, s_{2n-1}$ the homology classes of the $2n-1$ standard matching spheres in $A_{2n-1}$, 
 i.e. the ones that lie above $\{im(z)=1\}$; 
here they are labelled from left to right, and the spheres are oriented as the complex curves in the resolution of the $A_{2n-1}$-surface singularity.  
We denote the image of $e_i$ under \eqref{eq:geombasis} by $v_i \in H^2(A_{2n-1})$ for $1\le i \le 2n-1$.
They satisfy
\begin{align}
 v_1=s_1^* \text{ and } v_j=(-1)^{j}(s_{j-1}^*-s_j^*) \text{ for } 1<j \le 2n-1
\end{align}
where $*$ stands  for linear dual.
 The set $\{v_j\}_{j=1}^{2n-1}$ is the corresponding basis for $H^2(A_{2n-1})$.
 It induces a well-defined basis for $HF^2(\ul{L}_{\lambda},\ul{L}_{\lambda})$ via restriction 
 \begin{align}
  H^2(A_{2n-1}) \to H^2(\ul{L}_{\lambda})=HF^2(\ul{L}_{\lambda},\ul{L}_{\lambda})
 \end{align}
  where the first map is defined by regarding $\ul{L}_{\lambda}$ as the disjoint union of the matching spheres it contains, and hence as a submanifold of $A_{2n-1}$.
  This basis for $H^2(\ul{L}_{\lambda})$ naturally corresponds to the basis of $H^2(\Sym(\ul{L}_{\lambda}))$.
 Monomial products of the resulting elements give a basis for $HF^*(\ul{L}_{\lambda},\ul{L}_{\lambda})$.
 
 When $HF(\ul{L}_{\lambda},\ul{L}_{\lambda'}) \neq \{0\}$, it has rank one in minimal degree.  Let $a_{\min}$ be a minimal degree generator of $HF(\ul{L}_{\lambda},\ul{L}_{\lambda'})$ over the integers $\mathbb{Z}$.
 Then $HF(\ul{L}_{\lambda},\ul{L}_{\lambda'})$ is generated by $a_{\min}$
 as a module over each of $HF(\ul{L}_{\lambda},\ul{L}_{\lambda})$ and $HF(\ul{L}_{\lambda'},\ul{L}_{\lambda'})$.
A basis in $HF(\ul{L}_{\lambda},\ul{L}_{\lambda'})$ can therefore be defined by taking products of $a_{\min}$ with the bases for either of  
 $HF(\ul{L}_{\lambda},\ul{L}_{\lambda})$ or $HF(\ul{L}_{\lambda'},\ul{L}_{\lambda'})$; these act via the action of $H^*(\cY_{n,2n})$, which acts centrally, so there is no ambiguity (aside from a choice of sign of $a_{min}$).
 This finishes recalling the basis for $H_{n,2n}^{\symp}$, which we call a `convenient'  basis.
 
 We claim this convenient  basis 
  is geometric. 
 Let $S_{\lambda,1},\dots,S_{\lambda,n}$ be the matching spheres that $\ul{L}_{\lambda}$ contains.
 The first observation is that $v_j(S_{\lambda,k}) \in \{0,\pm 1\}$ for each $j$, and furthermore, for each $j$ there is exactly one $k$ for which  $v_j(S_{\lambda,k}) \neq 0$.
 This means that the convenient basis of $H^2(\Sym(\ul{L}_{\lambda}))$ induced from $H^2(\cY_{n,2n})$
 coincides with the product basis of the cohomology of $\Sym(\ul{L}_{\lambda})$ as a product of matching spheres.
 The geometric basis in $CF(\ul{L}_{\ol{\lambda}}, \ul{L}_{\tilde{\lambda}})$ also coincides with the product basis,  because $\tilde{\lambda}$ is  a 
 Hamiltonian push-off of $\ol{\lambda}$, and $\ol{\lambda} \cup \tilde{\lambda}$ consists of $n$ embedded pairwise non-nested circles. 
 That suffices to show that the convenient basis on $HF(\ul{L}_{\lambda},\ul{L}_{\lambda})$ is geometric.
 
 On the other hand, the minimal degree generator of  $HF(\ul{L}_{\lambda},\ul{L}_{\lambda'})$ is forced to be geometric, because the minimal degree subspace has rank $1$.
Since $\ol{\lambda} \cup \tilde{\lambda}'$ is a union of embedded and non-nested circles, the product map
 \begin{align}
  HF(\ul{L}_{\ol{\lambda}},\ul{L}_{\tilde{\lambda}'}) \otimes HF(\ul{L}_{\ol{\lambda}},\ul{L}_{\ol{\lambda}}) \to HF(\ul{L}_{\ol{\lambda}},\ul{L}_{\tilde{\lambda}'}) \label{eq:piece}
 \end{align}
 decomposes into pieces, one for each circle $C$ in $\ol{\lambda} \cup \tilde{\lambda}'$.
More precisely, we have isomorphisms
\begin{align}
HF(\ul{L}_{\ol{\lambda}},\ul{L}_{\tilde{\lambda}'}) &=\otimes_C H^{*}(S^2) \\
HF(\ul{L}_{\ol{\lambda}},\ul{L}_{\ol{\lambda}}) &= \otimes_C  (H^*(S^2))^{\otimes m_C}
\end{align}
where the tensor product is over all circles $C$ in $\ol{\lambda} \cup \tilde{\lambda}'$, and $m_C$ is the number of paths of $\ol{\lambda} $ that lie in $C$.
The product map \eqref{eq:piece} decomposes into the tensor product over all circles $C$ of maps
\begin{align}
H^{*}(S^2) \otimes  (H^*(S^2))^{\otimes m_C} \to H^{*}(S^2) \label{eq:piece2}
\end{align}
for which a typical local model is given by Figure \ref{fig:local_model}.
The product of the degree $0$ geometric generator in $H^{*}(S^2)$ and a degree $2$ geometric generator in 
$(H^*(S^2))^{\otimes m_C}$
 is, up to sign, the degree $2$ geometric generator in $H^{*}(S^2)$, because  $H^{2}(S^2)$ has rank $1$, the product is (by definition) a basis element of the convenient basis, and  \cite[Section 5]{AbouzaidSmith19} proves that the convenient basis is a basis for the cohomology groups over $\mathbb{Z}$.
 We conclude that the convenient basis is geometric.
 \end{proof}

\begin{figure}[h]
 \includegraphics{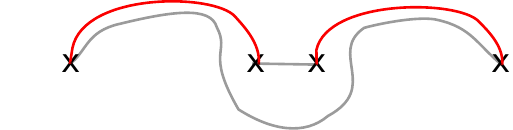}
 \caption{A circle $C$ in $\ol{\lambda} \cup \tilde{\lambda}'$ with $m_C=2$ (see Figure \ref{fig:tildeGeom})}\label{fig:local_model}
\end{figure}

\begin{corollary}\label{c:IdentifyingBasis}
After possibly changing $\Phi$ by sign on certain basis elements, the resulting graded vector space isomorphism $\Phi|_{H_{n,2n}^{\symp}}$ is an algebra isomorphism between $H_{n,2n}^{\symp}$ and $H_{n,2n}^{\alg}$. 
\end{corollary}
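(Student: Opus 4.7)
The plan is to leverage Theorem \ref{t:AbouzaidSmith} directly, reducing the Corollary to the observation that any two geometric bases of $H_{n,2n}^{\symp}$ differ only by signs.

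First, I would recall that Theorem \ref{t:AbouzaidSmith} supplies an algebra isomorphism $\Psi \colon H_{n,2n}^{\alg} \to H_{n,2n}^{\symp}$ sending the oriented circle diagram basis of $H_{n,2n}^{\alg}$ to the ``convenient'' basis of $H_{n,2n}^{\symp}$ of Abouzaid--Smith. The sketch of proof of Theorem \ref{t:AbouzaidSmith} verifies, via the K\"unneth-type decompositions of the restriction map on outermost circles, that the convenient basis is in fact geometric in the sense defined after \eqref{eq:CohBasis}. So $\Psi$ is a bijection, at the level of basis elements, from the oriented circle diagrams onto a geometric basis of $H_{n,2n}^{\symp}$.

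Next, I would compare $\Psi$ with $\Phi^{-1}|_{H_{n,2n}^{\symp}}$. By construction in Lemma \ref{l:gvspIsom}, $\Phi$ is the linear map sending each geometric generator $\ul{x} \in CF(\ul{L}_{\ol{\lambda}},\ul{L}_{\tilde{\lambda}'})$ to the oriented circle diagram with orientation $\lambda_{\ul{x}}$. Thus both $\Psi$ and $\Phi^{-1}|_{H_{n,2n}^{\symp}}$ take the oriented circle diagram basis bijectively onto geometric bases of $H_{n,2n}^{\symp}$, and these geometric bases are supported, component-by-component, on the same set of intersection points $\pi_E(\ul{x})$. As noted after \eqref{eq:CohBasis}, once the auxiliary tuples $\tilde{\lambda}'$ are fixed, any two such geometric bases can differ only by a sign on each basis element. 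Hence there is a diagonal sign change $\sigma$ on the oriented circle diagram basis such that $\Phi^{-1}|_{H_{n,2n}^{\symp}} = \Psi \circ \sigma$.

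Finally, changing $\Phi$ by the appropriate signs on each summand $CF(\ul{L}_{\ol{\lambda}},\ul{L}_{\tilde{\lambda}'})$ (equivalently, redefining the geometric basis element at each intersection point up to sign) replaces $\Phi^{-1}|_{H_{n,2n}^{\symp}}$ by $\Psi$ itself, and hence turns $\Phi|_{H_{n,2n}^{\symp}}$ into $\Psi^{-1}$, which is an algebra isomorphism. There is essentially no obstacle here beyond bookkeeping: the only subtlety is ensuring compatibility of the signs across the various off-diagonal components, but since each summand is spanned by a single well-defined set of geometric points whose sign is the only free choice, this is automatic.
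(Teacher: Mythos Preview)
Your proposal is correct and follows precisely the paper's approach: the paper gives no separate proof of this corollary, treating it as an immediate consequence of Theorem~\ref{t:AbouzaidSmith} (whose sketch establishes that the Abouzaid--Smith isomorphism sends the oriented circle diagram basis to a geometric basis). Your argument spells out the routine bookkeeping that, since $\Phi^{-1}$ and the Abouzaid--Smith isomorphism both carry oriented circle diagrams to geometric bases via the same weight-to-intersection-point correspondence, they differ only by a diagonal sign matrix.
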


From now on, we assume $\Phi|_{H_{n,2n}^{\symp}}$ has been chosen to be an algebra isomorphism.

\subsection{From the compact arc algebra to the extended arc algebra\label{Sec:tqft_multiplication}}

In this section, we summarise the multiplication rule 
for $K_{n,m}^{\alg}$ and $H_{n,m}^{\alg}$
from \cite[Sections 3 \& 4]{BS11}.  Our approach is dictated by two considerations. 
First, the easiest (though not the original) description of the algebra structure on $K_{n,m}^{\alg}$ is to 
realise $K_{n,m}^{\alg}$ as a quotient algebra of $H_{m,2m}^{\alg}$. 
Second, in the next section, we precisely compute $K_{n,m}^{\symp}$ by realising it as a quotient of $H_{m,2m}^{\symp}$ by an ideal which, under the isomorphism $H_{m,2m}^{\alg}=H_{m,2m}^{\symp}$ (Theorem \ref{t:AbouzaidSmith}), we identify with the corresponding ideal on the algebraic side.  The main purpose of this section is therefore to recall this quotient description of $K_{n,m}^{alg}$.

For completeness, we briefly recall the multiplication rule for $H_{m,2m}^{\alg}$ from  \cite[Section 3, multiplication]{BS11}.
Let $\ul{d} \lambda \ol{c}$ and $\ul{b} \mu \ol{a}$ be two oriented
circle diagrams with $a,b,c,d \in \Lambda_{n,m}^c$. If $b \neq c$,  then the
product is defined to be zero.
If $b=c$, then we put the diagram $\ul{b} \mu \ol{a}$ above
 the diagram $\ul{d} \lambda \ol{c}$
and apply the TQFT surgery procedure -- based on the Frobenius algebra underlying $H^*(S^2;\mathbb{Z})$, and  described e.g. in \cite[Section 3, the surgery procedure]{BS11}
iteratively to convert it into
a disjoint union of diagrams each of which  has no cups/caps in the `middle section'.
After this, the middle sections are  unions of line segments; we shrink 
each such line segment to a point to obtain a disjoint union of some new
oriented circle diagrams. The product $(\ul{d} \lambda \ol{c})(\ul{b} \mu \ol{a})$ is then defined to be the sum
of the corresponding basis vectors of $H_{m,2m}^{\alg}$.

One distinguished feature of this TQFT surgery procedure  is that
the output circle (resp. disjoint union of two circles) 
is (resp. are) oriented according to the following rules, where $1 \Leftrightarrow $ anti-clockwise orientation, and $x \Leftrightarrow$  clockwise orientation:
\begin{align}
 1 \otimes 1 \mapsto 1, \quad 1 \otimes x \mapsto x, &\quad x \otimes 1 \mapsto x, \quad x \otimes x \mapsto 0 \label{eq:TQFT1} \\
 1 \mapsto 1 \otimes x + x \otimes 1 \quad& \quad x \mapsto x \otimes x. \label{eq:TQFT2} 
\end{align}
Each TQFT operation yields a disjoint union of zero, one or two new oriented diagrams replacing
the old diagram (the iterative application may yield larger linear combinations of diagrams).
This completes our r\'esum\'e  of $H_{m,2m}^{\alg}$ as an algebra.

We now recall how to realise $K_{n,m}^{\alg}$ as a quotient algebra of $H_{m,2m}^{\alg}$; for an independent definition of $K_{n,m}^{alg}$, see \cite{BS11}.
For $\lambda \in \Lambda_{n,m}$, we define its {\it closure} $\cl(\lambda) \in \Lambda_{m,2m}^c$ by
\begin{align}
 \cl(\lambda)=
 \left\{
 \begin{array}{ll}
  \vee  &\text{ if }a \le m-n \\
  \lambda(a-(m-n)) &\text{ if } m-n< a \le 2m-n \\
  \wedge &\text{ if } a > 2m-n.
 \end{array}
 \right.
\end{align}
Intuitively, $\cl(\lambda)$ can be regarded as putting $m-n$ many $\vee$'s and $n$ many $\wedge$'s to the left and right of $\lambda$, respectively, 
to make $\cl(\lambda)$ a compact weight.
For any oriented circle diagram $\ul{b} \lambda \ol{a} \in K_{n,m}^{\alg}$, we define
\begin{align}
 \cl(\ul{b} \lambda \ol{a}):= \ul{\cl(b)} \cl(\lambda) \ol{\cl(a)} \label{eq:cl}
\end{align}
which is an oriented circle diagram of $H_{m,2m}^{\alg}$.

\begin{lemma}\cite[Lemma 4.2]{BS11}\label{l:qAlg}
 The map $\ul{b} \lambda \ol{a} \mapsto  \cl(\ul{b} \lambda \ol{a})$ is a degree preserving bijection between
the set of oriented circle diagrams in $K_{n,m}^{\alg}$ with underlying weight $\lambda \in \Lambda_{n,m}$ and
the set of oriented circle diagrams
in $H_{m,2m}^{\alg}$ with underlying weight $\cl(\lambda)$.
\end{lemma}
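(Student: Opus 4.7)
The lemma is purely combinatorial, and the core of the argument is to match up the structural data on both sides carefully. My plan is to first decompose the matching paths of $\ul{\cl(b)}$ and $\ol{\cl(a)}$ into pieces that can be traced back to $\ul{b}^{\alg}$ and $\ol{a}^{\alg}$, then verify that the orientation conditions \eqref{eq:orientations1}--\eqref{eq:orientations2} translate correctly, and finally check degree preservation.

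\textbf{Step 1: Anatomy of $\ul{\cl(b)}$.} By reading $\cl(b)$ left to right and matching $\vee$'s (open brackets) to $\wedge$'s (close brackets) in the standard non-crossing way, I would classify each matching path of $\ul{\cl(b)}$ as one of four types, depending on whether its endpoints lie in the left-added region $\{1,\dots,m-n\}$, the middle region $\{m-n+1,\dots,2m-n\}$, or the right-added region $\{2m-n+1,\dots,2m\}$. A direct inspection of the bracket-matching algorithm shows:
\begin{itemize}
\item[(i)] middle-to-middle paths are exactly the shifts by $m-n$ of the matching paths of $\ul{b}$ (good $\vee$'s paired with their $c^{\wedge}$);
\item[(ii)] left-added-to-middle paths have their right endpoint at a \emph{free} $\wedge$ of $b$;
\item[(iii)] middle-to-right-added paths have their left endpoint at a \emph{bad} $\vee$ of $b$;
\item[(iv)] some paths go straight from a left-added $\vee$ to a right-added $\wedge$.
\end{itemize}
A key combinatorial fact I would establish along the way is that, after erasing the good-pair matchings of $b$, the remaining positions in $\{1,\dots,m\}$, read left to right, form a sequence $\wedge\wedge\cdots\wedge\vee\vee\cdots\vee$: free $\wedge$'s first, then bad $\vee$'s. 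The analogous statement holds for $\ol{a}^{\alg}$ and $\ol{\cl(a)}$ by symmetry (reflecting across $\{\im(z)=1\}$).

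\textbf{Step 2: Translation of orientation conditions.} With the classification of Step~1 in hand, the condition that $\ul{\cl(b)}\cl(\lambda)\ol{\cl(a)}$ satisfies \eqref{eq:orientations1} for each matching path breaks into:
\begin{itemize}
\item on type~(i) paths, $\lambda(c)\neq\lambda(c^{\wedge})$, which is exactly \eqref{eq:orientations1} for $\ul{b}^{\alg}\lambda$ and $\ol{a}^{\alg}\lambda$;
\item on type~(ii) paths, the left endpoint has forced value $\vee$, so the free-$\wedge$ position $c$ must satisfy $\lambda(c)=\wedge=b(c)$;
\item on type~(iii) paths, the right endpoint has forced value $\wedge$, so the bad-$\vee$ position $c$ must satisfy $\lambda(c)=\vee=b(c)$;
\item on type~(iv) paths, the forced values $\vee$ and $\wedge$ automatically differ.
\end{itemize}
Therefore the orientation of $\ul{\cl(b)}\cl(\lambda)\ol{\cl(a)}$ is equivalent to the orientation condition~\eqref{eq:orientations1} for $\ul{b}^{\alg}\lambda$ (resp.\ $\ol{a}^{\alg}\lambda$) on the preserved matching paths, together with the requirement $\lambda(c)=b(c)$ (resp.\ $\lambda(c)=a(c)$) at every thimble position. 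The combinatorial fact from Step~1 shows that this second requirement is precisely \eqref{eq:orientations2}: since the free $\wedge$'s precede the bad $\vee$'s, the pattern ``$\wedge$'s then $\vee$'s'' demanded by \eqref{eq:orientations2} on the thimble positions, combined with the obvious count $|\lambda^{-1}(\vee)|=n=|b^{-1}(\vee)|$ and condition~\eqref{eq:orientations1} on the matching pairs of $\ul{b}$, forces $\lambda$ to agree with $b$ on all thimble positions. This gives the desired bijection.

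\textbf{Step 3: Degree preservation.} The degree counts clockwise cups and caps, i.e.\ matching paths of $\ul{\cdot}$ (resp.\ $\ol{\cdot}$) whose left endpoint has $\cdot$-value $\wedge$. The forced orientations analyzed in Step~2 immediately show that none of the type~(ii), (iii), (iv) matching paths of $\ul{\cl(b)}$ can be clockwise: in each case the left endpoint has value $\vee$. Hence the clockwise cups of $\ul{\cl(b)}\cl(\lambda)$ are in bijection with those of $\ul{b}^{\alg}\lambda$, and likewise for caps. This gives degree preservation.

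\textbf{Main obstacle.} The only nontrivial step is the combinatorial fact in Step~1 that, after removing good-pair matchings, the remaining positions read $\wedge\cdots\wedge\vee\cdots\vee$; everything else is then bookkeeping. This fact is a standard feature of non-crossing bracket matchings and admits a short inductive proof (or follows from the stack description of the matching), so I do not expect any real difficulty in executing the plan.
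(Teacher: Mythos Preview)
Your proof plan is correct and carefully executed. Note, however, that the paper does not supply its own proof of this lemma: it is stated with a direct citation to \cite[Lemma~4.2]{BS11} and then used without further argument. So there is no proof in the paper to compare against. What you have written is essentially the natural combinatorial unpacking of the bracket-matching construction of cup/cap diagrams, and the key observation (that after removing the internally matched pairs of $b$ the remaining positions read $\wedge\cdots\wedge\vee\cdots\vee$) is exactly the fact that makes the thimble condition~\eqref{eq:orientations2} match the type~(ii)/(iii) constraints coming from the closure. Your counting argument in Step~2, together with the check that types~(ii)--(iv) never produce clockwise cups in Step~3, is complete.
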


Let $I_{\Lambda_{n,m}}$ be the subspace of $H_{m,2m}^{\alg}$ spanned by the vectors
\begin{align}
 \{\ul{b} \lambda \ol{a} \in H_{m,2m}^{\alg}|  \text{ oriented circle diagram } \ul{b} \lambda \ol{a} \text{ with} \lambda \in \Lambda_{m,2m}\setminus \cl(\Lambda_{n,m})\} \label{eq:Inm}
\end{align}
which is a two-sided ideal. 
The condition $ \lambda \in \Lambda_{m,2m}\setminus \cl(\Lambda_{n,m})$ is equivalent to 
\begin{align}
\{1,\dots,m-n\} \nsubseteq \lambda^{-1}(\vee) \text{ or } \{2m-n+1,\dots,2m\} \nsubseteq \lambda^{-1}(\wedge) \label{eq:2sidedIdeal}
\end{align}
 In view of Lemma \ref{l:qAlg}, the vectors
\begin{align}
 \{  \cl(\ul{b} \lambda \ol{a})+I_{\Lambda_{n,m}}|\text{ oriented circle diagrams }\ul{b} \lambda \ol{a} \text{ with }\lambda \in \Lambda_{n,m}\}
\end{align}
give a basis for the quotient algebra $H_{m,2m}^{\alg}/I_{\Lambda_{n,m}}$. We deduce that the map
\begin{align}
 cl: K_{n,m}^{\alg} \to H_{m,2m}^{\alg}/I_{\Lambda_{n,m}} & &\ul{b} \lambda \ol{a} \mapsto \cl(\ul{b} \lambda \ol{a})+I_{\Lambda_{n,m}} \label{eq:cl2}
\end{align}
is an isomorphism of graded vector spaces. We use this to transport the algebra structure on $H_{m,2m}^{\alg}/I_{\Lambda_{n,m}}$ to that on $K_{n,m}^{\alg}$.

We want to extract from \eqref{eq:cl2} two algebra isomorphisms
\begin{align}
H^{\alg}_{n,m} &= H^{\alg}_{m-n,2(m-n)}/I \text{ for }2n \le m \label{eq:1quotient}\\
K^{\alg}_{n,m} &= H^{\alg}_{n,m+n}/J \text{ for all }n \le m  \label{eq:2quotient}
\end{align}
where $I$ and $J$ are certain ideals  to be specified. Note that \eqref{eq:1quotient}
is an empty statement when $2n > m$ because, in this case, $H^{\alg}_{n,m}=0$, so one can simply take $I$ to be 
$H^{\alg}_{m-n,2(m-n)}$.

By definition, an oriented circle diagram
$\ul{b}\lambda \ol{a} \in K^{\alg}_{n,m}$
lies in $H^{\alg}_{n,m}$
if and only if  $a$ and $b$ have the property that all the $c_{a,j}$
and $c_{b,j}$ are good points (see \eqref{eq:balancing}).
That is equivalent to asking
\begin{align}
c^{\wedge}_{\cl(a),k}=c^{\wedge}_{\cl(b),k}=2m+1-k \text{ for all }k=1,\dots,n.
\end{align}
It means that for $\ul{b}\lambda \ol{a} \in H^{\alg}_{n,m}$, $\cl(\ul{b}\lambda \ol{a})$ 
has $n$ counterclockwise circles enclosing an oriented circle diagram, which we call $\bc(\ul{b}\lambda \ol{a})$, and which defines an element in 
$H^{\alg}_{m-n,2(m-n)}$.
Let $f:H^{\alg}_{m-n,2(m-n)} \to H_{m,2m}^{\alg}$ be the algebra embedding given by adding to an oriented circle diagram in $H^{\alg}_{m-n,2(m-n)}$ precisely $n$ 
counterclockwise circles enclosing it, so we have $f \circ \bc=\cl$. 

\begin{lemma}
We have a commutative diagram of algebra maps
\[
\begin{tikzcd}
H^{\alg}_{n,m} \arrow[dr,"\bc",hook] \arrow[rr,"\cl", hook]& &   H_{m,2m}^{\alg}/I_{\Lambda_{n,m}}\\
    & H^{\alg}_{m-n,2(m-n)}/f^{-1}(I_{\Lambda_{n,m}}) \arrow[ur,"f",hook]&
\end{tikzcd}
\]
\end{lemma}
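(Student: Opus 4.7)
The plan is to establish commutativity at the level of linear maps (which is immediate from the construction preceding the lemma) and then to upgrade all three maps to algebra homomorphisms, deducing the algebra-map statement for $\bc$ from those for $\cl$ and $f$ together with the commutativity relation $f \circ \bc = \cl$.

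First I would observe that $\cl : K^{\alg}_{n,m} \to H^{\alg}_{m,2m}/I_{\Lambda_{n,m}}$ is by construction an algebra isomorphism: the algebra structure on $K^{\alg}_{n,m}$ is literally defined by transporting the quotient algebra structure through the graded vector space bijection \eqref{eq:cl2}. Since $H^{\alg}_{n,m}$ is a subalgebra of $K^{\alg}_{n,m}$ (both as defined diagrammatically), restricting $\cl$ to $H^{\alg}_{n,m}$ yields an injective algebra homomorphism into $H^{\alg}_{m,2m}/I_{\Lambda_{n,m}}$.

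Next I would pass $f$ to the quotient. The map $f: H^{\alg}_{m-n,2(m-n)} \to H^{\alg}_{m,2m}$ is an algebra embedding, which is verified diagrammatically following Brundan-Stroppel: when multiplying two elements $f(x)$, $f(y)$ in the image, the pairing condition forces the central weight of $f(x)$ to match that of $f(y)$, so the $n$ outer counterclockwise circles of the top factor pair with those of the bottom factor, and iterated application of the $1 \otimes 1 \mapsto 1$ rule of \eqref{eq:TQFT1} reproduces the $n$ outer counterclockwise circles of the product; the surgery on the inner part precisely realises the multiplication of $x$ and $y$ in $H^{\alg}_{m-n,2(m-n)}$. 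Injectivity is clear since distinct compact diagrams in $H^{\alg}_{m-n,2(m-n)}$ remain distinct after being enclosed by $n$ outer counterclockwise circles. Since $I_{\Lambda_{n,m}}$ is a two-sided ideal (noted just after \eqref{eq:Inm}), the preimage $f^{-1}(I_{\Lambda_{n,m}})$ is a two-sided ideal in $H^{\alg}_{m-n,2(m-n)}$, and $f$ descends to an injective algebra homomorphism $\bar f : H^{\alg}_{m-n,2(m-n)}/f^{-1}(I_{\Lambda_{n,m}}) \hookrightarrow H^{\alg}_{m,2m}/I_{\Lambda_{n,m}}$.

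Finally, the identity $f \circ \bc = \cl$ on $H^{\alg}_{n,m}$ (valid at the level of linear maps, and noted just before the lemma) passes to the quotient to give $\bar f \circ \bc = \cl|_{H^{\alg}_{n,m}}$. For $x, y \in H^{\alg}_{n,m}$ we then have
\[
\bar f(\bc(xy)) \;=\; \cl(xy) \;=\; \cl(x)\cl(y) \;=\; \bar f(\bc(x))\,\bar f(\bc(y)) \;=\; \bar f\bigl(\bc(x)\cdot\bc(y)\bigr),
\]
using that $\cl|_{H^{\alg}_{n,m}}$ and $\bar f$ are algebra homomorphisms; injectivity of $\bar f$ then forces $\bc(xy) = \bc(x)\bc(y)$ in the quotient, so $\bc$ is multiplicative. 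Injectivity of $\bc$ is automatic from injectivity of $\cl|_{H^{\alg}_{n,m}} = \bar f \circ \bc$. The main obstacle is the diagrammatic verification that $f$ is an algebra embedding; this is essentially folklore for Khovanov-style arc algebras, but requires a careful case analysis of the nested-concentric-circle TQFT surgeries. Everything else is then a formal diagram chase.
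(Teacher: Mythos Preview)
Your proof is correct and follows essentially the same approach as the paper: commutativity holds by construction, $\cl$ and $f$ are algebra maps (the paper asserts the latter just before the lemma, whereas you spell out the diagrammatic verification), and then $\bc$ is forced to be multiplicative by the relation $f\circ\bc=\cl$ together with injectivity of $f$. The paper's proof is just a terser version of yours.
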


\begin{proof}
The diagram is commutative by construction, and we  know that $f$ and $\cl$ are algebra maps.
It remains to check is that $\bc$ is also an algebra map, which follows from the injectivity of $f$ and the fact that $f$ and $\cl$ are algebra maps.
\end{proof}

By \eqref{eq:Inm}, \eqref{eq:2sidedIdeal} and the description of $f$, one checks
the ideal $I:=f^{-1}(I_{\Lambda_{n,m}})$ is the subspace of $H^{\alg}_{m-n,2(m-n)}$  spanned by
\begin{align}
 \{\ul{b} \lambda \ol{a} \in H^{\alg}_{m-n,2(m-n)}| \text{ oriented circle diagrams }\ul{b} \lambda \ol{a} \text{ such that } \{1,\dots,m-2n\} \nsubseteq \lambda^{-1}(\vee) \} \label{eq:ILeft}
\end{align}
From \eqref{eq:ILeft}, it is obvious that $\bc$ is an isomorphism (of vector spaces, and hence algebras)
\begin{align}
\bc:H^{\alg}_{n,m} \simeq  H^{\alg}_{m-n,2(m-n)}/I. \label{eq:1Q}
\end{align}

We given an equivalent reformulation of \eqref{eq:1Q}.
For $2n \le m$, define
 $\bc : \Lambda_{n,m} \to \Lambda_{m-n,2(m-n)}$ by
\begin{align}
 \bc(\lambda)(a):=
 \left\{
 \begin{array}{ll}
  \vee  &\text{ if }a \le m-2n \\
  \lambda(a-(m-2n)) &\text{ if }a> m-2n.
 \end{array}
 \right. \label{eq:bcFull}
\end{align}
This has the property that $\bc(\Lambda_{n,m}^c) \subset  \Lambda_{m-n,2(m-n)}^c$.
For $\ul{b}\lambda \ol{a} \in H^{\alg}_{n,m}$, we define
\begin{align}
\bc(\ul{b}\lambda \ol{a} )=\ul{\bc(b)} \bc(\lambda) \ol{\bc(a)}  \in  H^{\alg}_{m-n,2(m-n)}.
\end{align}
Let $I$ be as in \eqref{eq:ILeft}. Then
\begin{align}
 \bc: H_{n,m}^{\alg} \to H^{\alg}_{m-n,2(m-n)}/I &  &\ul{b} \lambda \ol{a} \mapsto \bc(\ul{b} \lambda \ol{a})+I \label{eq:bc}
\end{align}
is an algebra isomorphism.

Now we explain the definition of the ideal $J$ appearing in \eqref{eq:2quotient}.
Let $I_{n,m}^{(1)}$ and $I_{n,m}^{(2)}$ be the ideals of $H^{\alg}_{m,2m}$ spanned by
\begin{align}
 \{\ul{b} \lambda \ol{a} \in H_{m,2m}^{\alg}| \{1,\dots,m-n\} \nsubseteq \lambda^{-1}(\vee) )\} \text{ and } \label{eq:Inm1}\\
 \{\ul{b} \lambda \ol{a} \in H_{m,2m}^{\alg}| \{2m-n+1,\dots,2m\} \nsubseteq \lambda^{-1}(\wedge)\} \label{eq:Inm2}
\end{align}
respectively (see Figure \ref{fig:Ideals}).
\begin{figure}[h]
 \includegraphics{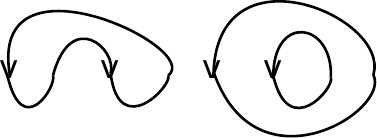}
 \caption{When $m=4$ and $n=2$, the oriented circle diagram above represents an element in $I_{n,m}^{(1)}$ but not in $I_{n,m}^{(2)}$.}\label{fig:Ideals}
\end{figure}
In view of \eqref{eq:Inm}, \eqref{eq:2sidedIdeal}, we have $I_{n,m}=I_{n,m}^{(1)}+I_{n,m}^{(2)}$.
This implies that
\begin{align}
K_{n,m}^{\alg} = H_{m,2m}^{\alg}/I_{\Lambda_{n,m}}=(H_{m,2m}^{\alg}/I_{n,m}^{(1)})/(I_{n,m}^{(1)}+I_{n,m}^{(2)}/I_{n,m}^{(1)}) \label{eq:2QQ}
\end{align}
From \eqref{eq:bc}, we have $H_{m,2m}^{\alg}/I_{n,m}^{(1)}=H_{n,m+n}^{\alg}$ so \eqref{eq:2QQ} becomes
\begin{align}
K_{n,m}^{\alg}=H_{n,m+n}^{\alg}/J \label{eq:2Q}
\end{align}
where $J$ is spanned by
\begin{align}
\{\ul{b} \lambda \ol{a} \in H_{n,m+n}^{\alg}| \ul{b} \lambda \ol{a} \text{ is an oriented circle diagram and } \{m+1,\dots,m+n\} \nsubseteq \lambda^{-1}(\wedge)\} \label{eq:Jright}
\end{align}

We give an equivalent reformulation of \eqref{eq:2Q}.
Let $\be: \Lambda_{n,m} \to \Lambda_{n,m+n}^c$ be the inclusion
\begin{align}
 \be(\lambda)(a):=
 \left\{
 \begin{array}{ll}
  \lambda(a) &\text{ if }a \le m \\
  \wedge &\text{ if }a> m
 \end{array}
 \right. \label{eq:beFull}
\end{align}
For $\ul{b}\lambda \ol{a} \in K^{\alg}_{n,m}$, we define
\begin{align}
\be(\ul{b}\lambda \ol{a} )=\ul{\be(b)} \be(\lambda) \ol{\be(a)}  \in  H^{\alg}_{n,m+n}
\end{align}
Let $J$ be as in  \eqref{eq:Jright}. Then
\begin{align}
 \be: K_{n,m}^{\alg} \to  H^{\alg}_{n,m+n}/J & &\ul{b} \lambda \ol{a} \mapsto \be(\ul{b} \lambda \ol{a})+J \label{eq:be}
\end{align}
is an algebra isomorphism.

\section{More algebra isomorphisms}\label{s:AlgIsom}

We will next identify $K^{\symp}_{n,m}$ and $K_{n,m}^{\alg}$ as algebras for every $n,m$.
 The proof of Theorem \ref{t:AbouzaidSmith} in \cite{AbouzaidSmith19} relied in an essential way on the fact that the Floer product for a triple of Lagrangians meeting pairwise cleanly can be understood, via `plumbing models', as a possibly sign-twisted convolution product, and that for every triple of Lagrangians associated to compact weights, one could find non-vanishing Floer products which factored through products associated to triples with plumbing models. 
 By contrast, for the extended algebra, there are  triples of weights and corresponding Lagrangians for which the product of minimal degree generators cannot be written as a product of minimal degree generators between interpolating Lagrangians with plumbing models. Instead of mimicking the  strategy of \cite{AbouzaidSmith19}
in proving $H^{\symp}_{n,2n}=H^{\alg}_{n,2n}$, we will instead reduce the isomorphism of extended arc algebras (in stages)  to Theorem \ref{t:AbouzaidSmith} by restriction-type arguments similar to those appearing in the proof of Lemma \ref{l:sliding2}.  The crucial point, as explained in Section \ref{Sec:tqft_multiplication}, is that all the extended arc algebras can be understood as sub-quotients of the algebras $H_{n,2n}^{alg}$.

\subsection{The compact cases}

We first explain how to compute the algebra structure on $H^{\symp}_{n,m}$ for all $n,m$.
We assume $2n \le m$ because $H^{\symp}_{n,m}=0$ otherwise.
Let $\bc : \Lambda_{n,m}^c \to \Lambda_{m-n,2(m-n)}^c$ be the injection given by \eqref{eq:bcFull}.
Let
\begin{align}
 H_{n,m}^{\bc}:=\oplus_{\lambda_0,\lambda_1 \in \Lambda_{n,m}^c} HF(\ul{L}_{\bc(\lambda_0)},\ul{L}_{\bc(\lambda_1)}) \label{eq:coHc}
\end{align}
which is a subalgebra of $H^{\symp}_{m-n,2(m-n)}=H_{m-n,2(m-n)}^{\alg}$, so in particular, $H_{n,m}^{\bc}$ is formal.
We want to compare $H^{\symp}_{n,m}$ with $H_{n,m}^{\bc}$. Note that the Lagrangians underlying $H_{n,m}^{\symp}$ are diffeomorphic to $(S^2)^n$, whilst those relevant to $H_{n,m}^{\bc}$ are $(S^2)^{m-n}$, where $m-n \geq n$.

For each ordered pair $(\lambda_0, \lambda_1)$ such that $\lambda_0,\lambda_1 \in \Lambda_{n,m}^c$, 
we choose upper matchings $\ol{\bc(\lambda_0)}$ and $\ol{\bc(\lambda_1)}$
such that
\begin{align}
 &\text{for $a =1,\dots,m-2n$, the slope at $a+\sqrt{-1}$ of the matching path in $\ol{\bc(\lambda_0)}$ starting } \label{eq:LeftCondition}\\
&\text{from $a+\sqrt{-1}$ is larger than that of the corresponding path in $\ol{\bc(\lambda_1)}$}  \nonumber
\end{align}

\begin{remark}
 Condition \ref{eq:LeftCondition} is used to eliminate the existence of certain pseudo-holomorphic maps.
 For example, let $l_0,l_1,l_2$ be upper matching paths from $1+\sqrt{-1}$ to $2+\sqrt{-1}$ such that the slopes at $1+\sqrt{-1}$ are in decreasing order (Figure \ref{fig:Viterbo}).
 Let $L_i$ be the corresponding matching spheres.
 Let $u$ be a solution contributing to the multiplication map
 \begin{align}
  CF(L_1,L_2) \times CF(L_0,L_1) \to CF(L_0,L_2)
 \end{align}
 such that $\pi_E \circ v$ is holomorphic.
 We claim that  if the output of $u$ maps to $1+\sqrt{-1}$ under $\pi_E$,  so do all the inputs of $v$.
 This is because if $\pi_E \circ v$ restricted to the boundary labelled by $L_0$ is not a constant, 
 then the image of $v$ must have non-empty intersection with the unbounded region of $\bH \setminus (l_0 \cup l_1 \cup l_2)$ by the holomorphicity of $\pi_E \circ v$.
 This in turn implies that $\pi_E \circ v$ is not relatively compact by the open mapping theorem, a contradiction.
 As a result, one can show that $\pi_E \circ v$ restricted to the boundary labelled by $L_0$ is a constant.
 Inductively applying this argument, one can show that the restriction of $\pi_E \circ v$ to the whole boundary is constant, and hence $u$ itself is a constant map.
\end{remark}

\begin{figure}[h]
 \includegraphics{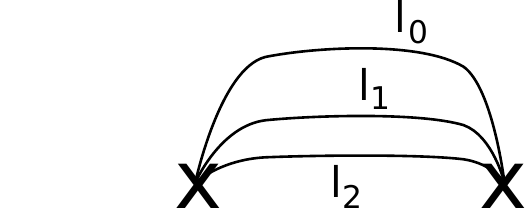}
 \caption{}\label{fig:Viterbo}
\end{figure}

The choices of matchings depend on the ordered pair we start with, cf. the discussion around Figure \ref{fig:tildeGeom}; to emphasise this dependence, we denote the matchings by $\ol{\bc(\lambda_0)}^{\lambda_0,\lambda_1}$
and $\ol{\bc(\lambda_1)}^{\lambda_0,\lambda_1}$ respectively.
Let $\cI_{\lambda_0,\lambda_1}$ be the subspace of $CF(\ul{L}_{\ol{\bc(\lambda_0)}^{\lambda_0,\lambda_1}},\ul{L}_{\ol{\bc(\lambda_1)}^{\lambda_0,\lambda_1}})$
generated by
\begin{align}
 \cB_{\lambda_0,\lambda_1}:=\{\ul{x} \in \cX(\ul{L}_{\ol{\bc(\lambda_0)}^{\lambda_0,\lambda_1}},\ul{L}_{\ol{\bc(\lambda_1)}^{\lambda_0,\lambda_1}})| \{1,\dots,m-2n\}+\sqrt{-1} \nsubseteq \pi_E(\ul{x}) \}
\end{align}
Let $\cG_{\lambda_0,\lambda_1}:=\cX(\ul{L}_{\ol{\bc(\lambda_0)}^{\lambda_0,\lambda_1}},\ul{L}_{\ol{\bc(\lambda_1)}^{\lambda_0,\lambda_1}}) \setminus \cB_{\lambda_0,\lambda_1}$.
We define
$\cI= \oplus \cI_{\lambda_0,\lambda_1}$, $\cB:=\oplus \cB_{\lambda_0,\lambda_1}$ and $\cG=\oplus \cG_{\lambda_0,\lambda_1}$.
We use the symbol $\cI$ because, as we will see in Lemma \ref{l:algIsom}, $\cI$ is actually an ideal.

\begin{lemma}\label{l:mu1preserveI}
 $\mu^1(\cI_{\lambda_0,\lambda_1}) \subset \cI_{\lambda_0,\lambda_1}$ so $\cI_{\lambda_0,\lambda_1}$ descends to a vector subspace of $H_{n,m}^{\bc}$, which we denote by $I_{\lambda_0,\lambda_1}$.
\end{lemma}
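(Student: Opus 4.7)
The plan is to reduce the claim to a statement about individual critical values. Since $\cI_{\lambda_0,\lambda_1}=\sum_{a=1}^{m-2n}\cI^{a}_{\lambda_0,\lambda_1}$, where $\cI^{a}_{\lambda_0,\lambda_1}$ is the subspace generated by $\{\ul{x}:a+\sqrt{-1}\notin\pi_E(\ul{x})\}$, it suffices to show that each $\cI^{a}_{\lambda_0,\lambda_1}$ is $\mu^1$-invariant. Fix $a\in\{1,\dots,m-2n\}$. Suppose $\ul{x}\in\cI^{a}_{\lambda_0,\lambda_1}$ and $\ul{y}$ appears with nonzero coefficient in $\mu^1(\ul{x})$; arguing by contradiction, suppose $a+\sqrt{-1}\in\pi_E(\ul{y})$. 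Choose a Floer strip $u:\RR\times[0,1]\to\Hilb^{m-n}(E)\setminus D_r$ contributing to this coefficient, and apply the tautological correspondence (Lemma \ref{l:tautologicalCorr}) to produce $v:\Sigma\to E$, where $\Sigma$ is an $(m-n)$-fold branched cover of the strip. Exactly one sheet at the output has asymptote at the critical point $p_a$ of $\pi_E$ above $a+\sqrt{-1}$; let $\Sigma_\star$ be the connected component containing this sheet.

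Near that asymptote, the boundary of $\Sigma_\star$ at $t=0$ maps to the matching sphere $S_{0,a}\subset\ul{L}_{\ol{\bc(\lambda_0)}^{\lambda_0,\lambda_1}}$ above $l_0^a:=\ol{\gamma}_{\bc(\lambda_0),a}$, while at $t=1$ it maps to $S_{1,a}\subset\ul{L}_{\ol{\bc(\lambda_1)}^{\lambda_0,\lambda_1}}$ above $l_1^a:=\ol{\gamma}_{\bc(\lambda_1),a}$. Arranging the Floer data so that the Hamiltonian perturbation vanishes in a neighborhood of $a+\sqrt{-1}$ (possible since one can localize perturbations away from transverse intersection points), the map $\pi_E\circ v|_{\Sigma_\star}$ is holomorphic near $p_a$, with boundary on $l_0^a$ and $l_1^a$. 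By the slope condition \eqref{eq:LeftCondition}, the two paths emanate from $a+\sqrt{-1}$ with distinct slopes, dividing a small neighborhood of $a+\sqrt{-1}$ in $\bH^\circ$ into two wedges; the orientation of the strip together with the ordering of slopes pins down the wedge into which $\pi_E\circ v|_{\Sigma_\star}$ maps locally.

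One then tracks the image of $\pi_E\circ v|_{\Sigma_\star}$ globally. Because the boundary of $\Sigma_\star$ at $t=0$ (resp. $t=1$) maps to the union of paths in $\ol{\bc(\lambda_0)}^{\lambda_0,\lambda_1}$ (resp. $\ol{\bc(\lambda_1)}^{\lambda_0,\lambda_1}$), the image of $\pi_E\circ v|_{\Sigma_\star}$ is constrained to a region whose boundary, near $a+\sqrt{-1}$, is $l_0^a\cup l_1^a$. The slope condition is designed exactly as in the Remark preceding Figure \ref{fig:Viterbo}: any non-constant strip with the prescribed boundary and one asymptote at $p_a$ would, via the open mapping theorem, be forced to extend into the unbounded region of $\bH^\circ\setminus(l_0^a\cup l_1^a)$, contradicting properness of $\pi_E\circ v|_{\Sigma_\star}$ (which is inherited from properness of $\pi_E|_{L_k}$ for each $k$, since $\pi_E\circ v$ has compact asymptotes and bounded $L^2$-energy by Lemma \ref{l:boundGeoEnergy1}). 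The only remaining option is that the input asymptote of this sheet is again $p_a$, yielding $a+\sqrt{-1}\in\pi_E(\ul{x})$ and contradicting the choice of $a$.

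The main obstacle will be making this global tracking rigorous, in particular verifying that the specific wedge determined by the slope ordering and strip orientation near $p_a$ does not extend to any other common endpoint of, or transverse intersection between, $l_0^a$ and $l_1^a$ that could provide a non-$p_a$ asymptote. This will be handled by the freedom in choosing the upper matchings: one picks $\ol{\bc(\lambda_0)}^{\lambda_0,\lambda_1}$ and $\ol{\bc(\lambda_1)}^{\lambda_0,\lambda_1}$ so that for $a\le m-2n$ the paths $l_0^a$ and $l_1^a$ are nearly parallel, close to one another, and well-separated from the other matching paths in both tuples. With such a choice the Viterbo-style local argument becomes global, the only allowed asymptote at $+\infty$ for this sheet is $p_a$, and the proof concludes.
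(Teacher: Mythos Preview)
Your overall strategy—decompose $\cI=\sum_a\cI^a$ and prove for each $a$ that if the output $\ul{y}$ has $a+\sqrt{-1}\in\pi_E(\ul{y})$ then so does the input—is exactly the direction of the paper's argument. The gap is in the execution of the single-$a$ step.

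You assume the open-mapping argument can be run globally on $\Sigma_\star$, but you have not shown that $\Sigma_\star$ is a bigon with boundary only on $S_{0,a}$ and $S_{1,a}$. In general $\Sigma_\star$ can carry branch points and have boundary arcs on many different matching spheres, so the image of $\pi_E\circ v|_{\Sigma_\star}$ is only constrained to lie in the complement of a complicated union of matching paths, not just $l_0^a\cup l_1^a$; the ``unbounded region'' argument then does not apply. Your proposed fix—choosing $l_0^a$ and $l_1^a$ nearly parallel and well-separated from all other paths—is not available: the right endpoints $c^\wedge_{\bc(\lambda_0),a}$ and $c^\wedge_{\bc(\lambda_1),a}$ are determined by the weights and are typically different, and the paths in a single upper matching are forced to be nested, so cannot be separated.

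The paper closes this gap by a local-to-global step you are missing. From the slope condition \eqref{eq:LeftCondition} and the open mapping theorem one first shows that \emph{one of the two boundary arcs adjacent to the output puncture is mapped constantly to $a+\sqrt{-1}$}. Then one uses that each of $\ul{L}_{\ol{\bc(\lambda_0)}}$ and $\ul{L}_{\ol{\bc(\lambda_1)}}$ has exactly one component whose projection contains $a+\sqrt{-1}$ (this holds because $\bc(\lambda_j)(b)=\vee$ for all $b\le m-2n$, so $a+\sqrt{-1}$ is a left endpoint of a unique matching path and not a right endpoint of any). Walking once more along the boundary forces the next boundary arc to coincide with the first, so $\Sigma_\star$ is a constant bigon and the input puncture also sits over $a+\sqrt{-1}$. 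This combinatorial uniqueness is what replaces your unattainable ``nearly parallel, well-separated'' choice.
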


\begin{proof}
 It suffices to show that if the $\ul{x}_0$-coefficient of $\mu^1(\ul{x}_1)$ is non-zero for some $\ul{x}_0 \in \cG$, then $\ul{x}_1 \in \cG$.
 
 By definition, $\ul{x}_0 \in \cG$ implies that $\{1,\dots,m-2n\}+\sqrt{-1} \subset \pi_E(\ul{x}_0)$.
 Let $v:\Sigma \to E$ be a $J$-holomorphic map 
 contributing to the $\ul{x}_0$-coefficient of $\mu^1(\ul{x}_1)$  such that near an output puncture $\xi^0$, $v$ is asymptotic to the output lying above $1+\sqrt{-1}$.
 We project $v$ to $\bH^\circ$ by $\pi_E$ and apply the open mapping theorem.
 The condition \eqref{eq:LeftCondition} when $a=1$ forces at least one of the boundary components of $\Sigma$
 adjacent to $\xi^0$ to be mapped constantly to $1+\sqrt{-1}$ by $\pi_E \circ v$.
 Let the two boundary components of $\Sigma$
 adjacent to $\xi^0$ be $\partial^i \Sigma$ and $\partial^{j} \Sigma$, respectively, and suppose $\pi_E \circ v |_{\partial^j \Sigma}=1+\sqrt{-1}$. 
 As a result, the other puncture $\xi'$ that is adjacent to $\partial^j \Sigma$ is also mapped to $1+\sqrt{-1}$ by $\pi_E \circ v$.
 Let the other boundary component of $\Sigma$ that is adjacent to $\xi'$ be $\partial^{j'} \Sigma$.
 
 By the Lagrangian boundary conditions, $v(\partial^i \Sigma)$ and $v(\partial^{j'} \Sigma)$ are both contained in the Lagrangian components of 
 $\ul{L}_{\ol{\bc(\lambda_0)}^{\lambda_0,\lambda_1}}$ (or $\ul{L}_{\ol{\bc(\lambda_1)}^{\lambda_0,\lambda_1}}$).
 However, each of  $\ul{L}_{\ol{\bc(\lambda_0)}^{\lambda_0,\lambda_1}}$ and $\ul{L}_{\ol{\bc(\lambda_1)}^{\lambda_0,\lambda_1}}$ has only one component whose projection to $\bH^\circ$
 contains $1+\sqrt{-1}$. Therefore $\partial^i \Sigma=\partial^{j'} \Sigma$, and the two punctures adjacent to $\partial^i \Sigma$ are both mapped to $1+\sqrt{-1}$ under $\pi_E \circ v$.
 By the open mapping theorem, we conclude that the restriction of  $\pi_E \circ v$ to the connected component of $\Sigma$ that contains $\xi^0$ is a constant map
 and that connected component of $\Sigma$ is a bigon.
 
 The matching paths starting from $1+\sqrt{-1}$ and the  Lagrangian matching spheres lying above those paths are not Lagrangian boundary conditions of the restriction of $v$ to the other connected components of $\Sigma$. 
 We can therefore apply the previous reasoning inductively to $a=1,2,\dots,m-2n$.
 The conclusion is that any $v:\Sigma \to E$ contributing to $\ul{x}_0$-coefficient of $\mu^1(\ul{x}_1)$ contains $m-2n$ bigon components, each of which maps by a constant map to 
 $a+\sqrt{-1}$ under $\pi_E \circ v$, for $a=1,\dots,m-2n$, respectively.
 This implies that $\{1,\dots,m-2n\}+\sqrt{-1} \subset \pi_E(\ul{x}_1)$, and hence $\ul{x}_1 \in \cG$.
 \end{proof}

\begin{corollary}\label{c:asGvspace}
 $HF(\ul{L}_{\ol{\bc(\lambda_0)}},\ul{L}_{\ol{\bc(\lambda_1)}})/I_{\lambda_0,\lambda_1}=HF(\ul{L}_{\lambda_0},\ul{L}_{\lambda_1})$ as a graded vector space.
\end{corollary}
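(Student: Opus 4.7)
The plan is to establish a chain-level isomorphism between the quotient $CF(\ul{L}_{\ol{\bc(\lambda_0)}},\ul{L}_{\ol{\bc(\lambda_1)}})/\cI_{\lambda_0,\lambda_1}$ and a standard chain model computing $HF(\ul{L}_{\lambda_0},\ul{L}_{\lambda_1})$, then pass to cohomology. First, I would produce the underlying bijection of generators. Each Lagrangian $\ul{L}_{\ol{\bc(\lambda_i)}}$ contains exactly $m-2n$ ``extra'' matching spheres, namely the ones lying over the matching paths in $\ol{\bc(\lambda_i)}$ that start at $a+\sqrt{-1}$ for $a=1,\dots,m-2n$, and exactly one such sphere projects over each point $a+\sqrt{-1}$. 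Consequently, for $\ul{x}\in\cG_{\lambda_0,\lambda_1}$, the constraint $\{1,\dots,m-2n\}+\sqrt{-1}\subset \pi_E(\ul{x})$ forces $\ul{x}$ to contain, for each such $a$, the unique intersection at the critical point of $\pi_E$ lying above $a+\sqrt{-1}$. Forgetting these forced intersections gives a map from $\cG_{\lambda_0,\lambda_1}$ to $\cX(\ul{L}_{\ol{\lambda_0}},\ul{L}_{\ol{\lambda_1}})$, with inverse obtained by re-adjoining them. (One may need to first make small Hamiltonian adjustments to $\ol{\lambda_i}$ so that the remaining matching paths project away from $\{1,\dots,m-2n\}+\sqrt{-1}$; this is harmless since $HF(\ul{L}_{\lambda_0},\ul{L}_{\lambda_1})$ is independent of the matchings by Proposition~\ref{p:slidinginvariance} and Lemma~\ref{l:HamInvariance}.)

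Next, I would check that this bijection preserves the Floer grading. The grading of an intersection above $a+\sqrt{-1}$ for $a\in\{1,\dots,m-2n\}$ can be computed from a local model in a Darboux ball around the critical point of $\pi_E$, using the angles at which the matching paths from $\ol{\bc(\lambda_0)}$ and $\ol{\bc(\lambda_1)}$ enter $a+\sqrt{-1}$. Condition \eqref{eq:LeftCondition} pins down these angles: the $\ol{\bc(\lambda_0)}$-path always has strictly larger slope than the $\ol{\bc(\lambda_1)}$-path, which fixes the local K\"ahler-angle configuration and hence the grading contribution of each forced intersection to a single value (independent of $\ul{x}$). A direct local computation shows that, with the grading functions normalised as in Section~\ref{ss:weights}, each such forced contribution is zero, so the ``forgetting'' bijection preserves the total grading on the nose.

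The heart of the proof is then the compatibility with the Floer differential, which will promote the bijection of generators to a chain isomorphism. Given a $J$-holomorphic building $v:\Sigma\to E$ contributing to the coefficient of some $\ul{x}_0\in\cG$ in $\mu^1(\ul{x}_1)$, the open-mapping/inductive argument already carried out in the proof of Lemma~\ref{l:mu1preserveI} forces $v$ to contain $m-2n$ constant bigon components, one mapping to each point $a+\sqrt{-1}$ for $a=1,\dots,m-2n$, and also forces $\ul{x}_1\in\cG$. Excising these constant bigons produces a $J$-holomorphic building contributing to the Floer differential between the corresponding elements of $\cX(\ul{L}_{\ol{\lambda_0}},\ul{L}_{\ol{\lambda_1}})$; conversely, any such building in the smaller target extends uniquely, by re-attaching constant bigons, to a building in the larger one. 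Since the constant bigons are rigid and regular (they are the identity continuation maps between the forced intersections), the bijection of moduli spaces is regularity-preserving, and one obtains an equality of differentials after the identification of generators. Passing to cohomology and invoking the invariance of $HF(\ul{L}_{\lambda_0},\ul{L}_{\lambda_1})$ under the choice of representing matching completes the argument.

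The main technical obstacle I anticipate is not the set-theoretic bijection but the rigorous excision and extension of constant bigon components in the $J$-holomorphic buildings, carried out coherently with the domain-dependent almost complex structures and Hamiltonian perturbations chosen in Sections~\ref{sss:FloerData}--\ref{sss:Compactness}. One has to ensure that the perturbation data can be chosen so that the decomposition of $v$ into its constant and non-constant components is genuinely holomorphic, and that the regularity of the non-constant part is equivalent to the regularity of the full $v$. This is a standard product/cylindrical decomposition argument, analogous to the one used in the proof of the embedding Lemma of Section~\ref{sss:CanEmbed}, but it needs to be adapted to the present mixed upper-matching setup and performed compatibly with condition \eqref{eq:LeftCondition}.
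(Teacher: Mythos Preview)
Your proposal is correct and follows essentially the same approach as the paper: establish the bijection $\cG_{\lambda_0,\lambda_1}\simeq\cX(\ul{L}_{\ol{\lambda_0}},\ul{L}_{\ol{\lambda_1}})$ by forgetting the forced intersections above $\{1,\dots,m-2n\}+\sqrt{-1}$, then promote it to a chain isomorphism by invoking the constant-bigon decomposition already obtained in the last paragraph of the proof of Lemma~\ref{l:mu1preserveI}. The paper's version is terser (it phrases the bijection as ``remove the $m-2n$ extra matching paths and translate by $-(m-2n)$'' rather than your Hamiltonian-adjustment language, and it does not spell out the grading check or the regularity of constant bigons), but the logical content is the same.
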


\begin{proof}
 For $j=0,1$, by removing the matching paths of $\ol{\bc(\lambda_j)}^{\lambda_0,\lambda_1}$ that contain $a+\sqrt{-1}$ for some $a=1,\dots,m-2n$ and translating 
 the remaining matching paths by $-(m-2n)$, we get an upper matching $\ol{\lambda_j}$ of $\lambda_j$.
 Therefore, there is an obvious bijective correspondence
 \begin{align}
  \cG_{\lambda_0,\lambda_1} \simeq \cX(\ul{L}_{\ol{\lambda_0}},\ul{L}_{\ol{\lambda_1}}) \label{eq:canonicalGen}
 \end{align}
 given by forgeting the elements in a tuple that lie above $\{1,\dots,m-2n\}+\sqrt{-1}$, and then translating the rest  by $-(m-2n)$.
 
 By the last paragraph of the proof of Lemma \ref{l:mu1preserveI}, there is a canonical isomorphism between the chain complexes
 \begin{align}
  CF(\ul{L}_{\ol{\bc(\lambda_0)}^{\lambda_0,\lambda_1}},\ul{L}_{\ol{\bc(\lambda_1)}^{\lambda_0,\lambda_1}})/\cI_{\lambda_0,\lambda_1} \cong CF(\ul{L}_{\ol{\lambda_0}},\ul{L}_{\ol{\lambda_1}}) \label{eq:canonicalChain}
 \end{align}
 which on generators is given by \eqref{eq:canonicalGen}, and on differentials is given by incorporating or removing the $m-2n$ constant bigon components from the proof of Lemma \ref{l:mu1preserveI}.
\end{proof}

\begin{lemma}\label{l:independent}
 The subspace $I_{\lambda_0,\lambda_1}$ is independent of the choices of $\ol{\bc(\lambda_0)}^{\lambda_0,\lambda_1}$
and $\ol{\bc(\lambda_1)}^{\lambda_0,\lambda_1}$, provided \eqref{eq:LeftCondition} is satisfied.
\end{lemma}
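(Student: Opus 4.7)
The plan is to show that for any two admissible pairs of matchings $(\ol{\bc(\lambda_0)}^{\lambda_0,\lambda_1}, \ol{\bc(\lambda_1)}^{\lambda_0,\lambda_1})$ and $(\ol{\bc(\lambda_0)}'^{\lambda_0,\lambda_1}, \ol{\bc(\lambda_1)}'^{\lambda_0,\lambda_1})$ satisfying \eqref{eq:LeftCondition}, the canonical continuation quasi-isomorphism (coming from Lemma \ref{l:HamInvariance}) between the two Floer cochain complexes carries the subcomplex $\cI_{\lambda_0,\lambda_1}$ to its primed analogue, and hence identifies the two candidate ideals $I_{\lambda_0,\lambda_1}$ at the level of cohomology.

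First, I would verify that the space of pairs of matchings satisfying \eqref{eq:LeftCondition} is path-connected. For each fixed $a \in \{1,\dots,m-2n\}$, the condition on slopes at $a+\sqrt{-1}$ is a strict inequality between the two matchings, which is preserved under simultaneous small perturbations. Any two pairs can then be connected by first straightening each pair to have steep linear initial segments near the points $a+\sqrt{-1}$ for $a \le m-2n$, and then connecting through an isotopy of matchings that respects the slope inequality throughout.

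Given such a path of admissible pairs, Lemma \ref{l:HamInvariance} produces a continuation cochain map between the two Floer complexes (which is a quasi-isomorphism). The main step is to run the open-mapping-theorem argument from Lemma \ref{l:mu1preserveI} on the moduli spaces defining the continuation map, using the slope condition \eqref{eq:LeftCondition} (which holds throughout the path, by construction). Concretely, if a continuation strip $v : \Sigma \to E$ connects an input generator $\ul{x}_1$ to an output generator $\ul{x}_0 \in \cG_{\lambda_0,\lambda_1}$, then applying the open mapping theorem to $\pi_E \circ v$ near an output puncture lying above $a+\sqrt{-1}$ forces the corresponding boundary arc to be mapped constantly to $a+\sqrt{-1}$; a finite induction on $a=1,\dots,m-2n$ (exactly as in the proof of Lemma \ref{l:mu1preserveI}) produces $m-2n$ constant bigon components and shows that $\{1,\dots,m-2n\}+\sqrt{-1} \subset \pi_E(\ul{x}_1)$, i.e.\ $\ul{x}_1 \in \cG'_{\lambda_0,\lambda_1}$. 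Consequently the continuation map carries $\cI_{\lambda_0,\lambda_1}$ into $\cI'_{\lambda_0,\lambda_1}$; running the same argument for a continuation map in the opposite direction and using that compositions of continuation maps are chain-homotopic to the identity gives the matching inclusion in the other direction at the level of cohomology.

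The main technical obstacle will be carrying out this open-mapping argument uniformly along the path of continuation data, i.e.\ confirming that the boundary-condition analysis from Lemma \ref{l:mu1preserveI} (which was done for a strip with time-independent boundary) remains valid for a continuation strip whose Lagrangian boundary conditions vary in $s$, but remain in the class of matchings satisfying \eqref{eq:LeftCondition}. Since \eqref{eq:LeftCondition} holds at every $s$, the forcing of constant boundary arcs at the fibre over $a+\sqrt{-1}$ persists, so this is a routine adaptation rather than a genuinely new argument. Together with the connectedness of the space of admissible pairs, this completes the proof.
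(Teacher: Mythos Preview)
Your overall strategy---show that the continuation isomorphism between the two models of $HF(\ul{L}_{\bc(\lambda_0)},\ul{L}_{\bc(\lambda_1)})$ carries $\cI$ to $\cI$---matches the paper's, but your implementation has a real gap at precisely the step you flag as routine. The open-mapping argument in Lemma \ref{l:mu1preserveI} works by tracing the boundary of the domain and invoking, at each puncture, a slope comparison between the two \emph{fixed} curves carrying the adjacent boundary arcs. Condition \eqref{eq:LeftCondition} only compares a $\lambda_0$-matching to a $\lambda_1$-matching at the same moment; it says nothing about how two different choices of $\lambda_0$-matching compare to one another. On a continuation strip with moving boundary the $t=0$ arc lies on a varying family $\gamma_{0,s}$, and the tangent to $s\mapsto w(s,0)$ mixes motion along $\gamma_{0,s}$ with the $s$-variation of $\gamma_{0,s}$ itself; the ``interior is forced into the unbounded region'' step therefore does not follow from \eqref{eq:LeftCondition} alone.

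The paper sidesteps this entirely. It varies only $\ol{\bc(\lambda_0)}$ (keeping $\ol{\bc(\lambda_1)}$ fixed), and given two choices $\bc(\lambda_0)^a,\bc(\lambda_0)^b$ it introduces a \emph{third} matching $\bc(\lambda_0)^c$ whose slope at each $a+\sqrt{-1}$ exceeds that of both $\bc(\lambda_0)^a$ and $\bc(\lambda_0)^b$. The comparison is then realised as $\mu^2(-,\theta)$ for a continuation element $\theta\in HF(\ul{L}_{\bc(\lambda_0)^c},\ul{L}_{\bc(\lambda_0)^a})$, i.e.\ as a triangle count with three \emph{fixed} boundary curves for which all three pairwise slope inequalities hold; the open-mapping argument on such triangles is then a genuine repetition of Lemma \ref{l:mu1preserveI}, yielding constant triangle (not bigon) components. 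One shows $\mu^2(I^a,\theta)\subset I^c$, and equality follows from the dimension count of Corollary \ref{c:asGvspace} rather than from a reverse continuation. The interpolating matching $\bc(\lambda_0)^c$ is the missing ingredient in your argument.
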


\begin{proof}
 Let $\bc(\lambda_0)^{a}$ and $\bc(\lambda_0)^{b}$ be two different choices of $\ol{\bc(\lambda_0)}^{\lambda_0,\lambda_1}$ such that 
 \eqref{eq:LeftCondition} is satisfied for the pairs $(\bc(\lambda_0)^{a}, \ol{\bc(\lambda_1)}^{\lambda_0,\lambda_1})$
 and $(\bc(\lambda_0)^{b},\ol{\bc(\lambda_1)}^{\lambda_0,\lambda_1})$.
 
By interpolating slopes,  there exists another choice $\bc(\lambda_0)^{c}$ of $\ol{\bc(\lambda_0)}^{\lambda_0,\lambda_1}$
 such that \eqref{eq:LeftCondition} is satisfied for the pairs $(\bc(\lambda_0)^{c},\bc(\lambda_0)^{a})$
 and $(\bc(\lambda_0)^{c},\bc(\lambda_0)^{b})$.
 Let $\theta$ be a continuation element (i.e the image of the identity element under a continuation map) of $HF(\ul{L}_{\bc(\lambda_0)^{c}},\ul{L}_{\bc(\lambda_0)^{a}})$.
 
 Let $I^a$ and $I^c$ be the respective $I_{\lambda_0,\lambda_1}$ for $HF(\ul{L}_{\bc(\lambda_0)^{a}},\ul{L}_{\ol{\bc(\lambda_1)}^{\lambda_0,\lambda_1}})$ and 
 $HF(\ul{L}_{\bc(\lambda_0)^{c}},\ul{L}_{\ol{\bc(\lambda_1)}^{\lambda_0,\lambda_1}})$.
 We need to show that the isomorphism
 \begin{align}
  \mu^2(-,\theta): HF(\ul{L}_{\bc(\lambda_0)^{a}},\ul{L}_{\ol{\bc(\lambda_1)}^{\lambda_0,\lambda_1}}) \to HF(\ul{L}_{\bc(\lambda_0)^{c}},\ul{L}_{\ol{\bc(\lambda_1)}^{\lambda_0,\lambda_1}})
 \end{align}
 sends $I^a$ to $I^c$.
 By Corollary \ref{c:asGvspace} and for dimension reasons, it suffices to show that the image of $I^a$ under $\mu^2(-,\theta)$ 
 is contained in $I^c$.
 The same will then be true when we replace $\bc(\lambda_0)^{a}$ by $\bc(\lambda_0)^{b}$,  so the result  will follow.
 
 The proof that $\mu^2(I^a,\theta) \subset I^c$ is similar to the proof of Lemma \ref{l:mu1preserveI}.
 Let $\cG^a$ and $\cG^c$ be the respective $\cG_{\lambda_0,\lambda_1}$ for $CF(\ul{L}_{\bc(\lambda_0)^{a}},\ul{L}_{\ol{\bc(\lambda_1)}^{\lambda_0,\lambda_1}})$ and 
 $CF(\ul{L}_{\bc(\lambda_0)^{c}},\ul{L}_{\ol{\bc(\lambda_1)}^{\lambda_0,\lambda_1}})$.
 By slight abuse of notation, we denote a chain level lift of $\theta$ to $CF(\ul{L}_{\bc(\lambda_0)^{c}},\ul{L}_{\bc(\lambda_0)^{a}})$ by $\theta$.
 It suffices to show that if the $\ul{x}_0$-coefficient of $\mu^2(\ul{x}_1,\theta)$ is non-zero for some $\ul{x}_0 \in \cG^c$, then $\ul{x}_1 \in \cG^a$.
 
 By definition, $\ul{x}_0 \in \cG^c$ implies that $\{1,\dots,m-2n\}+\sqrt{-1} \subset \pi_E(\ul{x}_0)$.
 Let $v:\Sigma \to E$ be a $J$-holomorphic map 
 contributing to the $\ul{x}_0$-coefficient of $\mu^2(\ul{x}_1,\theta)$  such that near an output puncture $\xi^0$, $v$ is asymptotic to the output lying above $1+\sqrt{-1}$.
 We project $v$ to $\bH^\circ$ by $\pi_E$ and apply the open mapping theorem.
 Note that both $(\bc(\lambda_0)^{a}, \ol{\bc(\lambda_1)}^{\lambda_0,\lambda_1})$ and $(\bc(\lambda_0)^{c},\bc(\lambda_0)^{a})$ satisfying condition \eqref{eq:LeftCondition}
 implies that $(\bc(\lambda_0)^{c},\ol{\bc(\lambda_1)}^{\lambda_0,\lambda_1})$ also satisfies condition \eqref{eq:LeftCondition}.
 This forces at least one of the boundary components of $\Sigma$
 adjacent to $\xi^0$ to be mapped constantly to $1+\sqrt{-1}$ by $\pi_E \circ v$.
 Let the two boundary component of $\Sigma$
 adjacent to $\xi^0$ be $\partial^i \Sigma$ and $\partial^{j} \Sigma$, respectively, and suppose $\pi_E \circ v |_{\partial^j \Sigma}=1+\sqrt{-1}$. 
 As before, the other puncture $\xi'$ that is adjacent to $\partial^j \Sigma$ is also mapped to $1+\sqrt{-1}$ by $\pi_E \circ v$.
 Let the other boundary component of $\Sigma$ that is adjacent to $\xi'$ be $\partial^{j'} \Sigma$.
 
 By examining the slopes of the matching paths at $1+\sqrt{-1}$ again, either $\partial^{i} \Sigma$ or $\partial^{j'} \Sigma$ is mapped to the constant  value $1+\sqrt{-1}$ by $\pi_E \circ v$.
 Say $\partial^{j'} \Sigma$ is mapped this way; then the next boundary components of $\Sigma$ and $\partial^{i} \Sigma$ 
 are mapped to components of the same Lagrangian tuple (i.e. $\ul{L}_{\bc(\lambda_0)^{c}}$, $\ul{L}_{\bc(\lambda_0)^{a}}$ or $\ul{L}_{\ol{\bc(\lambda_1)}^{\lambda_0,\lambda_1}}$) under $v$.
 As a result, these two boundary components co-incide, $\pi_E \circ v$ restricted to the component of $\Sigma$ containing $\xi^0$ is a constant map, and this component is a triangle.
 
 Applying this argument to all $a+\sqrt{-1}$ for $a=1,\dots,m-2n$, we conclude that 
 $\{1,\dots,m-2n\}+\sqrt{-1} \subset \pi_E(\ul{x}_1)$ and hence $\ul{x}_1 \in \cG^a$.
 \end{proof}

\begin{remark}
 Lemma \ref{l:independent} also proves that the continuation element $\theta$
 lies in the vector subspace spanned by the corresponding $\cG \subset CF(\ul{L}_{\bc(\lambda_0)^{c}},\ul{L}_{\bc(\lambda_0)^{a}})$.
\end{remark}


\begin{lemma}\label{l:algIsom}
 $I \subset H_{n,m}^{\bc}$  is an ideal. Moreover, there is an algebra isomorphism $H_{n,m}^{\bc}/I \simeq  H_{n,m}^{\symp}$.
\end{lemma}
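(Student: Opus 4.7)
The strategy is to adapt the $\mu^1$-analysis of Lemma \ref{l:mu1preserveI} to $\mu^2$, which will simultaneously establish the ideal property and yield a correspondence of moduli spaces encoding the algebra isomorphism.

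First I would show that $I$ is a two-sided ideal. Fix coherent choices of upper matchings $\ol{\bc(\lambda_i)}^{\lambda_i,\lambda_j}$ for all ordered pairs with $\lambda_i,\lambda_j \in \Lambda_{n,m}^c$ satisfying the slope condition \eqref{eq:LeftCondition}, together with, for each triple $(\lambda_0,\lambda_1,\lambda_2)$, an interpolating choice that satisfies the pairwise slope condition for all three pairs (arranged as in Lemma \ref{l:independent}). Then for any rigid $J$-holomorphic triangle $v:\Sigma \to E$ with output asymptotic to $\ul{x}_0 \in \cG$ and inputs at $\ul{x}_1,\ul{x}_2$, the open-mapping-theorem argument of Lemma \ref{l:mu1preserveI} applied inductively for $a=1,\dots,m-2n$ at each point $a+\sqrt{-1}$ forces a connected component of $\Sigma$ to be a constant triangle above $a+\sqrt{-1}$, with boundary labels in the components of $\ul{L}_{\bc(\lambda_0)},\ul{L}_{\bc(\lambda_1)},\ul{L}_{\bc(\lambda_2)}$ projecting through $a+\sqrt{-1}$. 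Consequently both $\ul{x}_1$ and $\ul{x}_2$ lie in their respective $\cG$-sets. Contrapositively, $\mu^2$ sends $\cI\otimes CF$ and $CF\otimes\cI$ into $\cI$, which together with Lemma \ref{l:mu1preserveI} shows that $I$ is a two-sided ideal of $H_{n,m}^{\bc}$.

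Second, I would upgrade the vector space isomorphism of Corollary \ref{c:asGvspace} to an algebra isomorphism $\Psi: H_{n,m}^{\bc}/I \to H_{n,m}^{\symp}$. On generators $\Psi$ sends the class of $\ul{x}\in\cG$ to the generator in $CF(\ul{L}_{\ol{\lambda_0}},\ul{L}_{\ol{\lambda_1}})$ obtained by deleting the components above $\{1,\dots,m-2n\}+\sqrt{-1}$ and translating by $-(m-2n)$. To see $\Psi$ is multiplicative, the argument of the previous paragraph shows that every rigid contributing triangle $v:\Sigma\to E$ with output in $\cG$ decomposes into $m-2n$ constant triangles above the left-most critical values plus a remaining triangle $v':\Sigma'\to E$ whose image is entirely contained in the complementary sub-$A_{m-1}$-Milnor fiber $E'\subset E$ obtained by restricting to a neighborhood of the remaining critical values. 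After the translation, $v'$ is precisely a rigid $J$-holomorphic triangle contributing to $\mu^2$ on $H_{n,m}^{\symp}$ for $(\ul{L}_{\ol{\lambda_0}},\ul{L}_{\ol{\lambda_1}},\ul{L}_{\ol{\lambda_2}})$. The inverse operation attaches back $m-2n$ constant triangles, giving a signed bijection of moduli spaces.

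The main technical obstacle is arranging the Floer data on $E$ to be consistent with the restriction to the sub-Milnor fiber $E'$, so that both sides of the bijection use mutually compatible perturbations (as in the restriction arguments of Lemmas \ref{l:sliding1} and \ref{l:sliding2}). Concretely, one chooses a domain-dependent almost complex structure on $E$ that restricts to a generic choice on $E'$ away from the left-most critical values and Hamiltonian perturbations vanishing near $\{1,\dots,m-2n\}+\sqrt{-1}$, so that the constant triangle components are automatically regular (as in Lemma \ref{l:directedSubCat}) and the reduced triangle $v'$ sees the generic data needed to compute $\mu^2$ on $H_{n,m}^{\symp}$. Signs are compatible by the standard factorization of determinant lines under the decomposition of the domain; this is routine but careful bookkeeping is needed to match the conventions used for $H_{n,m}^{\symp}$.
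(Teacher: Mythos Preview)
Your proposal is correct and follows essentially the same approach as the paper: both arguments choose matchings for a triple so that the slope condition \eqref{eq:LeftCondition} holds pairwise (the paper invokes Lemma \ref{l:independent} to justify this freedom), then run the open mapping theorem argument of Lemma \ref{l:mu1preserveI} to force $m-2n$ constant triangle components above $\{1,\dots,m-2n\}+\sqrt{-1}$, which simultaneously yields the ideal property (by contraposition) and the canonical bijection of moduli spaces giving the algebra isomorphism. Your additional remarks on arranging compatible Floer data and sign bookkeeping are reasonable elaborations the paper leaves implicit.
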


\begin{proof}
By Lemma \ref{l:independent}, for $\lambda_0,\lambda_1,\lambda_2 \in \Lambda_{n,m}^c$, we can choose the upper matchings $\ol{\bc(\lambda_j)}$ such that
condition \eqref{eq:LeftCondition} is satisfied for both the pairs
$(\ol{\bc(\lambda_0)},\ol{\bc(\lambda_1)})$
and 
$(\ol{\bc(\lambda_1)},\ol{\bc(\lambda_2)})$.

In this case, the exact same argument as in the proof of Lemma \ref{l:independent} shows that 
if the $\ul{x}_0$-coefficient of $\mu^2(\ul{x}_2,\ul{x}_1)$ is non-zero for some $\ul{x}_0 \in \cG^{0,2}$, 
then $\ul{x}_1 \in \cG^{0,1}$ and $\ul{x}_2 \in \cG^{1,2}$, where $\cG^{i,j}$ is the respective $\cG$ for $CF(\ul{L}_{\ol{\bc(\lambda_i)}},\ul{L}_{\ol{\bc(\lambda_j)}})$.
We define $\cB^{i,j}$ similarly.
It implies that if 
$\ul{x}_1 \in \cB^{0,1}$, 
$\ul{x}_2 \in \cB^{1,2} \cup \cG^{1,2}$ (i.e. $\ul{x}_2$ is a basis element in $CF(\ul{L}_{\ol{\bc(\lambda_1)}},\ul{L}_{\ol{\bc(\lambda_2)}})$),
$\ul{x}_0 \in \cB^{0,2} \cup \cG^{0,2}$ and the $\ul{x}_0$-coefficient of $\mu^2(\ul{x}_2,\ul{x}_1)$ is non-zero,
then $\ul{x}_0 \in \cB^{0,2}$.
Similarly, if $\ul{x}_1 \in \cB^{0,1} \cup \cG^{0,1}$, $\ul{x}_2 \in \cB^{1,2}$,
$\ul{x}_0 \in \cB^{0,2} \cup \cG^{0,2}$ and the $\ul{x}_0$-coefficient of $\mu^2(\ul{x}_2,\ul{x}_1)$ is non-zero,
then $\ul{x}_0 \in \cB^{0,2}$.
This precisely says that $I$ is an ideal in $H_{n,m}^{\bc}$.  (We are working in the cohomological algebra, so only need to check closure under $\mu^2$.)

Moreover, we also know that any $u$ contributing to the $\ul{x}_0$-coefficient of $\mu^2(\ul{x}_2,\ul{x}_1)$
has $m-2n$ components of constant triangles.
Therefore, under the identification of  generators \eqref{eq:canonicalGen} in Corollary \ref{c:asGvspace}, 
 the moduli spaces of holomorphic triangles defining the products $\mu^2$ on the  two sides are also canonically identified. The result follows.
\end{proof}

Next, we want to understand the ideal $I$ in terms of the geometric basis of $H_{n,m}^{\bc}$.
Consider the cochain model (cf. \ref{eq:CohBasisCpt}, \eqref{eq:coHc}).
\begin{align}
 H_{n,m}^{\bc}=\oplus_{\lambda_0,\lambda_1 \in \Lambda_{n,m}^c} HF(\ul{L}_{\ol{\bc(\lambda_0)}},\ul{L}_{\widetilde{\bc(\lambda_1)}})=\oplus_{\lambda_0,\lambda_1 \in \Lambda_{n,m}^c} CF(\ul{L}_{\ol{\bc(\lambda_0)}},\ul{L}_{\widetilde{\bc(\lambda_1)}}).
\end{align}
Define $\cI^{arc}_{\lambda_0,\lambda_1}$ to be the vector subspace of $CF(\ul{L}_{\ol{\bc(\lambda_0)}},\ul{L}_{\widetilde{\bc(\lambda_1)}})$
generated by
\begin{align}
 \cB^{arc}_{\lambda_0,\lambda_1}:=\{\ul{x} \in \cX(\ul{L}_{\ol{\bc(\lambda_0)}},\ul{L}_{\widetilde{\bc(\lambda_1)}})| \{1,\dots,m-2n\}+\sqrt{-1} \nsubseteq \pi_E(\ul{x}) \}
\end{align}
Let $\cG^{arc}_{\lambda_0,\lambda_1}:=\cX(\ul{L}_{\ol{\bc(\lambda_0)}},\ul{L}_{\widetilde{\bc(\lambda_1)}}) \setminus \cB^{arc}_{\lambda_0,\lambda_1}$
and define $\cI^{arc}$, $\cB^{arc}$ and $\cG^{arc}$ accordingly.

\begin{lemma}\label{l:algIsom2}
 There is an algebra isomorphism from $\oplus HF(\ul{L}_{\ol{\bc(\lambda_0)}^{\lambda_0,\lambda_1}},\ul{L}_{\ol{\bc(\lambda_1)}^{\lambda_0,\lambda_1}})$
 to $\oplus CF(\ul{L}_{\ol{\bc(\lambda_0)}},\ul{L}_{\widetilde{\bc(\lambda_1)}})$
 sending $I$ to $\cI^{arc}$.
\end{lemma}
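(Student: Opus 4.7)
The plan is to construct the isomorphism at the level of geometric bases, verify that this bijection intertwines the algebra structures, and then observe that the defining condition of the ideal is manifestly preserved.

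First, I would set up the bijection of geometric bases. A geometric generator of the LHS summand $CF(\ul{L}_{\ol{\bc(\lambda_0)}^{\lambda_0,\lambda_1}}, \ul{L}_{\ol{\bc(\lambda_1)}^{\lambda_0,\lambda_1}})$ is an unordered tuple $\ul{x}=\{x_1,\dots,x_{m-n}\}$ of intersection points; each $x_i$ projects under $\pi_E$ to some point in $\{1,\dots,2(m-n)\}+\sqrt{-1}$, and the combinatorial data of which component of each Lagrangian tuple $x_i$ lies on (together with $\pi_E(\ul{x})$) uniquely recovers the generator. The RHS summand $CF(\ul{L}_{\ol{\bc(\lambda_0)}},\ul{L}_{\widetilde{\bc(\lambda_1)}})$ has generators indexed by exactly the same combinatorial data, because $\widetilde{\bc(\lambda_1)}$ is obtained from $\ul{\bc(\lambda_1)}$ by the enclosure construction associated to the pair $(\bc(\lambda_0),\bc(\lambda_1))$, while $\ol{\bc(\lambda_j)}^{\lambda_0,\lambda_1}$ is a different geometric representative of the same Hamiltonian isotopy class of $\ul{L}_{\bc(\lambda_j)}$. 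Matching generators with the same value of $\pi_E(\ul{x})$ and the same component labels therefore gives a canonical bijection of geometric bases.

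Second, I would promote this to an algebra isomorphism. By Lemma \ref{l:algIsom}, the LHS is (the image in $H^{\symp}_{m-n,2(m-n)}$ of) the subalgebra $H_{n,m}^{\bc}$, while the RHS is an alternative cochain model for the same Floer cohomology that happens to have vanishing differential by the grading-parity argument preceding \eqref{eq:CohBasisCpt}. Both algebras inherit their product structure from $H^{\symp}_{m-n,2(m-n)}$ -- the former by definition, the latter because this product is an invariant of the cohomological Fukaya category, independent of the specific Hamiltonian representatives of the objects. The bijection of the previous paragraph therefore intertwines the two multiplications up to signs. Here I would invoke Theorem \ref{t:AbouzaidSmith} and Corollary \ref{c:IdentifyingBasis}: the geometric basis of $H^{\symp}_{m-n,2(m-n)}$ can be adjusted by signs to agree with the algebraic oriented-circle-diagram basis of $H^{\alg}_{m-n,2(m-n)}$, and the same sign convention transports simultaneously to both the LHS and the RHS, yielding a genuine algebra isomorphism.

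Third, the identification of ideals is a direct consequence of the construction. Both $\cB_{\lambda_0,\lambda_1}$ and $\cB^{arc}_{\lambda_0,\lambda_1}$ are defined by the identical condition $\{1,\dots,m-2n\}+\sqrt{-1}\nsubseteq \pi_E(\ul{x})$, and the bijection of geometric bases preserves $\pi_E(\ul{x})$ tautologically. Hence $\cB \leftrightarrow \cB^{arc}$ and therefore $\cI \leftrightarrow \cI^{arc}$ under the algebra isomorphism.

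The main obstacle will be the simultaneous sign bookkeeping in the second step: one must verify that a single choice of signs on the geometric basis of $H^{\symp}_{m-n,2(m-n)}$ makes the basis agree with the algebraic basis across \emph{all} pairs $(\lambda_0,\lambda_1)$, so that restricting to the LHS summands still gives an algebra map. This is ensured by the fact that the signs in Corollary \ref{c:IdentifyingBasis} are chosen globally (on $H_{m-n,2(m-n)}^{\symp}$ as a whole), and the LHS is a subalgebra for which the basis identification is the restriction of this global one; no further sign compatibility issues arise. Once this is set up, the argument is otherwise combinatorial and follows formally from Theorem \ref{t:AbouzaidSmith}, Lemma \ref{l:independent}, and the description of $\cI$, $\cI^{arc}$ in terms of $\pi_E(\ul{x})$.
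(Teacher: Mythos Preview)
Your argument has a genuine gap in the first step. The two upper matchings $\ol{\bc(\lambda_0)}^{\lambda_0,\lambda_1}$ and $\ol{\bc(\lambda_1)}^{\lambda_0,\lambda_1}$ both live in $\{im(z)\ge 1\}$, and nothing in their definition forces them to intersect only over the critical values $\{1,\dots,2(m-n)\}+\sqrt{-1}$; the slope condition \eqref{eq:LeftCondition} only constrains behaviour at the leftmost $m-2n$ critical values. So in general there are extra intersection points, the cochain complex on the LHS has nonzero differential, and your ``canonical bijection of geometric bases'' between $CF$-generators simply does not exist. Even if you arranged representatives so that the chain groups matched, you would still need to check that the resulting linear bijection is an algebra map; the fact that both sides are abstractly isomorphic to the same subalgebra of $H^{\symp}_{m-n,2(m-n)}$ does not imply that \emph{this particular} bijection realizes the isomorphism.

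The paper proceeds differently and in a way that avoids these issues. It takes the canonical Floer-theoretic identification: for each pair $(\lambda_0,\lambda_1)$ one chooses a degree-zero continuation element $\theta_{\lambda_0,\lambda_1}\in HF^0(\ul{L}_{\ol{\bc(\lambda_1)}^{\lambda_0,\lambda_1}},\ul{L}_{\widetilde{\bc(\lambda_1)}})$ and defines the map by $\mu^2(\theta_{\lambda_0,\lambda_1},-)$, which is an algebra isomorphism by general principles (for a sign-consistent choice of $\theta$). One may assume $\ol{\bc(\lambda_0)}^{\lambda_0,\lambda_1}=\ol{\bc(\lambda_0)}$ and choose $\widetilde{\bc(\lambda_1)}$ so that the pair $(\ol{\bc(\lambda_1)}^{\lambda_0,\lambda_1},\widetilde{\bc(\lambda_1)})$ also satisfies \eqref{eq:LeftCondition}. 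Now the ideal-preservation is \emph{not} tautological: one has to run the same slope/open-mapping-theorem argument as in Lemmas~\ref{l:independent} and~\ref{l:algIsom} to show that, in any holomorphic triangle contributing to $\mu^2(\theta,\ul{x})$ whose output lies in $\cG^{arc}$, the input $\ul{x}$ must lie in $\cG$. This gives $\mu^2(\theta,\cI)\subset\cI^{arc}$, and equality follows by dimension count. The point is that under a continuation map generators need not keep the same $\pi_E$-image, so your ``tautological'' third step is exactly the content that requires work.
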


\begin{proof}
 Let $\theta_{\lambda_0,\lambda_1} \in HF^0(\ul{L}_{\ol{\bc(\lambda_1)}^{\lambda_0,\lambda_1}},\ul{L}_{\widetilde{\bc(\lambda_1)}})$ be a generator over $\mathbb{Z}$.
 Then, for a sign-consistent choice of $\theta_{\lambda_0,\lambda_1}$, the direct sum of 
 \begin{align}
  \mu^2(\theta_{\lambda_0,\lambda_1},-):HF(\ul{L}_{\ol{\bc(\lambda_0)}^{\lambda_0,\lambda_1}},\ul{L}_{\ol{\bc(\lambda_1)}^{\lambda_0,\lambda_1}}) 
  \to HF(\ul{L}_{\ol{\bc(\lambda_0)}^{\lambda_0,\lambda_1}},\ul{L}_{\widetilde{\bc(\lambda_1)}}) \label{eq:algIsom}
 \end{align}
 over all $(\lambda_0,\lambda_1)$ is an algebra isomorphism.
 
 Without loss of generality, we can assume that $\ol{\bc(\lambda_0)}^{\lambda_0,\lambda_1}=\ol{\bc(\lambda_0)}$.
 Note that
 we can choose $\widetilde{\bc(\lambda_1)}$ such that
 the pair $(\ol{\bc(\lambda_1)}^{\lambda_0,\lambda_1}, \widetilde{\bc(\lambda_1)})$ also satisfies \eqref{eq:LeftCondition}.
 By the same reasoning as in Lemmas \ref{l:independent} and \ref{l:algIsom}, the image of $\cI$ under \eqref{eq:algIsom} is contained in  $\cI^{arc}$.
 Since $\mu^2(\theta_{\lambda_0,\lambda_1},-)$ is an isomorphism on cohomology and
 the dimensions spanned by $I$ and $\cI^{arc}$ in  cohomology are the same, the image of $I$ is precisely $\cI^{arc}$.
\end{proof}

From Lemmas \ref{l:algIsom} and \ref{l:algIsom2}, we have an algebra isomorphism
$\oplus CF(\ul{L}_{\ol{\bc(\lambda_0)}},\ul{L}_{\widetilde{\bc(\lambda_1)}})/\cI^{arc} \simeq H_{n,m}^{\symp}$.
We want to show that this isomorphism respects the geometric basis.

For $\lambda_0, \lambda_1 \in \Lambda_{n,m}^c$, we consider the composition of quasi-isomorphisms
\begin{align}
CF(L_{\ol{\lambda}_0},L_{\tilde{\lambda}_1}) &\to CF(L_{\ol{\lambda}_0},L_{\ol{\lambda}_1})  
\to  CF(L_{\ol{\bc(\lambda_0)}},L_{\ol{\bc(\lambda_1)}})/\cI_{\lambda_0,\lambda_1} 
\to   CF(L_{\ol{\bc(\lambda_0)}},L_{\widetilde{\bc(\lambda_1)}})/\cI^{\alg}_{\lambda_0,\lambda_1} \label{eq:ComposeAll}
\end{align}
The first arrow is given by $\mu^2(\theta,-)$ for a continuation element $\theta \in CF(L_{\tilde{\lambda}_1},L_{\ol{\lambda}_1})$.
The second arrow is the inverse of the chain isomorphism \eqref{eq:canonicalChain}.
The last arrow is \eqref{eq:algIsom}, which is given by $\mu^2(\theta',-)$ for a continuation element $\theta' \in CF^0(L_{\ol{\bc(\lambda_1)}},L_{\widetilde{\bc(\lambda_1)}})$.

We can assume that $\ol{\lambda}_0$, $\tilde{\lambda}_1$ and $\ol{\lambda}_1$
are obtained, respectively, by removing the leftmost $m-2n$ matching paths of $\ol{\bc(\lambda_0)}$ ,$\widetilde{\bc(\lambda_1)}$ and $\ol{\bc(\lambda_1)}$, and translating by $-(m-2n)$.
Let $\theta^{-1}$ be a quasi-inverse of $\theta$.
We can choose $\theta' $ to be image of $\theta^{-1}$ under the canonical inclusion
$ CF(L_{\ol{\lambda}_1},L_{\tilde{\lambda}_1}) \to CF(L_{\ol{\bc(\lambda_1)}},L_{\widetilde{\bc(\lambda_1)}})$
In this case, the composition \eqref{eq:ComposeAll} coincides with the canonical
cochain isomorphism 
\begin{align}
 CF(L_{\ol{\lambda}_0},L_{\tilde{\lambda}_1}) &\to CF(L_{\ol{\bc(\lambda_0)}},L_{\widetilde{\bc(\lambda_1)}})/\cI^{\alg}_{\lambda_0,\lambda_1}\\
 \ul{x} &\mapsto \ul{y} \nonumber
\end{align}
characterised by $\pi_E(\ul{y})=\{1,\dots,m-2n\} \cup (\pi_E(\ul{x})+m-2n)$ (i.e. essentially the same map as in \eqref{eq:canonicalGen}, \eqref{eq:canonicalChain}).
As a consequence, there is an algebra isomorphism from $H_{n,m}^{\symp}$ to  $H_{n,m}^{\bc}/I$ respecting the geometric basis.

For the combinatorial arc algebra, we have
exactly the same quotient description for $H_{n,m}^{\alg}$ (see \eqref{eq:bc}).
To conclude, we have

\begin{proposition}\label{p:CompactCases}
The isomorphism $\Phi$ of Lemma \ref{l:gvspIsom} restricts to an algebra isomorphism from $H_{n,m}^{\symp}$ to $H_{n,m}^{\alg}$.
\end{proposition}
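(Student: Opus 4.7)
The plan is to bootstrap from the already-established case with parameters $(m-n, 2(m-n))$ (Corollary \ref{c:IdentifyingBasis}) by matching the quotient descriptions of $H_{n,m}^{\symp}$ and $H_{n,m}^{\alg}$ as quotients of subalgebras of the larger, fully compact algebras. Concretely, I would apply Corollary \ref{c:IdentifyingBasis} with parameters $(m-n, 2(m-n))$ to obtain an algebra isomorphism $\Phi\colon H_{m-n,2(m-n)}^{\symp} \xrightarrow{\sim} H_{m-n,2(m-n)}^{\alg}$ that sends a geometric basis to the oriented circle diagram basis. The task then becomes to show that $\Phi$ carries the subalgebra/ideal pair cut out by the map $\bc$ on one side to the subalgebra/ideal pair cut out by $\bc$ on the other side.

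The identification of the two quotient pictures is already in place. On the symplectic side, the chain of isomorphisms in Lemmas \ref{l:algIsom} and \ref{l:algIsom2}, made explicit by the composition \eqref{eq:ComposeAll}, realises $H_{n,m}^{\symp}$ as the quotient $H_{n,m}^{\bc}/I$ inside $H_{m-n,2(m-n)}^{\symp}$, and on geometric bases the embedding sends $\ul{x} \in CF(L_{\ol{\lambda}_0}, L_{\widetilde{\lambda}_1})$ to the unique $\ul{y}$ with $\pi_E(\ul{y}) = (\{1,\dots,m-2n\} + \sqrt{-1}) \cup (\pi_E(\ul{x}) + (m-2n))$. On the algebraic side, the isomorphism $\bc\colon H_{n,m}^{\alg} \xrightarrow{\sim} H_{m-n,2(m-n)}^{\alg}/I$ of \eqref{eq:bc} is the tautological combinatorial shift $\ul{b}\lambda\ol{a} \mapsto \ul{\bc(b)}\bc(\lambda)\ol{\bc(a)}+I$, which from \eqref{eq:bcFull} is exactly the prescription of prepending $m-2n$ many $\vee$'s.

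Under $\Phi\colon H_{m-n,2(m-n)}^{\symp} \xrightarrow{\sim} H_{m-n,2(m-n)}^{\alg}$, the two compatibilities I need to verify are: (a) the subalgebra $H_{n,m}^{\bc}$ maps to the subalgebra $\bc(H_{n,m}^{\alg})$, and (b) the ideal spanned by $\cB^{arc}$ maps to the ideal $I$ of \eqref{eq:ILeft}. Both are immediate from the definition \eqref{eq:identifyGen} of $\Phi$ on generators: a geometric generator $\ul{x}$ of the Floer complex corresponds to the oriented circle diagram with underlying weight $\lambda_{\ul{x}}$ characterised by $\lambda_{\ul{x}}(c) = \vee \iff c+\sqrt{-1} \in \pi_E(\ul{x})$, so the condition $\{1,\dots,m-2n\} + \sqrt{-1} \subseteq \pi_E(\ul{x})$ translates into $\{1,\dots,m-2n\} \subseteq \lambda_{\ul{x}}^{-1}(\vee)$, matching $\bc$ on the boundary weights and matching the defining condition of $I$ in \eqref{eq:ILeft}.

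Finally, I would assemble the pieces into the commutative square
\begin{align*}
\begin{tikzcd}
H_{n,m}^{\bc}/I \arrow[r,"\Phi"] \arrow[d,"\sim"'] & \bc(H_{n,m}^{\alg}) \arrow[d,"\bc^{-1}"] \\
H_{n,m}^{\symp} \arrow[r,dashed,"\Phi|_{H_{n,m}^{\symp}}"'] & H_{n,m}^{\alg}
\end{tikzcd}
\end{align*}
in which the top row is an algebra isomorphism (by Corollary \ref{c:IdentifyingBasis} together with (a) and (b)), and the vertical arrows are algebra isomorphisms from Lemmas \ref{l:algIsom}, \ref{l:algIsom2} and \eqref{eq:bc}. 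Because all four arrows preserve the tautologically matched geometric/combinatorial bases, the induced dashed arrow agrees with the restriction of $\Phi$ defined in Lemma \ref{l:gvspIsom}, and is in particular an algebra isomorphism. The main subtlety is simply the bookkeeping of signs: just as in the compact case of Corollary \ref{c:IdentifyingBasis}, one may need to modify $\Phi$ by signs on individual basis elements to obtain a strict algebra isomorphism, but this is harmless as the overall conclusion is only up to such sign adjustments.
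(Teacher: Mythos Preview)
Your proposal is correct and follows essentially the same approach as the paper: both arguments bootstrap from Corollary \ref{c:IdentifyingBasis} for the parameters $(m-n,2(m-n))$, use Lemmas \ref{l:algIsom} and \ref{l:algIsom2} together with the explicit composition \eqref{eq:ComposeAll} to realise $H_{n,m}^{\symp}$ as a basis-respecting quotient inside $H_{m-n,2(m-n)}^{\symp}$, and then match this against the identical quotient description \eqref{eq:bc} on the algebraic side. Your commutative square and the verification of (a), (b) via \eqref{eq:identifyGen} make explicit what the paper summarises in the paragraph preceding the proposition.
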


\subsection{All cases}

We show that $\Phi$ is an algebra isomorphism from $K^{\symp}_{n,m}$ to $K_{n,m}^{\alg}$ for all $n,m$ in this section.
The strategy is similar to the previous section.

Let $\be: \Lambda_{n,m} \to \Lambda_{n,m+n}^c$ be the inclusion given by \eqref{eq:beFull}.
Let
\begin{align}
 H^{\be}_{n,m}:=\oplus_{\lambda_0,\lambda_1 \in \Lambda_{n,m}} HF(\ul{L}_{\be(\lambda_0)},\ul{L}_{\be(\lambda_1)})
\end{align}
which is a subalgebra of $H^{\symp}_{n,m+n}=H_{n,m+n}^{\alg}$ so in particular, $H^{\be}_{n,m}$ is formal.
We want to compare $K^{\symp}_{n,m}$ with $H^{\be}_{n,m}$.

For each ordered pair $(\lambda_0, \lambda_1)$ such that $\lambda_0,\lambda_1 \in \Lambda_{n,m}$, 
we choose upper matchings $\ol{\be(\lambda_0)}$ and $\ol{\be(\lambda_1)}$
such that
\begin{align}
 &\text{for $a=1,\dots,n$, the slope at $m+a+\sqrt{-1}$ of the matching paths in $\ol{\be(\lambda_0)}$ that ends} \label{eq:RightCondition} \\
& \text{at $m+a+\sqrt{-1}$ is more negative than that of $\ol{\be(\lambda_1)}$}   \nonumber
\end{align}
Since the choices of matching depend on the ordered pair of weights, we denote the matchings by $\ol{\be(\lambda_0)}^{\lambda_0,\lambda_1}$
and $\ol{\be(\lambda_1)}^{\lambda_0,\lambda_1}$ respectively.
Let $J_{\lambda_0,\lambda_1}$ be the subspace of $CF(\ul{L}_{\ol{\be(\lambda_0)}^{\lambda_0,\lambda_1}},\ul{L}_{\ol{\be(\lambda_1)}^{\lambda_0,\lambda_1}})$
generated by
\begin{align}
 \cB_{\lambda_0,\lambda_1}:=\{\ul{x} \in \cX(\ul{L}_{\ol{\be(\lambda_0)}^{\lambda_0,\lambda_1}},\ul{L}_{\ol{\be(\lambda_1)}^{\lambda_0,\lambda_1}})| m+a+\sqrt{-1} \in \pi_E(\ul{x}) \text{ for some }a=1\dots,n\} \label{eq:IdealGen2}
\end{align}
Let $\cG_{\lambda_0,\lambda_1}:=\cX(\ul{L}_{\ol{\be(\lambda_0)}^{\lambda_0,\lambda_1}},\ul{L}_{\ol{\be(\lambda_1)}^{\lambda_0,\lambda_1}}) \setminus \cB_{\lambda_0,\lambda_1}$
and define $\cJ= \oplus \cJ_{\lambda_0,\lambda_1}$, $\cB:=\oplus \cB_{\lambda_0,\lambda_1}$ and 
$\cG=\oplus \cG_{\lambda_0,\lambda_1}$.

\begin{lemma}\label{l:mu1preserveI2}
 $\mu^1(\cJ_{\lambda_0,\lambda_1}) \subset \cJ_{\lambda_0,\lambda_1}$ so $\cJ_{\lambda_0,\lambda_1}$ descends to a vector subspace of $H_{n,m}^{\be}$, which we denote by $J_{\lambda_0,\lambda_1}$.
\end{lemma}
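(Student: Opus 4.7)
The strategy is entirely parallel to the proof of Lemma \ref{l:mu1preserveI}, with \eqref{eq:LeftCondition} replaced by \eqref{eq:RightCondition} and the role of the leftmost critical values $\{1,\dots,m-2n\}+\sqrt{-1}$ played instead by the rightmost critical values $\{m+1,\dots,m+n\}+\sqrt{-1}$. I plan to prove the contrapositive: if $\ul{x}_0 \in \cG_{\lambda_0,\lambda_1}$ and the $\ul{x}_0$-coefficient of $\mu^1(\ul{x}_1)$ is non-zero, then $\ul{x}_1 \in \cG_{\lambda_0,\lambda_1}$. Since $\cG$ and $\cB$ partition the generators and $\cJ_{\lambda_0,\lambda_1}$ is the span of $\cB_{\lambda_0,\lambda_1}$, this is equivalent to the desired containment $\mu^1(\cJ_{\lambda_0,\lambda_1}) \subset \cJ_{\lambda_0,\lambda_1}$.

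The main step is as follows. Suppose for contradiction that $\ul{x}_1 \in \cB_{\lambda_0,\lambda_1}$, so that $m+a+\sqrt{-1} \in \pi_E(\ul{x}_1)$ for some $a \in \{1,\dots,n\}$. Let $v:\Sigma \to E$ be a $J$-holomorphic map contributing to the coefficient of $\ul{x}_0$ in $\mu^1(\ul{x}_1)$, and let $\xi'$ be the input puncture of $\Sigma$ at which $v$ is asymptotic to the chord lying above $m+a+\sqrt{-1}$. By definition of $\be$, the position $m+a$ is a $\wedge$-point of $\be(\lambda_i)$ for $i=0,1$, and therefore exactly one matching path of $\ol{\be(\lambda_0)}^{\lambda_0,\lambda_1}$ and exactly one of $\ol{\be(\lambda_1)}^{\lambda_0,\lambda_1}$ pass through $m+a+\sqrt{-1}$; both paths emanate from $m+a+\sqrt{-1}$ and travel towards critical values of strictly smaller real part. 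Applying the open mapping theorem to $\pi_E \circ v$ near $\xi'$, the slope condition \eqref{eq:RightCondition} forces at least one of the two boundary components of $\Sigma$ adjacent to $\xi'$ to be mapped by $\pi_E \circ v$ to the constant value $m+a+\sqrt{-1}$. Following the boundary and using the Lagrangian labels, the unique-component property above forces the adjacent boundary arc to close up into a constant bigon component of $\Sigma$, whose two punctures are each asymptotic to chords lying above $m+a+\sqrt{-1}$. In particular, $\ul{x}_0$ must also contain $m+a+\sqrt{-1}$ in $\pi_E(\ul{x}_0)$, contradicting $\ul{x}_0 \in \cG_{\lambda_0,\lambda_1}$.

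The one point that requires a little more care than in Lemma \ref{l:mu1preserveI} is the inductive structure: there, the analogous argument was iterated over $a = 1,\dots,m-2n$ to strip off $m-2n$ constant bigons simultaneously, because $\ul{x}_0 \in \cG$ required \emph{all} of $\{1,\dots,m-2n\}+\sqrt{-1}$ to lie in $\pi_E(\ul{x}_0)$. Here the defining condition \eqref{eq:IdealGen2} is an existential (``some $a$'') rather than a universal one, so no induction over $a$ is needed: we only need to produce a single constant bigon lying above $m+a+\sqrt{-1}$ for the particular $a$ witnessing $\ul{x}_1 \in \cB$. This actually makes the argument slightly simpler than in Lemma \ref{l:mu1preserveI}. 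The main obstacle, as before, is verifying rigorously that the slope condition \eqref{eq:RightCondition}, combined with the fact that only one component of each Lagrangian tuple passes through $m+a+\sqrt{-1}$, suffices to close up the boundary into a bigon; but this is a completely local computation near $\xi'$ identical in character to the one appearing in Lemma \ref{l:mu1preserveI}, and no genuinely new phenomenon intervenes.
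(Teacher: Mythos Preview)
Your argument is correct and follows the same strategy as the paper: start from an input puncture lying above $m+a+\sqrt{-1}$, apply the open mapping theorem together with the slope condition \eqref{eq:RightCondition} to force an adjacent boundary arc to be constant, and conclude that $m+a+\sqrt{-1} \in \pi_E(\ul{x}_0)$. The paper's version is slightly more direct in two respects: it proves the statement ``$\ul{x}_1 \in \cB \Rightarrow \ul{x}_0 \in \cB$'' without the contrapositive-plus-contradiction wrapper, and it stops as soon as one boundary arc is constant (since the other endpoint of that arc is already an output puncture above $m+a+\sqrt{-1}$), so the bigon closure you import from Lemma~\ref{l:mu1preserveI} is not needed here --- precisely because, as you yourself note, the condition defining $\cB$ is existential rather than universal.
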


\begin{proof}
 
Let $\ul{x}_1 \in \cB$. We want to show that if the $\ul{x}_0$-coefficient of $\mu^1(\ul{x}_1)$ is non-zero then $\ul{x}_0 \in \cB$.

 By definition, $\ul{x}_1 \in \cG$ implies that $m+a+\sqrt{-1} \subset \pi_E(\ul{x}_1)$ for some $a=1,\dots,n$.
 Let $v:\Sigma \to E$ be a $J$-holomorphic map 
 contributing to the $\ul{x}_0$-coefficient of $\mu^1(\ul{x}_1)$  such that near an input puncture $\xi$, $v$ is asymptotic to the  element of $\ul{x}_1$ lying above $m+a+\sqrt{-1}$.
 We project $v$ to $\bH^\circ$ by $\pi_E$ and apply the open mapping theorem.
 The condition \eqref{eq:RightCondition} at $m+a+\sqrt{-1}$ forces at least one of the boundary components of $\Sigma$
 adjacent to $\xi$ to be mapped constantly to $m+a+\sqrt{-1}$ by $\pi_E \circ v$.
 It immediately implies that $m+a+\sqrt{-1} \in \pi_E(\ul{x}_0)$.
\end{proof}

\begin{corollary}\label{c:asGvspace2}
 $HF(\ul{L}_{\be(\lambda_0)},\ul{L}_{\be(\lambda_1)})/J_{\lambda_0,\lambda_1}=HF(\ul{L}_{\lambda_0},\ul{L}_{\lambda_1})$ as a graded vector space.
\end{corollary}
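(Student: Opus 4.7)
The plan is to mirror the proof of Corollary \ref{c:asGvspace} essentially verbatim, replacing the $m-2n$ leftmost matching paths there with the $n$ rightmost matching paths of $\ol{\be(\lambda_i)}^{\lambda_0,\lambda_1}$ ending at $\{m+1,\dots,m+n\}+\sqrt{-1}$. First I would construct a canonical bijection of generators $\cG_{\lambda_0,\lambda_1} \simeq \cX(\ul{L}_{\ol{\lambda}_0},\ul{L}_{\ol{\lambda}_1})$. The Lagrangian $\ul{L}_{\ol{\be(\lambda_i)}^{\lambda_0,\lambda_1}}$ contains a matching sphere for each $\vee$-position of $\lambda_i$: for good points these coincide with those of $\ul{L}_{\ol{\lambda}_i}$, while for the bad points the thimble paths of $\ol{\lambda}_i$ have been replaced by matching paths terminating in $\{m+1,\dots,m+n\}+\sqrt{-1}$. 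By the choice \eqref{eq:RightCondition} together with the freedom provided by Lemma \ref{l:HamInvariance}, we may assume these extra matching paths agree with the corresponding thimble paths in a fixed neighborhood of the ``left'' region $\{re(z) \le m+\tfrac12\}$. Thus generators of $\cG_{\lambda_0,\lambda_1}$ (whose projections avoid $\{m+1,\dots,m+n\}+\sqrt{-1}$) are in degree-preserving bijective correspondence with $\cX(\ul{L}_{\ol{\lambda}_0},\ul{L}_{\ol{\lambda}_1})$, by ``forgetting'' the portions of the Lagrangian lying in the right region.

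Next I would promote this bijection to an identification of chain complexes, verifying that the Floer differentials match. Concretely, for $\ul{x}_0,\ul{x}_1 \in \cG_{\lambda_0,\lambda_1}$, the argument used in the last paragraph of Lemma \ref{l:mu1preserveI2} applies in reverse: suppose $v:\Sigma \to E$ is a $J$-holomorphic map contributing to the $\ul{x}_0$-coefficient of $\mu^1(\ul{x}_1)$ and suppose $\pi_E \circ v$ enters a neighborhood of some $m+a+\sqrt{-1}$. Applying the open mapping theorem together with the slope condition \eqref{eq:RightCondition}, some boundary component of $\Sigma$ must map constantly to $m+a+\sqrt{-1}$, which forces one of the asymptotes to lie above this point, contradicting $\ul{x}_0,\ul{x}_1 \in \cG_{\lambda_0,\lambda_1}$. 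Hence $\pi_E \circ v$ stays in the left region where the matching-sphere and thimble configurations agree, and the moduli space is canonically identified with the corresponding moduli space computing the $\ul{y}_0$-coefficient of $\mu^1(\ul{y}_1)$ in $CF(\ul{L}_{\ol{\lambda}_0},\ul{L}_{\ol{\lambda}_1})$, where $\ul{y}_i$ is the image of $\ul{x}_i$ under the bijection.

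The main obstacle, as in Corollary \ref{c:asGvspace}, is confirming that the open mapping argument really does confine the holomorphic curves to the left region. This is only delicate because the ``extra'' Lagrangian components genuinely extend into the right region, so the boundary constraints alone do not force confinement; the key input is the slope condition \eqref{eq:RightCondition}, which ensures that the local picture near each $m+a+\sqrt{-1}$ admits only constant curves with both asymptotes at that point, a situation ruled out for $\ul{x}_0,\ul{x}_1 \in \cG$. Once this is established, regularity of the identified moduli follows from the same considerations used in constructing the cylindrical Fukaya-Seidel category, and the resulting chain isomorphism yields the claimed graded vector space isomorphism on cohomology.
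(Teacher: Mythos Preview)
Your proposal is correct and follows the same approach as the paper: obtain $\ol{\lambda_j}$ by extending the matching paths that terminate in $\{m+1,\dots,m+n\}+\sqrt{-1}$ to thimble paths, set up the generator bijection $\cG_{\lambda_0,\lambda_1}\simeq \cX(\ul{L}_{\ol{\lambda_0}},\ul{L}_{\ol{\lambda_1}})$, and then argue that the differentials coincide via an open-mapping confinement argument. The paper's own justification for this last step is simply the phrase ``by the proof of Lemma~\ref{l:mu1preserveI2}''; your explicit confinement argument is essentially what that reference is gesturing at (though note that the argument in Lemma~\ref{l:mu1preserveI2} is applied starting from an asymptote at $m+a+\sqrt{-1}$ rather than from merely entering a neighborhood, so your phrasing could be tightened slightly).
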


\begin{proof}
 For $j=0,1$, by forgetting the points $\{1,\dots,n\}+m+\sqrt{-1}$ and
`extending' the matching paths of $\be(\lambda_j)^{\lambda_0,\lambda_1}$ that contain $m+a+\sqrt{-1}$ for some $a=1,\dots,n$ to thimble paths, we get an upper matching $\ol{\lambda_j}$ of $\lambda_j$.
 There is then an obvious bijective correspondence
 \begin{align}
  \cG_{\lambda_0,\lambda_1} \simeq \cX(\ul{L}_{\ol{\lambda_0}},\ul{L}_{\ol{\lambda_1}}) \label{eq:canonicalGen2}
 \end{align}
 
Moreover, there is a canonical isomorphism between the chain complexes
 \begin{align}
  CF(\ul{L}_{\ol{\be(\lambda_0)}^{\lambda_0,\lambda_1}},\ul{L}_{\ol{\be(\lambda_1)}^{\lambda_0,\lambda_1}})/\cJ_{\lambda_0,\lambda_1} \cong CF(\ul{L}_{\ol{\lambda_0}},\ul{L}_{\ol{\lambda_1}}) \label{eq:canonicalChain2}
 \end{align}
 which on generators is given by \eqref{eq:canonicalGen2};  the differentials on the left and right agree by the proof of Lemma \ref{l:mu1preserveI2}.
\end{proof}

\begin{lemma}\label{l:independent2}
 The subspace $J_{\lambda_0,\lambda_1}$ is independent of the choices of $\ol{\be(\lambda_0)}^{\lambda_0,\lambda_1}$
and $\ol{\be(\lambda_1)}^{\lambda_0,\lambda_1}$, provided \eqref{eq:RightCondition} is satisfied.
\end{lemma}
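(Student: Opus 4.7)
The plan is to adapt the argument of Lemma \ref{l:independent} to the ``right side'' condition \eqref{eq:RightCondition}, exchanging the role of matching paths at $a + \sqrt{-1}$ for $a = 1, \dots, m-2n$ with thimble/extended paths terminating at $m+a+\sqrt{-1}$ for $a = 1, \dots, n$. Concretely, let $\be(\lambda_0)^{\alpha}$ and $\be(\lambda_0)^{\beta}$ be two choices of $\ol{\be(\lambda_0)}^{\lambda_0,\lambda_1}$ both satisfying \eqref{eq:RightCondition} with respect to the chosen $\ol{\be(\lambda_1)}^{\lambda_0,\lambda_1}$. By interpolating slopes (taking a choice $\be(\lambda_0)^{\gamma}$ whose slope at each $m+a+\sqrt{-1}$ is strictly more negative than those of both $\be(\lambda_0)^{\alpha}$ and $\be(\lambda_0)^{\beta}$), we obtain a third choice such that $(\be(\lambda_0)^{\gamma},\be(\lambda_0)^{\alpha})$ and $(\be(\lambda_0)^{\gamma},\be(\lambda_0)^{\beta})$ both satisfy \eqref{eq:RightCondition}. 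Let $\theta \in HF^0(\ul{L}_{\be(\lambda_0)^{\gamma}}, \ul{L}_{\be(\lambda_0)^{\alpha}})$ be a continuation element, with chain-level lift also denoted $\theta$, and let $J^{\alpha}$ and $J^{\gamma}$ denote the corresponding subspaces for the two choices. The isomorphism $\mu^2(-,\theta)$ on cohomology will reduce the lemma to showing that the image of $J^{\alpha}$ under $\mu^2(-,\theta)$ is contained in $J^{\gamma}$, since the two quotients have the same rank by Corollary \ref{c:asGvspace2}, so a one-sided containment forces equality; the same argument with $\alpha$ replaced by $\beta$ then gives the desired independence.

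The core step is the holomorphic-curve input: I would show that if the $\ul{x}_0$-coefficient of $\mu^2(\ul{x}_1, \theta)$ is non-zero with $\ul{x}_0 \in \cB^{\gamma}$, then $\ul{x}_1 \in \cB^{\alpha}$. Pick $a \in \{1, \dots, n\}$ with $m + a + \sqrt{-1} \in \pi_E(\ul{x}_0)$, and let $v : \Sigma \to E$ be a $J$-holomorphic triangle contributing to this coefficient, with output puncture $\xi^0$ asymptotic to the element of $\ul{x}_0$ lying above $m+a+\sqrt{-1}$. Applying the open mapping theorem to $\pi_E \circ v$, together with the slope condition \eqref{eq:RightCondition} for the pair $(\be(\lambda_0)^{\gamma}, \ol{\be(\lambda_1)}^{\lambda_0,\lambda_1})$ (which follows automatically from the two other pair-conditions at $m+a+\sqrt{-1}$), one of the boundary components of $\Sigma$ adjacent to $\xi^0$ must be mapped constantly to $m+a+\sqrt{-1}$. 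Since each of the three Lagrangian tuples $\ul{L}_{\be(\lambda_0)^{\gamma}}$, $\ul{L}_{\be(\lambda_0)^{\alpha}}$ and $\ul{L}_{\ol{\be(\lambda_1)}^{\lambda_0,\lambda_1}}$ has at most one component whose projection under $\pi_E$ contains $m+a+\sqrt{-1}$, tracing around the puncture adjacent to this boundary arc (exactly as in the last paragraph of the proof of Lemma \ref{l:independent}) forces the component of $\Sigma$ containing $\xi^0$ to be a constant triangle over $m+a+\sqrt{-1}$, and in particular an input puncture is asymptotic to an element projecting to $m+a+\sqrt{-1}$, so $\ul{x}_1 \in \cB^{\alpha}$.

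The main obstacle, in contrast to Lemma \ref{l:independent}, is bookkeeping: the argument there iterated over $a = 1, \dots, m-2n$ producing $m-2n$ constant bigons/triangles, whereas here the potential contributions at $m+a+\sqrt{-1}$ need only occur for those $a$ actually witnessed by $\ul{x}_0$, and the relevant Lagrangian components near $m+a+\sqrt{-1}$ are non-compact thimble-type paths (coming from the $\be$-extension by $\wedge$'s) rather than matching paths. However, for the purpose of the ``local'' open-mapping-theorem argument at $m+a+\sqrt{-1}$, thimble extensions behave identically to matching paths since we only use the slope at the endpoint and local uniqueness of the Lagrangian component containing that endpoint. Therefore the argument carries over verbatim, and no genuinely new difficulty arises.
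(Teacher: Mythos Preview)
There is a genuine logical gap. You correctly identify that it suffices to prove $\mu^2(J^{\alpha},\theta) \subset J^{\gamma}$, but the implication you then set out to prove is the wrong one. Your claimed core step is: if the $\ul{x}_0$-coefficient of $\mu^2(\ul{x}_1,\theta)$ is nonzero with $\ul{x}_0 \in \cB^{\gamma}$, then $\ul{x}_1 \in \cB^{\alpha}$. The contrapositive of this says that $\mu^2(-,\theta)$ sends $\mathrm{span}(\cG^{\alpha})$ into $\mathrm{span}(\cG^{\gamma})$, which is \emph{not} the same as sending $\cJ^{\alpha}$ into $\cJ^{\gamma}$; the complementary subspace $\mathrm{span}(\cG)$ is not a subcomplex, and there is no reason an isomorphism preserving one summand of a direct-sum decomposition should preserve the other. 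What you actually need is: if $\ul{x}_1 \in \cB^{\alpha}$ and the $\ul{x}_0$-coefficient of $\mu^2(\ul{x}_1,\theta)$ is nonzero, then $\ul{x}_0 \in \cB^{\gamma}$.

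The source of the confusion is that you imported the ``trace from the output puncture'' strategy of Lemma~\ref{l:independent} verbatim. In that lemma the good set $\cG$ is defined by \emph{all} special points being hit, so one starts from an output $\ul{x}_0 \in \cG$ (which gives a specific point to anchor the open-mapping argument) and propagates backward. Here the bad set $\cB$ is defined by \emph{some} special point $m+a+\sqrt{-1}$ being hit, so the anchor must come from the input $\ul{x}_1 \in \cB$, and one propagates forward to the output. This is precisely what the paper does: it starts at the input puncture $\xi$ asymptotic to the element of $\ul{x}_1$ over $m+a+\sqrt{-1}$, and uses the slope condition \eqref{eq:RightCondition} (first for the pair $(\be(\lambda_0)^{\alpha},\ol{\be(\lambda_1)})$, then for $(\be(\lambda_0)^{\gamma},\ol{\be(\lambda_1)})$) to force a chain of constant boundary arcs reaching the output puncture. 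The paper does not even need the full constant-triangle conclusion --- it only needs that $m+a+\sqrt{-1} \in \pi_E(\ul{x}_0)$. A minor separate point: the paths in $\ol{\be(\lambda_j)}$ ending at $m+a+\sqrt{-1}$ are matching paths, not thimble paths, since $\be(\lambda_j) \in \Lambda_{n,m+n}^c$ is a compact weight; your closing remark about thimble extensions is based on a misreading, though it is inessential to the argument.
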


\begin{proof}
 Let $\be(\lambda_0)^{a}$ and $\be(\lambda_0)^{b}$ be two different choices of $\ol{\be(\lambda_0)}^{\lambda_0,\lambda_1}$ such that 
 \eqref{eq:RightCondition} is satisfied for the pairs $(\be(\lambda_0)^{a}, \ol{\be(\lambda_1)}^{\lambda_0,\lambda_1})$
 and $(\be(\lambda_0)^{b},\ol{\be(\lambda_1)}^{\lambda_0,\lambda_1})$.
 
 There exists another choice $\be(\lambda_0)^{c}$ of $\ol{\be(\lambda_0)}^{\lambda_0,\lambda_1}$
 such that \eqref{eq:LeftCondition} is satisfied for the pairs $(\be(\lambda_0)^{c},\be(\lambda_0)^{a})$
 and $(\be(\lambda_0)^{c},\be(\lambda_0)^{b})$.
 Let $\theta$ be a continuation element of $HF(\ul{L}_{\be(\lambda_0)^{c}},\ul{L}_{\be(\lambda_0)^{a}})$.
 
 Let $J^a$ and $J^c$ be the respective $J_{\lambda_0,\lambda_1}$ for $HF(\ul{L}_{\be(\lambda_0)^{a}},\ul{L}_{\ol{\be(\lambda_1)}^{\lambda_0,\lambda_1}})$ and 
 $HF(\ul{L}_{\be(\lambda_0)^{c}},\ul{L}_{\ol{\be(\lambda_1)}^{\lambda_0,\lambda_1}})$.
 We need to show that the isomorphism
 \begin{align}
  \mu^2(-,\theta): HF(\ul{L}_{\be(\lambda_0)^{a}},\ul{L}_{\ol{\be(\lambda_1)}^{\lambda_0,\lambda_1}}) \to HF(\ul{L}_{\be(\lambda_0)^{c}},\ul{L}_{\ol{\be(\lambda_1)}^{\lambda_0,\lambda_1}})
 \end{align}
 sends $J^a$ to $J^c$.
 By Corollary \ref{c:asGvspace2} and a dimension count, it suffices to show that the image of $J^a$ under $\mu^2(-,\theta)$ 
 is contained in $J^c$.
 
 The proof that $\mu^2(J^a,\theta) \subset J^c$ is similar to the proof of Lemma \ref{l:mu1preserveI2}.
 Let $\cB^a$ and $\cB^c$ be the respective $\cB_{\lambda_0,\lambda_1}$ for $CF(\ul{L}_{\be(\lambda_0)^{a}},\ul{L}_{\ol{\be(\lambda_1)}^{\lambda_0,\lambda_1}})$ and 
 $CF(\ul{L}_{\be(\lambda_0)^{c}},\ul{L}_{\ol{\be(\lambda_1)}^{\lambda_0,\lambda_1}})$.
 By slight abuse of notation, we denote a chain level lift of $\theta$ to $CF(\ul{L}_{\be(\lambda_0)^{c}},\ul{L}_{\be(\lambda_0)^{a}})$ by $\theta$.
 It suffices to show that if 
$\ul{x}_1 \in \cB^a$ and
the $\ul{x}_0$-coefficient of $\mu^2(\ul{x}_1,\theta)$ is non-zero,
then $\ul{x}_0 \in \cB^c$.
 
 Let $v:\Sigma \to E$ be a $J$-holomorphic map 
 contributing to the $\ul{x}_0$-coefficient of $\mu^2(\ul{x}_1,\theta)$  such that near an input puncture $\xi$, $v$ is asymptotic to the element of $\ul{x}_1$ lying above $m+a+\sqrt{-1}$.
 We project $v$ to $\bH^\circ$ by $\pi_E$ and apply the open mapping theorem.
Since $(\be(\lambda_0)^{a}, \ol{\be(\lambda_1)}^{\lambda_0,\lambda_1})$ satisfies \eqref{eq:RightCondition},
 it forces at least one of the boundary components of $\Sigma$
 adjacent to $\xi$ to be mapped constantly to $m+a+\sqrt{-1}$ by $\pi_E \circ v$.
 Let the two boundary components of $\Sigma$
 adjacent to $\xi$ be $\partial^i \Sigma$ and $\partial^{j} \Sigma$, with Lagrangian labels $\ul{L}_{\be(\lambda_0)^{a}}$ and $\ul{L}_{\ol{\be(\lambda_1)}^{\lambda_0,\lambda_1}}$, respectively.

If $\pi_E \circ v |_{\partial^j \Sigma}=m+a+\sqrt{-1}$, then we are done because we must have $m+a+\sqrt{-1} \in \pi_E(\ul{x}_0)$. 

If $\pi_E \circ v |_{\partial^i \Sigma}=m+a+\sqrt{-1}$, then we denote the next boundary component of $\Sigma$
adjacent to $\partial^i \Sigma$ by $\partial^{i'} \Sigma$, which is equipped with the Lagrangian label $\ul{L}_{\be(\lambda_0)^{c}}$.
Using the fact that
$(\be(\lambda_0)^{c}, \ol{\be(\lambda_1)}^{\lambda_0,\lambda_1})$ satisfies \eqref{eq:RightCondition},
we conclude that either $\partial^j \Sigma$ or $\partial^{i'} \Sigma$ is mapped to $m+a+\sqrt{-1}$ under $\pi_E \circ v$.
In either case, we have $m+a+\sqrt{-1} \in \pi(\ul{x}_0)$.
\end{proof}

By very similar arguments, the analogues of Lemma \ref{l:algIsom} and \ref{l:algIsom2} hold.
In particular, we obtain algebra isomorphisms
\begin{align}
K_{n,m}^{\symp} \simeq H^{\be}_{n,m}/J \simeq \oplus CF(\ul{L}_{\ol{\be(\lambda_0)}},\ul{L}_{\widetilde{\be(\lambda_1)}})/\cJ^{arc}
\end{align}
where $\cJ^{arc}$ is generated by \eqref{eq:IdealGen2}.
There is also exactly the same quotient description for $K_{n,m}^{\alg}$ (see \eqref{eq:be}).
As a result, the analog of Proposition \ref{p:CompactCases} is also true.

\begin{proposition}\label{p:AllCases}
The map $\Phi$ of Lemma \ref{l:gvspIsom} gives an algebra isomorphism from $K_{n,m}^{\symp}$ to $K_{n,m}^{\alg}$.
\end{proposition}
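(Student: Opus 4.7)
The plan is to run the same restriction-quotient strategy used in Proposition \ref{p:CompactCases}, but with the inclusion $\be:\Lambda_{n,m}\hookrightarrow\Lambda_{n,m+n}^c$ playing the role previously played by $\bc$. Crucially, the supporting geometric lemmas needed for this argument (Lemma \ref{l:mu1preserveI2}, Corollary \ref{c:asGvspace2} and Lemma \ref{l:independent2}) have already been established in the excerpt, and the combinatorial quotient description $K_{n,m}^{\alg}\simeq H_{n,m+n}^{\alg}/J$ is recorded in \eqref{eq:be} with $J$ spanned by oriented circle diagrams whose weight $\lambda$ fails to satisfy $\{m+1,\dots,m+n\}\subset\lambda^{-1}(\wedge)$. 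Since $H^{\symp}_{n,m+n}\simeq H_{n,m+n}^{\alg}$ (Proposition \ref{p:CompactCases}), the subalgebra $H^{\be}_{n,m}\subset H^{\symp}_{n,m+n}$ is automatically formal and equipped with a geometric basis compatible with $\Phi$.

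First I would establish the analogue of Lemma \ref{l:algIsom}. Choose, for each ordered triple $\lambda_0,\lambda_1,\lambda_2\in\Lambda_{n,m}$, upper matchings $\ol{\be(\lambda_i)}$ such that condition \eqref{eq:RightCondition} is simultaneously satisfied for the pairs $(\ol{\be(\lambda_0)},\ol{\be(\lambda_1)})$ and $(\ol{\be(\lambda_1)},\ol{\be(\lambda_2)})$; this is possible by interpolating slopes at each $m+a+\sqrt{-1}$. Then rerun the open-mapping-theorem argument from Lemma \ref{l:mu1preserveI2}, now applied to a holomorphic triangle $v:\Sigma\to E$ contributing to $\mu^2(\ul{x}_2,\ul{x}_1)$. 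Namely, if $\ul{x}_1\in\cB^{0,1}$ (or $\ul{x}_2\in\cB^{1,2}$) so that some element of the tuple lies above $m+a+\sqrt{-1}$, then the slope condition forces one of the boundary arcs meeting the corresponding input puncture to map constantly to $m+a+\sqrt{-1}$ under $\pi_E\circ v$, and one traces this along $\Sigma$ to conclude that $m+a+\sqrt{-1}\in\pi_E(\ul{x}_0)$, i.e. $\ul{x}_0\in\cB^{0,2}$. This shows $J$ is a two-sided ideal of $H^{\be}_{n,m}$, and identifies the $\mu^2$ on $H^{\be}_{n,m}/J$ with the product on $\oplus CF(\ul{L}_{\ol{\lambda_0}},\ul{L}_{\ol{\lambda_1}})\simeq K^{\symp}_{n,m}$ via the canonical bijection \eqref{eq:canonicalGen2}, because any triangle contributing to a $\cG$-to-$\cG$ product necessarily decomposes as a collection of $n$ constant triangles over the points $m+a+\sqrt{-1}$ together with a triangle in the remaining configuration. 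This yields an algebra isomorphism $H^{\be}_{n,m}/J\simeq K^{\symp}_{n,m}$.

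Second, I would port Lemma \ref{l:algIsom2} to the cap-diagram presentation. Choose continuation elements $\theta_{\lambda_0,\lambda_1}\in HF^0(\ul{L}_{\ol{\be(\lambda_1)}^{\lambda_0,\lambda_1}},\ul{L}_{\widetilde{\be(\lambda_1)}})$, multiplication by which gives an algebra isomorphism from $\oplus HF(\ul{L}_{\ol{\be(\lambda_0)}^{\lambda_0,\lambda_1}},\ul{L}_{\ol{\be(\lambda_1)}^{\lambda_0,\lambda_1}})$ to $\oplus CF(\ul{L}_{\ol{\be(\lambda_0)}},\ul{L}_{\widetilde{\be(\lambda_1)}})$. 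Choosing $\widetilde{\be(\lambda_1)}$ so that \eqref{eq:RightCondition} continues to hold for $(\ol{\be(\lambda_1)}^{\lambda_0,\lambda_1},\widetilde{\be(\lambda_1)})$, the same slope-tracking argument shows this isomorphism sends $J$ to the subspace $\cJ^{arc}$ generated by those $\ul{x}$ with $m+a+\sqrt{-1}\in\pi_E(\ul{x})$ for some $a$. Combining with the previous step produces an algebra isomorphism $K^{\symp}_{n,m}\simeq\bigl(\oplus CF(\ul{L}_{\ol{\be(\lambda_0)}},\ul{L}_{\widetilde{\be(\lambda_1)}})\bigr)/\cJ^{arc}$ whose underlying cochain map is, by construction, the canonical identification on generators $\ul{x}\leftrightarrow\ul{y}$ with $\pi_E(\ul{y})=(\pi_E(\ul{x}))\sqcup\{m+1,\dots,m+n\}+\sqrt{-1}$ (precisely as in the discussion following Lemma \ref{l:algIsom2}).

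Finally, I would compare with the algebraic side. By Proposition \ref{p:CompactCases}, $\Phi$ restricts to an algebra isomorphism $H^{\symp}_{n,m+n}\to H^{\alg}_{n,m+n}$; under this isomorphism the symplectic ideal $\cJ^{arc}$ corresponds, basis element for basis element, to the algebraic ideal $J$ of \eqref{eq:Jright}, since both are defined by the condition that the weight fails to place $\wedge$'s on $\{m+1,\dots,m+n\}$, and this condition on generators is preserved by $\Phi$ (being a statement about the positions of geometric intersection points / the values of the underlying weight). The quotient description $K_{n,m}^{\alg}\simeq H^{\alg}_{n,m+n}/J$ given by $\be$ in \eqref{eq:be} therefore fits into a commutative square of algebra isomorphisms matching $K^{\symp}_{n,m}$ with $K^{\alg}_{n,m}$, and the composite is visibly $\Phi$ on geometric bases. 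The main obstacle is verifying that the slope condition \eqref{eq:RightCondition} can be arranged consistently across all triples (including when all three weights coincide, where one must allow the continuation-element trick of Lemma \ref{l:independent2} to move freely between choices of matching), and checking that the open-mapping-theorem bookkeeping in the triangle argument is complete despite the presence of thimble paths, which ends at the real axis rather than closing up; this last point is the genuinely new feature relative to the compact case, and is precisely where the use of cap diagrams $\widetilde{\be(\lambda_1)}$ (whose thimble paths run off to $c+\sqrt{-1}\infty$) together with the cylindrical framework of Section \ref{s:cylindricalFS} guarantees that the relevant holomorphic discs are confined to the region in which the slope argument applies.
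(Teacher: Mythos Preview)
Your overall strategy is exactly that of the paper: realise $K^{\symp}_{n,m}$ as the quotient $H^{\be}_{n,m}/J$, identify this with $K^{\alg}_{n,m}$ via Proposition~\ref{p:CompactCases} and the algebraic quotient description \eqref{eq:be}, and check compatibility with $\Phi$ on geometric bases. That is correct.

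There is, however, a genuine error in how you transport the $\bc$-mechanism to the $\be$-setting. In the $\bc$ case the subset $\cG$ consists of tuples \emph{containing} the leftmost $m-2n$ critical values, and the moduli comparison works because any contributing triangle has $m-2n$ constant components sitting over those points. In the $\be$ case the situation is dual: $\cG$ consists of tuples \emph{avoiding} the rightmost $n$ critical values. Consequently your sentence ``any triangle contributing to a $\cG$-to-$\cG$ product necessarily decomposes as a collection of $n$ constant triangles over the points $m+a+\sqrt{-1}$'' cannot be right --- there are no asymptotes over those points, hence no constant polygon components there. The same confusion reappears when you describe the generator bijection as $\pi_E(\ul{y})=\pi_E(\ul{x})\sqcup(\{m+1,\dots,m+n\}+\sqrt{-1})$; in fact the bijection of Corollary~\ref{c:asGvspace2} is the identity on the underlying $n$-tuple of points (one \emph{removes} the right critical values and extends the affected matching paths to thimble paths, rather than adding points).

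The correct replacement for the constant-triangle step is a confinement argument: with \eqref{eq:RightCondition} arranged for all three pairs, the open-mapping-theorem tracing (as in Lemma~\ref{l:independent2}, run at an input puncture and then along successive boundary arcs) shows that no boundary arc of a triangle with all asymptotes in $\cG$ can reach $m+a+\sqrt{-1}$; hence $\pi_E\circ v$ stays in the region where the matching spheres of $\ol{\be(\lambda_i)}$ and the thimbles of $\ol{\lambda_i}$ agree. This yields a bijection of triangle moduli between $H^{\be}_{n,m}/J$ and $K^{\symp}_{n,m}$, playing the role that the constant-triangle decomposition played for $\bc$. Once you make this substitution, the rest of your argument goes through and coincides with the paper's.
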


\section{An nc-vector field}\label{s:ncField}

The algebra $K_{n,m}^{\symp}$ in principle carries a non-trivial $A_{\infty}$-structure. Seidel gave a necessary and sufficient criterion for formality of an $A_{\infty}$-algebra: it should admit a degree one Hochschild cohomology class which acts by the Euler field, see Theorem \ref{t:seidel-formality} (a proof is given in \cite{AbouzaidSmith16}).  Following the language of non-commutative geometry, we call a degree one Hochschild cocycle a \emph{non-commutative vector field} or nc-vector field: the motivating example is a global vector field $V \in H^0(\mathcal{T}_Z) \subset HH^1(D^b(\mathrm{Coh}(Z))$ on an algebraic variety $Z$.  
In this section, we apply the method from \cite{AbouzaidSmith16} to construct an nc-vector field by counting holomorphic discs with prescribed conormal-type conditions at infinity.
This will be well-defined even if $\ul{L}$ has some non-compact Lagrangian components (see Remark \ref{r:ncCom}).

\subsection{Moduli spaces of maps revisited}

We first explain how to modify the moduli spaces of maps used in \cite{AbouzaidSmith16} to construct an nc-vector field in our setting. 
Let $\cR^{d+1,h}_{(0,1)}$ be the moduli space of unit discs $S$ with $d+1$ boundary punctures $\{\xi_i\}_{i=0}^d$, 
$h$ ordered interior marked points $\mk(S)$ and $2$ more {\bf distinguished} ordered interior marked points $s_0,s_1$
such that $s_1$ lies on the hyperbolic geodesic arc between $s_0$ and $\xi_0$ (but $s_1 \neq s_0$ and $s_1 \neq \xi_0$).
We have $\dim(\cR^{d+1,h}_{(0,1)})=\dim(\cR^{d+1,h})+3$.
We call the $h$ ordered interior marked points type-1 interior marked points, and the $2$ distinguished ordered interior marked points type-2 interior marked points.
We denote $\mk(S) \cup \{s_0,s_1\}$ by $\mk(S)^+$.

We can compactify $\cR^{d+1,h}_{(0,1)}$ to $\overline{\cR}^{d+1,h}_{(0,1)}$ as in \cite[Section 3.6]{AbouzaidSmith16} (which treats the case $h=0$).
Informally, the compactification includes stable broken configurations as considered in \cite[Section 3.6]{AbouzaidSmith16}, with an extra condition
that nodal sphere components arise when some of interior marked points collide.
In particular, the codimension one boundary facets of $\overline{\cR}^{d+1,h}_{(0,1)}$ are given by 
(in all the moduli below, the subset $P \subset \{1,\dots,h\}$ remembers the type-1 ordered marked points that go to the same component)

\begin{enumerate}
 \item ($\{\xi_{i+1}, \dots, \xi_{i+j}$ move together) A nodal domain in which a collection of input boundary punctures bubble off:
 \begin{align}
  \coprod_{1 \le j \le d, 0\le i \le d-j, h_1+h_2=h, P \subset \{1,\dots,h\}, |P|=h_1} \cR^{d-j+1,h_1}_{(0,1)} \times \cR^{j+1,h_2} \label{eq:c1}
 \end{align}
 When $j=1$, we need $h_2>0$ so that the domain is stable.
 \item ($s_1 \to s_0$) A domain with a sphere bubble carrying the two type-2 interior marked points and some type-1 interior marked points attached to a disc carrying the remaining type-1 interior marked
 points and the $d+1$ boundary punctures.
 Letting $\eM_{0,3}^{h}$ denote the moduli space of spheres with $1$ interior node, $2$ type-2 marked points 
 and $h$ unordered type-1 interior marked points, and $\cR^{d+1,h}_1$ denote the moduli space of unit discs with $1$ interior node, $d+1$ boundary punctures and
$h$ type-1 interior marked points, this component is
\begin{align}
 \coprod_{h_1+h_2=h, P \subset \{1,\dots,h\}, |P|=h_1} \cR^{d+1,h_1}_1 \times \eM_{0,3}^{h_2} \label{eq:c2}
\end{align}
The attaching point is understood to be the node on the disc and on the sphere respectively.
 \item ($\{\xi_{d-l+1}, \dots,\xi_d,\xi_1,\dots, \xi_{i} \} \to \xi_0$) A nodal domain with two discs, one carrying both 
 type-2 interior marked points, some type-1 interior marked points and the boundary punctures $\{\xi_{i+1}, \dots, \xi_{d-l}\}$, the other carrying the remaining  type-1
 interior marked points and boundary punctures
 \begin{align}
  \coprod_{0 \le i+l \le d, h_1+h_2=h, P \subset \{1,\dots,h\}, |P|=h_1} \cR^{i+l+1,h_1} \times \cR^{d-i-l+1,h_2}_{(0,1)} \label{eq:c3}
 \end{align}
 When $i+l=0$, we need $h_1>0$ so that the domain is stable.
 \item ($\{s_1\}\cup \{\xi_{1}, \dots,\xi_i,\xi_{d-l+1},\dots, \xi_{d} \} \to \xi_0$)  A nodal 
 domain with two discs, one carrying $d-i-l$ input boundary punctures, one type-2 interior marked point and some type-1 interior marked points, the bubble carrying the second type-2 interior marked point,
 and the remaining type-1 interior marked points and boundary punctures:
 \begin{align}
  \coprod_{0 \le i+l \le d, h_1+h_2=h, P \subset \{1,\dots,h\}, |P|=h_1} \cR^{(i+l+1)+1,h_1}_1 \times \cR^{d-i-l+1,h_2}_{1} \label{eq:c4}
 \end{align}

\end{enumerate}


We pick a consistent choice of strip-like ends and marked-points neighborhoods for elements in $\overline{\cR}^{d+1,h}_{(0,1)}$ as in Section \ref{sss:EndsAndNeighborhoods}.
This time, we require each marked-points neighborhood to contain $s_0$ and $s_1$, and denote it by $\nu(\mk(S)^+)$.
For each $S \in \cR^{d+1,h}_{(0,1)}$, we can define $\cG_{\aff}(S)$ by \eqref{eq:GaugeS} but with $\nu(\mk(S))$ being replaced by $\nu(\mk(S)^+)$.
For a cylindrical Lagrangian label associating $\partial_j S$ to $\ul{L}_j$ for $j=0,\dots,d$
and a choice $A_S \in \cP_\aff(S, \{(A_j, \lambda_{\ul{L}_{j-1}}, \lambda_{\ul{L}_{j}})\}_{j=0}^d)$, we equip  $S$ with the additional data $(J,K)$ as in  \eqref{eq:FloerDataJ}
and \eqref{eq:FloerDataK}, again with $\nu(\mk(S))$ replaced by $\nu(\mk(S)^+)$. 

Let $D_0 \subset \Hilb^n(\overline{E})$ is the divisor of ideals whose support meets $D_E$.  We now assume that 
\begin{align}
 & D_0 \text{ is moveable, and the base locus of its linear system contains no rational curve, and} \label{eqn:moveable}\\
 & \text{there is a holomorphic volume form with simple poles on $D_0$ (so $c_1(\Hilb^n(\overline{E}))=PD(D_0)$)} \label{eq:simplepole}
 \end{align}
Let $D_0'$ be a divisor in $\Hilb^n(\overline{E})$ linearly equivalent to, but sharing no common irreducible component with, $D_0$.  
Let $B_0=D_0 \cap D_0'$ be the base locus of the corresponding pencil; 
we assume that it contains no rational curve. Note that these conditions will hold in the case of type $A$ Milnor fibres  (see \cite[Section 6]{AbouzaidSmith16}), and 
more generally \eqref{eqn:moveable} holds when $D_E \subset \overline{E}$ is moveable
and \eqref{eq:simplepole} holds when there is a holomorphic volume form on $\overline{E}$ with simple poles on $D_E$ (see \cite[Lemma 6.3]{AbouzaidSmith16}).

Given $\ul{x}_0 \in  \cX(H_0, \ul{L}_0,\ul{L}_d)$ and $\ul{x}_j \in \cX(H_j, \ul{L}_{j-1},\ul{L}_j)$ for $j=1,\dots,d$,
we define 
\begin{align}
\cR^{d+1,h}_{(0,1)}(\ul{x}_0;\ul{x}_d, \dots, \ul{x}_1) \label{eq:R(0,1)}
\end{align}
to be the moduli of all maps $u :S \to \Hilb^n(\overline{E})$ such that
\begin{align}\label{eq:FloerEquationNC}
    \left\{
\begin{array}{lll}
      \text{$[u] \cdot [D_r]=0$ and $[u] \cdot [D_0]=1$} \\
      u(\mk(S)) \subset D_{HC}, u(s_0) \in D_0, u(s_1) \in D_0' \\
      (Du|_z-X_{K}|_z)^{0,1}=0 \text{ with respect to } (J_z)_{u(z)} \text{ for } z \in S\\
      u(z) \in \Sym(\ul{L}_j) \text{ for } z \in \partial_j S \\
      \lim_{s \to \pm \infty}u( \epsilon^j(s,\cdot))=\ul{x}_j(\cdot) \text{ uniformly} \\
\end{array}
      \right.
\end{align}
where $J,K$ should be understood as their extension to $\Conf^n(\overline{E})$ (see Remark \ref{r:JKextension}).

Note that Lemma \ref{l:IPairing} and \ref{l:PositivityIntersection} remain true for $u \in \cR^{d+1,h}_{(0,1)}(\ul{x}_0;\ul{x}_d, \dots, \ul{x}_1)$
so when $h=I_{\ul{x}_0;\ul{x}_d, \dots, \ul{x}_1}$, \eqref{eq:FloerEquationNC}
implies that $Im(u) \cap D_r=\emptyset$ and $u$ intersect $D_{HC}$ transversally.
On the other hand, since $J=J^{[n]}_E$ near $s_0,s_1$, $[u] \cdot [D_0]=1$ implies that the intersection multiplicity of $u$ with $D_0$ at $s_0$ 
and with $D_0'$ at $s_1$  are both $1$. (We will see later, in the proof of Lemma \ref{l:posInt}, that every intersection between $u$ and $D_0$ contributes positively to their intersection number.)

We want to discuss the regularity and compactification of $\cR^{d+1,h}_{(0,1)}(\ul{x}_0;\ul{x}_d, \dots, \ul{x}_1)$ following 
Section \ref{sss:FloerData}, \ref{sss:Energy}, \ref{sss:Compactness} and \cite{AbouzaidSmith16}.

First, we pick a consistent choice of $(A_S,J,K)$ for all elements in $\overline{\cR}^{d+1,h}_{(0,1)}$ and all possible cylindrical Lagrangian labels and in/outputs.
In particular, this implies that if $S \in \overline{\cR}^{d+1,h}_{(0,1)}$
has a sphere component, then over that component $K \equiv 0$ and the complex structure is $J_{\overline{E}}^{[n]}$.
We can then introduce the following moduli  spaces of maps that are relevant for the compactification of $\cR^{d+1,h}_{(0,1)}(\ul{x}_0;\ul{x}_d, \dots, \ul{x}_1)$:
\begin{align}
 &\cR^{d+1,h}_1(\Hilb^n(\overline{E}), (d_0,d_r)| \ul{x}_0; \ul{x}_d,\dots,\ul{x}_1) \label{eq:R1bar}\\
 &\eM^h(\Hilb^n(\overline{E}) \setminus D_r|1) \label{eq:GW}
\end{align}


The moduli \eqref{eq:R1bar} consists of maps $u: S \to \Hilb^n(\overline{E})$ such that $S \in \cR_1^{d+1,h}$, and 
\begin{align}\label{eq:FloerEquationR1}
    \left\{
\begin{array}{lll}
      \text{$[u] \cdot [D_0]=d_0$, $[u] \cdot [D_r]=d_r$} \\
      u(\mk(S)) \subset D_{HC}  \\
      (Du|_z-X_{K}|_z)^{0,1}=0 \text{ with respect to } (J_z)_{u(z)} \text{ for } z \in S\\
      u(z) \in \Sym(\ul{L}_j) \text{ for } z \in \partial_j S \\
      \lim_{s \to \pm \infty}u( \epsilon^j(s,\cdot))=\ul{x}_j(\cdot) \text{ uniformly.} \\
\end{array}
      \right.
\end{align}
It has an evaluation map using the node to $\Hilb^n(\overline{E})$.
The virtual dimension of \eqref{eq:R1bar} is
\begin{align}
 d+2d_0+|\ul{x}_0|-\sum_{j=1}^d |\ul{x}_j|
\end{align}
where $2d_0$ comes from $c_1(\Hilb^n(\overline{E}))=PD(D_0)$ and $d$ comes from $\dim(\cR^{d+1}_1)$,

For $S \in \eM_{0,3}^h$, we can rigidify the domain and assume the type-2 marked points are $0,1$ and the node is $\infty$.
The moduli \eqref{eq:GW} consists of maps $u: S \to \Hilb^n(\overline{E}) \setminus D_r$ such that $S \in \eM_{0,3}^h$,
\begin{align}\label{eq:FloerEquationM}
    \left\{
\begin{array}{lll}
      \text{$[u] \cdot [D_0]=1$} \\ 
      u(\mk(S)) \subset D_{HC}, u(0) \in D_0, u(1) \in D_0' , \\
      \text{$u$ is $J^{[n]}_E$-holomorphic} \\
\end{array}
      \right.
\end{align}
It also has an evaluation map to $\Hilb^n(\overline{E})$ by evaluating at $\infty$.
The virtual dimension of \eqref{eq:GW} is
\begin{align}
 4n+2c_1([u])+2h-(4+2h)=4n-2
\end{align}
where $4n$ is the dimension of $\Hilb^n(\overline{E})$, $c_1([u])=[u] \cdot [D_0]=1$, $2h$ is the dimension of $\eM_{0,3}^h$ and $4+2h$ comes from the incidence conditions.

\begin{proposition}[cf. Lemma 3.18, 3.19 of \cite{AbouzaidSmith16}]\label{p:ncCompactification}
 Let $h=I_{\ul{x}_0;\ul{x}_d, \dots, \ul{x}_1}$.
 For generic consistent choice of $J,K$ satisfying \eqref{eq:FloerDataJ} and \eqref{eq:FloerDataK},
 every $u \in \cR^{d+1,h}_{(0,1)}(\ul{x}_0;\ul{x}_d, \dots, \ul{x}_1)$ is regular.
 Moreover, when $\cR^{d+1,h}_{(0,1)}(\ul{x}_0;\ul{x}_d, \dots, \ul{x}_1)$ is $0$ dimensional, it is compact.
 When $\cR^{d+1,h}_{(0,1)}(\ul{x}_0;\ul{x}_d, \dots, \ul{x}_1)$ is $1$ dimensional, it can be compactified by adding the following boundary strata, which are themselves regular.
 \begin{enumerate}
  \item (corresponding to \eqref{eq:c1}, \eqref{eq:c3})
  \begin{align}
   &\coprod_{\ul{x}, h_1+h_2=h, P \subset \{1,\dots,h\}, |P|=h_1}  \cR^{d-j+1,h_1}_{(0,1)}(\ul{x}_0;\ul{x}_d, \dots, \ul{x}_{i+j+1},\ul{x},\ul{x}_i,\dots, \ul{x}_1) 
   \times \cR^{j+1,h_2}(\ul{x};\ul{x}_{i+j}, \dots, \ul{x}_{i+1}) \label{eq:Cnc1}\\
   &\coprod_{\ul{x}, h_1+h_2=h, P \subset \{1,\dots,h\}, |P|=h_1}  \cR^{i+l+1,h_1}(\ul{x}_0;\ul{x}_d, \dots, \ul{x}_{d-l+1},\ul{x},\ul{x}_i,\dots, \ul{x}_1) 
   \times \cR^{d-i-l+1,h_2}_{(0,1)}(\ul{x};\ul{x}_{d-j}, \dots, \ul{x}_{i+1}) \label{eq:Cnc2}
  \end{align}
 \item (corresponding to \eqref{eq:c2})
 \begin{align}
  & \cR^{d+1,h}_1(\Hilb^n(\overline{E}), (0,0)| \ul{x}_0; \ul{x}_d,\dots,\ul{x}_1) \times_{\Hilb^n(\overline{E})} \eM^{0}(\Hilb^n(\overline{E}) \setminus D_r|1) \label{eq:Cnc3} \\
  & \cR^{d+1,h}_1(\Hilb^n(\overline{E}), (1,0)| \ul{x}_0; \ul{x}_d,\dots,\ul{x}_1) \times_{\Hilb^n(\overline{E})} B_0 \label{eq:Cnc4}
 \end{align}
 \end{enumerate}
where in \eqref{eq:Cnc3} and \eqref{eq:Cnc4} the fiber products are taken with respect to the evaluation maps, which are transversal.
\end{proposition}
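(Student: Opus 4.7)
The plan is to adapt the strategy of Lemmas 3.18 and 3.19 of \cite{AbouzaidSmith16}, combining it with the cylindrical compactness framework developed in Section \ref{sss:Compactness} and the positivity-of-intersection arguments used to control $D_r$ and $D_{HC}$. There are three assertions to prove: regularity, compactness in the zero-dimensional case, and the description of the one-dimensional compactification.

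For regularity, I will extend the proof of Lemmas \ref{l:regularity} and \ref{l:regularity2}. The integrability of $J$ near $\mk(S)^+$ (together with the smoothness of the divisors $D_{HC}$, $D_0$, $D_0'$ away from the base locus) allows the universal evaluation map at $\mk(S)^+$ to be made transverse to $D_{HC}^h \times D_0 \times D_0'$ by a generic choice of $(J,K)$ satisfying \eqref{eq:FloerDataJ}, \eqref{eq:FloerDataK}; one applies standard transversality for evaluation maps (\cite[Sections 6-7]{McDuffSalamonJ}) stratum by stratum, using the exhaustion $\cB^k$ from the proof of Lemma \ref{l:regularity} to handle the non-compactness of the target. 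For compactness in the zero-dimensional case, I would first apply Lemma \ref{l:EnergyBound} to obtain a uniform energy bound (the incidence conditions at $s_0, s_1$ add no extra energy), then invoke Lemma \ref{l:nobubble} and Proposition \ref{p:Compactness2} verbatim to extract a limit stable map $u_\infty$, using that exactness of the Lagrangian components and positive intersection of spheres in $E^\rceil$ with $D_E$ rule out boundary and interior bubbling outside $\nu(\mk(S)^+)$. The hypothesis $h = I_{\ul{x}_0;\ul{x}_d,\dots,\ul{x}_1}$ and Corollary \ref{c:missingDr} force the image to avoid $D_r$, and the intersection pairing with $D_0$ remains $1$ in the limit by continuity.

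For the one-dimensional compactification, the key input is the assumption \eqref{eqn:moveable}: since $D_0$ is moveable and its base locus contains no rational curve, every non-constant rational component $C$ in the limit satisfies $[C] \cdot [D_0] \ge 1$. The constraint $[u_\infty] \cdot [D_0] = 1$ then allows at most one non-trivial sphere bubble and, when present, it carries exactly the $D_0$-class. Combined with the classification of codimension-one facets of $\overline{\cR}^{d+1,h}_{(0,1)}$, this forces degenerations to fall into four families: boundary-puncture collisions \eqref{eq:c1}--\eqref{eq:c3}, which give the strata \eqref{eq:Cnc1} and \eqref{eq:Cnc2} (the type-2 marked points stay on a single disc component); and the collision $s_1 \to s_0$ from \eqref{eq:c2}, which produces either \eqref{eq:Cnc3} (the $D_0$-class migrates entirely onto a sphere bubble, leaving the disc with $(d_0, d_r) = (0,0)$) or \eqref{eq:Cnc4} (the two type-2 marked points collide on the disc at a point of $B_0$, so the sphere bubble is a constant ghost and the disc carries $(d_0, d_r) = (1,0)$). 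The remaining case \eqref{eq:c4} gives strata involving a disc with one type-2 marked point; these carry an extra boundary evaluation and I expect to show they have virtual codimension at least two, hence can be excluded by genericity. Regularity of each boundary stratum follows by applying the first step to each irreducible component and checking transversality of the evaluation maps at the nodes against $D_0$ and $B_0$, again using \eqref{eqn:moveable} and \eqref{eq:simplepole}.

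The main obstacles I anticipate are twofold. First, since $J$ is integrable only near $\mk(S)^+$ rather than globally, a priori intersection numbers with the non-holomorphic divisor $D_0$ (away from $s_0$) need not be positive; controlling this will require either a graph-trick argument as in Lemma \ref{l:PositivityIntersection} (using that $X_K$ is tangent to $D_r$ but $D_0$ plays a different role), or an appeal to the forthcoming Lemma \ref{l:posInt}. Second, verifying that \eqref{eq:c4}-type degenerations are of codimension $\ge 2$ requires a careful dimension count using that the restriction to the component carrying a single type-2 marked point has an unmatched constraint. I expect both obstacles to be resolvable along the lines of \cite[Section 3.6]{AbouzaidSmith16}, with the $D_r$-avoidance coming as a bonus from Corollary \ref{c:missingDr}.
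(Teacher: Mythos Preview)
Your overall strategy is sound, but there are two genuine gaps.

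\textbf{First gap: the claim that every non-constant rational component satisfies $[C]\cdot[D_0]\ge 1$ is false.} Moveability of $D_0$ and the absence of rational curves in $B_0$ only give $[C]\cdot[D_0]\ge 0$: there are plenty of Chern-zero rational curves in $\Hilb^n(\overline{E})$ (any curve contracted to a point by $\pi_{HC}$, for instance). The paper's argument is different. Since $c_1(\Hilb^n(\overline{E}))=PD(D_0)$ by \eqref{eq:simplepole}, the Chern number of a bubble equals its intersection with $D_0$. A Chern-zero bubble therefore lies in $\Hilb^n(E)$, and then Lemma \ref{l:rationalCurveinHilb} forces $[C]\cdot[D_r]>0$, contradicting the constraint $[u]\cdot[D_r]=0$ (combined with Lemma \ref{l:posInt}). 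This is how bubbles at points of $\mk(S)$ are excluded. At $s_0$ or $s_1$ the bubble tree may have Chern number one; in that case the bubble already meets $D_0'$, while the surviving marked point $s_1$ (respectively $s_0$) on the main component also lies on $D_0'$ (respectively $D_0$), so the total intersection with $[D_0]=[D_0']$ is at least $2$, contradicting $[u]\cdot[D_0]=1$. You need this dichotomy (Chern zero versus Chern one) rather than a uniform lower bound.

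\textbf{Second gap: the exclusion of the \eqref{eq:c4} stratum is not a codimension count.} In a \eqref{eq:c4}-degeneration the two disc components each carry one type-2 marked point, sent to $D_0$ and $D_0'$ respectively. By Lemma \ref{l:posInt} each disc has non-negative intersection with $D_0$, and the marked-point incidence forces each to contribute at least $1$; summing gives $[u]\cdot[D_0]\ge 2$ for $k$ large, an immediate contradiction. No virtual-dimension argument is needed (and your proposed one would be delicate, since the two factors in \eqref{eq:c4} individually have the expected dimension).

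The rest of your outline---regularity via evaluation transversality, the energy bound, the identification of \eqref{eq:c1}/\eqref{eq:c3} with \eqref{eq:Cnc1}/\eqref{eq:Cnc2}, and the splitting of \eqref{eq:c2} into \eqref{eq:Cnc3} and \eqref{eq:Cnc4}---matches the paper's approach.
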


We will see that $\eM^{h}(\Hilb^n(\overline{E}) \setminus D_r|1)$ is empty when $h>0$ (see Lemma \ref{l:chern1spheres} and Corollary \ref{c:pseudocycle})
which explains why it does not appear in \eqref{eq:Cnc3}. 
In the rest of this section, we establish some regularity and compactness statements and prove Proposition \ref{p:ncCompactification}.

\begin{remark}\label{r:ncCom}
 We will establish the compactness result below following the method in Section \ref{sss:Compactness}. The reason one should expect compactness to hold
 is because we are counting discs with interior marked points going to $D_0$ which corresponds to the vertical infinity of $E$, while non-compact Lagrangian components
 in $\ul{L}$ are only non-compact with respect to the horizontal infinity of $E$, so the method  in Section \ref{sss:Compactness} applies without substantial changes.
\end{remark}

\subsubsection{Moduli of Chern one spheres}

We first discuss $\eM^h(\Hilb^n(\overline{E}) \setminus D_r|1)$.
Let $C$ be a Chern number $1$ rational curve in $\overline{E}$ and $q_2,\dots,q_n \in \overline{E}$ be $n-1$ pairwise distinct points
such that at most one of $q_2,\dots,q_n$ lies in $C$.
The product determines a Chern number $1$ rational curve in $\Sym^n(\overline{E})$ which meets $\Delta_{\overline{E}}$ at at most one point.
Therefore, it uniquely lifts to an irreducible Chern number $1$ rational curve $\tilde{C}$ in $\Hilb^n(\overline{E})$.
We call $\tilde{C}$ a Chern number $1$ rational curve of product type.
In fact, these are essentially all the irreducible Chern number $1$ rational curves in $\Hilb^n(\overline{E})$.

\begin{lemma}[Lemma 6.4 of \cite{AbouzaidSmith16}]\label{l:chern1spheres}
 Let $C$ be an irreducible rational curve in $\Hilb^n(\overline{E})$ with Chern number $1$ and not contained in $D_{HC}$.
 Then $C$ is of product type.
\end{lemma}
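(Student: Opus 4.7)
The plan is to apply the tautological correspondence (Lemma~\ref{l:tautologicalCorr}) to convert $C$ into holomorphic data on $\overline{E}$, and then combine positivity of intersection with the moveability hypothesis~\eqref{eqn:moveable}. First I would let $\nu: \mathbb{P}^1 \to C$ be the normalization and compose with $\pi_{HC}$ to obtain $\pi_{HC} \circ \nu : \mathbb{P}^1 \to \Sym^n(\overline{E})$. Since $C \not\subset D_{HC}$, the image is not contained in the big diagonal $\Delta_{\overline{E}}$, so the tautological correspondence produces an $n$-fold branched covering $\pi_\Sigma : \Sigma \to \mathbb{P}^1$ together with a holomorphic map $v : \Sigma \to \overline{E}$ satisfying $\pi_{HC} \circ \nu(t) = v(\pi_\Sigma^{-1}(t))$ as multisets.

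Next I would translate the Chern number condition. Using $c_1(\Hilb^n(\overline{E})) = \mathrm{PD}([D_0])$ from~\eqref{eq:simplepole} together with the fact that $D_0$ is the $\pi_{HC}$-preimage of the divisor of multisets meeting $D_E$, the equality $c_1 \cdot [C] = 1$ translates via the tautological correspondence into the intersection identity $v_*[\Sigma] \cdot [D_E] = 1$ on $\overline{E}$. I would then decompose $\Sigma = \bigsqcup_\alpha \Sigma_\alpha$ into connected components, with $\pi_\Sigma|_{\Sigma_\alpha}$ a branched cover of degree $d_\alpha \geq 1$ satisfying $\sum_\alpha d_\alpha = n$. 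Each contribution $(v|_{\Sigma_\alpha})_*[\Sigma_\alpha] \cdot [D_E]$ is non-negative, as an intersection between an effective holomorphic cycle and an effective divisor, and is strictly positive whenever $v|_{\Sigma_\alpha}$ is non-constant, since hypothesis~\eqref{eqn:moveable} on the base locus of $|D_E|$ prevents any non-constant rational curve in $\overline{E}$ from being disjoint from $D_E$. The sum being $1$ therefore forces exactly one component $\Sigma_{\alpha_0}$ to be non-constant with contribution equal to $1$, while all other components collapse to points $q_\beta \in \overline{E} \setminus D_E$.

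The hard part will be ruling out the possibility that the non-constant component has degree $d_{\alpha_0} \geq 2$. Should $d_{\alpha_0} \geq 2$ hold, the restriction of $C$ to this factor would be a rational curve in $\Hilb^{d_{\alpha_0}}(v(\Sigma_{\alpha_0})) \subset \Hilb^{d_{\alpha_0}}(\overline{E})$, confined to the Hilbert scheme of a single rational curve in $\overline{E}$; combining this with the remaining constant factors, the full curve $C$ would then be forced to lie in the base locus of $|D_0|$ on $\Hilb^n(\overline{E})$, contradicting hypothesis~\eqref{eqn:moveable}. Verifying this carefully is the main technical step, and relies on analysing how $|D_0|$ restricts to the copies of $\Hilb^n(\mathbb{P}^1) = \mathbb{P}^n$ embedded in $\Hilb^n(\overline{E})$ via rational curves in $\overline{E}$. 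Once $d_{\alpha_0} = 1$ is established, $\Sigma_{\alpha_0} = \mathbb{P}^1$ maps biholomorphically to the base and $v|_{\Sigma_{\alpha_0}}$ parametrizes a Chern-$1$ rational curve $C' \subset \overline{E}$; the remaining constants provide the $n-1$ fixed points $q_\beta$, and the condition $C \not\subset D_{HC}$ ensures they are pairwise distinct with at most one lying on $C'$, confirming that $C$ is of product type.
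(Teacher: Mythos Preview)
Your proposal identifies the genuinely hard step—ruling out $d_{\alpha_0}\ge 2$—but your base-locus argument for it fails, and you miss the key geometric input that the paper's proof uses.

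Concretely, your claim that $d_{\alpha_0}\ge 2$ forces $C$ into the base locus of $|D_0|$ is false. Take $n=2$ and let $F'\subset\overline{E}$ be an irreducible component of a singular fibre of $\pi_{\overline E}$, so $[F']\cdot[D_E]=1$. A generic line $\ell\subset\Sym^2(F')\cong\mathbb{P}^2$ lifts to an irreducible rational curve $\tilde\ell\subset\Hilb^2(\overline E)$ with $[\tilde\ell]\cdot[D_0]=1$ and $\tilde\ell\not\subset D_{HC}$; the tautological correspondence here has $\Sigma$ connected with $d_{\alpha_0}=2$ and $v:\Sigma\to F'$ of degree~$1$. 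Yet $\tilde\ell$ meets $D_0$ in exactly one point, so it is certainly not in the base locus. This curve is not of product type (the product-type curves in $\Sym^2(F')$ are exactly the tangent lines to the diagonal conic), so it shows both that your argument breaks down and that the lemma, read literally, needs a further hypothesis.

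The ingredient you are missing, and on which the paper's argument hinges, is the Lefschetz fibration $\pi_{\overline E}:\overline E\to\bH^\circ$. Because $\bH^\circ$ is hyperbolic, $\pi_{\overline E}\circ v$ is constant on each component of $\Sigma$, so the image $v(\Sigma)$ lies in finitely many fibres—this is the sentence ``$C'$ lies inside a finite union of fibers of $\pi_{\overline E}$'' in the paper. Once you know this, the condition $C\not\subset D_r$ (which holds in the only application, Corollary~\ref{c:pseudocycle}, where the curves lie in $\Hilb^n(\overline E)\setminus D_r$) forces the $\pi_{\overline E}$-images of the $n$ sheets over a generic base point to be $n$ distinct points of $\bH^\circ$; hence all $d_\alpha=1$ and $\Sigma$ has $n$ components, each a copy of $\mathbb{P}^1$. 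After that, the Chern-number count and the $D_{HC}$ hypothesis finish the argument exactly as you outline. The counterexample $\tilde\ell$ above lies entirely inside $D_r$, so it does not affect the application; but without bringing in $\pi_{\overline E}$ and $D_r$ there is no way to exclude it.
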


\begin{proof}
 By assumption, $\pi_{HC}(C)$ is an irreducible Chern number $1$ rational curve in $\Sym^n(\overline{E})$.
 By the tautological correspondence, we get a (possibly disconnected) closed complex curve $C'$ in $\overline{E}$ of Chern number $1$.
 It means that $C'$ lies inside a finite union of fibers of $\pi_{\overline{E}}$.
 Having Chern number $1$ means that $[C'] \cdot [D_E]=1$ so $C'$ is a union of a Chern number $1$ rational curve $C''$ and $n-1$ points in $\overline{E}$.
 By the assumption that $C$ is not contained in $D_{HC}$ (or equivalently $\pi_{HC}(C)$ is not contained in $\Delta_{\overline{E}}$), we know that the points are pairwise disjoint
 and at most one of them lies inside $C''$.
\end{proof}

\begin{corollary}[Lemma 6.4 of \cite{AbouzaidSmith16}]\label{c:pseudocycle}
 The moduli $\eM^h(\Hilb^n(\overline{E}) \setminus D_r|1)$ is regular and 
 the evaluation map $\eM^h(\Hilb^n(\overline{E}) \setminus D_r|1) \to \Hilb^n(\overline{E})$ defines a pseudocycle.
 In other words, the image of the evaluation map can be compactified by adding a real codimension two (with respect to the image) subset.
\end{corollary}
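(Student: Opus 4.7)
The plan is to follow the strategy of \cite[Lemma 6.4]{AbouzaidSmith16}, using Lemma \ref{l:chern1spheres} to reduce each $u$ to an explicit product-type configuration, and then to split the analysis based on the value of $h$.

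First I would show that the moduli $\eM^h(\Hilb^n(\overline{E}) \setminus D_r|1)$ is empty when $h \geq 1$. For any $u$ in this moduli, one argues $u$ cannot be contained in $D_{HC}$: indeed, a Chern $1$ rational curve entirely inside $D_{HC}$ would contradict the combination of the incidence $u(0) \in D_0$, $u(1) \in D_0'$, the equality $[u]\cdot [D_0] = 1$, and the assumption \eqref{eqn:moveable} that the base locus of $|D_0|$ contains no rational curve (so such a curve is not forced into the base locus, and by positivity of intersection the single transversal meeting with $D_0$ cannot accommodate a second meeting with $D_0'$ in a way consistent with $u \subset D_{HC}$). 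Hence Lemma \ref{l:chern1spheres} applies and $u$ is of product type, $u(z)=\{C(z), q_2,\dots, q_n\}$ with at most one $q_i$ on $C$. If no $q_i$ lies on $C$ then $u^{-1}(D_{HC}) = \emptyset$, contradicting $h\geq 1$. If exactly one $q_i\in C$, then at the unique preimage point $z_0 \in S$ with $C(z_0)=q_i$, the image $u(z_0)$ has repeated support, and its projection under $\pi_E$ has length at most $n-1$, i.e.\ $u(z_0) \in D_r$, contradicting the target. This rules out $h\geq 1$.

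For $h=0$, by the same argument every element is of product type with the $q_i$ disjoint from $C$. The moduli space is therefore a smooth fiber product of the moduli of parameterized Chern $1$ rational curves $C \subset \overline{E}$ with incidence $C(0)\in D_E \cap D_0$, $C(1) \in D_E \cap D_0'$, times $\Conf^{n-1}(\overline{E}\setminus C)$. Regularity reduces via the product-type tangent splitting to regularity of (a) Chern $1$ rational curves in the complex surface $\overline{E}$, which is automatic since on a complex surface with $c_1([C])=1$ there are no obstructions by Riemann--Roch (and automatic transversality results for integrable $J$); and (b) constant maps at the $q_i$, trivially regular. The incidence conditions against $D_0, D_0'$ and $D_E$ are cut transversally by a generic choice within the allowed perturbation class.

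Finally, for the pseudocycle property, I would describe explicitly the image of $\ev_\infty$ and its closure. Gromov limits of a sequence in $\eM^0$ can only produce the following boundary phenomena: (i) two of the points $q_i$ collide, a real codimension-two condition in $\overline{E}^{n-1}$; (ii) some $q_i$ meets the Chern $1$ curve $C$, again real codimension two; or (iii) the Chern $1$ curve $C$ itself bubbles into a reducible configuration. Combining each case with the incidence conditions and using \eqref{eqn:moveable} and \eqref{eq:simplepole}, each stratum has real codimension at least two in $\Hilb^n(\overline{E})$. The key step, and the main obstacle, is case (iii): one must verify that the limiting Gromov configurations of Chern $1$ rational curves in $\overline{E}$ satisfying the incidence constraints are themselves covered by a manifold of real codimension at least two, which requires invoking that the base locus $B_0$ contains no rational curve, so that the degenerations of $C$ are not trapped in a locus that would force additional components.
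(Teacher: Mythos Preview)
Your argument for $h>0$ (emptiness) and the regularity for $h=0$ follow the paper's approach. One quibble: your justification that $u$ cannot be contained in $D_{HC}$ is muddled---the incidence constraints $u(0)\in D_0$, $u(1)\in D_0'$ together with $[u]\cdot[D_0]=1$ do not by themselves contradict $u\subset D_{HC}$, and your appeal to \eqref{eqn:moveable} does not clearly apply since $u\subset D_{HC}$ does not force $u\subset B_0$. The paper is equally terse here, simply citing Lemma~\ref{l:chern1spheres} and the reference \cite{AbouzaidSmith16}.

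The substantive divergence is in the pseudocycle argument. The paper does not enumerate Gromov limits case by case; instead it observes that every limiting stable Chern-$1$ configuration escaping $\eM^0$ meets $D_r$, and any such configuration projects under $\Sym^n(\pi_E)\circ\pi_{HC}$ to a single point of $\Sym^n(\bH^\circ)$ lying in $\Delta_{\bH^\circ}\cap D'$, where $D'$ is the divisor of tuples whose support meets the critical values of $\pi_E$. That intersection has complex codimension two in $\Sym^n(\bH^\circ)$, so its preimage in $\Hilb^n(\overline{E})$ has real dimension at most $4n-4$, giving the pseudocycle bound in one stroke. This completely bypasses your case~(iii): there is no need to analyse how the Chern-$1$ curve $C$ bubbles inside $\overline{E}$, nor to invoke the base-locus hypothesis on $B_0$ here. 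The geometric input you are missing is that Chern-$1$ rational curves in $\overline{E}$ live in fibres over critical values, which is why the projection always lands in $D'$.

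A minor correction: in your case~(i), two points of the surface $\overline{E}$ colliding is real codimension four in $\overline{E}^{n-1}$, not two; this only strengthens the bound you want, but the slip suggests the dimension bookkeeping in your approach would need tightening throughout.
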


\begin{proof}
When $h>0$, by Lemma \ref{l:chern1spheres}, $\eM^h(\Hilb^n(\overline{E}) \setminus D_r|1)$ is actually empty so there is nothing to prove.

When $h=0$, the regularity of $\eM^h(\Hilb^n(\overline{E}) \setminus D_r|1)$ follows from 
 the explicit description of Lemma \ref{l:chern1spheres} and
 the fact that every direct summand of the normal bundle of a product-type Chern one rational curve in 
 $\Hilb^n(\overline{E}) \setminus D_r$ has Chern number $\ge -1$.
 Therefore, automatic regularity for these somewhere injective curves applies.
 
 To define a pseudocycle, the codimension $2$ subset to be added is the union of the image
 of \emph{stable} Chern number $1$ rational curves in $\Hilb^n(\overline{E})$ that meet $D_r$ non-trivially, which is denoted by $B_r$ in \cite{AbouzaidSmith16}.
 It is of codimension $2$ because every such stable Chern number $1$ rational curve maps to a point in $\Sym^n(\bH^{\circ})$ that lies inside $\Delta_{\bH^{\circ}} \cap D'$ where $D'$
 is the divisor in $\Sym^n(\bH^{\circ})$ consisting of subschemes of $\bH^{\circ}$ whose support meets critical values of $\pi_{E}$.
\end{proof}

We denote the pseudocycle by $GW_1$.

\subsubsection{Moduli of discs}


The following regularity statement follows as in Lemma \ref{l:regularity}.

 \begin{lemma}[Regularity]
For generic consistent choice of $(J,K)$ satisfying \eqref{eq:FloerDataJ}, \eqref{eq:FloerDataK}, 
every element in the moduli spaces \eqref{eq:R(0,1)} and \eqref{eq:R1bar} is regular.
\end{lemma}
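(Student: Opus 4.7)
The plan is to follow the two-step pattern already used in Lemmas \ref{l:regularity} and \ref{l:regularity2}: first establish regularity of an auxiliary ``pre'' moduli in which only the PDE and Lagrangian boundary conditions are imposed, then upgrade to the incidence conditions at the interior marked points via a universal-evaluation transversality argument. Nothing beyond the methods of Section \ref{sss:FloerData} is needed, but one must carefully track the new constraints.

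First I would define $\cR^{d+1,h}_{(0,1)}(\ul{x}_0;\ul{x}_d,\dots,\ul{x}_1)_{pre}$, resp.\ $\cR^{d+1,h}_1(\Hilb^n(\overline{E}),(d_0,d_r)|\ul{x}_0;\ul{x}_d,\dots,\ul{x}_1)_{pre}$, by dropping the conditions $u(\mk(S))\subset D_{HC}$, $u(s_0)\in D_0$, $u(s_1)\in D_0'$ (for the first moduli) and $u(\mk(S))\subset D_{HC}$ (for the second), while retaining the condition $u^{-1}(D_{HC})\subset\nu(\mk(S)^+)$ (resp.\ $\nu(\mk(S))$) that ensures $(Du-X_K)^{0,1}=0$ is well-defined as an equation into $\Hilb^n(\overline{E})$. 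For these pre-moduli the regularity argument of Lemma \ref{l:regularity} goes through verbatim: exhaust by open sets $\cB^k$ on which the image stays a definite distance from $D_{HC}$ outside the marked-points neighbourhood, use Lemma \ref{l:tameSymp} to equip $\Hilb^n(\overline{E})$ with a taming symplectic form agreeing with $\omega_{\Conf^n(\overline{E})}$ off a neighbourhood of $D_{HC}$, and apply Gromov's graph trick to obtain surjectivity of the linearized operator by varying $K$ (which is free outside $\nu(\mk(S)^+)$, in particular away from the interior marked points). Consistency with the choices on the lower-dimensional strata of $\overline{\cR}^{d+1,h}_{(0,1)}$, resp.\ $\overline{\cR}^{d+1,h}_1$, is arranged inductively as in \cite[\S9i]{SeidelBook}.

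Next I would upgrade to the actual moduli. Since $J=J_E^{[n]}$ and $K=0$ on $\nu(\mk(S)^+)$, the universal evaluation map
\[
\mathrm{ev}:\cR^{d+1,h}_{(0,1)}(\ul{x}_0;\ul{x}_d,\dots,\ul{x}_1)_{pre}\to(\Hilb^n(\overline{E}))^{h+2},\qquad u\mapsto(u(\xi_+^1),\dots,u(\xi_+^h),u(s_0),u(s_1)),
\]
is holomorphic near the marked points, so algebraic intersection numbers with $D_{HC}$, $D_0$, $D_0'$ are well-defined. Since $D_0$ is moveable by \eqref{eqn:moveable} and $D_0'$ was chosen generic in the same linear system, we may replace each of $D_{HC}$, $D_0$, $D_0'$ by a smooth pseudocycle representative and argue, exactly as in Lemma \ref{l:regularity2}, that for generic $(J,K)$ within the class \eqref{eq:FloerDataJ}--\eqref{eq:FloerDataK} the map $\mathrm{ev}$ is transverse to $D_{HC}^h\times D_0\times D_0'$ (cf.\ \cite[\S6--7]{McDuffSalamonJ}). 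This gives the required manifold structure on $\cR^{d+1,h}_{(0,1)}$, and the multiplicity-one statement at each incidence point drops out of transversality combined with holomorphicity near the marked points. The argument for $\cR^{d+1,h}_1(\Hilb^n(\overline{E}),(d_0,d_r)|\cdots)$ is identical, with the node treated as a fixed interior marked point carrying no incidence condition on the evaluation side.

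The main technical point, and the only mild subtlety, is compatibility of the perturbation at the boundary strata \eqref{eq:c1}--\eqref{eq:c4} of $\overline{\cR}^{d+1,h}_{(0,1)}$: we need $(J,K)$ chosen consistently so that the pre-moduli is regular up to the boundary, and such that the interior-marked-point evaluation remains transverse to the chosen pseudocycles under gluing. This is handled by the standard inductive construction on strata of lower dimension, using the freedom of $K$ away from $\nu(\mk(S)^+)$ and from the strip-like ends, together with exponential convergence on the ends (Remark \ref{rmk:exp_convergence}) to preserve transversality after gluing. I expect this bookkeeping---in particular, ensuring simultaneous genericity on the three types of incidence conditions and on the sphere components on which $K\equiv0$ and $J=J_{\overline{E}}^{[n]}$ is forced---to be the main source of care, although it is by now standard.
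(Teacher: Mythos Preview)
Your proposal is correct and follows the same approach as the paper, which simply states that the result ``follows as in Lemma \ref{l:regularity}'' without further detail. You have correctly unpacked this into the two-step pattern of Lemmas \ref{l:regularity} and \ref{l:regularity2} (regularity of a pre-moduli, then transversality of the evaluation map to the relevant pseudocycles), adapted to the additional incidence conditions at $s_0,s_1$ and the larger marked-points neighbourhood $\nu(\mk(S)^+)$.
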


For an element $u$ in the moduli space \eqref{eq:R(0,1)} or \eqref{eq:R1bar}, we can define $E(u)$ by \eqref{eq:DefnEnergy}.
Even though $\overline{E}$ is not exact, we can still define $\omega^{geom}_K$ and $\omega^{top}_K$ on $\Conf^n(\overline{E}) \times S$.
They differ by $R_K$ which, by the same reasoning as in Lemma \ref{l:boundGeoEnergy2}, is uniformly bounded independent of $u$.
Moreover, the integration of $\omega^{top}_K$ over the graph of $u$ is bounded \emph{a priori} by the Lagrangian boundary conditions and the intersection numbers with $D_0,D_r$, because
$\Hilb^n(\overline{E}) \setminus (D_0 \cup D_r)$ is exact.
To conclude, we have

\begin{lemma}[Energy]
 For a fixed choice of $\{\ul{x}_j\}_{j=1}^d$ and a moduli of maps \eqref{eq:R(0,1)} or \eqref{eq:R1bar}, there exists $T>0$ such that for all elements $u$ in the moduli,
 we have $E(u)<T$.
\end{lemma}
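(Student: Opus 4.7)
The plan is to mimic the proof of Lemma \ref{l:EnergyBound}, replacing the use of an honest primitive $\theta^{top}_K$ on the entire target by a primitive defined only on the complement $\Hilb^n(\overline{E})\setminus(D_0\cup D_r)$, and then paying for the removed divisors with the a priori fixed intersection multiplicities.

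First I would decompose the domain into an "inner" piece $S^{in}$ and an "outer" piece, choose a relatively compact $U \supset C$ as in Section \ref{sss:Energy}, pass to the tautological correspondence on $S^{out}$, and rewrite the energy as the sum $\int_{S^{in}}\hat u^*\omega_K^{geom} + \int_{\Sigma^{out}}\hat v^*\omega^{geom}_{\pi_\Sigma^*K'}$, via \eqref{eq:geomE1} and \eqref{eq:geomE2}. Replacing $\omega^{geom}$ by $\omega^{top}$ costs the curvature integrals, which are bounded independently of $u$ by the analogues of Lemmas \ref{l:boundGeoEnergy1} and \ref{l:boundGeoEnergy2}: the curvature form $R_K$ (resp.\ $R_{\pi_\Sigma^*K'}$) pulls back from $\Sym^n(\bH^\circ)$ (resp.\ $\bH^\circ$), where compactness of supports and the exponential decay on strip-like ends (cf.\ \eqref{eq:HamData}, \eqref{eq:FloerDataK}) supply the needed $C^0$ bounds. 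Thus it remains to bound the topological energy $\int \hat u^*\omega_K^{top} + \int \hat v^*\omega^{top}_{\pi_\Sigma^*K'}$ for all $u$ in the moduli space.

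Since $\Hilb^n(\overline E)\setminus(D_0\cup D_r)$ is exact, $\omega_K^{top}$ admits a primitive $\theta_K^{top}=\theta-K$ there (using a choice of primitive $\theta$ for the exact form $\omega_{\Conf^n(E)}$, adjusted on the fiber at infinity exactly as in Section \ref{sss:Energy}). I would then delete small topological disc neighborhoods of each point in $u^{-1}(D_0)\cup u^{-1}(D_r)$, apply Stokes' theorem on the complement (together with the matching-across-$\partial\overline U$ argument of Lemma \ref{l:EnergyBound}, which cancels the $\partial\overline U$ boundary contribution between the $\hat u$ and $\hat v$ pieces), and then bound the resulting sum of three types of boundary integrals:
\begin{enumerate}
\item $\int_{\partial S}\hat u^*\theta_K^{top}$, bounded as in the exact case by a constant depending only on the Lagrangian boundary data and the Hamiltonian chords $\ul x_j$ (cf.\ \cite[Lemma 4.8]{Seidel4.5});
\item loops around points in $u^{-1}(D_0)$: each contributes the period of $\theta_K^{top}$ against a small loop linking $D_0$, and in the limit as the loops shrink this is a fixed multiple of the intersection multiplicity; the total contribution is therefore bounded by a constant times $[u]\cdot[D_0]$, which is $1$ for \eqref{eq:R(0,1)} and $d_0$ for \eqref{eq:R1bar};
\item analogous loops around $u^{-1}(D_r)$, contributing a bounded constant times $[u]\cdot[D_r]$, which is $0$ for \eqref{eq:R(0,1)} and $d_r$ for \eqref{eq:R1bar}.
\end{enumerate}

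Adding these together with the curvature bound produces the sought $T$. The one step requiring care, and the main obstacle, is justifying (2)--(3): concretely, one needs that near each intersection point the map $u$ is holomorphic (which holds at points of $\mk(S)$ by the Floer data being split there, and more generally follows from positivity of intersection with $D_0\cup D_r$), and that the primitive $\theta_K^{top}$ has logarithmic-type behavior near these divisors so that the small-loop integrals converge to the intersection multiplicity times a universal period. Once this is set up the bound is additive in the intersection multiplicities, and since both moduli \eqref{eq:R(0,1)} and \eqref{eq:R1bar} fix $[u]\cdot[D_0]$ and $[u]\cdot[D_r]$, the uniform constant $T$ exists.
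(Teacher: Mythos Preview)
Your proposal is correct and follows essentially the same approach as the paper. The paper's argument (given in the paragraph immediately preceding the lemma) is a terse sketch: bound $E(u)$ via $\omega_K^{top}$ and the curvature $R_K$ as in Lemmas \ref{l:boundGeoEnergy1}--\ref{l:EnergyBound}, then appeal to exactness of $\Hilb^n(\overline{E})\setminus(D_0\cup D_r)$ together with the fixed intersection numbers $[u]\cdot[D_0]$ and $[u]\cdot[D_r]$ to bound the topological energy. You have filled in the Stokes-theorem details (excising small discs around $u^{-1}(D_0\cup D_r)$ and controlling the residual loop integrals by the intersection multiplicities) that the paper leaves implicit in the phrase ``bounded a priori by the Lagrangian boundary conditions and the intersection numbers with $D_0,D_r$''.
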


We also have the corresponding statement for positivity of intersections.

\begin{lemma}[Positivity of intersection]\label{l:posInt}
 Let $u$ be an element in \eqref{eq:R(0,1)} or \eqref{eq:R1bar}.
 If $u(z) \in D_0$ (resp. $u(z) \in D_r$), then the intersection $u(z)$ 
 between $Im(u)$ and $D_0$ (resp. $D_r$) contribute positively to the algebraic intersection number.
\end{lemma}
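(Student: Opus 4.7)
The plan is to establish positivity by a localized Gromov graph trick, following the template of the analogous argument in the last paragraph of Lemma \ref{l:PositivityIntersection}. Fix an intersection point $z \in S$ with $u(z) \in D$, where $D \in \{D_0, D_r\}$, and work on a small disc $B(z) \subset S$ around $z$ and a neighborhood $U \subset \Hilb^n(\overline{E})$ of $u(z)$. The two geometric facts to verify on $U$ are: (i) $D$ is $J_w$-holomorphic for every $w \in B(z)$, and (ii) $X_K(\eta)|_\tau \in T_\tau D$ for every $\tau \in D \cap U$ and $\eta \in TB(z)$. Granting these, one repeats the graph construction of \eqref{eq:GraphJF} to produce an almost complex structure $J_F$ on $U \times B(z)$ rendering both the graph $F=(u,id)$ and the product $(D \cap U) \times B(z)$ pseudo-holomorphic, whereupon standard positivity of intersection in almost complex manifolds yields the claim.

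For $D = D_r$, the verifications of (i) and (ii) are already contained in the last paragraph of the proof of Lemma \ref{l:PositivityIntersection}, using the definitions \eqref{eq:JnE}, \eqref{eq:HnE}, together with the fact that $D_r$ is a complex analytic subvariety that is preserved by $(J')^{[n]}$ for any $J' \in \cJ(E)$; no new input is needed here.

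For $D = D_0$ the argument splits into two subcases according to whether $z \in \nu(\mk(S)^+)$ or not. If $z \in \nu(\mk(S)^+)$, the Floer datum is integrable on this neighborhood ($J_w = J_E^{[n]}$ and $K \equiv 0$), so $u$ is honestly $J_E^{[n]}$-holomorphic near $z$ and positivity follows from the standard fact that two complex submanifolds of a complex manifold intersect positively, since $D_0$ is a complex divisor of $\Hilb^n(\overline{E})$. If $z$ lies away from $\nu(\mk(S)^+)$, then $u(z) \in D_0$ forces $u(z)$ to lie in a neighborhood of the horizontal infinity, a region in which the admissible perturbations of \eqref{eq:HnE} must revert to product form: $J = (J')^{[n]}$ and $K = (\pi_E^* H')^{[n]}$ for some $J' \in \cJ(E)$ and some $H' \in \cH_\gamma(\bH^\circ)$. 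Condition (i) then holds because $J' = J_E$ near $D_E$ by \eqref{eq:pseudoconvex}, so $D_E$ is complex-holomorphic and $D_0$ is the induced complex divisor in $\Hilb^n(\overline{E})$. Condition (ii) follows from the tangency \eqref{eq:tangentD0} of $X_{H'}$ to $D_E$, which upon symmetrization passes to tangency of $X_{(\pi_E^* H')^{[n]}}=X_K$ to $D_0$ at every point of $D_0 \cap U$.

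The main point requiring care is the verification, in the $D_0$ case with $z \notin \nu(\mk(S)^+)$, that the Floer data genuinely remains product-type in a full neighborhood of $D_0$. This requires arranging the compact subset of $\Conf^n(E) \setminus D_r^\circ$ over which $K$ is permitted to differ from a product Hamiltonian (cf.\ \eqref{eq:HnE}) to be disjoint from a neighborhood of the horizontal boundary, and checking that the smooth extensions of $J$ and $K$ to $\Conf^n(\overline{E})$ of Remark \ref{r:JKextension} preserve the product structure near $D_0$. These are compatibility conditions on the setup of Floer data rather than genuinely new difficulties, but they need to be imposed explicitly (in parallel with the analogous conventions near $D_r$ from Section \ref{sss:FloerData}) in order for the graph trick to apply uniformly.
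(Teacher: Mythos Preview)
Your proof is correct and follows essentially the same approach as the paper. The paper's proof is terser: for $D_0$ it simply invokes Lemma \ref{l:1true} (where positivity with $D_E$ is shown via the tautological correspondence in $E$, splitting on whether $\pi_E \circ v$ lands in $C_{\bH}$), and for $D_r$ it cites Lemmas \ref{l:PositivityIntersection} and \ref{l:3true}. Your version carries out the $D_0$ argument directly in $\Hilb^n(\overline{E})$ rather than passing to $E$, splitting instead on whether $z \in \nu(\mk(S)^+)$, but the underlying mechanism---tangency of $X_K$ to the divisor plus the graph trick---is identical.
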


\begin{proof}
 For both cases ($D_0$ and $D_r$), it follows from the fact that the Hamiltonian vector field in the perturbation term is tangent to (every stratum of) the divisor.
 For $D_0$, this is exactly Lemma \ref{l:1true}.
 For $D_r$, the argument is similar to that of Lemma \ref{l:PositivityIntersection} and \ref{l:3true}.
\end{proof}

\begin{proof}[Proof of Proposition \ref{p:ncCompactification}]
 Let $u_k:S_k \to \Hilb^n(\overline{E})$ be a sequence of maps in \eqref{eq:R(0,1)}.
 First consider the case that there is a subsequence of $S_k$ converging to $S \in \cR^{d+1,h}_{(0,1)}$.
 Without loss of generality, we can assume $S_k=S$ for all $k$.
 
 The analog of Lemma \ref{l:nobubble} holds.
 More precisely, if there is energy concentration at a point in $S$ but outside $\nu(\mk(S)^+)$, then it will produce a 
 sphere bubble or a disc bubble in $E^{\rceil}$ which intersects $D_E$.
 By Lemma \ref{l:posInt}, this will imply that for large $k$, the algebraic intersection number between $u_k$ and $D_0$ is greater than $1$.
 Contradiction.
 
Energy concentration cannot happen at a point outside $\mk(S)^+$.
 If energy concentration happens at $\mk(S)^+$, then it is either at $\{s_0,s_1\}$ or not.
 If not, the resulting sphere bubble (tree) has Chern number $0$ so it is contained in $\Hilb^n(E)$ (recall that $c_1(\Hilb^n(\overline{E}))=PD(D_0)$).
 Therefore, it intersects $D_r$ positively by Lemma \ref{l:rationalCurveinHilb} and gives a  contradiction.
 If energy concentration happens at $s_0$ or $s_1$, say $s_0$, then the bubble tree has either Chern number zero or Chern number one.  We get a contradiction in the former case as above.
 For the latter case, the Chern number $1$ bubble intersects $D_0'$.
 Note that $s_1$ is another point in the stable domain that is mapped to $D_0'$. Since $D_0$ and $D_0'$ are homologous, that implies that the algebraic intersection number between $u$ and $D_0$ is at least $2$ in total. This again gives a  
 contradiction.
 
 Energy concentration at strip like ends give parts of \eqref{eq:Cnc1} and \eqref{eq:Cnc2}.
 This finishes the bubbling analysis when $S_k$ has a subsequence converging to $S \in \cR^{d+1,h}_{(0,1)}$.
 
 Now, if $S_k$ has a subsequence converging to the boundary strata \eqref{eq:c1}, \eqref{eq:c3}, we get the remaining terms in  \eqref{eq:Cnc1} and \eqref{eq:Cnc2}.
 
 If $S_k$ has a subsequence converging to the boundary strata \eqref{eq:c4}, then for $k$ large, $u$ has algebraic intersection at least $2$ with $D_0$ because
 each disc component in the limit contributes at least one.
 
 Next, we consider the case that $S_k$ has a subsequence converging to the boundary strata \eqref{eq:c2}.
 Without loss of generality, we can assume $S_k=S$ lies in the boundary strata \eqref{eq:c2} for all $k$.
 The analog of Lemma \ref{l:nobubble} still holds.
 That is, there is no energy concentration at a point in $S \setminus \nu(\mk(S)^+)$, and hence no energy concentration at a point in $S \setminus \mk(S)^+$.
 If energy concentration happens at $\mk(S)$, we get a Chern $0$ bubble and the usual contradiction.
 If energy concentration happens at $s_0$ or $s_1$, say $s_0$, then at least one of the spheres (including the original one in the domain) has vanishing Chern number, so meets $D_r$.
 Therefore, the tree of spheres define a stable Chern number $1$ rational curve in $\Hilb^n(\overline{E})$ which lies in a real codimension $4$ subset of  $\Hilb^n(\overline{E})$ (see Corollary \ref{c:pseudocycle}).
 As a result, for a generic choice of data, there is no  such stable Chern number $1$ rational curve in the codimension $1$ boundary strata
 of $\cR^{d+1,h}_{(0,1)}(\ul{x}_0;\ul{x}_d,\dots,\ul{x}_1)$ when $\cR^{d+1,h}_{(0,1)}(\ul{x}_0;\ul{x}_d,\dots,\ul{x}_1)$ is $1$ dimensional.
 Energy concentration of solutions in \eqref{eq:c2} at strip-like ends is a higher codimension phenomenon. 
 
 When there is no energy concentration, the boundary strata \eqref{eq:c2} correspond precisely to \eqref{eq:Cnc3} and \eqref{eq:Cnc4}.
 
 Finally, when $S_k$ converges to some element in higher codimension strata, regularity reasoning as in Corollary \ref{c:v1} shows that it cannot exist for generic $(J,K)$.
 This completes the proof.
 \end{proof}



\subsection{Pure Lagrangians}

 An nc-vector field of an $A_{\infty}$ category $\eA$ is a cocycle $b \in CC^1(\eA,\eA)$.
 Given an nc-vector field $b$, a $b$-equivariant object is a pair $(L,c_L)$ such that $L \in Ob(\eA)$, $c_L \in hom_{\eA}^0(L,L)$ and $b^0|_L=\mu^1(c_L)$.
 In this case, $(L,c_L)$ is called a $b$-equivariant lift of $L$.
 Given two $b$-equivariant objects $(L,c_L)$ and $(L',c_{L'})$, the map
 \begin{align}
  a \mapsto b^1(a)-\mu^2(c_{L'},a)+\mu^2(a,c_L) \label{eq:bchain}
 \end{align}
 is a chain map from $hom_{\eA}(L,L')$ to itself.
 An nc-vector field is called \emph{pure} if every object $L \in Ob(\eA)$ admits a $b$-equivariant lift $(L,c_L)$
 and for every pair $(L,c_L), (L',c_{L'})$ of $b$-equivariant lifts, 
 the endomorphism on $H(hom_{\eA}(L,L'))$ induced by \eqref{eq:bchain} is given by 
 \begin{align}
  a \mapsto |a|a \label{eq:Euler}
 \end{align}
 for all pure degree elements $a \in H(hom_{\eA}(L,L'))$, where $|\cdot|$ denotes degree.  In other words, purity asserts that \eqref{eq:bchain} agrees with the Euler vector field.

\begin{theorem}[Seidel]\label{t:seidel-formality}
 If an $A_{\infty}$ category $\eA$ over a field of characteristic $0$ admits a pure nc-vector field, then $\eA$ is formal.
\end{theorem}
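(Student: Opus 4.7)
The plan is to follow the Kaledin-type intrinsic formality argument, as adapted to $A_\infty$-categories in characteristic zero by Seidel (cf.\ \cite{AbouzaidSmith16}). Conceptually, a Hochschild $1$-cocycle is an infinitesimal symmetry of the $A_\infty$-structure, and purity says that this symmetry acts on cohomology as the Euler vector field. The slogan is that if an $A_\infty$-structure admits an infinitesimal symmetry whose action on cohomology is multiplication by degree, then the structure is forced to be homogeneous, and so formal.

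First I would normalise the nc-vector field. Given a choice of $b$-equivariant lifts $(L, c_L)$ provided by purity, the length-zero Hochschild cochain $c = (c_L)_L$ satisfies $\mu^1(c_L) = b^0|_L$, so the cohomologous cocycle $\tilde b := b - \partial c$ has vanishing length-zero component. The purity condition then translates into the statement that the induced endomorphism of $H^*(hom_{\eA}(L,L'))$ defined by $\tilde b^1$ acts by \eqref{eq:Euler}, i.e.\ by multiplication by cohomological degree, for every ordered pair of objects. Next I would pass to a minimal model $\eA^{\min}$: minimal models of $A_\infty$-categories exist in characteristic zero, and Hochschild cohomology is a quasi-isomorphism invariant, so $\tilde b$ transfers to a normalised pure Hochschild $1$-cocycle $\beta$ on $\eA^{\min}$ whose first-order component acts on each hom-space by degree multiplication. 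Formality of $\eA$ is equivalent to showing that the transferred higher products $\mu^d_{\min}$ for $d \geq 3$ can be killed by an $A_\infty$ gauge transformation.

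The core step is an inductive gauge argument on $d$. Assume $\mu^3_{\min} = \cdots = \mu^{d-1}_{\min} = 0$. The cocycle equation $[\mu_{\min}, \beta] = 0$, expanded and restricted to length $d$, becomes a linear relation between $\mu^d_{\min}$ and the action of $\beta^1$ on $\mu^d_{\min}$. Because $\mu^d_{\min}$ has degree $2-d$ and $\beta^1$ acts by degree multiplication, this relation takes the schematic form
\[
(d-2)\,\mu^d_{\min} \;=\; \partial_{\mu^2}(\eta^d)
\]
for an explicit length-$d$ Hochschild cochain $\eta^d$ constructed from $\beta$. Since $d-2 \neq 0$, this exhibits $\mu^d_{\min}$ as a Hochschild $2$-coboundary (with respect to the $\mu^2$-bar differential), and the primitive produces an $A_\infty$ gauge transformation $F_d = \mathrm{id} + f^d$ whose length-$d$ component $f^d$ kills $\mu^d_{\min}$ while preserving the already-established vanishing in lower length. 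Iterating in $d$ and taking a suitable limit (or working order-by-order in a completion) yields a quasi-isomorphism from $\eA^{\min}$ to the formal algebra $(H^*(\eA), \mu^2, 0, 0, \ldots)$.

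The main obstacle is the bookkeeping in the inductive step: a naive gauge transformation at stage $d$ alters all higher products $\mu^{>d}_{\min}$, and one must ensure that after modification (a) the inductive hypothesis still holds, (b) $\beta$ remains normalised and pure, and (c) the primitive $\eta^d$ itself can be chosen equivariantly with respect to the Euler grading so that the next inductive step is again solvable. The division by $d-2$ is the only place characteristic zero is used; the rest is a careful filtration argument on the Hochschild complex, carried out in \cite{AbouzaidSmith16}, which guarantees that each step of the induction is consistent and that the sequence of gauge transformations composes to an honest $A_\infty$-quasi-isomorphism. Once this is in place, formality of $\eA$ follows.
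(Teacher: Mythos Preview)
Your proposal is correct and matches the approach taken in the paper, which simply defers to \cite{AbouzaidSmith16} for the proof; your sketch accurately summarises the Seidel argument given there (normalise $b$ via the equivariant structures, transfer to a minimal model, and use the cocycle equation together with the Euler action of $\beta^1$ to obtain $(d-2)\mu^d_{\min} = \partial \eta^d$, then gauge away inductively using characteristic zero).
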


A proof is given in \cite{AbouzaidSmith16}.

\subsubsection{Construction}\label{ss:NCconstruction}

By Proposition \ref{p:ncCompactification}, we can define a Hochschild cochain 
\begin{align}
 \tilde{b} \in CC^*(\cFS^{cyl,n}(\pi_E),\cFS^{cyl,n}(\pi_E))
\end{align}
by counting rigid elements in $\cR^{d+1,h}_{(0,1)}(\ul{x}_0;\ul{x}_d, \dots, \ul{x}_1)$ and then divide it by $h!$.
The cochain $\tilde{b}$ is not closed due to the existence of the boundary strata \eqref{eq:Cnc3} and \eqref{eq:Cnc4}.

To compensate for these strata, we consider the closed-open maps
\begin{align}
 &CO: C^*(\Hilb^n(E) \setminus D_r) \to CC^*(\cFS^{cyl,n}(\pi_E),\cFS^{cyl,n}(\pi_E))\\
 &co: C^{lf}_{2n-2-j}(D_0,(D_0 \cap D_r) \cup D_0^{sing}) \to CC^j(\cFS^{cyl,n}(\pi_E),\cFS^{cyl,n}(\pi_E)) \text{ for }j \le 3
\end{align}
which are the direct analogs of the ones in \cite[Section 3.5]{AbouzaidSmith16}.
The only difference is that we added the type-1 interior marked points and hence use $\cR^{d+1,h}_1(\Hilb^n(\overline{E}), (d_0,d_r)| \ul{x}_0; \ul{x}_d,\dots,\ul{x}_1)$
for $(d_0,d_r)=(0,0)$ and $(1,0)$, respectively, to define $CO$ and $co$.

The boundary strata \eqref{eq:Cnc3} and \eqref{eq:Cnc4} correspond to
\begin{align}
 CO(GW_1) \quad \text{ and } \quad co([B_0]) 
\end{align}
respectively, where we recall $B_0 = D_0 \cap D_0'$.
Using the fact that both $CO$ and $co$ are chain maps, we get

\begin{proposition}[cf. Proposition 3.20 of \cite{AbouzaidSmith16}]\label{p:ncfield}
If both $GW_1$ and $[B_0]$ are null-homologous with primitives $gw_1$ and $\beta_0$, then
 the sum
 \begin{align}
 b:=\tilde{b}+CO(gw_1)+co(\beta_0)         
 \end{align}
 defines an nc-vector field for $\cFS^{cyl,n}(\pi_E)$.
\end{proposition}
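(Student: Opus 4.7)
The plan is to show that $b$ is a Hochschild cocycle of degree $1$, i.e.\ $\partial_{CC}(b) = 0$, where $\partial_{CC}$ denotes the Hochschild differential on $CC^*(\cFS^{cyl,n}(\pi_E),\cFS^{cyl,n}(\pi_E))$. The degree check is essentially bookkeeping: one counts that the virtual dimension formula for $\cR^{d+1,h}_{(0,1)}(\ul{x}_0;\ul{x}_d,\dots,\ul{x}_1)$ is $|\ul{x}_0| - \sum_{j=1}^d |\ul{x}_j| + (d-2) + 3 - 2h$ (the extra $+3$ coming from $\dim \cR^{d+1,h}_{(0,1)} = \dim \cR^{d+1,h}+3$, the $-2h$ from the incidence with $D_{HC}$ at the type-$1$ interior marked points, and a further $-4$ from the incidence conditions $u(s_0)\in D_0,\ u(s_1)\in D_0'$), so the rigid count lives in degree $1$; and the degree conventions for $CO$ and $co$ given in \cite{AbouzaidSmith16} already guarantee that $CO(gw_1)$ and $co(\beta_0)$ are both degree $1$ since $gw_1$ has degree $-1$ in locally finite chains on the codimension-$2$ pseudocycle $GW_1$, and $\beta_0$ has real codimension $3$ in $D_0$.

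The heart of the argument is to interpret Proposition \ref{p:ncCompactification} as expressing the Hochschild differential of $\tilde{b}$. Considering the signed count of the boundary of the one-dimensional components of $\cR^{d+1,h}_{(0,1)}(\ul{x}_0;\ul{x}_d,\dots,\ul{x}_1)$, which must vanish, the strata \eqref{eq:Cnc1} and \eqref{eq:Cnc2} fibre over a product of one $\tilde{b}$-type moduli and one $\mu^*$-type moduli; after dividing by the relevant factorials $h!$ and summing, these assemble precisely into the Hochschild coboundary $\partial_{CC}(\tilde{b})$, with the appropriate signs. The remaining boundary strata \eqref{eq:Cnc3} and \eqref{eq:Cnc4} fibre respectively over $GW_1$ via the evaluation map at the node and over the base locus $B_0$, and by the definition of the closed--open maps recalled in Section \ref{ss:NCconstruction} their contributions equal $CO(GW_1)$ and $co([B_0])$ respectively. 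Thus the cobordism identity yields
\[
\partial_{CC}(\tilde{b}) + CO(GW_1) + co([B_0]) = 0.
\]

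To finish, I would invoke that $CO$ and $co$ are chain maps with respect to $\partial_{CC}$ and the singular (resp.\ locally finite) boundary, which is standard and proved in \cite{AbouzaidSmith16}; these are also proved by studying one-parameter families of solutions to analogous equations, so the argument is in the same spirit. Then the hypotheses $\partial gw_1 = GW_1$ and $\partial \beta_0 = [B_0]$ give
\[
\partial_{CC} CO(gw_1) = CO(GW_1), \qquad \partial_{CC}\, co(\beta_0) = co([B_0]).
\]
Adding these to the displayed identity above yields $\partial_{CC}(b) = 0$, as required.

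The main obstacle is the careful bookkeeping of signs and orientations in the cobordism identity, especially because the strata \eqref{eq:Cnc3}, \eqref{eq:Cnc4} involve sphere bubbles carrying both (or one of) the distinguished interior marked points $s_0,s_1$, and the ordering of the $h$ type-$1$ interior marked points among the two components of the broken domain contributes a combinatorial factor of $\binom{h}{h_1}$ which must combine cleanly with the $1/h!$ weights so as to reproduce the unweighted closed--open/$\mu^*$ operations. Verifying that the orientation signs produced by Proposition \ref{p:ncCompactification} match the Koszul signs in the Hochschild differential requires tracking the trivialisation of $\det T \overline{\cR}^{d+1,h}_{(0,1)}$ along the strata \eqref{eq:c1}--\eqref{eq:c4}; modulo these sign conventions, which follow \cite{SeidelBook, AbouzaidSmith16} verbatim, no new geometric input is needed.
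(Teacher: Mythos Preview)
Your proposal is correct and follows essentially the same approach as the paper, which in fact gives no proof beyond the sentence preceding the proposition (``Using the fact that both $CO$ and $co$ are chain maps, we get'') and the reference to \cite[Proposition~3.20]{AbouzaidSmith16}. The logic---that the codimension-one boundary strata \eqref{eq:Cnc1}--\eqref{eq:Cnc4} of Proposition~\ref{p:ncCompactification} compute $\partial_{CC}(\tilde b)$ up to the correction terms $CO(GW_1)$ and $co([B_0])$, which are then absorbed by the chain-map property of $CO$ and $co$ applied to the chosen primitives---is exactly the intended argument.
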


We now specialise to the case where $E = A_{m-1}$ is the type $A$ Milnor fiber. 
It is shown in \cite[Lemma 6.7, 6.8]{AbouzaidSmith16} that both $GW_1$ and $[B_0]$ are null-homologous so Proposition \ref{p:ncfield} applies.
The Lagrangian spheres and thimbles in $E$ have trivial first cohomology and so (by exactness) trivial first Floer cohomology, so admit equivariant structures.

\begin{lemma}\label{l:Lpure}
 For each $\lambda \in \Lambda_{n,m}$, $\ul{L}_{\lambda}$ is pure.
\end{lemma}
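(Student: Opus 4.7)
The plan is to follow the strategy of \cite[Section 6]{AbouzaidSmith16}, with modifications to accommodate possible thimble components of $\ul{L}_\lambda$. Purity of $\ul{L}_\lambda$ amounts to two claims: the existence of a $b$-equivariant lift $(\ul{L}_\lambda, c_{\ul{L}_\lambda})$, and the verification that \eqref{eq:bchain} induces the Euler vector field on $HF^*(\ul{L}_\lambda, \ul{L}_{\lambda'})$ for any equivariant lift of $\ul{L}_{\lambda'}$.

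For the existence of a lift, I would observe that $\Sym(\ul{L}_\lambda)$ is diffeomorphic to $(S^2)^{a} \times (\RR^2)^{n-a}$, where $a$ is the number of matching paths in $\ul{\lambda}$, so $H^1(\Sym(\ul{L}_\lambda)) = 0$. Taking Floer data as in the proof of Lemma \ref{l:chainRep} so that $CF^*(\ul{L}_\lambda,\ul{L}_\lambda)$ agrees with a Morse complex for a small generic Morse function on $\Sym(\ul{L}_\lambda)$, we obtain $HF^1(\ul{L}_\lambda, \ul{L}_\lambda) \cong H^1(\Sym(\ul{L}_\lambda)) = 0$. Since $b^0|_{\ul{L}_\lambda}$ is a degree one cocycle, it is therefore a coboundary, and we can solve $\mu^1(c_{\ul{L}_\lambda}) = b^0|_{\ul{L}_\lambda}$, producing the desired lift.

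For the Euler condition, I would exploit the fact that whenever $HF^*(\ul{L}_\lambda, \ul{L}_{\lambda'})$ is nonzero it has rank one in its minimal degree, generated by some class $a_{\min}$, and that the full Floer cohomology is generated as a module over $HF^*(\ul{L}_\lambda,\ul{L}_\lambda)$ by $a_{\min}$. The cohomology-level map \eqref{eq:bchain} is a derivation with respect to $\mu^2$, so once the Euler action is established on the self-Floer algebra $HF^*(\ul{L}_\lambda,\ul{L}_\lambda)$ and on the single class $a_{\min}$, the additivity of degree under multiplication propagates it to all generators. The main technical step is thus the direct verification of the Euler action on $a_{\min}$ and on the self-Floer generators; I would follow the local computation of \cite[Section 6]{AbouzaidSmith16}, which treats this for compact Lagrangians via an analysis of $(0,1)$-type discs passing through $D_0 \cap D_0'$ together with the correction terms $CO(gw_1)$ and $co(\beta_0)$. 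In our setting, the cylindrical reformulation from Section \ref{s:cylindricalFS}, together with the compactness observation in Remark \ref{r:ncCom}, lets us translate these counts into the surface $E = A_{m-1}$ itself, where the geometry near $D_0$ (the divisor at the vertical infinity of $\pi_E$) is explicit. The main obstacle is precisely this transfer of the local analysis to the non-compact setting; however, the presence of thimble components of $\ul{L}_\lambda$ poses no essential new difficulty, since $D_0$ sits at vertical infinity while the thimbles are non-compact only in the horizontal direction, so positivity of intersection (Lemma \ref{l:posInt}) confines the relevant discs to a compact region away from the thimble ends, allowing the computations of \cite{AbouzaidSmith16} to go through verbatim.
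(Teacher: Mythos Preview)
Your proposal conflates two distinct statements. In the paper, ``$\ul{L}_\lambda$ is pure'' refers only to the self-Floer situation: $\ul{L}_\lambda$ admits a $b$-equivariant lift, and for that lift the map \eqref{eq:bchain} is the Euler field on $HF^*(\ul{L}_\lambda,\ul{L}_\lambda)$. The off-diagonal case $HF^*(\ul{L}_\lambda,\ul{L}_{\lambda'})$ with $\lambda'\neq\lambda$ is \emph{not} part of this lemma; it is established later in Proposition~\ref{p:bStructure} by an inductive argument that fixes the equivariant structures consistently across all weights. Your second paragraph, phrased ``for any equivariant lift of $\ul{L}_{\lambda'}$'', is in fact false: changing $c_{\ul{L}_{\lambda'}}$ by a scalar multiple of the unit shifts the weight on $HF^*(\ul{L}_\lambda,\ul{L}_{\lambda'})$ by that scalar, so the Euler property on the off-diagonal groups holds only for a \emph{specific} compatible choice of lifts, not for arbitrary ones. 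The cyclicity argument you sketch is precisely how Proposition~\ref{p:bStructure} proceeds, but there it is run as an induction over a total order on $\Lambda_{n,m}$, adjusting each $c_{\ul{L}_\lambda}$ in turn.

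Restricted to the actual content of the lemma, your approach is correct and matches the paper's. The existence of a lift via $HF^1(\ul{L}_\lambda,\ul{L}_\lambda)=0$ is what the paper invokes (stated in the sentence preceding the lemma). For the Euler action on $HF^*(\ul{L}_\lambda,\ul{L}_\lambda)$, the paper is more explicit than your outline: it notes that by Lemma~\ref{l:NoSideBubbling} the moduli $\cR^{0+1,h}_{(0,1)}(\ul{x}_0)$ are empty for $h>0$, so only $h=0$ contributes; then the domain $\Sigma$ is a disjoint union of $n$ discs, exactly one of which meets $D_E$ (the others being constant by exactness). The non-constant disc is one of the two hemispheres of a $\PP^1$-fibre of $\overline{E}$ over a point of a single matching or thimble path $\gamma$, giving $a\mapsto 2a$ on the degree $2$ generator of $HF(L_\gamma,L_\gamma)$ when $\gamma$ is a matching path. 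Vanishing on degree $0$ is automatic since $HF^0$ has rank one (\cite[Lemma~2.12]{AbouzaidSmith16}). Your reference to the cylindrical model and Remark~\ref{r:ncCom} to handle thimble components is apt and is exactly the point: $D_0$ lies at vertical infinity while thimbles escape only horizontally.
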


\begin{proof}
The argument of \cite[Lemma 6.9]{AbouzaidSmith16} generalises to our situation: we briefly recall the key points, adapted to the current setup. 
By Lemma \ref{l:NoSideBubbling}, the rigid count of $\cR^{d+1,h}_{(0,1)}(\ul{x}_0)$ is $0$ when $h>0$.
When $h=0$, the domain is a disjoint union of $n$ discs which are mapped to different Lagrangian components of $\ul{L}_{\lambda}$.
Since the intersection with $D_0$ is $1$, exactly one of these $n$ discs hits $D_E$ transversely once, and the others are disjoint from $D_E$ and are hence constant maps by exactness.
The key point in the argument of \cite[Lemma 6.9]{AbouzaidSmith16} is that, over each point $t \in \gamma$ of a fibred Lagrangian $L_\gamma \subset \overline{E}$, 
there are exactly two holomorphic discs with boundary on $L_\gamma$ (the hemispheres of a $\mathbb{P}^1$-fiber of $\overline{E}$).
When $\gamma$ is a matching path, this algebraic count gives $a \mapsto 2a$ under the map \eqref{eq:bchain} for the degree $2$ generator $a \in HF(L_\gamma)$.
Moreover, by \cite[Lemma 2.12]{AbouzaidSmith16} and the paragraph thereafter, \eqref{eq:bchain} necessarily vanishes on degree $0$ generators of $HF(L_\gamma)$ whenever $HF^0(L_\gamma)$ has rank $1$ (which holds  when $\gamma$ is a matching path or a thimble path).
The result follows. 
\end{proof}

\section{Formality}\label{s:formality}

By Proposition \ref{p:AllCases}, we know that on the cohomological level, the extended symplectic arc algebra 
$K_{n,m}^{\symp}$ agrees with the extended arc algebra $K_{n,m}^{\alg}$.
In this section, we prove that the extended symplectic arc algebra is formal when working over a field of characteristic $0$, by proving that the nc-vector field constructed in \ref{ss:NCconstruction} is pure.
To ease notation, in this section, given a weight $\lambda$, we use $\ol{\lambda}$ and  $\ul{\lambda}$
to denote $\ol{\lambda}^{\alg}$ and  $\ul{\lambda}^{\alg}$, respectively.

\subsection{Some lemmata}

Let $\lambda_0, \lambda_1 \in \Lambda_{n,m}$.

\begin{lemma}[Cyclic module structure]\label{l:cyclic}
Let $x_{\min}$ be an oriented circle diagram $\ul{\lambda}_1 \eta_{\min} \ol{\lambda}_0$ which corresponds to the minimal degree generator of 
$HF^*(\ul{L}_{\lambda_0},\ul{L}_{\lambda_1})$.
Let $x$ be another oriented circle diagram $\ul{\lambda}_1 \eta \ol{\lambda}_0$.
Then there are two oriented circle diagrams $\ul{\lambda}_0 \eta_0 \ol{\lambda}_0$ and $\ul{\lambda}_1 \eta_1 \ol{\lambda}_1$
such that 
\begin{align}
 \mu^2(\ul{\lambda}_1 \eta_1 \ol{\lambda}_1,x_{\min})=x=\mu^2(x_{\min},\ul{\lambda}_0 \eta_0 \ol{\lambda}_0)
\end{align}

In particular, $HF^*(\ul{L}_{\lambda_0},\ul{L}_{\lambda_1})$ is a cyclic module over $HF^*(\ul{L}_{\lambda_i},\ul{L}_{\lambda_i})$ for both $i=0,1$.
\end{lemma}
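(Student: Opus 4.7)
The plan is to reduce the claim to the diagrammatic algebra via Proposition \ref{p:AllCases} and then construct $\eta_0, \eta_1$ as explicit oriented circle diagrams whose TQFT-surgery product with $x_{\min}$ implements the required orientation flips one circle at a time. I work inside $K_{n,m}^{\alg}$ (identified with $K_{n,m}^{\symp}$ via $\Phi$) and use the TQFT-surgery description of multiplication recalled in Section \ref{Sec:tqft_multiplication}; the two identities in the Lemma are symmetric in the roles of $\lambda_0$ and $\lambda_1$, so I only describe the construction of $\eta_1$.

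The diagrams $x = \ul{\lambda}_1\eta\ol{\lambda}_0$ and $x_{\min} = \ul{\lambda}_1\eta_{\min}\ol{\lambda}_0$ are two orientations of the same underlying unoriented diagram $\ul{\lambda}_1 \cup \ol{\lambda}_0$. Orientations on the non-closed components (``line segments'' joining two vertical rays) are forced by $\lambda_0, \lambda_1$ and so agree in $x$ and $x_{\min}$; hence $x$ and $x_{\min}$ differ only on a subcollection $\{D_{i_1},\dots,D_{i_j}\}$ of closed circles, anti-clockwise in $x_{\min}$ and clockwise in $x$. In the related diagram $\ul{\lambda}_1 \cup \ol{\lambda}_1$, every cup $\gamma$ of $\ul{\lambda}_1$ pairs with its mirror cap in $\ol{\lambda}_1$ to form a short closed \emph{oval} $O(\gamma)$, and all closed circles of $\ul{\lambda}_1 \cup \ol{\lambda}_1$ arise this way. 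For each $D_{i_p}$ choose an arbitrary cup $\gamma_p \in D_{i_p} \cap \ul{\lambda}_1$; since different closed circles of $\ul{\lambda}_1 \cup \ol{\lambda}_0$ contain disjoint sets of cups of $\ul{\lambda}_1$, the $\gamma_p$ lie in pairwise distinct short ovals. Define $\ul{\lambda}_1\eta_1\ol{\lambda}_1$ to be the oriented circle diagram supported on $\ul{\lambda}_1 \cup \ol{\lambda}_1$ whose clockwise closed circles are precisely the $O(\gamma_p)$, with all remaining closed circles and non-closed components carrying their canonical anti-clockwise (resp.\ weight-forced) orientations. The construction of $\eta_0$ is entirely symmetric, using caps of $\ol{\lambda}_0$ and short ovals in $\ul{\lambda}_0 \cup \ol{\lambda}_0$.

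The identity $\mu^2(\ul{\lambda}_1\eta_1\ol{\lambda}_1, x_{\min}) = x$ is the heart of the argument and amounts to a TQFT-surgery computation. Stacking with $\eta_1$ below and $x_{\min}$ above, the middle section $\ol{\lambda}_1 \cup \ul{\lambda}_1$ decomposes (after the standard identification of the two weight lines) into short ovals, one per cup of $\ul{\lambda}_1$, and the surgery procedure collapses these. The Frobenius-algebra labels $1$ (anti-clockwise) and $x$ (clockwise) propagate via the rules $1 \otimes 1 \mapsto 1$, $1 \otimes x \mapsto x$, $x \otimes 1 \mapsto x$, $x \otimes x \mapsto 0$ for merges and $1 \mapsto 1\otimes x + x\otimes 1$, $x \mapsto x\otimes x$ for splits; the clockwise label on each $O(\gamma_p)$ is transported onto the output circle $D_{i_p}$ of $\ul{\lambda}_1 \cup \ol{\lambda}_0$, while every other output circle receives only anti-clockwise inputs and therefore retains the anti-clockwise label from $x_{\min}$. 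Spurious summands introduced by the split rule applied to $1$'s are annihilated by subsequent $x \otimes x \mapsto 0$ merges.

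The main obstacle is this last combinatorial bookkeeping step. A direct verification proceeds by induction on $j$: flipping one circle at a time reduces everything to the case $j=1$, in which the cobordism is short and can be traced by hand using the local fact that the short oval $O(\gamma_1)$ and the output circle $D_{i_1}$ share exactly the cup $\gamma_1$ and are connected through the middle cobordism only to each other. Alternatively, an appeal to the compact case (Theorem \ref{t:AbouzaidSmith}, extended to $H_{n,m}^{\alg}$ for all $2n \le m$ via Proposition \ref{p:CompactCases} and the quotient description \eqref{eq:1Q}) combined with the quotient description \eqref{eq:2Q} of $K_{n,m}^{\alg}$ as a quotient of $H_{n,m+n}^{\alg}$ reduces the computation to the corresponding cyclicity statement in the classical Khovanov arc algebra, which is a well-known consequence of its cellular structure.
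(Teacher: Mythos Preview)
Your construction is correct and is essentially the same as the paper's: the paper treats the $i=0$ case by choosing, for each clockwise circle $C_{s_p}$ of $\ul{\lambda}_1\cup\ol{\lambda}_0$ in $x$, a single cap $\gamma_p\in\ol{\lambda}_0$ lying on $C_{s_p}$, and orients the corresponding short oval in $\ul{\lambda}_0\cup\ol{\lambda}_0$ clockwise; you do the mirror-symmetric construction with cups of $\ul{\lambda}_1$ for the $i=1$ case.

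One small remark on the verification. Your concern about ``spurious summands introduced by the split rule'' is unnecessary: in this particular product no splits occur at all. Each closed component of the bottom diagram $\ul{\lambda}_1\eta_1\ol{\lambda}_1$ is a short oval $O(\gamma)$ whose only contact with the middle section is its cap $\gamma^r\in\ol{\lambda}_1$; hence $O(\gamma)$ remains a separate circle until the surgery on its own cup--cap pair, at which point it merges with a component coming from $x_{\min}$. Consequently every surgery is a merge, and the result follows directly from the merge rules $1\otimes 1\mapsto 1$, $1\otimes x\mapsto x$, $x\otimes 1\mapsto x$ (never encountering $x\otimes x$ because at most one clockwise oval feeds into each output circle). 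This is exactly how the paper dispatches the verification in a single sentence, so your inductive reduction and the detour through the quotient description \eqref{eq:2Q} are correct but not needed.
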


\begin{proof}
We give a proof when $i=0$; the case $i=1$ can be dealt with similarly.

Recall that, schematically, the product in the combinatorial arc algebra is computed by stacking an oriented $\ul{\lambda}_0 \cup \ol{\lambda}_0$ on top of an oriented $\ul{\lambda}_1 \cup \ol{\lambda}_0$, 
and resolving / removing the middle levels by a sequence of elementary cobordisms, which induce maps by the action of an underlying TQFT. 

 The diagram $\ul{\lambda}_1 \cup \ol{\lambda}_0$ consists of circles and lines.
 Let $C_1,\dots,C_k$ be the circles, and define for $j=1,\dots,k$
 \begin{align}
  &\Gamma_{o,j}:=\{\gamma \in \ol{\lambda}_0 | \gamma \text{ is contained in the circle }C_j \}.
 \end{align}
 It is clear that $\Gamma_{o,j} \neq \emptyset$ for all $j$.
 Let $\Gamma_o:=\cup_j \Gamma_{o,j}$ and $\Gamma_l:=\ol{\lambda}_0 \setminus \Gamma_o$.
 In particular, if $\gamma \in \ol{\lambda}_0$ is a line, then $\gamma \in \Gamma_l$.
 
 The minimal degree generator $x_{\min}$ corresponds to orienting 
 all the circles in $\ul{\lambda}_1 \cup \ol{\lambda}_0$ counterclockwise (note that the orientations of the arcs 
 in $\ul{\lambda}_1 \cup \ol{\lambda}_0$ are uniquely determined by the rule \eqref{eq:orientations2}, namely, the remaining $\vee$ {\it must} occupy the 
 rightmost available positions).
  On the other hand, let $C_{s_1}, \dots, C_{s_t}$ be the circles in $\ul{\lambda}_1 \cup \ol{\lambda}_0$ that are oriented clockwise by $x$.
  For each $j \in \{s_1,\dots,s_t\}$, we pick a single $\gamma_j \in \Gamma_{o,j}$.
 Now we give $\ul{\lambda}_0 \cup \ol{\lambda}_0$ the orientation $y$ that is uniquely determined by the property that a circle $C$ in $\ul{\lambda}_0 \cup \ol{\lambda}_0$
 is oriented clockwise if and only if 
 $C$ contains $\gamma_j$ for some $j \in \{s_1,\dots,s_t\}$.
 
 One feature of the Khovanov TQFT (\cite{Khovanov}, \cite{Khovanov-arc}, which is recalled in \eqref{eq:TQFT1} and \eqref{eq:TQFT2}) is that  a counterclockwise circle times a counterclockwise (resp. clockwise) circle 
 is a counterclockwise (resp. clockwise) circle.
 The analog is true for the Khovanov-type TQFT from \cite{BS11, Stroppel-parabolic}.
 One can check, by applying the rules underlying the Khovanov-type TQFT from \cite{BS11, Stroppel-parabolic}, that our prescription ensures that we have $\mu^2(x_{\min},y)=x$ (see Figure \ref{fig:cyclicModule} for an example).
\end{proof}

\begin{figure}[h]
 \includegraphics{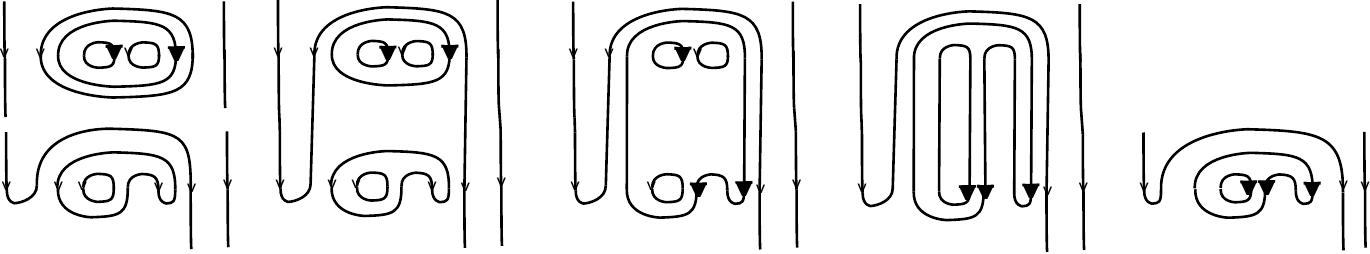}
 \caption{From left to right: $y$ (top) and $x_{\min}$ (bottom) before taking product; 
 successively applying two TQFT operations to the first picture; the product $x$. 
 Clockwise orientations on circles are indicated by $\blacktriangledown$.
 We choose $\gamma_j$ to be the outermost upper arcs in the $2$ circle components of $x$; this forces the indicated $\blacktriangledown$s on $y$.}\label{fig:cyclicModule}
\end{figure}

\begin{lemma}\label{l:constantmap}
 Let $ \lambda a$ be an oriented cap diagram and $b \lambda $ be an oriented cup diagram. 
 Then we have
 \begin{align}
  \mu^2(b \lambda \ol{\lambda} , \ul{\lambda} \lambda a)=b \lambda a \label{eq:CancellingProd}
 \end{align}
 In particular, the product map
 \begin{align}
  HF(L_{\lambda},L_b) \times HF(L_a, L_{\lambda}) \to HF(L_a,L_b) \label{eq:nontrivialProduct}
 \end{align}
 is nonzero.
\end{lemma}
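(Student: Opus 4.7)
The plan is to transport the identity to the combinatorial extended arc algebra $K_{n,m}^{\alg}$ via the algebra isomorphism of Proposition \ref{p:AllCases}, and then verify it by direct calculation using the Brundan--Stroppel TQFT surgery procedure recalled in Section \ref{Sec:tqft_multiplication}.

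First I would observe that $\underline{\lambda}\lambda\overline{\lambda}$ is the idempotent $e_\lambda \in K_{n,m}^{\alg}$ associated to the weight $\lambda$. Stacking $\underline{\lambda}\lambda a$ above $b\lambda\overline{\lambda}$, the middle section is precisely $\overline{\lambda} \cup \underline{\lambda}$, equipped with the same weight $\lambda$ on both sides. By construction its connected components are: one counterclockwise circle for each good point of $\lambda$ (formed by matching an arc of $\underline{\lambda}$ with its mirror arc in $\overline{\lambda}$), together with one vertical line for each bad point or unmatched position (formed by the paired thimble rays added in the $(\cdot)^{\alg}$ completions). Crucially, every middle circle carries the Frobenius unit label ``$1$'' and every vertical line is coherently oriented, because both sides of the middle arise from the same weight $\lambda$.

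Next I would carry out the TQFT surgery iteratively, peeling off one middle circle at a time, starting from an innermost. The key point is that every innermost middle circle $C$ encloses nothing in the middle diagram --- this follows from the minimality in the definition of $c_{\lambda,j}^{\wedge}$ together with the balancing condition \eqref{eq:balancing} --- so $C$ is adjacent to a non-middle component (an enveloping good circle, a vertical line, or part of the bounding $b$ or $a$). One can therefore absorb $C$ into its neighbour via merge saddles of the form $1 \otimes 1 \to 1$, $1 \otimes x \to x$, or $x \otimes 1 \to x$, never invoking the coproduct split $1 \to 1 \otimes x + x \otimes 1$ that would introduce multiple terms. Iterating until no middle circles remain leaves only the cup $b$ at the bottom joined through the surviving vertical lines to the cap $a$ on top, with the orientation still given by $\lambda$; collapsing the thin middle strip then returns the single oriented circle diagram $b\lambda a$.

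The main obstacle is showing rigorously that a merge-only ordering of surgeries always exists and genuinely produces a single-term output. This is a planar-combinatorial statement about $\overline{\lambda} \cup \underline{\lambda}$: one proves by induction on the nesting depth of good points of $\lambda$ that an innermost middle circle is always available, and that the two saddles needed to dissolve it can be arranged as merges into the neighbouring component without any intermediate split. The non-crossing structure inherent in the good-point definition makes this tractable, though some bookkeeping is required to handle the interaction between adjacent good circles and the vertical lines at bad/unused positions. Once \eqref{eq:CancellingProd} is established, the non-vanishing of the product map \eqref{eq:nontrivialProduct} is immediate, since $b\lambda a$ is a basis element of $K_{n,m}^{\alg}$.
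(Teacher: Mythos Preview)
Your approach---transport to $K_{n,m}^{\alg}$ and argue via the Brundan--Stroppel surgery procedure that all middle surgeries are merges of counterclockwise circles---is essentially the same as the paper's. The paper phrases it as ``the resolution cobordism move does not change the orientation''; you phrase it as ``a merge-only ordering of surgeries always exists''. Unfortunately this shared claim is false, and with it the identity \eqref{eq:CancellingProd} as stated.

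Take $(n,m)=(2,4)$, $\lambda=\vee\wedge\vee\wedge$, and $a=\ol{\alpha}$, $b=\ul{\alpha}$ with $\alpha=\vee\vee\wedge\wedge$. Both $\lambda a$ and $b\lambda$ are oriented. The middle $\ol{\lambda}\cup\ul{\lambda}$ consists of two non-nested cup--cap pairs (at $1\!-\!2$ and $3\!-\!4$), while each of the outer diagrams $\ul{\lambda}\lambda a$ and $b\lambda\ol{\lambda}$ is a \emph{single} circle. Hence the first surgery is a merge $1\otimes 1\mapsto 1$, but the second is forced to be a split $1\mapsto 1\otimes x+x\otimes 1$; no ordering avoids this, since the cobordism from two circles to two nested circles through two saddles necessarily computes $\Delta\circ m$. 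One obtains
\[
\mu^2(\ul{\alpha}\lambda\ol{\lambda},\,\ul{\lambda}\lambda\ol{\alpha})
\;=\;\ul{\alpha}\,\lambda\,\ol{\alpha}\;+\;\ul{\alpha}\,\lambda'\,\ol{\alpha},
\qquad \lambda'=\wedge\vee\wedge\vee,
\]
two terms rather than one. So your inductive claim that an innermost middle circle can always be dissolved by merges alone cannot be correct; in this example both middle arcs are innermost and either order produces a split. The weaker conclusion \eqref{eq:nontrivialProduct} does survive: at each split one of the two output terms retains the original vertex labels, so $b\lambda a$ always occurs with coefficient~$1$ in the product, and the map is nonzero.
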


\begin{proof}
 Note that $\ul{\lambda} \lambda a$, $b \lambda \ol{\lambda}$ and $b \lambda a $ are oriented circle diagrams so the LHS and RHS of \eqref{eq:CancellingProd} are well-defined.

Now we need to justify \eqref{eq:CancellingProd}.
If $\gamma \in \ul{\lambda}$ is a half-circle and the component in  $\ul{\lambda} \lambda a$ containing $\gamma$
is a circle $C$, then by the definition of $\ul{\lambda}$, we know that $C$ is oriented counterclockwise.
Similarly, if  $\gamma^* \in \ol{\lambda}$ is a half-circle and the component in  $b \lambda \ol{\lambda}$ containing $\gamma^*$
is a circle $C'$, then $C'$ is oriented counterclockwise.
In this case, when we do the resolution between $\gamma$ and $\gamma^*$ involved in the product, the outcome is also oriented counterclockwise.
In other words, the resolution cobordism move does not change the orientation (see Figure \ref{fig:Orientation unchanged1})

\begin{figure}[h]
 \includegraphics{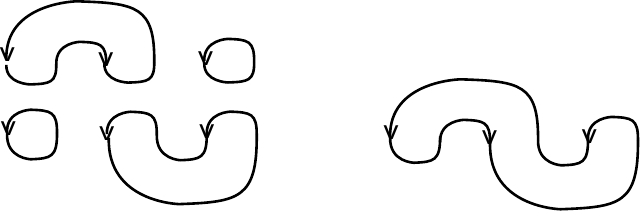}
 \caption{Orientation unchanged}\label{fig:Orientation unchanged1}
\end{figure}

On the other hand, if the component of $\ul{\lambda} \lambda a$ containing $\gamma$ is a line, then 
we need to first complete it to a circle before computing the product.
Moreover, the circle to which it is completed is oriented counterclockwise (see Figure \ref{fig:Orientation unchanged2}).
The same holds for $b \lambda \ol{\lambda}$.

Therefore, each type of resolution, and hence the product obtained from a composition of such, does not change the orientations.
It implies $\mu^2(b \lambda \ol{\lambda} , \ul{\lambda} \lambda a)=b \lambda a$.

By Proposition \ref{p:AllCases}, we have the algebra isomorphism $\Phi:K_{n,m}^{\symp} \to K_{n,m}^{\alg}$
so we have 
$$\mu^2(\Phi^{-1}(b \lambda \ol{\lambda}) , \Phi^{-1}(\ul{\lambda} \lambda a))=\Phi^{-1}(b \lambda a)$$
and \eqref{eq:nontrivialProduct} is nonzero.
\begin{figure}[h]
 \includegraphics{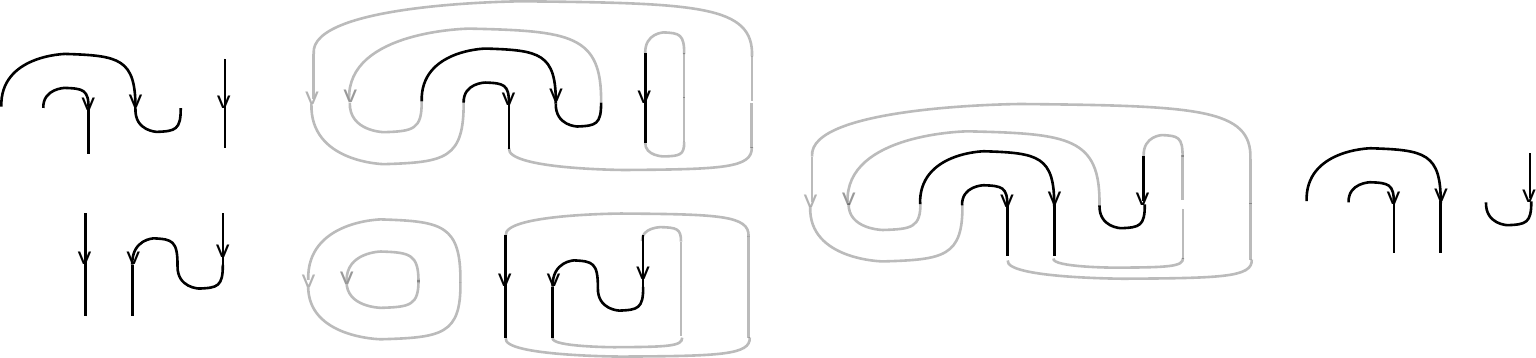}
 \caption{From left to right: 1) two oriented circle diagrams $\ul{\lambda} \lambda a$, $b \lambda \ol{\lambda}$, 
 2) closure of the oriented circle diagrams, all circles are oriented counterclockwise (see Section \ref{Sec:tqft_multiplication}), 3) taking product, 4) removing the completion (grey)}\label{fig:Orientation unchanged2}
\end{figure}
\end{proof}

The set of weights $\Lambda_{n,m}$ has a natural `Bruhat' partial ordering (see Section \ref{ss:directedsub}).
Intuitively, it is given by exchanging pairs $\vee$ and $\wedge$; one increases in the Bruhat order by moving $\vee$'s to the right, see \cite[Section 2]{BS11}.  Given $\lambda \in \Lambda_{n,m}$, there is a unique maximal element $w_{\lambda}$ in the set
\begin{align}
 \{w \in \Lambda_{n,m}\  |\  \ul{\lambda} w \ol{\lambda} \text{ is an oriented circle diagram}\}. \label{eq:maxElt}
\end{align}
Explicitly, the maximal element is given by putting a $\vee$ on the right end-points of all the half-circles and lines in $\ol{\lambda}$ (or $\ul{\lambda}$)
and a $\wedge$ otherwise.
In particular, it implies that $\lambda=w_{\lambda}$ only when $\ol{\lambda}$ has no half-circle component, that is, when $\lambda$ is the maximal weight in $\Lambda_{n,m}$.

\begin{lemma}\label{l:3choose1}
 Let $\lambda_0,\lambda_1 \in \Lambda_{n,m}$. At least one of the following statements holds.
\begin{enumerate}
 \item There is no $\eta \in \Lambda_{n,m}$ such that $\ul{\lambda_1} \eta \ol{\lambda_0}$ is an oriented circle diagram;
 \item There is some $\eta \in \Lambda_{n,m} \backslash \{\lambda_0, \lambda_1\}$ such that $\ul{\lambda_1} \eta \ol{\lambda_0}$ is an oriented circle diagram;
 \item $\lambda_0=w_{\lambda_1}$ or $\lambda_1=w_{\lambda_0}$.
\end{enumerate}

\end{lemma}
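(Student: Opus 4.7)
Plan. The claim is combinatorial and concerns the set $O$ of orientations of the circle diagram $D:=\ul{\lambda_1}\cup\ol{\lambda_0}$ (using the section convention $\ol{\lambda}:=\ol{\lambda}^{\alg}$, $\ul{\lambda}:=\ul{\lambda}^{\alg}$). Suppose (1) fails, so $O\neq\emptyset$. An orientation of $D$ is determined by two kinds of choices: (a) for each closed circle, a choice of clockwise or counterclockwise, i.e.\ which of the two $\vee/\wedge$ assignments consistent with \eqref{eq:orientations1} is taken; and (b) for each ray endpoint, a value consistent with the rightmost-$\vee$ rule \eqref{eq:orientations2}, propagated along the open arcs by alternation. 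The choices in (a) can be made independently, and each single flip produces a distinct element of $\Lambda_{n,m}$; so if $c$ denotes the number of closed circles of $D$, then $|O|\ge 2^c$. In particular, if $c\ge 2$ then $|O|\ge 4$ and some $\eta\in O$ avoids $\{\lambda_0,\lambda_1\}$, establishing (2).

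It remains to treat $c\in\{0,1\}$. When $c=1$ and the two circle-flip orientations are not both contained in $\{\lambda_0,\lambda_1\}$, one of them realizes (2). When $c=0$ the orientation is uniquely determined (if it exists) by the ray rule and alternation; if the unique element of $O$ differs from both $\lambda_0$ and $\lambda_1$, then (2) holds. In the residual cases $O\subseteq\{\lambda_0,\lambda_1\}$ and we must verify (3). Assume for concreteness that $\lambda_1\in O$ (the case $\lambda_0\in O$ is symmetric): this is equivalent to $\lambda_1\ol{\lambda_0}$ being an oriented cap diagram, since $\ul{\lambda_1}\lambda_1$ is automatic from the construction of $\ul{\lambda_1}$. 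The strategy is to compare $D$ with the diagonal diagram $D':=\ul{\lambda_0}\cup\ol{\lambda_0}$, whose maximal orientation is by definition $w_{\lambda_0}$, and to prove $\lambda_1=w_{\lambda_0}$. First I would verify directly that $\lambda_1$ is itself an orientation of $D'$, using compatibility of $\lambda_1$ with the components of $D$ above the line $\{\mathrm{im}(z)=1\}$. Second, I would show that no orientation of $D'$ is strictly greater than $\lambda_1$ in the Bruhat order: any such $\mu$ would transport, via the invariant cap factor $\ol{\lambda_0}$, to an orientation of $D$ distinct from both $\lambda_0$ and $\lambda_1$, contradicting $O\subseteq\{\lambda_0,\lambda_1\}$.

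The principal obstacle will be carrying out the transport step rigorously. Exchanging $\ul{\lambda_0}$ for $\ul{\lambda_1}$ modifies only the structure below the line $\{\mathrm{im}(z)=1\}$, but it can split closed circles of $D$ into open arcs of $D'$ (or vice versa), and the parity of the arcs interacts nontrivially with the rightmost-$\vee$ rule applied to the different set of ray positions belonging to $\ul{\lambda_0}$. I anticipate handling this by a case analysis of how the good and bad points of $\lambda_0$ (see \eqref{eq:balancing}) relate to those of $\lambda_1$, combined with an induction on the Bruhat distance between $\lambda_0$ and $\lambda_1$, reducing to the base case where the two weights differ at exactly the two positions forming the endpoints of a single circle of $D$.
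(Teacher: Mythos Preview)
Your overall structure---assume (1) and (2) both fail and deduce (3)---is sound, and the counting $|O|\ge 2^c$ is correct. However, the proof is incomplete, and the part you flag as ``the principal obstacle'' is a genuine gap that your outlined case analysis and induction do not resolve.

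First, you miss a key simplification. Once $\lambda_1\in O$, \cite[Lemma 2.3]{BS11} (which you implicitly invoke elsewhere) gives $\eta\ge\lambda_1$ and $\eta\ge\lambda_0$ for \emph{every} $\eta\in O$. So if $D=\ul{\lambda_1}\cup\ol{\lambda_0}$ has any circle, flipping it produces $\eta'\in O$ with $\eta'\neq\lambda_1$, hence $\eta'>\lambda_1\ge\lambda_0$, forcing $\eta'\notin\{\lambda_0,\lambda_1\}$ and contradicting the failure of (2). This rules out circles entirely: your residual case $c=1$ is vacuous, and you land directly in $c=0$ with $O=\{\lambda_1\}$.

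Second, your transport strategy in the residual case does not work as stated. Given an orientation $\mu>\lambda_1$ of $D'=\ul{\lambda_0}\cup\ol{\lambda_0}$, you need $\ul{\lambda_1}\mu$ to be an oriented cup diagram to conclude $\mu\in O$, but $\mu>\lambda_1$ does \emph{not} imply this (e.g.\ for $n=2$, $m=4$, $\lambda_1=\vee\vee\wedge\wedge$, the weight $\mu=\wedge\vee\vee\wedge$ satisfies $\mu>\lambda_1$ yet fails the cup condition for $\ul{\lambda_1}$ on the arc $(1,4)$). The proposed induction on Bruhat distance is not fleshed out enough to see whether it circumvents this.

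The paper instead argues directly in the $c=0$ case: listing the left endpoints $s_1<\cdots<s_k$ of the half-circles of $\ol{\lambda_0}$, one checks that $\lambda_1(s_j)=\vee$ for some $j$ would force (by alternation and the structure of $\ol{\lambda_0}$) a closed circle in $D$ through $s_j$ and $s_j^\wedge$, contradicting $c=0$. Hence $\lambda_1(s_j)=\wedge$ for all $j$, which is exactly $\lambda_1=w_{\lambda_0}$. This avoids the transport issue altogether.
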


\begin{proof}
 Suppose neither $(1)$  nor $(2)$ hold. We want to prove  $(3)$.
 
 Since $(1)$ and $(2)$ fail, there exists $\eta$ such that 
 $\ul{\lambda_1} \eta \ol{\lambda_0}$ is an oriented circle diagram, but every such $\eta$ is given by $\lambda_0$ or $\lambda_1$.
 Without loss of generality, we assume that $\eta=\lambda_1$ so, in particular, we have $\lambda_0 \le \lambda_1$, since whenever $\alpha \lambda \beta$ is an oriented circle diagram one has $\alpha \leq \lambda$ and $\beta \leq \lambda$ in the Bruhat order, cf. \cite[Lemma 2.3]{BS11}.
 We want to prove that $\lambda_1=w_{\lambda_0}$.
 
 Notice that if $\ul{\lambda_1} \cup \ol{\lambda_0}$ has a circle component, then there is $\eta' \in \Lambda_{n,m}$
 such that $\eta' \neq \eta$ and $\ul{\lambda_1} \eta' \ol{\lambda_0}$ is an oriented circle diagram.
 It implies that $\eta =\lambda_1 < \eta'$ but we also have $\lambda_0 \le \eta$ so $\eta' \in \Lambda_{n,m} \setminus \{\lambda_0,\lambda_1\}$.
 Since we have assumed that $(2)$ fails, so we get a contradiction and hence, $\ul{\lambda_1} \cup \ol{\lambda_0}$ is a diagram with only line components (and there is only one orientation given by $\eta=\lambda_1$).
 
 Let $s_1<\dots <s_k$ be the integers which are the left end points of \emph{half-circles} of $\ol{\lambda_0}$.
 By definition, $\lambda_0(s_i)=\vee$ for all $i$.
 If $\lambda_1(s_i)=\wedge$ for all $i$, then we are done, because  then $\lambda_1$ has $\vee$ on all the right end points of half-circles and lines of $\ol{\lambda_0}$,
 and hence $\lambda_1=w_{\lambda_0}$.
 
 If this is not the case, then let $s_j$ be the largest among $s_1, \dots s_k$ such that $\lambda_1(s_j)=\vee$.
 Let $s_j+t$ be the right end point of the half-circle of $\ol{\lambda_0}$ with left end point $s_j$. 
 By definition of $\ol{\lambda_0}$, exactly half of $\{s_j+1,\dots,s_j+t-1\}$ are labelled by $\wedge$
 and half by $\vee$ with respect to $\lambda_0$.
 Moreover, all of them lie in a half-circle of  $\ol{\lambda_0}$.
 
 By the definition of $s_j$, for $l=1,\dots,t-1$, if $\lambda_0(s_j+l)=\vee$, then $s_j+j \in \{s_{j+1},\dots,s_k\}$ so we have $\lambda_1(s_j+l)=\wedge$.
 Moreover, for $\ul{\lambda_1} \eta \ol{\lambda_0}$ to have at least one orientation, we need that $\lambda_1(s_j+t)=\wedge$ and
 for  $l=1,\dots,t-1$, if $\lambda_0(s_j+l)=\wedge$, then $\lambda_1(s_j+l)=\vee$ (see Figure \ref{fig:3choose1}).
 Now, by the definition of $\ul{\lambda_1}$, we see that $\ul{\lambda_1} \eta \ol{\lambda_0}$ has a circle passing through $s_j$ and $s_j+t$.
 This contradicts the fact that $\ul{\lambda_1} \eta \ol{\lambda_0}$ does not have circle components.
 \begin{figure}[h]
  \includegraphics{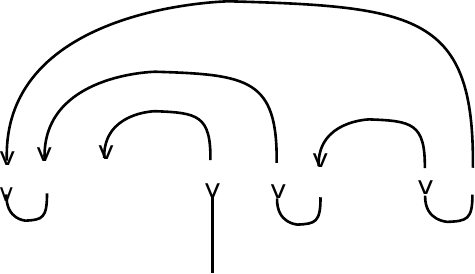}
  \caption{The weight and the cap diagram on top are $\lambda_0$ and $\ol{\lambda_0}$;
  the weight and the cup diagram on the bottom are $\lambda_1$ and $\ul{\lambda_1}$.}\label{fig:3choose1}
 \end{figure}
\end{proof}

\subsection{Consistent choice of equivariant structures\label{Sec:consistent}}

Let $b$ be the nc vector field constructed in Proposition \ref{p:ncfield}.

\begin{proposition}\label{p:bStructure}
 There is a choice of $b$-equivariant structures on $\{\ul{L}_{\lambda}\}_{\lambda \in \Lambda_{n,m}}$ such that $b$ is pure. 
\end{proposition}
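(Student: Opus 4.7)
The plan is to start from the objectwise $b$-equivariant lifts produced by Lemma \ref{l:Lpure}, then exploit the scalar freedom in choosing each $c_\lambda$ to simultaneously force the induced endomorphism \eqref{eq:bchain} to act as the Euler field on every Floer group $HF^*(\ul{L}_{\lambda_0},\ul{L}_{\lambda_1})$. For a fixed choice of equivariant data, the induced cohomological operator $D_{\lambda_0,\lambda_1}(a):= b^1(a)-\mu^2(c_{\lambda_1},a)+\mu^2(a,c_{\lambda_0})$ is a derivation of the cohomological algebra $K_{n,m}^{\symp}$; this is a standard consequence of $b$ being a Hochschild cocycle together with the equivariance relation $b^0|_{\ul{L}_\lambda}=\mu^1(c_\lambda)$. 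The crucial observation is that replacing $c_\lambda$ by $c_\lambda+\alpha_\lambda\cdot 1_{\ul{L}_\lambda}$ (which preserves the equivariance relation, since $\mu^1(1)=0$) shifts $D_{\lambda_0,\lambda_1}$ by the scalar $(\alpha_{\lambda_0}-\alpha_{\lambda_1})\cdot \mathrm{id}$ while leaving the self-actions $D_{\lambda,\lambda}$ unchanged.

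The first reduction is to check $D=\mathrm{Euler}$ only on minimal-degree generators. Lemma \ref{l:Lpure} already settles the self-endomorphism case: on each $HF^*(\ul{L}_\lambda,\ul{L}_\lambda)$, $D_{\lambda,\lambda}$ acts by the Euler field, and this property is stable under the scalar shifts above. For a pair $\lambda_0\neq\lambda_1$ with nonzero Floer cohomology, Lemma \ref{l:cyclic} realises every element as $\mu^2(y,x_{\min})$ or $\mu^2(x_{\min},z)$ with $y$ or $z$ in a self-endomorphism algebra; since Euler is also a derivation of $K_{n,m}^{\symp}$, the derivation property of $D$ reduces purity to checking that $D_{\lambda_0,\lambda_1}(x_{\min}(\lambda_0,\lambda_1))=|x_{\min}(\lambda_0,\lambda_1)|\cdot x_{\min}(\lambda_0,\lambda_1)$. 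The minimal-degree component of $HF^*(\ul{L}_{\lambda_0},\ul{L}_{\lambda_1})$ is one-dimensional, so $D_{\lambda_0,\lambda_1}(x_{\min})=\gamma(\lambda_0,\lambda_1)\cdot x_{\min}$ for some scalar, and the problem is to find $\{\alpha_\lambda\}$ with
\[
\alpha_{\lambda_0}-\alpha_{\lambda_1}=|x_{\min}(\lambda_0,\lambda_1)|-\gamma(\lambda_0,\lambda_1)
\quad\text{for all pairs with }HF^*(\ul{L}_{\lambda_0},\ul{L}_{\lambda_1})\neq 0.
\]

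I will split pairs using the trichotomy of Lemma \ref{l:3choose1}. Case (1) is vacuous. In Case (2), Lemma \ref{l:constantmap} factorises $x_{\min}(\lambda_0,\lambda_1)$ as $\mu^2(u,v)$ through an intermediate weight $\eta\notin\{\lambda_0,\lambda_1\}$, with $|u|+|v|=|x_{\min}(\lambda_0,\lambda_1)|$ (degree additivity, read off from the diagrammatic description and Proposition \ref{p:AllCases}). Combined with the derivation property of $D$ and Euler this gives the triangle identity $\gamma(\lambda_0,\lambda_1)-|x_{\min}|=(\gamma(\lambda_0,\eta)-|u|)+(\gamma(\eta,\lambda_1)-|v|)$, which is exactly the cocycle condition needed to propagate the solution across pairs. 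The `extremal' Case (3), where $\lambda_1=w_{\lambda_0}$ or $\lambda_0=w_{\lambda_1}$ and $\ul{\lambda_1}\cup\ol{\lambda_0}$ consists only of line components, provides the independent free equations that pin down the $\alpha_\lambda$; these pairs link each weight to a canonical `maximal' one and so span the set of all weights under iterated adjacencies.

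The main obstacle will be verifying that the collection $\{|x_{\min}(\lambda_0,\lambda_1)|-\gamma(\lambda_0,\lambda_1)\}$ is a coboundary on the graph of weight-pairs, i.e. that the triangle identities above really close up consistently; this is a combinatorial verification that combines the derivation property of $D$ with the explicit factorizations provided by Lemma \ref{l:constantmap} (and the degree calculus in Proposition \ref{p:AllCases}). Once this is established, one fixes a reference weight $\lambda_\star$, sets $\alpha_{\lambda_\star}=0$, and defines $\alpha_\lambda$ inductively along paths of Case~(3) adjacencies using the prescribed differences. The resulting equivariant lifts $(\ul{L}_\lambda, c_\lambda+\alpha_\lambda\cdot 1)$ then satisfy the Euler identity on $x_{\min}$ for every nonzero pair, which by the derivation reduction above promotes to the Euler identity on all of $K_{n,m}^{\symp}$, showing that $b$ is pure.
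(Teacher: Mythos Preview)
Your proposal takes essentially the same route as the paper's own proof: both exploit the one-parameter scalar freedom in each equivariant structure, reduce to minimal-degree generators via the cyclic module structure (Lemma~\ref{l:cyclic}), and use the trichotomy of Lemma~\ref{l:3choose1} together with the factorisation of Lemma~\ref{l:constantmap} to propagate purity from a distinguished ``anchor'' pair $(\eta,w_\eta)$ to all other pairs. Your cocycle/coboundary language is just a repackaging of the paper's double induction (outer induction over a total refinement of the Bruhat order, inner induction over the second weight in a pair).

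Two small corrections are worth flagging. First, Lemma~\ref{l:constantmap} does not factorise $x_{\min}(\lambda_0,\lambda_1)$ itself; it factorises the basis element $\ul{\lambda_1}\eta\ol{\lambda_0}$ as $\mu^2(\ul{\lambda_1}\eta\ol{\eta},\ \ul{\eta}\eta\ol{\lambda_0})$, and neither factor is generally the minimal-degree generator of its Floer group. This does not matter for your triangle identity, because once you know $D$ acts on each $HF(\lambda_0,\lambda_1)$ by $a\mapsto(|a|-|x_{\min}|+\gamma)a$ (via cyclicity and self-purity), the derivation property applied to \emph{any} nonzero product through $\eta$ yields $f(\lambda_0,\lambda_1)=f(\lambda_0,\eta)+f(\eta,\lambda_1)$; but your displayed identity with $|u|,|v|$ is not literally what you get. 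Second, the step you label ``combinatorial verification'' is exactly where all the content lies, and your sketch does not make clear why the triangle identities close up (in particular, why the antisymmetry $f(\lambda_0,\lambda_1)+f(\lambda_1,\lambda_0)=0$ holds, or why defining $\alpha_\lambda$ along the chain $\lambda\to w_\lambda\to w_{w_\lambda}\to\cdots\to\lambda_{\max}$ is consistent with all remaining edges). The paper resolves this by an explicit nested induction: having fixed $c_\eta$ so that $HF(\ul L_\eta,\ul L_{w_\eta})$ is pure, it separately verifies purity for $HF(\ul L_{w_\eta},\ul L_\eta)$ via Lemma~\ref{l:constantmap}, and then for each $\lambda>^T\eta$ with $\lambda\neq w_\eta$ produces (in Case~(2)) an intermediate $\lambda'>\lambda,\eta$ already handled by the inner induction. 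Your plan would succeed once you replace the vague ``combinatorial verification'' by exactly this inductive argument.
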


\begin{proof}
By Lemma \ref{l:Lpure}, $\ul{L}_{\lambda}$ is pure  for each $\lambda \in \Lambda_{n,m}$.

 Any partial order admits a total order refinement. We choose a total ordering $(\le^T)$ on $\Lambda_{n,m}$ that is compatible with the Bruhat partial ordering $\le$.
 We make choices of $b$-equivariant structure on the $\ul{L}_{\lambda}$ with decreasing order of $\lambda$.
 
 Let $\eta \in \Lambda_{n,m}$ and suppose that a $b$-equivariant structure on $\ul{L}_{\lambda}$ is chosen for all $\lambda >^T \eta$, 
 such that $b$ is pure.
 We want to choose a  $b$-equivariant structure on $\ul{L}_{\eta}$ such that $b$ is pure on $\{L_{\lambda}\}_{\lambda  \ge^T \eta}$.
 
 We choose a  $b$-equivariant structure on $\ul{L}_{\eta}$ such that the minimal degree element of
 $HF(\ul{L}_{\eta}, \ul{L}_{w_\eta})$
 has weight equal to the degree.
 By Lemma \ref{l:cyclic}, $HF(\ul{L}_{\eta}, \ul{L}_{w_\eta})$ is cyclic as a module over $HF(\ul{L}_{w_\eta},\ul{L}_{w_\eta})$ 
 so the weight equals the degree for all pure degree elements in $HF(\ul{L}_{\eta}, \ul{L}_{w_\eta})$.
 
 By Lemma \ref{l:constantmap}, we have
 \begin{align}
  \mu^2(\ul{\eta}w_\eta\ol{w_\eta}, \ul{w}_{\eta}w_\eta \ol{\eta})=\ul{\eta}w_\eta \ol{\eta}
 \end{align}
This implies that $x:= \ul{\eta} w_\eta \ol{w}_{\eta}$ in $HF(\ul{L}_{w_\eta},\ul{L}_{\eta})$ has weight equal to its  degree.

Let $x_{\min}$ be the minimal degree generator of $HF(\ul{L}_{w_\eta},\ul{L}_{\eta})$.
By Lemma \ref{l:cyclic}, we know that $x=\mu^2(x_{\min},y)$ for some $y \in HF(\ul{L}_{w_\eta}, \ul{L}_{w_\eta})$.
It means that $x_{\min}$ also has weight equal to its degree, and hence all elements of $HF(\ul{L}_{w_\eta},\ul{L}_{\eta})$
have weight equal to their degrees.

Now, suppose that $\lambda >^T \eta$ but $\lambda \neq w_{\eta}$, and purity holds for all of  $HF(L_{\lambda'}, L_\eta)$, $HF(L_\eta, L_{\lambda'})$
such that $\lambda' >^T \lambda$.
By Lemma \ref{l:3choose1}, $\lambda, \eta$ satisfies either $(1)$ or $(2)$ of Lemma \ref{l:3choose1}.
If $(1)$ holds, then $HF(L_{\lambda}, L_\eta)=HF(L_\eta,L_{\lambda})=0$ so purity is automatic.
If $(2)$ holds, let $\lambda'> \lambda, \eta$ and $\lambda' \neq \lambda, \eta$ such that $\ol{\eta} \lambda' \ul{\lambda}$
is an oriented circle diagram.

By Lemma \ref{l:constantmap}, we have
 \begin{align}
  \mu^2(\ul{\lambda}\lambda' \ol{\lambda'}, \ul{\lambda'} \lambda' \ol{\eta})=\ul{\lambda} \lambda' \ol{\eta} \label{eq:hahaha}
 \end{align}
 This means that $\ul{\lambda} \lambda' \ol{\eta}$ in $HF(\ul{L}_{\eta},\ul{L}_{\lambda})$ has weight equal to its  degree. 
 By Lemma \ref{l:cyclic} again, we know the minimal degree elements in $HF(\ul{L}_{\eta},\ul{L}_{\lambda})$ have weight equal to degree, hence so do all elements in 
 $HF(\ul{L}_{\eta},\ul{L}_{\lambda})$.
 The same is true for $HF(\ul{L}_{\lambda},\ul{L}_{\eta})$ because Lemma \ref{l:constantmap} also implies that
 \begin{align}
  \mu^2(\ul{\eta} \lambda' \ol{\lambda'}, \ul{\lambda'} \lambda' \ol{\lambda} )=\ul{\eta} \lambda'  \ol{\lambda}
 \end{align}
 By induction over the total order, the result follows.
 \end{proof}

 \begin{corollary}
  Over a field of characteristic $0$, the $A_{\infty}$-endomorphism algebra of the direct sum of objects in the collection $\{\ul{L}_{\lambda}\}_{\lambda \in \Lambda_{n,m}}$ is formal.
 \end{corollary}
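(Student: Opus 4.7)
The plan is to deduce this corollary directly from the preceding structural results, with essentially no new geometry. Let $\eA \subset \cFS^{cyl,n}(\pi_E)$ denote the full $A_\infty$-subcategory on the collection of objects $\{\ul{L}_\lambda\}_{\lambda \in \Lambda_{n,m}}$; its endomorphism algebra (of the direct sum) is quasi-isomorphic to $\eA$ regarded as a single $A_\infty$-algebra with idempotents indexed by the weights.

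First I would invoke Proposition \ref{p:ncfield} to produce a global nc-vector field $b \in CC^1(\cFS^{cyl,n}(\pi_E), \cFS^{cyl,n}(\pi_E))$. The two hypotheses needed — that the Gromov--Witten class $GW_1$ and the base-locus class $[B_0]$ are null-homologous — both hold in our setting by \cite[Lemmas 6.7, 6.8]{AbouzaidSmith16}, whose proofs apply verbatim to the type $A$ Milnor fibre $E = A_{m-1}$ considered here. Restricting $b$ to $\eA$ yields an nc-vector field $b|_\eA \in CC^1(\eA,\eA)$.

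Next I would appeal to Proposition \ref{p:bStructure}, which provides $b$-equivariant structures $(\ul{L}_\lambda, c_{\ul{L}_\lambda})$ on every object of $\eA$ such that, for every pair $\lambda_0, \lambda_1 \in \Lambda_{n,m}$, the endomorphism of $HF(\ul{L}_{\lambda_0}, \ul{L}_{\lambda_1})$ induced by \eqref{eq:bchain} agrees with the Euler vector field $a \mapsto |a|\,a$. This is precisely the statement that $b|_\eA$ is a pure nc-vector field on $\eA$ in the sense recalled just before Theorem \ref{t:seidel-formality}.

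Finally I would apply Seidel's formality criterion (Theorem \ref{t:seidel-formality}) to $\eA$: since the ground field $\KK$ has characteristic zero and $\eA$ carries a pure nc-vector field, $\eA$ is formal, i.e.\ quasi-isomorphic to its cohomology $A_\infty$-algebra. Translating this back, the $A_\infty$-endomorphism algebra of $\oplus_{\lambda \in \Lambda_{n,m}} \ul{L}_\lambda$ in $\cFS^{cyl,n}(\pi_E)$ is quasi-isomorphic to its cohomology, which by Proposition \ref{p:AllCases} is the extended arc algebra $K_{n,m}^{\alg}$. There is no real obstacle at this stage — the substantive work has all been done in Sections \ref{s:ArcAlg}--\ref{s:ncField} and in the preceding proposition; the corollary is a formal consequence of assembling those inputs with Seidel's criterion.
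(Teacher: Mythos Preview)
Your proof is correct and follows exactly the same approach as the paper's own proof, which simply cites Theorem \ref{t:seidel-formality} and Proposition \ref{p:bStructure}. Your version is more explicit in recalling why the nc-vector field $b$ exists (via Proposition \ref{p:ncfield} and the null-homology results from \cite{AbouzaidSmith16}) and in spelling out the restriction to the subcategory, but the logical skeleton is identical.
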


 \begin{proof}
  This follows from Theorem \ref{t:seidel-formality} and Proposition \ref{p:bStructure}.
 \end{proof}

\begin{proof}[Proof of Theorem \ref{t:main}]
Recall the notation for the tuple of Lagrangian thimbles $\ul{T}^I$ from Section \ref{s:FScategories}.
 One can argue inductively to show that
 $\ul{T}^{(\lambda)^{-1}(\vee)}$ can be generated by $\{\ul{L}_{\ul{\lambda}}|\lambda  \in \Lambda_{n,m}\}$ using the exact triangles constructed in Section \ref{sss:CanEmbed}
 (compare with Proposition \ref{p:cellMod}).
 
 We now give the  details.
 When $n=1$, $\{\ul{L}_{\ul{\lambda}}|\lambda  \in \Lambda_{n,m}\}$ consists of 
 one Lefschetz thimble at the right most critical point, and all the matching spheres over matching paths 
 whose end points are consecutive critical values.
 This collection clearly generates $\ul{T}^{(\lambda)^{-1}(\vee)}$, by applying Dehn twists in the matching spheres to the Lefschetz thimble. We now argue inductively. 
 Let $k \in \mathbb{N}_+$. We assume that for all $n<k$, for all $m \ge n$ and for all $\lambda_0 \in \Lambda_{n,m}$, the Lagrangian tuple
 $\ul{T}^{(\lambda_0)^{-1}(\vee)}$ can be generated by the collection $\{\ul{L}_{\ul{\lambda}}|\lambda  \in \Lambda_{n,m}\}$.
 Now we consider the case $n=k$, $m \ge n$ and let $\lambda  \in \Lambda_{n,m}$.
 
When $\lambda^{-1}(\vee)=\{m-n+1,\dots,m\}$, $\lambda$ is the maximal weight and we have  $\ul{L}_{\ul{\lambda}}=\ul{T}^{\lambda^{-1}(\vee)}$.

If $\lambda$ is not the maximal weight, then there is $k \in \{1,\dots, m\}$
such that $\lambda(k)=\vee$ and $\lambda(k+1)=\wedge$.
Let $\lambda''$ be the weight given by `swapping positions $k$ and $k+1$'. That is
\begin{align}
 \lambda''(i)=
 \left\{
 \begin{array}{ccc}
  \lambda(i) &\text{ if }i \neq k,k+1 \\
  \wedge     &\text{ if }i = k \\
  \vee       &\text{ if }i = k+1
 \end{array}
 \right.
\end{align}
Let $\lambda' \in \Lambda_{n-1,m-2}$ be the weight given by forgetting the positions $k,k+1$.That is
\begin{align}
 \lambda'(i)=
 \left\{
 \begin{array}{ccc}
  \lambda(i) &\text{ if }i <k \\
  \lambda(i+2) &\text{ if }i \ge k
 \end{array}
 \right.
\end{align}
We define $\cup_{k,k+1} \lambda'$ to be the admissible tuple in $\{im(z) \le 1\}$ consisting of $1$ matching path connecting 
$k+\sqrt{-1}$ and $k+1+\sqrt{-1}$ and $n-1$ thimble paths from $i+\sqrt{-1}$ to $i$ for each $i \in \lambda^{-1}(\vee) \setminus \{ k\}$.
The notation suggests that it is obtained from applying a `cup functor' to $\lambda'$ in position $k,k+1$ in the sense of \cite{AbouzaidSmith19}.

By Corollary \ref{c:Triangle}, we have the following exact triangle
\begin{align}
 \ul{T}^{(\lambda'')^{-1}(\vee)}[-1] \to \ul{L}_{\cup_{k,k+1} \lambda'} \to \ul{T}^{(\lambda)^{-1}(\vee)} \to \ul{T}^{(\lambda'')^{-1}(\vee)} \label{eq:ExactT}
\end{align}
Note that the second term $\ul{L}_{\cup_{k,k+1} \lambda'}$ can be generated by $\{\ul{L}_{\ul{\lambda}}|\lambda  \in \Lambda_{n,m}\}$
by applying the `cup functor' $\cup_{k,k+1}$ to the induction hypothesis.
More precisely, 
by induction hypothesis, the collection 
$$\{\ul{L}_{\ul{\lambda}}|\lambda  \in \Lambda_{n-1,m-2}\}$$
is sufficient to generate $\ul{L}_{\Gamma}$ for all admissible tuple $\Gamma=\{\gamma_1,\dots,\gamma_n\}$ such that 
for all $i$, $\gamma_i$ is a thimble path from $q_i+\sqrt{-1}$ to $q_i$ for some $q_i \in \{1,\dots,m\}$. 
Together with Corollary \ref{c:Triangle} (which implies that the `cup functor' takes exact triangles to exact triangles), we know that
the collection
$$\{\ul{L}_{\ul{\lambda}}|\lambda  \in \Lambda_{n,m}, \lambda(k)=\vee, \lambda(k+1)=\wedge\}$$ 
is sufficient to generate $\ul{L}_{\Gamma}$ for all admissible tuple $\Gamma=\{\gamma_1,\dots,\gamma_n\}$
such that (see Figure \ref{fig:cupfunctor})
\begin{enumerate}
 \item $\gamma_i \subset \{im(z) \le 1\}$ for all $i$
 \item $\gamma_1$ is a matching path joining $k+\sqrt{-1}$ and $k+1+\sqrt{-1}$
 \item for $i>1$, $\gamma_i$ is a thimble path from $q_i+\sqrt{-1}$ to $q_i$ for some $q_i \in \{1,\dots,m\}$. 
\end{enumerate}

\begin{figure}[h]
  \includegraphics{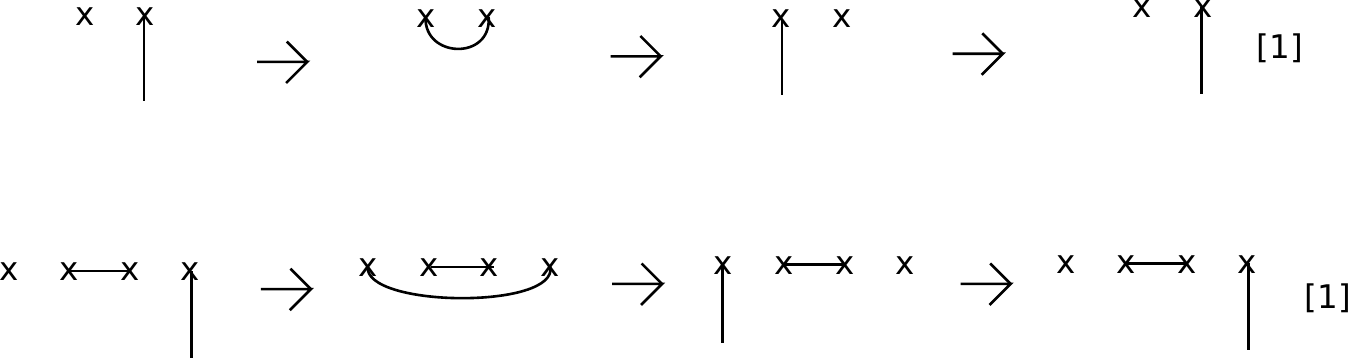}
  \caption{The first row represents an exact triangle in the induction hypothesis.
  The second row is the exact triangle after applying the `cup functor' $\cup_{2,3}$ to it.
  }\label{fig:cupfunctor}
 \end{figure}

On the other hand, the first and third term of \eqref{eq:ExactT} has the property that $\lambda''>\lambda$.
Since $\ul{L}_{\ul{\lambda}}=\ul{T}^{\lambda^{-1}(\vee)}$ when $\lambda$ is maximal, 
by \eqref{eq:ExactT} and  a second induction with respect to the Bruhat ordering starting from the maximal weight, we know that
$\ul{T}^{(\lambda)^{-1}(\vee)}$ can be generated by $\{\ul{L}_{\ul{\lambda}}|\lambda  \in \Lambda_{n,m}\}$.

 Conversely, $\ul{L}_{\ul{\lambda}}$ can be generated by $\{\ul{T}^{(\lambda)^{-1}(\vee)}|\lambda \in \Lambda_{n,m}\}$, either again by directly applying the exact triangles, or by appeal to Proposition \ref{p:generation}.
 Therefore, by Proposition \ref{c:directedSubCat2}, we have quasi-equivalences
 \begin{align}
 D^{\pi}\cFS(\pi_{n,m}) \simeq \rperf(\End(\oplus \ul{T}^{(\lambda)^{-1}(\vee)})) \simeq \rperf(K^{\symp}_{n,m})=\rperf(K^{\alg}_{n,m})
 \end{align}
\end{proof}

\begin{remark} Theorem \ref{t:main} and Claim \ref{p:Serre} together give a new proof of a result of \cite{BBM}, namely that the Serre functor of the constructible derived category of the Grassmannian is given by the action of the centre of the braid group.
\end{remark}

There is an algebra isomorphism $K^{\alg}_{n,m} \simeq (K^{\alg}_{n,m})^{op}$,
given on basis elements by sending an oriented circle diagram $\beta \lambda \alpha$ to 
its reflection $\alpha^r \lambda \beta^r$ (see \cite[Equation (3.9)]{BS11}).
Together with Theorem \ref{t:main}, this implies that
\begin{corollary}\label{c:flip}
For all $\lambda, \lambda' \in \Lambda_{n,m}$, we have
\begin{align}
 HF(\ul{L}_{\lambda},\ul{L}_{\lambda'})=HF(\ul{L}_{\lambda'},\ul{L}_{\lambda})
\end{align}
as graded vector spaces.
\end{corollary}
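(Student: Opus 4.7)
The plan is to extract the isomorphism of Hom spaces directly from the algebra anti-isomorphism, so essentially no new Floer-theoretic input is required beyond Theorem~\ref{t:main}. First I would identify, on the algebraic side, the piece of $K_{n,m}^{\alg}$ corresponding to the pair $(\lambda,\lambda')$: by the combinatorial description of $K_{n,m}^{\alg}$ recalled in Section~\ref{s:ArcAlg}, the subspace $e_{\lambda'} K_{n,m}^{\alg} e_{\lambda}$ is spanned by oriented circle diagrams of the form $\underline{\lambda'}^{\alg}\,\mu\,\overline{\lambda}^{\alg}$, with grading given by the number of clockwise cups/caps. Under the identification of Lemma~\ref{l:gvspIsom} (upgraded to an algebra isomorphism by Proposition~\ref{p:AllCases}), this matches $HF(\ul{L}_{\lambda},\ul{L}_{\lambda'})$ as a graded vector space.

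Next I would apply the reflection anti-isomorphism $K_{n,m}^{\alg}\simeq (K_{n,m}^{\alg})^{op}$ from \cite[(3.9)]{BS11}, which sends a basis element $\beta\,\mu\,\alpha$ to $\alpha^r\,\mu\,\beta^r$. Reflection in the horizontal line $\{\mathrm{im}(z)=1\}$ interchanges the roles of cup and cap diagrams: $(\underline{\lambda'}^{\alg})^r = \overline{\lambda'}^{\alg}$ and $(\overline{\lambda}^{\alg})^r = \underline{\lambda}^{\alg}$. Consequently, the reflection carries the subspace $e_{\lambda'} K_{n,m}^{\alg} e_{\lambda}$ bijectively onto $e_{\lambda} K_{n,m}^{\alg} e_{\lambda'}$. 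I would then check that the degree is preserved: reflection takes clockwise cups to clockwise caps and vice-versa, and the formula \eqref{eq:AlgGrading} shows the total degree of an oriented circle diagram is invariant under this operation.

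Combining these steps, the reflection induces a graded vector space isomorphism
\[
e_{\lambda'} K_{n,m}^{\alg} e_{\lambda}\;\xrightarrow{\,\sim\,}\;e_{\lambda} K_{n,m}^{\alg} e_{\lambda'},
\]
which under Proposition~\ref{p:AllCases} translates into the desired identification
\[
HF(\ul{L}_{\lambda},\ul{L}_{\lambda'})\;\cong\;HF(\ul{L}_{\lambda'},\ul{L}_{\lambda})
\]
as graded vector spaces, completing the argument. The only mild subtlety is to confirm that the reflection anti-involution is degree-preserving rather than degree-reversing; this is a direct inspection of \cite[Section~3]{BS11} and requires no Floer-theoretic input, so I do not expect a genuine obstacle. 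Note that Theorem~\ref{t:main} is used only at the level of graded vector spaces here, so one does not need to track compatibility of the anti-isomorphism with higher $A_\infty$-structure.
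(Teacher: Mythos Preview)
Your proposal is correct and follows essentially the same route as the paper: both derive the graded isomorphism from the reflection anti-isomorphism $K_{n,m}^{\alg}\simeq(K_{n,m}^{\alg})^{op}$ of \cite[(3.9)]{BS11} combined with the identification of $HF(\ul{L}_{\lambda},\ul{L}_{\lambda'})$ with the corresponding idempotent-truncated summand of $K_{n,m}^{\alg}$. Your observation that only the graded vector space identification (Lemma~\ref{l:gvspIsom}) is needed, rather than the full force of Theorem~\ref{t:main}, is a nice sharpening of what the paper invokes.
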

This is already non-trivial, because $\ul{L}_{\lambda},\ul{L}_{\lambda'}$ are not compact Lagrangians,  so it is not \emph{a priori} 
clear why they should satisfy this Poincar\'e duality type of equality (because of the non-trivial wrapping in constructing the category, the underlying cochain groups are typically \emph{not} equal).

The algebra isomorphism $K^{\alg}_{n,m} \simeq (K^{\alg}_{n,m})^{op}$ can be understood geometrically: we give a sketch of the argument.
By a hyperbolic isometry between the upper half plane and the unit disc, we can assume the target of $\pi_E$ is the unit disc $B$.
We also assume that the critical values lie on the real line, and that infinity in the upper half plane is mapped to $\sqrt{-1} \in \partial B$.
For example, it can be achieved by applying the inverse of the hyperbolic isometry $f:\overline{B}_1 \to \bH$ in \eqref{eq:dictionary}.
Let $\iota_B:B \to B$ be the reflection along the real line, which is an anti-symplectic involution of $B$.
We recall that for the construction of $(E,\omega_E)$ in Section \ref{ss:AmMilnorFiber}, $\omega_M$ is restricted from a product symplectic form $\omega_X$.
Therefore, by appropriately choosing $\omega_R$ in Lemma \ref{l:omegaR}, we can assume that there is an 
anti-symplectic involution $\iota_E:E \to E$ covering $\iota_B$.

Let $u:S \to \cY_E$ be a solution that contributes to the $A_{\infty}$ structure of $\cFS^{cyl,n}(\pi_E)$.
Let $\iota_S:S \to \ol{S}$ be reflection in the real diameter of the disc, and $\ol{S}$ the image of $S$ (equipped with the push-forward data by $\iota_S$).
Let $\iota_{\cY_E}: \cY_E \to \cY_E$ be the anti-symplectic involution induced by $\iota_E$.
In this case, $\iota_{\cY_E}  \circ u \circ \iota_S: \ol{S} \to \cY_E$ is tautologically a solution to the push-forward equation, which is itself a perturbed pseudo-holomorphic equation.
Note, however, that the ordering of the Lagrangian boundary conditions is reversed on $\partial \ol{S}$.

This illustrates that, for appropriate choices of Floer data, the involution $\iota_E$
induces an $A_{\infty}$-equivalence
\begin{align}
 \cFS^{cyl,n}(\pi_E) &\simeq (\cFS^{cyl,n}(\pi_E)^{\dagger})^{op} \label{eq:A_equivalence} \\
 \ul{L} &\mapsto \iota_E(\ul{L}) \nonumber
\end{align}
where  $\cFS^{cyl,n}(\pi_E)^{\dagger}$ is quasi-equivalent to $\cFS^{cyl,n}(\pi_E)$ but
the wrapping at infinity for $CF(?,??)$ in $\cFS^{cyl,n}(\pi_E)^{\dagger}$
is defined to be wrapping the latter entry clockwise, with stop at $-\sqrt{-1}$, whilst the wrapping in $\cFS^{cyl,n}(\pi_E)$  
is wrapping the former entry counterclockwise with stop at $\sqrt{-1}$ (see Figure \ref{fig:reflection}).
For more details of (quasi-)equivalences of the kind of \eqref{eq:A_equivalence}, see \cite[Appendix]{Nick11}.

 \begin{figure}[h]
  \includegraphics{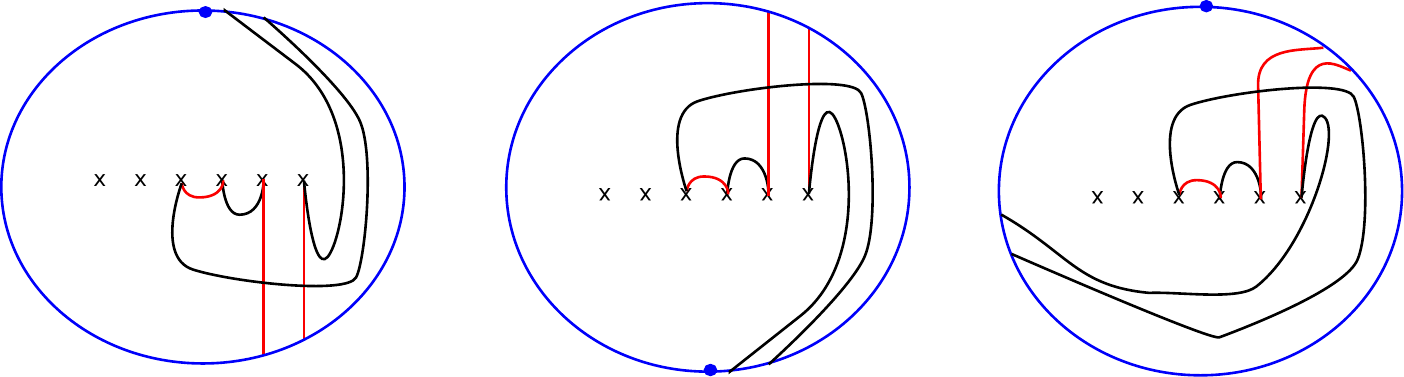}
  \caption{$CF(\ul{L}_{\lambda},\ul{L}_{\lambda'})$ (left), $CF(\iota_E (\ul{L}_{\lambda'}),\iota_E (\ul{L}_{\lambda}))$ in $\cFS^{cyl,n}(\pi_E)^{\dagger}$ (middle)
  and $CF(\Psi \circ \iota_E (\ul{L}_{\lambda'}),\Psi \circ \iota_E (\ul{L}_{\lambda}))$ (right). The dot on the boundary indicates the stop.
  }\label{fig:reflection}
 \end{figure}

To identify $\cFS^{cyl,n}(\pi_E)^{\dagger}$ with $\cFS^{cyl,n}(\pi_E)$, we need to pick a quasi-equivalence $\Psi$
between them.
We choose the one that corresponds to moving the `stop' $-\sqrt{-1}$ for $\cFS^{cyl,n}(\pi_E)^{\dagger}$
to $\sqrt{-1}$ clockwise along the boundary of $B$ and keeping a compact region containing the critical points unchanged (see Figure \ref{fig:reflection}).
This choice of quasi-equivalence has the property that 
\begin{align}
 \Psi \circ \iota_E (\ul{L}_{\ul{\lambda}}) = \ul{L}_{\ol{\lambda}} \label{eq:reSym}
\end{align}
for all $\lambda \in \Lambda_{n,m}$.


Applying $\Psi \circ \iota_E $ to the hom space between the Lagrangians associated to weights in $\Lambda_{n,m}$, we get Corollary \ref{c:flip}:
\begin{align}
 HF(\ul{L}_{\lambda},\ul{L}_{\lambda'})=HF(\Psi \circ \iota_E (\ul{L}_{\lambda'}),\Psi \circ \iota_E (\ul{L}_{\lambda}))=HF(\ul{L}_{\lambda'},\ul{L}_{\lambda})
\end{align}
where the first equality comes from \eqref{eq:A_equivalence}
and the second one comes from \eqref{eq:reSym}.
Similarly, applying $\Psi \circ \iota_E $ to the multiplication maps, we can deduce $K^{\symp}_{n,m} \simeq (K^{\symp}_{n,m})^{op}$,
so \eqref{eq:A_equivalence} can be viewed as a geometric interpretation of the isomorphism $K^{\alg}_{n,m} \simeq (K^{\alg}_{n,m})^{op}$.

\begin{proof}[Proof of Corollary \ref{c:projDual}]
 The statement follows from Theorem \ref{t:main} and the fact that $K^{\alg}_{n,m}=(K^{\alg}_{m-n,m})^{op}$.
 The  geometric origin of the latter equality is the Schubert-cells compatible identification of the Grassmannians $Gr(n,m)=Gr(m-n,m)$ but, following \cite[Equation (3.10)]{BS11}, it is not hard to give  a concrete isomorphism in diagrammatic terms. 
 We present their isomorphism here using our notations.
 
 Let $PD: \Lambda_{n,m} \to \Lambda_{m-n,m}$ be the bijection
 \begin{align}
  PD(\lambda)(a)=
  \left\{
  \begin{array}{cc}
   \wedge &\text{ if } \lambda(m+1-a)=\vee\\
   \vee &\text{ if }   \lambda(m+1-a)=\wedge
  \end{array}
 \right.
 \end{align}
 In words, $PD$ is obtained by rotating $\lambda$ by $\pi$ (see Figure \ref{fig:rotation}).

 \begin{figure}[h]
  \includegraphics{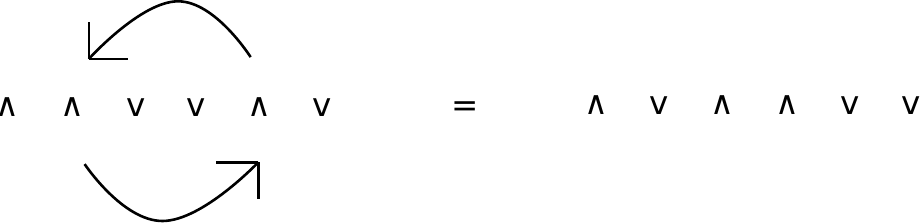}
  \caption{Left: a weight $\lambda$, Right: the weight $PD(\lambda)$}\label{fig:rotation}
 \end{figure}

 Each basis element of $K^{\alg}_{n,m}$ is an oriented circle diagram, for which we can perform a $\pi$-rotation to obtain
 another  oriented circle diagram, and hence a basis element of $(K^{\alg}_{m-n,m})^{op}$.
 This gives a bijection between basis elements of $K^{\alg}_{n,m}$ and $(K^{\alg}_{m-n,m})^{op}$.
 Since clockwise cup becomes clockwise cap, and vice versa, this bijection preserves grading.
 Because of the diagrammatic nature of the defining TQFT, it is immediate that this induces an algebra isomorphism, as observed in  \cite[Equation (3.10)]{BS11}.
\end{proof}

\section{Dictionary between Lagrangians and modules}\label{s:Dictionary}

In this section, we summarise the dictionary between 
certain objects in $\cFS^{cyl,n}(A_{m-1})$ (or their Yoneda images as modules over $K^{\symp}_{n,m}$) and modules over $K^{\alg}_{n,m}$.  None of our results rely on this dictionary: it is simply meant to help orient the interested reader. Given that, we make free use of Claim \ref{p:Serre} in this section.

Modules in this section are right modules, which differs from the convention used in \cite{BS11}.
However, as remarked previously, there is a canonical algebra isomorphism $K^{\alg}_{n,m} \simeq (K^{\alg}_{n,m})^{op}$, which identifies the
right $K^{\alg}_{n,m}$-modules used below with the left $K^{\alg}_{n,m}$-modules used in \cite{BS11}.

For an $A_\infty$ or dg category/algebra $\cC$, we use $\rperf \cC$ and $\cC \lperf$ to denote the dg category of perfect right, and respectively left, $\cC$ modules.
We use $[\cC,\cD]$ denote the dg category of $\cC\text{-}\cD$ bimodules.
Since $K^{\alg}_{n,m}$ is homologically smooth, every proper (cohomologically finite) module or bimodule is perfect.

\subsection{Indecomposable projectives}

For each $\lambda \in \Lambda_{n,m}$, we define 
$P(\lambda)$ to be the submodule of $K^{\alg}_{n,m}$ generated by the oriented circle diagrams of the form
\begin{align}
 P(\lambda):= \oplus \mathbb{K} \ul{\lambda}^{\alg} \mu \alpha
\end{align}
The collection of all $P(\lambda)$ (together with their grading shifts) is the set of all indecomposable projective modules of $K^{\alg}_{n,m}$.
Under the isomorphism $K^{\symp}_{n,m} \simeq K^{\alg}_{n,m}$, $P(\lambda)$
is the same as the Yoneda embedding of $\ul{L}_{\ul{\lambda}}$
as a right $K^{\symp}_{n,m}$ module.  Thus, \emph{the indecomposable projectives are the Lagrangians associated to weights.}

By \cite[Theorem 5.3]{BS11}, the set of all indecomposable injective modules of $K^{\alg}_{n,m}$ is given by
\begin{align}
 P(\lambda)^*:= \Hom( P(\lambda), \mathbb{K})
\end{align}
where the right module structure is given by $fa(m):=f(ma^*)$ for $a \in K^{\alg}_{n,m}$,  $m \in P(\lambda)$ and $f \in P(\lambda)^*$.
In other words, $P(\lambda)^*$ is obtained by pulling back the left module $\Hom(P(\lambda), \mathbb{K})$  via the algebra isomorphism $K^{\alg}_{n,m}=(K^{\alg}_{n,m})^{op}$.
On the symplectic side, up to a grading shift, $\Hom( P(\lambda), \mathbb{K})$ is given by the Yoneda embedding of $\tau^{-1}(\ul{L}_{\ul{\lambda}})$
as a left $K^{\symp}_{n,m}$ module, where $\tau$ is the global monodromy (see Claim \ref{p:Serre}), 
so $P(\lambda)^*$ corresponds to the pull-back of  $\tau^{-1}(\ul{L}_{\ul{\lambda}})$ via the algebra isomorphism $K^{\symp}_{n,m}=(K^{\symp}_{n,m})^{op}$.

In view of the geometric explanation of the equivalence $K^{\symp}_{n,m}=(K^{\symp}_{n,m})^{op}$ given in \eqref{eq:A_equivalence}, $P(\lambda)^*$ is given by the right Yoneda embedding of
$\Psi \circ \iota_E \circ \tau^{-1}(\ul{L}_{\ul{\lambda}})$.


\begin{figure}[h]
  \includegraphics{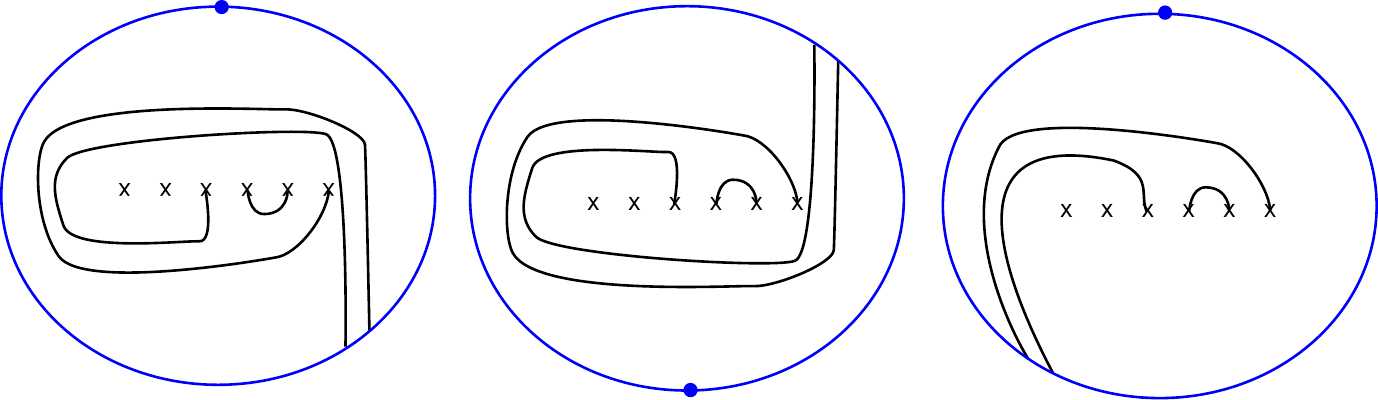}
  \caption{An example of $\tau^{-1}(\ul{L}_{\ul{\lambda}})$ (left), $\iota_E \circ \tau^{-1}(\ul{L}_{\ul{\lambda}})$ (middle)
  and 
  $\Psi \circ \iota_E \circ \tau^{-1}(\ul{L}_{\ul{\lambda}})$ (right)}\label{fig:injective}
 \end{figure}


 %

\subsection{Standard modules} 

For each $\lambda \in \Lambda_{n,m}$, there is a standard module $V(\lambda)$ defined in \cite[Equation (5.11)]{BS11}.
As a graded vector space, $V(\lambda)$ is generated by oriented cap diagrams of the form $\lambda \alpha$.
The collection of all $V(\lambda)$ gives a full exceptional collection for the dg category of right $K^{\alg}_{n,m}$ modules
and $\Ext^i(V(\lambda_0),V(\lambda_1)) \neq 0$ for some $i$ only if $\lambda_0 < \lambda_1$, where $<$ is with respect to the Bruhat partial ordering.

An inductive construction of a projective resolution of $V(\lambda)$ is described in \cite[Theorem 5.3]{BS2}.
By identifying $P(\lambda)$ with the Yoneda image of $\ul{L}_{\ul{\lambda}}$ and imitating that inductive construction, we have

\begin{proposition}\label{p:cellMod}
 The standard module $V(\lambda)$, as an $A_{\infty}$ module over $K^{\alg}_{n,m}=K^{\symp}_{n,m}$, is quasi-isomorphic to the Yoneda image of $\ul{T}^{\lambda^{-1}(\vee)}$ up to grading shift.
\end{proposition}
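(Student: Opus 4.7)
The plan is to show, by double induction on the rank $n$ and on the Bruhat order on $\Lambda_{n,m}$, that the symplectic thimble $\ul{T}^{\lambda^{-1}(\vee)}$ and the standard module $V(\lambda)$ admit matching presentations as iterated cones of projectives. Under the equivalence $D^{\pi}\cFS^{cyl,n}(\pi_E)\simeq \rperf(K^{\alg}_{n,m})$ of Theorem \ref{t:main}, the identification $P(\lambda)\leftrightarrow \ul{L}_{\ul{\lambda}}$ is by construction. The key is to match the inductive construction of a projective resolution of $V(\lambda)$ in \cite[Theorem 5.3]{BS2} with the inductive expression for $\ul{T}^{\lambda^{-1}(\vee)}$ as an iterated cone of the $\ul{L}_{\ul{\mu}}$ that is contained in the proof of Theorem \ref{t:main}.

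For the base case of the Bruhat induction, take $\lambda$ to be maximal, i.e. $\lambda^{-1}(\vee)=\{m-n+1,\dots,m\}$. Then $\ul{T}^{\lambda^{-1}(\vee)}=\ul{L}_{\ul{\lambda}}$ tautologically, while on the algebraic side $V(\lambda)=P(\lambda)$ because there are no oriented cap diagrams $\lambda\alpha$ with $\alpha\neq \ol{\lambda}^{\alg}$ for $\lambda$ maximal. For the inductive step, pick $k$ with $\lambda(k)=\vee$ and $\lambda(k+1)=\wedge$, and let $\lambda''$ be the result of swapping these values, so $\lambda''>\lambda$ in the Bruhat order. From Corollary \ref{c:Triangle} and the proof of Theorem \ref{t:main} we have the exact triangle
\[
\ul{T}^{(\lambda'')^{-1}(\vee)}[-1]\longrightarrow \ul{L}_{\cup_{k,k+1}\lambda'}\longrightarrow \ul{T}^{\lambda^{-1}(\vee)}\longrightarrow \ul{T}^{(\lambda'')^{-1}(\vee)},
\]
where $\lambda'\in\Lambda_{n-1,m-2}$ is the weight obtained by deleting positions $k,k+1$. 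The inductive hypothesis on the Bruhat order identifies $\ul{T}^{(\lambda'')^{-1}(\vee)}$ with $V(\lambda'')$, while the inner induction on $n$ (through the cup functor $\cup_{k,k+1}$) will identify $\ul{L}_{\cup_{k,k+1}\lambda'}$ with the image under a suitable diagrammatic cup functor of the standard module $V(\lambda')$ for $K^{\alg}_{n-1,m-2}$. Matching the resulting exact triangle with the one used in \cite[Theorem 5.3]{BS2} to inductively build a resolution of $V(\lambda)$ yields the claim up to grading shift, which is tracked by the degree function (number of clockwise cups in $\ul{\lambda}^{\alg}\lambda\ol{\lambda}^{\alg}$).

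The principal technical point is therefore to check that the symplectic cup functor $\cup_{k,k+1}\colon\cFS^{cyl,n-1}\to\cFS^{cyl,n}$, which adjoins a matching sphere over a matching path between $c_k$ and $c_{k+1}$ and leaves all other components unchanged, corresponds under the equivalence of Theorem \ref{t:main} to the parabolic induction/cup functor used in the diagrammatic construction of the resolution; this compatibility for objects follows from the canonical embedding of Section \ref{sss:CanEmbed} applied to $\ul{K}$ equal to such a matching sphere, combined with the fact that it sends $P(\mu)\mapsto P(\cup_{k,k+1}\mu)$ at the level of projectives. Once this is established, the matching of the connecting morphisms reduces to a rank-one calculation: the map $\ul{T}^{\lambda^{-1}(\vee)}\to\ul{T}^{(\lambda'')^{-1}(\vee)}$ is, up to scalar, the unique nonzero morphism (Lemma \ref{l:CohVanishing} gives rank one in degree zero), and the same is true for the projection $V(\lambda)\to V(\lambda'')$ coming from the inclusion of underlying cell-module generators.

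The hard part will be to pin down the cup-functor compatibility on the algebraic side so that the exact triangles match on the nose, rather than just up to some automorphism of the intermediate term. A clean way to do this is to observe that both collections $\{V(\lambda)\}$ and $\{\ul{T}^{\lambda^{-1}(\vee)}\}$ are full exceptional collections (Lemma \ref{l:CohVanishing} on the symplectic side, \cite[Theorem 5.3]{BS11} on the algebraic side) that are dual to the exceptional collection of projectives in the sense that $\Hom^*(P(\mu),-)$ picks out the basis indexed by oriented cap diagrams $\lambda\alpha$; verifying that $HF^*(\ul{L}_{\ul{\mu}},\ul{T}^{\lambda^{-1}(\vee)})$ is concentrated in a single degree with rank equal to the number of such cap diagrams (which can be done by open mapping/projection-to-base arguments as in Lemma \ref{l:directedSubCat}) then characterises $\ul{T}^{\lambda^{-1}(\vee)}$ uniquely up to quasi-isomorphism, avoiding delicate sign/scalar tracking in the inductive comparison.
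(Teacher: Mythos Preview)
Your main argument is essentially the paper's own sketch: both proceed by induction on $n$ (through the cup construction) and downward Bruhat induction on $\lambda$, matching the exact triangle
\[
\ul{T}^{(\lambda'')^{-1}(\vee)}[-1]\to \ul{L}_{\cup_{k,k+1}\lambda'}\to \ul{T}^{\lambda^{-1}(\vee)}\to \ul{T}^{(\lambda'')^{-1}(\vee)}
\]
with the short exact sequence \cite[Equation (5.8)]{BS2} used to build the projective resolution of $V(\lambda)$, and both reduce the identification of cones to a rank-one morphism space.

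Two points of difference. First, your citation of Lemma \ref{l:CohVanishing} for the rank-one statement is wrong: that lemma only gives directedness and the rank-one claim when $I_0=I_1$. The paper instead pins down the triangle via the \emph{first} arrow, noting that $HF(\ul{T}^{(\lambda'')^{-1}(\vee)}[-1],\ul{L}_{\cup_{k,k+1}\lambda'})$ has rank one; this follows from a K\"unneth-type splitting (all but one pair of components are identical thimbles, and the remaining pair is a thimble against an adjacent matching sphere). Your claim that $HF^0(\ul{T}^{\lambda^{-1}(\vee)},\ul{T}^{(\lambda'')^{-1}(\vee)})$ is rank one is also true by the same K\"unneth argument (adjacent thimbles in $A_{m-1}$ meet once), but it is not what Lemma \ref{l:CohVanishing} says.

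Second, your final-paragraph alternative does not work as stated. Knowing the graded ranks of $HF^*(\ul{L}_{\ul{\mu}},X)\cong \Hom^*(P(\mu),X)$ for all $\mu$ determines only the underlying graded vector space $\bigoplus_\mu e_\mu X$, not the $K^{\alg}_{n,m}$-module structure on $X$: for instance, $V(\lambda)$ and the semisimple module $\bigoplus_{\mu}L(\mu)[d_\mu]$ (with $\mu$ running over those weights for which $\lambda\ol{\mu}^{\alg}$ is oriented, shifted to match degrees) have identical such Hom-ranks but are not isomorphic. So this does not ``characterise $\ul{T}^{\lambda^{-1}(\vee)}$ uniquely up to quasi-isomorphism''; you still need the inductive triangle-matching.
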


\begin{proof}[Sketch of proof]
We want to compare the projective resolution of the standard module $V(\lambda)$
and the iterated mapping cone decomposition of $\ul{T}^{\lambda^{-1}(\vee)}$
(cf. the proof of Theorem \ref{t:main}).

When $\lambda^{-1}(\vee)=\{m-n+1,\dots,m\}$, $\lambda$ is the maximal weight and $P(\lambda)=V(\lambda)$. 
It corresponds to $\ul{L}_{\ul{\lambda}}=\ul{T}^{\lambda^{-1}(\vee)}$.

If $\lambda$ is not the maximal weight, then there is $k \in \{1,\dots, m\}$
such that $\lambda(k)=\vee$ and $\lambda(k+1)=\wedge$.
Let $\lambda''$, $\lambda'$ and $\cup_{k,k+1} \lambda'$ be as in the proof of Theorem \ref{t:main}.

The construction of the projective resolution of $V(\lambda)$ comes from iteratively applying the exact sequence of modules in \cite[Equation 5.8]{BS2}.
On the other hand, by Corollary \ref{c:Triangle}, we have the exact triangle of Yoneda modules
\begin{align}
 \ul{T}^{(\lambda'')^{-1}(\vee)}[-1] \to \ul{L}_{\cup_{k,k+1} \lambda'} \to \ul{T}^{(\lambda)^{-1}(\vee)} \to \ul{T}^{(\lambda'')^{-1}(\vee)} \label{eq:ProjRes}
\end{align}
Therefore, it is sufficient to prove that 
the exact sequence of modules in \cite[Equation 5.8]{BS2} corresponds to \eqref{eq:ProjRes}.
 This correspondence can be proved by induction on $m$, induction on the partial ordering of weights (from high to low; note that $\lambda'' > \lambda$)
and the fact that $HF(\ul{T}^{(\lambda'')^{-1}(\vee)}[-1],\ul{L}_{\cup_{k,k+1} \lambda'})$ has rank one, so up to quasi-isomorphism there is only one non-trivial mapping cone.
We leave the details to readers.
\end{proof}

Thus, \emph{the standard modules are the Lefschetz thimbles.}

\begin{remark}
In constrast to the collection of indecomposable projective modules,  
the collection of standard modules does not have formal endomorphism algebra when $n>1$.
A minimal model for the algebra when $n=2$ is given in \cite{Klamt}, which conjectured that this minimal model is not formal; we give a proof of that conjecture in the Appendix.
\end{remark}

By \cite[Theorem 5.3]{BS11}, the set of all costandard  modules of $K^{\alg}_{n,m}$ are given by
\begin{align}
 V(\lambda)^*:= \Hom( V(\lambda), \mathbb{K})
\end{align}
where the right module structure is given by $fa(m):=f(ma^*)$ for $a \in K^{\alg}_{n,m}$,  $m \in V(\lambda)^*$ and $f \in V(\lambda)^*$.
In other words, up to grading shift, it is given by the Yoneda embedding of $\tau^{-1}(\ul{T}^{\lambda^{-1}(\vee)})$
as a right $(K^{\symp}_{n,m})^{op}$ module after pull-back via the algebra isomorphism $K^{\symp}_{n,m}=(K^{\symp}_{n,m})^{op}$.

In view of  \eqref{eq:A_equivalence} again, $V(\lambda)^*$ is given by the right Yoneda embedding of
$\Psi \circ \iota_E \circ \tau^{-1}(\ul{T}^{\lambda^{-1}(\vee)})$.


 \begin{figure}[h]
  \includegraphics{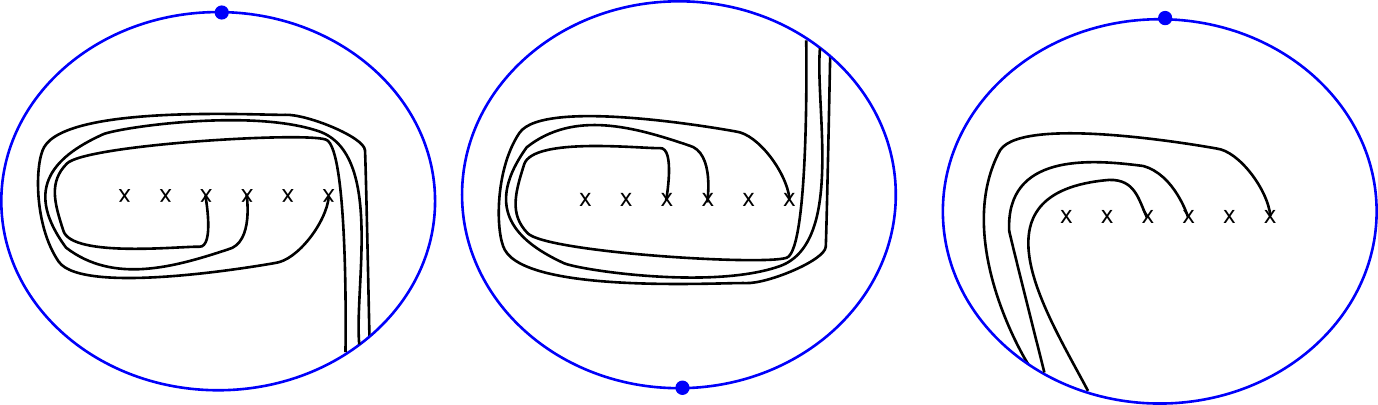}
  \caption{An example of $\tau^{-1}(\ul{T}^{\lambda^{-1}(\vee)})$ (left), $\iota_E \circ \tau^{-1}(\ul{T}^{\lambda^{-1}(\vee)})$ (middle)
  and 
  $\Psi \circ \iota_E \circ \tau^{-1}(\ul{T}^{\lambda^{-1}(\vee)})$ (right)}\label{fig:costandard}
 \end{figure}



In fact, the $\Psi \circ \iota_E \circ \tau^{-1}(\ul{T}^{\lambda^{-1}(\vee)})$ form the right Koszul duals of the thimbles 
(i.e. $HF(\ul{T}^{\lambda_0^{-1}(\vee)},\Psi \circ \iota_E \circ \tau^{-1}(\ul{T}^{\lambda_1^{-1}(\vee)}))=\mathbb{K}$ if and only if $\lambda_0=\lambda_1$, and equal to $0$ otherwise).
It is well-known that costandard modules are right Koszul dual to standard modules (see for example \cite{BK18}).

\subsection{Irreducible modules}

For each $\lambda \in \Lambda_{n,m}$, we define 
$L(\lambda)$ to be the submodule of $K^{\alg}_{n,m}$ generated by the signle oriented circle diagram
\begin{align}
 L(\lambda):= \mathbb{K} \ul{\lambda} \lambda \ol{\lambda}
\end{align}
The collection of all $L(\lambda)$ is the set of all irreducible modules of $K^{\alg}_{n,m}$.

It is not immediately clear in our context which Lagrangians correspond to the  irreducible modules.
In view of the work of \cite{EL17},  and the `topological model' for the compact core of the space $\cY_{n,m}$ arising from its description as a quiver variety (see e.g. \cite{AbouzaidSmith19}), it is reasonable to believe 
that one can enlarge the category  $\cFS^{cyl,n}(E)$ to allow Lagrangian discs that intersect exactly one of the $\{ \ul{L}_{\ul{\lambda}} \}_{\lambda}$ once and are disjoint from the others. These would be the analogues of `cocore discs' over the components of the skeleton of a plumbing of cotangent bundles, and the Yoneda images of these Lagrangian discs are natural candidates for the irreducible modules.


\section{Symplectic annular Khovanov homology}\label{s:Annular}

This section gives an application of our main results to link homology theories in the sense of Khovanov.   
Suppose  $m=2n$. A braid $\beta \in Br_n$ in the $n$-string braid group defines bimodules $P_{\beta}^{(j)}$ over each of the extended arc algebras $K_{j,n}^{\alg}$ with $0\leq j \leq n$. 
One can combine a theorem of Roberts \cite{Roberts13} with deep recent work of Beliakova, Putyra and Wehrli \cite{BPW19} 
to infer that there is a spectral sequence $\oplus_j HH_*(P_\beta^{(j)}) \Rightarrow Kh(\kappa(\beta))$, where $\kappa(\beta)$ is  the link closure of $\beta$, and where $Kh(\kappa)$ is the Khovanov homology \cite{Khovanov} of the link $\kappa \subset S^3$, constructed as a categorification of the Jones polynomial.  It is by now classical that Khovanov homology can be understood as a certain morphism group $\mathrm{Ext}_{\rperf H_{n,2n}^{\alg}}(P, (\beta\times\mathrm{id})(P))$ in the derived category of modules over the \emph{compact} arc algebra $H_{n,2n}^{\alg}$, see \cite{Khovanov-arc}, where $\beta \times \mathrm{id} \in Br_{2n}$ belongs to the $2n$-string braid group and $P$ is a particular projective.  From the viewpoint of the (extended) arc algebras, the existence of such a spectral sequence is rather mysterious. The purpose of this section is to give a transparent account of why it should exist, starting from a new semi-orthogonal decomposition of $per\!f$-$K_{n,2n}^{\alg}$ which we shall derive from the geometric viewpoint afforded by Theorem \ref{t:main}.

We assume $m=2n$ in Sections \ref{s:Annular} and \ref{s:Ann2Kh} unless stated otherwise.

\subsection{The annular symplectic theory}
 
Let $\hs \in \Lambda_{n,m}$ be the weight such that $\hs^{-1}(\vee)=\{1,\dots,n\}$.
The corresponding tuple $\ul{L}_{\hs}$ is called the `horseshoe' Lagrangian tuple
and the Lagrangian $\Sym(\ul{L}_{\hs}) \subset \cY_{n,m}$ is the horseshoe Lagrangian from \cite{SeidelSmith,AbouzaidSmith16}.
Let $Br_n$ be the braid group on $n$ strands.
The paper \cite{SeidelSmith} associates to each element $\beta \in Br_n$ a symplectomorphism $\phi^{(n)}_{\beta}: \cY_{n,m} \to \cY_{n,m}$, and defines   the \emph{symplectic Khovanov cohomology} of the braid closure $\kappa = \kappa(\beta)$ of $\beta$ to be 
\begin{align}
 Kh^{\symp}(\kappa(\beta)) := HF(\Sym(\ul{L}_{\hs}), \phi^{(n)}_{\beta}(\Sym(\ul{L}_{\hs})))
\end{align}
The main result of \cite{SeidelSmith} is that $Kh^{\symp}(\kappa(\beta))$ is a link invariant, i.e. it is independent of the representation of $\kappa$ as a braid closure.

One can give an equivalent definition of symplectic Khovanov cohomology using $\ul{L}_{\hs}$ instead of its product $\Sym(\ul{L}_{\hs})$. 
Let $E:=A_{m-1}$ denote the Milnor fiber.  
To each simple braid element $\sigma_i$ we associate the symplectomorphism $\phi_{\sigma_i}: E \to E$ given by the Dehn twist along the $i^{th}$ matching sphere (lying above 
the line joining $i+\sqrt{-1}$ and $i+1+\sqrt{-1}$). This defines a representation $Br_m \to \pi_0\mathrm{Symp}_{ct}(E)$.
Let $Br_n \hookrightarrow Br_m$ be the embedding of the left $n$ strands. 
We have the restricted representation
$Br_n \to \pi_0\mathrm{Symp}_{ct}(E)$
so each braid $\beta \in Br_n$ determines a symplectomorphism $\phi_{\beta}$ of $E$ that acts as the identity on the `right half'. 

Then \cite{Manolescu06}
\begin{align}
 Kh^{\symp}(\kappa(\beta))=HF(\ul{L}_{\hs}, \phi_{\beta}(\ul{L}_{\hs})).
\end{align}

Let $E_{1/2}$ be the $A_{n-1}$-Milnor fiber with its standard Lefschetz fibration $\pi_{E_{1/2}}$.
We can define the symplectomorphism $\phi_{\beta}$ of $E_{1/2}$ for $\beta \in Br_n$ accordingly.
Then $\phi_{\beta}$ induces an $A_{\infty}$ endofunctor of $\cFS^{cyl,j}(\pi_{E_{1/2}})$, and hence an $A_{\infty}$-bimodule $P_{\beta}^{(j)}$ over $\cFS^{cyl,j}(\pi_{E_{1/2}})$.

\begin{definition}\label{defn:AnnSympKh}
The \emph{symplectic annular Khovanov homology} of the braid $\beta \in Br_n$ is the direct sum of Hochschild homology groups
\begin{align}
 AKh^{\symp}(\beta):=\oplus_{j=0}^n HH_*(\cFS^{cyl,j}(\pi_{E_{1/2}}),P_{\beta}^{(j)})
\end{align}
where, by convention, $\cFS^{cyl,j}(\pi_{E_{1/2}})=\mathbb{K}$ and $P_{\beta}^{(j)}=\mathbb{K}$ is the diagonal bimodule when $j=0$ (cf. Remark \ref{r:nm}).
\end{definition}

Symplectic annular Khovanov homology is clearly a braid invariant. It is not an invariant of the link closure of the braid.  Its definition is motivated by the fact that combinatorial annular Khovanov homology, originally defined by a diagrammatic calculus for links in a solid torus \cite{APS04}, is itself isomorphic to a direct sum of 
Hochschild homologies of  braid bimodules over the extended arc algebras, a recent theorem of \cite{BPW19} establishing a conjecture due to \cite{AGW}.
Indeed, Theorem \ref{t:main} and the same formality arguments for bimodules as in \cite{AbouzaidSmith19} would prove that

\begin{proposition} \label{p:equal}
 $AKh^{\symp}(\beta)$ is isomorphic to annular Khovanov homology when $\mathbb{K}$ has characteristic $0$.
\end{proposition}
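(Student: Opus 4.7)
The plan is to identify, for each $j$, both the category $\cFS^{cyl,j}(\pi_{E_{1/2}})$ and the braid bimodule $P_{\beta}^{(j)}$ with their algebraic counterparts, and then invoke \cite{BPW19}. The combinatorial identification of annular Khovanov homology proved in \cite{BPW19} asserts
\[
AKh(\kappa(\beta)) \cong \bigoplus_{j=0}^{n} HH_{*}(K_{j,n}^{\alg},\, P_{\beta}^{(j),\alg}),
\]
where $P_{\beta}^{(j),\alg}$ is the bimodule over $K_{j,n}^{\alg}$ associated diagrammatically to $\beta$. So it suffices to produce, for each $j$, a quasi-isomorphism of $A_\infty$-bimodules between $P_{\beta}^{(j)}$ and (the image of) $P_{\beta}^{(j),\alg}$ under a Morita equivalence between $\cFS^{cyl,j}(\pi_{E_{1/2}})$ and $\rperf K_{j,n}^{\alg}$.

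First I would upgrade Theorem \ref{t:main} to the cylindrical category: by Proposition \ref{p:embedding} combined with Proposition \ref{p:generation} (which uses Claim \ref{p:Serre}), the canonical embedding $D^{\pi}\cFS(\pi_{j,n}) \hookrightarrow D^{\pi}\cFS^{cyl,j}(\pi_{E_{1/2}})$ is a quasi-equivalence, so Theorem \ref{t:main} yields a Morita equivalence $D^{\pi}\cFS^{cyl,j}(\pi_{E_{1/2}}) \simeq \rperf K_{j,n}^{\alg}$. Under this equivalence, the collection $\{\ul{L}_{\ul{\lambda}}\}_{\lambda \in \Lambda_{j,n}}$ of projective Lagrangians from Section \ref{s:ArcAlg} maps to the collection of indecomposable projectives $\{P(\lambda)\}$ of Section \ref{s:Dictionary}.

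The heart of the argument is then the identification of bimodules. The symplectomorphism $\phi_\beta$ acts on the projective Lagrangians, yielding an $A_\infty$-bimodule
\[
P_{\beta}^{(j)} = \bigoplus_{\lambda,\mu \in \Lambda_{j,n}} CF\bigl(\ul{L}_{\ul{\lambda}},\, \phi_{\beta}(\ul{L}_{\ul{\mu}})\bigr)
\]
over the extended symplectic arc algebra $\cK_{j,n}^{\symp}$. On cohomology, the results of Section \ref{s:AlgIsom} (plus a direct inspection of how $\phi_\beta$ acts on weights, since $Br_n$ acts by admissible-tuple moves as in Section \ref{s:TypeA}) identify this bimodule with $P_{\beta}^{(j),\alg}$ as a bimodule over the cohomological arc algebra $K_{j,n}^{\alg}$. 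The key remaining point is to promote this identification of cohomology to a quasi-isomorphism of $A_\infty$-bimodules, i.e.\ to prove bimodule formality. This is carried out following \cite{AbouzaidSmith19}: using the nc-vector field $b$ constructed in Section \ref{s:ncField} (which still makes sense after coupling to $\phi_\beta$-twisted boundary conditions, since $\phi_\beta$ is supported away from the divisor $D_0$ governing $b$), one equips each $\phi_\beta(\ul{L}_{\ul{\lambda}})$ with a $b$-equivariant structure, obtained by transporting the equivariant structure on $\ul{L}_{\ul{\lambda}}$ from Lemma \ref{l:Lpure} via the exact symplectic isotopy. Purity of the resulting enlarged collection $\{\ul{L}_{\ul{\lambda}}\} \cup \{\phi_\beta(\ul{L}_{\ul{\mu}})\}$ is then verified by the same inductive total-ordering argument as in Proposition \ref{p:bStructure}: minimal-degree generators lie in rank-one subspaces and hence automatically have weight equal to degree, and Lemmas \ref{l:cyclic} and \ref{l:constantmap} propagate purity across all of $H^{*}$ of the bimodule. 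By Seidel's criterion (Theorem \ref{t:seidel-formality}), the $A_\infty$-bimodule is formal, and so quasi-isomorphic to its cohomology bimodule $P_{\beta}^{(j),\alg}$.

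The final step is routine: Hochschild homology is Morita invariant and sends quasi-isomorphic bimodules to isomorphic groups, so summing over $j$ and combining with \cite{BPW19} yields the asserted isomorphism $AKh^{\symp}(\beta) \cong AKh(\kappa(\beta))$. The principal obstacle is the bimodule formality step: one must check that the nc-vector field $b$ constructed on the extended arc algebra extends compatibly to the braid-twisted situation, and that purity survives the enlargement of the generating collection; both points are non-trivial but are genuinely parallel to the compact case handled in \cite{AbouzaidSmith19}, with the additional bookkeeping for thimble components and non-compact Lagrangians controlled exactly as in the proofs of Lemmas \ref{l:Lpure} and \ref{l:NoSideBubbling}.
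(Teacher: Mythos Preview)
Your approach is essentially the same as the paper's: identify the categories via Theorem~\ref{t:main}, invoke the bimodule formality arguments of \cite{AbouzaidSmith19}, and appeal to \cite{BPW19} on the combinatorial side. The paper gives only a one-sentence sketch, and you have correctly identified the ingredients.

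One point deserves comment. Your cohomological identification of $P_\beta^{(j)}$ with $P_\beta^{(j),\alg}$ via ``direct inspection of how $\phi_\beta$ acts on weights'' is too quick: $\phi_\beta(\ul{L}_{\ul\mu})$ is not itself a weight Lagrangian, so neither the groups $HF(\ul{L}_{\ul\lambda}, \phi_\beta(\ul{L}_{\ul\mu}))$ nor the module maps on them are covered by Section~\ref{s:AlgIsom}. In \cite{AbouzaidSmith19} this is handled by factoring $\beta$ into elementary crossings and matching the cup, cap, and crossing bimodules one at a time with their diagrammatic counterparts, proving formality for each piece and then tensoring; the extension of that package to the non-compact extended-arc setting is what the paper means by ``the same formality arguments''. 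Your alternative of a single direct purity argument for the full braid bimodule is in the spirit of the paper's later treatment of the horseshoe bimodule (Proposition~\ref{p:diagonalbimod}), but note that Lemmas~\ref{l:cyclic} and~\ref{l:constantmap} rely on circle-diagram combinatorics specific to pairs of weight Lagrangians, and you would need to supply analogues for the twisted collection $\{\ul{L}_{\ul\lambda}\}\cup\{\phi_\beta(\ul{L}_{\ul\mu})\}$ before the induction of Proposition~\ref{p:bStructure} goes through; the elementary-tangle route avoids this.
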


We will prove Theorem \ref{t:sseq} by constructing a spectral sequence $AKh^{\symp}(\beta) \Rightarrow Kh^{\symp}(\kappa(\beta))$.

\begin{remark}\label{r:fixedFloer}
One could also define a braid invariant as the direct sum of fixed point Floer homologies of $\phi_\beta$ on $\cY_{j,n}$ over $j \in \{0,\ldots, n\}$ (where `partial wrapping' would remove fixed points at infinity).
It seems likely that this geometric definition would recover that of Definition \ref{defn:AnnSympKh}, but establishing such an isomorphism is beyond the scope of this paper. See \cite{Seidel2.5} for closely related results.
\end{remark}

\begin{remark}
 There is an $\liesl_2$ action on $AKh^{\symp}(\beta)$, by combining  \cite{GLW18} and Proposition \ref{p:equal}, which implies a rank inequality 
 $$\rank(HH_*(\cFS^{cyl,j}(\pi_{E_{1/2}}),P_{\beta}^{(j)})) \le \rank(HH_*(\cFS^{cyl,j+1}(\pi_{E_{1/2}}),P^{(j+1)}_{\beta}))$$
 for all $j< n/2 $.
Coupled with Remark \ref{r:fixedFloer}, this predicts a non-trivial existence result for periodic points of the symplectomorphisms of $\cY_{j,n}$ associated to braids. 
It would be interesting to construct the $\liesl_2$ action symplectic-geometrically.
\end{remark}

\subsection{Embedding of algebras}
Let $E:=A_{m-1}$ and $\pi_E$ be the standard Lefschetz fibration.
Let $W_1:=\{re(z) < n+\frac{1}{2}\}$ and $W_2:=\{re(z) > n+\frac{1}{2}\}$.
Note that $\cFS^{cyl,j}_{W_1}(\pi_E)=\cFS^{cyl,j}(\pi_{E_{1/2}})$.
As explained in Section \ref{sss:CanEmbed}, there is a faithful $A_{\infty}$ functor $\sqcup \ul{K}: \cFS^{cyl,j}_{W_1}(\pi_E) \to \cFS^{cyl,n}(\pi_E)$
for each object $\ul{K}$ of $\cFS^{cyl,n-j}_{W_2}(\pi_E)$.

By essentially the same argument, we can prove that 
\begin{align}
 HF(\ul{L_0} \sqcup \ul{K_0}, \ul{L_1} \sqcup \ul{K_1})=HF(\ul{L_0},\ul{L_1}) \otimes HF(\ul{K_0},\ul{K_1})
\end{align}
for $\ul{L}_i \in \cFS^{cyl,j}_{W_1}(\pi_E)$ and $\ul{K}_i \in \cFS^{cyl,j}_{W_2}(\pi_E)$.
Varying either the $\ul{L}_i$ or $\ul{K}_i$, Floer multiplication amongst the corresponding groups also respects the tensor product structure.
Let $0 \le j \le n$ be an integer.
For each $\lambda \in \Lambda_{j,n}$, one can define an object $\ul{L}_{\ul{\lambda}}$ in  $\cFS^{cyl,j}_{W_1}(\pi_E)$
by forgetting the right $n$ critical values (see Figure \ref{fig:Embedding}). 
Our convention is $\cFS^{cyl,j}_{W_1}(\pi_E)=\mathbb{K}$ when $j=0$, i.e. that the category contains a unique object whose endomorphism algebra is the ground field concentrated in degree zero;  the object  $\ul{L}_{\ul{\lambda}}$ is by definition the unique non-zero object in the category in this case.
For the other extreme, when $j=n$, the object  $\ul{L}_{\ul{\lambda}}$ is the tuple of Lagrangian thimbles (cf. Corollary \ref{c:nm}).
Similarly, for each $\mu \in \Lambda_{n-j,m-n}$, one can define an object $\ul{K}_{\ul{\mu}}$ in  $\cFS^{cyl,n-j}_{W_2}(\pi_E)$ by forgetting the left $n$ critical values.
We have an algebra isomorphism
\begin{align}
 \bigoplus \, HF(\ul{L}_{\lambda_0} \sqcup \ul{K}_{\mu_0}, \ul{L}_{\lambda_1} \sqcup \ul{K}_{\mu_1}) =K^{\symp}_{j,n} \times K^{\symp}_{n-j,m-n} \label{eq:DGproduct}
\end{align} 
where the sum is over all $\lambda_0,\lambda_1 \in \Lambda_{j,n}$ and $\mu_0,\mu_1 \in \Lambda_{n-j,m-n}$.

\begin{figure}[h]
  \includegraphics{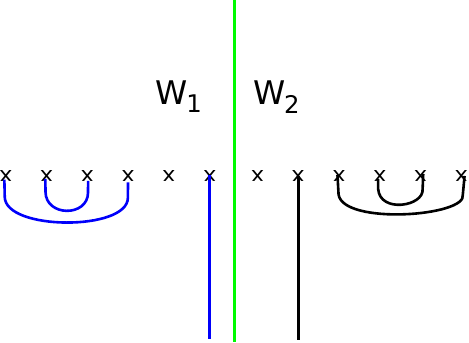}
  \caption{Lagrangian tuples $\ul{L}_{\ul{\lambda}}$ (blue, left) and $\ul{K}_{\ul{\mu}}$ (black, right)}\label{fig:Embedding}
 \end{figure}

Let $A$ be the $A_{\infty}$ full subcategory of $\cFS^{cyl,n}(\pi_E)$ consisting of objects  $\ul{L}_{\lambda} \sqcup \ul{K}_{\mu}$ over all 
$\lambda \in \Lambda_{j,n}$, $\mu\in \Lambda_{n-j,m-n}$ and $j=0,\dots,n$.
For each integer $0 \le j \le n$, we have the full subcategory $A_j$ of $A$  given by the objects  $\ul{L}_{\lambda} \sqcup \ul{K}_{\mu}$ over all 
$\lambda \in \Lambda_{j,n}$ and $\mu\in \Lambda_{n-j,m-n}$.

\begin{lemma}\label{l:productDG}
For each integer $0 \le j \le n$, the endomorphism algebra of the objects in $A_j$ is formal, and hence
 quasi-isomorphic to the right hand side of \eqref{eq:DGproduct}.
\end{lemma}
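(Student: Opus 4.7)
The plan is to reduce formality of the endomorphism algebra of $A_j$ to a K\"unneth-type splitting, so that the statement follows from formality of each factor (which is the main result of Section \ref{s:formality}). The key geometric input is that all paths defining objects $\ul{L}_{\lambda} \sqcup \ul{K}_{\mu}$ project under $\pi_E$ into the disjoint open regions $W_1$ and $W_2$, respectively, which allows a clean decomposition of the Floer-theoretic moduli. First, I would choose Floer data $(A_{\ul{L}_{\lambda_0} \sqcup \ul{K}_{\mu_0}, \ul{L}_{\lambda_1} \sqcup \ul{K}_{\mu_1}}, H, J)$ of split form with respect to the decomposition, with Hamiltonian perturbation vector fields supported separately in $\pi_E^{-1}(W_1)$ and $\pi_E^{-1}(W_2)$, and similarly with split domain-dependent data $(A_S, K, J)$ over each $S \in \cR^{d+1,h}$. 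With such choices, the Hamiltonian chords decompose as ordered pairs $(\ul{y}, \ul{z})$, giving a tensor-product decomposition of the Floer cochain groups that refines the cohomological identity already established in the paragraph preceding the lemma.

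Next I would analyse the moduli spaces $\cR^{d+1,h}(\ul{x}_0; \ul{x}_d, \dots, \ul{x}_1)$ via the tautological correspondence of Lemma \ref{l:tautologicalCorr}. Given a solution $u$ with asymptotics split as above, the associated pair $(\pi_\Sigma, v)$ with $v : \Sigma \to E$ has boundary components whose Lagrangian labels project (under $\pi_E$) into either $W_1$ or $W_2$. Applying the open mapping theorem to $\pi_E \circ v$, each connected component of $\Sigma$ must map into precisely one of $\pi_E^{-1}(W_1)$ or $\pi_E^{-1}(W_2)$, giving a disjoint-union decomposition $\Sigma = \Sigma_1 \sqcup \Sigma_2$ and correspondingly $v = v_1 \sqcup v_2$. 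Conversely, any pair of such solutions glues to give an element of $\cR^{d+1,h}$, with interior marked points distributed between $\Sigma_1$ and $\Sigma_2$ according to where the preimage of $D_{HC}$ lies (summed over all such distributions; this is the origin of the tensor-product convolution on the $A_\infty$ side). Regularity for the split moduli follows from the independent choice of generic perturbations on each side, and can be arranged consistently over all $(d,h)$ via the usual inductive construction as in Section \ref{sss:FloerData}. This identifies the $A_\infty$ endomorphism algebra of $A_j$ with the $A_\infty$ tensor product of the restrictions of $\cFS^{cyl,j}_{W_1}(\pi_E) \simeq \cFS^{cyl,j}(\pi_{E_{1/2}})$ and $\cFS^{cyl,n-j}_{W_2}(\pi_E) \simeq \cFS^{cyl,n-j}(\pi_{E_{1/2}'})$ to their weight objects.

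By the main formality theorem of Section \ref{s:formality} applied to each factor (an $A_{r-1}$-Milnor fibre with $r = n$ and $r = m-n$ respectively), both constituent $A_\infty$ algebras are formal. A tensor product of two formal $A_\infty$ algebras is itself formal -- passing to minimal models kills all higher operations on each side, and the tensor product of two such minimal models has only $\mu^2$ -- so $A_j$ is formal and quasi-isomorphic to $K^{\symp}_{j,n} \otimes K^{\symp}_{n-j,m-n}$, which is exactly the right-hand side of \eqref{eq:DGproduct}. The main obstacle is the technical bookkeeping of consistent split Floer data: one must verify that transversality (as in Lemmas \ref{l:regularity} and \ref{l:regularity2}) can be achieved within the class of split perturbations, and that the universal families of domain-dependent data can be made split in a manner compatible with gluing across the boundary strata of $\overline{\cR}^{d+1,h}$. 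This is routine since independent perturbations on the two factors suffice for generic regularity, but requires care to combine with the marked-points-neighbourhood constraints of Section \ref{sss:EndsAndNeighborhoods}.
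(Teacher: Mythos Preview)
Your proposal is correct and follows the K\"unneth route that the paper explicitly acknowledges as one valid approach (citing \cite{Amorim}), but the paper chooses instead to give details for a different argument. Rather than splitting the moduli and reducing to formality of each factor, the paper applies the nc-vector field and purity machinery of Section~\ref{s:formality} \emph{directly} to the product Lagrangians $\ul{L}_\lambda \sqcup \ul{K}_\mu$: each such object is pure with respect to $b$ for the same reason as in Lemma~\ref{l:Lpure}, and one then reruns the induction of Proposition~\ref{p:bStructure} with respect to a lexicographic total order on $\Lambda_{j,n}\times\Lambda_{n-j,m-n}$, invoking the tensor-product structure of the cohomological algebra only to supply the analogues of Lemmas~\ref{l:cyclic}, \ref{l:constantmap}, and~\ref{l:3choose1} (the last of which is replaced by a short case analysis, Claim~\ref{c:exist}).

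Your approach is more modular---it treats formality of each factor as a black box and deduces formality of the tensor product---but it front-loads the technical work into establishing that the $A_\infty$ endomorphism algebra really is the $A_\infty$ tensor product of the two factors, which requires the split-Floer-data and split-regularity arguments you flag at the end. The paper's approach avoids setting up the $A_\infty$ tensor product formalism and instead reuses the already-built nc-vector-field machinery wholesale; the price is that the purity induction must be redone in the product setting rather than inherited from the factors.
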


\begin{proof} This can be seen as a version of the K\"unneth theorem, i.e. 
that the $A_{\infty}$ structure on a product Lagrangian is quasi-isomorphic to the tensor product of a dg model for each factor (see \cite{Amorim} for the relevant theory).

Alternatively, and more concretely, in our case we observe that each $\ul{L}_{\lambda} \sqcup \ul{K}_{\mu}$ is pure with respect to the nc vector field for the same reason in Lemma \ref{l:Lpure}.
Moreover,  the cohomological algebra of the collection of $\ul{L}_{\lambda} \sqcup \ul{K}_{\mu}$ splits as a tensor product,
so we can repeat the strategy from Section \ref{s:formality}. We next give the details of this second approach. 

We choose a total ordering $(\le^T_1)$ and $(\le^T_2)$ for $\Lambda_{j,n}$ and $\Lambda_{n-j,m-n}$, respectively, 
both of which refine the Bruhat partial ordering.
We can define a total order $(\le^T)$ on the pairs $(\lambda,\mu) \in \Lambda_{j,n} \times \Lambda_{n-j,m-n}$ by declaring that 
\begin{align}
 (\lambda_0,\mu_0)<^T(\lambda_1,\mu_1) \text{ if and only if } (\lambda_0<^T_1\lambda_1) \text{ or } (\lambda_0=\lambda_1 \text{ and } \mu_0<^T_2\mu_1)
\end{align}
Now, we can run the induction in the proof of Proposition \ref{p:bStructure}.

For the rest of the proof, we use $L_{\lambda,\mu}$ to denote $\ul{L}_{\lambda} \sqcup \ul{K}_{\mu}$.
Let $(\eta_1,\eta_2 ) \in \Lambda_{j,n} \times \Lambda_{n-j,m-n}$ and suppose that a $b$-equivariant structure on $\ul{L}_{\lambda,\mu}$
has been chosen for all $(\lambda,\mu) >^T (\eta_1,\eta_2)$ such that $b$ is moreover pure.
We want to choose a $b$-equivariant structure on $\ul{L}_{\eta_1,\eta_2}$ such that $b$ is pure on 
$\{\ul{L}_{\lambda,\mu}\}_{(\lambda,\mu) \ge^T (\eta_1,\eta_2)}$.

Recall the notation for the maximal element $w_{\lambda}$ associated to $\lambda \in \Lambda_{j,n}$ from \eqref{eq:maxElt}.  We choose a  $b$-equivariant structure on $\ul{L}_{\eta_1,\eta_2}$ such that the minimal degree element of
 $HF(\ul{L}_{\eta_1,\eta_2}, \ul{L}_{w_{\eta_1},w_{\eta_2}})$
 has weight equal to the degree.
 By Lemma \ref{l:cyclic}, $HF(\ul{L}_{\eta_1,\eta_2}, \ul{L}_{w_{\eta_1},w_{\eta_2}})$ 
 is cyclic as a module over $HF(\ul{L}_{w_{\eta_1},w_{\eta_2}},\ul{L}_{w_{\eta_1},w_{\eta_2}})$ 
 so weight equals degree for all pure degree elements in $HF(\ul{L}_{\eta_1,\eta_2}, \ul{L}_{w_{\eta_1},w_{\eta_2}})$. By Lemma \ref{l:cyclic} and \ref{l:constantmap}, and exactly the same argument  as in Proposition \ref{p:bStructure}, we know 
that all elements of $HF(\ul{L}_{w_{\eta_1},w_{\eta_2}},\ul{L}_{\eta_1,\eta_2})$
have weight equal to their degrees.

Now, suppose that $(\lambda,\mu) >^T (\eta_1,\eta_2)$ but $(\lambda,\mu) \neq (w_{\eta_1},w_{\eta_2})$, 
and purity holds for all of  $HF(L_{\lambda',\mu'}, L_{\eta_1,\eta_2})$, $HF(L_{\eta_1,\eta_2}, L_{\lambda',\mu'})$
such that $(\lambda',\mu') >^T (\lambda,\mu)$.

\begin{claim}\label{c:exist}
 If $HF(L_{\lambda,\mu}, L_{\eta_1,\eta_2}) \neq 0$, then there exists $(\lambda',\mu')  >^T (\lambda,\mu)$
 such that both $\ul{\lambda} \lambda' \ol{\eta_1}$ and $\ul{\mu} \mu' \ol{\eta_2}$ are oriented circle diagrams.
\end{claim}

Assuming Claim \ref{c:exist}, 
one can show that purity holds also for  $HF(L_{\lambda,\mu}, L_{\eta_1,\eta_2})$, $HF(L_{\eta_1,\eta_2}, L_{\lambda,\mu})$ exactly as in  the last two paragraphs of the proof of Proposition \ref{p:bStructure}.
Therefore, the result follows by induction.
\end{proof}

\begin{proof}[Proof of Claim \ref{c:exist}]
 By assumption, we have
 \begin{align}
  (w_{\eta_1},w_{\eta_2})>^T (\lambda,\mu) >^T (\eta_1,\eta_2) \label{eq:ineq}
 \end{align}
By the definition of $>^T$, we have $\lambda=\eta_1$ or $\lambda >^T_1 \eta_1$.

If $\lambda=\eta_1$, then $\mu >^T_2 \eta_2$.
If $\mu \neq w_{\eta_2}$, then  by (2) of Lemma \ref{l:3choose1},
there exists $\mu' >^T_2 \mu, \eta_2$ such that $\ul{\mu} \mu' \ol{\eta_2}$ is an oriented circle diagram.
We can take $\lambda'=\eta_1$ so that we have $(\lambda',\mu')  >^T (\lambda,\mu)$ and 
both $\ul{\lambda} \lambda' \ol{\eta_1}$ and $\ul{\mu} \mu' \ol{\eta_2}$ are oriented circle diagrams.

If $\lambda=\eta_1$ and $\mu = w_{\eta_2}$, then by \eqref{eq:ineq}, we have $\eta_1 \neq w_{\eta_1}$ 
(in fact, $\nu=w_{\nu}$ only when $\nu$ is the maximal weight, see \eqref{eq:maxElt}).
In this case, we can take $(\lambda',\mu')=(w_{\eta_1},w_{\eta_2})$.

If $\lambda >^T_1 \eta_1$,  then either $\lambda=w_{\eta_1}$ or $\lambda \neq w_{\eta_1}$.
If $\lambda \neq w_{\eta_1}$, then by (2) of Lemma \ref{l:3choose1}, 
there exists $\lambda' >^T_1 \lambda, \eta_1$ such that $\ul{\lambda} \lambda' \ol{\eta_1}$ is an oriented circle diagram.
By $HF(L_{\lambda,\mu}, L_{\eta_1,\eta_2}) \neq 0$, there exists $\mu'$ such that $\ul{\mu} \mu' \ol{\eta_2}$ is an oriented circle diagram.
We can pick any such $\mu'$ because $(\lambda',\mu')  >^T (\lambda,\mu)$ no matter which $\mu'$ we pick.

If $\lambda >^T_1 \eta_1$ but $\lambda = w_{\eta_1}$, then we pick $\lambda'=\lambda=w_{\eta_1}$.
Since $(w_{\eta_1},w_{\eta_2})>^T (\lambda,\mu)$, we have $\mu <^T_2 w_{\eta_2}$.
If $\eta_2 \neq w_{\mu}$, then by (2) of Lemma \ref{l:3choose1}, 
there exists $\mu' >^T_2 \mu, \eta_2$ such that $\ul{\mu} \mu' \ol{\eta_2}$ is an oriented circle diagram so we have done.
If $\eta_2 = w_{\mu}$, then $\mu <^T_2 \eta_2$ so we can take $\mu'=\eta_2$
becasue it implies that $(\lambda',\mu')=(w_{\eta_1},\eta_2)  >^T (w_{\eta_1},\mu)=(\lambda,\mu)$
and both $\ul{\lambda} \lambda' \ol{\eta_1}$ and $\ul{\mu} \mu' \ol{\eta_2}$ are oriented circle diagrams.

This completes the proof.
\end{proof}

It is not clear whether the endomorphism algebra of the direct sum of all the objects in $A$ is formal or not. 
However, the dg category of perfect modules $\rperf A$ is formal in the following  weaker sense.

\begin{lemma}\label{l:switchingProjective}
$\rperf A$ is quasi-equivalent to $\rperf K_{n,m}^{\alg}$.
\end{lemma}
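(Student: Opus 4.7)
The strategy is to show that the collection of objects $\{\ul{L}_{\lambda} \sqcup \ul{K}_{\mu}\}$ of $A$ split-generates $D^{\pi}\cFS^{cyl,n}(\pi_{E})$, from which the quasi-equivalence $\rperf A \simeq \rperf K_{n,m}^{\alg}$ follows immediately via Theorem \ref{t:main} and standard derived Morita theory (an idempotent-complete pretriangulated envelope is determined by any split-generating set).

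The first step is to reduce split-generation of $D^{\pi}\cFS^{cyl,n}(\pi_{E})$ to generation of the thimble tuples $\ul{T}^{I}$ for $I \in \cI$; this is provided by Proposition \ref{p:generation}, which we are entitled to invoke since we are working under Claim \ref{p:Serre}. For each $I \in \cI$, split it as $I = I_{1} \sqcup I_{2}$ with $I_{1} \subset \{1,\dots,n\}$ and $I_{2} \subset \{n+1,\dots,2n\}$. The corresponding thimble paths are contained in the half-planes $W_{1}$ and $W_{2}$ respectively, so $\ul{T}^{I} = \ul{T}^{I_{1}} \sqcup \ul{T}^{I_{2}}$ with $\ul{T}^{I_{1}} \in \cFS^{cyl,|I_{1}|}_{W_{1}}(\pi_{E}) = \cFS^{cyl,|I_{1}|}(\pi_{E_{1/2}})$ and $\ul{T}^{I_{2}} \in \cFS^{cyl,|I_{2}|}_{W_{2}}(\pi_{E})$.

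Next, the proof of Theorem \ref{t:main}, applied inside the half-plane category $\cFS^{cyl,|I_{1}|}(\pi_{E_{1/2}})$, shows that the thimble tuple $\ul{T}^{I_{1}}$ (the standard module associated to the weight $\lambda$ with $\lambda^{-1}(\vee)=I_{1}$) can be written as an iterated cone built from the objects $\ul{L}_{\ul{\lambda}}$ with $\lambda \in \Lambda_{|I_{1}|,n}$; the argument is the inductive use of the exact triangle \eqref{eq:ExactT}, combined with sliding invariance. Applying Corollary \ref{c:Triangle} to adjoin the fixed object $\ul{T}^{I_{2}}$ on the right side produces the same iterated cone decomposition of $\ul{T}^{I_{1}} \sqcup \ul{T}^{I_{2}} = \ul{T}^{I}$ by objects of the form $\ul{L}_{\ul{\lambda}} \sqcup \ul{T}^{I_{2}}$. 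A symmetric argument applied to $\ul{T}^{I_{2}}$ inside $W_{2}$, now using Corollary \ref{c:Triangle} to adjoin the fixed $\ul{L}_{\ul{\lambda}}$ on the left, further resolves each $\ul{T}^{I_{2}}$ into a twisted complex of $\ul{K}_{\ul{\mu}}$ with $\mu \in \Lambda_{|I_{2}|,m-n}$. Combining these two steps expresses every $\ul{T}^{I}$ as a twisted complex over objects $\ul{L}_{\ul{\lambda}} \sqcup \ul{K}_{\ul{\mu}}$ of $A$.

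Since the $\ul{T}^{I}$ split-generate $D^{\pi}\cFS^{cyl,n}(\pi_{E}) \simeq \rperf K_{n,m}^{\alg}$ (by Theorem \ref{t:main} and Proposition \ref{p:generation}), the full subcategory $A$ also split-generates, and hence $\rperf A \simeq \rperf K_{n,m}^{\alg}$. The main potential obstacle is simply the bookkeeping of iterating Corollary \ref{c:Triangle} in both halves while keeping track of the product structure of Lagrangian tuples at the boundary of $W_{1}$ and $W_{2}$, but since all the relevant Lagrangians are pairwise disjoint away from the half-plane each lives in, the two generation arguments do not interact and can be carried out independently.
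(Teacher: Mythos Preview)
Your core argument---split each thimble tuple $\ul{T}^{I}$ into pieces lying over $W_1$ and $W_2$, then resolve each piece by the weight Lagrangians $\ul{L}_{\ul{\lambda}}$ (respectively $\ul{K}_{\ul{\mu}}$) using the exact triangles from the proof of Theorem~\ref{t:main} together with Corollary~\ref{c:Triangle}---is exactly the argument the paper gives, just spelled out in more detail than the paper's two-sentence proof.

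There is one unnecessary detour you should remove. You frame the argument as showing that $A$ split-generates all of $D^{\pi}\cFS^{cyl,n}(\pi_E)$, and for this you invoke Proposition~\ref{p:generation}, which in turn rests on Claim~\ref{p:Serre}. But Claim~\ref{p:Serre} is only sketched in the paper, and Remark~\ref{r:claim} explicitly says it is not needed for the results of Section~\ref{s:intro} (which include Theorem~\ref{t:semiorthogonal}, for which this lemma is a key input). The paper avoids this by arguing more directly: once you show that the objects of $A$ generate every thimble $\ul{T}^{I}$, and conversely that every object of $A$ is generated by thimbles (immediate from the same exact-triangle machinery, or from the proof of Theorem~\ref{t:main}), then $A$ and $\{\ul{T}^{I}\}_{I\in\cI}$ have the same idempotent-complete triangulated closure inside $\cFS^{cyl,n}(\pi_E)$. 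That closure is $\rperf K_{n,m}^{\alg}$ by Theorem~\ref{t:main}, regardless of whether it coincides with the whole of $D^{\pi}\cFS^{cyl,n}(\pi_E)$. So drop the appeal to Proposition~\ref{p:generation} and compare triangulated hulls directly; the rest of your write-up stands.
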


\begin{proof}
 By iteratively applying exact triangles, we see that the objects in $A_j$ can generate thimbles $\ul{T}=\{T_1,\dots,T_n\}$ in $\cFS^{cyl,n}(\pi_E)$ such that $j$ of the $T_i$'s are contained in 
 $\pi_E^{-1}(W_1)$, and the remaining $n-j$ are contained in $\pi_E^{-1}(W_2)$.
 Therefore, every thimble of $\cFS^{cyl,n}(\pi_E)$ can be generated by objects in $A$, so the result follows. 
\end{proof}

\begin{lemma}\label{l:semiOrtho}
Let $X_0 \in A_{j_0}$ and $X_1 \in A_{j_1}$.
If $HF(X_0,X_1) \neq 0$, then $j_0 \ge j_1$.
As a result, $\langle A_n,\dots,A_0 \rangle$ is a semi-orthogonal decomposition of $A$.
\end{lemma}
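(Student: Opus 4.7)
The plan is to reduce the vanishing statement to a combinatorial fact about the Bruhat order on cardinality-$n$ subsets of $\{1,\dots,2n\}$, where Lemma~\ref{l:CohVanishing} already does the Floer-theoretic work. The strategy runs in three steps.

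First, I would show that every object of $A_j$ lies in the triangulated hull (inside $\cFS^{cyl,n}(\pi_E)$) of the thimble tuples $\ul T^S$ indexed by cardinality-$n$ subsets $S \subset \{1,\dots,2n\}$ with $|S \cap \{1,\dots,n\}| = j$. By the generation argument appearing in the proof of Theorem~\ref{t:main}, an object $\ul L_\lambda \in \cFS^{cyl,j}_{W_1}(\pi_E)$ with $\lambda \in \Lambda_{j,n}$ is generated by the collection $\{\ul T^I : I \subset \{1,\dots,n\},\ |I| = j\}$, and symmetrically $\ul K_\mu \in \cFS^{cyl,n-j}_{W_2}(\pi_E)$ is generated by $\{\ul T^J : J \subset \{n+1,\dots,2n\},\ |J| = n - j\}$. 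Applying the $A_\infty$ functors $(-) \sqcup \ul K_\mu$ and $\ul T^I \sqcup (-)$ of Section~\ref{sss:CanEmbed} successively, and invoking Corollary~\ref{c:Triangle} to transport exact triangles into $\cFS^{cyl,n}(\pi_E)$, the tuple $X = \ul L_\lambda \sqcup \ul K_\mu$ is realised as a twisted complex of objects $\ul T^I \sqcup \ul T^J = \ul T^{I \cup J}$; by construction every such index set $S = I \cup J$ satisfies $|S \cap \{1,\dots,n\}| = j$.

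Next, I would check the Bruhat-order obstruction. If $S_0, S_1 \subset \{1,\dots,2n\}$ both have cardinality $n$ and $j_i := |S_i \cap \{1,\dots,n\}|$ satisfy $j_0 < j_1$, then no bijection $f : S_0 \to S_1$ can satisfy $x \le f(x)$ for all $x \in S_0$: each element of $S_0 \cap \{n+1,\dots,2n\}$ (there are $n - j_0$ of them) would be forced into $S_1 \cap \{n+1,\dots,2n\}$ (only $n - j_1 < n - j_0$ slots), contradicting injectivity. Hence $S_0 \not\le S_1$ in the Bruhat order of Section~\ref{ss:directedsub}, and Lemma~\ref{l:CohVanishing} yields $HF(\ul T^{S_0}, \ul T^{S_1}) = 0$ for every such pair.

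Combining the two steps: for $X_0 \in A_{j_0}$ and $X_1 \in A_{j_1}$ with $j_0 < j_1$, the morphism complex $HF(X_0, X_1)$ computed in $\cFS^{cyl,n}(\pi_E)$ (and hence in the full subcategory $A$) is a twisted-complex subquotient of $\bigoplus_{S_0, S_1} HF(\ul T^{S_0}, \ul T^{S_1})$ ranging over pairs $(S_0, S_1)$ with $|S_i \cap \{1,\dots,n\}| = j_i$, so it vanishes term by term. Since $A = \bigcup_j A_j$ by definition, this semi-orthogonality is exactly what is needed to assemble the semi-orthogonal decomposition $\langle A_n, \dots, A_0 \rangle$. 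There is no real obstacle here: both the generation of $\ul L_\lambda$ and $\ul K_\mu$ by thimbles and the exactness of $\sqcup$ are already in place in Sections~\ref{s:TypeA} and~\ref{sss:CanEmbed}, and the remaining argument is a short pigeonhole on Bruhat order. The only step requiring genuine care is making explicit the double use of the embedding $\sqcup$ on the two halves $W_1$ and $W_2$, so that the index set $S \cap \{1,\dots,n\}$ is tracked consistently through the generation procedure.
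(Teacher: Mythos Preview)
Your proof is correct, but it takes a considerably more elaborate route than the paper's own argument. The paper dispatches the lemma in one line: if $j_0 < j_1$, then already the Floer \emph{cochain} complex $CF(X_0,X_1)$ is zero. The reason is a direct pigeonhole on generators. A generator of $CF(X_0,X_1)$ is an unordered $n$-tuple of intersection points inducing a bijection between the $n$ components of $X_0$ and the $n$ components of (positively wrapped) $X_1$. Under the wrapping convention of Section~\ref{sss:FloerCochain}, the $j_1$ components of $X_1$ lying over $W_1$ are pushed further left and hence remain over $W_1$; they can therefore only meet the $j_0$ components of $X_0$ over $W_1$. If $j_0 < j_1$ no such bijection exists, so there are no generators at all.

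Your argument instead works at the level of the derived (twisted-complex) category: you first resolve each $X_i$ by thimbles $\ul T^S$ with $|S\cap\{1,\dots,n\}| = j_i$, and then use a Bruhat-order pigeonhole together with Lemma~\ref{l:CohVanishing} to kill all the $HF(\ul T^{S_0},\ul T^{S_1})$. This is valid, and the double use of the $\sqcup$-functor on the $W_1$ and $W_2$ halves is fine since both orientations of~\eqref{eq:Wcondition} are permitted. But note that it proves a weaker statement than the paper's (vanishing of cohomology rather than vanishing of cochains), relies on more machinery (the generation results from the proof of Theorem~\ref{t:main}, Corollary~\ref{c:Triangle}, the Bruhat comparison in Lemma~\ref{l:CohVanishing}), and your phrase ``twisted-complex subquotient of $\bigoplus HF(\ul T^{S_0},\ul T^{S_1})$'' is imprecise---what you mean is that there is a spectral sequence with that $E_1$-page, which is enough for the vanishing conclusion. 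The paper's direct geometric observation avoids all of this.
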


\begin{proof}
 It is clear that $CF(X_0,X_1)=0$ if $j_0 < j_1$, because of the definition of Floer cochains via positive isotopies along $\mathbb{R} = \partial \mathbb{H}$. 
\end{proof}

 \begin{theorem}\label{t:semiOrtho}
  $\rperf K_{n,m}^{\alg}$ admits a semi-orthogonal decomposition $\langle \rperf A_n,\dots, \rperf A_0 \rangle$, where for each $j$, $\rperf A_j$ is quasi-equivalent to 
  $\rperf(K_{j,n}^{\alg} \times K_{n-j,m-n}^{\alg})$.
 \end{theorem}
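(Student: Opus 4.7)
The proof plan is to assemble the theorem from the preceding three lemmata, mediated by the identification $K^{\symp} \simeq K^{\alg}$ of Theorem \ref{t:main}. All the conceptual work has essentially been done; what remains is to pass from statements about the small $A_\infty$-subcategories $A_j \subset A$ to statements about the corresponding categories of perfect modules.

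First I would identify $\rperf A_j$ with $\rperf(K^{\alg}_{j,n} \otimes K^{\alg}_{n-j,m-n})$. By Lemma \ref{l:productDG}, the $A_\infty$-endomorphism algebra of $\bigoplus \ul{L}_\lambda \sqcup \ul{K}_\mu$ over $\lambda \in \Lambda_{j,n}$, $\mu \in \Lambda_{n-j,m-n}$ is formal and agrees as a graded algebra with $K^{\symp}_{j,n} \otimes K^{\symp}_{n-j,m-n}$ (this is what equation \eqref{eq:DGproduct} asserts, and formality promotes the graded-vector-space identification to a quasi-isomorphism of $A_\infty$-algebras). Since $A_j$ has a full exceptional collection coming from the objects $\ul{L}_\lambda \sqcup \ul{K}_\mu$ (whose endomorphism algebra splits as such a tensor product), the Yoneda embedding induces a quasi-equivalence $\rperf A_j \simeq \rperf(K^{\symp}_{j,n} \otimes K^{\symp}_{n-j,m-n})$. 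By Theorem \ref{t:main} applied to both factors (which is valid since $j \le n$ and $n-j \le m-n$, as $m = 2n$ in this section or the general $n \le m$ case), we can replace $K^{\symp}$ by $K^{\alg}$.

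Second, I would upgrade the semi-orthogonal decomposition at the level of $A$ provided by Lemma \ref{l:semiOrtho} to one at the level of $\rperf A$. Concretely, the vanishing $HF(X_0, X_1) = 0$ for $X_0 \in A_{j_0}$, $X_1 \in A_{j_1}$ with $j_0 < j_1$ implies that $\rperf A_j$, viewed as a full subcategory of $\rperf A$ via Yoneda, satisfies $\mathrm{Hom}_{\rperf A}(M_0, M_1) = 0$ when $M_i \in \rperf A_{j_i}$ and $j_0 < j_1$; this is immediate because the triangulated category $\rperf A_j$ is split-generated by the Yoneda modules of the objects of $A_j$, and hom-vanishing on generators propagates to all of $\rperf A_j$. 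Generation of $\rperf A$ by $\bigcup_j \rperf A_j$ is automatic since together the $A_j$ contain every object of $A$. These two properties are the definition of a semi-orthogonal decomposition $\rperf A = \langle \rperf A_n, \dots, \rperf A_0\rangle$.

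Finally, Lemma \ref{l:switchingProjective} gives a quasi-equivalence $\rperf A \simeq \rperf K^{\alg}_{n,m}$, and transporting the above decomposition along this equivalence yields the claimed semi-orthogonal decomposition of $\rperf K^{\alg}_{n,m}$. The only real work in the argument is the first step; the hard part there is not the algebraic bookkeeping but the appeal to Lemma \ref{l:productDG}, whose proof required the full formality machinery (a purity argument for the nc-vector field adapted to product objects). Everything else is formal manipulation of semi-orthogonal decompositions and Morita theory, and no new Floer-theoretic input is needed.
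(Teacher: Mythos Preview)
Your proof is correct and follows essentially the same route as the paper, which simply records that the result ``immediately follows from Lemmas \ref{l:productDG}, \ref{l:switchingProjective} and \ref{l:semiOrtho}''; you have merely unpacked what that sentence means. One small slip: the objects $\ul{L}_\lambda \sqcup \ul{K}_\mu$ do \emph{not} form an exceptional collection (their self-Floer cohomology is typically larger than $\mathbb{K}$), but this does not matter for your argument---all you need is that they generate $A_j$ and have the endomorphism algebra computed in \eqref{eq:DGproduct}, so Morita theory gives $\rperf A_j \simeq \rperf$ of that algebra.
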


 \begin{proof}
  This immediately follows from Lemmas \ref{l:productDG}, \ref{l:switchingProjective} and \ref{l:semiOrtho}. 
 \end{proof}
 Theorem \ref{t:semiOrtho} categorifies the identity ${m \choose n}  = \sum_{j=0}^n {n \choose j} {m-n  \choose n-j}$ for ranks of Grothendieck groups.

 \begin{remark}
  Even though we are primarily interested in the case $m=2n$ in this section, Theorem \ref{t:semiOrtho} holds for all $n<m$.   
  This algebraic result seems to be new and may be of independent interest.  \end{remark}

  We need another full subcategory of $\cFS^{cyl,n}(\pi_E)$ that is analogous to $A$.
  Let
  \begin{align}
   W^!_2:=\{n+\frac{1}{2}<re(z) <m+\frac{1}{2} \text{ and }im(z) <2\} \cup \{re(z) <m+\frac{1}{2} \text{ and }im(z) <\frac{1}{2}\}
  \end{align}
  and $W^!_1$ be the complement of $W^!_2$.
 For $\lambda \in \Lambda_{j,n}$ and $\mu \in \Lambda_{n-j,m-n}$, we can consider the corresponding
 upper Lagrangian tuples 
 $\ul{L}^!_{\ol{\lambda}} \in \cFS^{cyl,j}_{W^!_1}(\pi_E)$
 and  
 $\ul{K}^!_{\ol{\mu}} \in \cFS^{cyl,n-j}_{W^!_2}(\pi_E)$ (see Figure \ref{fig:AnotherEmbedding}).

\begin{figure}[h]
  \includegraphics{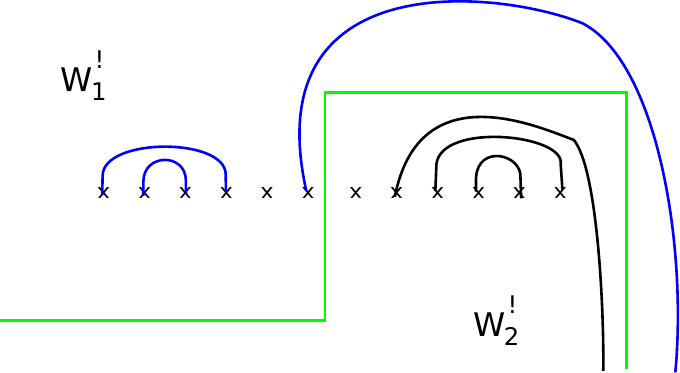}
  \caption{Lagrangian tuples $\ul{L}^!_{\ol{\lambda}}$ (blue) and $\ul{K}^!_{\ol{\mu}}$ (black)}\label{fig:AnotherEmbedding}
 \end{figure}

 Let $A^!$ be the $A_{\infty}$ full subcategory of $\cFS^{cyl,n}(\pi_E)$ consisting of objects  $\ul{L}^!_{\lambda} \sqcup \ul{K}^!_{\mu}$ over all 
$\lambda \in \Lambda_{j,n}$, $\mu\in \Lambda_{n-j,m-n}$ and $j=0,\dots,n$.
For each integer $0 \le j \le n$, we have the full subcategory $A^!_j$ of $A^!$  given by the objects  $\ul{L}^!_{\lambda} \sqcup \ul{K}^!_{\mu}$ over all 
$\lambda \in \Lambda_{j,n}$ and $\mu\in \Lambda_{n-j,m-n}$.
The following Koszulness property justifies the notation $(-)^!$.

\begin{lemma}\label{l:Koszul dual}
Let $X^! \in A^!_i$ and $X \in A_j$.
If $HF(X^!,X) \neq 0$, then $i=j$.
\end{lemma}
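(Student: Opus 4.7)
The plan is to reduce the lemma to a projection-disjointness argument in the base: after choosing suitable isotopy representatives and Floer data, the Lagrangian components of $X^!$ and $X$ split naturally into ``left'' and ``right'' groups whose projections under $\pi_E$ pair only within the same group, so a nonvanishing generator forces the left-group cardinalities on both sides to agree.

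First I would fix geometric representatives. By construction of $\ol{\lambda}$ and $\ul{\lambda}$ in Section \ref{ss:weights}, one can arrange that the projection of $\ul{L}^!_{\lambda^!}$ lies in the upper half $\{\im(z)\ge 1\}\cup\{\re(z)>2m\}$ and meets the set of critical values only at $\{\bc_1,\dots,\bc_n\}$, while the projection of $\ul{K}_\mu$ lies in $\{\im(z)\le 1\}$ and meets the critical values only at $\{\bc_{n+1},\dots,\bc_m\}$. Since the critical values all sit at height $\im=1$, and the two relevant subsets of critical values are disjoint, the projections of $\ul{L}^!_{\lambda^!}$ and $\ul{K}_\mu$ are disjoint in $\bH^\circ$. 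The symmetric argument shows the projection of $\ul{K}^!_{\mu^!}$ is disjoint from that of $\ul{L}_\lambda$. Disjoint projections imply the Lagrangians themselves are disjoint in $E$.

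Second I would choose a Floer cochain datum for $(X^!,X)$ of the product form from Example \ref{ex:noHperb}: a Hamiltonian $H=(H')^{[n]}\in \cH^{n,\pre}_{a_t}(E)$ with $H'$ a small compactly-supported perturbation of an element of $\cH_\gamma(E)$ whose support is contained in small neighborhoods of the critical values $\bc_1,\dots,\bc_m$. Such a perturbation does not create new intersections between components whose projections were originally disjoint, so no Hamiltonian chord can connect a component of $\ul{L}^!_{\lambda^!}$ to one of $\ul{K}_\mu$, nor a component of $\ul{K}^!_{\mu^!}$ to one of $\ul{L}_\lambda$. Consequently every generator of $CF(\Sym(X^!),\Sym(X))$ is an unordered tuple of $n$ chords whose underlying bijection pairs the $i$ components of $\ul{L}^!_{\lambda^!}$ with components of $\ul{L}_\lambda$ and the $n-i$ components of $\ul{K}^!_{\mu^!}$ with components of $\ul{K}_\mu$. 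Such a bijection can exist only when $i=j$, so for $i\ne j$ the complex $CF(X^!,X)$ is identically zero, forcing $HF(X^!,X)=0$. (In passing, when $i=j$ one also obtains a tensor-product identification $CF(X^!,X)\cong CF(\ul{L}^!_{\lambda^!},\ul{L}_\lambda)\otimes CF(\ul{K}^!_{\mu^!},\ul{K}_\mu)$.)

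The one point requiring care is that transversality of the relevant intersections must be achieved within the restricted class of product-type Hamiltonians, or, failing that, within $\cH^n_{a_t}(E)$ while keeping the perturbation localized near the critical values so that the disjointness-of-projections is not destroyed. This is exactly the flexibility of Remark \ref{r:FlexH} and the constructive argument behind Lemma \ref{l:chainRep}, so it presents no genuine obstacle. The main intuition is geometric: the upper/lower dichotomy separates $X^!$ from $X$ everywhere except at critical values, and the left/right dichotomy separates the two halves of $X^!$ (and of $X$) away from a measure-zero set; together these partition the Floer generators by the left-component count, which is therefore rigidly determined by the nonvanishing of $HF$.
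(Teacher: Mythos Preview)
Your argument is correct and follows essentially the same approach as the paper, whose entire proof is the single sentence ``The direction of wrapping shows that $CF(X^!,X)$ is non-zero only if $i=j$.'' Your projection-disjointness reasoning is exactly an unpacking of that line: the specific placement of $A^!_i$ in the regions $W^!_1, W^!_2$ versus $A_j$ in $W_1, W_2$ forces any Floer generator to pair left components with left and right with right, whence $i=j$.

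One minor point of imprecision worth noting: the regions you quote for the projections (e.g.\ $\{\im(z)\ge 1\}\cup\{\re(z)>2m\}$) are those of the generic upper matching convention, not the specific $W^!_1$, $W^!_2$ used to define $A^!$; and the Hamiltonian in $\cH_\gamma(E)$ is not compactly supported but equals $\pi_E^*H_\gamma$ at infinity, so genuine wrapping along $\partial\bH$ is involved. The argument still goes through because $\ul{L}^!_{\lambda}$ lies in $W^!_1$, which near the right-hand critical values forces $\im(z)\ge 2$, keeping it disjoint from the (wrapped) $\ul{K}_\mu$; and $\ul{K}^!_{\mu}$ lies in $W^!_2$, which on $\{\re(z)<n+\tfrac12\}$ forces $\im(z)<\tfrac12$, keeping it disjoint from the matching paths of $\ul{L}_\lambda$ and from its thimbles after they have been wrapped to the left. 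So the disjointness survives the wrapping, and your bijection-counting conclusion is valid.
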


\begin{proof}
 The direction of wrapping shows 
 that $CF(X^!,X)$ is non-zero only if $i=j$.
\end{proof}

On the other hand, $A^!$ shares many features with $A$.
By the same arguments as in Lemma \ref{l:productDG} and \ref{l:switchingProjective}, we know that 
$\rperf A^!$ is quasi-equivalent to $\rperf K_{n,m}^{\alg}$ and
for $j=0,\dots,n$, the category 
$A^!_j$ is formal with endomorphism algebra
\begin{align}
\bigoplus \, HF(\ul{L}^!_{\lambda_0} \sqcup \ul{K}^!_{\mu_0}, \ul{L}_{\lambda_1}^! \sqcup \ul{K}^!_{\mu_1}) =K^{\symp}_{j,n} \times K^{\symp}_{n-j,m-n} \label{eq:DGproduct2}
\end{align}
The analogues of Lemma \ref{l:semiOrtho2} and Theorem \ref{t:semiOrtho} are

\begin{lemma}\label{l:semiOrtho2}
Let $X_0 \in A^!_{j_0}$ and $X_1 \in A^!_{j_1}$.
If $HF(X_0,X_1) \neq 0$, then $j_0 \le j_1$.
As a result, $\langle A^!_0,\dots,A^!_n \rangle$ is a semi-orthogonal decomposition of $A^!$.
\end{lemma}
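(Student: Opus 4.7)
The proof will mirror that of Lemma \ref{l:semiOrtho}, with the direction of wrapping reversed: the non-compact Lagrangian components of $\ul{L}^!_{\overline{\lambda}}$ and $\ul{K}^!_{\overline{\mu}}$ escape through the upper horizontal boundary $\{\mathrm{im}(z)=+\infty\}$, rather than through $\partial\mathbb{H}$, so the positivity of Hamiltonian wrapping used to force the vanishing of $CF$ produces the opposite inequality. Concretely, for $X_0 \in A^!_{j_0}$ and $X_1 \in A^!_{j_1}$, the number of components of $X_i$ whose projection escapes to $+i\infty$ is exactly $j_i$; I will choose a Floer cochain datum whose Hamiltonian term implements a positive isotopy whose parallel transport pushes $X_1$'s escape points to a position that avoids intersection with those of $X_0$ whenever $j_0 > j_1$, forcing $CF(X_0,X_1) = 0$.

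A cleaner and essentially equivalent route is to deduce Lemma \ref{l:semiOrtho2} from Lemma \ref{l:semiOrtho} by invoking the anti-symplectic involution symmetry of $\pi_E$ discussed in and after equations \eqref{eq:A_equivalence}, \eqref{eq:reSym}. The quasi-equivalence $\Psi \circ \iota_E : \cFS^{cyl,n}(\pi_E) \simeq \cFS^{cyl,n}(\pi_E)^{op}$ reverses the direction of morphisms, and the key verification is that it sends each $\ul{L}_{\ul{\lambda}} \sqcup \ul{K}_{\ul{\mu}}$ (whose non-compact components go down through $\partial\mathbb{H}$) to something quasi-isomorphic to $\ul{L}^!_{\overline{\lambda}} \sqcup \ul{K}^!_{\overline{\mu}}$, preserving the index $j$. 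Granted this, the semi-orthogonal decomposition $\langle A_n,\dots,A_0\rangle$ from Lemma \ref{l:semiOrtho} and Theorem \ref{t:semiOrtho} dualises to the claimed decomposition $\langle A^!_0,\dots,A^!_n\rangle$ of $A^!$, since taking opposite categories reverses the ordering of a semi-orthogonal decomposition.

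The step I expect to require the most care is verifying that the geometric symmetry preserves the grading by $j$. Because $\iota_E$ only acts on $E$, while $\Psi$ moves the stop from one end of $\partial\mathbb{H}$ to the other, one needs to track how the images of the two regions $W_1, W_2$ in the base interact with $W_1^!, W_2^!$ under the reflection, and confirm that the upper-tuple representatives $\ul{L}^!_{\overline{\lambda}}, \ul{K}^!_{\overline{\mu}}$ defined by the picture in Figure \ref{fig:AnotherEmbedding} are indeed (up to Hamiltonian isotopy within the appropriate subcategories) the images of $\ul{L}_{\ul{\lambda}}, \ul{K}_{\ul{\mu}}$ under $\Psi\circ\iota_E$. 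This is a routine but somewhat fiddly inspection of the admissible tuples, and can be done by exhibiting explicit Hamiltonian isotopies analogous to those in Proposition \ref{p:slidinginvariance}. Once this bookkeeping is in place, the semi-orthogonality and the fact that each $A^!_j$ is quasi-equivalent (as an $A_\infty$-category) to $A_j$, and hence formal with endomorphism algebra \eqref{eq:DGproduct2}, both follow from the corresponding statements for $A$.
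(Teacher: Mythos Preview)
Your first approach---mirroring the proof of Lemma~\ref{l:semiOrtho} via a direct wrapping argument---is exactly the paper's (implicit) argument: the paper states Lemma~\ref{l:semiOrtho2} as ``the analogue'' of Lemma~\ref{l:semiOrtho} without further proof. However, your geometric description contains errors. The non-compact components of $\ul{L}^!_{\ol{\lambda}}$ and $\ul{K}^!_{\ol{\mu}}$ do \emph{not} escape through $\{\im(z)=+\infty\}$: every Lagrangian in $\cL$ has $\lambda_L \in \RR = \partial\bH$ by the setup of Section~\ref{sss:targetspace} (cf.~\eqref{eq:upper2}, where upper thimble paths end at real points $6m - c_{\lambda,j}$). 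What actually reverses the inequality is the geometry of the regions $W^!_1, W^!_2$: after the required perturbation of $X_1$, the $n-j_1$ components of $\ul{K}^!_{\mu_1}$ (confined to $W^!_2$, hence below $\im(z)=2$ over the right half of the base) cannot meet the $j_0$ components of $\ul{L}^!_{\lambda_0}$ (which pass over the top in $W^!_1$); so each component of $\ul{K}^!_{\mu_1}$ must be matched with one of the $n-j_0$ components of $\ul{K}^!_{\mu_0}$, forcing $n-j_1 \le n-j_0$. Your claim that ``the number of components whose projection escapes is exactly $j_i$'' is also off: $\ul{L}^!_{\ol{\lambda}}$ has $j_i$ components in total, but only those corresponding to bad points of $\lambda$ are non-compact.

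Your second route via the anti-symplectic involution $\Psi\circ\iota_E$ is plausible in principle but is much more elaborate than what is needed, and the paper does not take it. The verification you flag---that $\Psi\circ\iota_E$ carries objects of $A_j$ to objects quasi-isomorphic to those of $A^!_j$, preserving $j$---is not immediate from~\eqref{eq:reSym} (which concerns single weights in $\Lambda_{n,m}$, not the product Lagrangians), and would require tracking how $\iota_B$ and the stop-moving isotopy $\Psi$ interact with the splitting into $W_1, W_2$. For a lemma whose intended proof is one sentence, this is not a good trade; you should fix the geometric picture in your first paragraph and drop the rest.
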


 \begin{theorem}\label{t:semiOrtho2}
  $\rperf K_{n,m}^{\alg}$ admits a semi-orthogonal decomposition $\langle \rperf A^!_0,\dots, \rperf A^!_n \rangle$, where for each $j$, $\rperf A^!_j$ is quasi-equivalent to 
  $\rperf(K_{j,n}^{\alg} \times K_{n-j,m-n}^{\alg})$.
 \end{theorem}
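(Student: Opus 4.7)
The plan is to deduce this theorem by running, almost verbatim, the same three-step argument that established Theorem \ref{t:semiOrtho}, but replacing $A$ by $A^!$ throughout. The excerpt already assures us that the analogues of Lemma \ref{l:productDG} and Lemma \ref{l:switchingProjective} hold in the decorated setting, and Lemma \ref{l:semiOrtho2} supplies the required semi-orthogonality. So the proof is a one-line assembly of these three inputs.

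More concretely, first I would invoke the analogue of Lemma \ref{l:productDG} for $A^!_j$: by the very same induction over a total order refining the Bruhat order, carried out on the tensor product $\Lambda_{j,n}\times\Lambda_{n-j,m-n}$ and using the nc-vector field from Section \ref{s:ncField} together with Claim \ref{c:exist}, each $A^!_j$ is formal with cohomological algebra precisely the product $K^{\symp}_{j,n}\times K^{\symp}_{n-j,m-n}$, which by Proposition \ref{p:AllCases} equals $K^{\alg}_{j,n}\times K^{\alg}_{n-j,m-n}$. Hence $\rperf A^!_j\simeq \rperf(K^{\alg}_{j,n}\otimes K^{\alg}_{n-j,m-n})$, identifying the pieces.

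Next I would invoke the analogue of Lemma \ref{l:switchingProjective}: the iterated exact triangles from Corollary \ref{c:Triangle} show that every thimble tuple $\ul{T}$ in $\cFS^{cyl,n}(\pi_E)$ can be generated by the objects $\ul{L}^!_{\ol{\lambda}}\sqcup \ul{K}^!_{\ol{\mu}}$ — the only point to check is that one can `split' a thimble tuple into a left part and a right part relative to the wall $W^!_1\cap W^!_2$, which follows exactly as for $A$ since the cup/cap choices for $\ul{L}^!$ and $\ul{K}^!$ can be isotoped in their respective regions. Combined with Theorem \ref{t:main} this gives $\rperf A^!\simeq \rperf K^{\alg}_{n,m}$.

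Finally, Lemma \ref{l:semiOrtho2} — whose proof is a single application of the definition of the Floer complex via positive wrapping along $\partial\bH$, with the direction of wrapping for $A^!$ being opposite to that for $A$ — tells us that $\mathrm{Hom}(X_0,X_1)=0$ whenever $X_0\in A^!_{j_0}$, $X_1\in A^!_{j_1}$ with $j_0>j_1$, so $\langle A^!_0,\ldots,A^!_n\rangle$ is indeed semi-orthogonal in this order. Assembling these three ingredients gives the stated semi-orthogonal decomposition. Since every step is a direct parallel of the unadorned case already treated in the paper, I anticipate no substantive obstacle; the only care required is to verify that the opposite wrapping direction for $A^!$ reverses the ordering of the pieces (from $\langle A_n,\ldots,A_0\rangle$ to $\langle A^!_0,\ldots,A^!_n\rangle$), which is precisely the content of Lemma \ref{l:semiOrtho2} and is the reason $A^!$ functions as a Koszul-dual counterpart to $A$, as already noted in Lemma \ref{l:Koszul dual}.
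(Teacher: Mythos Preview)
Your proposal is correct and follows essentially the same approach as the paper: the paper explicitly notes (just before Lemma~\ref{l:semiOrtho2}) that the analogues of Lemma~\ref{l:productDG} and Lemma~\ref{l:switchingProjective} hold for $A^!$, and then states Lemma~\ref{l:semiOrtho2} and Theorem~\ref{t:semiOrtho2} without further proof, leaving them to be assembled exactly as you describe. Your identification of the three ingredients and the reversal of ordering due to the opposite wrapping direction is precisely what the paper intends.
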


 One consequence of Theorems \ref{t:semiOrtho} and \ref{t:semiOrtho2}
 is that, when $m=2n$, $\rperf(K_{j,n} \times K_{n-j,m-n})$ is the same as $[(K_{n-j,n})^{op},K_{j,n}]$.
 By the isomorphism $(K_{n-j,n})^{op}=K_{j,n}$ (see Corollary \ref{c:projDual}), this in turn is isomorphic to $[K_{j,n},K_{j,n}]$. 
 Similarly, $(K_{j,n} \times K_{n-j,n})\lperf$ is the same as $[K_{j,n},(K_{n-j,n})^{op}]$, which in turn is isomorphic to $[K_{j,n},K_{j,n}]$. 

\subsection{A Beilinson-type spectral sequence}\label{ss:digression}

We next explain why a semi-orthogonal decomposition of an $A_{\infty}$ category $\cC$  induces a spectral sequence with target a given morphism group in $\cC$. 
From now on, for an object $X$, we use $X^r$ and $X^l$ to denote its right and left Yoneda modules, respectively.

 Let $\cC$ be a split-closed trianglated $A_{\infty}$ category with a semi-orthogonal decomposition
 \begin{align}
  \cC =\langle \cC_n,\dots,\cC_0 \rangle \label{eq:semiC}
 \end{align}
A Koszul dual semi-orthogonal decomposition is a semi-orthogonal decomposition
 \begin{align}
  \cC =\langle \cC^!_0,\dots,\cC^!_n \rangle
 \end{align}
such that for $X \in \cC^!_i$ and $Y \in \cC_k$
\begin{align}
 H(hom_\cC(X,Y)) \neq 0 \text{ only if } i=k \label{eq:cKoszul}
\end{align}
Without loss of generality, we can assume each summand in the semi-orthogonal decomposition is split-closed trianglated.  Note that Koszul dual semi-orthogonal decompositions always exist \cite{Kuznetsov}.

Let $\iota_j:\cC_j \to \cC$ and $\iota_j^!:\cC^!_j \to \cC$ be the embeddings. 
Let $\pi_j:\cC \to \cC_j$ and $\pi_j^!:\cC \to \cC^!_j$ be the projection functors (i.e. the unique functors such that $\pi_j \circ \iota_j=id_{\cC_j}$ and $\pi_j \circ \iota_i=0$ for $i \neq j$).
These functors induce pull back functors on modules.

\begin{lemma}
 Up to quasi-isomorphism, the left Yoneda embedding $\cC^!_j \to \cC\lperf$ factors through $(\pi_j)^*:\cC_j\lperf \to \cC\lperf$.
\end{lemma}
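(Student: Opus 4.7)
The plan is to exhibit, for each $X \in \cC^!_j$, an explicit $\cC_j$-module $M_X$ together with a natural quasi-isomorphism of left $\cC$-modules $(\pi_j)^* M_X \simeq X^l$. The natural candidate is
\begin{equation}
M_X := (\iota_j)^* X^l, \qquad M_X(Z) = \hom_\cC(X, \iota_j Z) \text{ for } Z \in \cC_j,
\end{equation}
namely, the restriction of the left Yoneda module $X^l = \hom_\cC(X,-)$ along $\iota_j$.

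To compare $X^l$ with $(\pi_j)^* M_X$, I would exploit the fact that the semi-orthogonal decomposition makes $\iota_j$ admissible, so $\pi_j$ is canonically its adjoint and carries a counit $\epsilon : \iota_j \pi_j \to \mathrm{id}_\cC$. Applying $\hom_\cC(X,-)$ to $\epsilon$ yields a morphism of left $\cC$-modules
\begin{equation}
\alpha_X : (\pi_j)^* M_X \longrightarrow X^l, \qquad (\alpha_X)_Y = \hom_\cC(X, \epsilon_Y).
\end{equation}
Verifying that $\alpha_X$ is a quasi-isomorphism reduces, using that $\cC$ is split-closed triangulated and split-generated by $\bigcup_k \cC_k$ together with the fact that both sides are triangulated functors of $Y$, to a check on each $Y \in \cC_k$. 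When $k = j$, $\pi_j \iota_j \simeq \mathrm{id}_{\cC_j}$ makes $\epsilon_Y$ and hence $(\alpha_X)_Y$ quasi-isomorphisms. When $k \neq j$, $\pi_j Y \simeq 0$ forces $(\pi_j)^* M_X(Y) \simeq 0$, while the Koszul duality hypothesis~\eqref{eq:cKoszul} forces $H^*\hom_\cC(X, Y) = 0$; so both sides are acyclic at $Y$ and $(\alpha_X)_Y$ is trivially a quasi-isomorphism.

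The main technical obstacle will be making the natural transformation $\epsilon : \iota_j \pi_j \to \mathrm{id}_\cC$ precise at the $A_\infty$ level, where the projection functors attached to a semi-orthogonal decomposition are canonical only up to a contractible space of choices. The cleanest route is to reformulate the construction in terms of $\cC$-$\cC$-bimodules: the sod gives a filtration of the diagonal $\cC$-bimodule whose successive subquotients are the bimodules $(\pi_k)^* \otimes_{\cC_k} \iota_k^*$, and tensoring this filtration on the right with $X^l$ both displays $(\pi_j)^* M_X$ as the $j$-th piece and furnishes its comparison map with $X^l$. This bimodule formalism is by now standard \cite{Kuznetsov}, and once it is in place, the verification on generators sketched above goes through.
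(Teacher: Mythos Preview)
Your proof and the paper's take the same route: set $M_X = (\iota_j)^* X^l$ and argue $(\pi_j)^* M_X \simeq X^l$ because $X^l$ is acyclic on every $\cC_i$ with $i \neq j$ by the Koszul condition~\eqref{eq:cKoszul}. The paper's proof is simply that two-line observation, asserting $X^l \simeq (\pi_j)^*(\iota_j)^* X^l$ directly without constructing an explicit comparison map.

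One point to flag: the sentence ``$\pi_j$ is canonically its adjoint and carries a counit $\epsilon:\iota_j\pi_j\to id_\cC$'' is not correct for intermediate $j$. In $\langle\cC_n,\ldots,\cC_0\rangle$ the projection $\pi_j$ is a one-sided adjoint of $\iota_j$ only at the extremes $j=0$ or $j=n$; for middle $j$ there is in general no natural transformation $\iota_j\pi_j\to id_\cC$ (nor in the other direction), since $\iota_j\pi_j Y$ sits as a subquotient of $Y$ rather than a sub- or quotient object. You already recognise this and propose the bimodule filtration of the diagonal instead, which is a valid way to make the quasi-isomorphism precise and is implicitly what the paper's terse assertion rests on: applying a left $\cC$-module that is acyclic on all $\cC_i$ with $i\neq j$ to the sod filtration of any test object kills all but the $j$-th graded piece.
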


\begin{proof}
 Let $X \in \cC^!_j$.
 By \eqref{eq:cKoszul}, we have $H(hom_\cC(X,X'))=0$ for all $X' \in \cC_i$ such that $i \neq j$.
 It implies that, up to quasi-isomorphism, we have 
 \begin{align}
  X^l= (\pi_j)^* \circ (\iota_j)^*(X^l)
 \end{align}
  so the result follows.
\end{proof}

Since Yoneda embedding and $(\pi_j)^*$ are both cohomologically full and faithful, so is the functor 
\begin{align}
(\iota_j)^*((-)^l):  \cC^!_j \to \cC_j\lperf \label{eq:ssEmbedding}
\end{align}
 By \eqref{eq:cKoszul} again, we can see that
\begin{align}
 (\iota_j)^*((-)^l):  \cC^!_i \to \cC_j\lperf \label{eq:0functor}
\end{align}
is the $0$ functor when $i \neq j$.

\begin{lemma}
\eqref{eq:ssEmbedding} is essentially surjective, so induces a quasi-equivalence $\cC^!_j \to \cC_j\lperf$.
\end{lemma}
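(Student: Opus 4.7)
Given the cohomological fully faithfulness established in the preceding lemma, it remains to establish essential surjectivity. Since $\cC_j$ is split-closed triangulated by assumption, Yoneda $Z\mapsto Z^l$ is already a quasi-equivalence $\cC_j\to\cC_j\lperf$; hence it suffices to exhibit, for each $Z\in\cC_j$, an object $Y\in\cC^!_j$ together with a quasi-isomorphism $(\iota_j)^*(Y^l)\simeq Z^l$.

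The plan is to construct $Y$ as the $\cC^!_j$-component of $\iota_j(Z)\in\cC$ under the Koszul dual semi-orthogonal decomposition. Concretely, the decomposition $\cC=\langle\cC^!_0,\dots,\cC^!_n\rangle$ furnishes an iterated cone (Postnikov) presentation
\[
0=Z_{-1}\to Z_0\to Z_1\to\cdots\to Z_n=\iota_j(Z),
\]
unique up to quasi-isomorphism, whose successive cones $X_i:=\mathrm{cone}(Z_{i-1}\to Z_i)$ lie in $\cC^!_i$. Set $Y:=X_j$; equivalently, $Y=\pi_j^!(\iota_j Z)$ where $\pi_j^!:\cC\to\cC^!_j$ is the projection functor attached to the SOD.

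To verify the equivalence $(\iota_j)^*(Y^l)\simeq Z^l$, I test against an arbitrary $W\in\cC_j$. Applying $\hom_\cC(-,W)$ to the Postnikov tower yields exact triangles
\[
\hom_\cC(X_i,W)\to \hom_\cC(Z_i,W)\to \hom_\cC(Z_{i-1},W).
\]
By the Koszul hypothesis \eqref{eq:cKoszul}, $H\,\hom_\cC(X_i,W)=0$ for $i\neq j$, so the connecting maps $\hom_\cC(Z_i,W)\to \hom_\cC(Z_{i-1},W)$ are quasi-isomorphisms for $i\neq j$, while the triangle at $i=j$ collapses to $\hom_\cC(Z_j,W)\simeq \hom_\cC(X_j,W)=\hom_\cC(Y,W)$. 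Composing along the tower produces a natural quasi-isomorphism $\hom_\cC(\iota_j Z,W)\simeq \hom_\cC(Y,W)$, i.e.\ $Z^l(W)\simeq(\iota_j)^*(Y^l)(W)$, natural in $W\in\cC_j$.

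The main technical obstacle is promoting this pointwise equivalence to an equivalence of $\cC_j$-modules at the $A_\infty$ level, rather than merely object by object. This is handled by the standard theory of SODs of pretriangulated split-closed $A_\infty$ categories: each inclusion $\iota_j^!$ admits both adjoints, and the gluing of mutations gives a functorial assignment $Z\mapsto Y=\pi_j^!\iota_j(Z)$, together with a natural transformation whose value at each $W$ is the zig-zag of quasi-isomorphisms above. Once essential surjectivity is in hand, combining it with the cohomological fully faithfulness from the previous lemma upgrades \eqref{eq:ssEmbedding} to the desired quasi-equivalence $\cC^!_j\to\cC_j\lperf$.
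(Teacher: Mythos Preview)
Your proof is correct and rests on the same two ingredients as the paper's---the semi-orthogonal decomposition $\cC=\langle\cC^!_0,\dots,\cC^!_n\rangle$ and the vanishing \eqref{eq:0functor}---but the paper's argument is considerably shorter. Rather than starting from a fixed $Z\in\cC_j$ and constructing its preimage by hand, the paper simply observes that the composite $\cC\xrightarrow{(-)^l}\cC\lperf\xrightarrow{(\iota_j)^*}\cC_j\lperf$ is essentially surjective (both factors are), and that since this exact functor annihilates each $\cC^!_i$ with $i\neq j$, its restriction to $\cC^!_j$ must already be essentially surjective. Concretely: given $M\in\cC_j\lperf$, pick any $X\in\cC$ with $(\iota_j)^*(X^l)\simeq M$, decompose $X$ as an iterated cone of $X_i\in\cC^!_i$, and apply the functor; only the $X_j$ term survives, so $(\iota_j)^*(X_j^l)\simeq M$. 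This sidesteps your pointwise test against $W$ and with it your worry about promoting object-by-object quasi-isomorphisms to module-level ones: the quasi-isomorphism $(\iota_j)^*(X^l)\simeq(\iota_j)^*(X_j^l)$ is obtained directly by applying an exact functor to an iterated cone, so it holds in $\cC_j\lperf$ without further argument.
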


\begin{proof}
Combining  the fact that \eqref{eq:0functor} is the $0$ functor and that both Yoneda embedding and $(\iota_j)^*$ are essentially surjective, 
we know that \eqref{eq:ssEmbedding} is essentially surjective, so the result follows.
\end{proof}

Let $\psi_j: \cC_j\lperf \to  \cC^!_j $ be a quasi-inverse.

\begin{lemma}\label{l:decomposeX}
 Let $X \in \cC$ be quasi-isomorphic to an iterated mapping cone of the form
 \begin{align}
  X=\Cone(\dots \Cone(X_0 \to X_1) \dots X_n) \label{eq:coneX}
 \end{align}
 where $X_j \in \cC^!_j$ for $j=0,\dots,n$.
 Then $\psi_j \circ (\iota_j)^*(X^l)$ is quasi-isomorphic to $X_j$.
\end{lemma}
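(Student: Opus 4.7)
The plan is to use the fact that both the left Yoneda embedding $(-)^l: \cC \to \cC\lperf$ and the pullback $(\iota_j)^*: \cC\lperf \to \cC_j\lperf$ are cohomologically triangulated (in fact $A_\infty$) functors, so they both take iterated mapping cone presentations to iterated mapping cone presentations. Applying the composition $(\iota_j)^* \circ (-)^l$ to the presentation \eqref{eq:coneX} of $X$, we obtain
\begin{equation*}
(\iota_j)^*(X^l) \simeq \Cone\bigl(\dots \Cone\bigl((\iota_j)^*(X_0^l) \to (\iota_j)^*(X_1^l)\bigr) \dots (\iota_j)^*(X_n^l)\bigr)
\end{equation*}
in $\cC_j\lperf$. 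Thus the problem reduces to identifying $(\iota_j)^*(X_k^l)$ for each $k$.

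The key input is the Koszul duality hypothesis \eqref{eq:cKoszul}: for $X_k \in \cC^!_k$ and any $Y \in \cC_j$, the complex $hom_\cC(X_k, Y)$ is acyclic whenever $k \neq j$. Since $(\iota_j)^*(X_k^l)$ is, as an object of $\cC_j\lperf$, the left module $Y \mapsto hom_\cC(X_k, Y)$ restricted to $\cC_j \subset \cC$, this restriction is the zero module (up to quasi-isomorphism) for all $k \neq j$. Consequently, in the iterated cone expression above, every term indexed by $k \neq j$ is acyclic, and taking iterated cones with acyclic objects preserves quasi-isomorphism type, so we conclude
\begin{equation*}
(\iota_j)^*(X^l) \simeq (\iota_j)^*(X_j^l)
\end{equation*}
in $\cC_j\lperf$.

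Finally, by construction $\psi_j$ is a quasi-inverse to $(\iota_j)^* \circ (-)^l : \cC^!_j \to \cC_j\lperf$, so applying $\psi_j$ to the equivalence above yields $\psi_j \circ (\iota_j)^*(X^l) \simeq \psi_j \circ (\iota_j)^*(X_j^l) \simeq X_j$, as required.

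The argument is mostly formal once one sets things up carefully. The only minor obstacle is verifying that the pullback functor $(\iota_j)^*$ really does commute with mapping cones in $\cC\lperf$ at the level of cohomology, which is a standard fact about pullback of $A_\infty$ modules along $A_\infty$ functors (both $(\iota_j)^*$ and $(-)^l$ are $A_\infty$ functors, hence send exact triangles to exact triangles). No new geometry or homological algebra is needed beyond the Koszul orthogonality \eqref{eq:cKoszul} that has already been set up.
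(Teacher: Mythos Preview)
Your proof is correct and follows essentially the same approach as the paper: apply the triangulated functor $(\iota_j)^* \circ (-)^l$ (or in the paper, the full composite $\psi_j \circ (\iota_j)^* \circ (-)^l$) to the iterated cone decomposition, use the Koszul orthogonality \eqref{eq:cKoszul} to kill all terms with $k \neq j$, and then invoke that $\psi_j$ is a quasi-inverse. The only cosmetic difference is that you simplify in $\cC_j\lperf$ before applying $\psi_j$, whereas the paper applies the whole composite at once; the content is identical.
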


\begin{proof}
 By applying the functor $\psi_j \circ (\iota_j)^*((-)^l)$
 to the iterated mapping cone \eqref{eq:coneX}, and using the fact that \eqref{eq:0functor} is a $0$ functor, we see that only the object $X_j$ contributes
 and hence
 \begin{align}
  \psi_j \circ (\iota_j)^*(X^l) \simeq \psi_j \circ (\iota_j)^*(X_j^l). \label{eq:singleXj}
 \end{align}
 By definition, $\psi_j \circ (\iota_j)^*((-)^l)$ is quasi-isomorphic to the identity functor on $\cC_j$ so 
 the right hand side of \eqref{eq:singleXj} is in turn  quasi-isomorphic to $X_j$.
 \end{proof}

\begin{proposition}\label{p:spectral}
 Let $X,Y \in \cC$.
 There is a spectral sequence to $H(hom_{\cC}(X,Y))$ from
 \begin{align}
  \oplus_{j=0}^n   \, H((\iota^!_j)^*(Y^r)   \otimes_{\cC^!_j} \psi_j \circ (\iota_j)^*(X^l))
 \end{align}
where $\psi_j \circ (\iota_j)^*(X^l)$, as an object in $\cC^!_j$, is regarded as a left $\cC^!_j$ module via the Yoneda embedding.
 
\end{proposition}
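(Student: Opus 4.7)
The plan is to produce the spectral sequence from the filtration associated to the Koszul dual semi-orthogonal decomposition, and then to identify the $E_1$-page by a Yoneda-type computation combined with Lemma \ref{l:decomposeX}. First, I would invoke the standard fact that the semi-orthogonal decomposition $\cC = \langle \cC^!_0, \dots, \cC^!_n \rangle$ allows any object $X \in \cC$ to be written (up to quasi-isomorphism) as an iterated mapping cone
\[
X \simeq \Cone\bigl(\dots \Cone(X_0 \to X_1) \to \dots \to X_n\bigr), \qquad X_j \in \cC^!_j,
\]
obtained inductively by applying the projection functors $\pi^!_j$ together with the unit/counit morphisms of the corresponding adjunctions. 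Representing this cone by a one-sided twisted complex $(\bigoplus_{j=0}^n X_j, \delta)$ in $\Tw(\cC^!_0 \cup \dots \cup \cC^!_n)$ gives a finite increasing filtration of $X$ by the partial twisted complexes $F_j X := (\bigoplus_{i \le j} X_i, \delta|)$, with $F_{-1}X = 0$, $F_n X = X$, and $F_j X / F_{j-1}X \simeq X_j$.

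Next, I would apply $hom_\cC(-,Y)$ to this filtration. Since $hom_\cC(-,Y)$ is exact, we obtain a decreasing filtration of the cochain complex $hom_\cC(X,Y)$ whose associated graded is $\bigoplus_j hom_\cC(X_j,Y)$. Standard homological algebra then produces a bounded spectral sequence converging to $H(hom_\cC(X,Y))$ with
\[
E_1^{j,*} \cong H\bigl(hom_\cC(X_j,Y)\bigr).
\]
For the Koszul-dual identification, I would use Lemma \ref{l:decomposeX} to rewrite $X_j \simeq \psi_j \circ (\iota_j)^*(X^l)$ as an object of $\cC^!_j$.

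To recast each summand $H(hom_\cC(X_j,Y))$ in the form asserted, I would invoke the Yoneda lemma for $A_\infty$-modules: for any object $Z$ in an $A_\infty$-category $\cD$ and any right $\cD$-module $M$, there is a canonical quasi-isomorphism $M \otimes_\cD Z^l \simeq M(Z)$. Applying this with $\cD = \cC^!_j$, $Z = X_j$, and $M = (\iota^!_j)^*(Y^r)$ (the right $\cC^!_j$-module obtained by restricting the Yoneda module of $Y$) yields
\[
H\bigl(hom_\cC(X_j,Y)\bigr) \cong H\bigl((\iota^!_j)^*(Y^r) \otimes_{\cC^!_j} \psi_j \circ (\iota_j)^*(X^l)\bigr),
\]
which is precisely the desired $E_1$-term.

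The main obstacle, as is typical in the $A_\infty$ setting, will be to carry out the filtration construction on a chain-level model cleanly enough to guarantee strict compatibility with the differential, so that the resulting spectral sequence is genuinely well-defined and functorial. This is resolved by working in $\Tw(\cC)$ from the outset, where the iterated cone is a single twisted complex with an explicit filtration by sub-twisted complexes; all the homotopies needed to make the mapping cone descriptions strict are automatically accounted for in the twisted differential $\delta$. With that technical point handled, the spectral sequence is forced upon us by the filtration, and the identification of the $E_1$-page reduces to the Yoneda lemma and Lemma \ref{l:decomposeX}, both already in hand.
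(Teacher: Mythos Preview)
Your proposal is correct and follows essentially the same route as the paper: decompose $X$ as an iterated cone with pieces $X_j \in \cC^!_j$ coming from the Koszul dual semi-orthogonal decomposition, apply $Y^r(-) = hom_\cC(-,Y)$ to obtain a filtered complex and hence a spectral sequence with $E_1$-terms $H(Y^r(X_j))$, and then identify each term via Lemma~\ref{l:decomposeX} and the Yoneda lemma. The paper's proof is terser (it writes $Y^r(X_j)$ rather than $hom_\cC(X_j,Y)$ and does not spell out the twisted-complex model), but the argument is the same.
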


\begin{proof}
 By \eqref{eq:semiC}, $X$ is quasi-isomorphic to an iterated mapping cone of the form \eqref{eq:coneX}.
 An expression of $X$ as such a mapping cone induces a filtration on $Y^r(X)$, with direct sum of graded pieces being
 \begin{align}
  \oplus_{j=0}^n Y^r(X_j). \label{eq:filtration}
 \end{align}
 Thus we have a spectral sequence from the cohomology of \eqref{eq:filtration} to $H(hom_{\cC}(X,Y))$.
 
 On the other hand, we have quasi-isomorphisms of cochain complexes
 \begin{align}
  Y^r(X_j)&= (\iota^!_j)^*(Y^r)(X_j) \\
  &=(\iota^!_j)^*(Y^r)(\psi_j \circ (\iota_j)^*(X^l)) \\
  &=(\iota^!_j)^*(Y^r)   \otimes_{\cC^!_j} \psi_j \circ (\iota_j)^*(X^l)
 \end{align}
where the second quasi-isomorphism comes from Lemma \ref{l:decomposeX}, and the other quasi-isomorphisms come from the definitions of the objects involved.
 \end{proof}

\begin{remark}
 In the extreme case where each $\cC_j$ (and $\cC_j^!$) is an exceptional object, the semi-orthogonal decomposition
 is a full exceptional collection and Proposition \ref{p:spectral} reduces to the well-known Beilison type spectral sequence (cf. \cite[Section (5l)]{SeidelBook}).
\end{remark}

\section{From annular to ordinary Khovanov homology}\label{s:Ann2Kh}

In this section, we explain how to apply Proposition \ref{p:spectral} 
to the semi-orthogonal decompositions in Theorem \ref{t:semiOrtho}, \ref{t:semiOrtho2} to obtain
a spectral sequence from annular to ordinary (symplectic) Khovanov homology.

Recall that for an $A_{\infty}$ or dg category/algebra $\cC$,
the Hochschild homology of a $\cC\text{-}\cC$ bimodule $P$ is defined to be
\begin{align}
 HH_*(\cC,P):= H(\Delta_{\cC} \otimes_{\cC\text{-}\cC} P)
\end{align}
where $\Delta_{\cC}$ is the diagonal bimodule and $\otimes_{\cC\text{-}\cC}$ is the derived tensor product over $[\cC,\cC]$.
Equivalently, $\Delta_{\cC}$ defines a right $\cC \otimes \cC^{op}$ module
and $P$ defines a left $\cC \otimes \cC^{op}$ module, and we have 
\begin{align}
 HH_*(\cC,P):= H(\Delta_{\cC} \otimes_{\cC \otimes \cC^{op}} P)
\end{align}
The fact that Hochschild homology groups appear in the first page of a spectral sequence to symplectic Khovanov cohomology arises from a relation between the horseshoe Lagrangian and the diagonal bimodule, that we explain next. More precisely, in section \ref{ss:diagonalBimod}, we compute $(\iota_j)^*(\ul{L}_{\hs}^l)$ and $(\iota^!_j)^*(\phi_\beta(\ul{L}_{\hs})^r)$
for the embeddings $\iota_j: A_j \to \cFS^{cyl,n}(\pi_E)$ and $\iota^!_j: A^!_j \to \cFS^{cyl,n}(\pi_E)$.
Then, we describe the quasi-inverse $\psi_j$ and complete the proof of Theorem \ref{t:sseq} in Section \ref{ss:proofofss}.

\subsection{Horseshoe Lagrangian tuple and diagonal bimodule}\label{ss:diagonalBimod}


Recall that we have the pull-back functor
\begin{align}
 (\iota_j)^*:  \cFS^{cyl,n}(\pi_E)\lperf \to  A_j\lperf \simeq [K_{j,n},K_{j,n}]
\end{align}
The following is the key technical result.

\begin{proposition}\label{p:diagonalbimod}
 Up to grading shift, $(\iota_j)^* (\ul{L}_{\hs}^l)$ is quasi-isomorphic to the diagonal $K_{j,n}$-bimodule.
\end{proposition}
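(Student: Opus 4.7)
Plan:

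The plan is to reduce the statement to an algebraic claim via Theorem \ref{t:main}, verify it on cohomology by an explicit intersection count, and then promote it to a quasi-isomorphism of $A_\infty$-bimodules using an nc-vector field formality argument. Under Lemma \ref{l:switchingProjective}, the embedding $\iota_j: A_j \hookrightarrow \cFS^{cyl,n}(\pi_E)$ corresponds, via Lemma \ref{l:productDG} and the algebra isomorphism $K_{n-j,n}^{\alg} \simeq (K_{j,n}^{\alg})^{op}$ from Corollary \ref{c:projDual}, to an inclusion of a subalgebra $K_{j,n}^{\alg} \otimes (K_{j,n}^{\alg})^{op} \hookrightarrow K_{n,2n}^{\alg}$; meanwhile $\ul{L}_\hs^l$ corresponds to the left projective $P(\hs)$ of $K_{n,2n}^{\alg}$ associated to the horseshoe weight. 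The proposition then becomes the purely algebraic assertion that the restriction of $P(\hs)$ along this inclusion is the diagonal $K_{j,n}^{\alg}$-bimodule, up to grading shift.

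To compute the underlying graded vector space of this restricted module, I would choose Hamiltonian representatives so that $\ul{L}_\lambda, \ul{K}_\mu$ use upper matching paths ($\ul{L}_{\ol\lambda}, \ul{K}_{\ol\mu}$) while $\ul{L}_\hs$ uses the lower nested representative $\ul{L}_{\ul\hs}$. All intersections then project to $\{1,\ldots,2n\}+\sqrt{-1}$, and $n$-tuples of intersections are in bijection with orientations of the circle diagram $\ol\lambda\cup\ol\mu\cup\ul\hs$. Matching this against the oriented-diagram basis of the diagonal bimodule from \cite{BS11}, and using the quotient presentations \eqref{eq:bc} and \eqref{eq:be} of the arc algebras, identifies the resulting graded vector space with the $(\lambda, PD(\mu))$-matrix coefficient of the diagonal $K_{j,n}^{\alg}$-bimodule, where $PD$ is the projective-duality bijection underlying Corollary \ref{c:projDual}.

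For the bimodule structure, the action of $HF(\ul{L}_{\lambda'},\ul{L}_\lambda)\otimes HF(\ul{K}_\mu,\ul{K}_{\mu'})$ is computed by Floer multiplication triangles with the prescribed Lagrangian boundary. By an open-mapping and positivity-of-intersection argument entirely parallel to the proofs of Propositions \ref{p:CompactCases} and \ref{p:AllCases}, every contributing holomorphic triangle splits into pieces that project into the left half $W_1$ and the right half $W_2$ respectively. The Khovanov-type TQFT description of the multiplication recalled in Section \ref{Sec:tqft_multiplication} then identifies each such piece with the corresponding algebra multiplication, and under the identification of the previous paragraph, the resulting product on cohomology coincides with the diagonal bimodule action.

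Finally, to pass from a cohomological bimodule isomorphism to a quasi-isomorphism of $A_\infty$-bimodules, I would extend Seidel's purity formality criterion (Theorem \ref{t:seidel-formality}) to the bimodule setting using the nc-vector field $b$ of Section \ref{s:ncField}. The horseshoe carries a pure $b$-equivariant structure by the same argument as in Lemma \ref{l:Lpure}, which propagates to a pure equivariant structure on $(\iota_j)^*(\ul{L}_\hs^l)$; combined with Lemma \ref{l:productDG}, this makes the restricted module a formal $A_\infty$-bimodule over the formal algebra $K_{j,n}^{\alg}\otimes(K_{j,n}^{\alg})^{op}$. Since the diagonal bimodule is likewise formal, the cohomological identification above lifts to the required quasi-isomorphism. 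The main obstacle will be the detailed combinatorial matching in the second and third paragraphs: because the horseshoe arcs straddle the dividing line $\{\re(z)=n+\tfrac12\}$, one must carefully track how the $PD$-bijection acts on the right-hand factor $\ol\mu$ and verify that the Floer gradings from \eqref{eq:FloerGrading} agree with the algebraic gradings up to the global shift predicted by the Serre functor behavior of Claim \ref{p:Serre}.
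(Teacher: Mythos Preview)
Your overall architecture matches the paper's: establish the cohomological bimodule isomorphism, then promote it via nc-vector-field formality. The formality portion is essentially what the paper does.

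The gap is in your third paragraph. You claim that ``every contributing holomorphic triangle splits into pieces that project into the left half $W_1$ and the right half $W_2$,'' by analogy with Propositions \ref{p:CompactCases} and \ref{p:AllCases}. But each component of $\ul{L}_{\hs}$ is a matching sphere over a path from $i+\sqrt{-1}$ to $(2n+1-i)+\sqrt{-1}$, which straddles the dividing line. Since one of the three boundary conditions of every contributing triangle lies on $\ul{L}_{\hs}$, there is no open-mapping argument confining the triangle to one half. Already at the vector-space level, $HF(\ul{L}_{\hs},\iota_j(\ul{L}_\lambda,\ul{K}_\mu))$ is typically \emph{not} a tensor product of something in $W_1$ with something in $W_2$: the combined diagram $\ul{\hs}\cup\ol{\lambda}\cup\ol{\mu}$ can be a single circle, giving rank $2$ rather than $4$. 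So there is nothing for the triangles to split into, and the obstacle is structurally more serious than the grading bookkeeping you flag at the end.

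The paper circumvents this by a different geometric manoeuvre: rather than splitting across $\{\re(z)=n+\tfrac12\}$, it applies a sequence of symplectomorphisms and slide-moves (Figures \ref{fig:horseshoeBimod}--\ref{fig:diffRep}) that fold the right half over, converting $HF(\ul{L}_{\hs},\iota_j(\ul{L}_\lambda,\ul{K}_\mu))$ into a Floer group $HF(\ul{L}_{\mu^+},\ul{L}_{\lambda\sqcup\mu^-})$ living inside an auxiliary extended arc algebra $K^{\symp}_{j+s,n+2s}$ (Lemma \ref{l:gvisom}). After removing the constant-triangle contributions over the thimble paths of $\ul{K}_\mu$, the bimodule multiplication becomes an arc-algebra product inside that auxiliary algebra, to which Proposition \ref{p:AllCases} applies directly (Corollaries \ref{c:DiBiMult1}--\ref{c:DiBiMult2}). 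No splitting across $W_1$ and $W_2$ is needed.
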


We are going to divide the proof into two steps.
First, we will show that $(\iota_j)^* (\ul{L}_{\hs}^l)$ coincides with the diagonal bimodule (up to grading shift) on the cohomological level.
Then, we will explain how to adapt the strategy in Section \ref{s:formality} to prove that this bimodule is formal.
For all the Lagrangian tuples in the proof, we continue to use our grading convention \eqref{eq:FloerGrading} and the cost is that
we will see eventually that $(\iota_j)^* (\ul{L}_{\hs}^l)$ corresponds to the diagonal bimodule shifted by $n-j$ instead of the diagonal bimodule.
This arises from the fact that there are $n-j$ matching paths of $(\iota_j)^* (\ul{L}_{\hs}^l)$ being oriented clockwise in the sense of
the algebraic extended arc algebra (see Remark \ref{r:GradingExplain}).

Let $\mu \in \Lambda_{n-j,n}$ and define 
$1 \le c_{\mu,1} < \dots < c_{\mu,n-j} \le n$ as in the beginning of Section \ref{ss:weights}.
Let $c_{\mu,i_1} < \dots < c_{\mu,i_s}$ be all the good points.
Let 
\begin{align}
 T&:= \{n+1 -c_{\mu,i_l}|l=1,\dots,s\} \\
 S&:= \{1,\dots,n\} \cup (T+\frac{1}{3}) \cup (T+\frac{2}{3})
\end{align}
We define $(\mu^+)',(\mu^-)':S \to \{ \vee, \wedge \}$ by
\begin{align}
 (\mu^+)'(a)=
 \left\{
 \begin{array}{ccc}
  PD(\mu)(a) &\text{ if }a \in \{1,\dots,n\} \setminus T\\
  \vee    &\text{ if }a \in T \\
  \wedge       &\text{ if }a \in (T+\frac{1}{3}) \cup (T+\frac{2}{3})
 \end{array}
 \right.
\end{align}
and
\begin{align}
 (\mu^-)'(a)=
 \left\{
 \begin{array}{ccc}
  \vee &\text{ if }a \in T+\frac{1}{3}\\
  \wedge     &\text{ otherwise }
 \end{array}
 \right.
\end{align}
There is a unique order preserving bijective map (the order induced as a subset of $\mathbb{R}$) $f:\{1,\dots,n+2s\} \to S$.
Let $\mu^+:=(\mu^+)' \circ f \in \Lambda_{j+s,n+2s}$ and $\mu^-:=(\mu^-)' \circ f \in \Lambda_{s,n+2s}$.
For $\lambda \in \Lambda_{j,n}$, we define $(\lambda \sqcup \mu^-)': S \to \{ \vee, \wedge \}$ by
\begin{align}
 (\lambda \sqcup \mu^-)'(a)=
 \left\{
 \begin{array}{ccc}
  \lambda(a) &\text{ if }a \in \{1,\dots,n\}\\
  (\mu^-)'(a)     &\text{ if }a \in (T+\frac{1}{3}) \cup (T+\frac{2}{3})
 \end{array}
 \right.
\end{align}
and let $\lambda \sqcup \mu^-:=(\lambda \sqcup \mu^-)' \circ f \in \Lambda_{j+s,n+2s}$.
Note that both $\mu^+$ and $\lambda \sqcup \mu^-$ lie in $\Lambda_{j+s,n+2s}$.

\begin{lemma}\label{l:gvisom}
For $\lambda \in \Lambda_{j,n}$ and $\mu \in \Lambda_{n-j,n}$, we have vector space isomorphisms 
\begin{align}
  HF^{k+n-j-s}(\ul{L}_{\hs}, \iota_j(\ul{L}_{\lambda},\ul{K}_{\mu}))= HF^k(\ul{L}_{\mu^+},\ul{L}_{\lambda \sqcup \mu^-}) \label{eq:gvisom}
 \end{align}
 for all $k$.
(Here $n-j-s$ is the number of thimble paths in $\ul{K}_{\mu}$.)
\end{lemma}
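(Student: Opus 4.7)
The plan is to prove \eqref{eq:gvisom} by constructing an explicit bijection between geometric bases on the two sides, together with a careful accounting of the grading shift $n-j-s$. First I would replace the horseshoe $\ul{L}_{\hs}$ by its upper matching representative via Proposition \ref{p:slidinginvariance}, so that each of its $n$ arcs connects $i+\sqrt{-1}$ to $(2n+1-i)+\sqrt{-1}$ inside $\{im(z)\geq 1\}$; this puts the horseshoe in a configuration that can be cleanly separated by any vertical line between the left and right halves. Likewise, I would choose standard upper (resp. lower) representatives for $\ul{L}_\lambda$ and $\ul{K}_\mu$ supported in disjoint contractible open sets, so that the Floer cochain group $CF(\ul{L}_{\hs}, \iota_j(\ul{L}_\lambda, \ul{K}_\mu))$ can be enumerated by unordered $n$-tuples of transverse intersection points distributed among these arcs.

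Next, I would model the enlargement built into the definitions of $\mu^\pm$ by enlarging the ambient Lefschetz fibration. Concretely, I would introduce $2s$ auxiliary critical values at the positions $T+\tfrac{1}{3}$ and $T+\tfrac{2}{3}$, where $T$ records (via $a\mapsto n+1-a$) the good points of $\mu$. Each good-point component of $\ul{K}_\mu$ is then isotopic to a matching arc that can be split through the corresponding pair of auxiliary critical values using the sliding moves of Lemmas \ref{l:sliding1}--\ref{l:sliding3}; after this surgery the resulting configuration is precisely $\ul{L}_{\lambda \sqcup \mu^-}$ on one side. Simultaneously, cutting the upper horseshoe arcs at the auxiliary critical values produces, after reparametrisation, exactly the Lagrangian $\ul{L}_{\mu^+}$: the $\vee$-positions of $\mu^+$ correspond to the good points of $\mu$ (where a genuine cut is made) together with the original $\wedge$-positions of $PD(\mu)$ outside $T$, while the $\wedge$-positions at $T+\tfrac{1}{3}$ and $T+\tfrac{2}{3}$ arise from the new critical values introduced to split the horseshoe.

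With this model in place, I would establish the bijection between transverse intersection points of the two pairs, observing that intersections between a horseshoe arc and a bad-point thimble of $\ul{K}_\mu$ (of which there are $n-j-s$) correspond on the right-hand side to intersections whose contribution occurs in a geometric position one degree higher than the naive pairing obtained by removing them. Combining this with the grading convention \eqref{eq:FloerGrading} and the additivity \eqref{eq:AlgGrading} yields an overall shift of $n-j-s$ on the right, matching the claim. Once generators are matched with correct gradings, the fact that neither side carries a nontrivial Floer differential (both sides sit in pure even/odd degrees after this identification, as in \eqref{eq:CohBasis}) yields the vector space isomorphism.

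The main obstacle will be the careful degree and sign bookkeeping in the last step: in particular, verifying that the surgery at good points of $\mu$ intertwines the cohomological grading conventions used for $\ul{L}_{\hs}$ (inherited from its upper matching) with those used for $\ul{L}_{\mu^+}$ and $\ul{L}_{\lambda \sqcup \mu^-}$, and that each bad point of $\mu$ contributes exactly one unit to the shift. This requires tracing the effect of the enlarged fibration on the grading functions $\eta_{i,k}$ chosen in Section \ref{sss:FloerCochain}, and comparing the resulting basis to the geometric basis of Lemma \ref{l:gvspIsom} on both sides to ensure that nothing beyond a uniform shift is introduced.
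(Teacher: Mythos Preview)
Your overall strategy matches the paper's: both establish the isomorphism by exhibiting a bijection between geometric intersection-point generators, checking the grading shift, and observing that the differentials vanish for parity reasons. However, the geometric mechanism you describe is not the one the paper uses, and as stated it does not quite work.

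The right-hand side of \eqref{eq:gvisom} lives in the Milnor fibre with $|S| = n + 2s$ critical values, which is \emph{fewer} than the $2n$ critical values on the left-hand side (recall $s \le \min(j,n-j)$). So one does not enlarge the fibration; one shrinks it. The paper's argument proceeds by ambient symplectomorphisms in $A_{2n-1}$: first bring the right half of the critical values to sit vertically above the left half, so each horseshoe arc becomes a straight segment; then, for each of the $s$ good-point matchings of $\ul{K}_\mu$, isotope the two endpoint critical values down to the positions $T+\tfrac13$ and $T+\tfrac23$ at height $\sqrt{-1}$ (the auxiliary positions are the \emph{images} of existing critical values under this isotopy, not newly introduced ones). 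One then forgets the $n-j-s$ bad-point thimbles of $\ul{K}_\mu$ together with the horseshoe arcs and critical values they meet --- the intersection point over each such critical value is forced, so deleting it simply records a grading contribution --- and bends the remaining horseshoe arcs with free upper endpoint into thimbles, forgetting those endpoints as well. What remains is exactly the configuration $(\ul{L}_{\mu^+}, \ul{L}_{\mu^-})$ in a fibre with critical-value set $S$; adding the lower matching of $\ul{L}_\lambda$ on $\{1,\dots,n\}$ then gives $\ul{L}_{\lambda \sqcup \mu^-}$.

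Your proposed ``cutting'' and ``splitting'' of arcs at added critical values are not well-defined Floer-theoretic moves, and would in any case increase rather than decrease the number of arcs, whereas one must pass from the $n$ horseshoe arcs to the $j+s$ arcs of $\ul{L}_{\mu^+}$. The grading shift also has a more direct explanation than the one you outline: each of the $n-j-s$ removed forced intersections lies over the right endpoint of exactly one horseshoe matching sphere, hence by the convention \eqref{eq:FloerGrading} contributes degree $1$; removing all of them drops the total grading by exactly $n-j-s$. The part of your plan that survives unchanged is the final step: both cochain models have generators concentrated in a single parity, so the differentials vanish and the bijection of generators upgrades to the required graded vector-space isomorphism.
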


The importance of Lemma \ref{l:gvisom} is that the right hand side of 
\eqref{eq:gvisom} is part of the symplectic extended arc algebra. We will see later how to use \eqref{eq:gvisom},
and the identification of the symplectic and algebraic extended arc algebras (Proposition \ref{p:AllCases}), to compute the bimodule
multiplication map of $(\iota_j)^* (\ul{L}_{\hs}^l)$.

\begin{proof}[Proof of Lemma \ref{l:gvisom}]
We use an upper matching to represent  $\ul{L}_{\hs}$ and a lower matching to represent
$\ul{K}_{\mu}$ (see the first picture of Figure \ref{fig:horseshoeBimod}).
Now, we apply a symplectomorphism to `move the right half to the top' and 
make all the matching paths of $\ul{L}_{\hs}$ be straight line segments (see the second picture of Figure \ref{fig:horseshoeBimod}).
For each matching path of $\ul{K}_{\mu}$, we apply a further symplectomorphism to `bend it to the right' and make the two end points become 
$n+1 -c_{\mu,i_l}+\frac{1}{3}+ \sqrt{-1}$ and  $n+1 -c_{\mu,i_l}+\frac{2}{3}+ \sqrt{-1}$ (see the third picture of Figure \ref{fig:horseshoeBimod}).
Then, we remove all critical values contained in the thimble paths of $\ul{K}_{\mu}$ and all the paths that contain these critical values 
(see the fourth picture of Figure \ref{fig:horseshoeBimod}).
Finally, for those matching paths of $\ul{L}_{\hs}$ that do not intersect with a matching path of $\ul{K}_{\mu}$, we bend them to the right and turn them into thimble paths 
(see the fifth picture of Figure \ref{fig:horseshoeBimod}).

 \begin{figure}[h]
  \includegraphics{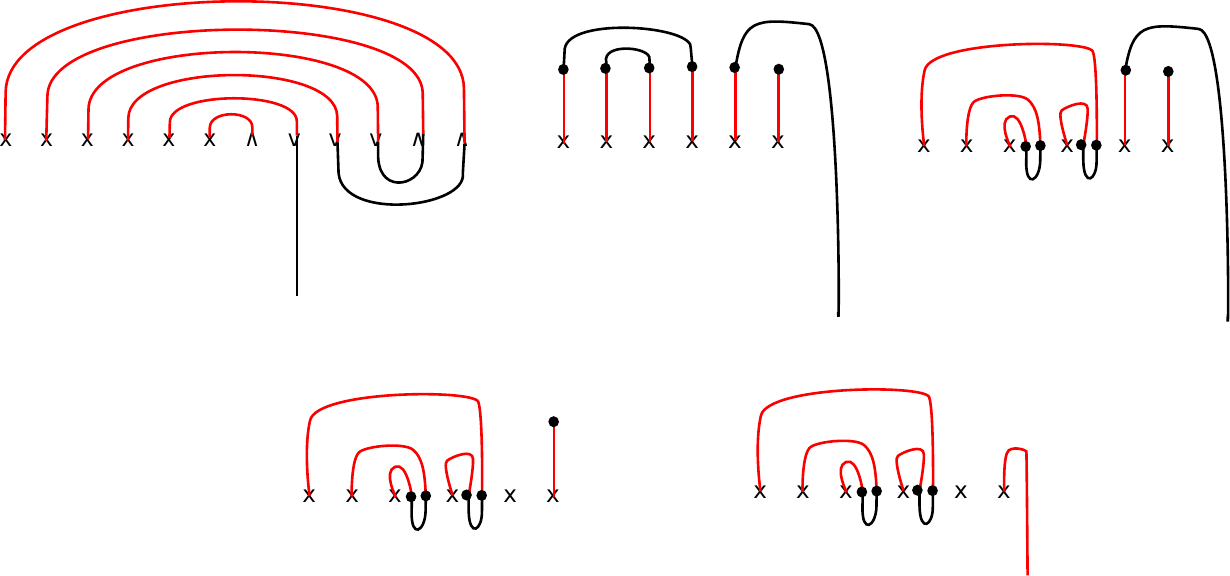}
  \caption{Turning $\ul{L}_{\hs}$ (red) and $\ul{K}_{\mu}$ (black) to $\ul{L}_{\mu^+}$ (red) and $\ul{L}_{\mu^-}$ (black), respectively}\label{fig:horseshoeBimod}
 \end{figure}

After this procedure, the set of critical values (the crosses and black dots in Figure \ref{fig:horseshoeBimod}) is exactly $S$, and we identify it with $\{1,\dots,n+2s\}$ by $f$.
The Lagrangian tuples $\ul{L}_{\hs}$ and $\ul{K}_{\mu}$ become $\ul{L}_{\mu^+}$ and $\ul{L}_{\mu^-}$ respectively.

If we now add in a lower matching of $\ul{L}_{\lambda}$ that does not intersect with  $\ul{L}_{\mu^-}$,
then together with $\ul{L}_{\mu^-}$ we obtain a representative of $\ul{L}_{\lambda \sqcup \mu^-}$ (see the second picture of Figure \ref{fig:horseshoeBimod2}).
Moreover, for degree reasons (either all generators have odd degree, or all have even degree),
both the cochain model $CF(\ul{L}_{\hs}, \iota_j(\ul{L}_{\lambda},\ul{K}_{\mu}))$
and $CF(\ul{L}_{\mu^+},\ul{L}_{\lambda \sqcup \mu^-})$ have vanishing differentials.
Furthermore, there is a identification of the generators by sending a generator $\ul{x}$ of 
$CF(\ul{L}_{\mu^+},\ul{L}_{\lambda \sqcup \mu^-})$ to the generator $\ul{y}$ of $CF(\ul{L}_{\hs}, \iota_j(\ul{L}_{\lambda},\ul{K}_{\mu}))$
such that $\pi_E(\ul{y})$ is the union of $\pi_E(\ul{x})$ and the end points of the thimble paths of $\ul{K}_{\mu}$ (Figure \ref{fig:horseshoeBimod2}).
This identification increases the grading by the number of points added, which is $n-j-s$.
It completes the proof.
\end{proof}

 \begin{figure}[h]
  \includegraphics{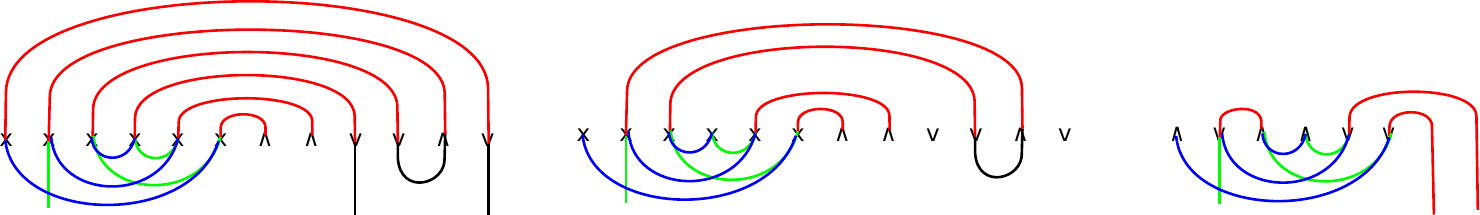}
  \caption{$CF(\ul{L}_{\hs}, \iota_j(\ul{L}_{\lambda},\ul{K}_{\mu}))$ (left) and $CF(\ul{L}_{\mu^+},\ul{L}_{\lambda \sqcup \mu^-})$ (right)}\label{fig:horseshoeBimod2}
 \end{figure}

To prove Lemma \ref{l:gvisom}, it suffices to consider a lower matching of 
$\ul{L}_{\lambda}$ to give a representative of $\ul{L}_{\lambda \sqcup \mu^-}$.
However, for this representative, there may be nested circles in the union of matching and thimble paths of 
$\ul{L}_{\mu^+}$ and $\ul{L}_{\lambda \sqcup \mu^-}$, which will be inconvenient later on.
By possibly sliding the lower matching of $\ul{L}_{\lambda}$ cross some matchings of $\ul{L}_{\mu^-}$ (and $\ul{L}_{\lambda}$ itself),
we can choose a representative $\ul{L}_{\widetilde{\lambda \sqcup \mu^-}}$ of $\ul{L}_{\lambda \sqcup \mu^-}$ with no nested circles (see the second picture of Figure \ref{fig:diffRep}).
This corresponds to choosing another representative $\ul{L}_{\widetilde{\lambda \sqcup \mu}}$ of
$\iota_j(\ul{L}_{\lambda},\ul{K}_{\mu})$, by sliding across some matching paths of $\ul{K}_{\mu}$ (see the first picture of Figure \ref{fig:diffRep}).
For these representatives, we still have the (grading shifting) cochain isomorphism with $0$ differential
\begin{align}
  CF^{*+n-j-s}(\ul{L}_{\hs}, \ul{L}_{\widetilde{\lambda \sqcup \mu}})= CF^*(\ul{L}_{\mu^+},\ul{L}_{\widetilde{\lambda \sqcup \mu^-}}) \label{eq:Cochainisom}
 \end{align}
 which is again given by sending a generator $\ul{x}$ of 
$CF(\ul{L}_{\mu^+},\ul{L}_{\widetilde{\lambda \sqcup \mu^-}})$ to the generator $\ul{y}$ of $CF(\ul{L}_{\hs}, \ul{L}_{\widetilde{\lambda \sqcup \mu}})$
such that $\pi_E(\ul{y})$ is the union of $\pi_E(\ul{x})$ and the end points of the thimble paths of $\ul{K}_{\mu}$.
The geometric generators on the right hand side of \eqref{eq:Cochainisom} are geometric basis elements of 
$K^{\symp}_{j+s,n+2s}$, which are identified with the corresponding diagrammatic basis elements of $K^{\alg}_{j+s,n+2s}$
under the map $\Phi$ of Proposition \ref{p:AllCases}.

\begin{figure}[h]
  \includegraphics{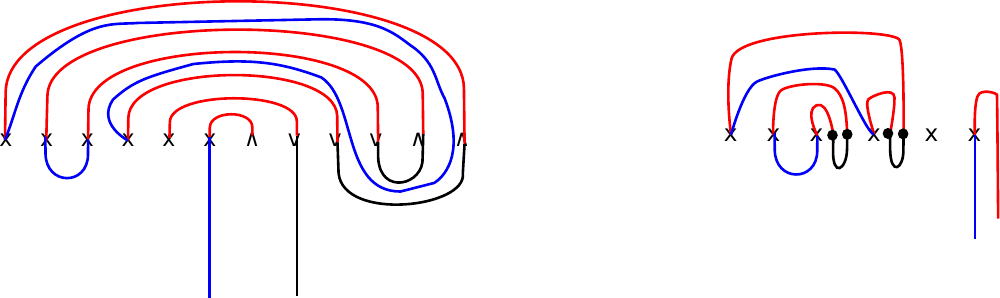}
  \caption{$CF(\ul{L}_{\hs}, \ul{L}_{\widetilde{\lambda \sqcup \mu}})$ (left) and $CF(\ul{L}_{\mu^+},\ul{L}_{\widetilde{\lambda \sqcup \mu^-}})$ (right)}\label{fig:diffRep}
 \end{figure}

Next, we want to compare the multiplication map
\begin{align}
 CF(\ul{L}_{\widetilde{\lambda_0 \sqcup \mu}}, \ul{L}_{\widetilde{\lambda_1 \sqcup \mu}}) \times CF(\ul{L}_{\hs}, \ul{L}_{\widetilde{\lambda_0 \sqcup \mu}})
 \to CF(\ul{L}_{\hs}, \ul{L}_{\widetilde{\lambda_1 \sqcup \mu}}) \label{eq:BimodMult}
\end{align}
and 
\begin{align}
 CF(\ul{L}_{\widetilde{\lambda_0 \sqcup \mu^-}}, \ul{L}_{\widetilde{\lambda_1 \sqcup \mu^-}}) \times CF(\ul{L}_{\mu^+},\ul{L}_{\widetilde{\lambda_0 \sqcup \mu^-}})
 \to CF(\ul{L}_{\mu^+},\ul{L}_{\widetilde{\lambda_1 \sqcup \mu^-}}) \label{eq:BimodMult2}
\end{align}
The second and third terms of \eqref{eq:BimodMult} and \eqref{eq:BimodMult2} are identified via \eqref{eq:Cochainisom},
and the identification of the first terms is defined similarly (i.e. adding to a generator $\ul{x}$ of 
$CF(\ul{L}_{\widetilde{\lambda_0 \sqcup \mu^-}}, \ul{L}_{\widetilde{\lambda_1 \sqcup \mu^-}})$ the intersection points of 
$CF(K_{\mu}, K_{\mu})$ that lie above those critical values that are contained in the thimble paths of $K_\mu$, see Figure \ref{fig:diffWeights},
note that this identification is degree-preserving, because the points being added now have degree $0$).

\begin{figure}[h]
  \includegraphics{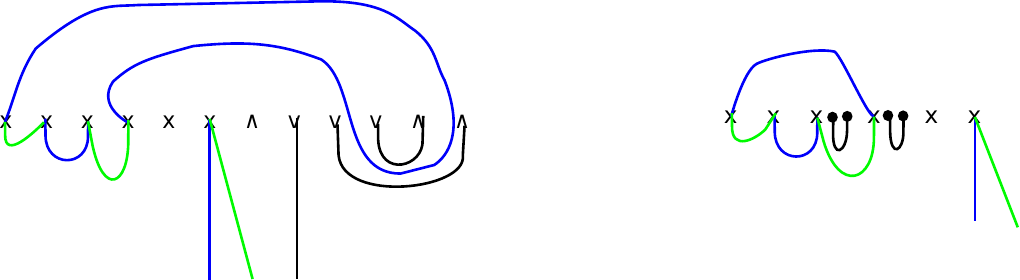}
  \caption{$CF(\ul{L}_{\widetilde{\lambda_0 \sqcup \mu}}, \ul{L}_{\widetilde{\lambda_1 \sqcup \mu}})$ (left) and $CF(\ul{L}_{\widetilde{\lambda_0 \sqcup \mu^-}}, \ul{L}_{\widetilde{\lambda_1 \sqcup \mu^-}})$ (right)}\label{fig:diffWeights}
 \end{figure}

By the open mapping theorem, all solutions $u$ contributing to the map 
\eqref{eq:BimodMult} must include $n-j-s$ constant triangles which, under $\pi_E$, map to the $n-j-s$ critical values that are contained in the thimble paths of $K_\mu$.
Removing these constant triangles (and removing the thimble paths of $K_\mu$), we can identify the moduli spaces defining the maps \eqref{eq:BimodMult} and \eqref{eq:BimodMult2},
so the maps \eqref{eq:BimodMult} and \eqref{eq:BimodMult2} agree under the identification above.

Let $CF(\ul{L}_{\lambda_0 \sqcup \mu^-}^\dagger,  \ul{L}_{\lambda_1 \sqcup \mu^-}^\dagger)$ be a cochain model for the pair 
$(\ul{L}_{\lambda_0 \sqcup \mu^-},  \ul{L}_{\lambda_1 \sqcup \mu^-})$ which 
contains the matching paths of $\mu^-$ used above, and such that 
the union of matching and thimble paths has no nested circles (see the first picture of Figure \ref{fig:split}).
In particular, as the matching paths in $\mu^-$ are not nested, 
the Floer cochain complex canonically splits
\begin{align}
 CF(\ul{L}_{\lambda_0 \sqcup \mu^-}^\dagger,  \ul{L}_{\lambda_1 \sqcup \mu^-}^\dagger)=CF(\ul{L}_{\lambda_0}^\dagger,  \ul{L}_{\lambda_1}^\dagger) \otimes CF(\ul{L}_{\mu^-},\ul{L}_{\mu^-}) \label{eq:split}
\end{align}
where, for $j=0,1$, $\ul{L}_{\lambda_j}^\dagger$ is obtained by removing the matchings corresponding to $\mu^-$ (see the second picture of Figure \ref{fig:split}).
Moreover, as there is no nesting, the geometric 
generators are geometric basis elements of $K^{\symp}_{j,n} \otimes K^{\symp}_{s,2s}$.
By Proposition \ref{p:AllCases}, these geometric generators can in turn be identified with diagrammatic basis elements in 
$K^{\alg}_{j,n} \otimes K^{\alg}_{s,2s}$.
Let
\begin{align}
 \kappa:CF(\ul{L}_{\lambda_0 \sqcup \mu^-}^\dagger,  \ul{L}_{\lambda_1 \sqcup \mu^-}^\dagger) \to CF(\ul{L}_{\widetilde{\lambda_0 \sqcup \mu^-}}, \ul{L}_{\widetilde{\lambda_1 \sqcup \mu^-}})
\end{align}
be a continuation map (the induced map on cohomology is independent of choices).
By Proposition \ref{p:AllCases}, we can completely describe
\begin{align}
 \mu^2(\kappa(-),-):CF(\ul{L}_{\lambda_0 \sqcup \mu^-}^\dagger,  \ul{L}_{\lambda_1 \sqcup \mu^-}^\dagger) \times CF(\ul{L}_{\mu^+},\ul{L}_{\widetilde{\lambda_0 \sqcup \mu^-}})
 \to CF(\ul{L}_{\mu^+},\ul{L}_{\widetilde{\lambda_1 \sqcup \mu^-}}) \label{eq:BimodMult3}
\end{align}
using the algebraic extended arc algebra.
In particular, by applying \eqref{eq:split} to the first term of \eqref{eq:BimodMult3}, we completely understand the map 
\begin{align}
 \mu^2(\kappa(- \otimes e_{\mu^-}),-):CF(\ul{L}_{\lambda_0}^\dagger,  \ul{L}_{\lambda_1}^\dagger) \times CF(\ul{L}_{\mu^+},\ul{L}_{\widetilde{\lambda_0 \sqcup \mu^-}})
 \to CF(\ul{L}_{\mu^+},\ul{L}_{\widetilde{\lambda_1 \sqcup \mu^-}}) \label{eq:BimodMult3.5}
\end{align}
where $e_{\mu^-} \in CF(\ul{L}_{\mu^-},\ul{L}_{\mu^-})$ is the unit (cf. Lemma \ref{l:chainRep}).

\begin{figure}[h]
  \includegraphics{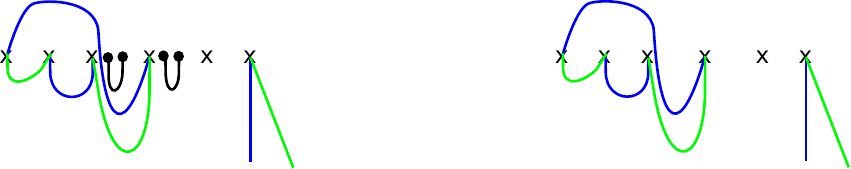}
  \caption{$CF(\ul{L}_{\lambda_0 \sqcup \mu^-}^\dagger,  \ul{L}_{\lambda_1 \sqcup \mu^-}^\dagger)$ (left) and $CF(\ul{L}_{\lambda_0}^\dagger,  \ul{L}_{\lambda_1}^\dagger)$ (right)}\label{fig:split}
 \end{figure}

On the other hand, we now consider
\begin{align}
 CF(\ul{L}_{\widetilde{\lambda_0}}, \ul{L}_{\widetilde{\lambda_1}}) \times CF(\ul{L}_{\ol{PD(\mu)}},\ul{L}_{\widetilde{\lambda_0}}) \to CF(\ul{L}_{\ol{PD(\mu)}},\ul{L}_{\widetilde{\lambda_1}}) \label{eq:BimodMult4}
\end{align}
where, for $j=0,1$, $\ul{L}_{\widetilde{\lambda_j}}$ is defined by removing the matching paths (and their end points) of
$\ul{L}_{\widetilde{\lambda_j \sqcup \mu^-}}$ that correspond to $\mu^-$, and 
$\ul{L}_{\ol{PD(\mu)}}$ is an upper matching such that there are no nested circles in the union of the matching paths 
of $\ul{L}_{\ol{PD(\mu)}}$ and $\ul{L}_{\widetilde{\lambda_j}}$ (see Figure \ref{fig:PDside}).

\begin{figure}[h]
  \includegraphics{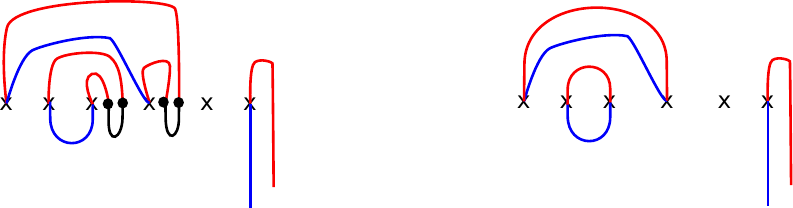}
  \caption{$CF(\ul{L}_{\mu^+},\ul{L}_{\widetilde{\lambda \sqcup \mu^-}})$ (left) and $CF(\ul{L}_{\ol{PD(\mu)}},\ul{L}_{\widetilde{\lambda_0}})$ (right)}\label{fig:PDside}
 \end{figure}

Analogously, we have a continuation map
\begin{align}
 \kappa':CF(\ul{L}_{\lambda_0}^\dagger,  \ul{L}_{\lambda_1}^\dagger) \to CF(\ul{L}_{\widetilde{\lambda_0}}, \ul{L}_{\widetilde{\lambda_1}})
\end{align}
and hence we can define
\begin{align}
 \mu^2(\kappa'(-),-):CF(\ul{L}_{\lambda_0}^\dagger,  \ul{L}_{\lambda_1}^\dagger) \times CF(\ul{L}_{\ol{PD(\mu)}},\ul{L}_{\widetilde{\lambda_0}}) \to CF(\ul{L}_{\ol{PD(\mu)}},\ul{L}_{\widetilde{\lambda_1}}) \label{eq:BimodMult5}
\end{align}
which we also know completely because the geometric generators are all geometric basis elements and we can apply Proposition \ref{p:AllCases}.

For $j=0,1$, we have a (grading shifting) cochain isomorphism (both sides have $0$ differential as usual)
\begin{align}
 CF^{*+s}(\ul{L}_{\mu^+},\ul{L}_{\widetilde{\lambda_j \sqcup \mu^-}}) \to CF^*(\ul{L}_{\ol{PD(\mu)}},\ul{L}_{\widetilde{\lambda_j}}) \label{eq:CoChainId}
\end{align}
giving by forgetting the points of a generator $\ul{x}$ that lie above $(T+\frac{1}{3}+\sqrt{-1}) \cup (T+\frac{2}{3}+\sqrt{-1})$.
The grading shift arises from removing the grading contribution of the points lying above $(T+\frac{1}{3}+\sqrt{-1}) \cup (T+\frac{2}{3}+\sqrt{-1})$.
There are two cases. If a point lying above $(T+\frac{1}{3}+\sqrt{-1})$ is removed, then its grading contribution, which is $1$, is removed and the rest of the gradings are unchanged.
If a point lying above $(T+\frac{2}{3}+\sqrt{-1})$ is removed, then its grading contribution, which is $2$, is removed
but there is grading change from $1$ to $2$ for the point lying above $T+\sqrt{-1}$.  Indeed, since there is a matching path from $T+\sqrt{-1}$ to $(T+\frac{1}{3}+\sqrt{-1})$
and another matching path from $(T+\frac{1}{3}+\sqrt{-1})$ to $(T+\frac{2}{3}+\sqrt{-1})$, a generator has a point lying above $(T+\frac{2}{3}+\sqrt{-1})$ if and only if it has a point lying above $T+\sqrt{-1}$.
Therefore, in both cases, each point removal results in a decrease of overall grading by $1$.

The outcome of this discussion is the following:

\begin{lemma}\label{l:oneMultId}
 Under the cochain identifications \eqref{eq:CoChainId}, the maps \eqref{eq:BimodMult3.5} and \eqref{eq:BimodMult5} are identified.
\end{lemma}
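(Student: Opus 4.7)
The plan is to reduce Lemma \ref{l:oneMultId} to a diagrammatic identity in the algebraic extended arc algebra, exploiting Proposition \ref{p:AllCases} which (on geometric bases) identifies the symplectic multiplication with the TQFT multiplication of Brundan--Stroppel \cite{BS11}. All four Floer cochain complexes involved admit geometric bases whose generators correspond to oriented circle diagrams, and the multiplications \eqref{eq:BimodMult3.5} and \eqref{eq:BimodMult5} act on these generators according to the same TQFT surgery rules recalled in Section \ref{Sec:tqft_multiplication}. The problem therefore becomes a purely combinatorial compatibility check.

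First, I would describe the cochain identification \eqref{eq:CoChainId} diagrammatically. A geometric generator $\ul{x}$ of $CF(\ul{L}_{\mu^+},\ul{L}_{\widetilde{\lambda_j\sqcup \mu^-}})$ corresponds to an oriented circle diagram whose underlying weight is determined by $\pi_E(\ul{x})$. Removing from $\ul{x}$ the $2s$ points lying above $(T+\tfrac13+\sqrt{-1})\cup(T+\tfrac23+\sqrt{-1})$ removes precisely $s$ ``small'' circles built from the matching paths of $\mu^-$ together with the corresponding matching paths of $\mu^+$ that connect points in $T+\sqrt{-1}$ to points in $T+\tfrac13+\sqrt{-1}$; these small circles are disjoint from the rest of the diagram and are oriented anticlockwise in every generator of the domain of \eqref{eq:CoChainId}, which is why removing them shifts degree by exactly $s$. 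The result is the oriented circle diagram associated to the corresponding generator of $CF(\ul{L}_{\ol{PD(\mu)}},\ul{L}_{\widetilde{\lambda_j}})$.

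Next, I would analyse \eqref{eq:BimodMult3.5}. By \eqref{eq:split}, the element $\kappa(a\otimes e_{\mu^-})$ corresponds diagrammatically to $a$ tensored with the identity of $K^{\symp}_{s,2s}$ (a sum of clockwise-oriented small circles supported on the matching paths of $\mu^-$ and their push-offs). When this product is computed by the TQFT surgery procedure, the portion of the diagram supported on the small $\mu^-$-circles is acted upon trivially (since multiplication by a unit is the identity, cf.\ the last paragraph of the proof of Lemma \ref{l:constantmap}), while the surgery steps away from these circles are precisely those computing \eqref{eq:BimodMult5} after the diagrammatic removal described above. In other words, the TQFT is local and respects the disjoint-union decomposition of the circle diagrams, so the surgery moves on the $\lambda_j$-portion and on the $\mu^\pm$-portion are independent, and those on the $\mu^\pm$-portion are trivial.

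The main obstacle is a careful sign verification, since the TQFT rules \eqref{eq:TQFT1}--\eqref{eq:TQFT2} and the geometric-basis identification of Proposition \ref{p:AllCases} are only fixed up to signs on each pair $(\lambda_0,\lambda_1)$. The cleanest way to handle this is to fix, once and for all, a preferred lift of every oriented circle diagram to a geometric generator such that the identification \eqref{eq:CoChainId} and the splitting \eqref{eq:split} are sign-preserving on geometric bases (using that the small $\mu^-$-circles contribute by a canonical unit, not merely up to sign); the identity of the two multiplication maps then reduces to the statement that the underlying TQFT operations agree, which is automatic from the local description of the surgery procedure. Finally, the continuation maps $\kappa$ and $\kappa'$ are both identified with the identity on geometric bases under Proposition \ref{p:AllCases}, since the sliding moves involved are supported away from the multiplication data, so they play no role in the comparison.
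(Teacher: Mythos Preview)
Your approach is essentially the same as the paper's: both reduce via Proposition~\ref{p:AllCases} to the diagrammatic TQFT multiplication, and both hinge on the fact that tensoring with $e_{\mu^-}$ corresponds to inserting counterclockwise circles which act as identities under the surgery rules. The paper's proof is more compressed (it simply cites \cite[Equation~(3.4)]{BS11} for the triviality of the TQFT action on these circles), but the logic is identical.

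Two small corrections. First, you describe $e_{\mu^-}$ as corresponding to ``clockwise-oriented small circles''; under the convention $1 \Leftrightarrow$ anticlockwise, $x \Leftrightarrow$ clockwise of \eqref{eq:TQFT1}--\eqref{eq:TQFT2}, the unit corresponds to \emph{anticlockwise} circles, and it is precisely this that makes the TQFT action trivial. Second, your claim that the small circles in the domain of \eqref{eq:CoChainId} are ``oriented anticlockwise in every generator'' is not quite right: the paragraph following \eqref{eq:CoChainId} explicitly analyses both cases (point over $T+\tfrac13$ versus $T+\tfrac23$), and the latter case corresponds to a clockwise orientation. This does not affect the argument, since what matters is the orientation on the circles coming from the \emph{first} factor $-\otimes e_{\mu^-}$, not the second; but your description of the identification \eqref{eq:CoChainId} should be adjusted accordingly.
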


\begin{proof}
 After applying Proposition \ref{p:AllCases} to the geometric basis elements for all the Floer cochain groups involved, the maps 
 \eqref{eq:BimodMult3.5} and \eqref{eq:BimodMult5} can be computed by the algebraic extended arc algebra
 in $K^{\alg}_{j+s,n+2s}$ and in $K^{\alg}_{j,n}$, respectively.
 
 The upshot is that, for \eqref{eq:BimodMult3.5}, elements of the form $-\otimes e_{\mu^-}$
 correspond to orienting the $s$ circles in $\ul{\lambda_1 \sqcup \mu^-}^{\alg} \cup \ol{\lambda_0 \sqcup \mu^-}^{\alg}$
 associated to $\mu^-$ counterclockwise.
 Applying TQFT type multiplication to these circles does nothing \cite[Equation (3.4)]{BS11}, and the rest of the diagrammatic calculus operations (for \ref{eq:BimodMult3.5} and \eqref{eq:BimodMult5}) 
 are obviously identified.
\end{proof}

As a consequence of Lemma \ref{l:gvisom}, the identification between \eqref{eq:BimodMult} and \eqref{eq:BimodMult2}, and Lemma \ref{l:oneMultId}, we get:

\begin{corollary}\label{c:DiBiMult1}
 We have canonical vector space isomorphisms 
 \begin{align}
  HF^{*+n-j}(\ul{L}_{\hs}, \iota_j(\ul{L}_{\lambda},\ul{K}_{\mu})) = HF^*(\ul{L}_{PD(\mu)},\ul{L}_{\lambda})
 \end{align}
 and under these isomorphisms, the multiplication maps
 \begin{align}
\mu^2(-\otimes e_K,-)  : HF(\ul{L}_{\lambda_0},\ul{L}_{\lambda_1}) \times HF(\ul{L}_{\hs}, \iota_j(\ul{L}_{\lambda_0},\ul{K}_{\mu}))
 \to HF(\ul{L}_{\hs}, \iota_j(\ul{L}_{\lambda_1},\ul{K}_{\mu})) \label{eq:1}
 \end{align}
and
\begin{align}
 \mu^2(-,-):HF(\ul{L}_{\lambda_0},  \ul{L}_{\lambda_1}) \times HF(\ul{L}_{PD(\mu)},\ul{L}_{\lambda_0})
 \to HF(\ul{L}_{PD(\mu)},\ul{L}_{\lambda_1}) 
\end{align}
are identified.
Here, for \eqref{eq:1}, 
\begin{align}
 -\otimes e_K: HF(\ul{L}_{\ul{\lambda_0}},\ul{L}_{\ul{\lambda_1}}) \to  HF(\ul{L}_{\ul{\lambda_0}},\ul{L}_{\ul{\lambda_1}}) \otimes HF(\ul{K}_{\mu},\ul{K}_{\mu}) \simeq HF(\iota_j(\ul{L}_{\lambda_0},\ul{K}_{\mu}),\iota_j(\ul{L}_{\lambda_1},\ul{K}_{\mu}))
\end{align}
is the obvious map where $e_K$ is the cohomological unit of $HF(\ul{K}_{\mu},\ul{K}_{\mu})$.
\end{corollary}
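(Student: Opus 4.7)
The plan is to chain together the identifications already assembled in the excerpt. First I would compose the two grading-shifting cochain isomorphisms available: \eqref{eq:Cochainisom}, which gives
$$CF^{*+n-j-s}(\ul{L}_{\hs}, \ul{L}_{\widetilde{\lambda \sqcup \mu}}) \;=\; CF^{*}(\ul{L}_{\mu^+},\ul{L}_{\widetilde{\lambda \sqcup \mu^-}}),$$
and \eqref{eq:CoChainId}, which gives
$$CF^{*+s}(\ul{L}_{\mu^+},\ul{L}_{\widetilde{\lambda \sqcup \mu^-}}) \;=\; CF^{*}(\ul{L}_{\ol{PD(\mu)}},\ul{L}_{\widetilde{\lambda}}).$$
Composing these yields the desired grading shift of $(n-j-s)+s = n-j$, producing the claimed vector space isomorphism
$$HF^{*+n-j}(\ul{L}_{\hs}, \iota_j(\ul{L}_{\lambda},\ul{K}_{\mu})) \;=\; HF^{*}(\ul{L}_{PD(\mu)},\ul{L}_{\lambda}).$$

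Next I would identify the two multiplication maps in stages, walking up the tower of identifications. The identification between \eqref{eq:BimodMult} and \eqref{eq:BimodMult2} was established via the open-mapping-theorem argument: any pseudoholomorphic triangle contributing to \eqref{eq:BimodMult} must contain $n-j-s$ constant triangle components mapping (under $\pi_E$) to the critical values contained in the thimble paths of $\ul{K}_\mu$; removing these constant components gives a bijection of moduli spaces with those defining \eqref{eq:BimodMult2}. This is exactly what is needed to translate \eqref{eq:1} (which by definition pairs with the unit $e_K$ on the $\ul{K}_\mu$-factor) into the map \eqref{eq:BimodMult2}. Then Lemma \ref{l:oneMultId} identifies \eqref{eq:BimodMult3.5} with \eqref{eq:BimodMult5} via the cochain identification \eqref{eq:CoChainId}; this relies on the observation that tensoring with the unit $e_{\mu^-}$ on the algebraic side corresponds to the counterclockwise orientation of the $s$ circles coming from $\mu^-$, and multiplication with such circles acts as the identity in the TQFT. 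The second map in \eqref{eq:BimodMult5} is precisely the multiplication map with $\ul{L}_{\ol{PD(\mu)}}$ which, by construction of $\mu^-$ from $PD(\mu)$, is the desired map $\mu^2:HF(\ul{L}_{\lambda_0},\ul{L}_{\lambda_1}) \times HF(\ul{L}_{PD(\mu)},\ul{L}_{\lambda_0}) \to HF(\ul{L}_{PD(\mu)},\ul{L}_{\lambda_1})$.

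The potentially subtle point, and the one I would devote most care to, is making sure the continuation-map factor $\kappa$ (and $\kappa'$) is compatible across the identifications: the maps \eqref{eq:BimodMult3.5} and \eqref{eq:BimodMult5} are defined using continuation maps $\kappa$, $\kappa'$ to move between the `split' cochain models $CF(\ul{L}_{\lambda_0}^\dagger,\ul{L}_{\lambda_1}^\dagger)$ and the cochain models $CF(\ul{L}_{\widetilde{\lambda_0\sqcup\mu^-}},\ul{L}_{\widetilde{\lambda_1\sqcup\mu^-}})$ and $CF(\ul{L}_{\widetilde{\lambda_0}},\ul{L}_{\widetilde{\lambda_1}})$ respectively. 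I would argue that both $\kappa$ and $\kappa'$ induce the same map on cohomology when one restricts to the subspace $CF(\ul{L}_{\lambda_0}^\dagger,\ul{L}_{\lambda_1}^\dagger)\otimes e_{\mu^-}$, because the induced maps on cohomology are independent of choices and $\kappa(-\otimes e_{\mu^-})$ factors through a continuation in the $\ul{L}_{\lambda}^\dagger$-factor together with the identity on $\mu^-$.

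Finally, I would stitch everything together: start with a class $\alpha \in HF(\ul{L}_{\lambda_0},\ul{L}_{\lambda_1})$ and a class $\beta \in HF(\ul{L}_{\hs}, \iota_j(\ul{L}_{\lambda_0},\ul{K}_\mu))$, pass $\beta$ through the two cochain isomorphisms to obtain $\beta' \in HF(\ul{L}_{\ol{PD(\mu)}},\ul{L}_{\widetilde{\lambda_0}})$, observe that the identifications of the three multiplication maps along the chain \eqref{eq:BimodMult}$\leftrightarrow$\eqref{eq:BimodMult2}$\leftrightarrow$\eqref{eq:BimodMult3.5}$\leftrightarrow$\eqref{eq:BimodMult5} are all compatible with $\alpha$ acting by $\mu^2$, and conclude that $\mu^2(\alpha\otimes e_K,\beta)$ corresponds to $\mu^2(\alpha,\beta')$, as required. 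The hardest bookkeeping is simply keeping track of which cochain model each object lives in at each step, and verifying that the grading shifts combine correctly; this was already handled in the detailed discussion before the corollary statement, so the corollary really is a formal consequence of Lemma \ref{l:gvisom}, the constant-triangle argument, and Lemma \ref{l:oneMultId}.
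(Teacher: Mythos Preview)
Your proposal is correct and follows exactly the route the paper takes: the paper does not give a separate proof of the corollary but simply records it ``As a consequence of Lemma \ref{l:gvisom}, the identification between \eqref{eq:BimodMult} and \eqref{eq:BimodMult2}, and Lemma \ref{l:oneMultId}'', and your argument is precisely the unpacking of that sentence, chaining the cochain isomorphisms \eqref{eq:Cochainisom} and \eqref{eq:CoChainId} for the grading shift and the multiplication identifications \eqref{eq:BimodMult}$\leftrightarrow$\eqref{eq:BimodMult2}$\leftrightarrow$\eqref{eq:BimodMult3.5}$\leftrightarrow$\eqref{eq:BimodMult5}.
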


\begin{remark}\label{r:GradingExplain}
 Another way to interpret the grading shift is as follows.
 We have seen from Lemma \ref{l:gvspIsom} (or Proposition \ref{p:AllCases}) that the map from the symplectic extended arc algebra to the algebraic extended arc algebra
 is given by putting a $\vee$ to the points $\pi_E(\ul{x})$, where $\ul{x}$ is a geometric basis element.
 Each generator $\ul{x}$ of $CF(\ul{L}_{\hs}, \ul{L}_{\widetilde{\lambda \sqcup \mu}})$ (see the first picture of Figure \ref{fig:diffRep})
 must have $j$ points lying above $\{1,\dots,n\}$ and $n-j$ points lying above $\{n+1,\dots,2n\}$.
 That corresponds to making $j$ counterclockwise caps and $n-j$ clockwise caps for the oriented cap diagram associated to $\ul{L}_{\hs}$.
 This is where the extra $n-j$ in the grading comes from.
\end{remark}

By completely analogous reasoning,
\begin{corollary}\label{c:DiBiMult2}
We have canonical vector space isomorphisms 
 \begin{align}
  HF^{*+n-j}(\ul{L}_{\hs}, \iota_j(\ul{L}_{\lambda},\ul{K}_{\mu})) = HF^*(\ul{L}_{PD(\lambda)},\ul{K}_{\mu})
 \end{align}
 and under these isomorphisms, the multiplication maps
 \begin{align}
\mu^2(e_L \otimes -,-) :  HF(\ul{K}_{\mu_0},\ul{K}_{\mu_1}) \times HF(\ul{L}_{\hs}, \iota_j(\ul{L}_{\lambda},\ul{K}_{\mu_0}))
 \to HF(\ul{L}_{\hs}, \iota_j(\ul{L}_{\lambda},\ul{K}_{\mu_1})) \label{eq:2}
 \end{align}
and
\begin{align}
 \mu^2(-,-):HF(\ul{K}_{\mu_0},\ul{K}_{\mu_1}) \times HF(\ul{L}_{PD(\lambda)},\ul{K}_{\mu_0})
 \to HF(\ul{L}_{PD(\lambda)},\ul{K}_{\mu_1})  \label{eq:beforeDual}
\end{align}
are identified.
Here, for \eqref{eq:2}, 
\begin{align}
 e_L\otimes -:HF(\ul{K}_{\mu_0},\ul{K}_{\mu_1}) \to  HF(\ul{L}_{\ul{\lambda}},\ul{L}_{\ul{\lambda}}) \otimes HF(\ul{K}_{\mu_0},\ul{K}_{\mu_1}) \simeq HF(\iota_j(\ul{L}_{\lambda},\ul{K}_{\mu_0}),\iota_j(\ul{L}_{\lambda},\ul{K}_{\mu_1}))
\end{align}
is the obvious map where $e_L$ is the cohomological unit of $HF(\ul{L}_{\ul{\lambda}},\ul{L}_{\ul{\lambda}})$.
\end{corollary}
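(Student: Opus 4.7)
The strategy is to mirror the proof of Corollary \ref{c:DiBiMult1} throughout, interchanging the roles of $\ul{L}_\lambda$ and $\ul{K}_\mu$. First I would establish a mirror of Lemma \ref{l:gvisom}: given $\lambda \in \Lambda_{j,n}$, let $c_{\lambda,i'_1}<\cdots<c_{\lambda,i'_{s'}}$ be the good points of $\lambda$, and define auxiliary weights $\lambda^+$ and $\lambda^-$ in $\Lambda_{\bullet,n+2s'}$ by the recipe obtained from the one preceding Lemma \ref{l:gvisom} by swapping left and right. Geometrically, apply a symplectomorphism that now moves the \emph{left}-half critical values upward, bends each matching path of $\ul{L}_\lambda$ into an arc between two freshly introduced auxiliary critical values sitting just above the original right-half critical values, deletes the critical values contained in thimble paths of $\ul{L}_\lambda$, and turns the remaining matching paths of $\ul{L}_{\hs}$ (those which no longer meet a matching path of $\ul{L}_\lambda$) into thimble paths. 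This produces a canonical grading-shifted cochain isomorphism
\[
CF(\ul{L}_{\hs},\iota_j(\ul{L}_\lambda,\ul{K}_\mu)) \;\cong\; CF(\ul{K}_{\lambda^+}, \ul{K}_{\mu \sqcup \lambda^-}),
\]
with both sides supported in a single parity so that all differentials vanish.

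Next, the splitting analysis around \eqref{eq:split} transfers verbatim. For representatives with no nested circles,
\[
CF(\ul{K}_{\mu_0\sqcup\lambda^-}^\dagger,\ul{K}_{\mu_1\sqcup\lambda^-}^\dagger) \;=\; CF(\ul{K}_{\mu_0}^\dagger,\ul{K}_{\mu_1}^\dagger)\otimes CF(\ul{K}_{\lambda^-},\ul{K}_{\lambda^-}),
\]
and Proposition \ref{p:AllCases} translates the multiplication into purely diagrammatic terms in $K_{\bullet,\bullet}^{\alg}$. Inserting the cohomological unit $e_{\lambda^-}$ in the second tensor factor corresponds to orienting the $s'$ auxiliary circles counterclockwise, which is absorbed by the TQFT-style product (cf.\ \cite[Equation (3.4)]{BS11}). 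This shows that the product \eqref{eq:2}, pulled back through the mirror of \eqref{eq:Cochainisom}, coincides with the analogue of the map \eqref{eq:BimodMult3.5}.

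Finally, the mirror of the cochain identification \eqref{eq:CoChainId} forgets the intersection points lying above the $2s'$ auxiliary critical values, yielding a grading-shifted cochain isomorphism
\[
CF(\ul{K}_{\lambda^+}, \ul{K}_{\widetilde{\mu\sqcup\lambda^-}}) \;\longrightarrow\; CF(\ul{L}_{\ol{PD(\lambda)}}, \ul{K}_{\widetilde{\mu}}),
\]
with the same grading arithmetic as in Remark \ref{r:GradingExplain} (the horseshoe contributes $n-j$ clockwise caps to the generators of the original cochain complex, which is where the total shift of $n-j$ arises). Composing the three identifications produces the stated vector-space isomorphism and the identification of the multiplication maps \eqref{eq:2} and \eqref{eq:beforeDual}.

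The main obstacle is the careful bookkeeping of grading shifts: the individual shifts at each step in the mirror calculation must be reconciled with the overall shift of $n-j$ declared in the statement. The naive tally from mirroring \eqref{eq:Cochainisom} and \eqref{eq:CoChainId} (giving contributions of $j-s'$ and $s'$) must be corrected by the absolute-grading contribution of the $n-j$ clockwise caps of $\ul{L}_{\hs}$ against the $\ul{K}_\mu$ factor, exactly as spelled out in Remark \ref{r:GradingExplain}. Once this is verified, the remaining geometric inputs---open mapping to force constant polygons, absence of nested circles to justify the tensor splitting, and Proposition \ref{p:AllCases} to reduce to diagrammatics---are inherited directly from the proof of Corollary \ref{c:DiBiMult1}.
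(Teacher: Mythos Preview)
Your proposal is correct and matches the paper's approach: the paper's entire proof is the phrase ``By completely analogous reasoning,'' meaning one mirrors the argument for Corollary~\ref{c:DiBiMult1} with the roles of $\ul{L}_\lambda$ and $\ul{K}_\mu$ interchanged, which is precisely what you outline. You have in fact gone further than the paper by correctly flagging the grading bookkeeping as the one non-automatic point---the left/right asymmetry of the grading convention \eqref{eq:FloerGrading} means the mirror does not simply relabel $(n-j-s,s)$ as $(j-s',s')$, and the reconciliation via Remark~\ref{r:GradingExplain} is the right mechanism.
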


Note that, after applying $PD$ (which is grading preserving), \eqref{eq:beforeDual} would become
\begin{align}
 \mu^2(-,-): HF(\ul{L}_{PD(\mu_0)},\ul{L}_{\lambda}) \times HF(\ul{L}_{PD(\mu_1)},\ul{L}_{PD(\mu_0)})
 \to HF(\ul{L}_{PD(\mu_1)},\ul{L}_{\lambda}) \label{eq:afterDual}
\end{align}

By Corollary \ref{c:DiBiMult1} and \ref{c:DiBiMult2}, and the fact that 
every basis element of $HF(\iota_j(\ul{L}_{\lambda_0},\ul{K}_{\mu_0}),\iota_j(\ul{L}_{\lambda_1},\ul{K}_{\mu_1}))$
can be written as the product of 
\begin{align}
 x \otimes e_K &\in HF(\iota_j(\ul{L}_{\lambda_0},\ul{K}_{\mu_0}), \iota_j(\ul{L}_{\lambda_1},\ul{K}_{\mu_0})) \text{ and }\\
 e_L \otimes y &\in HF(\iota_j(\ul{L}_{\lambda_1},\ul{K}_{\mu_0}), \iota_j(\ul{L}_{\lambda_1},\ul{K}_{\mu_1}))
\end{align}
for some $x,y$, we now know that the multiplication map
\begin{align}
HF(\iota_j(\ul{L}_{\lambda_0},\ul{K}_{\mu_0}),\iota_j(\ul{L}_{\lambda_1},\ul{K}_{\mu_1})) \times HF(\ul{L}_{\hs}, \iota_j(\ul{L}_{\lambda_0},\ul{K}_{\mu_0})) 
\to HF(\ul{L}_{\hs}, \iota_j(\ul{L}_{\lambda_1},\ul{K}_{\mu_1})) \label{eq:bimodMult}
\end{align}
can be identified with the left and right multiplication maps
\begin{align}
\mu^2(-,\mu^2(-,-)): HF(\ul{L}_{\lambda_0},  \ul{L}_{\lambda_1}) \times HF(\ul{L}_{PD(\mu_0)},\ul{L}_{\lambda_0}) \times HF(\ul{L}_{PD(\mu_1)},\ul{L}_{PD(\mu_0)})
\to HF(\ul{L}_{PD(\mu_1)},\ul{L}_{\lambda_1})   \label{eq:bimodMult2}
\end{align}
In other words, we have now shown that $(\iota_j)^* (\ul{L}_{\hs}^l)$ coincides with the diagonal bimodule (shifted by $n-j$) on the cohomological level.
Having this, we can now finish the proof of Proposition \ref{p:diagonalbimod}

\begin{proof}[Completion of the proof of Proposition \ref{p:diagonalbimod}]
 In the previous paragraph, we have shown that 
 $(\iota_j)^* (\ul{L}_{\hs}^l)$ coincides with the diagonal $K^{\symp}_{j,n}\text{-}K^{\symp}_{j,n}$ bimodule on the cohomological level.
 It remains to show that $(\iota_j)^* (\ul{L}_{\hs}^l)$ is formal as a bimodule over $K^{\symp}_{j,n}$, 
 or formal as a left module over $A_j$.

 As in the proof of formality in Section \ref{s:formality}, it suffices to show that we can choose a consistent choice of $b$-equivariant structures
 for the collection of Lagrangians in $A_j$ together with the single additional Lagrangian $\ul{L}_{hs}$.
 As explained in Lemma \ref{l:productDG}, the existence of a consistent choice of $b$-equivariant structures
 for the collection of Lagrangians in $A_j$ follows from Section \ref{s:formality} so we now only need to 
 argue the consistency for the pairs
 $(\ul{L}_{\hs}, \iota_j(\ul{L}_{\lambda},\ul{K}_{\mu}))$.
 
 The strategy is as before.
 First observe that, by the identification between \eqref{eq:bimodMult} and \eqref{eq:bimodMult2} and Lemma \ref{l:cyclic}, we know that
 $HF(\ul{L}_{\hs}, \iota_j(\ul{L}_{\lambda},\ul{K}_{\mu}))$ is a cyclic module over $HF( \iota_j(\ul{L}_{\lambda},\ul{K}_{\mu}),  \iota_j(\ul{L}_{\lambda},\ul{K}_{\mu}))$.

 Then, we can run the proof of Proposition \ref{p:bStructure}.
 We choose a $b$-equivariant structure on $\ul{L}_{\hs}$ such that the rank $1$ vector space
 $HF(\ul{L}_{\hs}, \iota_j(\ul{L}_{\lambda_{max}},\ul{K}_{\mu_{max}}))$
 is pure, where $\lambda_{max}$ and $\mu_{max}$ are the unique maximal weight in $\Lambda_{j,n}$ and $\Lambda_{n-j,n}$, respectively.
 Fixing $\ul{K}_{\mu_{max}}$, we want to inductively argue that 
 $HF(\ul{L}_{\hs}, \iota_j(\ul{L}_{\lambda},\ul{K}_{\mu_{max}}))$
 is pure for all $\lambda$.
 Let $\lambda$ be such that the purity of $HF(\ul{L}_{\hs}, \iota_j(\ul{L}_{\lambda'},\ul{K}_{\mu_{max}}))$ has been verified for all $\lambda' >^T \lambda$, where $>^T$ is the total ordering of weights in Proposition \ref{p:bStructure}; 
 then we  need to verify purity for $HF(\ul{L}_{\hs}, \iota_j(\ul{L}_{\lambda},\ul{K}_{\mu_{max}}))$.
 By Lemma \ref{l:constantmap}, and the identification between \eqref{eq:bimodMult} and \eqref{eq:bimodMult2}, there exists a weight $\lambda' >^T \lambda$ such that
 we have a non-trivial product (this is the analog of \eqref{eq:hahaha})
 \begin{align}
  HF(\iota_j(\ul{L}_{\lambda'},\ul{K}_{\mu_{max}}),\iota_j(\ul{L}_{\lambda},\ul{K}_{\mu_{max}})) \times HF(\ul{L}_{\hs}, \iota_j(\ul{L}_{\lambda'},\ul{K}_{\mu_{max}})) 
\to HF(\ul{L}_{\hs}, \iota_j(\ul{L}_{\lambda},\ul{K}_{\mu_{max}}))
 \end{align}
Together with cyclicity, it implies that $HF(\ul{L}_{\hs}, \iota_j(\ul{L}_{\lambda},\ul{K}_{\mu_{max}}))$ is pure.
Thus $HF(\ul{L}_{\hs}, \iota_j(\ul{L}_{\lambda},\ul{K}_{\mu_{max}}))$ is pure for all $\lambda$.

Similarly, we can fix $\ul{L}_{\lambda}$ and apply the inductive argument to 
$HF(\ul{L}_{\hs}, \iota_j(\ul{L}_{\lambda},\ul{K}_{\mu}))$ with varying $\mu$.
All together, we conclude that $HF(\ul{L}_{\hs}, \iota_j(\ul{L}_{\lambda},\ul{K}_{\mu}))$ is pure for all $\lambda,\mu$ and hence 
$(\iota_j)^* (\ul{L}_{\hs}^l)$ is formal as a left module over $A_j$.
\end{proof}

\begin{lemma}\label{l:diagonalbimodBETA}
Up to grading shift, $(\iota_j)^* (\phi_{\beta}(\ul{L}_{\hs})^l)$ is quasi-isomorphic to $P_\beta^{(j)}$.
\end{lemma}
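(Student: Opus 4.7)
The plan is to mimic the proof of Proposition \ref{p:diagonalbimod} almost verbatim, using that $\phi_\beta$ is compactly supported in $\pi_E^{-1}(W_1)$ (the left half), hence commutes with every geometric manipulation in Lemma \ref{l:gvisom} and Corollaries \ref{c:DiBiMult1}--\ref{c:DiBiMult2} that takes place in the right half $\pi_E^{-1}(W_2)$. First, I would reduce to a Floer cochain computation in the upper-half model, exactly as in Lemma \ref{l:gvisom}: moving the right half to the top, bending the matching paths associated to $\ul{K}_\mu$ to the right, removing the critical values contained in the thimble paths of $\ul{K}_\mu$, and converting the unpaired matching paths of $\ul{L}_{\hs}$ into thimbles. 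Since $\phi_\beta$ is the identity near everything being moved, these symplectomorphisms conjugate $\phi_\beta$ to a symplectomorphism of the reduced Milnor fiber (still denoted $\phi_\beta$) supported on the left, and produce the grading-shifted cochain isomorphism
\begin{align*}
CF^{*+n-j-s}(\phi_\beta(\ul{L}_{\hs}),\ul{L}_{\widetilde{\lambda\sqcup\mu}})\;=\;CF^{*}(\phi_\beta(\ul{L}_{\mu^+}),\ul{L}_{\widetilde{\lambda\sqcup\mu^-}}),
\end{align*}
which after a final reduction becomes $CF^{*+n-j}(\phi_\beta(\ul{L}_{\hs}),\iota_j(\ul{L}_\lambda,\ul{K}_\mu))=CF^{*}(\phi_\beta(\ul{L}_{PD(\mu)}),\ul{L}_\lambda)$.

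Next, I would verify the bimodule structure. The right-multiplication argument of Corollary \ref{c:DiBiMult1} is entirely supported on the right half (the critical splitting is \eqref{eq:split}, which separates the $\mu^-$ factor from the $\lambda$ factor, and the braid $\phi_\beta$ does not touch the $\mu^-$ side), so it carries over unchanged. The left-multiplication map of Corollary \ref{c:DiBiMult2} is precisely the Floer multiplication $HF(\ul{K}_{\mu_0},\ul{K}_{\mu_1})\times HF(\phi_\beta(\ul{L}_{PD(\lambda)}),\ul{K}_{\mu_0})\to HF(\phi_\beta(\ul{L}_{PD(\lambda)}),\ul{K}_{\mu_1})$, which under $PD$ becomes pre-composition of $\phi_\beta$-twisted cocycles by elements of $K_{j,n}^{\symp}$. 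This is exactly the left $K_{j,n}$-module structure defining $P_\beta^{(j)}$, and it matches the right multiplication from the other side, so on cohomology we obtain an isomorphism of bimodules between $H^*((\iota_j)^*\phi_\beta(\ul{L}_{\hs})^l)[n-j]$ and $P_\beta^{(j)}$.

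Finally, formality is proved by exhibiting a consistent pure equivariant structure on the augmented collection $A_j\cup\{\phi_\beta(\ul{L}_{\hs})\}$ for the nc-vector field $b$ of Section \ref{s:ncField}. Each component of $\phi_\beta(\ul{L}_{\hs})$ is a Lagrangian sphere (its components are images under $\phi_\beta$ of the matching spheres $L_{\hs,i}$), hence has vanishing $H^1$, so Lemma \ref{l:Lpure} gives an equivariant lift. The induction in Proposition \ref{p:bStructure} / Proposition \ref{p:diagonalbimod} then proceeds by fixing the equivariant structure on $\phi_\beta(\ul{L}_{\hs})$ so that $HF(\phi_\beta(\ul{L}_{\hs}),\iota_j(\ul{L}_{\lambda_{\max}},\ul{K}_{\mu_{\max}}))$ is pure, then using the cyclic module structure over the maximal-weight endomorphism algebra (Lemma \ref{l:cyclic} combined with the bimodule identification above) and the non-trivial products of Lemma \ref{l:constantmap} to propagate purity to every pair $(\lambda,\mu)$, fixing $\mu=\mu_{\max}$ and sweeping through $\lambda$, then $\lambda$ fixed and sweeping through $\mu$.

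The main obstacle I expect is ensuring that the cyclicity input to the purity argument (namely, that $HF(\phi_\beta(\ul{L}_{\hs}),\iota_j(\ul{L}_\lambda,\ul{K}_\mu))$ is a cyclic module over the endomorphism algebra of $\iota_j(\ul{L}_\lambda,\ul{K}_\mu)$) genuinely transports through $\phi_\beta$, since cyclicity in Lemma \ref{l:cyclic} was established by diagrammatic TQFT calculation on the un-twisted side; this should follow by combining the cohomology-level identification with $P_\beta^{(j)}$ established above with the fact that $P_\beta^{(j)}$ as a $K_{j,n}\text{-}K_{j,n}$-bimodule is invertible (its inverse being $P_{\beta^{-1}}^{(j)}$), but the bookkeeping of signs and grading shifts in transporting the Seidel formality criterion across the braided side will need to be done with care.
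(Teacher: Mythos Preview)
Your proposal reruns the entire argument of Proposition~\ref{p:diagonalbimod} with $\phi_\beta(\ul{L}_{\hs})$ in place of $\ul{L}_{\hs}$. The paper takes a much shorter and conceptually different route: rather than analysing $(\iota_j)^*(\phi_\beta(\ul{L}_{\hs})^l)$ directly, it first applies the pull-back $(\phi_\beta)^*$ by the auto-equivalence that $\phi_\beta$ induces on $A_j\lperf$. The key observation is that $(\phi_\beta)^*(\iota_j)^*(\phi_\beta(\ul{L}_{\hs})^l)$ is \emph{literally} the module $(\iota_j)^*(\ul{L}_{\hs}^l)$, because the symplectomorphism $\phi_\beta$ canonically identifies $CF(\phi_\beta(\ul{L}_{\hs}),\phi_\beta(\ul{L})\sqcup\ul{K})$ with $CF(\ul{L}_{\hs},\ul{L}\sqcup\ul{K})$ on the nose---Floer data, moduli spaces and all (using $\phi_\beta(\ul{K})=\ul{K}$). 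Proposition~\ref{p:diagonalbimod} then applies verbatim to give the diagonal, and un-pulling back yields $(\phi_\beta^{-1})^*\Delta = P_\beta^{(j)}$. No new formality argument is needed.

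Your approach might be completable, but the obstacle you flag is real and not merely bookkeeping. The purity induction in the proof of Proposition~\ref{p:diagonalbimod} is seeded by the rank-one group $HF(\ul{L}_{\hs},\iota_j(\ul{L}_{\lambda_{\max}},\ul{K}_{\mu_{\max}}))$; the corresponding group for $\phi_\beta(\ul{L}_{\hs})$ identifies with $HF(\phi_\beta(\ul{L}_{\lambda_{\max}}),\ul{L}_{\lambda_{\max}})$, which has no reason to be rank one for general $\beta$. Likewise Lemma~\ref{l:cyclic} only supplies cyclicity for pairs of weight Lagrangians, not for $\phi_\beta(\ul{L}_{PD(\mu)})$, and your proposed fix via invertibility of $P_\beta^{(j)}$ comes close to assuming what you want to prove (you would need purity to transport through an invertible bimodule at the $A_\infty$ level, not just cohomologically). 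There is also a separate issue with your treatment of the $\ul{K}_\mu$-side multiplication: the derivation of Corollary~\ref{c:DiBiMult2} uses symplectomorphisms supported on the \emph{left} half, which do not commute with $\phi_\beta$, so that step does not carry over as written. The paper's conjugation trick avoids all of this in one line.
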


\begin{proof}
 We can apply the same method as in the proof of Proposition \ref{p:diagonalbimod} to 
 $(\phi_\beta)^* (\iota_j)^* (\phi_{\beta}(\ul{L}_{\hs}))^l$, where $(\phi_\beta)^*: \rperf A_j \to \rperf A_j$
 is the pull-back functor under $\phi_\beta$.
 This is because, geometrically, $CF(\phi_\beta(\ul{L}_{\hs}), \phi_\beta(\ul{L}) \sqcup \ul{K})$ can be canonically identified with
 $CF(\ul{L}_{\hs}, \ul{L} \sqcup \ul{K})$, as can all holomorphic polygons contributing to the $A_{\infty}$ structure amongst tuples of such Floer groups.  It may be worth emphasising that at this point in the argument we use the  fact that, working with pullback data for Hamiltonians, almost complex structures, etc, the symplectomorphism $\phi_{\beta}$ gives a canonical identification of moduli spaces of polygons;  the corresponding isomorphism, from the algebraic viewpoint, is much less transparent.
 
 In consequence, $(\phi_\beta)^* (\iota_j)^* (\phi_{\beta}(\ul{L}_{\hs}))^l$ is also quasi-isomorphic to the diagonal bimodule, and hence
 $(\iota_j)^* (\phi_{\beta}(\ul{L}_{\hs}))^l$ is quasi-isomorphic to $(\phi_{\beta}^{-1})^*\Delta=P_\beta^{(j)}$.
\end{proof}
 
We have the analogous results for the embedding $\iota^!_j: A^!_j \to \cFS^{cyl,n}(\pi_E)$ and the induced pull back
\begin{align}
 (\iota^!_j)^*:  \rperf \cFS^{cyl,n}(\pi_E) \to  \rperf A^!_j \simeq [K_{j,n},K_{j,n}]
\end{align}
 
\begin{proposition}\label{p:diagonalbimod2}
 Up to grading shift, $(\iota^!_j)^* (\ul{L}_{\hs}^r)$ is quasi-isomorphic to the diagonal bimodule.
\end{proposition}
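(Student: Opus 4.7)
The proof will closely parallel the proof of Proposition~\ref{p:diagonalbimod}, with the roles of lower and upper matchings, and of left and right modules, interchanged. The right Yoneda module is $\ul{L}_{\hs}^r = CF(-,\ul{L}_{\hs})$, so pulling back along $\iota^!_j$ amounts to computing $CF(\iota^!_j(\ul{L}^!_\lambda,\ul{K}^!_\mu),\ul{L}_{\hs})$ as a vector space and then identifying its natural $A^!_j$-bimodule structure, using the quasi-equivalence $A^!_j \simeq K^{\symp}_{j,n}\otimes K^{\symp}_{n-j,n}$ of \eqref{eq:DGproduct2}, with the diagonal $K^{\symp}_{j,n}$-bimodule.

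First I will prove the analogue of Lemma~\ref{l:gvisom}. I represent $\iota^!_j(\ul{L}^!_\lambda,\ul{K}^!_\mu)$ by its upper-matching paths in $W^!_1 \cup W^!_2$, and $\ul{L}_{\hs}$ by a convenient lower matching. Applying a symplectomorphism that ``moves the right half to the bottom,'' one straightens the paths and observes that each thimble of $\ul{K}^!_\mu$ severs exactly one matching path of $\ul{L}_{\hs}$; deleting the $n-j-s$ critical values that lie in those thimbles (where $s$ is the number of good points of $\mu$), and bending any remaining matchings of $\ul{L}_{\hs}$ that survive, produces weights $\mu^+ \in \Lambda_{j+s,n+2s}$ and $\mu^- \in \Lambda_{s,n+2s}$ together with a cochain-level identification
\[
CF^{*+n-j-s}\bigl(\iota^!_j(\ul{L}^!_\lambda,\ul{K}^!_\mu),\,\ul{L}_{\hs}\bigr) \;\cong\; CF^*\bigl(\ul{L}_{\widetilde{\lambda\sqcup\mu^-}},\,\ul{L}_{\mu^+}\bigr),
\]
both sides having vanishing differential for parity reasons. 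Next, exactly as in the derivation of Corollaries~\ref{c:DiBiMult1} and \ref{c:DiBiMult2}, I use open-mapping and splitting arguments together with Proposition~\ref{p:AllCases} to identify the multiplication by $e_L\otimes-$ and by $-\otimes e_K$ with Floer product in $HF(\ul{L}_{PD(\mu)},\ul{L}_\lambda)$; since any basis element of $HF(\iota^!_j(\ul{L}^!_{\lambda_0},\ul{K}^!_{\mu_0}),\iota^!_j(\ul{L}^!_{\lambda_1},\ul{K}^!_{\mu_1}))$ factors through such products, this determines the full bimodule structure on cohomology, showing that it agrees with the diagonal $K^{\symp}_{j,n}$-bimodule up to the shift $[n-j]$.

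To promote the cohomological identification to a quasi-isomorphism of $A_\infty$ bimodules, I invoke the nc-vector field $b$ from Section~\ref{s:ncField}. By Lemma~\ref{l:productDG} (applied to $A^!_j$ in place of $A_j$) there is a consistent family of pure $b$-equivariant structures on the objects of $A^!_j$; I choose a $b$-equivariant structure on $\ul{L}_{\hs}$ so that the rank-one group $HF(\iota^!_j(\ul{L}^!_{\lambda_{\max}},\ul{K}^!_{\mu_{\max}}),\ul{L}_{\hs})$ is pure. Cyclicity of the bimodule, which follows from the Step~2 identification combined with Lemma~\ref{l:cyclic}, together with the non-vanishing products of Lemma~\ref{l:constantmap}, drives exactly the same inductive argument as in Proposition~\ref{p:bStructure}, first running the induction in $\lambda$ with $\mu=\mu_{\max}$ fixed and then in $\mu$. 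This establishes purity on every $HF(\iota^!_j(\ul{L}^!_\lambda,\ul{K}^!_\mu),\ul{L}_{\hs})$, whence formality by Theorem~\ref{t:seidel-formality}.

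The main obstacle is bookkeeping in Step~1--2: with upper matchings on the $A^!_j$ side and a lower matching representative of $\ul{L}_{\hs}$, one must track carefully how the grading shift $n-j-s$ arises (now distributed between removing points and reorienting caps versus cups, cf.\ Remark~\ref{r:GradingExplain}), and verify that it is the weight $PD(\mu)$ rather than $\mu$ that naturally appears after the symplectomorphism. Once these combinatorics are pinned down, every subsequent step is a mechanical adaptation of the earlier argument.
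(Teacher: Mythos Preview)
Your proposal is correct and takes essentially the same approach as the paper: the paper's own proof consists of the single sentence ``A completely parallel argument applies'' together with two figures indicating how Figures~\ref{fig:horseshoeBimod} and~\ref{fig:horseshoeBimod2} are modified, and you have faithfully spelled out that parallel argument (right Yoneda module in place of left, upper matchings for the $A^!_j$ objects and a lower matching for $\ul{L}_{\hs}$, the same cochain identification and grading shift, and the same purity-based formality argument). The reversal of arguments in your identification $CF^*(\ul{L}_{\widetilde{\lambda\sqcup\mu^-}},\ul{L}_{\mu^+})$ relative to Lemma~\ref{l:gvisom} is exactly what switching from left to right module requires, and your final grading shift of $n-j$ matches the remark following the proposition in the paper.
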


\begin{proof}[Sketch of proof]
 A completely parallel argument applies (see Figure \ref{fig:RightYoneda} and \ref{fig:RightYoneda2} for illustration).
\end{proof}

\begin{figure}[h]
  \includegraphics{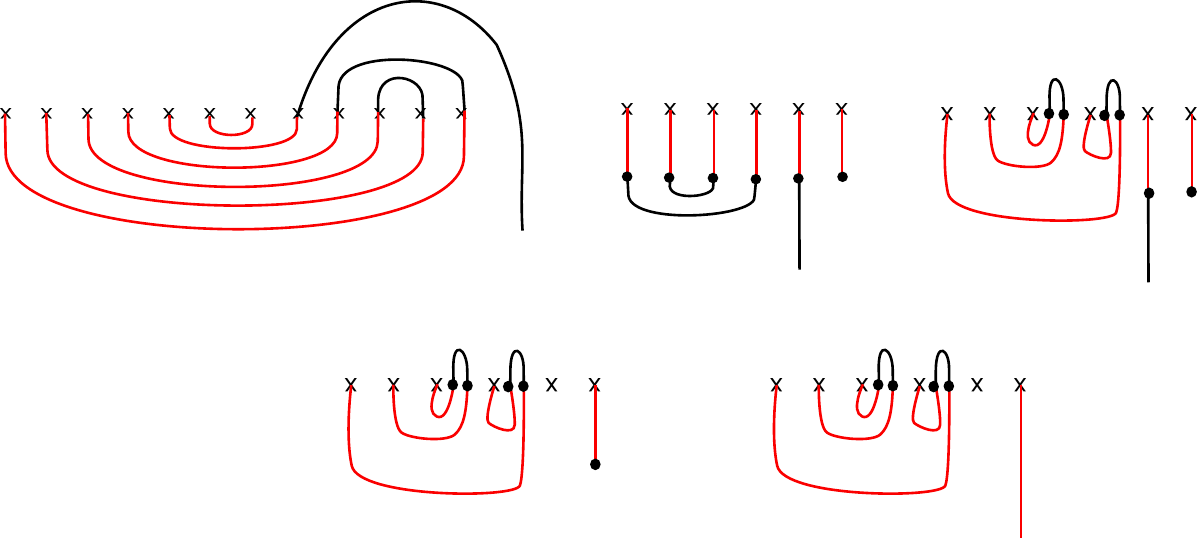}
  \caption{The corresponding figure of Figure \ref{fig:horseshoeBimod}}\label{fig:RightYoneda}
 \end{figure}
 
 \begin{figure}[h]
  \includegraphics{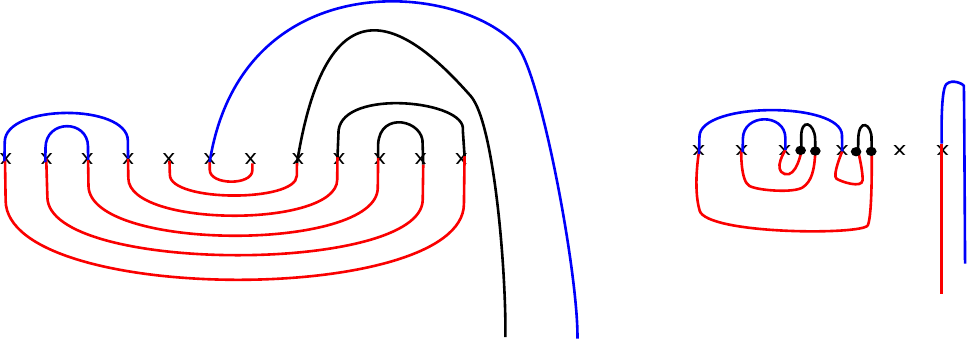}
  \caption{The corresponding figure of Figure \ref{fig:horseshoeBimod2}}\label{fig:RightYoneda2}
 \end{figure}

If one keeps track of the grading and follows our grading convention, then 
$(\iota^!_j)^* (\ul{L}_{\hs}^r)$ is quasi-isomorphic to the diagonal bimodule shifted by $n-j$ (cf. Remark \ref{r:GradingExplain}).

 \begin{lemma}\label{l:diagonalbimodBETA2}
 Up to grading shift, $(\iota_j)^* (\phi_{\beta}(\ul{L}_{\hs})^r)$ is quasi-isomorphic to $P_\beta^{(j)}$.
\end{lemma}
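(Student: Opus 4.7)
The plan is to mirror the proof sketch of Lemma \ref{l:diagonalbimodBETA}, substituting Proposition \ref{p:diagonalbimod2} for Proposition \ref{p:diagonalbimod} and the right Yoneda embedding for the left (the statement presumably should read $(\iota^!_j)^*$ in place of $(\iota_j)^*$, consistent with the pairing convention of Proposition \ref{p:diagonalbimod2}). The heart of the argument is naturality of Floer-theoretic invariants under the symplectomorphism $\phi_\beta$, together with the identification already established for the horseshoe itself.

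First I would observe that $\phi_\beta$ can be chosen to be supported in a small neighborhood of the matching spheres connecting the left $n$ critical values, which is disjoint from $\pi_E^{-1}(W^!_2)$. In particular $\phi_\beta$ fixes every object $\ul{K}^!_{\ol{\mu}} \in \cFS^{cyl,n-j}_{W^!_2}(\pi_E)$, so $\phi_\beta(\iota^!_j(\ul{L}^!_{\ol{\lambda}} \sqcup \ul{K}^!_{\ol{\mu}})) = \phi_\beta(\ul{L}^!_{\ol{\lambda}}) \sqcup \ul{K}^!_{\ol{\mu}}$. Working with pullback Floer data under $\phi_\beta$, the symplectomorphism provides a canonical, $A_\infty$-compatible identification of moduli
\[
CF(\iota^!_j(\ul{L}^!_{\ol{\lambda}} \sqcup \ul{K}^!_{\ol{\mu}}),\, \phi_\beta(\ul{L}_{\hs})) \;\cong\; CF(\phi_\beta^{-1}(\ul{L}^!_{\ol{\lambda}}) \sqcup \ul{K}^!_{\ol{\mu}},\; \ul{L}_{\hs}).
\]
This identifies $(\iota^!_j)^*(\phi_\beta(\ul{L}_{\hs})^r)$, viewed as a right $A^!_j$-module, with the pullback of $(\iota^!_j)^*(\ul{L}_{\hs}^r)$ along the derived autoequivalence of $\rperf A^!_j$ which acts by $(\phi_\beta^{(j)})^{-1}$ on the first tensor factor and trivially on the second.

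Combining this with Proposition \ref{p:diagonalbimod2}, which identifies $(\iota^!_j)^*(\ul{L}_{\hs}^r)$ with the diagonal bimodule over $K^\symp_{j,n} \otimes K^\symp_{n-j,m-n}$ up to a grading shift of $n-j$, and using the general fact that pulling back the diagonal bimodule by an autoequivalence acting on one factor produces the bimodule associated to that autoequivalence, one obtains $(\iota^!_j)^*(\phi_\beta(\ul{L}_{\hs})^r) \simeq P_\beta^{(j)} \otimes \Delta_{K^\symp_{n-j,m-n}}$, which is what the notation $P_\beta^{(j)}$ refers to in this context.

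Since the substantive cohomological identification and formality arguments were already carried out for $\ul{L}_{\hs}$ itself in Proposition \ref{p:diagonalbimod2}, there is no genuine new difficulty here; the braid-dependent aspects reduce to the formal consequence that Floer theory is natural under symplectomorphism via pullback data. The only point requiring modest care is verifying that $\phi_\beta$ truly preserves the $W^!_2$ half and tracking the grading shift, which again evaluates to $n-j$ for the same reason as explained in Remark \ref{r:GradingExplain}.
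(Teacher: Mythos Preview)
Your proposal is correct and follows essentially the same approach as the paper: the paper's proof is a one-line reference (``repeat the proof of Lemma \ref{l:diagonalbimodBETA}, replacing Proposition \ref{p:diagonalbimod} by Proposition \ref{p:diagonalbimod2}''), and you have unpacked exactly that argument, using naturality of Floer theory under $\phi_\beta$ with pullback data and the fact that $\phi_\beta$ fixes the $W^!_2$-factor. You also correctly flagged the apparent typo $(\iota_j)^*$ for $(\iota^!_j)^*$, consistent with how the lemma is actually invoked in the proof of Theorem \ref{t:sseq}.
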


\begin{proof}
 The proof repeats that of Lemma \ref{l:diagonalbimodBETA}, but replacing Proposition \ref{p:diagonalbimod} by Proposition \ref{p:diagonalbimod2}.
\end{proof}

\subsection{Spectral sequence to Khovanov homology}\label{ss:proofofss}

Having Proposition \ref{p:diagonalbimod} and Lemma \ref{l:diagonalbimodBETA2}, the last piece of information we need
to understand the $E_1$-page of the spectral sequence in Proposition \ref{p:spectral}
is a description of a quasi-inverse
\begin{align}
 \psi_j: A_j\lperf \to A^!_j
\end{align}
of the functor $(\iota_j)^*(-)^l:A^!_j \to A_j\lperf$.
We are going to describe $\psi_j$ geometrically.

 For $\lambda \in \Lambda_{j,n}$ and $\mu \in \Lambda_{n-j,m-n}$, we consider the
 upper Lagrangian tuples 
 $\ul{L}_{\ol{\lambda}} \in \cFS^{cyl,j}_{W_1}(\pi_E)$
 and  
 $\ul{K}_{\ol{\mu}} \in \cFS^{cyl,n-j}_{W_2}(\pi_E)$ (see Figure \ref{fig:ThirdEmbedding}).
The disjoint union 
$\ul{L}_{\ol{\lambda}} \sqcup \ul{K}_{\ol{\mu}}$ gives an object in $\cFS^{cyl,n}(\pi_E)$.
On the other hand, we have $\ul{L}^!_{\ol{\lambda}} \sqcup \ul{K}^!_{\ol{\mu}} \in A^!_j$ which is in particular an object in $\cFS^{cyl,n}(\pi_E)$.

 \begin{figure}[h]
  \includegraphics{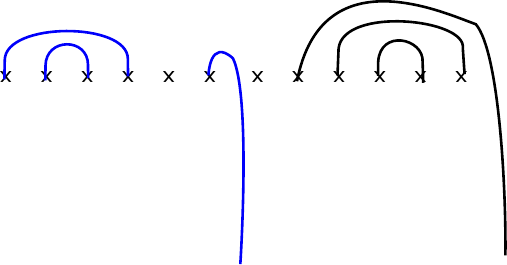}
  \caption{Lagrangian tuples $\ul{L}_{\ol{\lambda}}$ (blue) and $\ul{K}_{\ol{\mu}}$ (black)}\label{fig:ThirdEmbedding}
 \end{figure}

\begin{lemma}\label{l:switchingObject}
For $\lambda \in \Lambda_{j,n}$ and $\mu \in \Lambda_{n-j,m-n}$, we have a quasi-isomorphism of objects
 \begin{align}
  (\iota_j)^*(\ul{L}_{\ol{\lambda}} \sqcup \ul{K}_{\ol{\mu}})^l \simeq (\iota_j)^*(\ul{L}^!_{\ol{\lambda}} \sqcup \ul{K}^!_{\ol{\mu}})^l \label{eq:quasiinverse}
 \end{align}
in $A_j\lperf$.
\end{lemma}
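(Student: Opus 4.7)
The plan is to show that both modules restrict to isomorphic objects over $A_j$ via a direct K\"unneth-type computation, combined with Hamiltonian invariance on each factor, then upgrade the cohomological comparison to an $A_\infty$-module quasi-isomorphism using formality.

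First, I will exploit the fact that every object $X \in A_j$ has the form $X = \ul{L}_{\lambda'} \sqcup \ul{K}_{\mu'}$ where $\ul{L}_{\lambda'}$ uses lower matching paths supported in $W_1$ and $\ul{K}_{\mu'}$ uses lower matching paths supported in $W_2$. By the K\"unneth-type decomposition underlying \eqref{eq:DGproduct}, the Floer cochain complex
\[
CF(X,\, \ul{L}_{\ol{\lambda}} \sqcup \ul{K}_{\ol{\mu}}) \;\simeq\; CF(\ul{L}_{\lambda'}, \ul{L}_{\ol{\lambda}}) \otimes CF(\ul{K}_{\mu'}, \ul{K}_{\ol{\mu}})
\]
factors as a tensor product, because $\ul{L}_{\lambda'}$ and $\ul{L}_{\ol{\lambda}}$ are both supported in $W_1$, while $\ul{K}_{\mu'}$ and $\ul{K}_{\ol{\mu}}$ are both in $W_2$. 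The analogous factorisation for $CF(X, \ul{L}^!_{\ol{\lambda}} \sqcup \ul{K}^!_{\ol{\mu}})$ requires more care: by choosing the representatives $\ul{L}_{\lambda'}, \ul{K}_{\mu'}$ so that their supporting contractible neighbourhoods avoid the relevant pieces of $W^!_1$ and $W^!_2$ (achievable by applying Proposition \ref{p:slidinginvariance} and Lemma \ref{l:HamInvariance} within $A_j$), the same product decomposition holds for the $(!)$ target. Note that this uses the open mapping theorem exactly as in Lemma \ref{l:sliding2} to rule out holomorphic curves crossing between the two halves.

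Second, within each of the factors above, I will verify a quasi-isomorphism
\[
CF(\ul{L}_{\lambda'}, \ul{L}_{\ol{\lambda}}) \;\simeq\; CF(\ul{L}_{\lambda'}, \ul{L}^!_{\ol{\lambda}})
\quad\text{and}\quad
CF(\ul{K}_{\mu'}, \ul{K}_{\ol{\mu}}) \;\simeq\; CF(\ul{K}_{\mu'}, \ul{K}^!_{\ol{\mu}}),
\]
which follows from Lemma \ref{l:HamInvariance} because $\ul{L}_{\ol{\lambda}}$ and $\ul{L}^!_{\ol{\lambda}}$ can be connected by a smooth family of upper matchings of $\lambda$ in $\cL^{cyl,j}$ that remains pairwise-disjoint from the lower-matching representative $\ul{L}_{\lambda'}$, and similarly for the $\ul{K}$ side, where additionally Proposition \ref{p:slidinginvariance} (sliding invariance) is applied to pass between the two different placements of the thimble components of $\ul{K}^!_{\ol{\mu}}$.

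Third, I will upgrade this to a quasi-isomorphism of left $A_\infty$ $A_j$-modules. Since $A_j$ is formal with cohomology $K^{\symp}_{j,n} \otimes K^{\symp}_{n-j,m-n}$ (Lemma \ref{l:productDG}), I will prove formality of both modules $(\iota_j)^*(\ul{L}_{\ol{\lambda}} \sqcup \ul{K}_{\ol{\mu}})^l$ and $(\iota_j)^*(\ul{L}^!_{\ol{\lambda}} \sqcup \ul{K}^!_{\ol{\mu}})^l$ by running the same nc-vector-field / equivariant-structure argument used in Proposition \ref{p:diagonalbimod}: the total ordering / induction on weights there extends verbatim to pairs of weights, and the cyclic-module property (Lemma \ref{l:cyclic}) plus the nonvanishing product from Lemma \ref{l:constantmap} force purity of the $b$-action on the Yoneda module. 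Combined with the cohomological identification from the previous step, this yields the desired quasi-isomorphism \eqref{eq:quasiinverse}.

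The main obstacle is the second step applied to the $\ul{K}$ factor: the representative $\ul{K}_{\ol{\mu}}$ has thimble paths escaping to the right at infinity, whereas $\ul{K}^!_{\ol{\mu}}$ has thimble paths descending through the low horizontal strip in $W^!_2$, which may cross the support region of $\ul{L}_{\lambda'}$. The resolution requires interpolating via a combination of sliding moves (Lemmas \ref{l:sliding1}--\ref{l:sliding3}) and choosing representatives of the lower matchings $\ul{L}_{\lambda'}, \ul{K}_{\mu'}$ whose contractible neighbourhoods are compressed into sufficiently narrow strips, so that the isotopy realising the quasi-isomorphism remains admissible in $\cL^{cyl,n-j}$ throughout.
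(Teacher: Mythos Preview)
Your approach is substantially more elaborate than needed, and the detour through formality introduces work that the paper avoids entirely. The paper's proof observes that the two left modules are literally identical at the chain level, not merely quasi-isomorphic. For any object $\ul{L}_{\ul{\lambda'}} \sqcup \ul{K}_{\ul{\mu'}} \in A_j$, one has
\[
CF(\ul{L}_{\ol{\lambda}} \sqcup \ul{K}_{\ol{\mu}},\,\ul{L}_{\ul{\lambda'}} \sqcup \ul{K}_{\ul{\mu'}})
\;=\;
CF(\ul{L}^!_{\ol{\lambda}} \sqcup \ul{K}^!_{\ol{\mu}},\,\ul{L}_{\ul{\lambda'}} \sqcup \ul{K}_{\ul{\mu'}})
\]
as an equality of cochain groups: the only difference between $\ul{L}_{\ol{\lambda}} \sqcup \ul{K}_{\ol{\mu}}$ and $\ul{L}^!_{\ol{\lambda}} \sqcup \ul{K}^!_{\ol{\mu}}$ is where the thimble tails escape to infinity, and those tails produce no generators when paired against a lower matching in $A_j$ (any extra intersections created by wrapping cannot be completed to an $n$-tuple; see the caption to Figure~\ref{fig:SecondThirdEmbedding}). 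For a suitable choice of Floer data the holomorphic polygons governing the $A_\infty$ module maps also coincide exactly, so the two modules agree on the nose. No isotopy, no sliding, no formality argument is required.

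Your Step~3 is the weak point: invoking the nc-vector-field purity argument to establish formality of $(\iota_j)^*(\ul{L}_{\ol{\lambda}} \sqcup \ul{K}_{\ol{\mu}})^l$ and $(\iota_j)^*(\ul{L}^!_{\ol{\lambda}} \sqcup \ul{K}^!_{\ol{\mu}})^l$ is not something that follows ``verbatim'' from Proposition~\ref{p:diagonalbimod}, which is tailored to the horseshoe; you would need to re-run the full inductive consistency check for equivariant structures for these specific Lagrangians against all of $A_j$. Similarly, the obstacle you flag in the $\ul{K}$ factor---thimbles of $\ul{K}^!_{\ol{\mu}}$ crossing the support of $\ul{L}_{\lambda'}$ during an isotopy---is an artifact of your strategy; in the paper's argument there is no isotopy, so the issue never arises. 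Finally, note that you have the slots reversed: the left Yoneda module here places the defining object in the first argument, i.e.\ $X^l(Y)=CF(X,Y)$, which matters for the wrapping direction underlying the generator count.
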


\begin{proof}
 For any object $\ul{L}_{\ul{\lambda'}} \sqcup \ul{K}_{\ul{\mu'}} \in A_j$, there is an obvious isomorphism (see Figure \ref{fig:SecondThirdEmbedding})
 \begin{align*}
  CF(\ul{L}_{\ol{\lambda}} \sqcup \ul{K}_{\ol{\mu}},\ul{L}_{\ul{\lambda'}} \sqcup \ul{K}_{\ul{\mu'}} ) 
  &=CF(\ul{L}_{\ol{\lambda}}, \ul{L}_{\ul{\lambda'}}) \otimes CF(\ul{K}_{\ol{\mu}},\ul{K}_{\ul{\mu'}} ) \\
  &=CF(\ul{L}^!_{\ol{\lambda}}, \ul{L}_{\ul{\lambda'}}) \otimes CF(\ul{K}^!_{\ol{\mu}},\ul{K}_{\ul{\mu'}} ) \\
  &=CF(\ul{L}^!_{\ol{\lambda}} \sqcup \ul{K}^!_{\ol{\mu}}, \ul{L}_{\ul{\lambda'}} \sqcup \ul{K}_{\ul{\mu'}} )
 \end{align*}
 Moreover, when $X_0,\dots,X_d \in A_j$, for a careful choice of Floer data,
 one can actually make the $A_{\infty}$ structural maps
 \begin{align}
  CF(X_{d-1},X_d) \times \dots CF(X_0,X_1) \times CF(\ul{L}_{\ol{\lambda}} \sqcup \ul{K}_{\ol{\mu}},X_0) &\to CF(\ul{L}_{\ol{\lambda}} \sqcup \ul{K}_{\ol{\mu}},X_d) \\
  CF(X_{d-1},X_d) \times \dots CF(X_0,X_1) \times CF(\ul{L}^!_{\ol{\lambda}} \sqcup \ul{K}^!_{\ol{\mu}},X_0) &\to CF(\ul{L}_{\ol{\lambda}} \sqcup \ul{K}_{\ol{\mu}},X_d) 
 \end{align}
 completely coincide.
 It implies the result.
\end{proof}

 \begin{figure}[h]
  \includegraphics{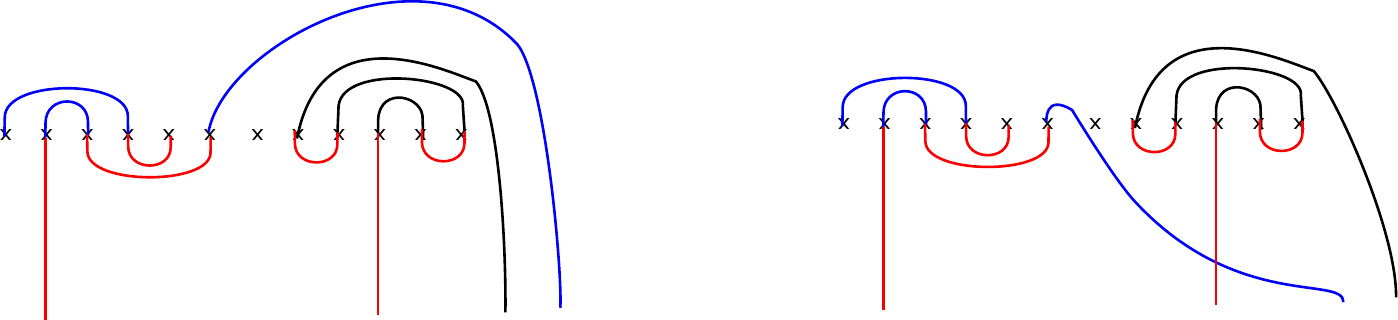}
  \caption{
  Floer cochain $CF(\ul{L}^!_{\ol{\lambda}} \sqcup \ul{K}^!_{\ol{\mu}}, \ul{L}_{\ul{\lambda'}} \sqcup \ul{K}_{\ul{\mu'}} )$ (left) and
  $ CF(\ul{L}_{\ol{\lambda}} \sqcup \ul{K}_{\ol{\mu}},\ul{L}_{\ul{\lambda'}} \sqcup \ul{K}_{\ul{\mu'}} )$ (right).
  The intersection between the positive wrapping of $\ul{L}_{\ol{\lambda}}$ and $\ul{K}_{\ul{\mu'}}$ cannot contribute generator to 
  $CF(\ul{L}_{\ol{\lambda}} \sqcup \ul{K}_{\ol{\mu}},\ul{L}_{\ul{\lambda'}} \sqcup \ul{K}_{\ul{\mu'}} )$ so the Floer cochain splits.}\label{fig:SecondThirdEmbedding}
 \end{figure}

\begin{remark}
 By taking morphisms with an appropriate object in $A_i$ for $i \neq j$, one can see 
 that $(\ul{L}_{\ol{\lambda}} \sqcup \ul{K}_{\ol{\mu}})^l$
 is not quasi-isomorphic to
 $(\ul{L}^!_{\ol{\lambda}} \sqcup \ul{K}^!_{\ol{\mu}})^l$.
\end{remark}

By Lemma \ref{l:switchingObject}, we know that the quasi-inverse $\psi_j$ is given, on the object level, by 
\begin{align}
 \psi_j: (\iota_j)^*(\ul{L}_{\ol{\lambda}} \sqcup \ul{K}_{\ol{\mu}})^l \mapsto \ul{L}^!_{\ol{\lambda}} \sqcup \ul{K}^!_{\ol{\mu}}
\end{align}
A further inspection of the proof of Lemma \ref{l:switchingObject} shows that the quasi-isomorphism \eqref{eq:quasiinverse} is functorial when we vary
$\lambda \in \Lambda_{j,n}$ and $\mu \in \Lambda_{n-j,m-n}$.
Together with the fact that $\ul{L}_{\ol{\lambda}} \sqcup \ul{K}_{\ol{\mu}}$ is quasi-isomorphic to 
$\ul{L}_{\ul{\lambda}} \sqcup \ul{K}_{\ul{\mu}}$ in $\cFS^{cyl,n}(\pi_E)$ (so the former can be regarded as an object in $A_j$),
we have a commutative diagram
\[
\begin{tikzcd}
A_j \arrow[r,"\simeq"] \arrow[d,"(-)^l"]& K_{j,n} \times K_{n-j,n} \arrow[r,"\simeq"]&   A^!_j \arrow[lld, "(\iota_j)^*(-)^l"]\\
A_j \lperf    & &
\end{tikzcd}
\]
where the horizontal isomorphisms are the ones in \eqref{eq:DGproduct}, \eqref{eq:DGproduct2}, which in particular send 
$\ul{L}_{\lambda} \sqcup \ul{K}_{\mu} \in A_j$ to  $\ul{L}^!_{\lambda} \sqcup \ul{K}^!_{\mu} \in A^!_j$.
This implies that $\psi_j$ sits inside the commutative diagram
\[
\begin{tikzcd}
A_j \arrow[r,"\simeq"] \arrow[d,"(-)^l"]& K_{j,n} \times K_{n-j,n} \arrow[r,"\simeq"]&   A^!_j\\
A_j \lperf \arrow[rru, swap, "\psi_j"]   & &
\end{tikzcd}
\]

\begin{proof}[Proof of Theorem \ref{t:sseq}]
By Proposition \ref{p:spectral}, we have a spectral sequence to 
$  Kh(\kappa(\beta))$
from 
\begin{align}
 \oplus_{j=0}^n   H((\iota^!_j)^*(\phi_\beta(\ul{L}_{\hs})^r)   \otimes_{A^!_j} \psi_j \circ (\iota_j)^*(\ul{L}_{\hs}^l))
\end{align}
By Proposition \ref{p:diagonalbimod}, Lemma \ref{l:diagonalbimodBETA2} and the commutative diagram above,
we have, up to grading shift,
\begin{align}
 H((\iota^!_j)^*(\phi_\beta(\ul{L}_{\hs})^r)   \otimes_{A^!_j} \psi_j \circ (\iota_j)^*(\ul{L}_{\hs}^l))
 &=H( P^{(j)}_\beta \otimes_{K_{j,n}-K_{j,n}} \Delta_{K_{j,n}} )\\
 &=HH_*(K_{j,n}, P^{(j)}_\beta)
\end{align}
which is what we want.
 \end{proof}

\appendix

\section{Non-formality of standard modules/thimbles\label{Appendix}}

Here we explain that the endomorphism algebra of the Lefschetz thimbles in $\cFS(\pi_{n,m})$ may be non-formal when $n>1$.
The non-formality follows rather directly from the minimal model computed in the thesis of Klamt \cite{Klamt}. 
Since this is not required for our main results, our treatment is somewhat brief. 

When $(n,m) = (1,m)$ the algebra is known to be formal; by duality, the same holds when $n=m-1$. On the other hand, if $n>1$ and $(n,m) \neq (m-1,m)$, the algebra associated to $(n,m) = (2,4)$ occurs as an $A_{\infty}$-subalgebra of the algebra associated to $(n,m)$.  It therefore suffices to consider the case $(n,m)=(2,4)$. In that case,  there are six Lefschetz thimbles (up to grading shift).
We denote them by $P(4|3)$, $P(4|2)$, $P(4|1)$, $P(3|2)$, $P(3|1)$ and $P(2|1)$, where $P(n|m)$ corresponds to $\ul{T}^{\{1,2,3,4\} \setminus \{n,m\}}$. 

The cohomological algebra is given by the following quiver with relations, by Proposition \ref{p:cellMod} and \cite[Section 5.2.8, 5.2.9]{Klamt}.

\[
\begin{tikzcd}
P(4|3) \arrow[d,"F^{(4|3)}_{(4|2)}"] \arrow[shift right, swap]{d}{Id^{(4|3)}_{(4|2)}} \arrow[bend left=30, swap]{ddrrrr}{K^{(4|3)}_{(2|1)}} \arrow[shift left,bend left=30]{ddrrrr}{G^{(4|3)}_{(2|1)}} &  &   & &\\
P(4|2) \arrow[d,"F^{(4|2)}_{(4|1)}"] \arrow[shift right, swap]{d}{Id^{(4|2)}_{(4|1)}}    
\arrow[rr,"\widetilde{F}^{(4|2)}_{(3|2)}"] \arrow[shift right, swap]{rr}{Id^{(4|2)}_{(3|2)}} &
& P(3|2)   \arrow[d,"F^{(3|2)}_{(3|1)}"] \arrow[shift right, swap]{d}{Id^{(3|2)}_{(3|1)}} &   &\\
P(4|1) \arrow[rr,"\widetilde{F}^{(4|1)}_{(3|1)}"] \arrow[shift right, swap]{rr}{Id^{(4|1)}_{(3|1)}} & & 
P(3|1) \arrow[rr,"\widetilde{F}^{(3|1)}_{(2|1)}"] \arrow[shift right, swap]{rr}{Id^{(3|1)}_{(2|1)}} & &P(2|1)   
\end{tikzcd}
\]

The subscript and superscript of the name of the arrow indicate the target and the source of the arrow, respectively.
The definition of $Id^{(k|l)}_{(a|b)}$, $F^{(k|l)}_{(a|b)}$, $\widetilde{F}^{(k|l)}_{(a|b)}$, $G^{(k|l)}_{(a|b)}$, $K^{(k|l)}_{(a|b)}$
for various $k,l,a,b$ can be found in Theorem 5.16, 5.17, 5.18 and 5.19 in \cite{Klamt}.
The complete list of relations of the product structure can be found in \cite[Table 5.25]{Klamt}.
For example, $\widetilde{F}^{(4|1)}_{(2|1)}=Id^{(3|1)}_{(2|1)} \widetilde{F}^{(4|1)}_{(3|1)}$
and $F^{(4|2)}_{(4|1)}F^{(4|3)}_{(4|2)}=0$.
We remark that $J^{(k|l)}_{(a|b)}$ in the table is defined in Theorem 5.26
and $J^{(4|2)}_{(3|1)}$ equals to $F^{(3|2)}_{(3|1)} \widetilde{F}^{(4|2)}_{(3|2)}$
and $\widetilde{F}^{(4|1)}_{(3|1)} F^{(4|2)}_{(4|1)}$ up to sign.
Moreover, the dimension of $\Ext(P(k|l),P(a|b))$ can be found in \cite[Section 5.2.6]{Klamt}.

For the $A_{\infty}$ structure, Klamt computed a minimal model with $\mu^3 \neq 0$ but $\mu^d=0$ for $d > 3$.
The list of non-zero $\mu^3$ is given in \cite[Table 8.4]{Klamt}.

\begin{lemma}\label{l:nonformal}
 When $(n,m)=(2,4)$, the endomorphism algebra of Lefschetz thimbles is not formal.
\end{lemma}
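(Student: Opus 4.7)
The plan is to produce an explicit non-vanishing Massey product in the cohomological endomorphism algebra of the thimbles, since such a Massey product is the canonical obstruction to formality. Concretely, I will use Klamt's minimal model from \cite[Table 8.4]{Klamt}, which already gives an explicit $A_\infty$-model with $\mu^d=0$ for $d\geq 4$ and with a completely tabulated list of non-trivial triple products $\mu^3$. Since quasi-isomorphic $A_\infty$-algebras have matching Massey products (well-defined modulo the usual indeterminacy), it suffices to exhibit one triple $a,b,c$ of composable cohomology classes with $\mu^2(a,b)=0$, $\mu^2(b,c)=0$ and with $\mu^3(a,b,c)$ nonzero in the quotient
\[
\Ext^*(X_0,X_3) \,\big/\, \bigl(\Ext^*(X_0,X_2)\cdot c \;+\; a\cdot \Ext^*(X_2,X_3)\bigr).
\]

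First, I will scan Klamt's $\mu^3$ table for an entry whose two pairwise $\mu^2$-products already vanish by one of the quiver relations recorded in \cite[Table 5.25]{Klamt}. Reasonable candidates involve composable chains passing through the three-step subquiver $P(4|3) \to P(4|2) \to P(4|1) \to P(3|1)$, since the identity-type generators satisfy relations that force many binary products to vanish while the $F$-type arrows give degree-raising classes that pair non-trivially only in triple compositions. Having chosen such a triple, the second step is to compute the indeterminacy group explicitly; the dimensions of $\Ext(P(k|l),P(a|b))$ are listed in \cite[Section 5.2.6]{Klamt} and each Hom is small, so the indeterminacy is spanned by a handful of images of $\mu^2$. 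One then checks directly from Klamt's $\mu^2$ tables that $\mu^3(a,b,c)$ is not in this span.

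Once a single non-trivial Massey product is exhibited, non-formality follows: if the algebra were quasi-isomorphic to its cohomology equipped with the zero higher structure, all Massey products would have to be trivial. Equivalently, viewing $\mu^3$ as a Hochschild $2$-cochain $\mu^3 \in CC^2(\Ext^*,\Ext^*)$ of degree $-1$, formality would force $\mu^3$ to be a Hochschild coboundary; evaluation on $(a,b,c)$ shows it is not.

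The main obstacle is the bookkeeping in step two: Klamt's generators carry intricate sign and grading conventions, and computing the indeterminacy requires enumerating all possible factorizations $a = a' a''$ and $c = c' c''$ up to the relations. A streamlined alternative, which I would pursue if the direct computation becomes unwieldy, is to observe that the subalgebra generated by the three composable $F$-arrows along $P(4|3) \to P(4|2) \to P(4|1) \to P(3|1)$ is quasi-isomorphic to a truncated $A_3$-type $A_\infty$-algebra of the shape appearing in \cite{SeidelBook} for Milnor fibres of type $A_3$ with a non-trivial secondary product; the non-vanishing of the $\mu^3$ there is classical and can be transported back along the inclusion. Either route isolates a single non-trivial Massey triple product and thereby establishes Lemma \ref{l:nonformal}.
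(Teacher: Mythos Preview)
Your approach is correct and is essentially the same as the paper's: both exhibit a non-trivial triple Massey product, equivalently show that Klamt's $\mu^3$ is not a Hochschild coboundary by evaluating on a well-chosen triple. The paper carries this out with the triple $(\widetilde{F}^{(4|1)}_{(2|1)},\,F^{(4|2)}_{(4|1)},\,F^{(4|3)}_{(4|2)})$ along $P(4|3)\to P(4|2)\to P(4|1)\to P(2|1)$, verifying from \cite[Table 5.25]{Klamt} that both adjacent products vanish and that the output $K^{(4|3)}_{(2|1)}$ does not lie in the indeterminacy subspace, which is contained in the span of $F^{(4|3)}_{(2|1)}$.

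Two small corrections. First, your suggested candidate chain ending at $P(3|1)$ will not work: by the relations recorded just before the lemma, $\widetilde{F}^{(4|1)}_{(3|1)}F^{(4|2)}_{(4|1)}=\pm J^{(4|2)}_{(3|1)}\neq 0$, so one of the two binary products fails to vanish and the Massey product is undefined there; you must extend one step further to $P(2|1)$, where $\widetilde{F}^{(4|1)}_{(2|1)}F^{(4|2)}_{(4|1)}=0$ does hold. Second, your indeterminacy formula has an indexing slip: for $a\colon X_0\to X_1$, $b\colon X_1\to X_2$, $c\colon X_2\to X_3$ the indeterminacy is $c\cdot\Ext^*(X_0,X_2)+\Ext^*(X_1,X_3)\cdot a$, not $a\cdot\Ext^*(X_2,X_3)$. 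With these fixes your argument goes through verbatim and coincides with the paper's; the fallback ``$A_3$ subalgebra'' route is unnecessary and, as stated, not obviously available.
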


\begin{proof}
Let $\cB$ be the $A_{\infty}$ endomorphism algebra of Lefschetz thimbles
and $B:=H(\cB)$ be the underlying cohomological algebra.
If $\cB$ were formal, then the $\mu^3$ of the minimal model Klamt found would define the zero class in 
$HH^3(B,B[-1])$.
It means that $\mu^3=d_{CC} \tau$ for some $\tau \in CC^2(B,B[-1])$, where $d_{CC}$ is the Hochschild differential.

From \cite[Table 8.4]{Klamt}, we have 
 \begin{align}
  \mu^3(\widetilde{F}^{(4|1)}_{(2|1)}, F^{(4|2)}_{(4|1)},F^{(4|3)}_{(4|2)})= \pm K^{(4|3)}_{(2|1)}
 \end{align}
On the other hand, up to sign,
 \begin{align}
   &d_{CC} \tau(\widetilde{F}^{(4|1)}_{(2|1)}, F^{(4|2)}_{(4|1)},F^{(4|3)}_{(4|2)}) \nonumber \\
  =&\widetilde{F}^{(4|1)}_{(2|1)} \tau( F^{(4|2)}_{(4|1)},F^{(4|3)}_{(4|2)})+\tau(\widetilde{F}^{(4|1)}_{(2|1)}, F^{(4|2)}_{(4|1)}) F^{(4|3)}_{(4|2)}  \label{eq:dCC}+ \tau(\widetilde{F}^{(4|1)}_{(2|1)} F^{(4|2)}_{(4|1)},F^{(4|3)}_{(4|2)})+\tau(\widetilde{F}^{(4|1)}_{(2|1)}, F^{(4|2)}_{(4|1)} F^{(4|3)}_{(4|2)}) 
 \end{align}
By \cite[Table 5.25]{Klamt}, we have $\widetilde{F}^{(4|1)}_{(2|1)} F^{(4|2)}_{(4|1)}=F^{(4|2)}_{(4|1)} F^{(4|3)}_{(4|2)}=0$
so the last two terms of \eqref{eq:dCC} vanish.

One can check from \cite[Section 5.2.6]{Klamt} that the dimension of $\Ext(P(2|1),P(4|3))$ is $4$ and it is generated by 
$Id^{(4|3)}_{(2|1)}$, $F^{(4|3)}_{(2|1)}=\widetilde{F}^{(4|3)}_{(2|1)}$, $G^{(4|3)}_{(2|1)}$ and $K^{(4|3)}_{(2|1)}$.
On the other hand, one can check from \cite[Table 5.25]{Klamt} that
for any $a \in \Ext(P(4|1),P(4|3))$ and $b \in \Ext(P(2|1),P(4|2))$, the sum
$\widetilde{F}^{(4|1)}_{(2|1)} a+b F^{(4|3)}_{(4|2)}$
lie in the subspace spanned by $F^{(4|3)}_{(2|1)}$.
Therefore, $K^{(4|3)}_{(2|1)}$ cannot be written as the sum of the first two terms of \eqref{eq:dCC} so it does not equal to
$d_{CC} \tau (\widetilde{F}^{(4|1)}_{(2|1)}, F^{(4|2)}_{(4|1)},F^{(4|3)}_{(4|2)})$
for any $\tau$ in $CC^2(B,B[-1])$.
Contradiction and the result follows.
\end{proof}

\begin{remark}
 Lemma \ref{l:nonformal} shows that the strategy from Section \ref{s:formality} cannot be used to prove that the collection of thimbles is formal.  In other words, the cohomological algebra of the thimbles 
is not compatible with the existence of such a consistent choice of equivariant structure. It seems hard to pinpoint why one would expect the collection 
of
Lagrangians $\{\ul{L}_{\lambda}\}_{\lambda \in \Lambda_{n,m}}$ to be better in this respect. 
\end{remark}

\end{document}